\definecolor{leafgreen}{rgb}{0.5,0.8,0.6}
\definecolor{lightgreen}{rgb}{0.8,1.0,0.9}
\definecolor{lightgray}{rgb}{0.95,0.95,0.98}
\newtheorem{theorem}{Theorem}[subsection]
\newtheorem{lemma}[theorem]{Lemma}
\newtheorem{proposition}[theorem]{Proposition}
\newtheorem{corollary}[theorem]{Corollary}
\newtheorem{claim}[theorem]{Claim}
\newtheorem{fact}[theorem]{Fact}
\newtheorem{conjecture}[theorem]{Conjecture}
\theoremstyle{definition}
\newtheorem{remark}[theorem]{Remark}
\newtheorem{definition}[theorem]{Definition}
\newtheorem{example}[theorem]{Example}
\newtheorem{convention}[theorem]{Convention}
\numberwithin{equation}{section}
\numberwithin{figure}{section}
\newcommand{\strikeout}[1]{\mathrlap{\stretchrel*{\smallsetminus}{#1}}#1}
\DeclareMathOperator*{\bigplus}{\scalerel*{+}{\sum}}
\newcommand{\uppar}[1]{\textup{(}{#1}\textup{)}}  
\newcommand{\babydots}[0]{\scalebox{0.8}{$\dots$}}
\newcommand\opr[1]{\operatorname{#1}}
\newcommand{\RR}[0]{\mathbb{R}}
\newcommand{\CC}[0]{\mathbb{C}}
\newcommand{\FF}[0]{\mathbb{F}}
\newcommand{\QQ}[0]{\mathbb{Q}}
\newcommand{\ZZ}[0]{\mathbb{Z}}
\newcommand{\cR}[0]{\mathcal{R}}
\newcommand{\cC}[0]{\mathcal{C}}
\newcommand{\cG}[0]{\mathcal{G}}
\newcommand{\cM}[0]{\mathcal{M}}
\newcommand{\cA}[0]{\mathcal{A}}
\newcommand{\cP}[0]{\mathcal{P}}
\newcommand{\fs}[0]{\mathfrak{s}}
\newcommand{\fM}[0]{\mathfrak{M}}
\newcommand{\PP}[0]{\mathbb{P}}
\newcommand{\EE}[0]{\mathop{{}\mathbb{E}}}
\newcommand{\eps}[0]{\varepsilon}
\newcommand{\im}[0]{\opr{im}}
\newcommand{\id}[0]{\opr{id}}
\newcommand{\spn}[0]{\opr{span}}
\newcommand{\entails}[0]{\vdash}
\newcommand{\zeroi}[0]{\circleddash}
\newcommand{\leafs}[0]{\mathcal{L}}
\newcommand{\pins}[0]{\mathcal{P}}
\newcommand{\toggles}[0]{\mathcal{T}}
\newcommand{\nonleafs}[0]{\mathcal{N}}
\newcommand{\cs}[0]{\opr{cs}}
\newcommand{\CS}[0]{\opr{CS}}
\newcommand{\gc}[0]{\opr{GC}}
\newcommand{\diagram}[0]{\opr{Diag}}
\newcommand{\datum}[0]{\opr{Dat}}
\newcommand{\morph}[0]{\opr{MORPHISM}}
\newcommand{\aggregate}[0]{\opr{AggreGate}}
\newcommand{\agg}[0]{\opr{Agg}}
\newcommand{\bigagg}[0]{\opr{BigAgg}}
\newcommand{\bridge}[0]{\opr{Bridge}}
\newcommand{\supera}[0]{\opr{SupAGate}}
\newcommand{\smagate}[0]{\opr{IndAGate}}
\newcommand{\agate}[0]{\opr{AGate}}
\newcommand{\bag}[0]{\opr{bAG}}
\newcommand{\opbr}[0]{\opr{Br}}
\newcommand{\abilin}[0]{\opr{ABilinear}}
\newcommand{\amulti}[0]{\opr{AMultinear}}
\newcommand{\bigsum}[0]{\opr{BigSum}}
\newcommand{\Middle}[0]{\opr{NonInitial}}
\newcommand{\Initial}[0]{\opr{Initial}}
\newcommand{\boring}[0]{\opr{Boring}}
\newcommand{\stargate}[0]{\opr{StarGate}}
\newcommand{\ag}[0]{\opr{AG}}
\newcommand{\lag}[0]{\opr{lAG}}
\newcommand{\const}[0]{\opr{Const}}
\newcommand{\trivial}[0]{\opr{Trivial}}
\newcommand{\opsum}[0]{\opr{Sum}}
\newcommand{\opcross}[0]{\opr{Cross}}
\newcommand{\crs}[0]{\opr{Crs}}
\newcommand{\sumconst}[0]{\opr{SumConst}}
\newcommand{\gw}[0]{\opr{GW}}
\newcommand{\Alpha}[0]{\mathrm{A}}
\def\wt{\widetilde}
\begin{document}

\title{True complexity and iterated Cauchy--Schwarz}

\author{Freddie Manners}
\address{Freddie Manners, UCSD Department of Mathematics, 9500 Gilman Drive \#0112, La Jolla CA 92093, USA}
\email{fmanners@ucsd.edu}

\begin{abstract}
  We prove a polynomial bound in the ``true complexity'' problem of Gowers and Wolf.
  The proof uses only repeated applications of the Cauchy--Schwarz inequality, answering negatively a question posed by Gowers and Wolf.

  To choose and reason about the sequence of Cauchy--Schwarz steps needed, we need to introduce several layers of formalism and theory.
  The highest level of abstraction in this framework concerns building what we term ``arithmetic circuits'' encoding computations in multilinear algebra.

  It is plausible this machinery could be used to generate arithmetic inequalities in greater generality, and we state some conjectures along these lines.
\end{abstract}

\maketitle
\setcounter{tocdepth}{2}
\tableofcontents

\section{Introduction}%
\label{sec:intro}

\subsection{Cauchy--Schwarz arguments and true complexity}%
\label{sub:background}

This paper is concerned with inequalities in additive combinatorics that may be proved by multiple applications of the Cauchy--Schwarz inequality.
A standard natural example is the following.
\begin{fact}%
  \label{fact:quasirandom-thing}
  If $n$ is odd, $S \subseteq \ZZ/n\ZZ$ is a set of size $\eps n$, and $S$ has $(\eps^4+o(1))n^3$ additive quadruples,
  \[
    \bigl\lvert \bigl\{ (x,a,b) \in (\ZZ/n\ZZ)^3 \colon x,\,x+a,\,x+b,\,x+a+b \in S \bigr\}  \bigr\rvert = (\eps^4 + o(1)) n^3
  \]
  then $S$ has $(\eps^3 + o(1)) n^2$ three-term arithmetic progressions:
  \[
    \bigl\lvert \bigl\{ (y,h) \in (\ZZ/n\ZZ)^3 \colon y,\,x+h,\,x+2h, \in S \bigr\}  \bigr\rvert = (\eps^3 + o(1)) n^2.
  \]
\end{fact}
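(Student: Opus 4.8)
The plan is to collapse everything onto a single scalar: the additive–quadruple count of $1_S$. Write $f = 1_S$ and $g = f - \eps$, so that $\EE g = 0$ and $\lvert g\rvert \le 1$ pointwise, and set $\lVert g\rVert_{U^2}^4 := \EE_{x,a,b}\, g(x)g(x+a)g(x+b)g(x+a+b)$; this is $\ge 0$, being equal to $\EE_a\bigl(\EE_x g(x)g(x+a)\bigr)^2$ after the substitution $x+b \mapsto x'$. Expanding the additive–quadruple count of $f$ by multilinearity produces $2^4$ terms, indexed by which of the four factors one takes to be $g$ rather than $\eps$. In every term with between one and three factors equal to $g$ one can either average over one of the free variables $a,b$, or apply the bijection $(x,a,b)\mapsto(x{+}a,\,x{+}b,\,x{+}a{+}b)$ of $(\ZZ/n\ZZ)^3$, so as to expose a lone factor $\EE g = 0$; every such term therefore vanishes. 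Hence
\[
  \EE_{x,a,b}\, f(x)f(x+a)f(x+b)f(x+a+b) = \eps^4 + \lVert g\rVert_{U^2}^4,
\]
and the hypothesis says precisely that $\lVert g\rVert_{U^2} = o(1)$.

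Next I would expand the three–term progression count in the same way. Using that $h \mapsto 2h$ is a bijection of $\ZZ/n\ZZ$ — this is the one place oddness of $n$ is needed — every mixed term again contains a factor averaging to $\EE g = 0$, leaving
\[
  \EE_{y,h}\, f(y)f(y+h)f(y+2h) = \eps^3 + \EE_{y,h}\, g(y)g(y+h)g(y+2h).
\]
It remains to bound the cubic term $\Lambda := \EE_{y,h}\, g(y)g(y+h)g(y+2h)$, and this is the only genuinely non-trivial step. Recentring the progression as $(b-d,\,b,\,b+d)$ and applying Cauchy--Schwarz in $b$ removes the middle factor (using $\lVert g\rVert_2 \le 1$) at the cost of doubling up the difference variable; a second Cauchy--Schwarz, together with another use of the substitution $d \mapsto 2d$, rewrites the resulting average as $\lVert g\rVert_{U^2}^4$. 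This is just the generalized von Neumann inequality for $3$-term progressions, and it yields $\lvert\Lambda\rvert \le \lVert g\rVert_{U^2}^{2} = o(1)$, so the progression count is $\eps^3 + o(1)$. Quantitatively: if the quadruple count is $(\eps^4 + \delta)n^3$ then the progression count lies within $\delta^{1/2}n^2$ of $\eps^3 n^2$.

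The only real obstacle is the generalized von Neumann step; the remainder is bookkeeping with bijective changes of variables. I would also remark that this step — indeed the whole fact — admits a two-line Fourier-analytic proof: both counts become $\sum_r\lvert\hat f(r)\rvert^4$ and $\sum_r\hat f(r)^2\hat f(-2r)$, the hypothesis forces $\sup_{r\ne 0}\lvert\hat f(r)\rvert = o(1)$ while Parseval pins $\sum_r\lvert\hat f(r)\rvert^2 = \eps$, and Hölder finishes. But the iterated Cauchy--Schwarz argument above is the one in the spirit of this paper.
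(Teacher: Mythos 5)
Your argument is correct and is exactly the route the paper indicates for deducing Fact~\ref{fact:quasirandom-thing} from Fact~\ref{fact:functional-inequality}: decompose $f=1_S=\eps+g$, observe the cross terms vanish so the hypothesis is $\lVert g\rVert_{U^2}=o(1)$, and bound $\Lambda(g,g,g)$ by Cauchy--Schwarz. One small clarification: Fact~\ref{fact:functional-inequality} as stated gives $\lvert\Lambda(g,g,g)\rvert\le\lVert g\rVert_{U^2}$, not $\lVert g\rVert_{U^2}^{2}$, but the sharper exponent you quote (and hence the $\delta^{1/2}$) is indeed correct for real-valued $g$ with $\lVert g\rVert_2\le 1$ via a \emph{single} Cauchy--Schwarz centred at $(b-d,b,b+d)$, since $\lvert\Lambda\rvert^2\le\lVert g\rVert_2^2\cdot\EE_b\bigl(\EE_d g(b-d)g(b+d)\bigr)^2=\lVert g\rVert_2^2\,\lVert g\rVert_{U^2}^{4}$.
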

More specifically, we will be concerned with proving inequalities between certain arithmetically-defined  \emph{multilinear averages} of functions, as in the following statement.
\begin{fact}%
  \label{fact:functional-inequality}
  If $n$ is odd and $f_1,f_2,f_3 \colon \ZZ/n\ZZ \to \CC$ are $1$-bounded functions\footnote{I.e., $ \lvert f_i(x) \rvert \le 1$ for all $x \in \ZZ/n\ZZ$.} then
  \begin{equation}%
    \label{eq:ex-cs}
    \biggl\lvert \EE_{y,h \in \ZZ/n\ZZ} f_1(y) f_2(y+h) f_3(y+2h) \biggr\rvert \le \lVert f_1 \rVert_{U^2}
  \end{equation}
  where
  \[
    \lVert f_1 \rVert_{U^2} = \biggl( \EE_{x,a,b \in \ZZ/n\ZZ} f_1(x) \overline{f_1(x+a)} \overline{f_1(x+b)} f_1(x+a+b) \bigg)^{1/4}.
  \]
  The same holds with $\lVert f_2 \rVert_{U^2}$, $ \lVert f_3 \rVert_{U^2}$ on the right-hand side.
\end{fact}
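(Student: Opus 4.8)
The plan is to prove this by the classical ``generalized von Neumann'' argument: two applications of the Cauchy--Schwarz inequality, separated by affine changes of variable. This is exactly the kind of repeated Cauchy--Schwarz estimate that the rest of the paper produces mechanically, so I will go through it with some care.

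First I would establish the bound with $\lVert f_1 \rVert_{U^2}$ on the right; the other two cases are handled at the end. Writing $\Lambda \coloneqq \EE_{y,h} f_1(y) f_2(y+h) f_3(y+2h)$, the linear map $(y,h) \mapsto (u,v) \coloneqq (y+h,\, y+2h)$ is a bijection of $(\ZZ/n\ZZ)^2$ (its determinant is $1$), so $\Lambda = \EE_{u,v} f_1(2u-v)\, f_2(u)\, f_3(v)$. Cauchy--Schwarz in $v$, using $\lvert f_3 \rvert \le 1$, gives $\lvert \Lambda \rvert^2 \le \EE_v \bigl\lvert \EE_u f_1(2u-v) f_2(u) \bigr\rvert^2$. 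I would then expand the square (writing the second copy of $u$ as $u+k$) and substitute $w = 2u-v$, a bijection in $v$ for each fixed $u$, to obtain
\[
  \lvert \Lambda \rvert^2 \;\le\; \EE_{k} \Bigl( \EE_w f_1(w)\, \overline{f_1(w+2k)} \Bigr)\Bigl( \EE_u f_2(u)\, \overline{f_2(u+k)} \Bigr) .
\]
The second factor is $1$-bounded, so with $g(k) \coloneqq \EE_w f_1(w)\,\overline{f_1(w+2k)}$ we get $\lvert \Lambda \rvert^2 \le \EE_k \lvert g(k) \rvert \le \bigl( \EE_k \lvert g(k) \rvert^2 \bigr)^{1/2}$ by a second Cauchy--Schwarz. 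Expanding $\lvert g(k) \rvert^2$ (second copy of $w$ as $w+a$) and substituting $b = 2k$ --- this is where $n$ being odd is used, so that $k \mapsto 2k$ is a bijection of $\ZZ/n\ZZ$ --- yields
\[
  \EE_k \lvert g(k) \rvert^2 \;=\; \EE_{w,a,b} f_1(w)\, \overline{f_1(w+a)}\, \overline{f_1(w+b)}\, f_1(w+a+b) \;=\; \lVert f_1 \rVert_{U^2}^4 ,
\]
hence $\lvert \Lambda \rvert^2 \le \lVert f_1 \rVert_{U^2}^2$, which is the claim.

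For $f_2$ and $f_3$: running the same two-step recipe but eliminating $f_1$ and then $f_3$ (keeping the $f_2$-correlation throughout) gives the $\lVert f_2 \rVert_{U^2}$ bound, and the substitution $(y,h) \mapsto (y+2h,\,-h)$ reverses the progression, interchanging $f_1$ with $f_3$, which reduces the $\lVert f_3 \rVert_{U^2}$ bound to the one just proved. The computation is entirely elementary; the step that needs the most attention is the bookkeeping of the successive linear substitutions --- verifying that each is a bijection (which is exactly where oddness of $n$ enters, via invertibility of doubling) and that, after the first Cauchy--Schwarz, the average genuinely separates across the new difference variable $k$, so that the unwanted $1$-bounded factor can simply be discarded before applying Cauchy--Schwarz again. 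I do not anticipate any genuine obstacle here: this is a standard estimate, included as the prototype for the inequalities produced later.
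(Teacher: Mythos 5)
Your argument is correct and is essentially the same two-Cauchy--Schwarz argument given in the paper (change variables to isolate $f_3$, eliminate it, then eliminate $f_2$), with the minor cosmetic difference that after the first Cauchy--Schwarz you observe the average factorizes over the difference variable $k$ and bound the $f_2$ factor pointwise by $1$ before the second Cauchy--Schwarz, whereas the paper applies the second Cauchy--Schwarz directly to the joint variable $(z_1,z_2)$; the two are algebraically identical. Your observations that oddness of $n$ is needed precisely for the invertibility of doubling, and that the $f_3$ case follows from $f_1$ by reversing the progression, are both accurate.
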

By standard telescoping arguments, Fact~\ref{fact:functional-inequality} applied with $f_i=1_S - \eps$ or $1_S$ implies Fact~\ref{fact:quasirandom-thing}.

The proof of Fact~\ref{fact:functional-inequality} by multiple applications of Cauchy--Schwarz is the following now-standard argument.
By change of variables, the left-hand side may be written as
\[
  \left\lvert \EE_{z,w \in \ZZ/n\ZZ} f_1(2z\!-\!w) f_2(z) f_3(w) \right\rvert
\]
and by Cauchy--Schwarz
\begin{align*}
  &\left\lvert \EE_{w\in \ZZ/n\ZZ} f_3(w) \left( \EE_{z \in \ZZ/n\ZZ} f_1(2z\!-\!w) f_2(z) \right) \right\rvert 
  \\ &\le \left( \EE_{w \in \ZZ/n\ZZ} \lvert f_3(w) \rvert^2 \right)^{1/2} \left( \EE_{w \in \ZZ/n\ZZ} \left\lvert \EE_{z \in \ZZ/n\ZZ} f_1(2z\!-\!w) f_2(z) \right\rvert^2 \right)^{1/2}.
\end{align*}
Since $f_3$ is $1$-bounded the first term is bounded by $1$.
Expanding the second term yields
\[
\EE_{z_1,z_2, w \in \ZZ/n\ZZ} f_1(2z_1\!-\!w) f_2(z_1) \overline{f_1(2z_2\!-\!w)} \overline{f_2(z_2)}
\]
which may be rewritten and bounded by Cauchy--Schwarz as follows:
\begin{align*}
  &\EE_{z_1, z_2 \in \ZZ/n\ZZ} f_2(z_1) \overline{f_2(z_2)} \left( \EE_{w \in \ZZ/n\ZZ} f_1(2z_1\!-\!w) \overline{f_1(2z_2\!-\!w)} \right) \le \\
  & \left( \EE_{z_1,z_2 \in \ZZ/n\ZZ} \left\lvert f_2(z_1) \overline{f_2(z_2)} \right\rvert^2 \right)^{1/2}
\left( \EE_{z_1,z_2 \in \ZZ/n\ZZ} \left\lvert \EE_{w \in \ZZ/n\ZZ} f_1(2z_1\!-\!w) \overline{f_1(2z_2\!-\!w)}  \right\rvert^2 \right)^{1/2}.
\end{align*}
Again bounding the first term by $1$, expanding out the second term as above and changing variables gives the right-hand side of~\eqref{eq:ex-cs}.
By tradition, verifying this is left as an exercise for the reader.

This reader could be excused for not finding such arguments inherently inspiring, but they are certainly useful.
For example, suitable generalizations of Fact~\ref{fact:functional-inequality} are an essential early step in Gowers' proof of Szemer\'edi's theorem~\cite{gowers-szemeredi} and work of Green and Tao on linear equations in the primes~\cite{gt-primes,gt-linear}.

A key feature of such arguments is that they yield good-quality (polynomial) bounds.
In a number of more recent applications, finding new and ever more elaborate Cauchy--Schwarz arguments of this type is a limiting step in proving quantitative bounds in formally ineffective results.
Examples include:---
\begin{itemize}
  \item work of Peluse~\cite{peluse-finite} on finding polynomial progressions in dense subsets of finite fields;
  \item work of Peluse and Prendiville~\cite{peluse-prendiville-1,peluse-prendiville-2} and Peluse~\cite{peluse-integers} on cases of the polynomial Szemer\'edi theorem, which in particular required progress on a quantitative version of a ``concatenation theorem'' of Tao and Ziegler;
  \item certain arguments in~\cite{me-uk} and~\cite{gowers-mil} (and very recently~\cite{szegedy}) related to the inverse theory of the Gowers norms.
\end{itemize}

As the Cauchy--Schwarz arguments being sought grow more complicated, it is essential to have a more conceptual way of thinking about them.
One useful approach\footnote{It is hard to be sure of the correct reference for this point of view, but it certainly appears in some notes of Tao~\cite{tao-functional}.} is to consider the ``100\%'' case.  
For example, a special case of Fact~\ref{fact:functional-inequality} is that if
\[
  \left\lvert \EE_{y,h \in \ZZ/n\ZZ} f_1(y) f_2(y+h) f_3(y+2h) \right\rvert =1
\]
then
\[
  \left( \EE_{x,a,b \in \ZZ/n\ZZ} f_1(x) \overline{f_1(x+a)} \overline{f_1(x+b)} f_1(x+a+b) \right)  \ge 1.
\]
Unpacking the hypothesis, we must have that $\lvert f_i(x)\rvert=1$ for all $x$, or in other words $f_i(x) = \exp(2 \pi i F_i(x))$ for some function $F_i \colon \ZZ/n\ZZ \to \RR/\ZZ$.
Moreover, the identity
\begin{equation}%
  \label{eq:first-fun-equation}
  F_1(y) + F_2(y+h) + F_3(y+2h) = A
\end{equation}
must hold for every pair of elements $y,h \in \ZZ/n\ZZ$ and some global constant $A \in \RR/\ZZ$.
Similarly, the desired conclusion is that
\begin{equation}%
  \label{eq:affine-linear}
  F_1(x) - F_1(x+a) - F_1(x+b) + F_1(x+a+b) = 0
\end{equation}
holds for all $x,a,b \in \ZZ/n\ZZ$.
In other words, our aim is to show that if the functional equation~\eqref{eq:first-fun-equation} holds then $F_1$ (and similarly $F_2$, $F_3$) must be affine-linear.

This statement is true by our proof of Fact~\ref{fact:functional-inequality}, and it is also helpful to examine the Cauchy--Schwarz proof in the 100\% setting.
Having changed variables and rearranged,~\eqref{eq:first-fun-equation} states
\[
  F_1(2z-w) + F_2(z) = -F_3(w) + A
\]
for all $z,w \in \ZZ/n\ZZ$.
Since the right-hand side is independent of $z$, so is the left-hand side: that is,
\[
  F_1(2z_1-w) + F_2(z_1) = -F_3(w) + A = F_1(2z_2-w) + F_2(z_2)
\]
for all $z_1,z_2,w \in \ZZ/n\ZZ$.
Rearranging again,
\[
  F_1(2z_1-w) - F_1(2z_2-w) = -F_2(z_1) + F_2(z_2)
\]
is independent of $w$, so
\[
  F_1(2z_1-w_1) - F_1(2z_2-w_1) = -F_2(z_1) + F_2(z_2) = F_1(2z_1-w_2) - F_1(2z_2-w_2) 
\]
for all $z_1,w_1,z_2,w_2$, and again this implies~\eqref{eq:affine-linear} by change of variables.

This gives the following helpful approach to Cauchy--Schwarz problems.
\begin{enumerate}[label=(\roman*)]
  \item Given a putative inequality, consider the 100\% case, and decide whether it is a true statement about functional equations.
    If not, the statement is false.
  \item To have a hope of proving the inequality by Cauchy--Schwarz, it is necessary to know how to solve the functional equation problem in a completely \emph{elementary} way, i.e.\ simply by rearranging equalities, changing variables etc., since a putative Cauchy--Schwarz proof would imply such a proof.
  \item The constant $C$ in the corresponding ``1\%'' inequality
    \[
      \lvert \text{multilinear average 1} \rvert \le \lvert \text{multilinear average 2} \rvert^{1/C}
    \]
    is related to the \emph{length} of such an elementary proof:\footnote{For now we will not attempt to give a formal definition of an elementary proof in this sense, but to a decent approximation we mean a proof in some suitably construed first-order language.
    An attempt at a rigorous definition occurs in Definition~\ref{def:elem}.}
    the longer the argument, the worse the bound.
\end{enumerate}
In practice, (ii) means that we are not allowed to apply any kind of structure theorem.
For example, having determined that $F_1$ is an affine-linear function $\ZZ/n\ZZ \to \CC$, we might be tempted to observe that we know exactly what such functions look like, and use this information to make further deductions (about $F_2$, say).
However, that would no longer be an ``elementary'' proof, unless the proof of the structure theorem could itself be somehow encoded using elementary operations.

A major stumbling block for this approach is that the converse to (ii) does not hold.
We may have found an elementary proof of some fact about functional equations, but be unable to turn it into a Cauchy--Schwarz argument, because proofs corresponding to Cauchy--Schwarz arguments have an extremely special and restrictive form among all elementary proofs.\footnote{%
  \label{footnote:ultra}
  If we are allowed to use stronger tools such as ultralimits, ``soft'' structured/random decompositions (i.e., using regularity decompositions but not a hard inverse theorem), polynomial approximation~\cite{gowers-hb}, etc., it might be possible to remove these restrictions and prove a sort of dictionary from arbitrary elementary proofs to inequalities.
  This is perhaps similar to the approach to ``concatenation'' taken in~\cite{tao-ziegler-concat}.
  However, such techniques do not offer polynomial-strength bounds.
  While the author thinks these possibilities are interesting, we will not pursue them in this paper.
}
We highlight a few key features.
\begin{enumerate}[label=(\Alph*)]
  \item Essentially the only inference rule we may use, besides changes of variable etc., is the ``duplication step'': if $[E_1] = [E_2]$ for two expressions $[E_1]$ and $[E_2]$, then $[E_1] = [E_1]'$ where $[E_1]$, $[E_1]'$ are copies of the same expression with certain variables held the same (namely, those mentioned in $E_2$).
  \item In particular, in a normal proof we would be free to use any of the lines $\{1,\dots,n\}$ to deduce line $n+1$.
    In a Cauchy--Schwarz argument, we can only get line $n+1$ by applying some operation to line $n$.
    In other words, usual proofs may be parallel, whereas Cauchy--Schwarz proofs are serial.
  \item Similarly, in an ordinary proof, if we have shown $F_1(x)+F_2(y) = [E_1]$ and $F_1(x)+F_2(y) = [E_2]$ for some expressions $[E_1]$ and $[E_2]$, we can deduce $[E_1] = [E_2]$.
    In Cauchy--Schwarz world, this is typically not permitted: we can only achieve ``cancellation'' when $[E_1]$, $[E_2]$ are copies of the same expression, as in (A).

    In other words, we are working in some strange logic where we cannot assume ``$=$'' is transitive.
  \item Another way of stating the previous points is that Cauchy--Schwarz arguments have essentially no memory of the fact that two instances of the same symbol---say, $F_1$---in the same equation, actually refer to the same function.
    We have no direct mechanism to replace, say, $F_1(x)-F_1(x)$ by zero: the second $F_1$ behaves, for all practical purposes, as if it were a brand new object.
\end{enumerate}

It is natural to wonder when, if ever, these difficulties are fatal.
That is: given a putative inequality to be proved by iterated Cauchy--Schwarz, and an elementary proof of the associated functional equation, can we always turn this into a carefully constructed proof of Cauchy--Schwarz type as in (A)--(D), and hence get an associated 1\% inequality with good bounds? Or are there examples where no such proof exists, and/or the associated polynomial 1\% inequality is not true?

At the moment this question is imprecise, although by the end of the paper (Section~\ref{sec:conjectures}) we will be able to state a precise version.
In the author's view, any general affirmative answer, or negative answer (or proof of undecidability), would be of very significant interest.

In this paper, we only consider a certain sub-problem, corresponding to Cauchy--Schwarz proofs in the \emph{true complexity} problem of Gowers and Wolf, which we will now describe.
We also give one example not of this type, discussed in Section~\ref{sub:repr} below, as it is a good vehicle for explaining the methods we will use.
The hope, however, is that the techniques developed in these special cases could eventually be used to prove inequalities in the much greater generality discussed above and in Section~\ref{sec:conjectures}.

The ``true complexity'' problem of Gowers and Wolf~\cite{gw1} asks for best-possible statements of the same precise form as Fact~\ref{fact:functional-inequality}.
Specifically, suppose $p$ is a prime, $d \ge 1$ is an integer and $\Phi = (\phi_i)_{i=1}^k$ is a collection of linear forms $\phi_i \colon \FF_p^d \to \FF_p$.
The associated multilinear average of functions $f_1,\dots,f_k \colon \FF_p^n \to \CC$ is given by
\begin{equation}%
  \label{eq:system-of-forms}
  \Lambda_{\Phi}(f_1,\dots,f_k) = \EE_{x_1,\dots,x_d \in \FF_p^{n}} f_1\bigl(\phi_1(x_1,\dots,x_d)\bigr) \dots f_k\bigl(\phi_k(x_1,\dots,x_d)\bigr)
\end{equation}
where we abuse notation to write $\phi_i$ for the linear map $\phi_i^n \colon \big(\FF_p^n\big)^d \to \FF_p^n$.
For example, the system of linear forms in Fact~\ref{fact:functional-inequality} has $d=2$, $k=3$, and linear forms  $\Phi=(\phi_1,\phi_2,\phi_3)$ where $\phi_1(y,h) = y$, $\phi_2(y,h)=y+h$ and $\phi_3(y,h)=y+2h$.
Alternatively, if $d=3$, $k=4$ and $\Psi=(\psi_1,\dots,\psi_4)$ where $\psi_1(x,a,b) = x$, $\psi_2(x,a,b)=x+a$, $\psi_3(x,a,b) = x+b$ and $\psi_4(x,a,b)=x+a+b$ then
\[
  \Lambda_{\Psi}(f,\overline{f}, \overline{f}, f) = \lVert f \rVert_{U^2}^4.
\]

The analogue of Fact~\ref{fact:functional-inequality} is then a statement of the following type: given a system of linear forms $\Phi$, an index $i \in [k]$ and $\eps>0$, there exists $\delta>0$ such that for any $n \ge 1$ and $1$-bounded functions $f_1,\dots,f_k \colon \FF_p^n \to \CC$:
\begin{equation}%
  \label{eq:gw-bound}
  \lVert f_i \rVert_{U^{s+1}} \le \delta\ \ \Rightarrow \ \ \lvert \Lambda_{\Phi}(f_1,\dots,f_k) \rvert \le \eps.
\end{equation}
In other words, $\Lambda_\Phi$ is controlled by a Gowers norm\footnote{We assume the reader is familiar with Gowers norms; see e.g.~\cite[Chapter 11]{tao-vu} or~\cite[Appendix B]{gt-linear}.} $\lVert f_i \rVert_{U^{s+1}}$, where $s$ depends on $\Phi$ and possibly $i$.
Ideally, the dependence of $\delta$ on $\eps$ would be polynomial, as in Fact~\ref{fact:functional-inequality} itself: i.e.,
\begin{equation}%
  \label{eq:gw-poly}
  \lvert \Lambda_{\Phi}(f_1,\dots,f_k) \rvert \le \lVert f_i \rVert_{U^{s+1}}^{1/C}
\end{equation}
for some constant $C=C(\Phi,i)$.

When $\Phi$ is a $k$-term progression, i.e., $d=2$ and $\phi_i(x,h) = x+(i-1) h$, Gowers~\cite{gowers-szemeredi} proved such a bound with $s=k-2$ and $C=1$, by a direct generalization of the proof above when $k=3$.
A natural extension of this process to general systems of forms was formulated by Green and Tao~\cite{gt-linear}:\footnote{They actually prove something more complicated; the simplified version relevant to us is given explicitly in~\cite[Theorem 2.3]{gw1}.}
they show $\lvert \Lambda_{\Phi}(f_1,\dots,f_k) \rvert \le \lVert f_i \rVert_{U^{s+1}}$ for a value $s=s_{\cs}(\Phi, i)$ called the \emph{Cauchy--Schwarz complexity} of $\Phi$ at $i$.
This is defined as follows: it is the smallest non-negative integer $s$ such that $[k] \setminus \{i\}$ may be partitioned into $s+1$ sets $S_1,\dots,S_{s+1}$, such that $\phi_i \notin \spn\bigl((\phi_j)_{j \in S_r} \bigr)$ for each $r \in [s+1]$.
Provided $\phi_i$ is not a scalar multiple of $\phi_j$ for some $j\ne i$, we have $s_{\cs} \le k-2$; if it is, the whole situation is hopeless and foolish, and we say $s_{\cs}=\infty$.

In~\cite{gw1}, Gowers and Wolf consider the question: what is the \emph{smallest} value $s$ such that~\eqref{eq:gw-bound} holds?
We call this $s$ the%
\footnote{This is not actually the definition used by Gowers and Wolf: they impose an extra condition that $f_1=f_2=\cdots=f_k$ in the definition.
The multilinear version here appears to have been introduced in~\cite{hl}, and is in some ways more natural.
We caution that statements including the condition $f_1=f_2=\cdots=f_k$ are logically weaker than their multilinear variants, but since the multilinear versions are now known to be true we will not worry too much about this distinction.}
\emph{Gowers--Wolf complexity} $s_{\gw}(\Phi, i)$.
We also define a symmetric version
\[
  s_{\gw}(\Phi) = \max_{i \in [k]} s_{\gw}(\Phi,i) ;
\]
i.e., the smallest $s$ such that~\eqref{eq:gw-bound} holds for all $i \in [k]$.

Certainly $s_{\gw} \le s_{\cs}$.
On the other hand we may investigate $s_{\gw}(\Phi,i)$ by examining the corresponding statements about functional equations, as above.
The 100\% case of~\eqref{eq:gw-bound} asserts the following.
Suppose $F_1,\dots,F_k \colon \FF_p^n \to \RR/\ZZ$ are functions such that
\begin{equation}%
  \label{eq:functional-gw-hypo}
  F_1\bigl(\phi_1(v)\bigr) + F_2\bigl(\phi_2(v)\bigr) + \cdots + F_k\bigl(\phi_k(v)\bigr) = A
\end{equation}
for some global $A \in \RR/\ZZ$ and all $v \in \FF_p^{dn}$.
For $h \in \FF_p^{n}$ and $F \colon \FF_p^n \to \RR/\ZZ$, write $\partial_h F \colon x \mapsto F(x) - F(x+h)$ for the discrete derivative of $F$.
Then we wish to conclude (unpacking the definition of the Gowers norm $U^{s+1}$) that
\[
  \partial_{h_1} \partial_{h_2} \dots \partial_h{h_{s+1}} F_i(x) = 0
\]
for all $x,h_1,\dots,h_{s+1} \in \FF_p^n$.
This last condition is equivalent to the assertion that $F_i$ is a \emph{polynomial map} $\FF_p^n \to \RR/\ZZ$.
When $s<p$ this means that $F_i$ has the form of a multivariate polynomial,
\[
  (y_1,\dots,y_n) \mapsto \sum_{r_1+\cdots+r_n\le s} c_{r_1,\dots,r_n} y_1^{r_1} \dots y_n^{r_n}
\]
for some $c_{r_1,\dots,r_n} \in \RR/\ZZ$;
for $s \ge p$ we must also allow ``non-classical'' polynomials (see e.g.~\cite{tao-blog}) but we ignore this subtlety for now.

To summarize, in the 100\% setting, we want to know that for any solution to~\eqref{eq:functional-gw-hypo}, the function $F_i$ must be a polynomial of degree at most $s$.
Hence, if we can find a solution to~\eqref{eq:functional-gw-hypo} where $F_i$ is a polynomial of degree $t$, we can deduce\footnote{The 100\% statement still follows logically from the weaker inequality~\eqref{eq:gw-bound}, using the tensor power trick.}
the lower bound $s_{\gw}(\Phi,i) \ge t$.

Determining whether such a solution exists is an exercise in multilinear algebra. 
Considering the tensor powers\footnote{We write ${(\FF_p^d)}^\ast$ to denote the vector-space dual of $\FF_p^d$.  It is, of course, canonically isomorphic to $\FF_p^d$, but maintaining a distinction between vector spaces and their duals makes some statements easier to parse.} $\phi_i^{\otimes t} \in \left((\FF_p^d)^\ast\right)^{\otimes t }$,
a solution to~\eqref{eq:functional-gw-hypo} where $F_i$ is a polynomial of degree exactly $t$ exists if and only if%
\footnote{This characterization remains true in the non-classical regime $t \ge p$.}
\begin{equation}%
  \label{eq:true-criterion}
  \phi_i^{\otimes t} \in \spn\left( (\phi_j)^{\otimes t} \colon j \ne i \right) \le \left((\FF_p^d)^\ast\right)^{\otimes t }.
\end{equation}
Hence, the solution to the 100\% problem is captured by the following definition.

\begin{definition}%
  \label{def:true-complexity}
  For a system of linear forms $\Phi = (\phi_1,\dots,\phi_k)$ and an index $i \in [k]$, define $s(\Phi,i)$ to be the largest non-negative integer $t \ge 0$ such that~\eqref{eq:true-criterion} holds.

  We also write $s(\Phi)$ for $\max_{i \in [k]} s(\Phi,i)$, or equivalently, the largest non-negative integer $t \ge 0$ such that
  \[
    \phi_1^{\otimes t}, \dots, \phi_k^{\otimes t} \in \left((\FF_p^d)^\ast\right)^{\otimes t }
  \]
  are linearly dependent.
\end{definition}

Gowers and Wolf conjectured that these 100\% calculations correspond to the truth in the original inequality problem: that is, that $s_{\gw}(\Phi,i)=s(\Phi,i)$, or the weaker symmetric statement $s_{\gw}(\Phi)=s(\Phi)$.
In the latter case, this is equivalent to proving the following inequality, as in~\eqref{eq:gw-bound}.

\begin{conjecture}[Gowers--Wolf]%
  \label{conj:gw}
If $\Phi = (\phi_1,\ldots,\phi_k)$ is a system of linear forms $\phi_i \colon \FF_p^d \to \FF_p$ such that $(\phi_i)^{\otimes (s+1)}$ are linearly independent in $\bigl((\FF_p^d)^\ast\bigr)^{\otimes (s+1)}$, then for all $\eps>0$ there exists $\delta>0$ such that whenever $n \ge 1$, $i \in [k]$ and $f_1,\ldots,f_k \colon \FF_p^n \to \CC$ are $1$-bounded functions such that $\|f_i\|_{U^{s+1}} \le \delta$, we have $|\Lambda_{\Phi}(f_1,\ldots,f_k)| \le \eps$.
\end{conjecture}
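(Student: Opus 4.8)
\emph{Proof plan.} The plan is to prove the conjecture by pure Cauchy--Schwarz, reducing it to the construction of a finite sequence of Cauchy--Schwarz steps and elementary changes of variable that starts from $\Lambda_{\Phi}(f_1,\dots,f_k)$ and terminates with $\lVert f_i\rVert_{U^{s+1}}$. First I would set up a bookkeeping layer: a formal notion of ``multilinear average pattern'', the moves one is allowed to perform on it (a linear change of variables; a Cauchy--Schwarz step in a chosen direction followed by expansion, which in the $100\%$ world is exactly the duplication (A)--(D); and discarding a factor that is $1$-bounded), and a \emph{soundness lemma} asserting that any valid sequence of moves of length $\ell$ carrying $\Lambda_{\Phi}$ to the pattern computing $\lVert f_i\rVert_{U^{s+1}}$ yields a genuine inequality $\lvert\Lambda_{\Phi}(f_1,\dots,f_k)\rvert \le \lVert f_i\rVert_{U^{s+1}}^{1/C}$ with $C$ depending only on $\ell$. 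Granting this lemma the whole problem becomes combinatorial: exhibit one terminating sequence of moves, and bound its length by some $\ell=\ell(\Phi)<\infty$, which gives the conjecture with $\delta=\eps^{C(\Phi)}$.

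The engine of the construction should be the dual form of the hypothesis $\phi_i^{\otimes(s+1)}\notin\spn\bigl((\phi_j)^{\otimes(s+1)}\colon j\ne i\bigr)$: there exists a tensor $T\in(\FF_p^d)^{\otimes(s+1)}$ with $\langle T,\phi_i^{\otimes(s+1)}\rangle\ne 0$ but $\langle T,\phi_j^{\otimes(s+1)}\rangle=0$ for every $j\ne i$. When $T$ may be taken to be a pure tensor $v_1\otimes\cdots\otimes v_{s+1}$ this is exactly the Green--Tao situation: the vanishing conditions partition $[k]\setminus\{i\}$ into the sets $S_r=\{j:\phi_j(v_r)=0\}$, and $s+1$ rounds of Cauchy--Schwarz --- in round $r$ collapsing the fibre of the projection with kernel $\langle v_r\rangle$, discarding the now $1$-bounded factors indexed by $S_r$, and expanding --- leave precisely $\lVert f_i\rVert_{U^{s+1}}$ on the right. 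A naive attempt at the general case would induct on $s$, peeling off one direction at a time, but the tensor hypothesis does not factor through a single distinguished coordinate and in general $T$ has rank greater than one; one genuinely needs the whole tensor, i.e.\ several families of directions, handled together.

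This is where the ``arithmetic circuit'' formalism of the abstract enters. I would build $T$ out of pure tensors using sums, tensor products and symmetrizations, matching each such algebraic operation with a corresponding operation on Cauchy--Schwarz schemes; a circuit is then a directed acyclic graph of such multilinear-algebra gates computing the separating tensor, and the central technical result is a compilation theorem turning a circuit of polynomial size into a legal Cauchy--Schwarz scheme of finite length. The paper would be organized as: (1) patterns and the soundness lemma; (2) the rank-one building block, phrased in that language; (3) the algebra of legal combinations of schemes; (4) the circuit abstraction and its compilation theorem; (5) construction of the specific circuit computing $T$ for an arbitrary $\Phi$, with the size bound. Step (3) already contains a characteristic difficulty: one cannot simply ``add'' two schemes, since that would require cancelling two occurrences of $f_i$ as in the forbidden transitivity move (C); instead the sum must be realized by first tensoring the two schemes against one another and only afterwards contracting, mirroring the identity used to decompose symmetric tensors.

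The main obstacle is exactly this mismatch between multilinear algebra and Cauchy--Schwarz logic created by features (A)--(D). It is easy to write an ordinary (parallel, transitive, memoryful) elementary derivation showing that any solution of the functional equation~\eqref{eq:functional-gw-hypo} forces $F_i$ to be a polynomial of degree at most $s$; converting this into a serial, duplication-only derivation with no transitivity of $=$ and no memory that two copies of $F_i$ coincide is the entire content of the machinery, and the circuit model must be designed so that every gate has a legal compilation and the construction of $T$ never calls for an illegal move. A secondary, quantitative obstacle is that the obvious recursive realization of a rank-$r$ tensor composes $r$ sub-schemes and could make $\ell(\Phi)$ grow like a tower in $r$; keeping the dependence on the parameters of $\Phi$ polynomial forces one to share subcomputations --- genuinely using the DAG rather than a tree --- and to choose the order of eliminations so that the pieces interlock.
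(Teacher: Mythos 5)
Your plan shares the paper's overall architecture to a striking degree: the bookkeeping layer (linear data, morphisms, the $\entails$ relation of Section~\ref{sub:linear-datum}), the soundness lemma (Corollary~\ref{cor:log-entails}), the rank-one / Cauchy--Schwarz-complexity base case (Lemma~\ref{lem:gc-cs}), the circuit abstraction (Section~\ref{sec:gates}), and the realization of a separating tensor $T$ as a circuit (Sections~\ref{sec:baby-thm}--\ref{sec:thm-main}). You also correctly identify the central obstacle as the mismatch between ordinary multilinear-algebra reasoning and the serial, duplication-only, cancellation-free logic of iterated Cauchy--Schwarz.

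There is nonetheless a genuine gap. You propose to take $T \in (\FF_p^d)^{\otimes(s+1)}$ with $\langle T, \phi_{i}^{\otimes(s+1)}\rangle \ne 0$ but $\langle T, \phi_j^{\otimes(s+1)}\rangle = 0$ for $j\ne i$ and compile it directly into a Cauchy--Schwarz scheme, but you never address the chicken-and-egg problem that blocks this when $s_{\cs}(\Phi) > s(\Phi,i)+1$: to carry out the multilinear calculation encoded by $T$ one needs the forms $B_j(h_1,\dots,h_{s+1})=\partial_{h_1}\cdots\partial_{h_{s+1}}F_j$ to be well-defined multilinear objects, i.e.\ one needs $\partial_{h_1}\cdots\partial_{h_{s+2}}F_j=0$ for every $j\ne i$, and that is a higher-degree statement your plan has no mechanism to establish. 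The paper resolves this with two devices absent from your outline: \emph{stashing} (Section~\ref{sec:stashing}), which banks an intermediate conclusion ``$\Lambda_\Phi$ is controlled by $\|f_j\|_{U^{t+1}}$'' as a dual-function hypothesis that survives further Cauchy--Schwarz manipulations, and \emph{degree reduction} (Corollary~\ref{cor:key-cor}), which iterates such stashed conclusions to lower the relevant degree one step at a time until the multilinear forms exist at the correct degree and the separating-tensor circuit becomes legal. Your quantitative remark about avoiding tower growth by ``sharing subcomputations in a DAG rather than a tree'' also does not capture the paper's actual mechanism: the saving comes from the fact that each Cauchy--Schwarz step \emph{doubles} the circuit, so a circuit of $m$ gates can be built in $O(\log m)$ Cauchy--Schwarz steps (this is the saving explained in Section~\ref{sub:repr}), together with the explicit StarGate construction of Section~\ref{sub:stargates}; the circuit is not heavily shared in the DAG sense.
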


This conjecture is now a theorem\footnote{
To be accurate, some of these results proved the original non-multilinear conjecture, with $f_1=\cdots=f_k$.
However, Hatami and Lovett~\cite{hl} (in finite fields) and Altman's argument (for $n=1$) prove the multilinear version discussed here.}
in the two regimes of significant interest, namely (a) $n=1$ and $p$ is large, or (b) $p$ is fixed and $n$ is large.  Specifically:---
\begin{itemize}
  \item in a series of papers~\cite{gw1,gw2,gw3} Gowers and Wolf resolved Conjecture~\ref{conj:gw} in regime (b), provided $p$ is not too small (i.e., avoiding the ``non-classical regime'');
  \item Hatami, Hatami and Lovett~\cite{hhl} resolved the remaining cases in regime (b), i.e., when $p$ is small; 
  \item Hatami and Lovett~\cite{hl} further proved the ``asymmetric'' conjecture $s_{\gw}(\Phi,i)=s(\Phi,i)$, again for $p$ fixed and $n$ large;
  \item Gowers and Wolf~\cite{gw4} also resolved the regime (a) cases of Conjecture~\ref{conj:gw} where $s_{\cs}(\Phi)=2$ and $s(\Phi)=1$;
  \item Green and Tao~\cite{gt} solved the remaining cases with $n=1$ and $p$ large, subject to the system of linear forms obeying a technical ``flag condition'' (which holds in many cases of interest but can fail, sometimes generically);%
\footnote{The original version of~\cite{gt} did not mention this extra hypothesis, but it was later observed by Altman that the proofs assumed it implicitly.}
  \item the remaining (symmetric) cases $n=1$, $p$ large (with or without the flag condition) were resolved by Altman~\cite{altman}, building on the Green--Tao argument.
\end{itemize}
The ``asymmetric'' statement $s(\Phi,i)=s_{\gw}(\Phi,i)$ remains open in regime (a), although it is unclear how much attention it has received.

We sketch the rough form taken by these arguments.
\begin{enumerate}[label=\arabic*.]
  \item Note we have control of $\Lambda_{\Phi}$ by \emph{some} Gowers norm $\|\cdot\|_{U^{s_{\cs}+1}}$, by the Cauchy--Schwarz complexity argument~\cite[Theorem~2.3]{gw1}.
    Hence we are free to modify $f_i$ by small errors in the $\|\cdot\|_{U^{s_{\cs}+1}}$-norm.
  \item Apply an inverse theorem for the Gowers $\|\cdot\|_{U^{s_{\cs}+1}}$-norm to the $f_i$ (\cite{inverse-fp,inverse-gtz}, or for quantitative bounds~\cite{gowers-mil,me-uk}).
  \item Taking steps 1 and 2 together, we may assume WLOG that $f_i$ are ``$U^{s_{\cs}+1}$ structured functions'': i.e., nilsequences (if $n=1$ and $p$ is large) or phase polynomials (if $p$ is fixed and $n$ is large).
  \item Solve the problem for such $f_i$.  This is still almost all the work.
\end{enumerate}
We will not say any more about step 4.  However, we note that step 2 limits the quality of the quantitative dependence of $\delta$ on $\eps$ in Conjecture~\ref{conj:gw} obtainable by these methods to that offered by the inverse theorems.  Using the best currently known bounds, this is still no better than $1/\delta \approx \exp^{O(1)}(1/\eps)$ when $s_{\cs} \ge 3$; i.e., a tower of exponentials of fixed height.
Certainly we do not get polynomial bounds as in~\eqref{eq:gw-poly}.

Gowers and Wolf posed two further related questions (\cite[Problem 7.8]{gw4}):---
\begin{enumerate}[label=(\roman*)]
  \item Could the dependence of $\delta$ on $\eps$ be polynomial as in~\eqref{eq:gw-poly}?
  \item Could Conjecture~\ref{conj:gw} be proven in general by some sufficiently complicated sequence of applications of the Cauchy--Schwarz inequality, or are ``higher tools'' such as the inverse theorem somehow essential?
\end{enumerate}
Gowers and Wolf state that they suspect that a proof as in (ii) does not exist, and that a good way to rule it out would be to show that the answer to (i) is also no (since pure Cauchy--Schwarz arguments give polynomial bounds).
We now state the main result of this paper, which resolves this question in the opposite direction.

\begin{theorem}%
  \label{thm:main}
  Suppose $p$ is a prime and $\Phi=(\phi_1,\dots,\phi_k)$ is a system of linear forms $\phi_i \colon \FF_p^d \to \FF_p$ with complexity $s(\Phi)$.
  Then for some constant $M\ge0$ depending on $\Phi$, the following holds: for any $n \ge 1$ and any $1$-bounded functions $f_1,\dots,f_k \colon \FF_p^n \to \CC$, and any $i_0 \in [k]$, we have
  \[
    \left\lvert \Lambda_{\Phi}(f_1,\dots,f_k) \right\rvert \le \|f_{i_0}\|_{U^{s(\Phi)+1}}^{2^{-M}}.
  \]
  Moreover, suppose $\phi_i(x_1,\dots,x_d) = \sum_{j=1}^d a_{ij} x_j$ where $a_{ij}$ are integer coefficients with $|a_{ij}| \le L$ for all $i,j$.
  Then we may take $M \ll k^{3} \bigl(\log k + \log \log (10 L)\bigr)$.
\end{theorem}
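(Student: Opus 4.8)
The plan is to reduce Theorem~\ref{thm:main} to a sharp Cauchy--Schwarz control statement for one index at a time, and then to prove that statement by compiling an explicit linear-algebraic certificate into a carefully staged sequence of Cauchy--Schwarz steps. Fix $i_0 \in [k]$ and put $s = s(\Phi)$. First I would observe that the set of $t$ for which \eqref{eq:true-criterion} holds at index $i_0$ is an initial segment of $\ZZ_{\ge 0}$: multiplying a degree-$t$ witness polynomial by any linear form not vanishing at $\phi_{i_0}$ produces a degree-$(t+1)$ witness, so the property $\phi_{i_0}^{\otimes t}\notin\spn((\phi_j)^{\otimes t}\colon j\ne i_0)$ persists as $t$ grows; since $s(\Phi)\ge s(\Phi,i_0)$ this gives $\phi_{i_0}^{\otimes(s+1)}\notin\spn((\phi_j)^{\otimes(s+1)}\colon j\ne i_0)$. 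Hence it suffices to show: whenever $\phi_{i_0}^{\otimes t}\notin\spn((\phi_j)^{\otimes t}\colon j\ne i_0)$ one has $|\Lambda_\Phi(f_1,\dots,f_k)|\le\|f_{i_0}\|_{U^t}^{2^{-M}}$ for $M$ of the stated size; applying this with $t=s+1$ and taking the maximum of the resulting $M$ over the $k$ choices of $i_0$ finishes the theorem. The Cauchy--Schwarz complexity argument already delivers such a bound with exponent $1$, but only for the weaker norm $U^{s_{\cs}(\Phi,i_0)+1}$, and $s_{\cs}$ can exceed $s$; so the real content is to sharpen the Gowers norm down to the value dictated by the tensor-power criterion \eqref{eq:true-criterion}, at the price of a polynomially small exponent $2^{-M}$. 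No finite iteration of the ``clean'' Cauchy--Schwarz moves available on systems of linear forms achieves this, since those moves are exactly governed by $s_{\cs}$.

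To organize the required sequence of moves I would build the apparatus promised in the introduction in two layers. The lower layer is a formal notion of \emph{Cauchy--Schwarz proof}: a serial list of inequalities between absolute values of multilinear averages $\Lambda_\Psi$ attached to systems of forms and tuples of $1$-bounded functions, where the only legal deductions are invertible changes of variables, passage to a subsystem (bounding a product of $1$-bounded factors by $1$), and the Cauchy--Schwarz/duplication step in a chosen direction (which differentiates the surviving functions and squares the quantity being estimated), exactly as in the computation following Fact~\ref{fact:functional-inequality}. A soundness lemma then records that a Cauchy--Schwarz proof using $M$ such squarings and leading from $\Lambda_\Phi$ to a power of $\|f_{i_0}\|_{U^t}$ yields precisely $|\Lambda_\Phi|\le\|f_{i_0}\|_{U^t}^{2^{-M}}$. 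The upper layer is the family of \emph{arithmetic circuits}: directed acyclic diagrams whose gates are bookkeeping devices for operations in multilinear algebra --- aggregating parallel branches, bridging between systems of different ambient dimension, forming multilinear contractions, and so on --- together with a compilation theorem asserting that every valid arithmetic circuit unfolds into a Cauchy--Schwarz proof whose number of squarings is bounded by a fixed polynomial in the circuit's size. All of the combinatorial design then takes place at the circuit level, where the awkward restrictions on Cauchy--Schwarz proofs (serial rather than parallel access to earlier lines, no transitivity of equality, no memory that two copies of a symbol coincide) are enforced structurally rather than worried about by hand.

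The heart of the proof is then to construct, from a witness for $\phi_{i_0}^{\otimes t}\notin\spn$, an arithmetic circuit of size polynomial in $k$. The witness is a homogeneous degree-$t$ polynomial $Q$ on the dual space with $Q(\phi_j)=0$ for $j\ne i_0$ and $Q(\phi_{i_0})\ne 0$; by linear algebra (Gaussian elimination, Cramer's rule) such a $Q$ can be described with $O\bigl(k\log(kL)\bigr)$ bits of integer data, hence is polynomial-size. The circuit must encode both ingredients of the $100\%$ argument sketched in Section~\ref{sub:background}: the production of $Q$, and the \emph{extraction} step whereby the degree-$t$ component of the functional equation $\sum_j F_j(\phi_j(v))=A$ is pushed, via $Q$, entirely onto $F_{i_0}$, forcing $\deg F_{i_0}<t$. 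The essential point --- and the source of the gain over $s_{\cs}$ --- is that this extraction is most naturally performed on genuinely multilinear intermediate objects (tensored and lifted systems, carrying a grading that records how many times $f_{i_0}$ has been differentiated) that admit no description as plain systems of linear forms; on those objects the effective Cauchy--Schwarz complexity has dropped to the tensor-power value, but they may only be touched through gates that honour the restrictions above, which is exactly what the circuit calculus is built for. I expect the construction to proceed by an induction that removes one unit of complexity per stage: each stage differentiates $f_{i_0}$ once and passes from $\Phi$ to a derived system satisfying the tensor-power criterion with $t-1$ in place of $t$, while keeping the number of forms $O(k)$, and after $t$ stages one arrives at $\|f_{i_0}\|_{U^t}$ up to a bounded cleanup.

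The main obstacle I anticipate is precisely this construction: realizing the extraction step --- which in an ordinary proof would freely use transitivity and parallel access to all earlier lines --- by gates respecting the serial, transitivity-free, memoryless discipline, and doing so without the size blowing up. A naive implementation that passed to tensor powers of the entire system would incur a factor exponential in $k$ and give a useless bound; avoiding this, and keeping the number of forms and the circuit width polynomial throughout, is where the several layers of formalism earn their keep. Granted the construction, the quantitative estimate is bookkeeping: there are $t = s(\Phi)+1 \le k$ stages, and each stage compiles to $O\bigl(k^2(\log k + \log\log(10L))\bigr)$ Cauchy--Schwarz steps --- the $k^2$ from indexing by pairs of forms and coordinate directions, the $\log k$ from serializing parallel branches in the aggregation gates, and the $\log\log(10L)$ from digesting the $O(k\log(kL))$-bit coefficients of $Q$ through repeated-doubling subcircuits of logarithmic depth, with compilation preserving size up to constants. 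Summing over stages gives $M \ll k^3(\log k + \log\log(10L))$, and the exponent $2^{-M}$ then follows from the soundness lemma, the maximum over $i_0$ leaving the bound in the stated form.
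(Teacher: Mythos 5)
The overall blueprint you describe---a formal calculus of Cauchy--Schwarz steps, an ``arithmetic circuit'' layer above it, and a compilation of a linear-algebraic certificate into those circuits with the size analysis $M \ll k^3(\log k + \log\log(10L))$---matches the paper's shape. But two of the steps you wave through are precisely where the paper does its real work, and your description of them is not just vague but points in the wrong direction.

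First, your intermediate claim, ``whenever $\phi_{i_0}^{\otimes t}\notin\spn((\phi_j)^{\otimes t}\colon j\ne i_0)$ one has $|\Lambda_\Phi|\le\|f_{i_0}\|_{U^t}^{2^{-M}}$,'' together with the description of the induction as ``each stage differentiates $f_{i_0}$ once and passes to a derived system satisfying the tensor-power criterion with $t-1$ in place of $t$,'' would, if provable, yield the full asymmetric Conjecture~\ref{conj:asymmetric}, which is explicitly open (see Example~\ref{ex:bad-asymmetric}). The degree reduction does \emph{not} repeatedly differentiate $f_{i_0}$; it differentiates the \emph{other} functions, lowering their degrees in lockstep, and applies the tensor criterion at $i_0$ only once at the very end. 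The reason this matters is structural: to do degree-$s$ multilinear algebra you must already know every other $F_j$ is polynomial of degree $\le s$, so you cannot lower $\deg F_{i_0}$ past $s(\Phi)$ without lowering the others past $s(\Phi)$ too, which is impossible when $s(\Phi,j)$ is larger. This coupling of the indices is exactly why the theorem gives $U^{s(\Phi)+1}$ and not $U^{s(\Phi,i_0)+1}$, and it is not visible in your sketch.

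Second and more seriously, you assert a ``compilation theorem'' from arbitrary circuits to Cauchy--Schwarz proofs of polynomial size, and take for granted that the ``serial, transitivity-free, memoryless'' discipline can be ``enforced structurally.'' There is no such blanket theorem, and the constraint is not merely serial access: every $\CS$ step self-joins the current diagram and thus produces a \emph{reflection-symmetric} object, so the geometry of any buildable circuit is severely constrained. The paper escapes this in two distinct ways neither of which appears in your proposal: (i) gates are \emph{programmable}---the symmetric diagram admits many different partitions of its toggles, and choosing the partition after the build phase is what breaks the symmetry; and (ii) the degree-reduction recursion itself is made Cauchy--Schwarz-compatible via \emph{stashing}: one $\CS$ step creates two copies, one is reduced via a control estimate, and the other is reinterpreted as a $\Phi$-dual-function hypothesis, converting a ``previously proved lemma'' into a persistent structural hypothesis on one of the functions (Section~\ref{sub:true-summary}, Proposition~\ref{prop:awesome-stashing}). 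Without stashing, the serial nature of Cauchy--Schwarz makes your multi-stage induction impossible to implement: after you ``prove'' $\deg F_j \le t-1$ in one branch there is no mechanism to carry that conclusion forward. This is the missing key idea, and the place where your plan, as written, cannot be completed.
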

Moreover, the proof proceeds only by $M$ applications of the Cauchy--Schwarz inequality (as well as other elementary tools such as the triangle inequality).

Clearly it is always valid to take $L=p$, so for fixed $p$ we get a bound depending only on $s(\Phi)$, $k$ and $p$.  However, some dependence on either $p$ or the size of the coefficients $L$ is necessary: see~\cite[Theorem~1.7]{me}.  In particular, this theorem shows that for fixed $k$ the bound $M \ll \log \log (10 L)$ is in some cases best possible up to constants.

In the special case $k=6$, $d=3$ this was proved in~\cite{me}.
The general approach---finding a systematic scheme for repeatedly applying Cauchy--Schwarz---is the same here as in~\cite{me}, but the specifics of the two schemes are quite different.
In particular, the reader wishing to understand the proof of Theorem~\ref{thm:main} in general will lose almost nothing by keeping in mind the following model case of~Theorem~\ref{thm:main}, even though it was already covered in~\cite{me}.

\begin{example}[Example case of Theorem~\ref{thm:main}]%
  \label{ex:gw-ex}
  There exists $M\ge0$ such that for any prime $p>5$, integer $n \ge 1$ and $1$-bounded functions $f_1,\dots,f_6 \colon \FF_p^n \to \CC$, we have
  \[
    \biggl\lvert \EE_{x,y,z \in \FF_p^n} f_1(x) f_2(x+z) f_3(x+y) f_4(x+y+z) f_5(x+2y+3z) f_6(2x+3y+6z) \biggr\rvert \le \|f_1\|_{U^2}^{2^{-M}}.
  \]
\end{example}
This is an explicit example of a system with $s(\Phi)=1$ but $s_{\cs}=2$.
Note this is sensitive to the coefficients: for example, there does not exist $C>0$ such that
\[
  \biggl\lvert \EE_{x,y,z \in \FF_p^n}\!\! f_1(x) f_2(x+z) f_3(x+y) f_4(x+y+z) f_5(x+2y+3z) f_6(13x+12y+9z) \biggr\rvert \le \|f_1\|_{U^2}^{1/C}
\]
under the same hypotheses, even for $p$ sufficiently large, as this system of forms has $s(\Phi)=2$.

We note that Theorem~\ref{thm:main} handles the symmetric form of true complexity, dealing with $s(\Phi)$ rather than $s(\Phi,i)$ for each $i \in [k]$.
It seems plausible that our methods should resolve the full asymmetric version, but we have not been able to do this.
The problem appears to highlight a genuinely difficult case for the techniques of this paper.
Hence the following conjecture is a useful test case in improving the power of Cauchy--Schwarz arguments in general.

\begin{conjecture}%
  \label{conj:asymmetric}
  For any system of linear forms $\Phi = (\phi_1,\dots,\phi_k)$, $\phi_i \colon \FF_p^d \to \FF_p$,
  and any index $i_0 \in [k]$,
  there exists $M\ge0$ such that for any $n \ge 1$ and $f_1,\dots,f_k \colon \FF_p^n \to \CC$ $1$-bounded functions,
  \[
    \left\lvert \Lambda_{\Phi}(f_1,\dots,f_k) \right\rvert \le \|f_{i_0}\|_{U^{s(\Phi,i_0)+1}}^{2^{-M}}.
  \]
  Moreover, this equality can be proved using only multiple applications of Cauchy--Schwarz.   
\end{conjecture}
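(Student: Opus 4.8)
The naive hope would be to deduce this from Theorem~\ref{thm:main}, but this fails: for $1$-bounded functions one has the monotonicity $\|f_{i_0}\|_{U^{s(\Phi,i_0)+1}}\le\|f_{i_0}\|_{U^{s(\Phi)+1}}\le 1$, so when $s(\Phi,i_0)<s(\Phi)$ the bound in Conjecture~\ref{conj:asymmetric} is \emph{stronger} than the one in Theorem~\ref{thm:main}, and a genuinely different argument is needed. The plan is to run the same circuit-construction algorithm that proves Theorem~\ref{thm:main}, but with an \emph{index-sensitive budget}: rather than demanding that the longest root-to-leaf path in the arithmetic circuit carry at most $s(\Phi)$ Cauchy--Schwarz (aggregation) steps, we demand only that the unique wire whose leaf is labelled $f_{i_0}$ pass through at most $s(\Phi,i_0)$ such steps, while the wires carrying the other $f_j$ may pass through more. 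Evaluating such a circuit would give, for some $N$ controlled by the same size parameters as in Theorem~\ref{thm:main},
\[
  \lvert \Lambda_{\Phi}(f_1,\dots,f_k)\rvert^{2^{N}} \le \EE_{h_1,\dots,h_{s(\Phi,i_0)}}\bigl\lvert \Lambda_{\Psi}\bigl(\partial_{h_1}\!\cdots\partial_{h_{s(\Phi,i_0)}} f_{i_0},\; g_1,\dots,g_m\bigr)\bigr\rvert,
\]
where $\partial_h$ is read multiplicatively ($\partial_h f(x)=f(x)\overline{f(x+h)}$), the $g_\ell$ are $1$-bounded leaves, and in $\Psi$ the form descending from $\phi_{i_0}$ has been made an independent coordinate, so the $g_\ell$ contribute a factor at most $1$ and one further Cauchy--Schwarz collapses the right-hand side to $\|f_{i_0}\|_{U^{s(\Phi,i_0)+1}}^{2^{N'}}$.

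For the descent I would argue by induction on $s(\Phi,i_0)$, exactly as in the symmetric case but tracking the pair $\bigl(s(\Phi,i_0),\,s_{\cs}(\Phi,i_0)\bigr)$. If the gap $s_{\cs}(\Phi,i_0)-s(\Phi,i_0)$ is zero, the classical Cauchy--Schwarz-complexity inequality (\cite[Theorem~2.3]{gw1}) already gives the conclusion (indeed with no exponent loss). Otherwise, Definition~\ref{def:true-complexity} supplies a dependence $\phi_{i_0}^{\otimes s(\Phi,i_0)}=\sum_{j\neq i_0}c_j\,\phi_j^{\otimes s(\Phi,i_0)}$, and the idea is to use it to build a Cauchy--Schwarz sub-circuit that strictly simplifies the surviving system \emph{without differentiating the $f_{i_0}$-wire}: whenever the algorithm of Theorem~\ref{thm:main} would ordinarily apply a derivative to $f_{i_0}$ at a level $\ge s(\Phi,i_0)$, we instead "spend" the tensor identity, rewriting the $\phi_{i_0}$-contribution in terms of the $\phi_j^{\otimes s(\Phi,i_0)}$ and absorbing the cost into additional aggregation steps on the $f_j$-wires. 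After $s(\Phi,i_0)$ genuine derivative levels the $f_{i_0}$-wire can be made independent, the induction closes, and the coefficient bound $|a_{ij}|\le L$ feeds through the same multilinear-algebra size estimates as in Theorem~\ref{thm:main}. Since every step here is a change of variables, the triangle inequality, or Cauchy--Schwarz, the resulting proof is of the required Cauchy--Schwarz-only type.

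The step I expect to be the main obstacle — and which I do not at present know how to carry out in general — is precisely the "spend the tensor identity" move. In the proof of Theorem~\ref{thm:main} the circuit is free to differentiate every surviving function, including $f_{i_0}$, and it is exactly this freedom that makes the forms at the next level strictly simpler and forces termination in $s(\Phi)+1$ levels; forbidding $f_{i_0}$-derivatives beyond level $s(\Phi,i_0)$ removes the only available mechanism for simplifying the system at the remaining levels, and invoking $\phi_{i_0}^{\otimes s(\Phi,i_0)}=\sum c_j\phi_j^{\otimes s(\Phi,i_0)}$ requires the surviving forms to already sit in a rigid "flag"-type configuration — essentially the extra hypothesis that had to be imposed in \cite{gt}, which can fail generically. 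Overcoming this appears to demand a genuinely new circuit primitive: a "substitution gate" that, given wires carrying forms related by a degree-$t$ tensor identity, rewrites them in terms of a single wire one level higher without paying the usual doubling cost in $k$. Constructing such a gate and controlling the bounds it produces is, I believe, the heart of the conjecture; note that any positive answer would in particular settle the still-open equality $s_{\gw}(\Phi,i_0)=s(\Phi,i_0)$ in the regime $n=1$, $p$ large, with polynomial bounds, so one should not expect it to come cheaply.
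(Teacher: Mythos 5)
You have not proved this statement, and neither does the paper: Conjecture~\ref{conj:asymmetric} is stated there as an open problem, and the paper explicitly says its methods do not resolve it in general. Your proposal is honest about this --- the ``spend the tensor identity'' move that you flag as the main obstacle is exactly the missing ingredient. In the paper's language, the degree-reduction step (Corollary~\ref{cor:key-cor}, built on Lemma~\ref{lem:key-lem}) can only lower the stashed $\gc_t$ hypothesis at an index $j_0$ when $s(\Phi,j_0)\le s-1$ for the current level $s$; iterating this down to level $s(\Phi,i_0)+1$ therefore needs the well-definedness of the multilinear objects attached to \emph{every} other form at that level, which is precisely the hypothesis $s(\Phi)\le s(\Phi,i_0)+1$ of Theorem~\ref{thm:main-ext} and which fails in examples such as Example~\ref{ex:bad-asymmetric}. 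So the portion of your induction that closes is essentially the paper's Theorem~\ref{thm:main-ext}, and the part you cannot carry out is the same part the paper cannot.

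Two smaller points. Your opening observation is correct and worth keeping: by monotonicity of Gowers norms for $1$-bounded functions the asymmetric bound is genuinely stronger than Theorem~\ref{thm:main} when $s(\Phi,i_0)<s(\Phi)$, so no formal deduction from the symmetric theorem is possible. But your proposed fix --- an ``index-sensitive budget'' limiting the number of Cauchy--Schwarz levels through the $f_{i_0}$-wire --- does not by itself supply the needed circuit primitive: forbidding derivatives of $f_{i_0}$ beyond level $s(\Phi,i_0)$ removes the only mechanism the existing gates have for simplifying the system at the higher levels, and invoking the dependence $\phi_{i_0}^{\otimes s(\Phi,i_0)}=\sum_{j\ne i_0}c_j\phi_j^{\otimes s(\Phi,i_0)}$ inside a Cauchy--Schwarz-symmetric circuit is exactly the unconstructed ``substitution gate.'' Until such a gate is exhibited (with its assignment and $\entails$ statement verified as in Sections~\ref{sec:gates}--\ref{sec:thm-main}), the argument is a programme, not a proof.
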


Our methods do however give the following slight strengthening of Theorem~\ref{thm:main} in the direction of this conjecture.
\begin{theorem}%
  \label{thm:main-ext}
  If $\Phi$ is a system of $k$ linear forms, $i_0 \in [k]$ and $s(\Phi) \le s(\Phi,i_0)+1$ then Conjecture~\ref{conj:asymmetric} holds for $\Phi$ and $i_0$, with the same value $M$ as in Theorem~\ref{thm:main}.
\end{theorem}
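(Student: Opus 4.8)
\emph{Proof strategy.} Write $s = s(\Phi,i_0)$. If $s(\Phi) = s$ there is nothing to prove: Theorem~\ref{thm:main} already gives $\lvert \Lambda_\Phi(f_1,\dots,f_k) \rvert \le \lVert f_{i_0} \rVert_{U^{s+1}}^{2^{-M}}$ with the stated $M$, which is exactly the conclusion of Conjecture~\ref{conj:asymmetric} for $\Phi$ and $i_0$. So the only case to treat is $s(\Phi) = s+1$, and the entire content is to upgrade the Gowers norm in Theorem~\ref{thm:main} from $U^{s+2}$ to $U^{s+1}$ without changing the number $M \ll k^3(\log k + \log\log(10L))$ of Cauchy--Schwarz steps. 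We may also discard the degenerate situations (e.g.\ $\phi_{i_0} = 0$, or $\phi_{i_0}$ a scalar multiple of some $\phi_j$), where the claim is vacuous or trivial.

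The plan is to re-use verbatim the Cauchy--Schwarz scheme that proves Theorem~\ref{thm:main} with target index $i_0$, and to squeeze the improvement out of its terminal step. That scheme outputs, after its $M$ applications of Cauchy--Schwarz, an inequality of the shape $\lvert \Lambda_\Phi(f_1,\dots,f_k) \rvert^{2^{M}} \le W$, where $W$ is an explicit multilinear average built only from $f_{i_0}$ and $\overline{f_{i_0}}$; concretely $W$ is modelled on a Gowers uniformity norm of order $s(\Phi)+1$ --- say a box average $\lVert f_{i_0} \rVert_{\Box_{\mathcal Q}}^{2^{|\mathcal Q|}}$ over a family $\mathcal Q$ of $s(\Phi)+1 = s+2$ linear maps read off from $\Phi$ and from the combinatorics of the scheme --- and the concluding move in Theorem~\ref{thm:main} is simply that such a $W$ is bounded by $\lVert f_{i_0} \rVert_{U^{s+2}}^{c}$. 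To prove Theorem~\ref{thm:main-ext} it therefore suffices to establish the sharper bound $W \le \lVert f_{i_0} \rVert_{U^{s+1}}^{c}$ under the extra hypothesis.

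The input for this is the separating tensor furnished by Definition~\ref{def:true-complexity}. Since $s(\Phi,i_0) = s$ we have $\phi_{i_0}^{\otimes(s+1)} \notin \spn\bigl(\phi_j^{\otimes(s+1)} : j \ne i_0\bigr)$, so there is a tensor $\Theta \in \bigl((\FF_p^d)^\ast\bigr)^{\otimes(s+1)}$, which we may take symmetric, annihilating $\phi_j^{\otimes(s+1)}$ for every $j \ne i_0$ but not $\phi_{i_0}^{\otimes(s+1)}$. Pairing $\Theta$ against the family $\mathcal Q$ then forces an explicit linear dependence inside $\mathcal Q$: one of the $s+2$ maps lies in the span of the others, so the box average collapses (after the usual change of variables absorbing the $1$-bounded conjugate slots) to $\lVert f_{i_0} \rVert_{U^{s+1}}^{c}$, as required. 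No new Cauchy--Schwarz steps are introduced, so $M$ is exactly as in Theorem~\ref{thm:main}.

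The main obstacle is the bookkeeping that connects the circuit/arithmetic-inequality formalism of the paper to this piece of linear algebra: one has to unwind the terminal output of the scheme far enough to exhibit $\mathcal Q$ explicitly in terms of $\Phi$, and then prove the lemma that the vanishing $\langle \Theta, \phi_j^{\otimes(s+1)} \rangle = 0$ for $j \ne i_0$ is exactly what produces the dependence among the maps in $\mathcal Q$ (while $\langle \Theta, \phi_{i_0}^{\otimes(s+1)}\rangle \ne 0$ is what obstructs any further collapse, consistently with $s(\Phi) = s+1$). A secondary point is the non-classical regime $p \le s+1$, in which a symmetric tensor need not be a sum of $(s+1)$-st tensor powers of vectors; here one leans on the fact, already noted after Definition~\ref{def:true-complexity}, that the tensor criterion is unchanged in that regime, together with the insensitivity of box norms to classicality, so the argument carries over with only cosmetic changes.
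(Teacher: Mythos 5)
Your plan rests on a claim about where the scheme's $\Phi$-dependence lives that is incorrect, and as a result the ``terminal step improvement'' cannot be carried out.

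The Cauchy--Schwarz scheme in the paper does not output a box average over a $\Phi$-dependent family $\mathcal Q$ of linear forms. Its terminal object (in the $\entails$ formalism) is the fixed datum $\gc_{s'}$ (Definition~\ref{def:gen-conv}), and the inequality it delivers reads $\lvert \Lambda_\Phi \rvert \le \lvert \Lambda_{\gc_{s'}} \rvert^{2^{-M}}$ where the function at index $\triangle$ on the right is a translate of $f_{i_0}$ or $\overline{f_{i_0}}$. The linear forms of $\gc_{s'}$ are the $s'+1$ coordinate-deletion maps of $\FF_p^{s'+1}$ together with the sum map; they carry no memory of $\Phi$, and the deletion maps are visibly independent, so there is no linear dependence for a separating tensor $\Theta$ to detect and the final bound $\lvert \Lambda_{\gc_{s'}} \rvert \le \lVert f \rVert_{U^{s'+1}}$ (Lemma~\ref{lem:gc-bound}) cannot be sharpened post hoc. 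More generally a Cauchy--Schwarz proof has, by the time it terminates, forgotten the system $\Phi$: any $\Phi$-dependent linear algebra must be consumed during the construction, not applied to the output.

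That is exactly what the paper does. The separating tensor of Definition~\ref{def:true-complexity} is used inside the circuit, via Claim~\ref{claim:complexity}, to build the StarGate assignment (Lemma~\ref{lem:assign-stargate}); this is the step that witnesses Cauchy--Schwarz complexity $s-1$ at $F0;i_0$ and hence reaches $\gc_{s-1}$ in Lemma~\ref{lem:key-lem}. And the hypothesis $s(\Phi) \le s(\Phi,i_0)+1$ is consumed even earlier, in the degree-reduction loop of the proof of Theorem~\ref{thm:main-gc}: at each iteration of Corollary~\ref{cor:key-cor} one needs $s(\Phi,j_0) \le s-1$ for the auxiliary index $j_0$, and the chain $s(\Phi,j_0) \le s(\Phi) \le s(\Phi,i_0)+1 < s$ supplies this, allowing the exponents $t_j$ to be pushed down to $\min\bigl(s(\Phi,i_0)+1, t_j\bigr)$. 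Without the hypothesis the reduction stalls at $s(\Phi)+1$ and the scheme lands only on $\gc_{s(\Phi)}$. So the scheme yielding $\gc_{s(\Phi,i_0)}$ really is a different sequence of Cauchy--Schwarz steps (the crude bound on $M$ happens to coincide, but it is not the $\gc_{s(\Phi)}$-scheme reinterpreted).

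A secondary point: the paper derives Theorem~\ref{thm:main} from Theorem~\ref{thm:main-ext} (both via Theorem~\ref{thm:main-gc}), so invoking Theorem~\ref{thm:main} at the outset is circular as written; but this is easily repaired by appealing to Theorem~\ref{thm:main-gc} directly in the case $s(\Phi)=s(\Phi,i_0)$. The unmendable gap is the claim that a terminal $\gc$-bound can be collapsed by pairing against a separating tensor.
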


It is not too hard to find natural systems of linear forms where Conjecture~\ref{conj:asymmetric} applies but is open.
A simple one is the following.
\begin{example}%
  \label{ex:bad-asymmetric}
  According to Conjecture~\ref{conj:asymmetric}, it should be true that for some $M \ge 0$ and $p$ large enough,
  \[
    \left\lvert \EE_{x,y,z \in \FF_p^n} f_1(y+z) \prod_{r=0}^{7} f_{r+2}(rx+r^2 y+z) \right\rvert \le \|f_1\|_{U^2}^{2^{-M}}
  \]
  but we cannot currently prove this using our methods.

  In this example, the linear forms $(x,y,z) \mapsto rx+r^2 y + z$, thought of as points $(r:r^2:1)$ of the projective space $\PP^2(\FF_p) \cong \PP\bigl({(\FF_p^3)}^\ast\bigr)$, all lie on a conic that does not contain the point $(0:1:1)$.
  That is, there is a linear map $\bigl(\bigl(\FF_p^3\bigr)^\ast\bigr)^{\otimes 2} \to \FF_p$ vanishing on $\phi_i^{\otimes 2}$ for $2 \le i \le 9$ but not on $\phi_1^{\otimes 2}$.
  It follows that $s(\Phi,1)=1$.
  However, $s(\Phi)=3$ so Theorem~\ref{thm:main-ext} does not help.
\end{example}

\subsection{A simpler model inequality}%
\label{sub:repr}

The following inequality is separate from Theorem~\ref{thm:main}, although it emerges naturally from the proof.
We include it for two reasons.
First, it is a natural statement and could be of independent interest, although we do not have any applications in mind.\footnote{Related inequalities appear in~\cite[Lemma~5.5.4]{me-uk}.
There is also an analogy with problems about ``sums of dilates'', as in~\cite{bukh-sums}.}
Second, it is useful as a model problem on the way to Theorem~\ref{thm:main}, allowing us to introduce most of the key techniques of proof in a less complicated setting.

\begin{theorem}%
  \label{thm:baby-thm}
  Suppose $p>2$ is a prime, $n \ge 1$ is an integer,
  $f \colon \FF_p^n \to \CC$ is a $1$-bounded function and $\|f\|_{U^3}^8 \ge \delta$.
  Equivalently, writing $b \colon \FF_p^n \times \FF_p^n \to \CC$ for the function
  \[
    b(h,h') = \EE_{x \in \FF_p^n} f(x) \overline{f(x+h)} \overline{f(x+h')} f(x+h+h')
  \]
  we have $\|f\|_{U^3}^8 = \EE_{h,h' \in \FF_p^n} |b(h,h')|^2 \ge \delta$.
  
  Then for any integer $a$ we have
  \[
    \EE_{h,h' \in \FF_p^n} b(ah, h') \overline{b(h, a h')} \ge \delta^{2^M}
  \]
  where $M = 6 + \lfloor \log_2 \lfloor \log_2 |a| - 1 \rfloor \rfloor$ if $|a| \ge 4$ and $M=5$ for $|a|<4$.
\end{theorem}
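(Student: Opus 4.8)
The plan is to prove the bound by induction on $\lvert a\rvert$, organised so that each step of the induction \emph{squares} the dilation factor at the cost of a bounded number of Cauchy--Schwarz applications; this is what produces the doubly-logarithmic exponent $M \approx \log_2\log_2\lvert a\rvert$, rather than the singly-logarithmic bound one would get from naively telescoping $b(ah,h')$ into an $\lvert a\rvert$-fold product. Write $\Psi(a) = \EE_{h,h'} b(ah,h')\,\overline{b(h,ah')}$ for the quantity to be bounded below; since $b$ is symmetric, $b(h,h')=b(h',h)$, one checks $\Psi(a)$ is real. I record the basic features of $b$: it is $1$-bounded; $b(h,h') = \EE_x \Delta_h\Delta_{h'}f(x)$ with $\Delta_h g(x) = g(x)\overline{g(x+h)}$; it obeys the cocycle identity $\Delta_{h_1+h_2}\Delta_{h'}f(x) = \Delta_{h_1}\Delta_{h'}f(x)\cdot\Delta_{h_2}\Delta_{h'}f(x+h_1)$; and $\EE_{h,h'}\lvert b(h,h')\rvert^2 = \lVert f\rVert_{U^3}^8$, so the hypothesis reads $\Psi(1) \ge\delta$. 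The $100\%$ case is a sanity check: if $\lVert f\rVert_{U^3}=1$ then $b(h,h')$ is a bilinear phase $e(B(h,h'))$ (up to lower-order terms), and $B(ah,h') = B(h,ah') = aB(h,h')$, so $b(ah,h')\overline{b(h,ah')}\equiv 1$ identically; the task is to make this rigidity quantitative.

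\emph{Base cases.} For $\lvert a\rvert\le 3$ (including $a=0$) I would prove $\Psi(a)\ge\delta^{2^5}$ by a fixed short sequence of at most five Cauchy--Schwarz steps: the cases $a\in\{-1,0,1\}$ are essentially immediate, and for $a\in\{\pm2,\pm3\}$ one expands $b(ah,h')$ via the cocycle identity into a product of $\lvert a\rvert$ copies of $b(h,\cdot)$-type terms based at shifted points, then removes the shifts with a couple of applications of Cauchy--Schwarz. This matches the value $M=5$ claimed for $\lvert a\rvert<4$.

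\emph{The squaring step.} The core of the argument is the lemma: for $p\nmid c$, a lower bound $\Psi(c)\ge\eta$ implies $\Psi(c^2)\ge\eta^{O(1)}$ with only an $O(1)$ increase in the number of Cauchy--Schwarz steps used. The idea begins with the observation that, since multiplication by $c$ is a bijection of $\FF_p^n$, substituting $h\mapsto ch$ in $\Psi(c)$ gives $\EE_{h,h'} b(c^2h,h')\,\overline{b(ch,ch')} = \Psi(c)$, while substituting $h'\mapsto ch'$ gives $\EE_{h,h'} b(ch,ch')\,\overline{b(h,c^2h')} = \Psi(c)$; pointwise, the product of these two integrands is exactly the integrand of $\Psi(c^2)$ once the common factor $b(ch,ch')$ cancels --- which it does honestly only when $\lvert b(ch,ch')\rvert=1$, i.e.\ in the $100\%$ case. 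To make this rigorous I would introduce an auxiliary averaging variable, use the cocycle identity to rewrite $b(ch,\cdot)$ in terms of a shifted copy of $b(h,\cdot)$, and close the loop with one Cauchy--Schwarz that simultaneously clears the shift and couples the two halves; the essential point is that what remains is genuinely again an average of the form $\Psi(\cdot)$, for a new dilation factor and a mildly modified underlying function, so the recursion is self-similar. Iterating $j$ times from a base case reaches dilation factor $\gtrsim 3^{2^{j}}$ using $5+O(j)$ Cauchy--Schwarz steps; taking $j\asymp\log_2\log_2\lvert a\rvert$ and then correcting downward from the first reachable square to $\lvert a\rvert$ itself using the $\lvert a\rvert\le 3$ base moves and monotonicity yields the stated $M = 6 + \lfloor\log_2\lfloor\log_2\lvert a\rvert - 1\rfloor\rfloor$.

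\emph{Main obstacle.} The crux is the squaring lemma: arranging the Cauchy--Schwarz so that, after the cocycle expansion and the clearing of shifts, what remains is genuinely an average of the same shape $\Psi(\cdot)$, so that the per-step cost is $O(1)$ rather than growing with $c$. Getting the bookkeeping of shifts, conjugations and the symmetry $b(h,h')=b(h',h)$ to line up, and verifying that the final descent to an arbitrary $\lvert a\rvert$ does not secretly reintroduce a $\log\lvert a\rvert$ (rather than $\log\log\lvert a\rvert$) cost, is where the real work lies; the remaining manipulations are standard handling of $U^2$-inner products.
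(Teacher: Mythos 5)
Your proposal takes a genuinely different route from the paper, but as stated the central ``squaring lemma'' has a gap that I do not see how to repair.

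The step you are proposing is: given $\Psi(c)\ge\eta$, deduce $\Psi(c^2)\ge\eta^{O(1)}$ with $O(1)$ further Cauchy--Schwarz applications, then iterate. The heuristic --- multiply the integrands of $\EE_{h,h'}b(c^2h,h')\overline{b(ch,ch')}$ and $\EE_{h,h'}b(ch,ch')\overline{b(h,c^2h')}$ and watch $b(ch,ch')$ cancel --- is exactly the $100\%$ argument, and you correctly note it requires $\lvert b(ch,ch')\rvert=1$ pointwise. But the promised fix (``introduce an auxiliary averaging variable, use the cocycle identity, close the loop with one Cauchy--Schwarz'') is not an argument, and the obstacle is structural, not cosmetic. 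In the Cauchy--Schwarz formalism developed in Section~\ref{sec:framework}, a single $\CS(S)$ step on the $8$-form system underlying $\Psi(c)$ produces a system with $2(8-\lvert S\rvert)$ forms whose fiber-product structure is forced; one cannot choose the quadratic to square, nor divide out a factor of $\|f\|_{U^3}^8$, nor simply declare the result to be $\Psi(c^2)$ again ``for a mildly modified function.'' More fundamentally, treating $\Psi(c)\ge\eta$ as a reusable hypothesis and deducing $\Psi(c^2)\ge\eta^{O(1)}$ from it is precisely the kind of ``quote an earlier line'' inference that points (A)--(D) in Section~\ref{sub:background} identify as unavailable in a pure Cauchy--Schwarz chain: the proof must be serial, and intermediate inequalities cannot be banked except via the much more restrictive stashing mechanism of Section~\ref{sec:stashing}, which stores a dual-function hypothesis, not an arbitrary quantitative bound.

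What the paper actually does is a different use of the ``each Cauchy--Schwarz step doubles something'' observation. Rather than squaring the dilation factor $c$, it builds a fixed-geometry \emph{programmable circuit} ($\agate_s^k$, a chain of $k$ IndAGates) entirely independently of $a$: starting from $\gc_2$, five Cauchy--Schwarz steps produce one IndAGate (Lemmas~\ref{lem:gc-aggre}, \ref{lem:build-agate}), and then $\log_2 k$ further steps double the chain repeatedly (Lemma~\ref{lem:build-large-agate}), so $5+\log_2 k$ steps build a circuit that can handle any $\lvert a\rvert<2^{k+1}$. Only afterwards is the specific $a$ introduced, via an \emph{assignment} (Lemma~\ref{lem:agate-master}), which is a statement about the existence of morphisms into the already-built diagram and costs no Cauchy--Schwarz steps at all. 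The $\log\log\lvert a\rvert$ arises because the build phase sets $k\asymp\log_2\lvert a\rvert$, not because anything recursively squares $a$. Your proposal is closest in spirit to the unary argument in Remark~\ref{rem:big-proofs}, which the paper explicitly flags as the thing one has to avoid, and I would encourage you to work through why the ``self-similar recursion'' does not survive contact with the formalism before investing further in this direction.
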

By taking a bracket-quadratic function $f \colon \FF_p \to \CC$ such as
\[
  f(x) = \exp\bigl(2 \pi i \sqrt{2} x\bigr) \text{ for }x=0,1,\dots,p-1
\]
for $p$ large, one can show that the exponent $O(\log_2 |a|)$ is best possible as a function of $|a|$, up to constants.
The proof is similar to that of~\cite[Theorem~1.7]{me}.

We briefly explain why this statement is natural.
Analysing the 100\% case as above, our hypothesis is that we have a function $F \colon \FF_p^n \to \RR/\ZZ$ satisfying
\begin{equation}%
  \label{eq:hypo}
  \begin{aligned}
    &F(x) - F(x+h) - F(x+h') + F(x+h+h') \\
    &= F(y) - F(y+h) - F(y+h') + F(y+h+h') 
  \end{aligned}
\end{equation}
for all $x,y,h,h' \in \FF_p^n$.
In other words, the function
\[
  x,h,h' \mapsto F(x) - F(x+h) - F(x+h') + F(x+h+h')  = \partial_h \partial_{h'} F(x)
\]
is independent of $x$;
i.e., the second-order discrete derivatives $\partial_h \partial_{h'} F$ are all constant functions.
Functions $F$ with this property are exactly the quadratic polynomials $\FF_p^n \to \RR/\ZZ$, i.e., functions
\[
  (x_1,\dots,x_n) \mapsto \sum_{i,j=1}^n c_{ij} x_i x_j
\]
for coefficients $c_{ij} \in \RR/\ZZ$ with $p c_{ij} = 0$.

Write $B(h,h')$ for the constant value of $\partial_h \partial_{h'} F$.
For $F$ quadratic as above, we may compute
\[
  B(h,h') = \sum_{i,j}^n (c_{ij} + c_{ji}) h_i h'_j,
\]
the ``polarization'' of the quadratic form $F$.
In particular, $B$ is bilinear in the sense that
\begin{equation}%
  \label{eq:bilinear}
  B(h_2-h_1, h') + B(h_1,h') = B(h_2,h')
\end{equation}
for all $h_1,h_2,h'$, and similarly exchanging the first and second arguments.
Moreover, for a fixed non-zero integer $a$ we have $B(a h, h') = B(h, a h')$ for any $h,h' \in \FF_p^n$.
This is exactly the conclusion of Theorem~\ref{thm:baby-thm}, as a 100\% functional equation.

The above proof is not ``elementary'' because it uses a classification of quadratic polynomials, which we did not even prove, but it turns out that bilinearity as in~\eqref{eq:bilinear} may be derived from the hypothesis~\eqref{eq:hypo} in by a short, elementary, and even Cauchy--Schwarz-friendly argument.
Indeed, by definition of $B$ and rearranging~\eqref{eq:hypo}, we get
\[
  F(x+h) - F(x+h+h') + B(h,h') = F(x) - F(x+h')
\]
for all $x,h,h'$, and since the right-hand side does not depend on $h$,
\begin{align*}
  F(x+h_1) - F(x+h_1+h') + B(h_1,h') &= F(x) - F(x+h') \\
  &= F(x+h_2) - F(x+h_2+h') + B(h_2,h').
\end{align*}
Since
\[
  F(x+h_1) - F(x+h_1+h') - F(x+h_2) + F(x+h_2+h') = B(h_2-h_1, h')
\]
by change of variables, rearranging gives~\eqref{eq:bilinear}.

To complete the elementary proof, we would have to show that if $B$ is ``bilinear'' in the sense of~\eqref{eq:bilinear} then it is ``bilinear'' in the sense that for any fixed integer $a$, $B(a h, h') = B(h, a h')$ holds for all $h,h' \in \FF_p^n$.
In the world of usual algebra, this is straightforward: e.g., when $a=2$ we can apply~\eqref{eq:bilinear} twice:
\[
  B(h+h, h') = B(h,h') +B(h,h') = B(h, h'+h')
\]
and for general $a>0$ we could apply~\eqref{eq:bilinear} a total of $(a-1)$ times in each argument to show
\begin{equation}%
  \label{eq:unary-arg}
  \begin{aligned}
    B(ah, h') &= B((a-1)h, h') + B(h,h') \\
    &= \ldots = B(h,h') + \cdots + B(h,h') \\ 
    &= \ldots = B(h, (a-1) h') + B(h, h') = B(h, ah'). 
  \end{aligned}
\end{equation}
It is clear by definition that $B(-h,h') = -B(h,h')$, so the case $a<0$ is also straightforward.

\begin{remark}%
  \label{rem:big-proofs}
  Notice that the number of times we need to invoke~\eqref{eq:bilinear} grows with $|a|$.
  This is a feature of any argument proving $B(a h, h') = B(h, a h')$, not just the particular one given here.
  In the regime where $p$ is fixed but $a \in \FF_p$ is arbitrary, we are saying that although logically any $\ZZ$-bilinear function is $\FF_p$-bilinear, the ``cost'' of proving this grows with $p$.
  This is also the source of the dependence of proof-length on coefficient size in Theorem~\ref{thm:main}.
\end{remark}

Two issues prevent us from claiming an easy victory in Theorem~\ref{thm:baby-thm}.
\begin{enumerate}[label=\arabic*.]
  \item This last argument is not phrased in a Cauchy--Schwarz-friendly way.
    Steps such as ``quoting a previous result'' and ``substituting expressions'' were noted to be troublesome in points (A)--(D) in Section~\ref{sub:background}.
  \item Less fundamentally, this argument makes about $2(a-1)$ appeals to bilinearity, each of which was proved using one step of Cauchy--Schwarz type.
    So if we could turn this into a full Cauchy--Schwarz proof, we might expect to apply Cauchy--Schwarz at least $2(a-1)$ times, leading to a lower bound no better than $\delta^{2^{-O(|a|)}}$.
    This exponent is worse than what was claimed in Theorem~\ref{thm:baby-thm} by two exponentials.
\end{enumerate}

We first give a partial answer to the second issue.
We note that~\eqref{eq:unary-arg} uses a certain algorithm for multiplying by a fixed integer $a$ using repeated addition: namely, the trivial ``unary'' method
\[
  a h = h + h + \cdots + h
\]
that uses $a-1$ additions.
A more refined algorithm would use a ``binary'' or double-and-add method; for example,
\[
  13 h = h + 2( 2(h + 2 h))
\]
which uses $O(\log |a|)$ addition operations in general.
Applied to this example~\eqref{eq:unary-arg}, this would look like
\begin{equation}
  \label{eq:bilinearity-arg}
  \begin{aligned}%
    B(13h, h') &= B(12 h, h') + B(h,h') \\
               &= B(6 h, 2 h') + B(h,h') \\
               &= B(3h, 4h') + B(h,h') \\
               &= B(2h, 4h') + B(h, 4h') + B(h,h') = B(2h, 4h') + B(h, 5h') \\
               &= B(h, 8h') + B(h, 5h') = B(h, 13h')
  \end{aligned}
\end{equation}
where each step appeals either to~\eqref{eq:bilinear} or to the $a=2$ case (which uses~\eqref{eq:bilinear} twice).
Hence we may save one logarithm, at the expense of a slightly more complicated proof.

Meanwhile, Issue 1 motivates most of the main techniques of this paper.
In vague terms, our approach is to encode a calculation such as~\eqref{eq:bilinearity-arg} as a kind of ``arithmetic circuit''.
These are akin to logical circuits, except instead of ``wires'' carrying $0$ or $1$ values they carry one or more values in $\FF_p$, and instead of ``gates'' encoding logical equalities (such as $y= x_1 \vee x_2$) they encode additive arithmetic equalities (such as $y=x_1 + x_2$ in $\FF_p$).
Then, it suffices to show that we can ``build'' this circuit using Cauchy--Schwarz steps, or rather, whatever Cauchy--Schwarz steps look like in terms of circuits.

In the case of~\eqref{eq:bilinearity-arg}, a possible corresponding ``circuit'' is shown in Figure~\ref{fig:bilinear}.
We briefly explain the (informal) notation.
Each ``wire'' annotated $(\alpha,\beta)$ refers to an argument of the function $B$.
Some ``gates'' are annotated $+,=$ or $=,+$, which asserts that the incident wires $(\alpha_1, \beta_1)$, $(\alpha_2,\beta_2)$, $(\alpha_3,\beta_3)$ obey, respectively, $\alpha_1+\alpha_2=\alpha_3$ and $\beta_1=\beta_2=\beta_3$, or $\alpha_1=\alpha_2=\alpha_3$ and $\beta_1+\beta_2=\beta_3$.
To signify which of the three wires is the distinguished one $(\alpha_3,\beta_3)$ in these equations, we place a black dot.
Gates annotated $(-1,1)$ require their incident wires $(\alpha_1,\beta_1)$ and $(\alpha_2, \beta_2)$ to obey $\alpha_1=-\alpha_2$ and $\beta_1=\beta_2$.
Gates with no annotations, and dotted lines, do nothing at all, but are placeholders for where additional gates would go if the corresponding binary digit of $13$ were changed from $0$ to $1$.

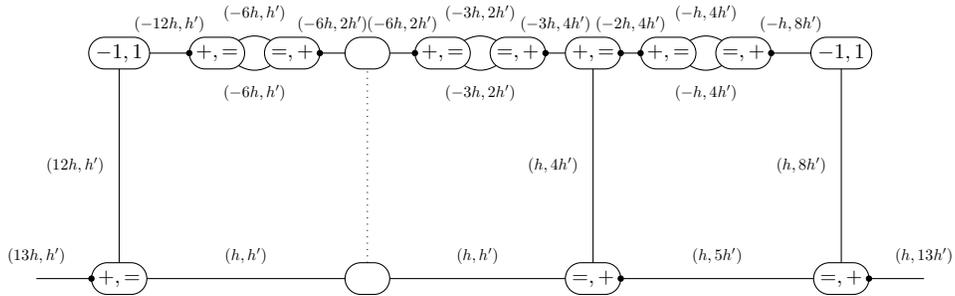
\begin{figure}[htbp]
  \begin{center}
    \begin{tikzpicture}[
      subnode/.style={rounded rectangle, draw, inner sep=2pt, outer sep=0pt,minimum size=15pt},
      mydot/.style={circle, draw, fill=black, inner sep=0pt, outer sep=0pt, minimum size=2pt},
      scale=1.0]

      \node[subnode,scale=0.8] (B0)  at (-0.3,3) {$-1,1$};
      \node[subnode,scale=0.8] (A11) at (1,3) {$+,=$};
      \node[mydot] at (A11.west) {};
      \node[subnode,scale=0.8] (A12) at (2,3) {$=,+$};
      \node[mydot] at (A12.east) {};
      \node[subnode,scale=0.8] (B1)  at (3,3) {$\phantom{--}$};
      
      \node[subnode,scale=0.8] (A21) at (4,3) {$+,=$};
      \node[mydot] at (A21.west) {};
      \node[subnode,scale=0.8] (A22) at (5,3) {$=,+$};
      \node[mydot] at (A22.east) {};
      \node[subnode,scale=0.8] (B2)  at (6,3) {$+,=$};
      \node[mydot] at (B2.east) {};

      \node[subnode,scale=0.8] (A31) at (7,3) {$+,=$};
      \node[mydot] at (A31.west) {};
      \node[subnode,scale=0.8] (A32) at (8,3) {$=,+$};
      \node[mydot] at (A32.east) {};
      \node[subnode,scale=0.8] (B3) at (9.3,3) {$-1,1$};
                                            
      \node[subnode,scale=0.8] (C0)  at (-0.3,0) {$+,=$};
      \node[mydot] at (C0.west) {};
      \node[subnode,scale=0.8] (C1)  at (3,0) {$\phantom{--}$};
      \node[subnode,scale=0.8] (C2)  at (6,0) {$=,+$};
      \node[mydot] at (C2.east) {};
      \node[subnode,scale=0.8] (C3)  at (9.3,0) {$=,+$};
      \node[mydot] at (C3.east) {};

      \draw (-1.4,0) node ["{$(13h,h')$}" {above,scale=0.6}] {} -- (C0.west);
      \draw (C0) -- (B0) node [midway, "{$(12h,h')$}" {left,scale=0.6}] {};
      \draw (B0) -- (A11) node [midway, "{$(-12h,h')$}" {above,scale=0.6,label distance=3.8}] {};
      \draw (A32) -- (B3) node [midway, "{$(-h,8 h')$}" {above,scale=0.6, label distance=3.8}] {};
      \draw (B3) -- (C3) node [midway, "{$(h,8h')$}" {left,scale=0.6}] {};

      \draw (C0) -- (C1) node [midway, "{$(h,h')$}" {above,scale=0.6}] {};
      \draw (C1) -- (C2) node [midway, "{$(h,h')$}" {above,scale=0.6}] {};
      \draw (C2) -- (C3) node [midway, "{$(h,5h')$}" {above,scale=0.6}] {};

      \draw[dotted] (C1) -- (B1);
      \draw (C2) -- (B2) node [midway, "{$(h,4h')$}" {left,scale=0.6}] {};

      \draw (C3) -- (10.4,0) node ["{$(h,13h')$}" {above,scale=0.6}] {};

      \draw (A11) to[bend left]  node[midway, "{$(-6h,h')$}" {above,scale=0.6}] {} (A12);
      \draw (A11) to[bend right] node[midway, "{$(-6h,h')$}" {below,scale=0.6}] {} (A12);
      \draw (A21) to[bend left]  node[midway, "{$(-3h,2h')$}" {above,scale=0.6}] {} (A22);
      \draw (A21) to[bend right] node[midway, "{$(-3h,2h')$}" {below,scale=0.6}] {} (A22);
      \draw (A31) to[bend left]  node[midway, "{$(-h,4h')$}" {above,scale=0.6}] {} (A32);
      \draw (A31) to[bend right] node[midway, "{$(-h,4h')$}" {below,scale=0.6}] {} (A32);

      \draw (A12) -- (B1) node[midway, "{$(-6h,2h')$}" {above,scale=0.6,label distance=3.8}] {};
      \draw (B1) -- (A21) node[midway, "{$(-6h,2h')$}" {above,scale=0.6,label distance=3.8}] {};
      \draw (A22) -- (B2) node[midway, "{$(-3h,4h')$}" {above,scale=0.6,label distance=3.8}] {};
      \draw (B2) -- (A31) node[midway, "{$(-2h,4h')$}" {above,scale=0.6,label distance=3.8}] {};
    \end{tikzpicture}
  \end{center}
  \caption{An ``arithmetic circuit'' for proving $B(13h, h') = B(h, 13h')$.}%
  \label{fig:bilinear}
\end{figure}

The use of negative values is essentially spurious, except that it allows the placement of ``dots'' to be more symmetrical, which will turn out to be important.
The reader can, if they wish, remove all minus signs and move dots around to get a simpler circuit.
We put them there so that we can refer back to this circuit without amendment in Section~\ref{sub:agate}.

The dictionary between systems of linear forms and Cauchy--Schwarz on the one hand, and these arithmetic circuits on the other, is spread over several stages and will take a large part of the paper to set up rigorously.
For now, we make a few general comments.
\begin{itemize}
  \item Very roughly, ``applying Cauchy--Schwarz'' to a circuit $\cC$ will mean replacing $\cC$ with two copies of itself, where any ``free wires'' (such as those on the far left and right in Figure~\ref{fig:bilinear}) in one copy may optionally be fused with their identical twins in the other copy.
    Thinking of a circuit as a kind of graph, one could say the resulting circuit will be ``reflection symmetric''.

    This symmetry is our enemy because it constrains the circuits we can build.
    Having instead the freedom to assemble \emph{arbitrary} circuits would roughly correspond to allowing arbitrary ``elementary proofs'' instead of Cauchy--Schwarz-friendly proofs.

  \item To defeat this symmetry it is essential that our circuits be \emph{programmable}.
    The ``build phase'' uses Cauchy--Schwarz steps to construct a highly symmetric circuit geometry, but if we can later freely ``assign'' which gates are $+,=$ gates, which are $=,+$ and which are $(-1,1)$ or ``null gates'' (i.e., the non-annotated blobs in Figure~\ref{fig:bilinear}), we can break symmetry and accomplish fairly general calculations.

  \item The primary advantage of this circuit formalism is that it is far easier for a human user to design and build arithmetic circuits than to manipulate systems of linear forms directly using Cauchy--Schwarz.
    The difference is to some extent heuristic rather than logical.
\end{itemize}

In this language it is not too unreasonable to say that our task is to ``build a computer'' using the Cauchy--Schwarz inequality.
This was also the stated purpose in~\cite{me}, so we might further clarify that our present aim is to build, as far as possible, a \emph{general-purpose} computer.
By contrast the arguments in~\cite{me} implemented only a very specialized set of operations, which happened to suffice for the problem being considered but do not generalize.

The points above also explain, in passing, how we save another logarithm as in Issue 2 above.
A single Cauchy--Schwarz operation on circuits can double the number of gates.
If we can perform multiplication by $a$ using $O(\log |a|)$ suitably programmable gates, then it only takes $\log_2 \log |a| + O(1)$ Cauchy--Schwarz steps to build a circuit large enough.

\subsection{Further techniques for the true complexity problem}%
\label{sub:true-summary}

We make some partial comments about the remaining ingredients in the proof of Theorem~\ref{thm:main}, or more precisely Theorem~\ref{thm:main-ext}, that do not appear during the discussion of the model problem in Section~\ref{sub:repr}.

It is reasonable to ask at this stage for an elementary proof of the 100\% functional equation corresponding to Theorem~\ref{thm:main-ext}: that is, for an elementary proof that if
\begin{equation}%
  \label{eq:ex-fun}
  \sum_{i=1}^k F_i(\phi_i(y_1,\dots,y_d)) = A
\end{equation}
for all $y_1,\dots,y_d \in \FF_p^n$ then $\partial_{h_1} \partial_{h_2} \dots \partial_{h_{s+1}} F_{i_0}(x) = 0$ for all $x,h_1,\dots,h_{s+1} \in \FF_p^n$, where $s=s(\Phi,i_0) \ge s(\Phi)-1$.
This is not immediately straightforward because we are forbidden from using any knowledge of the structure theory of polynomial maps.

For simplicity we first consider the model problem Example~\ref{ex:gw-ex}, where $d=3$, $k=6$, $i_0=1$, $\phi_1(x,y,z)=x$, $\phi_2(x,y,z)=x+z$, $\phi_3(x,y,z)=x+y$, $\phi_4(x,y,z)=x+y+z$, $\phi_5(x,y,z) = x+2y+3z$ and $\phi_6(x,y,z)=2x+3y+6z$,
and so we want to show $\partial_{h_1} \partial_{h_2} F_1(x) = 0$ for all $x,h_1,h_2$.

Our first observation is that we can show that $\partial_{h_1} \partial_{h_2} \partial_{h_3} F_i(x) = 0$ for all $i$, because the system has Cauchy--Schwarz complexity $2$.
The proof is by three duplication steps and is fairly standard, so we omit it.
Equivalently, we have shown that $(x,h_1,h_2) \mapsto \partial_{h_1} \partial_{h_2} F_i(x)$ is independent of $x$, and as in Section~\ref{sub:repr} we write $B_i(h_1,h_2)$ for this constant value.
Our goal is exactly to show $B_1$ is the zero function.

Crucially, as we showed in Section~\ref{sub:repr}, these functions $B_i$ are bilinear, and we may use the bilinear identities derived in that section freely.

Next let $h_1=(u_1,v_1,w_1) \in \FF_p^{3n}$ and $h_2=(u_2,v_2,w_2) \in \FF_p^{3n}$.
By differentiating~\eqref{eq:ex-fun} twice, in directions $h_1$ and $h_2$,
we obtain
\[
  \sum_{i=1}^6 B_i\bigl(\phi_i(h_1), \phi_i(h_2)\bigr) = 0.
\]
We can use several copies of this equation with different specializations of $h_1$ and $h_2$.
A reasonable if not optimally efficient choice is to take multiples of standard basis vectors: e.g., if $h_1 = (t,0,0)$ and $h_2 = (t',0,0)$ we obtain
\[
  B_1(t,t') + B_2(t,t') + B_3(t,t') + B_4(t,t') + B_5(t,t') + B_6(2t, 2t') = 0
\]
and we could replace the last term with $B_6(4t,t')$. 
If $h_1=(0,t,0)$ and $h_2 = (0,0,t')$ we get
\[
  B_1(0,0) + B_2(0,t') + B_3(t,0) + B_4(t,t') + B_5(2t,3t') + B_6(3t, 6t') = 0
\]
or equivalently
\[
  B_4(t,t') + B_5(6t,t') + B_6(18t, t') = 0.
\]
There are four more genuinely different choices.
It suffices to sum up these equations with appropriate rescalings $\alpha_1,\dots,\alpha_6$ of $t$, to deduce that $B_1(t,t') = 0$.
The problem is exactly to find $(\alpha_1,\dots,\alpha_6) \in \FF_p^6$ such that
\begin{equation}
  \begin{pmatrix} \alpha_1 & \cdots & \alpha_6 \end{pmatrix}
  \begin{pmatrix}
    1 &  1 &  1 &  1 &  1 &  4 \\
    0 &  0 &  1 &  1 &  2 &  6 \\
    0 &  1 &  0 &  1 &  3 & 12 \\
    0 &  0 &  1 &  1 &  4 &  9 \\
    0 &  0 &  0 &  1 &  6 & 18 \\
    0 &  1 &  0 &  1 &  9 & 36
  \end{pmatrix}
  = \begin{pmatrix} 1 & 0 & 0 & 0 & 0 & 0 \end{pmatrix}
  \label{eq:matrix-eq}
\end{equation}
where the columns of the matrix correspond to $B_1,\dots,B_6$ and our calculations above produced the first and fifth rows.
The solution is
\[
  (\alpha_1,\dots,\alpha_6) = (1, -1/5, -14/15, -4/5, 1, -1/15)
\]
over $\QQ$, which gives a valid solution if $p>5$.

In general, the linear algebra problem we have set up is exactly to find an element $T \in (\FF_p^d)^{\otimes (s+1)}$ such that $\phi_i^{\otimes (s+1)}(T) = 1$ and $\phi_j^{\otimes (s+1)}(T) = 0$ for $j \ne i$.
In the above, where $s=1$ and $d=3$, we would take
\[
  T = e_1 \otimes e_1 - \frac15 e_1 \otimes e_2 - \frac{14}{15} e_1 \otimes e_3 - \frac{4}{5} e_2 \otimes e_2 + e_2 \otimes e_3 - \frac{1}{15} e_3 \otimes e_3.
\]
This is possible if and only if the true complexity of $\Phi$ at $i$ is at most $s$.
For example, for the second system of linear forms from Example~\ref{ex:gw-ex}, the analogous matrix equation would not have a solution.

In this case, the Cauchy--Schwarz complexity and the true complexity differ by one.
In general the difference could be arbitrarily large.
Hence to extend this argument to the general case, it needs to be applied iteratively, a process called ``degree reduction'' in~\cite{peluse-finite,peluse-integers}.
In other words:---
\begin{itemize}
  \item we first show $\partial_{h_1} \dots \partial_{h_{t+1}} F_j(x) = 0$ for each $j$, where $t$ is the (maximum) Cauchy--Schwarz complexity of the system;
  \item we do degree $t$ multilinear algebra on the functions $B_j(h_1,\dots,h_t)$ and prove $B_j=0$ for all $j$;
  \item this means that $\partial_{h_1} \dots \partial_{h_t} F_j(x) = 0$ for each $j$; and
  \item we proceed inductively until we have established $B_{i_0}(h_1,\dots,h_{s+1}) = 0$ where $s=s(\Phi,i_0)$.
\end{itemize}
Note that we can only perform the last step if $B_j(h_1,\dots,h_{s+1})$ for $j \ne i_0$ are well-defined functions, which is the origin of the hypothesis $s(\Phi) \le s(\Phi,i_0)$ in Theorem~\ref{thm:main-ext}.

These arguments serve as the inspiration for the proof of Theorem~\ref{thm:main-ext}, but again we are far from done because they do not have Cauchy--Schwarz type.
Instead we must implement these multilinear calculations in terms of an arithmetic circuit, which is itself built by Cauchy--Schwarz steps, as discussed in Section~\ref{sub:repr}.

There is one further crucial technique we will need to implement this degree-lowering strategy, which is not required explicitly for Theorem~\ref{thm:baby-thm}.
As noted in Section~\ref{sub:background}, one weakness of Cauchy--Schwarz style proofs is their serial nature: it is not obvious how to ``save'' an intermediate equation for future use.
However under some circumstances it is possible to simulate this.

For example, to implement the above argument for Example~\ref{ex:gw-ex} in inequalities, an early step is to show, say,
\begin{equation}
  \label{eq:u3-control}
  \lvert \Lambda_{\Phi} (f_1,\dots,f_6) \rvert \le \|f_6\|_{U^3}
\end{equation}
as this is a translation of the statement $\partial_{h_1} \partial_{h_2} \partial_{h_3} F_6(x) = 0$ for all $x,h_1,h_2,h_3$.
Having done this, we claim the following consequence: assuming $\lvert \Lambda_\Phi(f_1,\dots,f_6) \rvert \ge \eps$, there exists a $1$-bounded function $f'_6$ such that $\lvert \Lambda(f_1,\dots,f_5,f_6') \rvert \ge \eps^{16}$, and moreover $f'_6$ is itself a \emph{$U^3$-dual function}
\[
  f_6'(x) = D_{U^3}(g) = \EE_{h_1,h_2,h_3 \in \FF_p^n} \begin{aligned}[t]
    &g(x+h_1) g(x+h_2) \overline{g(x+h_1+h_2)} g(x+h_3) \\
    &\overline{g(x+h_1+h_3)} \overline{g(x+h_2+h_3)} g(x+h_1+h_2+h_3)
  \end{aligned}
\]
for some $1$-bounded function $g \colon \FF_p^n \to \CC$.

The proof of this fact is very closely analogous to arguments in~\cite{peluse-finite,peluse-integers}, which use a Hahn--Banach decomposition in the style of~\cite{gowers-hb}.
Conveniently we can phrase the argument here purely in terms of the Cauchy--Schwarz inequality.

If we write
\[
  D_{\Phi,6}(f_1,\dots,f_5) = x \mapsto \EE_{\substack{v \in \FF_p^{3n} \\ \phi_6(v)=x}} f_1(\phi_1(v)) \dots f_5(\phi_5(v))
\]
for the ``$\Phi$-dual function at position $6$'' of $f_1,\dots,f_5$,
then
\begin{align*}
  \eps \le \lvert \Lambda_{\Phi}(f_1,\dots,f_6) \rvert &= \biggl\lvert \EE_{x \in \FF_p^n} f_6(x) D_{\Phi,6}(f_1,\dots,f_5)(x) \biggr\rvert \\
                                              &\le \left( \EE_{x \in \FF_p^n} \bigl\lvert D_{\Phi,6}(f_1,\dots,f_5)(x) \bigr\rvert^2 \right)^{1/2}
\end{align*}
by Cauchy--Schwarz and $1$-boundedness, and
\[
  \EE_{x \in \FF_p^n} \bigl\lvert D_{\Phi,6}(f_1,\dots,f_5)(x) \bigr\rvert^2 = \Lambda_{\Phi}\bigl(f_1,\dots,f_5, \overline{D_{\Phi,6}(f_1,\dots,f_5)}\bigr).
\]
Writing $g = \overline{D_{\Phi}(f_1,\dots,f_5)}$, 
by~\eqref{eq:u3-control} we have
\[
  \lvert \Lambda_{\Phi}(f_1,\dots,f_5, g) \rvert \le \lVert g \rVert_{U^3}
\]
and by definition
\[
  \lVert g \rVert_{U^3}^8 = \EE_{x \in \FF_p^n} \bigl( D_{U^3}(g)(x) \bigr) \overline{g(x)}.
\]
Setting $f'_6 = D_{U^3}(g)$, we deduce
\[
  \lVert g \rVert_{U^3}^8 = \EE_{x \in \FF_p^n} \bigl( D_{\Phi,6}(f_1,\dots,f_5)(x)  \bigr) f'_6(x) = \Lambda_{\Phi}\bigl(f_1,\dots,f_5, f_6'\bigr) \ge \eps^{16}.
\]

The rough summary is as follows.
The first Cauchy--Schwarz step creates two copies of $\Phi$.
In one of them, we apply our proof of~\eqref{eq:u3-control}.
But then by rediscovering the other copy of $\Phi$ (in the form of a $\Phi$-dual function) we reinterpret the multilinear average we have created as $\Lambda_{\Phi}(f_1,\dots,f_6')$.

The effect is that the proof that $\Lambda_{\Phi}(f_1,\dots,f_6)$ is controlled by $\lVert f_6 \rVert_{U^3}$ has now been ``banked'' as an extra hypothesis that $f_6$ is itself a $U^3$-dual function.
This information can \emph{only ever help us}: if at any stage it is not useful to think of some copy of $f_6$ as a $U^3$-dual function, we can revert to treating it as an arbitrary $1$-bounded function, and have lost nothing except a power of $16$.
On the other hand, sometimes it will be extremely useful to know that $f_6$ is a $U^3$-dual function, because we can start to apply bilinearity inequalities as in Section~\ref{sub:repr}.

Finally, we note that the quantity ``$\Lambda_{\Phi}(f_1,\dots,f_6)$ where $f_6 = D_{U^3}(g)$'' can be encoded as a larger multilinear average $\Lambda_{\Phi'}(f_1,\dots,f_5,g, \overline{g}, \dots, g)$ for some larger system of linear forms $\Phi'$ obtained by gluing together $\Phi$ with the system of linear forms corresponding to the $U^3$-norm. 
Hence all these arguments will stay neatly within the Cauchy--Schwarz formalism we will introduce in Section~\ref{sec:framework}.

We refer to this technique as \emph{stashing} or \emph{stashed structure},
because we ``stash'' some hypothesis or intermediate conclusion as a dual-function hypothesis on some function, until we are ready to use it later in the proof.
In a way, the argument above uses stashing twice: at the start we stash a $\Phi$-dual function hypothesis for later use, then at the end we stash a $U^3$-dual function hypothesis.

\subsection{Relation to previous work}%
\label{sub:previous-work}

We have already discussed the points of comparison between this paper and previous work of Gowers~\cite{gowers-szemeredi}, Gowers and Wolf~\cite{gw1,gw2,gw3,gw4}, Green and Tao~\cite{gt-linear}, and Peluse~\cite{peluse-finite,peluse-integers}, as well as~\cite{gt,hhl,hl,altman,szegedy}.

We make one further remark about the comparison between this work and~\cite{me}.
As we have said, the latter solved a particular case of Theorem~\ref{thm:main-ext} by building a specialized kind of ``circuit'' that does not work in general, whereas here we try to build ``general-purpose'' circuits.

It is worth emphasizing, though, the increased \emph{cost} of the latter approach.
The key ``circuit'' used in~\cite{me} to prove the case Example~\ref{ex:gw-ex} could be rendered in the formal notation we will develop in Section~\ref{sub:labelling}, as
\begin{center}
  \begin{tikzpicture}[
      defnode/.style={rectangle,draw,fill=lightgray,inner sep=2pt,outer sep=0pt,minimum size=10pt},
      gatelabel/.style={circle, inner sep=1pt, outer sep=0pt, fill=white,scale=0.6},
      subnode/.style={rounded rectangle, draw, inner sep=5pt, outer sep=0pt,minimum size=15pt},
      leaf/.style={rectangle,draw,fill=leafgreen,inner sep=2pt,outer sep=0pt,minimum size=10pt},
      scale=0.90
    ]
    \node[subnode, scale=0.8] (A1) at (-3.0,   0) {$\Phi$};
    \node[subnode, scale=0.8] (A2) at (-1.5, 1.0) {$\Phi$};
    \node[subnode, scale=0.8] (A3) at (-1.5,-1.0) {$\Phi$};
    \node[subnode, scale=0.8] (A4) at (   0,   0) {$\Phi$};

    \node[subnode, scale=0.8] (B2) at ( 1.5, 1.0) {$\Phi$};
    \node[subnode, scale=0.8] (B3) at ( 1.5,-1.0) {$\Phi$};
    \node[subnode, scale=0.8] (B1) at ( 3.0,   0) {$\Phi$};

    \draw (A1) -- node[gatelabel,pos=0.05] {$3$} node[midway, defnode,scale=0.5] {} node[gatelabel,pos=0.95] {$3$} (A2);
    \draw (A1) -- node[gatelabel,pos=0.05] {$4$} node[midway, defnode,scale=0.5] {} node[gatelabel,pos=0.95] {$4$} (A3);
    \draw (A4) -- node[gatelabel,pos=0.05] {$4$} node[midway, defnode,scale=0.5] {} node[gatelabel,pos=0.95] {$4$} (A2);
    \draw (A4) -- node[gatelabel,pos=0.05] {$3$} node[midway, defnode,scale=0.5] {} node[gatelabel,pos=0.95] {$3$} (A3);

    \draw (B1) -- node[gatelabel,pos=0.05] {$5$} node[midway, defnode,scale=0.5] {} node[gatelabel,pos=0.95] {$5$} (B2);
    \draw (B1) -- node[gatelabel,pos=0.05] {$6$} node[midway, defnode,scale=0.5] {} node[gatelabel,pos=0.95] {$6$} (B3);
    \draw (A4) -- node[gatelabel,pos=0.05] {$6$} node[midway, defnode,scale=0.5] {} node[gatelabel,pos=0.95] {$6$} (B2);
    \draw (A4) -- node[gatelabel,pos=0.05] {$5$} node[midway, defnode,scale=0.5] {} node[gatelabel,pos=0.95] {$5$} (B3);
  \end{tikzpicture}
\end{center}
which describes a system of $26$ linear forms.
To solve the \emph{same problem} Example~\ref{ex:gw-ex} in this paper, we use the circuit shown in Figure~\ref{fig:stargate}, where in Russian doll fashion every blob marked $A(-,-)$ is a copy of the circuit in Definition~\ref{def:large-agate}, and every blob there is a copy of the circuit in Definition~\ref{def:agate}.
Unpacked in full, this circuit has on the order of thousands of forms.

In~\cite{me} it was plausible if unwieldy to analyse the system of linear forms by hand.
Here that would be impractical, which is why we spend a significant portion of the paper introducing a formalism to automate many of the steps.
This formalism is rather heavy, and it is likely that it could be improved to require somewhat less set-up or give somewhat cleaner proofs, but having \emph{no formalism at all} is simply not an option.

We also briefly remark on the relationship of these methods to Razborov's \emph{flag algebras}.
There is a reasonably close analogy, where 
\begin{center}
  \begin{tabular}{rcl}
    flags / small finite graphs $G$ &$\leftrightarrow$& linear data $\Phi$ \\
    homomorphism densities $p(G,-)$ &$\leftrightarrow$& $\Lambda$-values $\Lambda_{\Phi}(-)$.
  \end{tabular}
\end{center}
Both frameworks make heavy use of the Cauchy--Schwarz inequality, implicitly or explicitly, and have something to do with first-order logic.

However there are a few key differences.
\begin{itemize}
  \item We always allow negative- or complex-valued functions, so many positivity arguments do not arise.
  \item We are asking much less delicate questions in the setting of linear data than are typically asked using flag algebras.
    For instance, many flag algebra proofs are searching for an optimal constant $C$ such that $p(G_1,-) \ge C$ subject to some constraints on $p(G_2,-), p(G_3,-)$, etc..
  Here we only care about statements of the form: if $\Lambda_{\Phi}(-)$ is close to zero then $\Lambda_{\Psi}(-)$ must also necessarily be fairly close to zero.
  \item On the other hand, the underlying linear-algebraic theory of linear data is much richer than the basic theory of graphs, and statements which would be trivial to prove or disprove for graphs require complicated arguments for linear data.
\end{itemize}
As a result, in practice there is not much cross-over between the tools that are useful in each case.

\subsection{Outline of the remainder of the paper}%
\label{sub:outline}

A large part of the work of this paper is in building a sufficiently powerful formalism that the Cauchy--Schwarz proofs of Theorem~\ref{thm:main-ext} and Theorem~\ref{thm:baby-thm}, when we come to them, will be easy to state and intuitive to justify.
This process is fairly lengthy, but as we have discussed, the alternative is far worse.

We present the first part in Section~\ref{sec:framework}.
The key definitions are that of a \emph{linear datum} (also appearing in~\cite{me}), and a \emph{Cauchy--Schwarz diagram}.
We also build the necessary theory for manipulating them and proving inequalities.

Next, Section~\ref{sec:gates} attempts to bridge the gap between Cauchy--Schwarz diagrams and ``arithmetic circuits'' of the kind discussed above.

In Section~\ref{sec:baby-thm} we begin in earnest building such circuits, and use them to prove Theorem~\ref{thm:baby-thm}.
We prove Theorem~\ref{thm:main} (and Theorem~\ref{thm:main-ext}) in Section~\ref{sec:thm-main}, after introducing some further general theory relating to stashing in Section~\ref{sec:stashing} (see Section~\ref{sub:true-summary}). 

Finally, Section~\ref{sec:conjectures} discusses some general conjectures.

\subsection{Acknowledgements}%
\label{sub:ack}

The author is grateful to Sarah Peluse, Ben Green, Julia Wolf, Sean Prendiville and Daniel Altman for informative discussions on these topics.

\section{The Cauchy--Schwarz framework}%
\label{sec:framework}

We introduce some definitions and notation that constitute the language of iterated Cauchy--Schwarz proofs we will use.
There are several layers which are introduced in turn.

\subsection{Linear data}%
\label{sub:linear-datum}

The notion of a \emph{linear datum} was introduced in~\cite{me} as a conservative generalization of the notion of a system of linear forms.
We recall a definition, with some minor changes relative to~\cite{me}.

\begin{definition}%
  \label{def:datum}
  Let $p$ be a prime.  A \emph{linear datum} over $\FF_p$ consists of a tuple $\Phi = \big(I, V, (W_i)_{i \in I}, (\phi_i)_{i \in I}\big)$ where $I$ is a finite index set, $V$ and $W_i$ are finite-dimensional vector spaces over $\FF_p$ and $\phi_i \colon V \to W_i$ are linear maps.
\end{definition}
Suppose $n \ge 1$ and $f_i \colon W_i^n \to \CC$ for $i \in I$ are functions.  Exactly as for a system of linear forms, we may define the multilinear average
\begin{equation}%
  \label{eq:lambda-def}
  \Lambda_{\Phi}\big( (f_i)_{i \in I} \big) = \EE_{v \in V^n} \prod_{i \in I} f_i\bigl(\phi_i(v)\bigr).
\end{equation}
Again we abuse notation to write $\phi_i$ for the map $\phi_i^n \colon V^n \to W_i^n$ applied entry-wise.
When $W_i = \FF_p$ for each $i\in I$, then $\bigl(\phi_i \colon V \to \FF_p\bigr)_{i \in I}$ is a system of linear forms, and this definition of $\Lambda_{\Phi}$ coincides with~\eqref{eq:system-of-forms}.

\begin{convention}%
  \label{conv:datum-tuple}
  To avoid running out of variable names, we adopt the convention that $I^\Phi$, $V^\Phi$, $W^{\Phi}_i$, $\phi^{\Phi}_i$ refer to the constituent parts of the linear datum tuple $\Phi$.  We will use similar notation for other definitions.
\end{convention}

Clearly any system of linear forms is an example of a linear datum.
We take the opportunity to give a few other standard examples.

\begin{definition}%
  \label{def:trivial}
  Suppose $I$ is an index set and $(W_i)_{i \in I}$ is a collection of finite-dimensional vector spaces.
  The linear datum $\trivial\bigl(I, (W_i)_{i \in I}\bigr)$ has these spaces as $W_i$, $V = \bigoplus_{i \in I} W_i$ and $\phi_i \colon V \to W_i$ the corresponding projection map.

  If all the $W_i$ are $\FF_p$, we just write $\trivial(I)$ in place of $\trivial\bigl(I, (\FF_p)_{i \in I}\bigr)$ for the trivial system of linear forms.
\end{definition}

\begin{definition}%
  \label{def:const}
  Suppose $W$ is a single finite-dimensional vector space and $I$ is an index set.
  We write $\const(I,W)$ for the linear datum with $W_i = W$ for all $i \in I$, $V = W$ and $\phi_i \colon V \to W_i$ the identity map for each $i$.

  Again if $W = \FF_p$ this is abbreviated to $\const(I)$.
\end{definition}

\begin{definition}%
  \label{def:uk}
  Let $k \ge 1$ and let $W$ be a finite-dimensional vector space.
  We write $U^k(W)$ for the linear datum with index set $I = \{0,1\}^k$, $W_i = W$ for $i \in I$, $V = W^{k+1}$ and $\phi_\omega \colon V \to W_i$ for $\omega = (\omega_1,\dots,\omega_k) \in \{0,1\}^k$ given by
  \[
    \phi_{\omega}(x,h_1,\dots,h_k) = x + \omega_1 h_1 + \cdots + \omega_k h_k.
  \]
  Again we abbreviate $U^k(\FF_p)$ to $U^k$.  
\end{definition}

\begin{remark}%
  \label{rem:surj-degen}
  There are two natural additional hypotheses on $\Phi$ which it is convenient not to insist upon in the definition itself.

  First, consider the joint kernel of all the linear maps $\phi_i$,
  \[
    K = \bigcap_{i \in I} \ker \phi_i \subseteq V.
  \]
  We say $\Phi$ is \emph{non-degenerate} if $K = \{0\}$ and \emph{degenerate} otherwise.
  Replacing $v \in V^n$ by $v+k$ for any $k \in K^n$ leaves all the terms $f_i(\phi_i(v))$ in~\eqref{eq:lambda-def} unchanged, and so the part of the averaging $\EE_{v \in V^n}$ over $K^n$ is completely spurious.
  In other words, the same averaging operator $\Lambda_{\Phi}$ factors through a smaller average.
  For example, $\EE_{x,y \in \FF_p^n} f(x-y)$ is degenerate and equivalent to the smaller average $\EE_{t \in \FF_p^n} f(t)$.

  Second, it is not assumed that $\phi_i$ are all surjective maps.\footnote{This choice differs from the definitions in~\cite{me}.}
  We say that $\Phi$ is \emph{surjective at $i \in I^\Phi$} if $\phi_i^\Phi$ is a surjective map, and that $\Phi$ is \emph{surjective} if it is surjective at every $i \in I^\Phi$.

  It is trivial to modify $\Phi$ so that one or both of these conditions holds, without affecting the values of any multilinear averages $\Lambda_\Phi$.  Indeed, to make $\Phi$ non-degenerate we simply replace $V$ with $V/K$, and to make it surjective we replace $W_i$ with $\im \phi_i$ (and $f_i$ with its restriction to $\im \phi_i$).
\end{remark}

Our goal is to prove inequalities of the form
\begin{equation}%
  \label{eq:lambda-ineq}
  \bigl\lvert \Lambda_{\Phi}\bigl((f_j)_{j \in I}\bigr) \bigr\rvert \le \bigl\lvert \Lambda_{\Psi}\bigl((f'_j)_{j \in I'}\bigr) \bigr\rvert^{1/C}
\end{equation}
for related\footnote{%
  \label{foot:cut-norm}%
  Given a datum $\Lambda$, an index $i$ and an integer $n$, one could define an associated \emph{cut-norm} on functions $f \colon (W_i^\Phi)^n \to \CC$ by
  \[
    \lVert f \rVert_{\opr{cut},\Phi,i} = \sup_{(f_j)_{j \ne i}} \bigl\lvert \Lambda_{\Phi}\bigl( (f_j)_{j \ne i}, i \mapsto f\bigr) \bigr\rvert
  \]
  where the supremum is over all tuples of $1$-bounded functions $f_j \colon (W_j^\Phi)^n \to \CC$.
  When $(f_j)_{j \in I}$ and $(f'_j)_{j \in I'}$ in~\eqref{eq:lambda-ineq} have one function $f_i=f'_{i'}$ in common but are otherwise arbitrary $1$-bounded functions,~\eqref{eq:lambda-ineq} is exactly an inequality $\lVert f \rVert_{\opr{cut},\Phi,i} \le \lVert f \rVert_{\opr{cut}, \Psi, i'}^{1/C}$.
  This approach is expounded in~\cite{tao-cut-blog}.
  We do not adopt it here explicitly, but note that discussing families of linear data is a roughly equivalent perspective to discussing families of cut-norms.
}
tuples of functions $(f_i)$, $(f'_i)$.
Before discussing Cauchy--Schwarz, we note there are a number of elementary ways to achieve such inequalities with $C=1$.
By way of exposition we give some examples.
Suppose
\[
  \biggl\lvert \EE_{x,a,b \in \FF_p^n} f_1(x) f_2(x+a) f_3(x+b) f_4(x+a+b) \biggr\rvert = \eps
\]
for $1$-bounded functions $f_i$. Then
\begin{equation}
  \label{eq:gen-conv}
  \biggl\lvert \EE_{x,a \in \FF_p^n} f_1(x) f_2(x+a) g(a) \biggr\rvert \ge \eps
\end{equation}
holds for some $1$-bounded function $g \colon \FF_p^n \to \CC$.
Indeed, we may change variables from $b$ to $t=x+b-a$ (say) to get
\begin{equation}
  \label{eq:u2conv-cov}
  \eps = \biggl\lvert \EE_{x,a,t \in \FF_p^n} f_1(x) f_2(x+a) f_3(t+a) f_4(t+2a) \biggr\rvert
\end{equation}
then set $\gamma(t,a) = f_3(t+a) f_4(t+2a)$ (``merging'' the two functions $f_3$ and $f_4$).
We then observe (by the triangle inequality)
\begin{align*}
  \biggl\lvert \EE_{x,a,t \in \FF_p^n} f_1(x) f_2(x+a) \gamma(t,a) \biggr\rvert &\le \EE_{t \in \FF_p^n} \biggl\lvert \EE_{x,a \in \FF_p^n} f_1(x) f_2(x+a) \gamma(t,a) \biggr\rvert \\
                                                   &\le \max_{t \in \FF_p^n} \biggl\lvert \EE_{x,a \in \FF_p^n} f_1(x) f_2(x+a) \gamma(t,a) \biggr\rvert
\end{align*}
and set $g(a) = \gamma(t,a)$ for some fixed $t$ achieving the maximum value on the right-hand side.

An alternative argument is that by a change of variables $y=x+b$,~\eqref{eq:gen-conv} implies
\[
  \biggl\lvert \EE_{x,a,y \in \FF_p^n} f_1(x) f_2(x+a) f_3(y) f_4(y+a) \biggr\rvert = \eps
\]
and by the triangle inequality
\[
  \EE_{y \in \FF_p^n} |f_3(y)|\ \biggl\lvert \EE_{x,a \in \FF_p^n} f_1(x) f_2(x+a) f_4(y+a) \biggr\rvert \ge \eps
\]
which again implies that for at least one value of $y$,
\[
  \biggl\lvert \EE_{x,a \in \FF_p^n} f_1(x) f_2(x+a) f_4(y+a) \biggr\rvert \ge \eps.
\]
Setting $g(a)=f_4(y+a)$ for this $y$, we recover the original statement but with the extra information that $g$ can be taken to be some translate of $f_4$.

Other steps include introducing or removing spurious averaging, or spuriously increasing the domain of a function: for example, if~\eqref{eq:gen-conv} holds then clearly
\[
  \biggl\lvert \EE_{x,a,b \in \FF_p^n} f_1(x) f_2(x+a) g(a) \biggr\rvert \ge \eps
\]
and moreover
\[
  \biggl\lvert \EE_{x,a,b \in \FF_p^n} f_1(x) f_2(x+a) h(a,b) \biggr\rvert \ge \eps
\]
for some $1$-bounded function $h \colon \FF_p^{2n} \to \CC$, namely $h(a,b) = g(a)$.

All of these inequalities may be summarized in the following general scheme.
There is a natural notion of a \emph{morphism} of linear data, i.e., a structure-preserving map $\Phi \to \Psi$ between two linear data $\Phi$ and $\Psi$.
Proposition~\ref{prop:morphism} below
 shows that any morphism $\Phi \to \Psi$ induces an inequality $\lvert \Lambda_{\Psi}(\dots) \rvert \le \lvert \Lambda_{\Phi}(\dots)\rvert$, and all the inequalities above arise in this way.

We first introduce the following useful convention.
Note that introducing a new space $W=\{0\}$ with the zero map $V \to W$ to a linear datum $\Phi$ has essentially no effect: for instance, it changes $\Lambda_{\Phi}$ by introducing a function $\{0\} \to \CC$, i.e., a constant.
It is sometimes convenient to act as though such a zero subspace were present---essentially, to absorb constants---but it would be overly cumbersome to actually include one.
We circumvent this as follows.
\begin{convention}%
  \label{convention:ast}
  For any linear datum $\Phi$,
  the element $\zeroi$ denotes an arbitrary symbol not contained in $I^\Phi$.   
  We write $W^\Phi_{\zeroi}$ to denote the zero space $\{0\}$ and $\phi^\Phi_\zeroi$ to denote the zero map $V^\Phi \to W^\Phi_\zeroi$.
\end{convention}

Given this convention,
the definition of a \emph{morphism of linear data} is fairly clean.
\begin{definition}%
  \label{def:morphism}
  Let $\Phi$ and $\Psi$ be two linear data.
  Suppose $\alpha \colon I^{\Psi} \to I^{\Phi} \cup \{\zeroi\}$ is a function.

  A \emph{morphism of linear data} $\Theta \colon \Phi \to \Psi$ of \emph{shape $\alpha$} consists of (i) a linear map $\theta \colon V^{\Phi} \to V^{\Psi}$, and (ii) for each $i \in I^{\Psi}$, a morphism $\sigma_i \colon W^{\Phi}_{\alpha(i)} \to W^{\Psi}_i$, such that the diagram
  \begin{equation}%
    \label{eq:morph}
    \begin{tikzcd}
      V^{\Phi} \arrow[d, "\phi^{\Phi}_{\alpha(i)}"] \arrow[r,"\theta"] & 
      V^{\Psi} \arrow[d, "\phi^{\Psi}_{i}"] \\ 
      W^{\Phi}_{\alpha(i)} \arrow[r,"\sigma_i"] & 
      W^{\Psi}_{i}
    \end{tikzcd}
  \end{equation}
  commutes for each $i \in I^{\Psi}$.

  For $i \in I^{\Psi}$, we say the morphism $\Theta$ \emph{respects $i$} if (a) $\alpha(i) \ne \zeroi$, (b) for all other $j \in I^{\Psi}$ we have $\alpha(j) \ne \alpha(i)$, and (c) $W^{\Psi}_i = W^{\Phi}_{\alpha(i)}$ and $\sigma_i$ is the identity map.
\end{definition}

As in Convention~\ref{conv:datum-tuple}, we may write $\alpha^{\Theta}$, $\theta^{\Theta}$ or $\sigma^{\Theta}_i$ to denote the constituent parts of a morphism $\Theta$.

We now explain how between morphisms of linear data give rise to inequalities.
This is a slight generalization of~\cite[Proposition 3.4]{me}.
\begin{proposition}%
  \label{prop:morphism}
  Suppose $\Phi$, $\Psi$ are linear data and $\Theta \colon \Phi \to \Psi$ is a morphism.
  Let $I_1 \subseteq I^{\Psi}$ denote the set of all $i$ respected by $\Theta$.

  Then the following holds: for $n \ge 1$ and any tuple of $1$-bounded functions $(f_i)_{i \in I^{\Psi}}$, $f_i \colon (W^\Psi_i)^n \to \CC$, there exist $1$-bounded functions $(g_j)_{j \in I^{\Phi}}$, $g_j \colon (W^{\Phi}_j)^n \to \CC$ such that
  \[
    \bigl\lvert\Lambda_{\Psi}\bigl((f_i)_{i \in I^{\Phi}}\bigr) \bigr\rvert \le \bigl\lvert\Lambda_{\Phi}\bigl((g_j)_{j \in I^{\Psi}}\bigr)\bigr\rvert.
  \]
  Moreover, for each $i \in I_1$, $g_{\alpha(i)}$ is a translate of $f_i$; that is, there exists $t \in (W^\Psi_i)^n$ such that $g(x) = f(x+t)$ for all $x \in (W_i^\Psi)^n$.
\end{proposition}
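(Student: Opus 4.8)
The plan is to pull the average defining $\Lambda_\Psi$ back along $\theta$ and then absorb everything that survives, using the commuting squares~\eqref{eq:morph}; this is the strategy already illustrated by the worked examples preceding the statement. The one genuine wrinkle is that $\theta$ need not be surjective, so we cannot literally substitute $v \mapsto \theta v$, and this is exactly what forces the functions $g_{\alpha(i)}$ to be \emph{translates} of $f_i$ rather than literally equal to $f_i$. To deal with it, fix an $\FF_p$-complement $C$ of $\im \theta$ inside $V^\Psi$, so $(V^\Psi)^n = (\im\theta)^n \oplus C^n$. Splitting $w \in (V^\Psi)^n$ as $w'+w''$ with $w' \in (\im\theta)^n$, $w'' \in C^n$, and using $\phi^\Psi_i(w'+w'') = \phi^\Psi_i(w') + \phi^\Psi_i(w'')$ together with the triangle inequality (as in the derivation of~\eqref{eq:gen-conv}),
\[
  \bigl\lvert \Lambda_\Psi\bigl((f_i)_{i \in I^\Psi}\bigr) \bigr\rvert \le \max_{w'' \in C^n} \Bigl\lvert \EE_{w' \in (\im\theta)^n} \prod_{i \in I^\Psi} f_i\bigl(\phi^\Psi_i(w') + \phi^\Psi_i(w'')\bigr) \Bigr\rvert .
\]
Fixing $w''$ attaining the maximum and setting $\hat f_i(x) = f_i\bigl(x + (\phi^\Psi_i)^n(w'')\bigr)$ — a $1$-bounded translate of $f_i$ on $(W^\Psi_i)^n$ — the right-hand side becomes $\bigl\lvert \EE_{w' \in (\im\theta)^n} \prod_i \hat f_i(\phi^\Psi_i(w')) \bigr\rvert$.

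Next I would pull back along $\theta$. Since $\theta^n \colon (V^\Phi)^n \to (\im\theta)^n$ is a surjective group homomorphism, it pushes the uniform measure forward to the uniform measure, so the last quantity equals $\bigl\lvert \EE_{v \in (V^\Phi)^n} \prod_i \hat f_i(\phi^\Psi_i(\theta v)) \bigr\rvert$. Commutativity of~\eqref{eq:morph} gives $\phi^\Psi_i(\theta v) = \sigma_i\bigl(\phi^\Phi_{\alpha(i)}(v)\bigr)$, so writing $\tilde f_i := \hat f_i \circ \sigma_i^n$ — again $1$-bounded, now on $(W^\Phi_{\alpha(i)})^n$ — we obtain
\[
  \bigl\lvert \Lambda_\Psi\bigl((f_i)_{i \in I^\Psi}\bigr) \bigr\rvert \le \Bigl\lvert \EE_{v \in (V^\Phi)^n} \prod_{i \in I^\Psi} \tilde f_i\bigl(\phi^\Phi_{\alpha(i)}(v)\bigr) \Bigr\rvert .
\]

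It then remains to regroup the product by the fibres of $\alpha$. For $j \in I^\Phi \cup \{\zeroi\}$ put $A_j = \alpha^{-1}(j)$; these sets partition $I^\Psi$. The factors with $\alpha(i) = \zeroi$ are constants $\tilde f_i(0)$ (since $\phi^\Phi_\zeroi$ is the zero map and $W^\Phi_\zeroi = \{0\}$), and their product $c$ satisfies $\lvert c \rvert \le 1$ and pulls out of the expectation. For $j \in I^\Phi$ set $g_j := \prod_{i \in A_j} \tilde f_i \colon (W^\Phi_j)^n \to \CC$, with the empty product interpreted as the constant $1$; each $g_j$ is $1$-bounded. Then the displayed expectation equals $c\,\Lambda_\Phi\bigl((g_j)_{j \in I^\Phi}\bigr)$, whence $\bigl\lvert \Lambda_\Psi \bigr\rvert \le \bigl\lvert \Lambda_\Phi \bigr\rvert$. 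Finally, if $i$ is respected by $\Theta$ then $\alpha(i) \ne \zeroi$ and no other $j$ has $\alpha(j) = \alpha(i)$, so $A_{\alpha(i)} = \{i\}$; moreover $W^\Psi_i = W^\Phi_{\alpha(i)}$ and $\sigma_i = \id$, so $g_{\alpha(i)} = \tilde f_i = \hat f_i$, which is precisely the translate $x \mapsto f_i\bigl(x + (\phi^\Psi_i)^n(w'')\bigr)$ of $f_i$, as required.

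I do not expect a serious obstacle: the content is entirely in arranging the bookkeeping so that the commuting squares of Definition~\ref{def:morphism} can be applied factor by factor. The step deserving the most care is the very first one, because $\theta$ may fail to be surjective: one must pass to a complement of $\im\theta$ and apply the triangle inequality \emph{before} pulling back to $V^\Phi$, and it is this maneuver — rather than any subsequent algebra — that accounts for the appearance of translates in the conclusion.
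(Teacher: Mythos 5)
Your proof is correct and follows essentially the same strategy as the paper's: split the average over $(V^\Psi)^n$ into cosets of $\theta\bigl((V^\Phi)^n\bigr)$, pick the best coset by the triangle inequality, pull back along $\theta$ via the commuting square~\eqref{eq:morph}, and regroup the factors by the fibres of $\alpha$. The only cosmetic difference is that you fix a linear complement $C$ of $\im\theta$ to parametrize cosets, whereas the paper just takes an arbitrary transversal $v_1,\dots,v_M$ and averages over all of $(V^\Phi)^n$ (absorbing $\ker\theta$ into a spurious average) rather than invoking the pushforward-of-Haar argument explicitly; both are fine.
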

\begin{proof}%
  The proof is much the same as~\cite[Proposition 3.4]{me}, but we give it for completeness.
  Take $v_1,\dots,v_M \in (V^\Psi)^n$ to be a complete list of coset representatives of the subgroup $\theta^\Theta\bigl((V^{\Phi})^n\bigr) \le (V^\Psi)^n$.
  We compute
  \begin{align*}
    \bigl\lvert\Lambda_{\Psi}\bigl((f_i)_{i \in I^\Psi}\bigr)\bigr\rvert &= \left\lvert \EE_{v \in (V^{\Psi})^n} \prod_{i \in I^\Psi} f_i\bigl(\phi^\Psi_i(v)\bigr) \right\rvert \\
                                                     &= \left\lvert \EE_{\ell \in [M]} \EE_{u \in (V^{\Phi})^n} \prod_{i\in I} f_i\bigl(\phi^\Psi_i(v_\ell + \theta^\Theta(u))\bigr) \right\rvert
  \end{align*}
  and by the triangle inequality this is at most
  \[
    \max_{\ell \in [M]} \left\lvert \EE_{u \in (V^\Phi)^n} \prod_{i \in I^\Psi} f_i\bigl(\phi^\Psi_i(v_\ell + \theta^\Theta(u))\bigr) \right\rvert.
  \]
  Pick some $\ell \in [M]$ that achieves the maximum value.
  By~\eqref{eq:morph}, for each $i \in I^\Psi$,
  \[
    f_i\bigl(\phi^\Psi_i(v_\ell + \theta^\Theta(u))\bigr) = f_i\bigl(\phi^\Psi_i(v_\ell) + \sigma_i(\phi^\Phi_{\alpha(i)}(u))\bigr).
  \]
  For $j \in I^\Phi \cup \{\zeroi\}$ define
  \[
    g_j(w) = \prod_{i \in I^\Psi \colon \alpha(i) = j} f_i\bigl(\phi^\Psi_i(v_\ell) + \sigma_i(w)\bigr).
  \]
  Since $f_i$ are all $1$-bounded so are $g_j$, and
  \[
    \left\lvert \EE_{u \in (V^\Phi)^n} \prod_{i \in I^\Psi} f_i\bigl(\phi^\Psi_i(v_\ell + \theta^\Theta(u))\bigr) \right\rvert = \bigl\lvert g_\zeroi(0) \bigr\rvert  \, \bigl\lvert \Lambda_{\Phi}((g_j)_{j \in I^\Phi})\bigr\rvert.
  \]
  The bound $\bigl\lvert \Lambda_{\Psi}\bigl((f_i)_{i \in I^\Psi}\bigr) \bigr\rvert \le \bigl\lvert \Lambda_{\Phi}\bigl((g_j)_{j \in I^\Phi}\bigr) \bigr\rvert$ follows, and it is clear by construction that $g_{\alpha(i)}$ is a translate of $f_i$ when $i \in I_1$.
\end{proof}

\begin{example}%
  \label{ex:morphism-application}
  The example calculation above has $I^\Psi=[4]$, $I^\Phi=[3]$, $V^\Psi = \FF_p^3$, $W^\Psi_1,\dots,W^\Psi_4=\FF_p$, $V^{\Phi} = \FF_p^2$, $W^\Phi_1,\dots,W^\Phi_3=\FF_p$ and
  \[
    \begin{aligned}
      \phi^\Psi_1(x,a,b) &= x   & \phi^{\Phi}_1(x,a) &= x \\
      \phi^\Psi_2(x,a,b) &= x+a & \phi^{\Phi}_2(x,a) &= x+a \\
      \phi^\Psi_3(x,a,b) &= x+b & \phi^{\Phi}_3(x,a) &= a. \\
      \phi^\Psi_4(x,a,b) &= x+a+b
    \end{aligned}
  \]
  The first argument above corresponds to a morphism $\Theta_1 \colon \Phi \to \Psi$ with $\alpha(1)=1$, $\alpha(2)=2$, $\alpha(3)=\alpha(4)=3$ and $\theta(x,a) = (x,a,a-x)$, with $\sigma_1=\sigma_2=\sigma_3 = \id$ and $\sigma_4(z)=2z$.
  It respects the indices $1,2 \in I^\Psi$.

  The second argument is encoded by a different morphism $\Theta_2 \colon \Phi \to \Psi$ with $\alpha(1)=1$, $\alpha(2)=2$, $\alpha(3)=\zeroi$ and $\alpha(4)=3$, and $\theta(x,a) = (x,a,-x)$, with $\sigma_1=\sigma_2=\sigma_4=\id$ and $\sigma_3=0$.
  In this case the morphism respects $1,2,4 \in I^\Psi$ so we get the added conclusion that $g_3$ (i.e., $g$) is a translate of $f_4$.
\end{example}

The longhand arguments above actually give the stronger fact (in both cases) that both $g_1=f_1$ and $g_2=f_2$, not merely that they are translates of each other.
With more effort we could keep track of such extra information, but
in practice this becomes difficult to work with.
Since replacing a function by a translate is typically benign, we simply accept this loss of precision.

\begin{remark}%
  \label{rem:pictures}
  It is often convenient to show morphisms of linear data pictorially.
  For example, the morphism $\Theta_1$ in Example~\ref{ex:morphism-application} could be shown as follows:
  \begin{center}
    \begin{tikzpicture}[
        boxnode/.style={rectangle,draw,inner sep=2pt,outer sep=0pt,minimum size=13pt,scale=0.7},
        leaf/.style={fill=leafgreen},
        scale=0.8]
      \begin{scope}[shift={(0,0)}]
        \node[boxnode]       (V)  at ( 0  , 0) {};
        \node[leaf,boxnode] (W1) at (-1,-1) {$1$};
        \node[leaf,boxnode] (W2) at (0,-1) {$2$};
        \node[leaf,boxnode] (W3) at (1,-1) {$3$};
        \draw[-stealth] (V) -- (W1);
        \draw[-stealth] (V) -- (W2);
        \draw[-stealth] (V) -- (W3);

        \node[boxnode]       (Vp)  at (4 , 0) {};
        \node[leaf,boxnode] (Wp1) at ($(Vp) + (-1.2,-1)$) {$1$};
        \node[leaf,boxnode] (Wp2) at ($(Vp) + (-0.4,-1)$) {$2$};
        \node[leaf,boxnode] (Wp3) at ($(Vp) + ( 0.4,-1)$) {$3$};
        \node[leaf,boxnode] (Wp4) at ($(Vp) + ( 1.2,-1)$) {$4$};
        \draw[-stealth] (Vp) -- (Wp1);
        \draw[-stealth] (Vp) -- (Wp2);
        \draw[-stealth] (Vp) -- (Wp3);
        \draw[-stealth] (Vp) -- (Wp4);

        \draw[dotted,-angle 60] (W1) to[bend left = 30] (Wp1);
        \draw[dotted,-angle 60] (W2) to[bend left = 30] (Wp2);
        \draw[dotted,-angle 60] (W3) to[bend right = 25] (Wp3);
        \draw[dotted,-angle 60] (W3) to[bend right = 25] (Wp4);
      \end{scope}
      \begin{scope}[shift={(8,0)}]
        \node[boxnode]       (V)  at ( 0  , 0) {$(x,a)$};
        \node[leaf,boxnode] (W1) at (-1,-1) {$x$};
        \node[leaf,boxnode] (W2) at (0,-1) {$x+a$};
        \node[leaf,boxnode] (W3) at (1,-1) {$a$};
        \draw[-stealth] (V) -- (W1);
        \draw[-stealth] (V) -- (W2);
        \draw[-stealth] (V) -- (W3);

        \node[boxnode]       (Vp)  at (4 , 0) {$(x,a,a-x)$};
        \node[leaf,boxnode] (Wp1) at ($(Vp) + (-1.2,-1)$) {$x$};
        \node[leaf,boxnode] (Wp2) at ($(Vp) + (-0.4,-1)$) {$x+a$};
        \node[leaf,boxnode] (Wp3) at ($(Vp) + ( 0.4,-1)$) {$a$};
        \node[leaf,boxnode] (Wp4) at ($(Vp) + ( 1.2,-1)$) {$2a$};
        \draw[-stealth] (Vp) -- (Wp1);
        \draw[-stealth] (Vp) -- (Wp2);
        \draw[-stealth] (Vp) -- (Wp3);
        \draw[-stealth] (Vp) -- (Wp4);

        \draw[dotted,-angle 60] (W1) to[bend left = 30] (Wp1);
        \draw[dotted,-angle 60] (W2) to[bend left = 30] (Wp2);
        \draw[dotted,-angle 60] (W3) to[bend right = 25] (Wp3);
        \draw[dotted,-angle 60] (W3) to[bend right = 25] (Wp4);

        \draw[dashed,-angle 60] (V) -- (Vp);
      \end{scope}
    \end{tikzpicture}
  \end{center}
  and similarly $\Theta_2$ would be:
  \begin{center}
    \begin{tikzpicture}[
        boxnode/.style={rectangle,draw,inner sep=2pt,outer sep=0pt,minimum size=13pt,scale=0.7},
        leaf/.style={fill=leafgreen},
        scale=0.8]
      \begin{scope}[shift={(0,0)}]
        \node[boxnode]       (V)  at ( 0  , 0) {};
        \node[leaf,boxnode] (W1) at (-1,-1) {$1$};
        \node[leaf,boxnode] (W2) at (0,-1) {$2$};
        \node[leaf,boxnode] (W3) at (1,-1) {$3$};
        \draw[-stealth] (V) -- (W1);
        \draw[-stealth] (V) -- (W2);
        \draw[-stealth] (V) -- (W3);

        \node[boxnode]       (Vp)  at (4 , 0) {};
        \node[leaf,boxnode] (Wp1) at ($(Vp) + (-1.2,-1)$) {$1$};
        \node[leaf,boxnode] (Wp2) at ($(Vp) + (-0.4,-1)$) {$2$};
        \node[leaf,boxnode] (Wp3) at ($(Vp) + ( 0.4,-1)$) {$3$};
        \node[leaf,boxnode] (Wp4) at ($(Vp) + ( 1.2,-1)$) {$4$};
        \draw[-stealth] (Vp) -- (Wp1);
        \draw[-stealth] (Vp) -- (Wp2);
        \draw[-stealth] (Vp) -- (Wp3);
        \draw[-stealth] (Vp) -- (Wp4);

        \draw[dotted,-angle 60] (W1) to[bend left = 30] (Wp1);
        \draw[dotted,-angle 60] (W2) to[bend left = 30] (Wp2);
        \draw[dotted,-angle 60] (W3) to[bend right = 25] (Wp4);
      \end{scope}
      \begin{scope}[shift={(8,0)}]
        \node[boxnode]       (V)  at ( 0  , 0) {$(x,a)$};
        \node[leaf,boxnode] (W1) at (-1,-1) {$x$};
        \node[leaf,boxnode] (W2) at (0,-1) {$x+a$};
        \node[leaf,boxnode] (W3) at (1,-1) {$a$};
        \draw[-stealth] (V) -- (W1);
        \draw[-stealth] (V) -- (W2);
        \draw[-stealth] (V) -- (W3);

        \node[boxnode]       (Vp)  at (4 , 0) {$(x,a,-x)$};
        \node[leaf,boxnode] (Wp1) at ($(Vp) + (-1.2,-1)$) {$x$};
        \node[leaf,boxnode] (Wp2) at ($(Vp) + (-0.4,-1)$) {$x+a$};
        \node[leaf,boxnode] (Wp3) at ($(Vp) + ( 0.4,-1)$) {$0$};
        \node[leaf,boxnode] (Wp4) at ($(Vp) + ( 1.2,-1)$) {$a$};
        \draw[-stealth] (Vp) -- (Wp1);
        \draw[-stealth] (Vp) -- (Wp2);
        \draw[-stealth] (Vp) -- (Wp3);
        \draw[-stealth] (Vp) -- (Wp4);

        \draw[dotted,-angle 60] (W1) to[bend left = 30] (Wp1);
        \draw[dotted,-angle 60] (W2) to[bend left = 30] (Wp2);
        \draw[dotted,-angle 60] (W3) to[bend right = 25] (Wp4);

        \draw[dashed,-angle 60] (V) -- (Vp);
      \end{scope}
    \end{tikzpicture}
  \end{center}
  The left-hand picture\footnote{We could combine both sides into a single diagram but this becomes harder to read.} identifies the indices of the datum, and the shape $\alpha$ of the morphism is implied by the dotted arrows (read backwards).
  An index $i$ with no incoming arrow means $\alpha(i) = \zeroi$.

  The right-hand picture shows the maps $\phi^\Phi_i$ and $\theta$ explicitly, and provided $\Phi$ is surjective we could uniquely infer the maps $\sigma_j$.
  Verifying~\eqref{eq:morph} corresponds to checking that the maps $\phi^{\Psi}_j$ and $\sigma_j$ do indeed send the inputs shown to the outputs shown.
\end{remark}

\begin{example}%
  \label{ex:weak-eq}
  Suppose as in Remark~\ref{rem:surj-degen} that $\Phi$ is a linear datum and
  \[
    K = \bigcap_{i \in I^\Phi} \ker \phi^\Phi_i \subseteq V^\Phi
  \]
  is non-zero.
  Let $\Psi$ be the quotient datum $I^\Psi=I^\Phi$, $V^\Psi=V/K$, $W_i^\Psi=W_i^\Phi$ and $\phi_i^\Psi(v+K) = \phi_i^\Phi(v)$ (which is well-defined).
  Then the quotient map $\theta \colon V \to V/K$, together with identity maps $\alpha$ and $\sigma_i$ for $i \in I^\Phi$, gives a morphism $\Phi \to \Psi$ which respects every $i \in I^\Psi$.

  If $\theta' \colon V/K \to V$ is any section (i.e., $\theta \circ \theta' = \id_{V/K}$) then we similarly get a morphism $\Psi \to \Phi$ respecting every index $i \in I^\Phi$.
\end{example}

\begin{example}%
  \label{ex:surj-fix}
  Similarly, for $\Phi$ a linear datum, consider the datum $\Psi$ with $I^\Psi=I^\Phi$, $V^\Psi=V^\Phi$, $W^\Psi_i=\im \phi^\Phi_i$ and $\phi^\Psi_i =\phi^{\Phi}_i$.

  Then taking $\id_V \colon V \to V$ together with $\sigma_i \colon \im \psi_i \to W_i$ the inclusion map, defines a morphism $\Psi \to \Phi$ which fixes the ``surjectivity problem'' as in Remark~\ref{rem:surj-degen}.
  It respects those $i \in I^\Phi$ for which $\phi^\Phi_i$ is already surjective.
\end{example}

We now consider applications of the Cauchy--Schwarz inequality, such as the proof of Fact~\ref{fact:functional-inequality}, in the language of linear data.
To describe such arguments in general we introduce the notion of ``joining'' two linear data together along one or more of the spaces $W_i$.
This concept will be useful in other settings so we define it in somewhat greater generality than we need right now.
\begin{definition}%
  \label{def:joining}
  Suppose $\Phi_0$, $\Phi_1$ are two linear data.
  Let $\beta \subseteq I^{\Phi_0} \times I^{\Phi_1}$ be a partial matching, specifically a bijection between $J_0 \subseteq I^{\Phi_0}$ to $J_1 \subseteq I^{\Phi_1}$,
  and assume that
  $W^{\Phi_0}_{i_0} = W^{\Phi_1}_{i_1}$ for all $(i_0,i_1) \in \beta$.

  The \emph{joining} of $\Phi_0$ and $\Phi_1$ along $\beta$, denoted $\Phi_0 +_\beta \Phi_1$, is a new datum $\Psi$ defined as follows.
  \begin{itemize}
    \item The index set $I^\Psi$ is the set
      \[
        J = \bigl\{ (L, i) \colon i \in I^{\Phi_0} \setminus J_0 \bigr\} \cup \bigl\{ (R, i) \colon i \in I^{\Phi_1} \setminus J_1 \bigr\}
      \]
      where $L$, $R$ denote any two arbitrary symbols.\footnote{%
        Using $0$ and $1$ would be more standard but can be hard to read if an index set $\{1,\dots,k\}$ also appears. 
        We will address labelling issues in more detail in Section~\ref{sub:labelling}.}
    \item For $j \in J$, the space $W^{\Psi}_j$ is $W^{\Phi_0}_i$ if $j=(L,i)$ for some $i \in I^{\Phi_0}$, or $W^{\Phi_1}_i$ if $j=(R,i)$ for some $i \in I^{\Phi_1}$.
    \item The vector space $V^\Psi \subseteq V^{\Phi_0} \oplus V^{\Phi_1}$ is the fiber product
      \begin{equation}%
        \label{eq:fiber-product}
        V^\Psi = \left\{ (v_0, v_1) \in V^{\Phi_0} \oplus V^{\Phi_1} \colon \phi^{\Phi_0}_{i_0}(v_0) = \phi^{\Phi_1}_{i_1}(v_1) \ \forall (i_0,i_1) \in \beta \right\}.
      \end{equation}
    Let $\pi_0 \colon V^\Psi \to V^{\Phi_0}$ and $\pi_1 \colon V^\Psi \to V^{\Phi_1}$ denote the two projection maps.
  \item The maps $\phi_j^{\Psi} \colon V^\Psi \to W^\Psi_j$ are given by $\phi_j^\Psi = \phi^{\Phi_0}_{i} \circ \pi_0$ if $j = (L,i)$ and $\phi_j^\Psi = \phi^{\Phi_1}_i \circ \pi_1$ if $j=(R,i)$.
  \end{itemize}

  If $I^{\Phi_0}=I^{\Phi_1}=I$ and $\beta = \{ (i,i) \colon i \in S \}$ for some $S \subseteq I$, we write $\Phi +_S \Phi$ in place of $\Phi +_\beta \Phi$.
\end{definition}

Applying Cauchy--Schwarz to eliminate the terms $\{ f_i \colon i \in S \}$ corresponds to the self-joining datum $\Phi +_S \Phi$, as the following proposition shows.
(This is closely related to~\cite[Proposition 3.6]{me}.)

\begin{proposition}%
  \label{prop:cs}
  Suppose $\Phi$ is a linear datum and $S \subseteq I^\Phi$ is a subset.  Let $\Psi = \Phi +_S \Phi$, as in Definition~\ref{def:joining}.

  For any $n \ge 1$ and functions $1$-bounded functions $(f_i)_{i \in I}$, $f_i \colon (W^\Phi_i)^n \to \CC$, define a tuple of functions $(g_j)_{j \in I^\Psi}$ by $g_{(L,i)} = f_i$ and $g_{(R,i)} = \overline{f_i}$ for all $i \in I^\Phi \setminus S$.  Then
  \[
    \bigl\lvert\Lambda_{\Phi}\bigl((f_i)_{i \in I^\Phi}\bigr)\bigr\rvert \le \bigl\lvert\Lambda_{\Psi}\bigl((g_j)_{j \in I^\Psi}\bigr) \bigr\rvert^{1/2}.
  \]
\end{proposition}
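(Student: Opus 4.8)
The plan is to transcribe the one-variable Cauchy--Schwarz step from the proof of Fact~\ref{fact:functional-inequality} into the language of linear data: collapse all functions indexed by $S$ into a single factor depending only on the ``shared'' coordinates, apply the Cauchy--Schwarz inequality once in that shared variable, and then recognize the expanded square as an average over $(V^\Psi)^n$. Concretely, introduce the linear map $\rho \colon V^\Phi \to \bigoplus_{i \in S} W^\Phi_i$, $v \mapsto (\phi^\Phi_i(v))_{i \in S}$, and split the defining product of $\Lambda_\Phi$ as $F(v) G(v)$ where $F(v) = \prod_{i \in I^\Phi \setminus S} f_i(\phi^\Phi_i(v))$ and $G(v) = \prod_{i \in S} f_i(\phi^\Phi_i(v))$. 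Since $G$ factors through $\rho^n$, there is a $1$-bounded function $\widetilde G$ on $(\im \rho)^n$ with $G(v) = \widetilde G(\rho^n(v))$. As $\rho^n \colon (V^\Phi)^n \to (\im \rho)^n$ is a linear surjection, all of its fibers are cosets of $(\ker \rho)^n$ and hence equinumerous, so $\EE_{v \in (V^\Phi)^n} = \EE_{w \in (\im \rho)^n}\, \EE_{v \in (\rho^n)^{-1}(w)}$; writing $H(w) = \EE_{v \in (\rho^n)^{-1}(w)} F(v)$ this gives $\Lambda_\Phi\bigl((f_i)_{i \in I^\Phi}\bigr) = \EE_{w} \widetilde G(w) H(w)$.

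Next I would apply the Cauchy--Schwarz inequality in $w$, using $\lvert \widetilde G(w) \rvert \le 1$, to obtain $\lvert \Lambda_\Phi((f_i)) \rvert^2 \le \EE_w \lvert H(w) \rvert^2$. Expanding, $\EE_w \lvert H(w) \rvert^2 = \EE_w\, \EE_{v_0, v_1 \in (\rho^n)^{-1}(w)} F(v_0) \overline{F(v_1)}$, and by the equal-fiber-size observation the iterated average over $w$ and over the two fibers is exactly the uniform average over the set of pairs $(v_0, v_1) \in (V^\Phi)^n \oplus (V^\Phi)^n$ with $\rho^n(v_0) = \rho^n(v_1)$. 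By definition of the joining this set is precisely $(V^\Psi)^n$, and under this identification the projections $\pi_0, \pi_1$ intertwine $\phi^\Phi_i \circ \pi_0$ with $\phi^\Psi_{(L,i)}$ and $\phi^\Phi_i \circ \pi_1$ with $\phi^\Psi_{(R,i)}$. Since $g_{(L,i)} = f_i$ and $g_{(R,i)} = \overline{f_i}$, the right-hand side is literally $\Lambda_\Psi\bigl((g_j)_{j \in I^\Psi}\bigr)$, which — being of the form $\EE_w \lvert H(w) \rvert^2$ — is a non-negative real; taking square roots gives the claimed bound.

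The one step needing genuine care is the bookkeeping in the second paragraph: checking that $\EE_{w}\, \EE_{v_0 \in (\rho^n)^{-1}(w)}\, \EE_{v_1 \in (\rho^n)^{-1}(w)}$ is the uniform probability measure on the fiber product $\{(v_0, v_1) \colon \rho^n(v_0) = \rho^n(v_1)\}$, and that this fiber product agrees on the nose with $(V^\Psi)^n$ once one accounts for the $n$-fold power — here one uses that $V^\Psi \subseteq V^{\Phi_0} \oplus V^{\Phi_1}$ is cut out by linear equations, so $(V^\Psi)^n$ is cut out inside $(V^\Phi)^n \oplus (V^\Phi)^n$ by those same equations applied entrywise, which is the same condition as $\rho^n(v_0) = \rho^n(v_1)$. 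Everything else is a direct transcription of the scalar Cauchy--Schwarz computation, and the argument is a mild variant of~\cite[Proposition~3.6]{me}.
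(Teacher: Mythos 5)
Your proof is correct and follows essentially the same route as the paper's: you introduce the same map $\rho = \tau \colon V^\Phi \to \bigoplus_{i \in S} W^\Phi_i$, condition on its value, apply Cauchy--Schwarz once with the trivial bound on the $S$-factor, and then identify the expanded square as the average over the fiber product, which is $(V^\Psi)^n$. The extra bookkeeping in your last paragraph (equal fiber sizes, the entrywise identification of $(V^\Psi)^n$ with the fiber product of $(V^\Phi)^n$ with itself over $(\im\rho)^n$, and the observation that the right-hand side is nonnegative) is implicit in the paper's shorter writeup but is exactly the right thing to check.
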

\begin{proof}%
  The proof is much more straightforward than the set-up.  Let
  \[
    \tau \colon V^\Phi \to \bigoplus_{i \in S} W^\Phi_i
  \]
  be the map $v \mapsto \bigl(\phi^\Phi_i(v)\bigr)_{i \in S}$, and let $U = \im \tau$.
  In particular any $u \in U$ is a tuple $(u_i)_{i \in S}$.
  Then
  \[
    \bigl\lvert\Lambda_{\Phi}((f_i)_{i \in I})\bigr\rvert = \biggl\lvert \EE_{u \in U} \Biggl( \prod_{i \in S} f_i(u_i) \Biggr) \Biggl( \EE_{v \in \tau^{-1}(u)} \prod_{i \notin S} f_i\bigl(\phi^\Phi_i(v)\bigr) \Biggr) \biggr\rvert.
  \]
  Applying Cauchy--Schwarz bounds this by
  \[
    \left( \EE_{u \in U} \prod_{i \in S} \bigl\lvert f_i(u_i) \bigr\rvert^2 \right)^{1/2} \left( \EE_{u \in U} \biggl\lvert\EE_{v \in \tau^{-1}(u)} \prod_{i \notin S} f_i\bigl(\phi^\Phi_i(v)\bigr) \biggr\rvert^2 \right)^{1/2}.
  \]
  The left-hand term is bounded by $1$, and expanding the square on the right yields
  \[
    \EE_{u \in U} \mathop{{}\EE}_{\substack{v_0, v_1 \in V^\Phi \\ \tau(v_0)=\tau(v_1)=u}} \prod_{i \notin S} f_i\bigl(\phi^\Phi_i(v_0)\bigr) \overline{f_i\bigl(\phi^\Phi_i(v_1)\bigr)}
  \]
  which on close inspection is exactly $\Lambda_{\Psi}\bigl((g_j)_{j \in I^\Psi}\bigr)$ as claimed.
\end{proof}

\begin{remark}%
  \label{rem:stashing}
  In Section~\ref{sub:true-summary} we considered ``stashed'' averages of the form $\Lambda_{\Phi}\bigl(f_1,\dots,f_5,f_6'\bigr)$ where $f_6' = D_{U^3}(g)$ is the $U^3$-dual function of some $g$.
  
  The multilinear average above may be expressed in terms of a single datum using the joining $\Phi +_{6 \leftrightarrow 000} U^3$, where $U^3$ is as in Definition~\ref{def:uk}: we have
  \[
    \Lambda_{\Phi}\bigl(f_1,\dots,f_5, D_{U^3}(g)\bigr) = 
    \Lambda_{\Phi +_{6 \leftrightarrow 000} U^3} \bigl(f_1,\dots,f_5, g, g, \overline{g}, \dots, g\bigr).
  \]
\end{remark}

At this stage we can say formally what a ``Cauchy--Schwarz argument'' looks like: it is simply a sequence of linear data $\Phi_0, \Phi_1, \dots, \Phi_M$ where each $\Phi_{i+1}$ is related to $\Phi_i$ either by a morphism $\Phi_{i+1} \to \Phi_i$ or by a Cauchy--Schwarz construction $\Phi_{i+1} = \Phi_{i} +_S \Phi_{i}$ for some $S \subseteq I^{\Phi_i}$.

Combining Proposition~\ref{prop:morphism} and Proposition~\ref{prop:cs} repeatedly, we get a statement of the following type: writing $I=I^{\Phi_0}$ and $J = I^{\Phi_M}$, for any tuple of $1$-bounded functions $(f_i)_{i \in I}$ there exists a tuple of $1$-bounded functions $(g_j)_{j \in J}$ such that
\[
  \bigl\lvert\Lambda_{\Phi_0}\bigl((f_i)_{i \in I}\bigr) \bigr\rvert \le \bigl\lvert\Lambda_{\Phi_M}\bigl((g_j)_{j \in J}\bigr) \bigr\rvert^{1/2^k}
\]
where $k$ counts the number of Cauchy--Schwarz steps.
Moreover, some (but not necessarily all) functions $g_j$ are known to be translates $x \mapsto f_i(x+t)$ or conjugate translates $x \mapsto \overline{f_i(x+t)}$ of some functions $f_i$.
Certainly at least one $g_j$ should have one of these form or the statement is vacuous.

By analogy with elementary proofs of functional equations, as discussed in Section~\ref{sub:background}, it is reasonable to think of such a sequence of steps as a ``Cauchy--Schwarz-friendly proof of $\Phi_M$ assuming $\Phi_0$''.
For this reason we borrow the logical notation\footnote{The notation $\Phi \entails \Psi$ is pronounced ``$\Phi$ entails $\Psi$''.}
$\Phi_0 \entails \Phi_M$ to denote the existence of such an argument.

Some extra decoration to the symbol $\entails$ is necessary to keep track of (i) the number of Cauchy--Schwarz steps, and (ii) how functions on the right relate to functions on the left.
For (ii), it is natural to introduce a partial function $\gamma \colon J \to I \times \{0,1\}$ where $\gamma(j) = (i, 0)$ means that $g_j$ is a translate of $f_i$ and $\gamma(j) = (i,1)$ means that $g_j$ is a translate of $\overline{f_i}$.
If $\gamma(j)$ is not defined, this means that, as far as we know, $g_j$ is an arbitrary $1$-bounded function.

We now set this up formally.
The details are involved but hopefully motivated by the above discussion.
\begin{definition}%
  \label{def:logic-notation}
  Suppose $\Phi$ and $\Psi$ are two linear data.
  Let $\gamma \colon I^{\Psi} \to I^{\Phi} \times \{0,1\}$ be a partial function and $k \ge 0$ an integer.
  We write
  $\Phi \entails^k_{\gamma} \Psi$
  if any of the following hold.
  \begin{enumerate}[label=(\alph*)]
    \item (One ``morphism'' step.) There is a morphism of linear data $\Theta \colon \Psi \to \Phi$, $k=0$, and $\gamma$ is the partial function
      \[
        \gamma = \bigl\{ \alpha^\Theta(i) \mapsto (i,0) \colon i \in I^\Phi,\ \Theta \text{ respects } i \bigr\}.
      \]
    \item (One Cauchy--Schwarz step.) We have $\Psi = \Phi +_S \Phi$ for some $S \subseteq I^\Phi$, and $\gamma((L,i)) = (i, 0)$ and $\gamma((R,i)) = (i,1)$ for all $i \in I \setminus S$.

    \item (Composing steps.) For some datum $\Xi$ it holds recursively that $\Phi \entails^{k_1}_{\gamma_1} \Xi$ and $\Xi \entails^{k_2}_{\gamma_2} \Psi$ where $k=k_1+k_2$ and $\gamma$ is the ``composite''
      \[
        \gamma = \bigl\{ j \mapsto \bigl(i, a_1+a_2 \bmod 2\bigr) \colon \gamma_2(j) = (\ell, a_2), \gamma_1(\ell) = (i, a_1) \bigr\}.
      \]
    \item (Discarding information.) It holds recursively that $\Phi \entails^{k'}_{\gamma'} \Psi$ where $k' \le k$ and $\gamma$ is a sub-function of the partial function $\gamma'$.
  \end{enumerate}
\end{definition}
We say that $\Phi \entails \Psi$ holds \emph{by }$\morph(\Theta)$ in case (a), and respectively \emph{by }$\CS(S)$ in case (b).
Hence whenever $\Phi \entails \Psi$ in general, there will be a sequence of steps
``$\morph(\Theta)$'' or ``$\CS(S)$'' encoding the ``proof'' of the statement $\Phi \entails \Psi$.

\begin{remark}%
  \label{rem:conj-tracking}
  Often we will not be interested in keeping track of the $\{0,1\}$ part of $\gamma(j)$.
  In this case, by abuse of notation we will simply provide a partial function $\gamma \colon I^{\Psi} \to I^\Phi$.
  For such $\gamma$, the statement $\Phi \entails^k_\gamma \Psi$ should be interpreted as saying that there exists a function $\gamma' \colon I^\Psi \to I^\Phi \times \{0,1\}$ with the same domain as $\gamma$, and with $\gamma'(j) = (\gamma(j), 0)$ or $(\gamma(j), 1)$ for all $j$, such that $\Phi \entails^k_{\gamma'} \Psi$.
\end{remark}

The following is then immediate from Proposition~\ref{prop:morphism} and Proposition~\ref{prop:cs}.
\begin{corollary}%
  \label{cor:log-entails}
  Let $\Phi$ and $\Psi$ be two linear data and write $I = I^\Phi$, $J=I^\Psi$.
  Suppose $\Phi \entails^k_\gamma \Psi$.
  Then for any $n \ge 1$ and any tuple $(f_i)_{i \in I}$ of $1$-bounded functions $f_i \colon (W_i^\Phi)^n \to \CC$ there exists a tuple of $1$-bounded functions $(g_j)_{j \in J}$, $g_j \colon (W^{\Psi}_j)^n \to \CC$, such that
  \[
    \bigl\lvert\Lambda_{\Phi}\bigl((f_i)_{i \in I}\bigr)\bigr\rvert \le \bigl\lvert\Lambda_{\Psi}\bigl((g_j)_{j \in J}\bigr)\bigr\rvert^{1/2^k}
  \]
  and such that whenever $j$ is in the domain of $\gamma$ and $\gamma(j) = (i, a)$, we have that $W^\Phi_i =W^\Psi_j$, and for some $t \in W_i^\Phi$:
  \[
    \forall x \in W^\Phi_i \colon g_j(x) = f_i(x+t) 
  \]
  if $a=0$, and if $a=1$:
  \[
    \forall x \in W^\Phi_i \colon g_j(x) = \overline{f_i(x+t)}.
  \]
\end{corollary}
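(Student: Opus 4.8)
The plan is to prove the corollary by induction on the recursive structure by which the statement $\Phi \entails^k_\gamma \Psi$ was established, following the four clauses (a)--(d) of Definition~\ref{def:logic-notation} (equivalently, by induction on a derivation witnessing $\Phi \entails^k_\gamma \Psi$). In every case the task is the same: given an arbitrary tuple $(f_i)$ of $1$-bounded functions $f_i \colon (W^\Phi_i)^n \to \CC$, produce a tuple $(g_j)$ of $1$-bounded functions $g_j \colon (W^\Psi_j)^n \to \CC$ with $\lvert \Lambda_\Phi((f_i)) \rvert \le \lvert \Lambda_\Psi((g_j)) \rvert^{1/2^k}$ and with the translate/conjugate-translate relationship recorded by $\gamma$ between the respective $g_j$ and $f_i$ (in particular $W^\Phi_i = W^\Psi_j$ whenever $\gamma(j) = (i,a)$).

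First I would dispatch the two base cases, which amount to little more than reading off the two propositions already proved. For a morphism step (clause (a)) we have $k = 0$ and a morphism $\Theta \colon \Psi \to \Phi$; applying Proposition~\ref{prop:morphism} with the names $\Phi$ and $\Psi$ exchanged produces $(g_j)_{j \in I^\Psi}$ with $\lvert \Lambda_\Phi((f_i)) \rvert \le \lvert \Lambda_\Psi((g_j)) \rvert$, and the conclusion of that proposition that $g_{\alpha^\Theta(i)}$ is a translate of $f_i$ for each $i \in I^\Phi$ respected by $\Theta$ is precisely the assertion encoded by $\gamma = \{\alpha^\Theta(i) \mapsto (i,0)\}$ (no conjugation occurs, so the $\{0,1\}$-coordinate is $0$). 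For a Cauchy--Schwarz step (clause (b)) we have $k = 1$ and $\Psi = \Phi +_S \Phi$; Proposition~\ref{prop:cs} supplies $g_{(L,i)} = f_i$, $g_{(R,i)} = \overline{f_i}$ and $\lvert \Lambda_\Phi((f_i)) \rvert \le \lvert \Lambda_\Psi((g_j)) \rvert^{1/2}$, which matches $\gamma((L,i)) = (i,0)$, $\gamma((R,i)) = (i,1)$ with translation vector $0$.

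For the recursive clauses, composition (clause (c)) I would treat by applying the inductive hypothesis first to $\Phi \entails^{k_1}_{\gamma_1} \Xi$ to obtain a tuple $(h_\ell)_{\ell \in I^\Xi}$, and then again to $\Xi \entails^{k_2}_{\gamma_2} \Psi$ with $(h_\ell)$ as input to obtain $(g_j)_{j \in I^\Psi}$; chaining the two inequalities gives
\[
  \lvert \Lambda_\Phi((f_i)) \rvert \le \lvert \Lambda_\Xi((h_\ell)) \rvert^{1/2^{k_1}} \le \lvert \Lambda_\Psi((g_j)) \rvert^{1/2^{k_1+k_2}},
\]
as needed since $k = k_1 + k_2$. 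Discarding information (clause (d)) uses the inductive hypothesis to get $\lvert \Lambda_\Phi((f_i)) \rvert \le \lvert \Lambda_\Psi((g_j)) \rvert^{1/2^{k'}}$ for some $k' \le k$, and then the trivial bound $\lvert \Lambda_\Psi((g_j)) \rvert \le 1$ (all $g_j$ are $1$-bounded) lets us weaken the exponent from $1/2^{k'}$ down to $1/2^k$; replacing $\gamma'$ by a subfunction $\gamma$ merely forgets part of the tracking.

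The one place that needs genuine care — and which I would write out carefully rather than leave to the reader — is the bookkeeping of $\gamma$ in the composition clause. There one must verify that the composite of two translates is a translate, that a conjugate-translate composed with a conjugate-translate is an ordinary translate (and a conjugate-translate composed with a translate is a conjugate-translate), so that the $\{0,1\}$-coordinate of the relation between $g_j$ and $f_i$ comes out as $a_1 + a_2 \bmod 2$ exactly as in the definition of the composite $\gamma$ in Definition~\ref{def:logic-notation}(c); and one must follow the chain of equalities $W^\Phi_i = W^\Xi_\ell = W^\Psi_j$ so that these composed-translate statements are even well-formed. Everything else is routine, so the only real risk is an unnoticed slip in this bookkeeping.
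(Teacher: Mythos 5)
Your proof is correct and follows exactly the approach the paper intends; the paper states the corollary is ``immediate from Proposition~\ref{prop:morphism} and Proposition~\ref{prop:cs}'' without writing out the induction, and your treatment of the four clauses of Definition~\ref{def:logic-notation} (base cases from the two propositions, composition by chaining exponents, discarding via $\lvert\Lambda_\Psi\rvert \le 1$) is precisely what that assertion elides, including the careful check of the $\bmod 2$ bookkeeping for conjugation under composition.
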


\begin{remark}%
  \label{rem:computer}
  We emphasize that $\entails^k_\gamma$ is a purely \emph{syntactic} statement about linear data: its truth is independent of any mention of functions $(f_i)$ or $(g_j)$, and a sequence of $\morph$ and $\CS$ statements that purports to show that $\Phi \entails^k_\gamma \Psi$ can be checked formally just by doing computations in linear algebra.
  In particular, any given instance of such a proof can---and arguably should---be verified by a computer.
  It is (intentionally) distinct from simply asserting that the conclusion of Corollary~\ref{cor:log-entails} holds.
\end{remark}

\subsection{Cauchy--Schwarz diagrams}%
\label{sub:diagrams}

Logically, the notion of a linear datum is sufficient to describe all the Cauchy--Schwarz arguments we need.
In practice, we soon have to consider hugely unwieldy vector spaces $V$ obtained by a sequence of iterated joinings / fiber products, either from applications of Cauchy--Schwarz or to encode stashed structure (see Remark~\ref{rem:stashing}).
It is essentially impossible for a human to reason about these spaces without keeping additional data about their internal fiber product structure.

To address this we introduce the notion of a \emph{Cauchy--Schwarz diagram} as an extra layer of language sitting above the notion of a linear datum.
There will be a forgetful mapping (in fact, a functor) from diagrams to linear data, and a standard embedding of linear data as diagrams (also a functor), allowing us to import the theory from Section~\ref{sub:linear-datum}.

The basic idea is as follows.  Any time we are tempted to apply a joining of linear data---say, along $\beta = \{(i_0,i_1)\}$--- to produce a new linear datum, i.e., by taking the fiber product
\[
  \begin{tikzcd}
    & \phantom{\wt V} & \\
    V^{\Phi_0} \arrow[dr, "\phi^{\Phi_0}_{i_0}"] & & V^{\Phi_1} \arrow[dl, "\phi^{\Phi_1}_{i_1}" above] \\ 
                                     & W &
  \end{tikzcd}
  \hspace{10pt}\leadsto\hspace{10pt}
  \begin{tikzcd}
    & V^\Psi \arrow[dl,dotted] \arrow[dr,dotted] & \\
    V^{\Phi_0} \arrow[dr, "\phi_{i_0}^{\Phi_0}"] & & V^{\Phi_1} \arrow[dl, "\phi_{i_1}^{\Phi_1}" above] \\ 
                                     & W &
  \end{tikzcd}
\]
as in~\eqref{eq:fiber-product}, we instead \emph{don't} take the fiber product and just ``remember'' the diagram on the left.
Iterating this process will result in a complicated digraph.
There will also be a mechanism for simplifying these digraphs using a ``morphism of diagrams'', analogously to the situation for morphisms of linear data.

A Cauchy--Schwarz diagram is not the same thing as an ``arithmetic circuit'', as discussed in Section~\ref{sub:repr}, but is designed as a step in that direction.

The related but not identical notion of a ``graph of vector spaces'' was introduced in~\cite{me}.
However it was used only as an informal tool to illustrate fairly small cases.
For our current purposes it is necessary to develop the concept systematically.

\begin{definition}%
  \label{def:diagram}
  Fix a prime $p$.  A \emph{diagram}\footnote{
  The term ``diagram'' is also chosen because it is consistent with the meaning of ``diagram'' in category theory.  
Formally, we can think of $\cG$ as (essentially) a finite category, and $V_x$ and $\phi_{xy}$ as a functor from $\cG$ to vector spaces over $\FF_p$.  

We will refer to the categorical perspective in subsequent footnotes, on the basis that the reader can take it exactly as seriously as they wish.}
  or \emph{Cauchy--Schwarz diagram} over $\FF_p$ consists of the following data:---
  \begin{itemize}
    \item a (finite) directed acyclic graph $\cG = (X, E)$ which is 
      \emph{singly connected}: i.e., for any pair $x,y \in X$ there is at most one directed path $x \to y$ in $\cG$;\footnote{%
        This condition is not actually needed anywhere, but has the effect of limiting the definition to cases where it is morally correct.
        Specifically, given two paths $x \to z \to y$ and $x \to z' \to y$ we should worry about whether the associated diagram should commute; i.e., whether or not we demand $\phi_{zy} \circ \phi_{xz} = \phi_{z'y} \circ \phi_{xz'}$.
        Either option could be natural according to context.
        Correctly we should replace $\cG$ with a finite category, which includes such information, but we wish to avoid such complications.
        However, it is possible that the extra flexibility afforded by these more general categorical diagrams could be useful in future.}
    \item a subset $\leafs \subseteq X$ of vertices, all having in-degree $1$ and out-degree $0$, termed the \emph{leaves} of the diagram;
    \item for each $x \in X$, a (finite-dimensional) vector space $V_x$ over $\FF_p$;
    \item for each directed edge $xy \in E$, a linear map $\phi_{xy} \colon V_x \to V_y$.
  \end{itemize}
  We write $\nonleafs = X \setminus \leafs$ for the non-leaf vertices.
\end{definition}

Once again, as in Convention~\ref{conv:datum-tuple} we write $\cG^D$, $X^D$, $\leafs^D$, $\nonleafs^D$, $E^D$, $V^D_x$ and $\phi^D_{xy}$ to refer to the constituent parts of a diagram $D$.

The intuition behind ``leaves'' is that they will correspond to the spaces $W^\Phi_i$ of a corresponding linear datum, where functions $f$ are actually evaluated.
By contrast, non-leaves will contribute only to the definition and structure of the space $V^\Phi$ (and the maps $\phi^\Phi_i$).

The following definition explains in full this structure-discarding mapping from diagrams to linear data.
\begin{definition}%
  \label{def:diagram-datum}
  Given a Cauchy--Schwarz diagram $D$, we define its associated linear datum $\Phi=\datum(D)$ as follows.
  \begin{itemize}
    \item The vector space $V^\Phi$ is\footnote{
      Equivalently, this is exactly the definition of the ``limit of a diagram'' in category theory, applied to this case.
    }%
      \begin{equation}%
        \label{eq:limit-space}
        V^{\Phi} = \left\{ (v_x)_{x \in X^D} \in \bigoplus_{x \in X^D} V^D_x : \forall xy \in E^D \colon \phi^D_{xy}(v_x) = v_y  \right\}.
      \end{equation}
    \item The index set $I^\Phi$ is $\leafs^D$, the set of leaves, and $W^\Phi_x = V^D_x$ for $x \in \leafs^D$.
    \item The maps $\phi^\Phi_x \colon V^\Phi \to W^\Phi_x$ for $x \in \leafs^D$ are exactly the projections 
      \begin{equation}%
        \label{eq:projections}
        \pi_x \colon \bigoplus_{y \in X^D} V^D_y \to V^D_x
      \end{equation}
      restricted to the subspace $V^\Phi$.\footnote{Alternatively, these are the maps $V^\Phi \to V^D_x$ from the universal property of the categorical limit.}
  \end{itemize}
  As an extra piece of terminology, we call an tuple $(v_x)_{x \in X}$, $v_x \in V^D_x$ such that $\phi_{xy}(v_x) = v_y$ for all $xy \in E^D$, as in~\eqref{eq:limit-space}, a \emph{compatible tuple}.
\end{definition}

Conversely, any linear datum defines a Cauchy--Schwarz diagram of a particular simple type.
\begin{definition}%
  \label{def:trivial-diagram}
  Given a linear datum $\Phi$, we define its associated diagram
  $D=\diagram(\Phi)$ as follows:---
  \begin{itemize}
    \item we take $X^D = I^{\Phi} \cup \{\diamond\}$, where $\diamond \notin I^{\Phi}$ is an extra symbol, $\leafs^D = I^\Phi$, and $E^D = \{ \diamond i \colon i \in I^{\Phi} \}$;
    \item we set $V^D_{\diamond} = V^{\Phi}$ and for $i \in I^{\Phi}$, $V^D_i = W^{\Phi}_i$ and $\phi^D_{\diamond i} = \phi^{\Phi}_i$.  
  \end{itemize}
\end{definition}
It is clear a diagram corresponds to a linear datum in this way precisely when it has exactly one non-leaf.
Pictorially, if $I^{\Phi}=\{i_1,\dots,i_k\}$ such a diagram looks like this:
\begin{center}
  \begin{tikzpicture}[
      boxnode/.style={rectangle,draw,inner sep=2pt,outer sep=0pt,minimum size=15pt,scale=0.7},
      leaf/.style={fill=leafgreen}]
    \node[boxnode]       (V)  at ( 0  , 0) {$\diamond$};
    \node[leaf,boxnode] (W1) at (-1.5,-1) {$i_1$};
    \node[leaf,boxnode] (W2) at (-0.5,-1) {$i_2$};
    \node (dd) at ( 0.5,-1) {$\dots$};
    \node[leaf,boxnode] (Wk) at ( 1.5,-1) {$i_k$};
    \draw[-stealth] (V) -- (W1) node[midway, "{$\phi_{i_1}$}" {left,scale=0.7}] {};
    \draw[-stealth] (V) -- (W2) node[midway, "{$\phi_{i_2}$}" {right,scale=0.7}] {};
    \draw[-stealth] (V) -- (Wk) node[midway, "{$\phi_{i_k}$}" {right,scale=0.7}] {};
  \end{tikzpicture}
\end{center}
Here, as in future, we have adopted the convention that leaves are green.

\begin{remark}%
  \label{rem:datum-diag}
  For any linear datum $\Phi$, we have
  $\datum(\diagram(\Phi)) = \Phi$,
  up to canonical isomorphisms.
  Indeed, the space
  \[
    V^{\datum(\diagram(\Phi))} = \biggl\{ (v_x)_{x \in I \cup \{\diamond\}} \colon \forall i \in I : v_i = \phi^\Phi_i(v_\diamond) \biggr\} \subseteq V^\Phi \oplus \bigoplus_{i \in I} W^\Phi_i
  \]
  from~\eqref{eq:limit-space} is naturally isomorphic to $V^\Phi$.
\end{remark}

\begin{convention}%
  \label{conv:no-diagram-word}
  If $\Phi$ is a linear datum, in practice we often abuse notation and simply write $\Phi$ to denote the corresponding diagram $\diagram(\Phi)$.
  This is especially true of the named linear data in Section~\ref{sub:linear-datum}, such as $\trivial(I)$, $\const(I)$ etc..
\end{convention}

We give one example (for now) of a diagram that is not simply $\diagram(\Phi)$ for some linear datum $\Phi$.
\begin{example}%
  \label{ex:my-first-diagram}
  Consider a diagram $D$ with vertex set $X = \{ A, B, 1A, 1B, 2A, 2B, 3 \}$, of which $\leafs = \{1A,1B,2A,2B\}$.
  The digraph $\cG$ consists of edges $A\!\rightarrow\!1A$, $A\!\rightarrow\!2A$, $A\!\rightarrow\! 3$, $B\!\rightarrow\! 1B$, $B\!\rightarrow\! 2B$, $B\!\rightarrow\! 3$ as shown below.
  \begin{center}
    \begin{tikzpicture}[
        every node/.style={rectangle,draw,inner sep=2pt,outer sep=0pt,minimum size=15pt,scale=0.7},
        leaf/.style={fill=leafgreen}]
      \node       (0V)  at ( 0, 0) {$A$};
      \node[leaf] (0W1) at (-1,-1) {$1A$};
      \node[leaf] (0W2) at ( 0,-1) {$2A$};
      \node       (-W3) at ( 1,-1) {$3$};
      \node       (1V)  at ( 2, 0) {$B$};
      \node[leaf] (1W2) at ( 2,-1) {$2B$};
      \node[leaf] (1W1) at ( 3,-1) {$1B$};
      \draw[-stealth] (0V) -- (0W1);
      \draw[-stealth] (0V) -- (0W2);
      \draw[-stealth] (0V) -- (-W3);
      \draw[-stealth] (1V) -- (1W1);
      \draw[-stealth] (1V) -- (1W2);
      \draw[-stealth] (1V) -- (-W3);
    \end{tikzpicture}
  \end{center}
  We take $V_A = V_B = \FF_p^2$ and $V_x = \FF_p$ for all other $x$.
  The morphisms are given by $\phi_{A \rightarrow 1A}(x,h) = x$, $\phi_{A \rightarrow 2A}(x,h) = x+h$ and $\phi_{A\rightarrow 3}(x,h) = x+2h$, together with the same linear maps with $B$ replacing $A$ throughout.

  This diagram represents the state of affairs in the proof of Fact~\ref{fact:functional-inequality} after the first Cauchy--Schwarz step.
  Accordingly, it is possible to check that $\datum(D)$ is, up to relabelling, the same as $\Phi +_{\{3\}} \Phi$ where $\Phi$ is the linear datum of three-term arithmetic progressions.
  We will prove a general statement of this form in Proposition~\ref{prop:diagram-joining} below.
\end{example}

For much of the rest of this section, we adapt the material from Section~\ref{sub:linear-datum} to the augmented world of diagrams.
In so doing we prove the link between abstract diagram manipulations and honest $\Lambda$-inequalities.

We begin by remarking on the notion of surjectivity for diagrams; see Remark~\ref{rem:surj-degen}.
\begin{definition}%
  \label{def:diag-surj}
  If $D$ is a diagram and $x \in X^D$ is a vertex, we say $D$ is \emph{surjective at $x$} if the natural projection map $V^{\datum(D)} \to V_x$ (see~\eqref{eq:limit-space}) is surjective.
  When $x \in \leafs^D$, this is equivalent to saying $\datum(D)$ is surjective at $x$.
\end{definition}
It is typically not obvious that a complicated diagram is surjective at any vertex, but most of the diagrams we care about are in fact surjective at every vertex. 
In Section~\ref{sub:surj} we develop some tools for checking such statements.

As with linear data, there is a natural and useful notion of morphisms between diagrams.
As in that case, it is convenient for the definition to imagine that there is an extra space $V_\zeroi = \{0\}$ attached to every diagram.
The following convention is analogous to Convention~\ref{convention:ast}.

\begin{convention}%
  \label{convention:ast-revenge}
  For any Cauchy--Schwarz diagram $D$, we assume $\zeroi$ is an extra symbol with $\zeroi \notin X^D$,
  and write $V^D_{\zeroi} = \{0\}$, the zero space.
  
  By convention $x\zeroi$ and $\zeroi y$ for $x,y \in X^D$ are considered elements of $E^D$.
  The maps $\phi^D_{\zeroi y}$ and $\phi^D_{x \zeroi}$ are the zero map.

  Also, for any $x \in X^D$ the self-loop $xx$ is considered to be an edge\footnote{When we make statements such as ``$x$ has out-degree $0$'' we of course do not include this self-loop or the edge $x \zeroi$.} in $E^D$, and by convention $\phi^D_{xx} \colon V^D_x \to V^D_x$ denotes the identity map.\footnote{This is needed to make certain definitions work correctly, and is natural if we take the view that our directed acyclic graph $\cG$ should really be thought of as a finite category.}
\end{convention}

\begin{definition}%
  \label{def:diagram-morphism}
  Suppose $C,D$ are two diagrams, and $\alpha \colon X^D \to X^{C} \cup \{\zeroi\}$ is a function.

  Suppose also that (i) $\alpha\bigl(\leafs^{D}\bigr) \subseteq \leafs^{C} \cup \{\zeroi\}$ and $\alpha \bigl( \nonleafs^D \bigr) \subseteq \nonleafs^C$, and (ii) for all $xy \in E^D$ we have  $\alpha(x)\alpha(y) \in E^{C}$.
  By Convention~\ref{convention:ast-revenge}, (ii) is automatically satisfied if $\alpha(x)=\zeroi$, $\alpha(y)=\zeroi$ or $\alpha(x)=\alpha(y)$.

  A \emph{morphism of diagrams} $\Theta \colon C \to D$ of shape $\alpha$ consists of a collection of linear maps $\theta = (\theta_x)_{x \in X^D}$ where $\theta_x \colon V^{C}_{\alpha(x)} \to V^D_x$ such that for every $xy \in E^D$, the diagram
  \begin{equation}%
    \label{eq:diag-morph}
    \begin{tikzcd}
      V^{C}_{\alpha(x)} \arrow[d, "\phi^{C}_{\alpha(x)\alpha(y)}"] \arrow[r,"\theta_{x}"] & 
      V^D_{x} \arrow[d, "\phi^D_{xy}"] \\ 
      V^{C}_{\alpha(y)} \arrow[r,"\theta_y"] & 
      V^D_{y}
    \end{tikzcd}
  \end{equation}
  commutes, where Convention~\ref{convention:ast-revenge} applies if $\alpha(x)=\zeroi$, $\alpha(y)=\zeroi$ or $\alpha(x)=\alpha(y)$.

  We say the morphism \emph{respects} a leaf $x \in \leafs^D$ if (a) $\alpha(x) \ne \zeroi$, (b) for all other $y \in \leafs^D$ we have $\alpha(x) \ne \alpha(y)$, and (c) $V^D_x = V^{C}_{\alpha(x)}$ and $\theta_x$ is the identity map.
\end{definition}

Again, as in Convention~\ref{conv:datum-tuple}, we write $\theta^\Theta$ and $\alpha^\Theta$ to refer to the constituent parts of a morphism $\Theta$.

As one might hope, a morphism of diagrams induces a morphism of the corresponding linear data (and hence an inequality of multilinear averages $\Lambda_\Phi$).

\begin{proposition}%
  \label{prop:diagram-morphism}
  Suppose $\Theta \colon C \to D$ is a morphism of diagrams of shape $\alpha$.
  Then there is a corresponding morphism $\datum(\Theta) \colon \datum(C) \to \datum(D)$ of linear data of shape $\alpha|_{\leafs^D}$ with $\sigma_i = \phi_i$ for all $i \in \leafs$.

  Moreover, if $\Theta$ respects some $i \in \leafs^D$ then so does $\datum(\Theta)$.
\end{proposition}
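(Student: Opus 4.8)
The plan is to read the data of $\datum(\Theta)$ directly off $\Theta$ and then verify the two commutativity conditions of Definition~\ref{def:morphism}. Take the shape of $\datum(\Theta)$ to be $\alpha|_{\leafs^D}$: by hypothesis (i) of Definition~\ref{def:diagram-morphism} this sends $\leafs^D = I^{\datum(D)}$ into $\leafs^C \cup \{\zeroi\} = I^{\datum(C)} \cup \{\zeroi\}$, so it is a well-typed shape. For the linear map $\theta^{\datum(\Theta)} \colon V^{\datum(C)} \to V^{\datum(D)}$, recall from~\eqref{eq:limit-space} that a point of $V^{\datum(C)}$ is a compatible tuple $(v_x)_{x \in X^C}$, and set
\[
  \theta^{\datum(\Theta)}\bigl((v_x)_{x \in X^C}\bigr) = \bigl(\theta^\Theta_x(v_{\alpha(x)})\bigr)_{x \in X^D},
\]
interpreting $v_\zeroi = 0$ via Convention~\ref{convention:ast-revenge} when $\alpha(x) = \zeroi$. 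For the leaf maps, set $\sigma_i = \theta^\Theta_i$ for each $i \in \leafs^D$; since $W^{\datum(D)}_i = V^D_i$ and $W^{\datum(C)}_{\alpha(i)} = V^C_{\alpha(i)}$, this is a map of the correct type.

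The key point is to check that $\theta^{\datum(\Theta)}$ really lands in $V^{\datum(D)}$, i.e.\ that the output tuple $\bigl(\theta^\Theta_x(v_{\alpha(x)})\bigr)_x$ is compatible. Fix a genuine edge $xy \in E^D$. Since $x$ has positive out-degree it is a non-leaf, so hypothesis (i) forces $\alpha(x) \in \nonleafs^C$, in particular $\alpha(x) \ne \zeroi$; and hypothesis (ii) gives $\alpha(x)\alpha(y) \in E^C$ in the extended sense of Convention~\ref{convention:ast-revenge} (allowing $\alpha(x) = \alpha(y)$ or $\alpha(y) = \zeroi$). Apply the commuting square~\eqref{eq:diag-morph} at $xy$, namely $\phi^D_{xy} \circ \theta^\Theta_x = \theta^\Theta_y \circ \phi^C_{\alpha(x)\alpha(y)}$, and then use compatibility of $(v_x)$ for $C$, namely $\phi^C_{\alpha(x)\alpha(y)}(v_{\alpha(x)}) = v_{\alpha(y)}$. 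The two borderline cases are covered by the conventions: if $\alpha(x) = \alpha(y)$ then $\phi^C_{\alpha(x)\alpha(y)}$ is the identity and $v_{\alpha(x)} = v_{\alpha(y)}$; if $\alpha(y) = \zeroi$ then $\phi^C_{\alpha(x)\zeroi}$, hence also $\phi^D_{xy}\circ\theta^\Theta_x$, is the zero map and $v_\zeroi = 0$. In all cases
\[
  \phi^D_{xy}\bigl(\theta^\Theta_x(v_{\alpha(x)})\bigr) = \theta^\Theta_y\bigl(\phi^C_{\alpha(x)\alpha(y)}(v_{\alpha(x)})\bigr) = \theta^\Theta_y(v_{\alpha(y)}),
\]
which is exactly the compatibility condition for the output tuple. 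Linearity of $\theta^{\datum(\Theta)}$ is immediate, since each $\theta^\Theta_x$ and each coordinate projection is linear.

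It remains to verify the square~\eqref{eq:morph} of Definition~\ref{def:morphism} for $\datum(\Theta)$ at each leaf $i$, and the ``respects'' clause. By~\eqref{eq:projections}, $\phi^{\datum(D)}_i$ and $\phi^{\datum(C)}_{\alpha(i)}$ are the coordinate projections restricted to the limit spaces, so evaluating both legs of~\eqref{eq:morph} on a compatible tuple $(v_x)$ reduces the required identity to $\theta^\Theta_i(v_{\alpha(i)}) = \sigma_i(v_{\alpha(i)})$, which holds by the choice $\sigma_i = \theta^\Theta_i$ (when $\alpha(i) = \zeroi$ both sides are $0$ by Convention~\ref{convention:ast}). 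This exhibits the morphism $\datum(\Theta) \colon \datum(C) \to \datum(D)$ of shape $\alpha|_{\leafs^D}$ with the stated leaf maps. For the last assertion, suppose $\Theta$ respects a leaf $i \in \leafs^D$, i.e.\ $\alpha(i) \ne \zeroi$, no other leaf $y$ of $D$ satisfies $\alpha(y) = \alpha(i)$, $V^D_i = V^C_{\alpha(i)}$, and $\theta^\Theta_i = \id$. Under the identifications $I^{\datum(D)} = \leafs^D$ and $W^{\datum(D)}_i = V^D_i$ these are precisely conditions (a), (b), (c) of Definition~\ref{def:morphism} for $\datum(\Theta)$ to respect $i$, the last using $\sigma_i = \theta^\Theta_i = \id$. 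Hence $\datum(\Theta)$ respects $i$.

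I expect the only genuine obstacle to be the compatibility check in the second paragraph: one must see that hypotheses (i)--(ii) of Definition~\ref{def:diagram-morphism} together with the self-loop and $\zeroi$-edge conventions of Convention~\ref{convention:ast-revenge} are exactly what is needed to legitimately invoke the square~\eqref{eq:diag-morph} at every edge of $D$, including the degenerate edges. Everything else is formal unwinding of the definitions of $\datum(-)$ and of morphisms.
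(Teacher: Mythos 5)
Your proof is correct and follows essentially the same route as the paper: define $\theta^{\datum(\Theta)}$ coordinate-wise via $\theta^\Theta_x$ and $\alpha$, verify that compatible tuples map to compatible tuples using~\eqref{eq:diag-morph}, set $\sigma_i=\theta^\Theta_i$, and read off the commutativity of~\eqref{eq:morph} and the respecting clause directly from the definitions. Your treatment of the borderline cases ($\alpha(y)=\zeroi$, self-loops) is slightly more explicit than the paper's, but the underlying argument is identical.
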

\begin{proof}%
  We write $\Phi = \datum(C)$ and $\Psi=\datum(D)$.
  In particular $V^\Phi$ and $V^\Psi$ are the vector spaces from~\eqref{eq:limit-space} applied to $C$ and $D$ respectively.

  With reference to~\eqref{eq:limit-space}, we may define
  $\wt \theta \colon V^\Phi \to V^\Psi$ by
  \begin{equation}%
    \label{eq:theta-tilde}
    \wt \theta \bigl( (v_z)_{z \in X^C} \bigr) = \bigl(\theta_{x}(v_{\alpha(x)})\bigr)_{x \in X^D}.
  \end{equation}
  We claim this is well-defined: that is, if the tuple $(v_z)_{z \in X^C}$ lies in $V^\Phi$ (i.e., is a compatible tuple in $C$) then the right-hand side lies in $V^\Psi$ (i.e., is a compatible tuple in $D$).
  In other words, we want to show that for all $xy \in E^D$,
  \[
    \phi_{xy}^D \bigl(\theta_{x}(v_{\alpha(x)})\bigr) = \theta_{y}(v_{\alpha(y)}).
  \]
  Indeed, we have
  \[
    \phi_{xy}^D \bigl(\theta_{x}(v_{\alpha(x)})\bigr) =\theta_y\bigl(\phi^C_{\alpha(x)\alpha(y)}(v_{\alpha(x)})\bigr)
  \]
  by~\eqref{eq:diag-morph}, and
  \[
    \phi^C_{\alpha(x)\alpha(y)}(v_{\alpha(x)}) = v_{\alpha(y)}
  \]
  by the hypothesis that $(v_z)_{z \in X^C}$ is a compatible tuple.
  
  For $i \in \leafs^D$, define $\sigma_i \colon W^{\Phi}_{\alpha(i)} \to W^\Psi_i$ by $\sigma_i = \theta_i$.
  Now~\eqref{eq:morph}, i.e.\ the equation
  \[
    \sigma_i \circ \phi^{\Phi}_{\alpha(i)} = \phi^\Psi_i \circ \wt \theta
  \]
  is immediate from the definitions of $\wt \theta$, $\phi^\Phi_{\alpha(i)}$ and $\phi^\Psi_i$.

  The statement about respecting an index $i \in \leafs^D$ is also clear from the definitions.
\end{proof}

\begin{remark}%
  \label{rem:other-functor}
  Proposition~\ref{prop:diagram-morphism} states essentially that $\datum(-)$ is a functor.
  We note that $\diagram(-)$ is also a functor: given a morphism $\Theta \colon \Phi \to \Psi$ of linear data, a corresponding morphism $\diagram(\Phi) \to \diagram(\Psi)$ is easily constructed by setting $\alpha(\diamond)=\diamond$, $\theta_{\diamond} = \theta^\Theta$ and $\theta_{j} = \sigma_j$ for $j \in I^\Psi$.
  The notion of ``respecting'' a leaf or index is preserved by this construction.

  The construction also works in reverse: given a morphism of diagrams $\diagram(\Phi) \to \diagram(\Psi)$ we necessarily have $\alpha(\diamond) = \diamond$ and recover a morphism of linear data $\Phi \to \Psi$ by taking $\theta = \theta_\diamond$ and $\sigma_j = \theta_j$ for $j \in I^\Psi$.

  We therefore have a natural bijection
  \begin{equation}%
    \label{eq:fully-faithful}
    \bigl\{ \text{diagram morphisms } \diagram(\Phi) \to \diagram(\Psi) \bigr\} \leftrightarrow \bigl\{ \text{datum morphisms } \Phi \to \Psi \bigr\}
  \end{equation}
  that preserves the notion of ``respecting'' leaves.

  Conveniently this means that  we do not have to worry about whether a morphism $\Phi \to \Psi$ is supposed to mean a morphism of linear data, or a diagram morphism $\diagram(\Phi) \to \diagram(\Psi)$ under the abuse of notation in Convention~\ref{conv:no-diagram-word}.
\end{remark}

We now state crucially how this notion of a diagram interacts with ``joinings'' of linear data and Cauchy--Schwarz.
\begin{definition}%
  \label{def:diagram-cs}
  Suppose $D_0, D_1$ are two diagrams and $\beta \subseteq \leafs^{D_0} \times \leafs^{D_1}$ is a partial matching between leaves of $D_0$ and leaves of $D_1$, specifically a bijection between $J_0 \subseteq \leafs^{D_0}$ and $J_1 \subseteq \leafs^{D_1}$.
  Assume that $V^{D_0}_{x_0} = V^{D_1}_{x_1}$ for all $(x_0, x_1) \in \beta$.

  The \emph{joining} of $D_0$ and $D_1$ along $\beta$, denoted $D_0 +_{\beta} D_1$, is a new diagram $D$ defined as follows.
  \begin{itemize}
    \item The digraph $\cG^D$ consists of a copy of each of $\cG^{D_0}$ and $\cG^{D_1}$ with pairs of vertices $(x_0, x_1) \in \beta$ identified.
      Formally, for any two arbitrary symbols $L$, $R$, we set
      \[
        X^D = \left( \bigl\{ (L,x) \colon x \in ^{D_0} \bigr\} \cup \bigl\{ (R, x) \colon x \in X^{D_1} \bigr\} \right) / \sim
      \]
      where $\sim$ is the equivalence relation given by $(L, x_0) \sim (R, x_1)$ for $(x_0, x_1) \in \beta$ together with $y \sim y$ for all $y \in X^D$, and\footnote{
        Correctly we should say ``the $\sim$-equivalence class of $(L,x)$'' in place of ``$(L,x)$'', etc, in the definition of $E^D$.
      We note though that all the edges appearing in the definition are distinct under $\sim$: for $(L,x_0)(L,y_0)$ and $(R,x_1)(R,y_1)$ to coincide we would need $(L,x_0) \sim (R,x_1)$ and so $x_0$ and $x_1$ would be leaves, but leaves have out-degree $0$.
      }
      \[
        E^D = \bigl\{ (L, x) (L, y) \colon xy \in E^{D_0} \bigr\} \cup \bigl\{ (R,x) (R,y) \colon xy \in E^{D_1} \bigr\}.
      \]
    \item The leaves $\leafs^D$ are given by the disjoint union
      \[
        \leafs^D = \bigl\{ (L, x_0) \colon x_0 \in \leafs^{D_0} \setminus J_0 \bigr\} \cup \bigl\{ (R, x_1) \colon x_1 \in \leafs^{D_1} \setminus J_1 \bigr\}.
      \]
      In particular, ``matched'' vertices $(L,x_0) \sim (R, x_1)$ are \emph{not} leaves of $D$.
    \item The vector spaces are $V^D_{(L,x)}=V^{D_0}_x$ for $x \in X^{D_0}$ and $V^D_{(R,x)} = V^{D_1}_x$ for $x \in X^{D_1}$.
      Note this is well-defined under $\sim$ by our hypotheses.
    \item The linear maps are $\phi^D_{(L,x)(L,y)} = \phi^{D_0}_{xy}$ for $xy \in E^{D_0}$ and $\phi^D_{(R,x)(R,y)} = \phi^{D_1}(xy)$ for $xy \in E^{D_1}$.
  \end{itemize}
  As with linear data, if $\leafs = \leafs^{(0)} = \leafs^{(1)}$ and $\beta = \{ (i,i) \colon i \in S \}$ is diagonal for some $S \subseteq \leafs$, we write $D_0 +_S D_1$ to mean the same as $D_0 +_\beta D_1$.
\end{definition}

By a slight abuse of notation,
we will refer to elements of $X^D$ as simply $(L,x_0)$ or $(R,x_1)$ and write $(L,x_0)=(R,x_1)$ if they are equivalent; i.e., the equivalence relation $\sim$ is always left implicit.
If $(x,x) \in \beta$ and so $(L,x)=(R,x)$, to preserve symmetry we occasionally write $(\#, x)=(L,x)=(R,x)$ to denote this element.

Again, as we might hope, joining two diagrams $D_0$, $D_1$ is compatible with joining their associated linear data $\datum(D_0)$, $\datum(D_1)$.
\begin{proposition}%
  \label{prop:diagram-joining}
  If $D_0$, $D_1$ are two diagrams and $\beta \subseteq \leafs^{D_0} \times \leafs^{D_1}$ is a partial matching as in Definition~\ref{def:diagram-cs}, then
  \[
    \datum(D_0) +_{\beta} \datum(D_1) = \datum \bigl( D_0 +_{\beta} D_1 \bigr)
  \]
  are the same linear datum, up to canonical isomorphisms.
\end{proposition}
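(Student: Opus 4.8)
The plan is to name both sides, write down the evident candidate isomorphism between their $V$-spaces, and then check it is compatible with all the remaining structure --- which for the index sets and the spaces $W_j$ is visibly the case. Write $\Phi_0 = \datum(D_0)$, $\Phi_1 = \datum(D_1)$, let $\Psi = \Phi_0 +_\beta \Phi_1$ be the joining of linear data, and let $\Psi' = \datum(D_0 +_\beta D_1)$. First I would note that $\Psi$ and $\Psi'$ have literally the same index set: on either side a leaf is an unmatched leaf $(L,x_0)$ with $x_0 \in \leafs^{D_0} \setminus J_0$ or $(R,x_1)$ with $x_1 \in \leafs^{D_1} \setminus J_1$, and the associated space $W_j$ is $V^{D_0}_{x_0}$, respectively $V^{D_1}_{x_1}$, in both constructions. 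The only subtlety here is keeping the symbols $L$, $R$ and the equivalence relation $\sim$ from Definition~\ref{def:diagram-cs} aligned with the analogous labels in Definition~\ref{def:joining}, which is routine.

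The substance is the identification of $V^\Psi$ with $V^{\Psi'}$. Unwinding Definition~\ref{def:joining}, a point of $V^\Psi$ is a pair $(v_0, v_1)$, where $v_0 = (v_{0,x})_{x \in X^{D_0}}$ is a compatible tuple in $D_0$ and $v_1 = (v_{1,x})_{x \in X^{D_1}}$ is a compatible tuple in $D_1$; and, because $\phi^{\Phi_0}_{i_0}$ and $\phi^{\Phi_1}_{i_1}$ are by Definition~\ref{def:diagram-datum} the projections onto the $i_0$- and $i_1$-components, the fiber-product condition becomes $v_{0,i_0} = v_{1,i_1}$ for every $(i_0,i_1) \in \beta$. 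On the other hand, a point of $V^{\Psi'}$ is a compatible tuple $(w_z)$ over the vertex set of $D_0 +_\beta D_1$; since that vertex set consists of the $(L,x)$ and $(R,x)$ with $(L,x_0) \sim (R,x_1)$ for $(x_0,x_1) \in \beta$, and the edge set is disjoint copies of $E^{D_0}$ and $E^{D_1}$, such a tuple is exactly a compatible tuple in $D_0$ and a compatible tuple in $D_1$ that agree at matched vertices. So $(v_0, v_1) \mapsto (w_z)$ with $w_{(L,x)} = v_{0,x}$ and $w_{(R,x)} = v_{1,x}$ is a well-defined linear bijection $V^\Psi \to V^{\Psi'}$, with obvious inverse obtained by splitting a compatible tuple into its $L$- and $R$-parts. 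The one point worth stating carefully is that a matched vertex $(L,i_0) = (R,i_1)$ is \emph{not} a leaf of $D_0 +_\beta D_1$ and so may have in-degree $2$; the two compatibility equations into it, coming respectively from the $D_0$- and $D_1$-sides, are precisely what encode the fiber-product condition $v_{0,i_0} = v_{1,i_1}$.

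It then remains to check that this bijection intertwines the leaf-projections, which is immediate: $\phi^\Psi_{(L,i)} = \phi^{\Phi_0}_i \circ \pi_0$ sends $(v_0,v_1)$ to $v_{0,i}$, while $\phi^{\Psi'}_{(L,i)}$ sends $(w_z)$ to $w_{(L,i)} = v_{0,i}$, and symmetrically for $(R,i)$; hence the two linear data agree up to this canonical isomorphism. I do not anticipate a genuine obstacle: conceptually this is the diagram-level shadow of the fact that a limit over two diagrams glued along a set of objects is the fiber product of their limits, and in the categorical language of the footnotes it says that the limit functor $\datum(-)$ carries the gluing of shapes (a pushout) to the corresponding pullback. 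The only thing that really needs attention is the notational overhead of the $L/R$ labelling and the $\sim$-identification, and taking care not to conflate the projections $\pi_x$ with the structure maps $\phi_{xy}$ of the diagrams.
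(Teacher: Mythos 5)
Your proposal is correct and follows essentially the same route as the paper's own proof: unpack both sides to see that a compatible tuple in $D_0 +_\beta D_1$ is exactly a pair of compatible tuples in $D_0$ and $D_1$ agreeing at matched vertices, which is precisely the fiber-product condition, and then observe the leaf spaces and projections match up. The only cosmetic difference is the direction of the map (you go $V^\Psi \to V^{\Psi'}$, the paper defines $\tau$ in the other direction), which is immaterial.
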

The proof is essentially a case of unpacking the definitions, but for completeness we give the details.
\begin{proof}%
  Write $D = D_0 +_\beta D_1$ and $\Phi = \datum(D_0) +_{\beta} \datum(D_1)$.
  Hence we wish to show $\datum(D) = \Phi$.
  It is clear by definition that $I^\Phi = \leafs^D$.

  Recall that
  \begin{align*}
    V^{\datum(D_0)} &\subseteq \bigoplus_{x_0 \in X^{D_0}} V_{x_0}^{D_0} \\
    V^{\datum(D_1)} &\subseteq \bigoplus_{x_1 \in X^{D_1}} V_{x_1}^{D_1}  \\
    V^{\datum(D)} &\subseteq \bigoplus_{z \in X^{D}} V_{z}^{D} 
  \end{align*}
  are the subspaces of compatible tuples, as in~\eqref{eq:limit-space}.
  Also recall that $V^\Phi$ as defined in~\eqref{eq:fiber-product} is the subspace
  $
  V^\Phi \subseteq V^{\datum(D_0)} \oplus V^{\datum(D_1)}
  $
  consisting of pairs $(u,v)$ such that $u_{i_0} = v_{i_1}$ for all $(i_0,i_1) \in \beta$ (unwrapping the definitions of $\phi_{i_0}^{\datum(D_0)}$ and $\phi_{i_1}^{\datum(D_1)}$).

  The embeddings $X^{D_0} \to X^D$ and $X^{D_1} \to X^D$ induce natural maps
  \begin{align*}
    \tau_0 \colon \bigoplus_{z \in X^D} V^D_z &\to \bigoplus_{x_0 \in X^{D_0}} V_{x_0}^{D_0} \\
    \tau_1 \colon \bigoplus_{z \in X^D} V^D_z &\to \bigoplus_{x_1 \in X^{D_1}} V_{x_1}^{D_1}
  \end{align*}
  sending $(v_z)_{z \in X^D}$ to $(v_{(L,x_0)})_{x_0 \in X^{D_0}}$ and $(v_{(R,x_1)})_{x_1 \in X^{D_1}}$ respectively.
  Combining these gives a map
  \[
    \tau \colon \bigoplus_{z \in X^D} V^D_z \to \left( \bigoplus_{x_0 \in X^{D_0}} V_{x_0}^{D_0} \right) \oplus \left( \bigoplus_{x_1 \in X^{D_1}} V_{x_1}^{D_1} \right).
  \]
  We make the following claims:---
  \begin{enumerate}[label=(\alph*)]
    \item $\tau_0\bigl(V^{\datum(D)}\bigr) \subseteq V^{\datum(D_0)}$ and 
      $\tau_1\bigl(V^{\datum(D)}\bigr) \subseteq V^{\datum(D_1)}$;
    \item $\tau\bigl(V^{\datum(D)}\bigr) \subseteq V^\Phi$;
    \item $\tau$ restricted to $V^{\datum(D)}$ is an isomorphism onto $V^\Phi$.
  \end{enumerate}
  Indeed, (a) says that if $(v_z)_{z \in X^D}$ is a compatible tuple and $x_0 y_0 \in E^{D_0}$ then $\phi^{D_0}_{x_0y_0}(v_{(L,x_0)}) = v_{(L,y_0)}$ (and similarly for $D_1$).
  But this is just the definition of compatibility applied to the edge $(L,x_0)(L, y_0) \in E^{D}$.

  In (b), for $(u,v)=\tau(w)$ and $(i_0,i_1) \in \beta$ we have $(L,i_0) = (R,i_1)$ and so
  \[
    u_{i_0} = w_{(L,i_0)} = w_{(R,i_1)} = v_{i_1}.
  \]

  For (c), we note in reverse that any pair of tuples $(u,v)$ obeying $u_{i_0} = v_{i_1}$ for all $(i_0,i_1) \in \beta$ has a unique preimage $(w_z)_{z \in X^D}$ under $\tau$, by setting $w_{(L,x_0)} = u_{x_0}$ and $w_{(R,x_1)} = v_{x_1}$.
  By reversing the argument in (a), this element lies in $V^{\datum(D)}$.

  Hence $V^{\Phi}$ and $V^{\datum(D)}$ are canonically isomorphic under $\tau$.
  To complete the proof we consider the spaces $W_i$ and maps $\phi_i$.

  For $i \in I^\Phi = \leafs^D$, if $i=(L,i_0)$ from the definitions we have
  \[
    W_i^{\Phi} = V_{i_0}^{D_0} = V_i^D = W_i^{\datum(D)}
  \]
  and similarly if $i=(R,i_1)$.
  Finally, if $i=(L,i_0)$ and $w \in V^{\datum(D)}$, writing $\tau(w)=(u,v)$ the definitions give
  \[
    \phi_i^{\datum(D)}(w) = w_i = u_{i_0} = \phi_{i_0}^{\datum(D_0)}(u) = \phi_i^{\Phi}(u,v) = \phi_i^{\Phi}(\tau(w))
  \]
  and similarly if $i=(R,i_1)$, so the maps $\phi_i$ coincide under $\tau$.
\end{proof}

Hence, applying Cauchy--Schwarz to a diagram, as in Proposition~\ref{prop:cs}, corresponds to replacing $D$ with $D +_S D$ for some set of leaves $S \subseteq \leafs^D$.

We can now define an ``$\entails$'' notation for diagrams analogous to Definition~\ref{def:logic-notation}.
\begin{definition}%
  \label{def:logic-notation-revenge}
  Suppose $C$ and $D$ are two diagrams.
  Let $\gamma \colon \leafs^{D} \to \leafs^C \times \{0,1\}$ be a partial function and $k \ge 0$ an integer.
  We write $C \entails^k_\gamma D$ if any of conditions (a')--(d') hold, where (a')--(d') are obtained from the exactly analogous conditions (a)--(d) in Definition~\ref{def:logic-notation} by replacing $\Phi$ with $C$, $\Psi$ with $D$ and $I^\Phi$, $I^\Psi$ with $\leafs^{C}$, $\leafs^D$ wherever they appear.

  We also use the notation $\morph(\Theta)$ to denote a step of type (a') and $\CS(S)$ to denote a step of type (b').
\end{definition}

The following is then immediate from Proposition~\ref{prop:diagram-morphism} and Proposition~\ref{prop:diagram-joining}.
\begin{corollary}%
  \label{cor:entails-entails}
  If $C$ and $D$ are diagrams and $C \entails^k_\gamma D$ then $\datum(C) \entails^k_\gamma \datum(D)$.
\end{corollary}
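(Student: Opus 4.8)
The plan is to argue by induction on the derivation witnessing $C \entails^k_\gamma D$, according to which of the clauses (a')--(d') of Definition~\ref{def:logic-notation-revenge} is applied at the outermost step, applying the induction hypothesis to the sub-derivations occurring in (c') and (d'). In each case we construct a matching derivation of $\datum(C) \entails^k_\gamma \datum(D)$ via the correspondingly-labelled clause (a)--(d) of Definition~\ref{def:logic-notation}, with the \emph{same} integer $k$ and the \emph{same} partial function $\gamma$, using that $\leafs^C = I^{\datum(C)}$ and $\leafs^D = I^{\datum(D)}$.

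For a morphism step (a') we are given a morphism of diagrams $\Theta \colon D \to C$ with $k=0$ and $\gamma = \{\alpha^\Theta(i) \mapsto (i,0) : i \in \leafs^C \text{ respected by } \Theta\}$; here Proposition~\ref{prop:diagram-morphism} does all the work, producing a morphism of linear data $\datum(\Theta) \colon \datum(D) \to \datum(C)$ of shape $\alpha^\Theta|_{\leafs^C}$, and a direct comparison of Definitions~\ref{def:morphism} and~\ref{def:diagram-morphism} shows $\datum(\Theta)$ respects a leaf $i \in \leafs^C$ precisely when $\Theta$ does, so clause (a) of Definition~\ref{def:logic-notation} yields $\datum(C) \entails^0_\gamma \datum(D)$ with exactly this $\gamma$. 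For a Cauchy--Schwarz step (b') we have $D = C +_S C$ with $\gamma$ the canonical sign-tracking function $\gamma((L,i))=(i,0)$, $\gamma((R,i))=(i,1)$ on $\leafs^C\setminus S$, and Proposition~\ref{prop:diagram-joining} identifies $\datum(D) = \datum(C +_S C)$ with $\datum(C) +_S \datum(C)$; since the index set of a joining of linear data is built from the indices (= leaves) in exactly the way the leaf set of a joining of diagrams is, clause (b) of Definition~\ref{def:logic-notation} applies with the same $\gamma$. The recursive clauses are purely formal: for a composition step (c') we apply the induction hypothesis to $C \entails^{k_1}_{\gamma_1} \Xi$ and $\Xi \entails^{k_2}_{\gamma_2} D$, then invoke clause (c), noting that the composite-of-partial-functions recipe for $\gamma$ refers only to $\gamma_1$ and $\gamma_2$; and (d') reduces to the induction hypothesis followed by clause (d).

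The only point requiring a word of care is that Propositions~\ref{prop:diagram-morphism} and~\ref{prop:diagram-joining} deliver their conclusions only \emph{up to canonical isomorphism} (and $\datum(\Theta)$ arrives with $\sigma_i = \theta_i$ rather than a literal identity). This is harmless: a canonical isomorphism of linear data is in particular a morphism respecting every leaf, so by clauses (a) and (c) of Definition~\ref{def:logic-notation} one may freely pre- or post-compose any $\entails^k_\gamma$-derivation with such an isomorphism without changing $k$ or $\gamma$, and the bijection~\eqref{eq:fully-faithful} of Remark~\ref{rem:other-functor} shows the notion of ``respecting'' leaves transports correctly. Thus there is no genuine obstacle: the substance of the corollary is entirely carried by the two preceding propositions, and what remains is bookkeeping over the inductive definition of $\entails$.
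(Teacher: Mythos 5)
Your proposal is correct and takes essentially the same approach as the paper: the paper simply declares the corollary ``immediate from Proposition~\ref{prop:diagram-morphism} and Proposition~\ref{prop:diagram-joining},'' and your argument is precisely the routine induction on the derivation that makes that ``immediate'' precise, using those two propositions for the $\morph$ and $\CS$ base cases and observing that clauses (c) and (d) are purely formal. Your added care about canonical isomorphisms and the transport of ``respecting'' is sound and matches the spirit of Remarks~\ref{rem:datum-diag} and~\ref{rem:other-functor}, though the paper leaves it implicit.
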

Clearly this may be combined with Corollary~\ref{cor:log-entails} to obtain a statement involving inequalities.

By way of example, we explain what the proof of Fact~\ref{fact:functional-inequality} looks like in the language of Cauchy--Schwarz diagrams.
\begin{example}%
  \label{ex:diag-ex1}
  We start with the linear datum $\Phi$ with $V^\Phi=\FF_p^2$, $I^\Phi=[3]$, $W^\Phi_i = \FF_p$, $\phi^\Phi_1(x,h)=x$, $\phi^\Phi_2(x,h)=x+h$ and $\phi^\Phi_3(x,h)=x+2h$.
  The diagram $D_0=\diagram(\Phi)$ looks like this:
  \begin{center}
    \begin{tikzpicture}[
        every node/.style={rectangle,draw,inner sep=2pt,outer sep=0pt,minimum size=15pt,scale=0.7},
        leaf/.style={fill=leafgreen}]
      \node       (V)  at ( 0, 0) {$\diamond$};
      \node[leaf] (W1) at (-1,-1) {$1$};
      \node[leaf] (W2) at ( 0,-1) {$2$};
      \node[leaf] (W3) at ( 1,-1) {$3$};
      \draw[-stealth] (V) -- (W1);
      \draw[-stealth] (V) -- (W2);
      \draw[-stealth] (V) -- (W3);
    \end{tikzpicture}
  \end{center}
  We also have the linear datum $U^2$ from Definition~\ref{def:uk}: that is, $I^{U^2} = \{00,01,10,00\}$, $V^{U^2} = \FF_p^3$, and $\phi_{ij}^{U^2}(x,a,b) = x\!+\!ia\!+\!jb$.
  We again have a diagram $\diagram(U^2)$ (also called $U^2$; see Convention~\ref{conv:no-diagram-word}):
  \begin{center}
    \begin{tikzpicture}[
        every node/.style={rectangle,draw,inner sep=2pt,outer sep=0pt,minimum size=15pt,scale=0.7},
        leaf/.style={fill=leafgreen}]
      \node       (V)  at ( 0, 0) {$\diamond$};
      \node[leaf] (W00) at (-0.6, 0.6) {$00$};
      \node[leaf] (W01) at ( 0.6, 0.6) {$01$};
      \node[leaf] (W10) at (-0.6,-0.6) {$10$};
      \node[leaf] (W11) at ( 0.6,-0.6) {$11$};
      \draw[-stealth] (V) -- (W00);
      \draw[-stealth] (V) -- (W01);
      \draw[-stealth] (V) -- (W10);
      \draw[-stealth] (V) -- (W11);
    \end{tikzpicture}
  \end{center}
  Our goal is to show that $D_0 \entails^2_\gamma \diagram(U^2)$ where $\gamma(ij) = (1,i+j \bmod 2)$ for $i,j \in \{0,1\}$.
  Hence $\Phi \entails^2_\gamma U^2$.
  This is weaker than Fact~\ref{fact:functional-inequality} in that it implies
  \[
    \lvert \Lambda_{\Phi}(f_1,f_2,f_3) \rvert \le \lvert \Lambda_{\Psi}(g_{00},g_{01},g_{10},g_{11}) \rvert^{1/4}
  \]
  for some functions $g_{ij}$ where $g_{ij}$ is a translate of $f_1$ (if $ij=00,11$) or $\overline{f_1}$ (if $ij=01,10$).
  By contrast, in Fact~\ref{fact:functional-inequality} we got the same statement where $g_{ij}$ was equal to $f_1$ or $\overline{f_1}$.
  However, as we have said, it is somewhat burdensome to build this stronger observation into our framework.
  In many cases including this one, we could recover the stronger form by the Gowers--Cauchy--Schwarz inequality:
  \[
    \lvert \Lambda_{\Psi}(g_{00},g_{01},g_{10},g_{11}) \rvert^{1/4} \le 
    \lVert g_{00} \rVert_{U^2}^{1/4}\,
    \lVert g_{01} \rVert_{U^2}^{1/4}\,
    \lVert g_{10} \rVert_{U^2}^{1/4}\,
    \lVert g_{11} \rVert_{U^2}^{1/4}
    = 
    \lVert f \rVert_{U^2}.
  \]

  We now prove $D_0 \entails^2_\gamma \diagram(U^2)$. 
  We first apply $\CS(\{3\})$ to give $D_1 =D_0 +_{\{3\}} D_0$, which looks like this:
  \begin{center}
    \begin{tikzpicture}[
        every node/.style={rectangle,draw,inner sep=2pt,outer sep=0pt,minimum size=15pt,scale=0.7},
        leaf/.style={fill=leafgreen}]
      \node       (0V)  at ( 0, 0) {$(L,\diamond)$};
      \node[leaf] (0W1) at (-1,-1) {$(L,1)$};
      \node[leaf] (0W2) at ( 0,-1) {$(L,2)$};
      \node       (-W3) at ( 1,-1) {$(\#,3)$};
      \node       (1V)  at ( 2, 0) {$(R,\diamond)$};
      \node[leaf] (1W2) at ( 2,-1) {$(R,2)$};
      \node[leaf] (1W1) at ( 3,-1) {$(R,1)$};
      \draw[-stealth] (0V) -- (0W1);
      \draw[-stealth] (0V) -- (0W2);
      \draw[-stealth] (0V) -- (-W3);
      \draw[-stealth] (1V) -- (1W1);
      \draw[-stealth] (1V) -- (1W2);
      \draw[-stealth] (1V) -- (-W3);
    \end{tikzpicture}
  \end{center}
  Moreover $D_0 \entails^1_{(L,i) \mapsto (i,0), (R,i) \mapsto (i,1)} D_1$ by this Cauchy--Schwarz step.
  Next, applying $\CS(\{(L,2),(R,2)\})$ we obtain $D_2 = D_1 +_{\{(L,2),(R,2)\}} D_1$:
  \begin{center}
    \begin{tikzpicture}[
        every node/.style={rectangle,draw,inner sep=2pt,outer sep=0pt,minimum size=15pt,scale=0.7},
        leaf/.style={fill=leafgreen}]
      \node       (00V)  at ( 0, 0)   {$(L,L,\diamond)$};
      \node[leaf] (00W1) at (-1,-1)   {$(L,L,1)$};
      \node[leaf] (01W1) at ( 3,-1)   {$(L,R,1)$};
      \node       (0-W3) at ( 1,-1)   {$(L,\#,3)$};
      \node       (01V)  at ( 2, 0)   {$(L,R,\diamond)$};

      \node       (-0W2) at ( 0,-1.5) {$(\#,L,2)$};
      \node       (-1W2) at ( 2,-1.5) {$(\#,R,2)$};

      \node       (10V)  at ( 0,-3) {$(R,L,\diamond)$};
      \node[leaf] (10W1) at (-1,-2) {$(R,L,1)$};
      \node[leaf] (11W1) at ( 3,-2) {$(R,R,1)$};
      \node       (1-W3) at ( 1,-2) {$(R,\#,3)$};
      \node       (11V)  at ( 2,-3) {$(R,R,\diamond)$};

      \draw[-stealth] (00V) -- (00W1);
      \draw[-stealth] (01V) -- (01W1);
      \draw[-stealth] (10V) -- (10W1);
      \draw[-stealth] (11V) -- (11W1);
      \draw[-stealth] (00V) -- (-0W2);
      \draw[-stealth] (01V) -- (-1W2);
      \draw[-stealth] (10V) -- (-0W2);
      \draw[-stealth] (11V) -- (-1W2);
      \draw[-stealth] (00V) -- (0-W3);
      \draw[-stealth] (01V) -- (0-W3);
      \draw[-stealth] (10V) -- (1-W3);
      \draw[-stealth] (11V) -- (1-W3);
    \end{tikzpicture}
  \end{center}
  Again $D_1 \entails^1_{\gamma} D_2$ where $\gamma(a,b,1) = (1,\ell)$ for $a,b \in \{L,R\}$, where $\ell=0$ when $a=b$ and $\ell=1$ otherwise.
  
  Finally, we claim there is a morphism $\Theta \colon \diagram(U^2) \to D_2$ as follows.
  We choose the natural bijection on leaves, $\alpha^\Theta(L,L,1)=00$, $\alpha^\Theta(L,R,1)=01$, $\alpha^\Theta(R,L,1)=10$ and $\alpha^\Theta(R,R,1)=11$, and also set $\alpha^\Theta(x) = \diamond$ for every non-leaf $x$ of $D_2$ (the only possible choice).
  The maps $\theta^\Theta_{(L,L,1)} \colon V_{00}^{U^2} \to V_{(L,L,1)}^{D_2}$, etc., are all the identity map $\FF_p \to \FF_p$.
  Therefore $\Theta$ respects every leaf.
  We have the following picture:
  \begin{center}
    \begin{tikzpicture}[
        every node/.style={rectangle,draw,inner sep=2pt,outer sep=0pt,minimum size=15pt,scale=0.7},
        leaf/.style={fill=leafgreen}]
      \begin{scope}[shift={(-4,-1.5)}]
        \node       (V)  at ( 0, 0) {$\diamond$};
        \node[leaf] (W00) at (-0.7, 0.7) {$00$};
        \node[leaf] (W01) at ( 0.7, 0.7) {$01$};
        \node[leaf] (W10) at (-0.7,-0.7) {$10$};
        \node[leaf] (W11) at ( 0.7,-0.7) {$11$};
        \draw[-stealth] (V) -- (W00);
        \draw[-stealth] (V) -- (W01);
        \draw[-stealth] (V) -- (W10);
        \draw[-stealth] (V) -- (W11);
      \end{scope}

      \node       (00V)  at ( 0, 0)   {$(L,L,\diamond)$};
      \node[leaf] (00W1) at (-1,-1)   {$(L,L,1)$};
      \node[leaf] (01W1) at ( 3,-1)   {$(L,R,1)$};
      \node       (0-W3) at ( 1,-1)   {$(L,\#,3)$};
      \node       (01V)  at ( 2, 0)   {$(L,R,\diamond)$};

      \node       (-0W2) at ( 0,-1.5) {$(\#,L,2)$};
      \node       (-1W2) at ( 2,-1.5) {$(\#,R,2)$};

      \node       (10V)  at ( 0,-3) {$(R,L,\diamond)$};
      \node[leaf] (10W1) at (-1,-2) {$(R,L,1)$};
      \node[leaf] (11W1) at ( 3,-2) {$(R,R,1)$};
      \node       (1-W3) at ( 1,-2) {$(R,\#,3)$};
      \node       (11V)  at ( 2,-3) {$(R,R,\diamond)$};

      \draw[-stealth] (00V) -- (00W1);
      \draw[-stealth] (01V) -- (01W1);
      \draw[-stealth] (10V) -- (10W1);
      \draw[-stealth] (11V) -- (11W1);
      \draw[-stealth] (00V) -- (-0W2);
      \draw[-stealth] (01V) -- (-1W2);
      \draw[-stealth] (10V) -- (-0W2);
      \draw[-stealth] (11V) -- (-1W2);
      \draw[-stealth] (00V) -- (0-W3);
      \draw[-stealth] (01V) -- (0-W3);
      \draw[-stealth] (10V) -- (1-W3);
      \draw[-stealth] (11V) -- (1-W3);

      \draw[dotted,-angle 60] (W00) to[bend left] (00W1.west);
      \draw[dotted, -angle 60] (W01) .. controls (0,1) and (3,1) .. (01W1.north);
      \draw[dotted,-angle 60] (W10) to[bend right] (10W1.west);
      \draw[dotted, -angle 60] (W11) .. controls (0,-4) and (3,-4) .. (11W1.south);

      \draw[dashed, -angle 60] (V) to[bend left=10] (00V);
      \draw[dashed, -angle 60] (V) to[bend left=5] (01V);
      \draw[dashed, -angle 60] (V) to[bend right=10] (10V);
      \draw[dashed, -angle 60] (V) to[bend right=5] (11V);

      \draw[dashed, -angle 60] (V) to (-0W2);
      \draw[dashed, -angle 60] (V) to[bend right=2] (0-W3);
      \draw[dashed, -angle 60] (V) to[bend left=2]  (1-W3);
      \draw[dashed, -angle 60] (V) .. controls (-5,-1.5) .. (-5, -0.5) .. controls (-5, 0.7) .. (-0.5, 0.7) .. controls (4,0.7) .. (4,-0.5) .. controls (4,-1.5) .. (-1W2.east);
    \end{tikzpicture}
  \end{center}
  and it remains to find maps $\theta_x \colon V^{U^2}_\diamond \to V^{D_2}_x$ for every non-leaf $x \in X^{D_2}$ (corresponding to the dashed arrows) such that~\eqref{eq:diag-morph} holds.
  This is the step that in the original proof was summarized by the last ``changing variables'' instruction.
  The choice is not unique, but an explicit family of maps is as follows:---
  \begin{align*}
    \theta_{(L,L,\diamond)}(y,a,b) &= (y,0) & 
    \theta_{(\#,L,2)}(y,a,b) &= y\\
    \theta_{(L,R,\diamond)}(y,a,b) &= (y+b,-b/2) & 
    \theta_{(\#,,R,2)}(y,a,b) &= y+b/2\\
    \theta_{(R,L,\diamond)}(y,a,b) &= (y+a,-a) & 
    \theta_{(L,\#,,3)}(y,a,b) &= y\\
    \theta_{(R,R,\diamond)}(y,a,b) &= (y+a+b,-a-b/2) & 
    \theta_{(R,\#,3)}(y,a,b) &= y-a.
  \end{align*}
  As in Remark~\ref{rem:pictures} it is most conceptual to show this information pictorially,
  by inserting these linear functions of $(y,a,b)$ into the diagram above in place of the vertex labels.
  \begin{center}
    \begin{tikzpicture}[
        every node/.style={rectangle,draw,inner sep=2pt,outer sep=0pt,minimum size=15pt,scale=0.7},
        leaf/.style={fill=leafgreen}]
      \begin{scope}[shift={(-4,-1.5)}]
        \node       (V)  at ( 0, 0) {$(y,a,b)$};
        \node[leaf] (W00) at (-0.7, 1.0) {$y$};
        \node[leaf] (W01) at ( 0.7, 1.0) {$y+b$};
        \node[leaf] (W10) at (-0.7,-1.0) {$y+a$};
        \node[leaf] (W11) at ( 0.7,-1.0) {$y+a+b$};
        \draw[-stealth] (V) -- (W00);
        \draw[-stealth] (V) -- (W01);
        \draw[-stealth] (V) -- (W10);
        \draw[-stealth] (V) -- (W11);
      \end{scope}

      \begin{scope}[yscale=1.0]
        \node       (00V)  at ( 0, 0)   {$(y,0)$};
        \node[leaf] (00W1) at (-1,-1)   {$y$};
        \node[leaf] (01W1) at ( 3,-1)   {$y+b$};
        \node       (0-W3) at ( 1,-1)   {$y$};
        \node       (01V)  at ( 2, 0)   {$(y+b,-b/2)$};

        \node       (-0W2) at ( 0,-1.5) {$y$};
        \node       (-1W2) at ( 2,-1.5) {$y+b/2$};

        \node       (10V)  at ( 0,-3) {$(y+a,-a)$};
        \node[leaf] (10W1) at (-1,-2) {$y+a$};
        \node[leaf] (11W1) at ( 3,-2) {$y+a+b$};
        \node       (1-W3) at ( 1,-2) {$y-a$};
        \node       (11V)  at ( 2,-3) {$(y+a+b,-a-b/2)$};
      \end{scope}

      \draw[-stealth] (00V) -- (00W1);
      \draw[-stealth] (01V) -- (01W1);
      \draw[-stealth] (10V) -- (10W1);
      \draw[-stealth] (11V) -- (11W1);
      \draw[-stealth] (00V) -- (-0W2);
      \draw[-stealth] (01V) -- (-1W2);
      \draw[-stealth] (10V) -- (-0W2);
      \draw[-stealth] (11V) -- (-1W2);
      \draw[-stealth] (00V) -- (0-W3);
      \draw[-stealth] (01V) -- (0-W3);
      \draw[-stealth] (10V) -- (1-W3);
      \draw[-stealth] (11V) -- (1-W3);

      \draw[dotted,-angle 60] (W00) to[bend left=35] (00W1.west);
      \draw[dotted, -angle 60] (W01) .. controls (0,1) and (4.6,1) .. (01W1.north);
      \draw[dotted,-angle 60] (W10) to[bend right=35] (10W1.west);
      \draw[dotted, -angle 60] (W11) .. controls (0,-4) and (5.0,-4) .. (11W1.south);

      \draw[dashed, -angle 60] (V) to[bend left=10] (00V);
      \draw[dashed, -angle 60] (V) to[bend left=5] (01V);
      \draw[dashed, -angle 60] (V) to[bend right=10] (10V);
      \draw[dashed, -angle 60] (V) to[bend right=5] (11V);

      \draw[dashed, -angle 60] (V) to (-0W2);
      \draw[dashed, -angle 60] (V) to[bend right=2] (0-W3);
      \draw[dashed, -angle 60] (V) to[bend left=2]  (1-W3);
      \draw[dashed, -angle 60] (V) .. controls (-5,-1.5) .. (-5, -0.5) .. controls (-5, 0.7) .. (-0.5, 0.7) .. controls (4,0.7) .. (4,-0.5) .. controls (4,-1.5) .. (-1W2.east);
    \end{tikzpicture}
  \end{center}
  It is clear that each solid arrow on the left is the given map $\phi^{\diagram(U^2)}_{\diamond i}$,
  each dotted arrow (from leaf to leaf) is the map $\theta_x = \id_{\FF_p}$
  and each dashed arrow (from non-leaf to non-leaf) specifies the map $\theta_x$ given above.
  To verify that~\eqref{eq:diag-morph} holds,
  it suffices to check that each solid arrow $\phi^{D_2}_{xy}$ on the right does indeed map the input shown to the output shown;
  equivalently, that the right-hand side is a compatible tuple in $D_2$.
  In pictorial form, this is clear by inspection.

  We note that this last picture is very close to what we mean by an ``arithmetic circuit'', up to notational changes.
  In particular we could redraw the right-hand side to better resemble Figure~\ref{fig:bilinear}:
  \begin{center}
    \begin{tikzpicture}[
        subnode/.style={rounded rectangle, draw, inner sep=2pt, outer sep=0pt,minimum size=15pt},
        pinnode/.style={circle,fill=white,scale=0.6, inner sep=0.7pt, outer sep=0pt},
        wirelabel/.style={scale=0.7, label distance=-0.4em}]

        \node[subnode]  (00V)  at ( 0, 0)       {$\ \Phi\ $};
        \node[subnode]  (01V)  at ( 2.0, 0)     {$\ \Phi\ $};
        \node[subnode]  (10V)  at ( 0, -1.3)    {$\ \Phi\ $};
        \node[subnode]  (11V)  at ( 2.0, -1.3)  {$\ \Phi\ $};

        \draw (00V) -- node[midway, "{$y$}" {above,wirelabel}] {} (01V);
        \draw (10V) -- node[midway, "{$y-a$}" {below,wirelabel}] {} (11V);
        \draw (00V) -- node[midway, "{$y$}" {left ,wirelabel}] {} (10V);
        \draw (01V) -- node[midway, "{$y+b/2$}" {right,wirelabel}] {} (11V);

        \draw (00V) -- ++(-1.5,0)      node["{$y$}" {above,wirelabel}] {};
        \draw (01V) -- ++(1.5,0)     node["{$y+b$}" {above,wirelabel}] {};
        \draw (10V) -- ++(-1.5,0)   node["{$y+a$}" {below,wirelabel}] {};
        \draw (11V) -- ++(1.5,0)  node["{$y+a+b$}" {below,wirelabel}] {};

        \draw (00V.west)  node[pinnode] {$1$};
        \draw (00V.south) node[pinnode] {$2$};
        \draw (00V.east)  node[pinnode] {$3$};
        \draw (01V.west)  node[pinnode] {$3$};
        \draw (01V.south) node[pinnode] {$2$};
        \draw (01V.east)  node[pinnode] {$1$};
        \draw (10V.west)  node[pinnode] {$1$};
        \draw (10V.north) node[pinnode] {$2$};
        \draw (10V.east)  node[pinnode] {$3$};
        \draw (11V.west)  node[pinnode] {$3$};
        \draw (11V.north) node[pinnode] {$2$};
        \draw (11V.east)  node[pinnode] {$1$};
    \end{tikzpicture}
  \end{center}
  Again the notation here is informal: we will define a slightly different precise notation in Convention~\ref{conv:labelling-other}.
  We briefly explain how the diagram above should be interpreted.
  \begin{itemize}
    \item Each vertex $x$ with $V_x = \FF_p$ has become a ``wire''.
      In particular the four leaves have become the four ``free wires'' in the NW, NE, SW and SE corners, and the four non-leaf vertices with $V_y = \FF_p$ have become the four ``connecting wires'' forming the middle square.
    \item Each copy of $V^\Phi$ has become a ``$\Phi$-gate''.
      Such a gate has three ``pins'', called $1$, $2$ and $3$, and comes with an assertion that the values $z_1$, $z_2$ and $z_3$ on the wires at these pins satisfy $z_1=\phi_1^\Phi(v)$, $z_2 = \phi_2^\Phi(v)$ and $z_3=\phi_3^\Phi(v)$ for some $v \in \FF_p^2$, or equivalently that $(z_1,z_2,z_3)$ form an arithmetic progression, i.e., $z_1-2 z_2 + z_3 = 0$.
  \end{itemize}
  Hence the picture above corresponds to building a ``$U^2$-gate'' out of four $\Phi$-gates.

  Note the ``circuit'' here is doing purely linear calculations, as opposed to the multilinear calculations in Section~\ref{sub:repr}.
  We will explain this key missing feature in Section~\ref{sec:gates}.
  Otherwise, though, the remarks above will remain valid when we consider how to to realize the sketch proof in Section~\ref{sub:repr} in terms of diagrams.
  In particular, we emphasize that the values appearing on wires in arithmetic circuits arise precisely as compatible tuples of some diagram, related to some morphism.

  Here we started by defining the maps $\theta_x$ and used that definition to write down these pictures.
  In practice it is much more useful to go the other way: given the last ``arithmetic circuit'' picture, it is routine to work backwards and recover the linear maps $\theta_x$.
  In what follows we will sometimes leave this step to the reader.
\end{example}

\begin{example}%
  \label{ex:cache-example}
  We recall the description of ``stashing'' from Section~\ref{sub:true-summary}.
  We saw in Remark~\ref{rem:stashing} that stashing a dual function corresponds to joining two linear data at a single pair $(i_0,i_1)$.
  In terms of diagrams this will look like a cut-vertex with in-degree $2$ and out-degree $0$.
  For example, in the setting of Remark~\ref{rem:stashing} the diagram $\diagram(\Phi) +_{6 \leftrightarrow 000} \diagram(U^3)$ looks like this:
  \begin{center}
    \begin{tikzpicture}[
        every node/.style={rectangle,draw,inner sep=2pt,outer sep=0pt,minimum size=15pt,scale=0.7},
        leaf/.style={fill=leafgreen}]
      \node       (V)  at ( 0, 0) {$(L,\diamond)$};
      \node[leaf] (W1) at (-2.5,-2) {$(L,1)$};
      \node[leaf] (W2) at (-1.5,-2) {$(L,2)$};
      \node[leaf] (W3) at (-0.5,-2) {$(L,3)$};
      \node[leaf] (W4) at ( 0.5,-2) {$(L,4)$};
      \node[leaf] (W5) at ( 1.5,-2) {$(L,5)$};
      \node[align=center] (W6) at ( 2.5,-1) {$(L,6)$\\ $=$\\ $(R,000)$};

      \coordinate (X) at (1.5,-0);
      \coordinate (Y) at (0.0,-2.0);
      \coordinate (Z) at (3.5,-0.2);

      \node[leaf] (R100) at ($(W6) + (X)$) {$(R,100)$};
      \node[leaf] (R010) at ($(W6) + (Y)$) {$(R,010)$};
      \node[leaf] (R001) at ($(W6) + (Z)$) {$(R,001)$};
      \node[leaf] (R110) at ($(W6) + (X) + (Y)$) {$(R,110)$};
      \node[leaf] (R101) at ($(W6) + (X) + (Z)$) {$(R,101)$};
      \node[leaf] (R011) at ($(W6) + (Y) + (Z)$) {$(R,011)$};
      \node[leaf] (R111) at ($(W6) + (X) + (Y) + (Z)$) {$(R,111)$};
      \node (U) at (5,-2) {$(R,\diamond)$};

      \draw[-stealth] (V) -- (W1);
      \draw[-stealth] (V) -- (W2);
      \draw[-stealth] (V) -- (W3);
      \draw[-stealth] (V) -- (W4);
      \draw[-stealth] (V) -- (W5);
      \draw[-stealth] (V) -- (W6);

      \draw[-stealth] (U) -- (W6);
      \draw[-stealth] (U) -- (R001);
      \draw[-stealth] (U) -- (R010);
      \draw[-stealth] (U) -- (R011);
      \draw[-stealth] (U) -- (R100);
      \draw[-stealth] (U) -- (R101);
      \draw[-stealth] (U) -- (R110);
      \draw[-stealth] (U) -- (R111);
    \end{tikzpicture}
  \end{center}
  We can think of this either as a copy of $\diagram(\Phi)$ with a stashed $U^3$-dual function, or as a copy of $\diagram(U^3)$ with a stashed $\Phi$-dual function.
  Visually these correspond to ``collapsing'' respectively the right-hand or left-hand halves of the diagram.
  We will formalize this in Section~\ref{sec:stashing}.
\end{example}

\subsection{Some further constructions and properties}%
\label{sub:constructions}

In the above we focussed on constructions and properties of linear data and diagrams that correspond to inequalities.
We now mention some others that do not, but which will nonetheless be useful.

First we make the rather fundamental observation that morphisms of linear data and diagrams can be composed.
\begin{definition}%
  \label{def:compose-morph}
  Given two morphisms of linear data $\Theta_1 \colon \Phi_0 \to \Phi_1$ and $\Theta_2 \colon \Phi_1 \to \Phi_2$, their composite is a morphism $\Theta_2 \circ \Theta_1 \colon \Phi_0 \to \Phi_2$, given by $\alpha = \alpha^{\Theta_1} \circ \alpha^{\Theta_2}$, $\theta = \theta^{\Theta_2} \circ \theta^{\Theta_1}$ and $\sigma_i = \sigma^{\Theta_2}_{i} \circ \sigma^{\Theta_1}_{\alpha^{\Theta_2}(i)}$.

  Similarly, for two diagram morphisms $\Theta_1 \colon D_0 \to D_1$ and $\Theta_2 \colon D_1 \to D_2$ their composite $\Theta_2 \circ \Theta_1 \colon D_0 \to D_2$ has $\alpha = \alpha^{\Theta_1} \circ \alpha^{\Theta_2}$ and 
  $\theta_x = \theta^{\Theta_2}_x \circ \theta^{\Theta_1}_{\alpha^{\Theta_2}(x)}$.

  Where necessary we use the further convention that $\alpha(\zeroi) = \zeroi$.
\end{definition}
We leave it to the reader to verify that these are indeed morphisms.

We briefly consider various notions of isomorphism among linear data or diagrams.

\begin{definition}%
  \label{def:strong-iso}
  As usual, a morphism $\Phi \to \Psi$ of linear data or $C \to D$ of diagrams is called an \emph{isomorphism} if it has an inverse morphism.

  These is readily seen to be equivalent to the added conditions that (i) $\alpha$ is a bijection (for linear data) or graph isomorphism (for diagrams), and (ii) all the component maps $(\theta, (\sigma_i)_{i \in I})$ (for linear data) or $(\theta_x)_{x \in X}$ (for diagrams) are isomorphisms.

  An isomorphism $\Phi \to \Psi$ is called a \emph{strong isomorphism} if furthermore it respects every index; i.e, if all maps $\sigma_i$ are the identity map.
  Similarly an isomorphism $C \to D$ of diagrams is called a \emph{strong isomorphism} if it respects every leaf.
\end{definition}

Next, we note a key property relating morphisms of linear data and diagrams.
\begin{proposition}%
  \label{prop:adjunction}
  Let $\Phi$ be a linear datum and $D$ a diagram.
  There is a natural bijection
  \[
    \bigl\{ \text{diagram morphisms } \diagram(\Phi) \to D \bigr\} \leftrightarrow \bigl\{ \text{datum morphisms } \Phi \to \datum(D) \bigr\}.
  \]
  The diagram morphism $\diagram(\Phi) \to D$ respects a leaf $i \in \leafs^D$ if and only if the linear datum morphism $\Phi \to \datum(D)$ respects $i \in I^{\datum(D)}$.
\end{proposition}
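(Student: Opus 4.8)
The plan is to write down the bijection completely explicitly and then check that it is well-defined in both directions, using only the structural constraints in Definition~\ref{def:diagram-morphism} and Definition~\ref{def:morphism} together with the conventions on $\zeroi$ and self-loops (Convention~\ref{convention:ast}, Convention~\ref{convention:ast-revenge}). The starting point is the observation that the shape of a diagram morphism $\Theta\colon\diagram(\Phi)\to D$ is half-determined for free: since $\diagram(\Phi)$ has the unique non-leaf $\diamond$, and a diagram morphism must carry $\nonleafs^D$ into $\nonleafs^{\diagram(\Phi)}=\{\diamond\}$, we must have $\alpha^\Theta(x)=\diamond$ for all $x\in\nonleafs^D$, while $\alpha^\Theta$ restricted to $\leafs^D$ is an \emph{arbitrary} function $\leafs^D\to I^\Phi\cup\{\zeroi\}=\leafs^{\diagram(\Phi)}\cup\{\zeroi\}$. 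Since $I^{\datum(D)}=\leafs^D$ and $W^{\datum(D)}_i=V^D_i$, this is precisely a shape for a datum morphism $\Phi\to\datum(D)$, and the linear maps $\theta^\Theta_x$ split accordingly into maps $V^\Phi\to V^D_x$ for $x\in\nonleafs^D$ and maps $W^\Phi_{\alpha(i)}\to V^D_i$ for $i\in\leafs^D$.

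Given $\Theta$, I would produce a datum morphism $\Xi\colon\Phi\to\datum(D)$ by setting $\alpha^\Xi=\alpha^\Theta|_{\leafs^D}$, $\sigma^\Xi_i=\theta^\Theta_i$, and $\theta^\Xi\colon V^\Phi\to V^{\datum(D)}$, $v\mapsto(w_z)_{z\in X^D}$ where $w_x=\theta^\Theta_x(v)$ for non-leaves $x$ and $w_i=\theta^\Theta_i\bigl(\phi^\Phi_{\alpha(i)}(v)\bigr)$ for leaves $i$ (reading $\phi^\Phi_\zeroi=0$ when $\alpha(i)=\zeroi$). The one substantive point is that $(w_z)_z$ is a \emph{compatible} tuple, hence lies in the subspace $V^{\datum(D)}\subseteq\bigoplus_z V^D_z$; this is checked edge by edge directly from the commuting square \eqref{eq:diag-morph}, using $\phi^{\diagram(\Phi)}_{\diamond\diamond}=\id$ for an edge between two non-leaves and $\phi^{\diagram(\Phi)}_{\diamond,\alpha(i)}=\phi^\Phi_{\alpha(i)}$ for an edge from a non-leaf to a leaf $i$. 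The morphism condition \eqref{eq:morph} for $\Xi$ is then automatic, because $\phi^{\datum(D)}_i$ is by construction the projection onto the $i$-th coordinate, so $\phi^{\datum(D)}_i\circ\theta^\Xi$ simply reads off $w_i=\sigma^\Xi_i\bigl(\phi^\Phi_{\alpha(i)}(v)\bigr)$.

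Conversely, from a datum morphism $\Xi$ I would build $\Theta$ by putting $\alpha^\Theta(x)=\diamond$ on non-leaves, $\alpha^\Theta|_{\leafs^D}=\alpha^\Xi$, $\theta^\Theta_i=\sigma^\Xi_i$ on leaves, and $\theta^\Theta_x=\pi_x\circ\theta^\Xi$ on non-leaves, where $\pi_x\colon V^{\datum(D)}\to V^D_x$ is the projection \eqref{eq:projections} restricted to compatible tuples. Verifying \eqref{eq:diag-morph} again falls into the same two cases: between non-leaves it is exactly the assertion that $\theta^\Xi(v)$ is a compatible tuple, and from a non-leaf to a leaf $i$ it is \eqref{eq:morph} for $\Xi$ combined with $\phi^{\datum(D)}_i=\pi_i$. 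A brief check that both round trips reduce to the identity $\pi_x\bigl((w_z)_z\bigr)=w_x$ shows the two constructions are mutually inverse, and naturality in $\Phi$ and $D$ is then routine; this is just the adjunction $\diagram\dashv\datum$ anticipated in the footnotes, and it specializes (via $\datum(\diagram(\Psi))\cong\Psi$ from Remark~\ref{rem:datum-diag}) to the bijection of Remark~\ref{rem:other-functor}. For the clause on leaves, unwinding the definition of ``respects'' in Definition~\ref{def:diagram-morphism} and Definition~\ref{def:morphism} shows that $\Theta$ respects $i\in\leafs^D$ iff $\alpha(i)\ne\zeroi$, no other leaf maps to $\alpha(i)$, $V^D_i=W^\Phi_{\alpha(i)}$ and $\theta_i=\id$; since $\theta_i=\sigma_i$ and $V^D_i=W^{\datum(D)}_i$ under the bijection, this is verbatim the condition that $\Xi$ respects $i\in I^{\datum(D)}$.

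I expect the only real obstacle to be the bookkeeping: threading the $\zeroi$ convention, the identity self-loop convention, and the fact that $V^{\datum(D)}$ is a subspace of $\bigoplus_z V^D_z$ (rather than all of it) through the construction consistently, so that each map genuinely lands where it should and the two assignments really do invert one another. There is no analytic or structural difficulty beyond this.
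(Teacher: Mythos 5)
Your proposal is correct and takes essentially the same approach as the paper: observe that $\alpha(x)=\diamond$ is forced on non-leaves, identify the remaining shape data with that of a datum morphism, and move between $\theta^\Theta$ and the $\theta_x$'s via the compatible-tuple description of $V^{\datum(D)}$ and the projections $\pi_x$. Your write-up is in fact slightly more careful than the paper's about the leaf case (the paper writes $\theta^{\wt\Theta}_x = \pi_x\circ\theta^\Theta$ and $\theta^\Theta(v) = (\theta^{\wt\Theta}_x(v))_x$ ``for all $x\in X^D$'', which does not literally type-check on leaves; you correctly insert the precomposition with $\phi^\Phi_{\alpha(i)}$ there), and you explicitly verify the round-trips and the ``respects'' clause, both of which the paper treats as clear.
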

In other words, $\diagram(-)$ is the left adjoint of $\datum(-)$.
Recalling Remark~\ref{rem:datum-diag}, this means that when considering morphisms with domain $\diagram(\Phi)$, it makes no difference whether we work in the category of linear data or diagrams.
\begin{proof}[Proof of Proposition~\ref{prop:adjunction}]
  Write $\Psi = \datum(D)$, so $V^\Psi$ is the space in~\eqref{eq:limit-space}.
  For $x \in X^D$, write $\pi_x \colon V^\Psi \to V_x$ for the coordinate projection map $(v_y)_{y \in X^D} \mapsto v_x$, as in~\eqref{eq:projections}.
  Note that by the definition of $V^\Psi$, for all $xy \in E^D$ we have
  \begin{equation}
    \label{eq:pi-compat}
    \pi_y = \phi^D_{xy} \circ \pi_x.
  \end{equation}
  
  Given a morphism $\Theta \colon \Phi \to \Psi$, we may define a diagram morphism $\wt \Theta \colon \diagram(\Phi) \to D$ by setting $\alpha^{\wt \Theta}(x) = \diamond$ for all $x \in \nonleafs^D$ and $\theta^{\wt \Theta}_x = \pi_x \circ \theta^\Theta$ for all $x \in X^D$.
  Then~\eqref{eq:diag-morph} follows from~\eqref{eq:pi-compat}.

  Conversely, given $\wt \Theta \colon \datum(\Phi) \to D$ we necessarily have $\alpha^{\wt \Theta}(x) = \diamond$ for all $x \in \nonleafs^D$ (as it is the only non-leaf) and we set $\alpha^\Theta =\alpha^{\wt \Theta}|_{\leafs^D}$.
  We also set $\sigma^\Theta_j = \theta^{\wt \Theta}_j$ for $j \in \leafs^D$, and define $\theta^\Theta \colon V^\Phi \to V^\Psi$ by\footnote{In other words we are using the universal property of the limit space~\eqref{eq:limit-space}.}
  \[
    \theta^\Theta(v) = \bigl(\theta^{\wt \Theta}_x(v)\bigr)_{x \in X^D}.
  \]
  The fact that the right-hand side lies in $V^\Psi$, i.e., is a compatible tuple in $D$, is immediate from the hypothesis~\eqref{eq:diag-morph} on $\wt \Theta$.
\end{proof}

Next, we consider natural notions of ``sub-datum'' and ``sub-diagram''.
\begin{definition}%
  \label{def:subdatum}
  If $\Phi$ is a linear datum and $J \subseteq I^\Phi$ is a subset of its indices, the \emph{restriction} or \emph{induced sub-datum} $\Phi[J]$ is simply $\bigl(J, V^\Phi, (W^\Phi_i)_{i \in J}, (\phi^\Phi_i)_{i \in J}\bigr)$, i.e., the linear datum obtained by discarding spaces $W_i$ and maps $\phi_i$ for $i \notin J$.
\end{definition}

\begin{definition}%
  \label{def:subdiagram}
  If $D$ is a diagram and $Y \subseteq X^D$ is a subset of its vertices, the \emph{restriction} or \emph{induced sub-diagram} on $Y$, denoted $D[Y]$, is the diagram obtained by removing all vertices, edges, vector spaces and linear maps mentioning vertices of $X \setminus Y$.
\end{definition}

\begin{remark}%
  \label{rem:subdiagram-madness}
  The restriction $D[Y]$ might not actually be a legal diagram, since we require any surviving leaves to have in-degree $1$.
  We require the added constraint on $Y$ that if $y \in Y$ is a leaf and $x \in X^D$ is its unique parent $xy \in E^D$, then $x \in Y$.
\end{remark}

\begin{convention}%
  If, as is often the case, we wish to delete some leaves of a diagram $D$ but keep all non-leaves, for $Y \subseteq \leafs^D$ we write $D[\nonleafs \cup Y]$ in place of the redundant $D\bigl[\nonleafs^D \cup Y\bigr]$.

  If $\Phi$ is a linear datum and $J \subseteq I^\Phi$ then it is clear that $\diagram(\Phi[J]) = \diagram(\Phi)[\{\diamond\} \cup J]$.
  However, if we are using Convention~\ref{conv:no-diagram-word} to write $\Phi$ to mean $\diagram(\Phi)$, we have an ambiguity because $\Phi[J]$ and $\diagram(\Phi)[J]$ are not the same: indeed, the latter is never legal in the sense of Remark~\ref{rem:subdiagram-madness}.
  Hence we lose nothing by deciding that $\Phi[J]$ always defaults to the restriction with $\Phi$ considered as a linear datum, if applicable.

  Applying the same abuse of notation again, $\Phi[J]$ can also refer to $\diagram(\Phi[J])$.
\end{convention}

\begin{remark}%
  \label{rem:restriction}
  We note that in both cases there is a natural \emph{restriction map}, i.e., a morphism $\Phi \to \Phi[J]$ or $D \to D[Y]$.
  We take $\alpha(x)=x$ for all $x$ in the domain, and all linear maps to be the identity.
\end{remark}

\begin{remark}%
  \label{rem:restriction-morphs}
  Given two linear data $\Phi$, $\Psi$, a morphism $\Theta \colon \Phi \to \Psi$ and a set $Y \subseteq I^\Phi$, let $Y' = (\alpha^\Theta)^{-1}(Y)$.
  Then it is clear $\Theta$ defines a morphism $\Phi[Y] \to \Phi[Y']$, by simply discarding the maps $\sigma^\Theta_i$ for $i \notin Y'$.

  Similarly, given two diagrams $C$, $D$, a morphism $\Theta \colon C \to D$ and a set $Y \subseteq X^C$, we get a morphism $C[Y] \to D[Y']$ where $Y' = (\alpha^{\Theta})^{-1}(Y)$.
\end{remark}

We should check, because it is not obvious, that the notions of sub-datum and sub-diagram are compatible under the $\datum(-)$ construction.

\begin{proposition}%
  \label{prop:sub-sub}
  If $D$ is a linear datum and $Y \subseteq \leafs^D$ is a subset, then
  \[
   \datum(D[\nonleafs \cup Y]) = (\datum(D))[Y]
  \]
  up to canonical isomorphisms.
  More precisely, there is a canonical strong isomorphism between them, as in Definition~\ref{def:strong-iso}.
\end{proposition}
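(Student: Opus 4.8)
The plan is to exhibit the asserted isomorphism directly as the coordinate-restriction map on compatible tuples, in the same style as the proof of Proposition~\ref{prop:diagram-joining}. Write $D' = D[\nonleafs \cup Y]$ and let $Z = \leafs^D \setminus Y$ be the set of deleted leaves. First I would dispatch the bookkeeping: each retained leaf $y \in Y$ still has in-degree $1$ in $D'$, because its unique parent in $D$ is a non-leaf (a leaf has out-degree $0$, so it cannot be the source of an edge) and hence survives in $\nonleafs^D$. So $D'$ is a legal diagram in the sense of Remark~\ref{rem:subdiagram-madness}, with $\nonleafs^{D'} = \nonleafs^D$, $\leafs^{D'} = Y$, and $E^{D'}$ equal to the set of edges of $D$ with both endpoints in $\nonleafs^D \cup Y$. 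Consequently $\datum(D')$ and $(\datum(D))[Y]$ have the same index set $Y$ and the same spaces $W_y = V^D_y$ for $y \in Y$; only the ambient space of compatible tuples differs, namely $V^{\datum(D')} \subseteq \bigoplus_{x \in \nonleafs^D \cup Y} V^D_x$ versus $V^{\datum(D)} \subseteq \bigoplus_{x \in X^D} V^D_x$, the latter being also the underlying vector space of $(\datum(D))[Y]$ by Definition~\ref{def:subdatum}.

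Next I would define $\tau \colon V^{\datum(D)} \to V^{\datum(D')}$ to be the restriction of the coordinate projection $\bigoplus_{x \in X^D} V^D_x \to \bigoplus_{x \in \nonleafs^D \cup Y} V^D_x$ to the subspaces of compatible tuples in~\eqref{eq:limit-space}. This is well-defined and linear, since every edge of $D'$ is an edge of $D$, so the compatibility constraints for $D'$ are a subset of those for $D$. The one substantive point is that $\tau$ is a bijection, and I would prove this by writing down the inverse. Given a compatible tuple $(v_x)_{x \in \nonleafs^D \cup Y}$ in $D'$, put $v_z := \phi^D_{xz}(v_x)$ for each $z \in Z$, where $x$ is the unique parent of $z$; again $x$ is a non-leaf, so $x \in \nonleafs^D$ and $v_x$ is available. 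The extended tuple is compatible in $D$: for an edge with both endpoints in $\nonleafs^D \cup Y$ this is the hypothesis, and every remaining edge of $D$ has the form $xz$ with $z \in Z$ (a deleted leaf cannot be a source, having out-degree $0$), for which $\phi^D_{xz}(v_x) = v_z$ by construction. This extension is the unique compatible one, since the only constraint involving $v_z$ is that of the edge $xz$; so the assignment is a two-sided inverse of $\tau$, and $\tau$ is a linear isomorphism.

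Finally I would check that $\tau$ respects the remaining structure. With shape $\alpha = \id_Y$ and maps $\sigma_y = \id$, the square~\eqref{eq:morph} for a morphism $(\datum(D))[Y] \to \datum(D')$ reads $\phi^{\datum(D')}_y \circ \tau = \phi^{(\datum(D))[Y]}_y$ for $y \in Y$, i.e.\ $\pi_y \circ \tau = \pi_y$ in the projection description~\eqref{eq:projections} of these maps, which is immediate from the definition of $\tau$. Hence $\tau$ is a strong isomorphism in the sense of Definition~\ref{def:strong-iso}, and its inverse is the asserted canonical strong isomorphism $\datum(D') \to (\datum(D))[Y]$.

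I do not expect any genuine obstacle: the whole argument rests on the unique-extension step for the deleted leaves, which uses only that a leaf has in-degree $1$ and out-degree $0$ --- so that the parent of a leaf is a non-leaf, a deleted leaf imposes exactly one compatibility constraint, and a deleted leaf generates none. Everything else is unwinding~\eqref{eq:limit-space} and the definitions of restriction.
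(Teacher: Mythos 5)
Your proof is correct and takes essentially the same approach as the paper's: the canonical isomorphism is the coordinate projection on compatible tuples, inverted by extending via $\phi^D_{n_i i}$ at each deleted leaf, using the fact that a leaf has in-degree $1$ and out-degree $0$. You are slightly more explicit than the paper in verifying that $D[\nonleafs \cup Y]$ is a legal diagram and that the resulting isomorphism is a \emph{strong} one, but the core argument is the same.
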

The proof is similar in spirit to Remark~\ref{rem:datum-diag}.
\begin{proof}%
  Write $\Phi = \datum(D[\nonleafs \cup Y])$ and $\Psi = (\datum(D))[Y]$.
  Since they have the same index set $Y$ and the same spaces $(W_y)_{y \in Y}$, it suffices to find an isomorphism
  $\tau \colon V^\Phi \xrightarrow{\cong} V^\Psi$ such that $\sigma_y^\Psi \circ \tau = \sigma_y^\Phi$ for all $y \in Y$.
  
  Every leaf $i \in \leafs^D$ has in-degree $1$,\footnote{This is possibly the only place we use this hypothesis.}
  and we write $n_i \in \nonleafs^D$ for the unique vertex with $n_i i \in E^D$.
  Write $Y' = \nonleafs^D \cup Y$, $Z = \leafs^D \setminus Y$ and
  \[
    E' = \{ xy \in E^D \colon x, y \in Y' \} = E^D \setminus \{ n_i i \colon i \in Z \}.
  \]
  By~\eqref{eq:limit-space} we have
  \[
    V^{\datum(D)} 
    = \left\{ (v_x)_{x \in X^D} \in \bigoplus_{x \in X^D} V^D_x : \forall xy \in E^D \colon \phi^D_{xy}(v_x) = v_y  \right\}
  \]
  and
  \[
    V^{\datum(D[\nonleafs \cup Y])} 
    = \left\{ (v_x)_{x \in Y'} \in \bigoplus_{x \in Y'} V^D_x : \forall xy \in E' \colon \phi^D_{xy}(v_x) = v_y  \right\}.
  \]
  There is a natural projection map $V^{\datum(D)} \to V^{\datum(D[\nonleafs \cup Y])}$ by omitting coordinates.
  In the other direction, define $\tau \colon V^{\datum(D[\nonleafs \cup Y])} \to V^{\datum(D)}$ by
  \[
    (v_x)_{x \in Y'} \mapsto \biggl(v_x \colon x \in Y',\ \phi^D_{n_i i}(v_{n_i}) \colon i \in Z\biggr).
  \]
  It is clear this is a well-defined linear map and an inverse to the projection map.
  Recalling that $V^{\datum(D)[Y]} = V^{\datum(D)}$ and that $\sigma^\Phi_y$, $\sigma^\Psi_y$ are just the coordinate projections, we are done.
\end{proof}

There is also a useful dual notion to the sub-datum construction, where instead of simply deleting certain indices we both delete them and set them to zero.

\begin{definition}%
  \label{def:cores}
  Suppose $\Phi$ is a linear datum and $J \subseteq I^\Phi$ is a subset.
  Write $I' = I^\Phi \setminus J$ for the complement.
  The \emph{co-restriction} $\Phi\langle J \rangle$ is the linear datum $\bigl(I', V', (W^\Phi_i)_{i \in I'}, (\phi^\Phi_i|_{V'})_{i \in I'}\bigr)$, where
  \[
    V' = \bigcap_{j \in J} \ker \phi_j^\Phi.
  \]
\end{definition}

\begin{remark}%
  \label{rem:cores}
  The natural \emph{co-restriction map} is a morphism $\Phi\langle J \rangle \to \Phi$.
  It has $\alpha(i) = i$ for all $i \notin J$ and $\alpha(j) = \zeroi$ for all $j \in J$.
  The maps $\sigma_i$ are the identity for $i \notin J$, and $\theta \colon V' \to V^\Phi$ is the inclusion map.

  Co-restriction has the following universal property: the morphisms $\Psi \to \Phi$ which factor through the co-restriction map as $\Psi \to \Phi\langle J \rangle \to \Phi$ are exactly those with $\alpha(j) = \zeroi$ for all $j \in J$, and moreover those factor uniquely.
\end{remark}

There is an analogous notion for diagrams (defining a suitable subspace of every space $V_x$ for $x$ a non-leaf) but it is awkward to state and we don't need to.
However, in the special case of diagrams $D$ of the form $D=\diagram(\Phi)$, we can write $D\langle J \rangle$ to mean the same as $\diagram(\Phi\langle J \rangle)$.

One particularly straightforward operation on diagrams is to make a leaf into a non-leaf or vice-versa.
Again, by itself this has no valid interpretation in terms of inequalities but it will be useful in the abstract.

\begin{definition}%
  \label{def:leaf-non-leaf}
  Let $D$ be a diagram.
  For a subset $Y \subseteq \leafs^D$, we write $D(Y \leadsto \nonleafs)$ to denote the diagram $D'$ which is identical to $D$ except that $\leafs^{D'} = \leafs^D \setminus Y$ and $\nonleafs^{D'} = \nonleafs^D \cup Y$.
\end{definition}

\begin{remark}%
  \label{rem:joining-sub}
  Consider a joining $C = D_0 +_{\beta} D_1$ where $\beta$ is a matching between $J_0 \subseteq \leafs^{D_0}$ and $J_1 \subseteq \leafs^{D_1}$, as in Definition~\ref{def:diagram-cs}.
  Then consider a sub-diagram such as $C[Y]$ where $Y = \bigl\{ (L, x) \colon x \in X^{D_0} \bigr\}$.
  This is exactly $D_0(J_0 \leadsto \nonleafs)$: i.e., it is a copy of $D_0$ where all elements of $J_0$ have become non-leaves.
  This is the main motivation for Definition~\ref{def:leaf-non-leaf}.
\end{remark}

\begin{remark}%
  \label{rem:leaf-non-leaf-morph}
  If $C$, $D$ are two diagrams, $\Theta \colon C \to D$ is a morphism, $Y \subseteq \leafs^C$ is a subset and we set $Y' = (\alpha^\Theta)^{-1}(Y)$, then the same value $\Theta$ defines a morphism $C(Y \leadsto \nonleafs) \to D(Y' \leadsto \nonleafs)$.
\end{remark}

Finally, we state a useful but straightforward ``patching'' or ``glueing'' lemma for constructing morphisms into a diagram.

\begin{lemma}%
  \label{lem:patching}
  Let $C$ and $D$ be two diagrams.
  Suppose $X^D = Y_1 \cup \dots \cup Y_k$ is a cover of the vertices of $D$ which furthermore covers the edges $E^D$: i.e., for every edge $xy \in E^D$ there is some $i \in [k]$ such that $x,y \in Y_i$.

  Then there is a one-to-one correspondence between:---
  \begin{enumerate}[label=(\alph*)]
    \item morphisms $\Theta \colon C \to D$;
    \item collections of morphisms $\Theta_1,\dots,\Theta_k$, $\Theta_i \colon C \to D[Y_i]$, such that
      $\alpha^{\Theta_i}(x) = \alpha^{\Theta_j}(x)$ and $\theta^{\Theta_i}_x = \theta^{\Theta_j}_x$ whenever $x \in Y_i \cap Y_j$.
  \end{enumerate}
\end{lemma}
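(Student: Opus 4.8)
\textbf{Proof proposal for Lemma~\ref{lem:patching}.}

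The plan is to show that the two data in (a) and (b) determine each other in the obvious way: a morphism $\Theta \colon C \to D$ restricts to morphisms $\Theta_i \colon C \to D[Y_i]$, and conversely a compatible family $(\Theta_i)$ glues to a single $\Theta$. First I would handle the easy direction: given $\Theta$, set $\Theta_i$ to have shape $\alpha^{\Theta_i} = \alpha^\Theta|_{\,(\alpha^\Theta)^{-1}(Y_i)}$ — but we must be careful, since $D[Y_i]$ has vertex set $Y_i$ and $\alpha^\Theta$ need not land in $Y_i$. The correct statement is that $\alpha^\Theta$ carries the vertices relevant to $D[Y_i]$, namely all of $X^D \cap Y_i = Y_i$, to $X^C$, so we just put $\alpha^{\Theta_i}(x) = \alpha^\Theta(x)$ and $\theta^{\Theta_i}_x = \theta^\Theta_x$ for $x \in Y_i$; the commuting-square condition~\eqref{eq:diag-morph} for $\Theta_i$ is an immediate restriction of that for $\Theta$ to edges $xy \in E^{D[Y_i]} \subseteq E^D$. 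Agreement on overlaps $x \in Y_i \cap Y_j$ is trivial since both $\alpha^{\Theta_i}(x)$ and $\alpha^{\Theta_j}(x)$ equal $\alpha^\Theta(x)$, and likewise for $\theta_x$. (One should also note this uses Remark~\ref{rem:restriction-morphs} / Remark~\ref{rem:restriction} implicitly: $\Theta_i$ is the composite of $\Theta$ with the restriction $D \to D[Y_i]$, once one checks that composite makes sense given $\alpha^\Theta(Y_i) \subseteq X^C$.)

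Next I would treat the gluing direction, which is the substantive content. Given $(\Theta_i)_{i \in [k]}$ agreeing on overlaps, define $\alpha^\Theta(x) := \alpha^{\Theta_i}(x)$ and $\theta^\Theta_x := \theta^{\Theta_i}_x$ for any $i$ with $x \in Y_i$; this is well-defined by the overlap hypothesis, and every $x \in X^D$ lies in some $Y_i$ because the $Y_i$ cover $X^D$. I then need to verify that $\Theta$ is a legal morphism: conditions (i)--(ii) of Definition~\ref{def:diagram-morphism} (that $\alpha^\Theta$ preserves leaf/non-leaf status and sends edges to edges) and the commuting square~\eqref{eq:diag-morph} for every $xy \in E^D$. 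For an edge $xy \in E^D$, by the hypothesis that the cover is edge-covering there exists $i$ with $x, y \in Y_i$; then $xy \in E^{D[Y_i]}$, and $\alpha^\Theta(x)\alpha^\Theta(y) = \alpha^{\Theta_i}(x)\alpha^{\Theta_i}(y) \in E^C$, while the commuting square for $\Theta$ at $xy$ is precisely the commuting square for $\Theta_i$ at $xy$, which holds by assumption. The leaf/non-leaf condition: if $x \in \leafs^D$, pick $i$ with $x \in Y_i$; then $x \in \leafs^{D[Y_i]}$ (restriction preserves which vertices are leaves — here one invokes Remark~\ref{rem:subdiagram-madness}, under which $D[Y_i]$ is a legal diagram and the leaf set of $D[Y_i]$ is $\leafs^D \cap Y_i$), so $\alpha^{\Theta_i}(x) \in \leafs^C \cup \{\zeroi\}$, hence $\alpha^\Theta(x) \in \leafs^C \cup \{\zeroi\}$; similarly for non-leaves.

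Finally I would check the two constructions are mutually inverse, which is essentially formal: starting from $\Theta$, restricting, and regluing returns $\alpha^\Theta, \theta^\Theta$ on the nose since every vertex lies in some $Y_i$ and the values are copied verbatim; starting from $(\Theta_i)$, gluing to $\Theta$ and then restricting back recovers each $\Theta_i$ on its vertex set $Y_i$, again by definition chasing. The main obstacle — really the only place any care is needed — is bookkeeping around the edge-covering hypothesis: it is exactly what guarantees that the commuting square~\eqref{eq:diag-morph} at \emph{every} edge of $D$ is witnessed inside \emph{some} single $D[Y_i]$, so that no condition on $\Theta$ is left unchecked by the local data. Without it, one could have an edge $xy$ with $x \in Y_i \setminus Y_j$ and $y \in Y_j \setminus Y_i$, and the relation $\phi^D_{xy} \circ \theta^\Theta_x = \theta^\Theta_y \circ \phi^C_{\alpha(x)\alpha(y)}$ would not be implied by the $\Theta_i$. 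Everything else is routine diagram-chasing, and I would leave the verification that these are mutually inverse bijections to the reader, as the lemma statement invites.
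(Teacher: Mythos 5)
Your proposal is correct and takes essentially the same approach as the paper: restrict $\Theta$ to each $D[Y_i]$ for one direction, glue the $\Theta_i$ via the cover for the other, and use the edge-covering hypothesis to check the commuting square~\eqref{eq:diag-morph} at each edge inside some single $D[Y_i]$. You spell out more of the routine verifications (leaf/non-leaf preservation, the mutual-inverse check) than the paper, which leaves those implicit, but the substance is identical.
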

\begin{proof}%
  Given $\Theta$ we can define $\Theta_i$ by composing with the restriction map $D \to D[Y_i]$ from Remark~\ref{rem:restriction}.

  Conversely, given $\Theta_1,\dots,\Theta_k$ we can define a single morphism $\Theta$ by setting $\alpha^\Theta(x) = \alpha^{\Theta_i}(x)$ and $\theta^{\Theta}_x = \theta^{\Theta_i}_x$ for any $i$ with $x \in Y_i$: by hypothesis this does not depend on the choice of $i$.

  For every edge $xy \in E^D$, the compatibility condition~\eqref{eq:diag-morph} holds for $\Theta$, by choosing $i$ such that $x,y \in Y_i$ and invoking~\eqref{eq:diag-morph} for $\Theta_i$.
\end{proof}

\subsection{Labelling conventions and pictorial representations}%
\label{sub:labelling}

A trivial but very frustrating problem when manipulating diagrams is handling index sets.
If we start with a reasonable linear datum or Cauchy--Schwarz diagram and apply ten Cauchy--Schwarz / self-joining steps in succession, we will have a diagram with thousands of vertices.
We need to be able to refer to each of them individually in a way that is both systematic and human-readable.

At the cost of layering yet more conventions, we present a labelling scheme that seems to be workable in practice.
A typical diagram we will encounter admits a top-level description as being several smaller diagrams glued together.  
If we label each of these top-level smaller diagrams, we can refer to vertices in a recursive, tree-like fashion by a tuple $i_1; i_2;\dots; i_k$ where $i_1$ names one of the top-level sub-diagrams, $i_2$ names a top-level sub-diagram of the sub-diagram $i_1$, and so on, until $i_k$ names an honest vertex of the sub-diagram $i_{k-1}$.

Two types of ambiguity are encouraged.  The top-level pieces need not be disjoint, so there may be multiple ways to refer to the same vertex.  Also, a single diagram may admit many natural labelling schemes, by breaking apart its structure in different ways or using different label names.

\begin{definition}%
  \label{def:labelling}
  A \emph{labelling structure} on a Cauchy--Schwarz diagram $D$ is a prefix-free\footnote{I.e., for distinct tuples $t_1, t_2 \in T$, $t_1$ is never a prefix of $t_2$.} collection $T$ of tuples, together with a surjective mapping $\imath \colon T \to X^D$.
  We denote elements of $T$ by sequences separated by semi-colons, $i_1;i_2;\dots;i_k$ (with or without parentheses).  

  Given a sequence $i_1;i_2;\dots;i_k$, we write $i_1;i_2;\dots;i_k;\ast$ to denote the (possibly empty) set of all elements of $T$ which have $i_1;i_2;\dots;i_k$ as a (not necessarily proper) prefix.
  We write $\imath(i_1;i_2;\dots;i_k)$ for the image of this set in $X^D$,\footnote{When $i_1;i_2;\dots;i_k \in T$, we have overloaded $\imath(i_1;i_2;\dots;i_k)$ to mean either a vertex $x \in X^D$ or the singleton $\{x\}$, but in practice this will not cause confusion.} or (by abuse of notation) for the sub-diagram $D[\imath(i_1;i_2;\dots;i_k)]$ of $D$ induced by this set (see Definition~\ref{def:subdiagram}).

  A \emph{labelled Cauchy--Schwarz diagram} is a diagram $D$ together with a labelling $(T,\imath)$ of $D$.

  A labelling structure on a linear datum $\Phi$, and a labelled linear datum, are defined analogously, replacing $X^D$ with $I^\Phi$.
\end{definition}

Equivalently, $T$ can be thought of as a rooted tree where every vertex is equipped with a labelling of its children.
Any leaf of this tree can be uniquely identified by the labels on the unique path from the root.
A prefix $i_1;i_2;\dots;i_k;\ast$ names an internal vertex of the tree, or the sub-tree rooted there.

This notation is more-or-less backwards-compatible with our existing scheme for labelling left-hand and right-hand copies of a joining $D_0 +_\beta D_1$, except using semicolons instead of tuples.
\begin{convention}%
  \label{convention:joining-labelling}
  If $D_0$ and $D_1$ are two diagrams with labellings $(T_0,\imath_0)$, $(T_1,\imath_1)$, then 
  we write $D_0 +_\beta^{\mathfrak{L},\mathfrak{R}} D_1$ to denote the diagram $D_0 +_\beta D_1$ together with the labelling
  \[
    T = \bigl\{ (\mathfrak{L};t_0)  \colon t_0 \in T_0 \bigr\} \cup \big\{ (\mathfrak{R};t_1) \colon t_1 \in T_1 \big\}
  \]
  with $\imath(\mathfrak{L};t_0) = ({L}, \imath_0(t_0))$ and $\imath(\mathfrak{R};t_1) = ({R}, \imath_1(t_1))$.
  In other words, the diagram has two top-level labels $\mathfrak{L}$ and $\mathfrak{R}$ corresponding to the sub-diagrams
  \[
    \bigl\{ ({L}, x_0) \colon x_0 \in X^{D_0} \bigr\},\ \big\{ ({R}, x_1) \colon x_1 \in X^{D_1} \big\} \subseteq X^D.
  \]
  Here $\mathfrak{L}$ and $\mathfrak{R}$ are symbols of the user's choice.
  If we write just $D_0 +_\beta D_1$ instead of $D_0 +_\beta^{\mathfrak{L},\mathfrak{R}} D_1$, these symbols default to $\mathfrak{L} = L$ and $\mathfrak{R}=R$.

  If both $\mathfrak{L};t$ and $\mathfrak{R};t$ name the same vertex in $X^D$ or $I^\Phi$, we continue to write $\imath(\#;t)$ to denote this vertex.

  Another possible labelling is denoted by $D_0 +_{\beta}^{,\mathfrak{R}} D_1$, where $\mathfrak{R}$ is not a top-level label of $T_0$.
  Informally, we create a new top-level label $\mathfrak{R}$ for the sub-diagram 
  $
    \big\{ (R, x_1) \colon x_1 \in X^{D_1} \big\} \subseteq X^D
  $
  and leave the labelling of 
  $
  \bigl\{ (L, x_0) \colon x_0 \in X^{D_0} \bigr\},
  $
  unchanged.
  Formally, $T$ is the disjoint union
  \[
    T = T_0 \cup \big\{ (\mathfrak{R};t_1) \colon t_1 \in T_1 \big\}
  \]
  and $\imath(t_0) = (L,\imath_0(t_0))$ and $\imath(\mathfrak{R};t_1) = (R,\imath_1(t_1))$ for $t_0 \in T_0$ or $t_1 \in T_1$.
  The labelling $D_0 +_{\beta}^{\mathfrak{L},} D_1$ is defined analogously.

  All the same conventions apply, with the obvious modifications, to a linear datum $\Phi$.
\end{convention}

In order to feed the kind of arguments given in this paper into a computer to be verified, much of the required effort is spent manipulating these labellings using formal operations.
Since the reader is not a computer, we can leave many of these manipulations implicit, or represent them pictorially, without causing confusion.
It is rarely necessary to define labellings $(T,\imath)$ formally, as we did above.

For this to work, we emphasize some further guidelines and symbology.
\begin{convention}%
  \label{conv:labelling-other}
  We make the following uses and abuses of notation.
  \begin{itemize}
    \item The symbol $\imath$ is omitted in practice: we just write $i_1;\dots;i_k$ to denote the vertex or set of vertices $\imath(i_1;\dots;i_k)$ this label represents.
    \item When we present a new interesting diagram, we will describe its top-level labels.
      The reader should assume that these labels are in important and canonical part of the definition and may be referenced later.
    \item In a graphical representation of a diagram, rectangular boxes \tikz[baseline=-3pt]{\node[rectangle,draw,inner sep=4pt,outer sep=0pt,scale=0.7] {$x$}} denote honest vertices $x \in X^D$, whereas rounded boxes such as \tikz[baseline=-3pt]{\node[rounded rectangle,draw,inner sep=4pt,outer sep=0pt,scale=0.7] {$X7 \,:\, C$}} indicate an entire copy of some smaller known diagram, here $C$, which has been assigned a top-level label, here $X7$.

      For those vertices of the sub-diagram $C$ which interact with the rest of the diagram (for example, because they are also contained in some other sub-diagram), we ``expose'' them as follows:
      \begin{center}
        \begin{tikzpicture}[
          defnode/.style={rectangle,draw,fill=lightgray,inner sep=2pt,outer sep=0pt,minimum size=10pt},
          gatelabel/.style={rectangle, inner sep=1pt, outer sep=0pt, fill=white,scale=0.7},
          subnode/.style={rounded rectangle, draw, inner sep=5pt, outer sep=0pt,minimum size=15pt}]
          \node[subnode,scale=0.8] (V) at (0,0) {$X7 : C$};
          \node[defnode,scale=0.4] (W) at (2,0) {};
          \draw (V) -- node[pos=0.2, gatelabel] {$4i2;\clubsuit$} (W);
        \end{tikzpicture}
      \end{center}
      to indicate that the vertex \tikz[defnode/.style={rectangle,fill=lightgray,draw,inner sep=2pt,outer sep=0pt,minimum size=4pt}]{\node[defnode] at (0,0) {}} is the one labelled (in this case) $4i2;\clubsuit$ within $X7$, and hence labelled $X7;4i2;\clubsuit$ overall.

    \item A visible vertex, i.e.\ those of shape \tikz[baseline=-3pt]{\node[rectangle,draw,inner sep=4pt,outer sep=0pt,scale=0.7] {$x$}} or \tikz[defnode/.style={rectangle,fill=lightgray,draw,inner sep=2pt,outer sep=0pt,minimum size=4pt}]{\node[defnode] at (0,0) {}}, is a leaf in $D$ if and only if it is colored green, as in \tikz[baseline=-3pt]{\node[rectangle,draw,fill=leafgreen,inner sep=4pt,outer sep=0pt,scale=0.7] {$x$}} or \tikz[defnode/.style={rectangle,draw,fill=leafgreen,inner sep=2pt,outer sep=0pt,minimum size=4pt}]{\node[defnode] at (0,0) {}}.
      A vertex which is only indicated implicitly by a sub-diagram \tikz[baseline=-3pt]{\node[rounded rectangle,draw,inner sep=4pt,outer sep=0pt,scale=0.7] {$X7 \,:\, C$}} is a leaf if and only if it is a leaf in $C$.
  \end{itemize}
\end{convention}

For example, if $C$ and $D$ are two diagrams, $a,b \in \leafs^C$ and $A,B \in \leafs^D$ are four vertices or vertex labels with $V^C_a = V^{D}_A$ and $V^C_b=V^{D}_B$, then the diagram $C +_{\substack{a \leftrightarrow A \\ b \leftrightarrow B}}^{\aleph,\beth} D$ could be shown graphically as
\begin{center}
  \begin{tikzpicture}[
    defnode/.style={rectangle,fill=lightgray,draw,inner sep=2pt,outer sep=0pt,minimum size=10pt},
    gatelabel/.style={rectangle, inner sep=1pt, outer sep=0pt, fill=white,scale=0.7},
    subnode/.style={rounded rectangle, draw, inner sep=5pt, outer sep=0pt,minimum size=15pt}]
    \node[subnode,scale=0.8] (V0) at (0,0) {$\aleph : C$};
    \node[defnode,scale=0.4] (Wa) at (1.4,0.4) {};
    \node[defnode,scale=0.4] (Wb) at (1.4,-0.4) {};
    \node[subnode,scale=0.8] (V1) at (2.8,0) {$\beth : D$};

    \draw (V0) -- node[pos=0.06, gatelabel] {$a$} (Wa);
    \draw (V0) -- node[pos=0.06, gatelabel] {$b$} (Wb);
    \draw (V1) -- node[pos=0.07, gatelabel] {$A$} (Wa);
    \draw (V1) -- node[pos=0.07, gatelabel] {$B$} (Wb);
  \end{tikzpicture}
\end{center}
where we have chosen top-level labels $\aleph$ for $C$ and $\beth$ for $D$.
The middle-top vertex is labelled both $\aleph;a$ and $\beth;A$, and the middle-bottom vertex is both $\aleph;b$ and $\beth;B$.

A useful property of this notation is that it allows us to denote diagrams precisely in a way that closely resembles our previous informal pictures of arithmetic circuits, as in Example~\ref{ex:diag-ex1}.

\section{Generalized convolutions, gates and circuits}%
\label{sec:gates}

The third and final layer of our Cauchy--Schwarz language, lying above both linear data and Cauchy--Schwarz diagrams, are what we call (almost interchangeably) \emph{gates} or \emph{circuits}.
These correspond to the heuristic notion of an arithmetic circuit presented in Section~\ref{sub:repr}.

To introduce them, we must first make two digressions in the next two subsections, outlining preliminary steps in the proofs of Theorem~\ref{thm:main} or Theorem~\ref{thm:baby-thm}.
The secondary purpose of these digressions is to explain how tensors and multilinear algebra come to be encoded by diagrams and linear data.  

\subsection{Cauchy--Schwarz complexity and generalized convolutions}%
\label{sub:gen-convolution}

We first recall the concept of the \emph{Cauchy--Schwarz complexity} of a system of linear forms from~\cite{gt-linear}, and extend it to linear data or diagrams.
The original statement was that a system $\Phi = (\phi_i)_{i \in [k]}$ has Cauchy--Schwarz complexity at most $s$ at $i$ if it is possible to partition the remaining indices
\[
  [k] \setminus \{i\} = S_1 \cup \dots \cup S_{s+1}
\]
such that $\phi_i$ does not lie in the subspace $\spn( \phi_j \,\colon\, j \in S_r)$ for any $r \in [s+1]$, as elements of $(\FF_p^d)^\ast$.
By duality and general linear algebra, this is equivalent to any of the following statements:---
\begin{itemize}
  \item $\{\phi_i\}^\perp \not\supseteq \{\phi_j \colon j \in S_r\}^\perp$ for any $r \in [s+1]$, as subspaces of $\FF_p^d$;
  \item we have $\phi_i \left( \bigcap_{j \in S_r} \ker \phi_j \right) = \FF_p$ for each $r \in [s+1]$;
  \item for each $r \in [s+1]$ there exists $v_r \in \FF_p^d$ such that $\phi_i(v_r) = 1$ but $\phi_j(v_r) = 0$ for all $j \in S_r$;
  \item for each $r \in [s+1]$ there exists a linear map $\mu_r \colon \FF_p \to \FF_p^d$ such that $\phi_i \circ \mu_r = \id_{\FF_p}$ but $\phi_j \circ \mu_r = 0$ for all $j \in S_r$.
\end{itemize}

We adapt the last statement to a definition for general linear data.
(The second statement would adapt to give an equivalent definition.)
\begin{definition}[Cauchy--Schwarz complexity]%
  \label{def:cs-complexity}
  Let $\Phi$ be a linear datum and $i \in I^\Phi$ and index.
  We write $s_{\cs}(\Phi, i)$, the \emph{Cauchy--Schwarz complexity} of $\Phi$ at $i$, for the smallest $s$ such that it is possible to partition the remaining indices
  \[
    I^\Phi \setminus \{i\} = S_1 \cup \dots \cup S_{s+1}
  \]
  into $s+1$ sets, such that for each $r \in [s+1]$ there exists a linear map $\mu_r \colon W^\Phi_i \to V^\Phi$ such that $\phi^\Phi_i \circ \mu_r = \id_{W_i^\Phi}$ but $\phi^{\Phi}_j \circ \mu_r = 0$ for each $j \in S_r$.

  If no such $s$ exists then we write $s_{\cs}(\Phi,i) = \infty$.
\end{definition}

\begin{remark}%
  \label{rem:def-cs-monotone}
  We should remark that the property in Definition~\ref{def:cs-complexity} is indeed monotone in $s$: that is, if a partition $S_1,\dots,S_{s+1}$ and maps $\mu_1,\dots,\mu_{s+1}$ exist for some $s \ge 0$ then they also exist for any $s' >s$.
  Indeed, we can set $S_{s+2},\dots,S_{s'+1} = \emptyset$ and $\mu_{s+2},\dots,\mu_{s'+1} = \mu_1$.

  Hence, we have $s_{\cs}(\Phi, i) \le t$ if, and only if, there exist $S_1,\dots,S_{t+1}$ and $\mu_1,\dots,\mu_{t+1}$ satisfying the conditions in Definition~\ref{def:cs-complexity}.
\end{remark}

\begin{remark}%
  \label{rem:cs-complex-equiv}
  It is possible, if a little eccentric at this point, to phrase this definition in terms of morphisms.
  Given a partition $S_1,\dots, S_{s+1}$ as in Definition~\ref{def:cs-complexity}, specifying maps $\mu_r$ is equivalent to specifying morphisms $\cM_r \colon \trivial(\{i\}, W_i) \to \Phi[\{i\} \cup S_r]$ which respect the vertex $i$ and which have $\alpha^{\cM_r}(j) = \zeroi$ for all $j \in S_r$.
  Here $\Phi[\{i\} \cup S_r]$ is the induced sub-datum, as in Definition~\ref{def:subdatum}.

  Indeed, setting $\theta^{\cM_r} = \mu_r$ and $\sigma_i^{\cM_r} = \id$, the conditions $\phi_i^\Phi \circ \mu_r = \id_{W_i}$ and $\phi_j^\Phi \circ \mu_r = 0$ for $j \in S_r$ are exactly the morphism condition~\eqref{eq:morph}.

  Writing $S_r = \{j_1,\dots,j_k\}$ this is shown graphically below.
  \vspace{0.5\baselineskip}
  \begin{center}
    \begin{tikzpicture}[
        boxnode/.style={rectangle,draw,inner sep=2pt,outer sep=0pt,minimum size=13pt,scale=0.7},
        leaf/.style={fill=leafgreen},
        scale=0.8]
      \begin{scope}[shift={(0,0)}]
        \node[boxnode]       (V)  at ( 0  , 0) {};
        \node[leaf,boxnode] (W) at (0,-1) {$i$};
        \draw[-stealth] (V) -- (W);

        \node[boxnode]       (Vp)  at (4 , 0) {};
        \node[leaf,boxnode] (Wp1) at ($(Vp) + (-1.2,-1)$) {$i$};
        \node[leaf,boxnode] (Wp2) at ($(Vp) + (-0.4,-1)$) {$j_1$};
        \node               (Wp3) at ($(Vp) + ( 0.4,-1)$) {$\dots$};
        \node[leaf,boxnode] (Wp4) at ($(Vp) + ( 1.2,-1)$) {$j_k$};
        \draw[-stealth] (Vp) -- (Wp1);
        \draw[-stealth] (Vp) -- (Wp2);
        \draw[-stealth] (Vp) -- (Wp4);

        \draw[dotted,-angle 60] (W) to[bend right = 10] (Wp1);
      \end{scope}
      \begin{scope}[shift={(8,0)}]
        \node[boxnode]       (V)  at ( 0  , 0) {$v$};
        \node[leaf,boxnode] (W) at (0,-1) {$v$};
        \draw[-stealth] (V) -- (W);

        \node[boxnode]       (Vp)  at (4 , 0) {$\mu_r(v)$};
        \node[leaf,boxnode] (Wp1) at ($(Vp) + (-1.2,-1)$) {$v$};
        \node[leaf,boxnode] (Wp2) at ($(Vp) + (-0.4,-1)$) {$0$};
        \node               (Wp3) at ($(Vp) + ( 0.4,-1)$) {$\dots$};
        \node[leaf,boxnode] (Wp4) at ($(Vp) + ( 1.2,-1)$) {$0$};
        \draw[-stealth] (Vp) -- (Wp1);
        \draw[-stealth] (Vp) -- (Wp2);
        \draw[-stealth] (Vp) -- (Wp4);

        \draw[dotted,-angle 60] (W) to[bend right = 10] (Wp1);
        \draw[dashed,-angle 60] (V) to (Vp);
      \end{scope}
    \end{tikzpicture}
  \end{center}
  This formulation has the advantage that it carries over naturally to diagrams.
\end{remark}

The point of Definition~\ref{def:cs-complexity} is that it readily implies a bound
\begin{equation}
  \label{eq:cs-bound}
  \lvert \Lambda_{\Phi}\bigl((f_j)_{j \in I}\bigr) \rvert \le \|f_i\|_{U^{s+1}}
\end{equation}
for any $n \ge 1$ and $1$-bounded functions $f_j \colon (W_j^{\Phi})^n \to \CC$, where $s=s_{\cs}(\Phi, i)$.
The proof is by $s+1$ applications of Cauchy--Schwarz.
Instead of recalling a direct proof of this, similar to Example~\ref{ex:diag-ex1}, we proceed in a roundabout way via the notion of a \emph{generalized convolution}.
This was introduced in~\cite{conlon-fox-zhao} to describe a class of functions: given $s \ge 1$, for an abelian group $U$ a function $f \colon U \to \CC$ was called a \emph{generalized convolution of degree $s$} if it has the form
\[
  f(y) = \EE_{\substack{x_1,\dots,x_{s+1} \in U \\ x_1+\cdots+x_{s+1} = y}} F_1(x_2,x_3,\dots,x_{s+1}) F_2(x_1,x_3,\dots,x_{s+1}) \dots F_{s+1}(x_1,\dots,x_s)
\]
for some functions $F_i \colon U^s \to \CC$, typically assumed to be $1$-bounded.
The case $s=1$ is the statement $f = F_1 \ast F_2$ where $\ast$ is convolution in the usual sense, hence the term ``generalized convolution''.

We may state this in terms of a linear datum.\footnote{More accurately, a generalized convolution is the ``dual function'' version of the linear datum we define, as discussed in Section~\ref{sub:true-summary}.
  These concepts are often discussed in terms of the associated cut-norm, in the sense of Footnote~\ref{foot:cut-norm}.
}
Typically we will have $U=\FF_p$ below, but for generality we allow any bounded-dimensional space.
\begin{definition}%
  \label{def:gen-conv}
  Let $s \ge 1$ and let $U$ be a finite-dimensional vector space over $\FF_p$.
  The \emph{generalized convolution datum} over $U$ of degree $s$ is denoted $\gc_s(U)$, where
  \begin{itemize}
    \item $V^{\gc_s(U)} = U^{s+1}$;
    \item $I^{\gc_s(U)} = \{ \triangle \} \cup \{1,\dots,s+1\}$;
    \item $W^{\gc_s(U)}_{\triangle} = U$ and $W_i = U^{s}$ for $i \in [s+1]$;
    \item $\phi^{\gc_s(U)}_\triangle(x_1,\dots,x_{s+1})=  x_1+\cdots+x_{s+1}$ and
      \[
        \phi^{\gc_s(U)}_i(x_1,\dots,x_{s+1}) = (x_1,\dots,x_{i-1},x_{i+1},\dots,x_{s+1})
      \]
      for $i \in [s+1]$ (i.e., the map omitting the $i$-th coordinate).
  \end{itemize}
  If the space $U$ is $\FF_p$, or otherwise clear from context, we abbreviate this to $\gc_s$.
\end{definition}

It turns out that~\eqref{eq:cs-bound} is a consequence of the following two steps.
\begin{lemma}%
  \label{lem:gc-cs}
  Let $\Phi$ be a linear datum and $i \in I^\Phi$ and index.
  Then $\Phi$ has Cauchy--Schwarz complexity at most $s$ at $i$ if and only if there exists a morphism of linear data $\Theta \colon \gc_s(W_i) \to \Phi$ respecting $i$ with $\alpha^\Theta(i)=\triangle$.
\end{lemma}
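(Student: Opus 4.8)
\textbf{Proof strategy for Lemma~\ref{lem:gc-cs}.}
The plan is to translate both sides of the claimed equivalence into the common language of the partition and maps from Definition~\ref{def:cs-complexity}, exploiting the morphism reformulation sketched in Remark~\ref{rem:cs-complex-equiv}. First I would unpack what a morphism $\Theta \colon \gc_s(W_i) \to \Phi$ with $\alpha^\Theta(i) = \triangle$ respecting $i$ amounts to concretely. The datum $\gc_s(W_i)$ has index set $\{\triangle\} \cup [s+1]$, and a morphism to $\Phi$ consists of a shape $\alpha \colon I^\Phi \to \{\triangle\} \cup [s+1] \cup \{\zeroi\}$, a linear map $\theta \colon V^\Phi \to V^{\gc_s(W_i)} = W_i^{s+1}$, and maps $\sigma_j \colon W^{\gc_s(W_i)}_{\alpha(j)} \to W^\Phi_j$ making~\eqref{eq:morph} commute for each $j \in I^\Phi$. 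Note the direction: the morphism goes $\gc_s(W_i) \to \Phi$, so by Proposition~\ref{prop:morphism} it induces an inequality $|\Lambda_\Phi| \le |\Lambda_{\gc_s(W_i)}|$, and the requirement that it respect $i$ forces $\alpha(i) = \triangle$, $W^\Phi_i = W^{\gc_s(W_i)}_\triangle = W_i$ (consistent), and $\sigma_i = \id$.

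Next I would set up the correspondence in both directions. Given $\Theta$ as above, the shape $\alpha$ partitions $I^\Phi \setminus \{i\}$ according to the value of $\alpha(j) \in [s+1] \cup \{\zeroi\}$: set $S_r = \{ j \in I^\Phi \setminus \{i\} : \alpha(j) = r \}$ for $r \in [s+1]$, and note any indices with $\alpha(j) = \zeroi$ can be thrown into $S_1$ (say) without harm — indeed I should check that the resulting $\mu_1$ still kills them, which it does because $\phi^\Phi_j \circ \mu_r = 0$ whenever $\alpha(j) = \zeroi$, since $\sigma_j = 0$ forces $\phi^\Psi_j \circ \theta$... wait, I need the commuting square the other way; let me instead just absorb $\zeroi$-indices into an arbitrary part and verify the orthogonality survives directly from~\eqref{eq:morph}. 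To produce the maps $\mu_r \colon W_i^\Phi \to V^\Phi$ I would dualize: the commuting square~\eqref{eq:morph} for each $j$ reads $\sigma_j \circ \phi^{\gc_s}_{\alpha(j)} = \phi^\Phi_j \circ \theta$ — but this has $\theta$ on the wrong side. The resolution is that the morphism $\gc_s(W_i) \to \Phi$ of Definition~\ref{def:morphism} has $\theta \colon V^\Phi \to V^{\gc_s(W_i)}$, so I should read off the $\mu_r$ from a partial inverse / section. Concretely, $V^{\gc_s(W_i)} = W_i^{s+1}$ comes with the $r$-th coordinate inclusion $\iota_r \colon W_i \to W_i^{s+1}$; one checks $\phi^{\gc_s}_\triangle \circ \iota_r = \id_{W_i}$ and $\phi^{\gc_s}_{r'} \circ \iota_r = 0$ for $r' \ne r$ (the map $\phi^{\gc_s}_{r'}$ omits the $r'$-th coordinate, and $\iota_r$ places its input in slot $r \ne r'$, leaving all surviving coordinates zero). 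But I want a map landing in $V^\Phi$, not out of it. So actually the right move is the reverse construction: the morphism data gives me, for building $\mu_r$, that I need a section of $\theta$ over the image of $\iota_r$. Here is where I'd use that $\Phi$ has the map $\theta$ going \emph{into} $\gc_s$: pick any linear section and compose appropriately. Let me instead approach the whole lemma through the chain $\gc_s(W_i) \entails^{s+1} \trivial(\{\triangle\})$ — no; the cleanest path, and the one I would actually write up, is: (a) show $s_{\cs}(\gc_s(U), \triangle) \le s$ directly by the obvious partition $S_r = \{r\}$ with $\mu_r = \iota_r$; (b) observe morphisms compose and pull back Cauchy--Schwarz complexity bounds in the sense that if $\Theta \colon \gc_s(W_i) \to \Phi$ respects $i$ with $\alpha(i) = \triangle$, then composing the morphisms $\cM_r \colon \trivial(\{\triangle\}, W_i) \to \gc_s(W_i)[\{\triangle\} \cup \{r\}]$ witnessing (a) with (the appropriate restriction of) $\Theta$ yields morphisms witnessing $s_{\cs}(\Phi, i) \le s$, via Remark~\ref{rem:cs-complex-equiv} and Remark~\ref{rem:restriction-morphs}; (c) conversely, given a witnessing partition $S_1, \dots, S_{s+1}$ and maps $\mu_1, \dots, \mu_{s+1}$ for $s_{\cs}(\Phi, i) \le s$, assemble them into a single morphism $\Theta \colon \gc_s(W_i) \to \Phi$ by taking $\alpha(i) = \triangle$, $\alpha(j) = r$ for $j \in S_r$, $\sigma_i = \id$, $\sigma_j = \phi^\Phi_j \circ \mu_r$ for $j \in S_r$ (thought of on the right slot of $W_i^s$), and $\theta \colon V^\Phi \to W_i^{s+1}$ the map $v \mapsto $ a tuple whose $\triangle$-ish bookkeeping is arranged so the squares commute — explicitly, I would define $\theta$ so that $\phi^{\gc_s}_\triangle \circ \theta = \phi^\Phi_i$ and the other coordinates record the ``error'' terms; the existence of such a $\theta$ is exactly a fiber-product / universal-property computation using that the $\mu_r$ exist.

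The one genuine subtlety — and the step I expect to be the main obstacle — is the construction of the linear map $\theta \colon V^\Phi \to W_i^{s+1} = V^{\gc_s(W_i)}$ in direction (c), making \emph{all} the squares~\eqref{eq:morph} commute simultaneously, not just the one at $i$. The constraint at $i$ says the sum of the $s+1$ coordinates of $\theta(v)$ must equal $\phi^\Phi_i(v)$; the constraint at each $j \in S_r$ says the tuple of coordinates of $\theta(v)$ with the $r$-th omitted, pushed through $\sigma_j$, must equal $\phi^\Phi_j(v)$. The natural guess is to let the $r$-th coordinate of $\theta(v)$ be $\phi^\Phi_i(\mu_r(\text{something}))$-flavoured, but one must check these are mutually consistent, which is where the hypotheses $\phi^\Phi_i \circ \mu_r = \id$ and $\phi^\Phi_j \circ \mu_r = 0$ (for $j \in S_r$) earn their keep: they guarantee that defining the $r$-th coordinate via $\mu_r$ neither disturbs the $S_r$-constraints (which see the $r$-th slot omitted) nor the sum constraint (which telescopes correctly because each $\mu_r$ is a section in its own part). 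I would write this out as a short explicit formula and verify~\eqref{eq:morph} holds for each $j$ by a two-line computation, splitting into the cases $j = i$, $j \in S_r$. Everything else — monotonicity in $s$ (Remark~\ref{rem:def-cs-monotone}), handling $\zeroi$-values of $\alpha$, functoriality of $\datum(-)$ if one wants the diagram version — is routine bookkeeping, and I would dispatch it by citing the relevant earlier remarks rather than re-deriving it. Finally, to close the loop with~\eqref{eq:cs-bound}, I would note that $\gc_s$ itself satisfies $|\Lambda_{\gc_s}((f_j))| \le \|f_\triangle\|_{U^{s+1}}$ by the standard Cauchy--Schwarz unfolding (this is the content of the companion lemma, presumably stated next), so the morphism $\Theta$ combined with Proposition~\ref{prop:morphism} yields the bound for $\Phi$; but strictly this last remark is outside the lemma as stated and I would defer it.
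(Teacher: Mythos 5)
Your proposal is derailed by a single but consequential misreading of Definition~\ref{def:morphism}: for a morphism $\Theta \colon \Phi \to \Psi$, the structure map $\theta$ goes $V^\Phi \to V^\Psi$ (same direction as the arrow), while it is the \emph{shape} $\alpha \colon I^\Psi \to I^\Phi \cup \{\zeroi\}$ and the maps $\sigma_i \colon W^\Phi_{\alpha(i)} \to W^\Psi_i$ that go ``backwards'' on the index level. So for $\Theta \colon \gc_s(W_i) \to \Phi$, you have $\theta \colon W_i^{s+1} \to V^\Phi$, not $V^\Phi \to W_i^{s+1}$ as you wrote. You correctly identified the directions of $\alpha$ and $\sigma_j$ but flipped $\theta$, and this is what produces the phantom obstruction you spend most of the proposal fighting.

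Once $\theta$ is pointed the right way, both directions collapse to the one-line compositions you initially guessed at and then abandoned. Given $\Theta$, set $\mu_r = \theta^\Theta \circ \tau_r$ where $\tau_r \colon W_i \to W_i^{s+1}$ inserts into slot $r$: this lands in $V^\Phi$ as desired, and~\eqref{eq:morph} applied at $i$ and at $j \in S_r$ gives $\phi^\Phi_i \circ \mu_r = \id$ and $\phi^\Phi_j \circ \mu_r = 0$ directly, with no section or partial inverse needed. Conversely, given the $\mu_r$, define $\theta(u_1,\dots,u_{s+1}) = \sum_{\ell} \mu_\ell(u_\ell)$ and $\sigma_j$ by pushing forward through $\phi^\Phi_j$; verifying~\eqref{eq:morph} is a two-line telescoping check, not a universal-property or fiber-product computation. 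The step you flagged as ``the main obstacle'' — building a map $V^\Phi \to W_i^{s+1}$ consistent with all constraints — is not required by the lemma at all; the map you actually need goes the other way and its formula is forced.

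Your alternative route for the forward direction, composing the morphisms $\cM_r$ of Remark~\ref{rem:cs-complex-equiv} for $\gc_s(W_i)$ with the restriction of $\Theta$ via Remark~\ref{rem:restriction-morphs}, is sound and unpacks to the same $\mu_r = \theta^\Theta \circ \tau_r$, so it is a valid if slightly more circuitous way to present that half. But it does not rescue the converse direction as you have set it up, and you still need to handle the $\zeroi$-valued indices explicitly (the paper's device: reassign $\alpha(j)=1$, $\sigma_j=0$, which is a valid morphism with the same $\theta$). Fixing the direction of $\theta$ should let you write the whole thing in a few lines following your original instincts.
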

Note it is easy to see $\gc_s$ has Cauchy--Schwarz complexity at most $s$ at $\triangle$, by setting $S_r = \{r\}$
and $\mu_r(x) = (0, \dots, 0, x, 0, \dots, 0)$ with $x$ in the $r$-th position.
The lemma asserts that $\gc_s$ is moreover \emph{universal} for the property of having Cauchy--Schwarz complexity at most $s$.
It is also very similar in spirit to statements of Green and Tao about Cauchy--Schwarz complexity and ``normal forms'' of systems of linear forms~\cite[Definition~4.2]{gt-linear}.

\begin{lemma}%
  \label{lem:gc-bound}
  For any finite-dimensional vector space $W$, any $n \ge 1$ and any $1$-bounded functions $f_\triangle \colon W^n \to \CC$ and $f_i \colon (W^n)^s \to \CC$ for $i \in [s+1]$, we have
  \[
    \lvert \Lambda_{\gc_s(W)}(f_\triangle,f_1,\dots,f_{s+1}) \rvert \le \|f_\triangle\|_{U^{s+1}}.
  \]
\end{lemma}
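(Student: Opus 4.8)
The plan is to prove the lemma directly, by $s+1$ applications of the Cauchy--Schwarz inequality — one to eliminate each of $f_1,\dots,f_{s+1}$ in turn. This is the classical ``generalized von Neumann'' argument and needs nothing beyond the definitions of $\gc_s(W)$ and of the Gowers norm $U^{s+1}$ (Definition~\ref{def:uk}). \emph{(Alternatively one could exhibit a sequence of $s+1$ self-joining and morphism steps witnessing $\gc_s(W) \entails^{s+1}_\gamma U^{s+1}(W)$, with $\gamma$ sending each leaf $\omega \in \{0,1\}^{s+1}$ of $U^{s+1}(W)$ to $(\triangle,\,|\omega| \bmod 2)$, and then invoke Corollary~\ref{cor:log-entails} together with the Gowers--Cauchy--Schwarz inequality; the moves involved are exactly of the kind in Example~\ref{ex:diag-ex1}. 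But the direct route is shorter to write down.)}

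Throughout write $x_{-i} = (x_1,\dots,x_{i-1},x_{i+1},\dots,x_{s+1})$, and for $\varepsilon \in \{0,1\}$ and a function $g$ let $\mathcal{C}^{\varepsilon} g$ denote $g$ if $\varepsilon = 0$ and $\overline{g}$ if $\varepsilon = 1$. By definition
\[
  \Lambda_{\gc_s(W)}(f_\triangle,f_1,\dots,f_{s+1}) = \EE_{x_1,\dots,x_{s+1} \in W^n} f_\triangle\!\left(\textstyle\sum_{i=1}^{s+1} x_i\right) \prod_{i=1}^{s+1} f_i(x_{-i}).
\]
First I would peel off $f_{s+1}$: since $f_{s+1}(x_{-(s+1)})$ does not involve $x_{s+1}$, group the average as $\EE_{x_1,\dots,x_s} f_{s+1}(x_1,\dots,x_s)\, G(x_1,\dots,x_s)$, where $G$ is the inner average over $x_{s+1}$ of the remaining factors; since $f_{s+1}$ is $1$-bounded, Cauchy--Schwarz gives $|\Lambda_{\gc_s(W)}(\cdots)|^2 \le \EE_{x_1,\dots,x_s} |G|^2$. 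Expanding $|G|^2$ replaces $x_{s+1}$ by a free copy $x_{s+1}^{(0)}$ and a difference $h_{s+1}$; the factor $f_\triangle$ now occurs as $\prod_{\varepsilon \in \{0,1\}} \mathcal{C}^{\varepsilon} f_\triangle\bigl(\sum_{i\le s} x_i + x_{s+1}^{(0)} + \varepsilon h_{s+1}\bigr)$, each surviving $f_i$ ($i \le s$) occurs as $\prod_{\varepsilon} \mathcal{C}^{\varepsilon} f_i(\,\cdots,\, x_{s+1}^{(0)} + \varepsilon h_{s+1})$ (i.e.\ with its last slot decorated), and $f_{s+1}$ is gone.

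Iterating, a routine induction shows that after $j$ steps (having eliminated $f_{s+1},\dots,f_{s+2-j}$) one has
\[
  |\Lambda_{\gc_s(W)}(\cdots)|^{2^j} \le \EE \left[ \prod_{\eta \in \{0,1\}^j} \mathcal{C}^{|\eta|} f_\triangle(\cdots) \right] \prod_{i=1}^{s+1-j} \left[ \prod_{\eta \in \{0,1\}^j} \mathcal{C}^{|\eta|} f_i(\cdots) \right],
\]
the average being over $x_1,\dots,x_{s+1-j}$ together with a free copy $x_l^{(0)}$ and a difference $h_l$ for each $l > s+1-j$, and the arguments being the evident cube of shifts. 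The $(j{+}1)$-st step groups the $2^j$ copies of $f_{s+1-j}$ — which jointly form a single $1$-bounded function not involving $x_{s+1-j}$ — applies Cauchy--Schwarz in the remaining variables, and doubles $x_{s+1-j}$. After $s+1$ steps all the $f_i$ have disappeared, leaving
\[
  |\Lambda_{\gc_s(W)}(\cdots)|^{2^{s+1}} \le \EE_{x_1^{(0)},\dots,x_{s+1}^{(0)},\,h_1,\dots,h_{s+1}} \prod_{\eta \in \{0,1\}^{s+1}} \mathcal{C}^{|\eta|} f_\triangle\!\left(\textstyle\sum_{i=1}^{s+1}\bigl(x_i^{(0)} + \eta_i h_i\bigr)\right).
\]
Since $\sum_i (x_i^{(0)} + \eta_i h_i) = \bigl(\sum_i x_i^{(0)}\bigr) + \eta \cdot h$ and $\sum_i x_i^{(0)}$ is uniform on $W^n$ when the $x_i^{(0)}$ are, the right-hand side equals $\EE_{y,\,h_1,\dots,h_{s+1}} \prod_{\eta} \mathcal{C}^{|\eta|} f_\triangle(y + \eta\cdot h) = \|f_\triangle\|_{U^{s+1}}^{2^{s+1}}$; taking $2^{s+1}$-st roots finishes the proof. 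The only real work is the bookkeeping in the induction — tracking which variables get doubled, which slot of which $f_i$ acquires a cube coordinate, and the conjugation parities $\mathcal{C}^{|\eta|}$ — so I would state the inductive hypothesis precisely, verify the single step once, and leave the remaining iterations to the reader in the usual way.
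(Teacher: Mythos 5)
Your argument is correct and is exactly the ``classical'' $s{+}1$-fold Cauchy--Schwarz computation that the paper itself acknowledges underlies this lemma; the paper does not write it out, but instead cites an external reference and notes (without verifying) that it corresponds to the $\entails$-statement $\gc_s(W) \entails^{s+1}_\gamma U^{s+1}(W)$ together with a Gowers--Cauchy--Schwarz step to remove the translates. Your direct route matches the paper's intended proof precisely, and you were right to observe that writing out the iteration directly is less painful than verifying all the details through the $\entails$ formalism; the bookkeeping in your inductive hypothesis (which variables are doubled, which slot of each $f_i$ acquires a cube coordinate, the parities $\mathcal{C}^{|\eta|}$) is the correct one and the final change of variables $y = \sum_i x_i^{(0)}$ is legitimate since that sum is uniform on $W^n$.
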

Together, and with Proposition~\ref{prop:cs}, these imply~\eqref{eq:cs-bound}.

Lemma~\ref{lem:gc-bound} is somewhat standard: it appears as~\cite[Lemma 6]{tao-cut-blog}, or in many places in a more disguised form as a statement about hypergraph norms.
It proceeds by $s+1$ applications of Cauchy--Schwarz, and is in some ways the archetypal ``Cauchy--Schwarz $s+1$ times'' argument from which all others descend.

It almost, but not quite, corresponds to the statement in our language that
$\gc_s(W) \entails^{s+1}_\gamma U^{s+1}$ where $\gamma(\omega) = \left(\triangle, \sum_{i=1}^{s+1} \omega_i \bmod 2\right)$ for each $\omega \in \{0,1\}^{s+1}$: indeed, this would show
\[
  \lvert \Lambda_{\gc_s(W)}(f_\triangle, f_1,\dots, f_{s+1}) \rvert \le \bigl\lvert \Lambda_{U^{s+1}(W)}\bigl((g_\omega)_{\omega \in \{0,1\}^{s+1}}\bigr) \bigr\rvert^{2^{-(s+1)}}
\]
for some functions $g_{\omega}$ which are all translates of $f_\triangle$ or $\overline{f_{\triangle}}$ as appropriate.
These translates are not really there, and as in Example~\ref{ex:diag-ex1} we can improve this artificially weak statement with the Gowers--Cauchy--Schwarz inequality to obtain the claimed bound.
This ``$\entails$'' claim follows by $s+1$ Cauchy--Schwarz steps applied to $\gc_s$; specifically,
\[
  \CS(\{1\}),\ \CS(\{(L;2),(R;2)\}),\ \dots,\ \CS\bigl(\bigl\{ (i_1;i_2;\dots,i_s;s+1) \colon i_1,\dots,i_s \in \{L,R\} \bigr\}\bigr)
\]
using the notation in Section~\ref{sub:labelling}.
The resulting datum is exactly $U^{s+1}$, after removing some degeneracy (see Remark~\ref{rem:surj-degen}).

However, the formalism makes verifying the details harder rather than easier in this case, so although we rely on Lemma~\ref{lem:gc-bound} for our main theorem, we leave its formal proof to the references.

We now prove Lemma~\ref{lem:gc-cs}.

\begin{proof}[Proof of Lemma~\ref{lem:gc-cs}]
  By Remark~\ref{rem:def-cs-monotone}, the first condition is equivalent to the existence a partition $S_1 \cup \dots \cup S_{s+1}$ and a tuple of maps $(\mu_r)_{r \in [s+1]}$ as in Definition~\ref{def:cs-complexity}.
  In fact we will show that $(S_r)_{r \in [s+1]}$, $(\mu_r)_{r \in [s+1]}$ encode exactly the same data as a morphism $\gc_s(W_i) \to \Phi$.
  The content of the proof is therefore once again to unpack the definition of a morphism.

  First consider the shape $\alpha^\Phi$.
  We may reduce to the case where $\alpha(j) \ne \zeroi$ for all $j \in I^{\Phi}$, as otherwise modifying this to $\alpha(j) = 1$ and $\sigma_j = 0$ produces another valid morphism.
  Then, specifying a function $\alpha \colon I^{\Phi} \to I^{\gc_s}$ respecting $i \in I^\Phi$ with $\alpha(i)=\triangle$ is exactly the same thing as specifying a partition $I^{\Phi} \setminus \{i\} = S_1 \cup \dots \cup S_{s+1}$, by setting $S_r = \alpha^{-1}(r)$ and vice versa.

  Suppose first that linear maps $\theta^\Theta$ and $\sigma_j^\Theta$ defining a valid morphism are given.
  Then maps $\mu_r \colon W_i \to V^\Phi$ as in Definition~\ref{def:cs-complexity} may be defined as follows.
  For $r \in [s+1]$ define $\tau_r \colon W_i \to W_i^{s+1}$ by
  \[
    \tau_r(u) = (0, \dots, 0, u, 0, \dots, 0)
  \]
  where the $u$ on the right-hand side is in position $r$.
  Clearly $\phi^{\gc_s}_\triangle \circ \tau_r = \id_{W_i}$ and $\phi^{\gc_s}_r \circ \tau_r = 0$. 
  Then set $\mu_r = \theta^{\Theta} \circ \tau_r$.
  By~\eqref{eq:morph} we have
  \[
    \phi^{\Phi}_i \circ \mu_r = \bigl( \phi^{\Phi}_i \circ \theta^{\Theta} \bigr) \circ \tau_r = \bigl( \sigma^\Theta_i \circ \phi^{\gc_s}_{\triangle} \bigr) \circ \tau_r = \id_{W_i}
  \]
  (recalling $\sigma_i^\Theta=\id_{W_i}$) and similarly for $j \in S_r$,
  \[
    \phi^{\Phi_j} \circ \mu_r = \bigl( \phi^{\Phi}_j \circ \theta^{\Theta} \bigr) \circ \tau_r = \bigl( \sigma^\Theta_j \circ \phi^{\gc_s}_{r} \bigr) \circ \tau_r = 0
  \]
  as required in Definition~\ref{def:cs-complexity}.

  Conversely, given linear maps $\mu_r \colon W_i \to V^\Phi$ as in Definition~\ref{def:cs-complexity}, we may define $\Theta$ by $\sigma_i^\Theta = \id_{W_i}$,
  \[
    \theta^\Theta(u_1,\dots,u_{s+1}) = \sum_{\ell=1}^{s+1} \mu_\ell(u_\ell)
  \]
  and for each $j \in S_r$,
  \[
    \sigma_j^\Theta(u_1,\dots,u_{r-1},u_{r+1},\dots,u_{s+1}) = \sum_{\ell \in [s+1] \setminus \{r\}} \phi_j^{\Phi} \bigl( \mu_\ell(u_\ell) \bigr).
  \]
  It suffices to check~\eqref{eq:morph} holds.
  Indeed,
  \[
    \bigl(\phi^{\Phi}_i \circ \theta^{\Theta}\bigr)(u_1,\dots,u_{s+1})
    = \sum_{\ell=1}^{s+1} \bigl(\phi^{\Phi}_i \circ \mu_\ell\bigr)(u_\ell)
    = \sum_{\ell=1}^{s+1} u_\ell
    = \bigl(\sigma_i \circ \phi^{\gc_s}_{\triangle}\bigr)(u_1,\dots,u_{s+1})
  \]
  and for $j \in S_r$,
  \begin{align*}
    \bigl(\phi^{\Phi}_j \circ \theta^{\Theta}\bigr)(u_1,\dots,u_{s+1})
    &= \sum_{\ell=1}^{s+1} \bigl(\phi^{\Phi}_j \circ \mu_\ell\bigr)(u_\ell)
    = \sum_{\ell \in [s+1] \setminus \{r\} }^{s+1} \bigl(\phi^{\Phi}_j \circ \mu_\ell\bigr)(u_\ell)  \\
    &= \bigl(\sigma_j \circ \phi^{\gc_s}_{r}\bigr)(u_1,\dots,u_{s+1})
  \end{align*}
  as required, since $\phi^{\Phi}_j \circ \mu_r = 0$.
\end{proof}

In fact it turns out to be easier to work consistently with generalized convolution data rather than Gowers norm data.
In particular our proof of Theorem~\ref{thm:main-ext} will proceed via the following fact (and Lemma~\ref{lem:gc-bound}).

\begin{theorem}%
  \label{thm:main-gc}
  Take any system of linear forms $\Phi = (\phi_1,\dots,\phi_k)$, $\phi_j \colon \FF_p^d \to \FF_p$.
  We may consider this as a linear datum with $I^\Phi=[k]$.
  
  For any index $i \in [k]$, if $s=s(\Phi,i) < \infty$ and $s(\Phi) \le s+1$ then
  \[
    \Phi \entails_\gamma^M \gc_s(\FF_p)
  \]
  where $\gamma(\triangle) = (i,0)$ or $(i,1)$ and $M \ll k^{3} \bigl(\log k + \log \log (10 L)\bigr)$.
\end{theorem}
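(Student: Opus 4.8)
The plan is to realise, inside the purely syntactic $\entails$ calculus of Section~\ref{sec:framework} and its diagram/circuit refinements in Sections~\ref{sec:framework}--\ref{sec:gates}, the ``degree reduction'' strategy sketched in Section~\ref{sub:true-summary}. By Lemma~\ref{lem:gc-cs} the datum $\gc_t(\FF_p)$ is universal for ``Cauchy--Schwarz complexity at most $t$'', so after the standard normalisations of Remark~\ref{rem:surj-degen} (and merging any proportional forms, which is a single $\morph$ step) the first move is to pass from $\Phi$ to $\gc_{t_0}(\FF_p)$, where $t_0 = \max_j s_{\cs}(\Phi,j) \le k$; more precisely, using the stashing technique of Section~\ref{sub:true-summary} one ``stash'' per index $j$, each costing a bounded number of Cauchy--Schwarz steps one arrives at a datum which records simultaneously that every $f_j$ is controlled by a $U^{t_0+1}$ Gowers norm, i.e.\ carries a stashed $\gc_{t_0}$-dual structure at each leaf. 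From here the proof is a loop: for $t = t_0, t_0-1, \dots, s+1$ one establishes a ``degree lowering'' entailment from the datum stashed at level $t$ to the datum stashed at level $t-1$, and finally discards the stashed structure to land on $\gc_s(\FF_p)$ with $\gamma(\triangle) = (i,0)$ or $(i,1)$. Concatenating the $t_0 - s \le k$ loop iterations with the preliminary phase, via case~(c) of Definition~\ref{def:logic-notation}, yields the desired $\Phi \entails^M_\gamma \gc_s(\FF_p)$.

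The single degree-lowering step is the heart of the matter, and it is where the linear algebra of Definition~\ref{def:true-complexity} enters. On the $100\%$ side (which is only a heuristic guide; the genuine content is the syntactic $\entails$ statement) one has $t$-multilinear forms $B_j$ obeying the differentiated functional equation $\sum_j B_j(\phi_j(h_1),\dots,\phi_j(h_t)) = 0$ together with $\FF_p$-multilinearity in each slot, the latter being exactly the ``bilinearity'' identities \eqref{eq:bilinear} whose Cauchy--Schwarz-friendly derivation is carried out in Section~\ref{sub:repr}. Since $s(\Phi) \le s+1 \le t$ throughout the loop this is precisely where the hypothesis $s(\Phi) \le s+1$ is used, ensuring the relevant $B_j$ remain well-defined $t$-multilinear forms, Definition~\ref{def:true-complexity} supplies a tensor $T$ with $\phi_j^{\otimes t}(T) = \delta_{j,j_0}$ for the target index $j_0$, and contracting the functional equation against $T$ forces $B_{j_0} \equiv 0$, i.e.\ $\partial^{\,t}F_{j_0} = 0$. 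To implement this contraction by Cauchy--Schwarz I would build an arithmetic circuit (a gate, in the sense of Section~\ref{sec:gates}) whose free wires carry the arguments of $B_{j_0}$, whose internal wires carry the images $\phi_j(h_r)$, and whose gates are (i) copies of $\Phi$ asserting the functional equation and (ii) the multilinearity/rescaling gates. The essential efficiency input is that every rescaling by an integer coefficient of size $\le L$ must be performed by the ``double-and-add'' circuit of Figure~\ref{fig:bilinear} and its higher-arity aggregate-gate analogues $\agate$, $\bigagg$ costing only $O(\log L)$ programmable gates rather than $O(L)$; since building a circuit of size $N$ by self-joining takes only about $\log_2 N$ Cauchy--Schwarz steps, and since the circuit per stage can be kept of size polynomial in $k$ times $\log L$ by processing the $t$ slots sequentially and re-using a common circuit skeleton across stages, each stage costs $\ll k^2(\log k + \log\log(10L))$ Cauchy--Schwarz steps; summing over the $\le k$ stages gives $M \ll k^3(\log k + \log\log(10L))$.

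The main obstacle is the conceptual one isolated in points (A)--(D) of Section~\ref{sub:background}: the textbook degree-reduction argument freely specialises the functional equation at many tuples $h^{(1)}, h^{(2)}, \dots$ and freely substitutes one derived identity into another, none of which is legitimate for genuine iterated Cauchy--Schwarz, which is serial and has ``no memory''. The circuit formalism of Sections~\ref{sec:framework}--\ref{sec:gates} is what makes the many specialisations legal they become distinct $\Phi$-gates wired into one highly symmetric geometry that can genuinely be produced by a build phase of self-joinings and then re-programmed to break the symmetry while the stashing machinery of Section~\ref{sec:stashing} is what lets the conclusion of stage $t$ be carried into stage $t-1$ as a dual-function hypothesis instead of being forgotten. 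A subsidiary but real difficulty, reflected in the bound $M \ll k^3(\log k + \log\log(10L))$, is quantitative bookkeeping: one must check that reconstructing the ``other half'' after each Cauchy--Schwarz step (Remark~\ref{rem:stashing}) keeps all data polynomially bounded, that the programmable gates can indeed be reassigned so that the $\le k$ stages share essentially one circuit skeleton, and that the multilinear contraction can be arranged with only polynomially many gates in $k$ rather than the exponential number a naive expansion of $T$ into simple tensors would suggest; it is exactly here that the hypotheses $s(\Phi,i) < \infty$ and $s(\Phi) \le s+1$ do their work.
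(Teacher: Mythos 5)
Your proposal follows the same strategy as the paper's proof of Theorem~\ref{thm:main-gc} in Section~\ref{sub:main-outline}: stash $\gc_{t_j}$-dual structures at every index $j \ne i_0$ via repeated applications of Lemma~\ref{lem:cs-dual}, iterate a degree-lowering step (Corollary~\ref{cor:key-cor}) that contracts against a tensor $T$ witnessing $s(\Phi,j_0) \le s-1$ and is realised by the $\agate$/$\bigagg$/StarGate circuitry, and finish with one last contraction (Lemma~\ref{lem:key-lem}) to land on $\gc_s$; the hypothesis $s(\Phi)\le s+1$ enters exactly where you locate it, to guarantee the degree-reduction hypothesis at each intermediate $j_0$, and your accounting (bulk stages of cost $\ll k^2(\log k+\log\log(10L))$ over $\le k$ levels, versus the paper's $O(k^2)$ one-index reductions of cost $O(k(\log k+\log\log(10L)))$ each) is a harmless reorganisation reaching the same $k^3$ bound. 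Two tiny imprecisions that don't affect the substance: the ``merging proportional forms'' normalisation is unnecessary since $s(\Phi)<\infty$ already forces pairwise independence, and you stash $\gc_{t_0}$ at every leaf where the paper stashes $\gc_{s_{\cs}(\Phi,j)}$ (which by Lemma~\ref{lem:gc-morph} is a harmless relaxation).
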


We close by recording the situation for diagrams.
\begin{proposition}%
  \label{prop:diag-cs-complexity}
  Let $D$ be a diagram and $i \in \leafs^D$ an index.
  The following are equivalent:---
  \begin{enumerate}[label=(\roman*)]
    \item $\datum(D)$ has Cauchy--Schwarz complexity at most $s$ at $i$;
    \item there exists a diagram morphism $\Theta \colon \gc_s(V^D_i) \to D$ with $\alpha^{\Theta}(i) = \triangle$ respecting $i$;
    \item there exists a partition $\leafs^D \setminus \{i\} = S_1 \cup \dots \cup S_{s+1}$ and diagram morphisms $\cM_r \colon \trivial(\{i\}, V^D_i) \to D\bigl[\nonleafs \cup \{i\} \cup S_r\bigr]$ for $1 \le r \le s+1$, such that $\alpha^{\cM_r}(i) = i$, $\alpha^{\cM_r}(j) = \zeroi$ for $j \in S_r$, and $\cM_r$ respects $i$.
  \end{enumerate}
  We say \emph{$D$ has Cauchy--Schwarz complexity at most $s$} if any of these conditions hold, and write $s_{\cs}(D,i) = s_{\cs}(\datum(D),i)$.
\end{proposition}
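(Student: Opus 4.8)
The plan is to deduce all three equivalences from facts already available for linear data, transporting them across the adjunction of Proposition~\ref{prop:adjunction} and the compatibility of sub-data with $\datum(-)$ in Proposition~\ref{prop:sub-sub}. Throughout one uses that $i \in \leafs^D$ forces $W^{\datum(D)}_i = V^D_i$, and that $s_{\cs}(D,i)$ is by definition $s_{\cs}(\datum(D),i)$, so condition (i) is literally the assertion that $\datum(D)$ has Cauchy--Schwarz complexity at most $s$ at $i$.

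\textbf{Equivalence of (i) and (ii).} Apply Lemma~\ref{lem:gc-cs} to the linear datum $\datum(D)$: it says (i) holds if and only if there is a morphism of linear data $\gc_s(V^D_i) \to \datum(D)$ respecting $i$ whose shape sends $i$ to $\triangle$. By Proposition~\ref{prop:adjunction} (with $\Phi = \gc_s(V^D_i)$) these datum morphisms are in natural bijection with diagram morphisms $\diagram(\gc_s(V^D_i)) \to D$; moreover the bijection agrees with the original on shapes restricted to $\leafs^D$ and preserves the property of respecting $i$. Under Convention~\ref{conv:no-diagram-word} the domain $\diagram(\gc_s(V^D_i))$ is precisely what (ii) writes as $\gc_s(V^D_i)$, so the image of this bijection is exactly the set of morphisms allowed in (ii). Hence (i) $\Leftrightarrow$ (ii).

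\textbf{Equivalence of (i) and (iii).} Unpack (i) via Definition~\ref{def:cs-complexity}, Remark~\ref{rem:def-cs-monotone} and the morphism reformulation of Remark~\ref{rem:cs-complex-equiv}: (i) holds if and only if there is a partition $\leafs^D \setminus \{i\} = I^{\datum(D)} \setminus \{i\} = S_1 \cup \dots \cup S_{s+1}$ and morphisms of linear data $\cM_r \colon \trivial(\{i\}, V^D_i) \to \datum(D)[\{i\} \cup S_r]$ respecting $i$, with $\alpha^{\cM_r}(i) = i$ and $\alpha^{\cM_r}(j) = \zeroi$ for $j \in S_r$. Now Proposition~\ref{prop:sub-sub} provides a canonical strong isomorphism $\datum(D[\nonleafs \cup \{i\} \cup S_r]) \xrightarrow{\cong} \datum(D)[\{i\} \cup S_r]$; composing with it (in the appropriate direction) sets up a bijection between morphisms $\trivial(\{i\}, V^D_i) \to \datum(D)[\{i\} \cup S_r]$ and morphisms $\trivial(\{i\}, V^D_i) \to \datum(D[\nonleafs \cup \{i\} \cup S_r])$, and since a strong isomorphism has all its $\sigma$-maps equal to the identity and its $\alpha$ equal to the canonical index bijection, this bijection leaves the shape and the ``respecting $i$'' status unchanged. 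Finally apply Proposition~\ref{prop:adjunction} once more, with $\Phi = \trivial(\{i\}, V^D_i)$: these datum morphisms correspond bijectively, preserving shape and respecting, to diagram morphisms $\trivial(\{i\}, V^D_i) \to D[\nonleafs \cup \{i\} \cup S_r]$, i.e.\ to the morphisms $\cM_r$ of (iii). (The sub-diagram $D[\nonleafs \cup \{i\} \cup S_r]$ is legal in the sense of Remark~\ref{rem:subdiagram-madness}, since it retains all non-leaves, in particular the unique parent of the leaf $i$.) Composing these bijections over all choices of partition yields (i) $\Leftrightarrow$ (iii).

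\textbf{Expected main obstacle.} No genuinely new idea is needed beyond Lemma~\ref{lem:gc-cs}, Proposition~\ref{prop:adjunction} and Proposition~\ref{prop:sub-sub}; the work is entirely bookkeeping, and the point requiring care is verifying that each translation above preserves the data ``$\alpha(i)=i$ (resp.\ $\triangle$)'', ``$\cM_r$ respects $i$'' and ``$\alpha^{\cM_r}(j)=\zeroi$ for $j \in S_r$''. The least automatic of these is that composing with the strong isomorphism of Proposition~\ref{prop:sub-sub} is neutral on ``respecting $i$'' and on the shape — which holds precisely because a strong isomorphism respects every index (all $\sigma_i = \id$) and has $\alpha$ a graph isomorphism fixing the leaf $i$, so it touches exactly the features that ``respecting'' records and leaves them intact.
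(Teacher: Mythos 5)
Your proof is correct and takes essentially the same route as the paper's: (i)$\Leftrightarrow$(ii) via Lemma~\ref{lem:gc-cs} and Proposition~\ref{prop:adjunction}, and (i)$\Leftrightarrow$(iii) via Remark~\ref{rem:cs-complex-equiv}, Proposition~\ref{prop:sub-sub} and Proposition~\ref{prop:adjunction}. You have merely written out the bookkeeping (preservation of shape and of ``respecting $i$'') that the paper's proof leaves to the reader.
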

\begin{proof}%
  By Lemma~\ref{lem:gc-cs}, (i) is equivalent to (ii) except with a linear datum morphism $\gc_s(V^D_i) \to \datum(D)$, which is equivalent to (ii) by Proposition~\ref{prop:adjunction}.
  On the other hand Remark~\ref{rem:cs-complex-equiv} asserts (i) is equivalent to (iii) except again using linear datum morphisms throughout, which is equivalent to (iii) by Proposition~\ref{prop:adjunction} and Proposition~\ref{prop:sub-sub}.
\end{proof}

\subsection{A reformulation of Theorem~\ref{thm:baby-thm}}%
\label{sub:baby-gc}

Our next leading question is the following.
If the Gowers norm bound in our main theorem Theorem~\ref{thm:main} is obtained via Theorem~\ref{thm:main-gc} and the datum $\gc_s$, what analogous statement / linear datum implies the Gowers norm-type bound in our model problem, Theorem~\ref{thm:baby-thm}?
As well as being important for Theorem~\ref{thm:baby-thm} itself, this question will lead us naturally the definition of gates and circuits below.

The answer is essentially to generalize the idea of Cauchy--Schwarz complexity from one index $i$ to two or more indices.
Let $\Phi$ be a linear datum and $i,j \in I^\Phi$ two indices with, for simplicity, $W^\Phi_i=W^\Phi_j=\FF_p$.
Let $\phi_{i,j}$ denote the map $V^\Phi \to \FF_p^2$ given by $v \mapsto (\phi_i(v), \phi_j(v))$.
Suppose $I^\Phi \setminus \{i,j\} = S_1 \cup S_2$ is a partition of the other indices.
By analogy with Cauchy--Schwarz complexity, we set all the functions $\phi_\ell$ for $\ell \in S_r$ to zero and see what this does to $\phi_{i,j}$: that is, for each set $S_r$ we consider the subspace
\[
  U_r = \phi_{i,j} \left( \bigcap_{\ell \in S_r} \ker(\phi_\ell) \right) = \left\{ \bigl(\phi_i(v), \phi_j(v)\bigr) \colon v \in \bigcap_{\ell \in S_r} \ker(\phi_\ell)\right\} \subseteq \FF_p^2.
\]
For Cauchy--Schwarz complexity the two possibilities for such subspaces were $0$ or $\FF_p$, but here many one-dimensional subspaces $\{ (ax,bx) \colon x \in \FF_p \}$ are possible, for integer values $(a,b)$.
The subspace may also vary with $r$.

Finally, we also now need to consider
\[
  U_0 = \phi_{i,j}\bigl(V^\Phi\bigr) = \left\{ \bigl(\phi_i(v), \phi_j(v)\bigr) \colon v \in V^\Phi \right\} \subseteq \FF_p^2
\]
since this cannot necessarily be inferred from $U_1$ and $U_2$.

For Theorem~\ref{thm:baby-thm}, the goal turns out to be to have $U_0=\FF_p^2$ and
\begin{align*}
  U_1 &\supseteq \bigl\{ (ax, x) \colon x \in \FF_p \bigr\}, & 
  U_2 &\supseteq \bigl\{ (x, ax) \colon x \in \FF_p \bigr\}.
\end{align*}
As with Cauchy--Schwarz complexity, there are a number of equivalent ways to state this.
By linear algebra the following are all the same:---
\begin{enumerate}[label=(\alph*)]
  \item for $U_0$, $U_1$, $U_2$ defined as above, $U_0=\FF_p^2$, $U_1 \supseteq \{ (ax, x) \colon x \in \FF_p \}$ and $U_2 \supseteq \{ (x, ax) \colon x \in \FF_p \}$;
  \item there exist linear maps $\mu'_0 \colon \FF_p^2 \to V^\Phi$, $\mu'_1 \colon \{ (ax,x) \colon x \in \FF_p \} \to V^\Phi$ and $\mu'_2 \colon \{ (x,ax) \colon x \in \FF_p \} \to V^\Phi$ such that $\phi_{i,j} \circ \mu'_r = \id$ for each $r=0,1,2$, and
    $\phi_\ell \circ \mu'_r = 0$ for $r=1,2$ and $\ell \in S_r$;
  \item there exist linear maps $\mu_0 \colon \FF_p^2 \to V^\Phi$, $\mu_1 \colon \FF_p \to V^\Phi$ and $\mu_2 \colon \FF_p \to V^\Phi$ such that $\phi_{i,j}(\mu_0(u)) = u$ for all $u \in \FF_p^2$, $\phi_{i,j}(\mu_1(x)) = (ax, x)$, $\phi_{i,j}(\mu_2(x)) = (x, ax)$ for all $x \in \FF_p$, and $\phi_\ell \circ \mu_r = 0$ for $r=1,2$ and $\ell \in S_r$.
\end{enumerate}
Indeed, it is clear (b) implies (a), and conversely (a) implies (b) as every surjective linear map has a right inverse.
Also (b) and (c) are equivalent by composing with the isomorphisms $\FF_p \cong \{ (ax,x) \colon x \in \FF_p\}$ and $\FF_p \cong \{ (x,ax) \colon x \in \FF_p \}$.

Hence, we have the following analogue of Cauchy--Schwarz complexity applicable to Theorem~\ref{thm:baby-thm}.
\begin{lemma}%
  \label{lem:baby-cs-complexity}
  Suppose $a$ is an integer, $\Phi$ is a linear datum and $i,j \in I^\Phi$ are indices with $W_i=W_j=\FF_p$.
  Suppose also we have a partition
  \[
    I^\Phi \setminus \{ i,j \} = S_1 \cup S_2
  \]
  and linear maps $\mu_0$, $\mu_1$, $\mu_2$ obeying the conditions in \uppar{c} above.

  Then for any $n \ge 1$ and any tuple of $1$-bounded functions $(f_\ell)_{\ell \in I^\Phi} \colon (W_\ell^\Phi)^n \to \CC$, we have
  \[
    \lvert \Lambda_{\Phi}((f_\ell)_{\ell \in I^{\Phi}}) \rvert \le \left( \EE_{h,h' \in \FF_p^n} b_1(ah,h') b_2(h,ah') \right)^{1/4}
  \]
  where
  \begin{align*}
    b_1(h,h') &= \EE_{x \in \FF_p^n} f_i(x) \overline{f_i(x+h)} \overline{f_i(x+h')} f_i(x+h+h') \\
    b_2(h,h') &= \EE_{x \in \FF_p^n} f_j(x) \overline{f_j(x+h)} \overline{f_j(x+h')} f_j(x+h+h').
  \end{align*}
\end{lemma}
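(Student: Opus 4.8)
The plan is to show that the hypotheses of the lemma package up into a morphism of linear data $\Phi \to \Psi$ for a suitable ``target'' datum $\Psi$, and then to reduce the estimate to a Cauchy--Schwarz application on $\Psi$ together with Proposition~\ref{prop:morphism}. The natural target is the datum encoding the right-hand side: writing $b_1,b_2$ as $U^2$-type averages, the quantity $\EE_{h,h'} b_1(ah,h')\,b_2(h,ah')$ is itself $\Lambda_\Psi$ for an explicit $\Psi$ built from two copies of $U^2(\FF_p)$ with arguments rescaled by $a$ in complementary slots and the ``$x$'' variables of the two $U^2$-blocks left independent. Concretely, $\Psi$ has $V^\Psi = \FF_p^2 \times (\FF_p^n)^{?}$-style shape over $\FF_p$: index set $\{00,01,10,11\}\times\{1,2\}$ with forms $\phi_{\omega,1}(x_1,x_2,h,h') = x_1 + \omega_1(ah) + \omega_2 h'$ and $\phi_{\omega,2}(x_1,x_2,h,h') = x_2 + \omega_1 h + \omega_2(ah')$; one checks directly that $\Lambda_\Psi$ with $f_i$ placed on the first block and $f_j$ (and conjugates, matching the $\overline{f}$ pattern of $b_1,b_2$) on the second equals $\EE_{h,h'} b_1(ah,h')\,\overline{b_1}(\cdots)$—I will need to be slightly careful to match the conjugation pattern so that $\Lambda_\Psi$ literally reproduces $\EE\, b_1(ah,h')\,b_2(h,ah')$ rather than its conjugate, but this is a bookkeeping matter analogous to the $\gamma(\omega)$-sign tracking already done in the discussion after Definition~\ref{def:gen-conv}.

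First I would make the target datum explicit and verify $\Lambda_\Psi((g_m)) = \EE_{h,h'} b_1(ah,h') b_2(h,ah')$ by expanding both sides; this is the same computation as expanding a $U^2$ norm, done twice and with linear substitutions. Second, I would assemble the morphism $\Theta\colon \Phi \to \Psi$ whose existence is forced by condition (c). The idea is exactly the one in Remark~\ref{rem:cs-complex-equiv} and the proof of Lemma~\ref{lem:gc-cs}: the maps $\mu_0,\mu_1,\mu_2$ tell us how to map each ``corner'' of the two $U^2$-blocks of $\Psi$ back into $V^\Phi$. Precisely, on the first $U^2$-block I use $\mu_0$ to handle the free $x_1$-direction and $\mu_1$ (which lands in $\bigcap_{\ell\in S_1}\ker\phi_\ell$ and satisfies $\phi_{i,j}(\mu_1(t)) = (at,t)$) to handle the $h$-direction, and on the second block I use $\mu_0$ again together with $\mu_2$ for the $h'$-direction; the condition $\phi_\ell\circ\mu_r = 0$ for $\ell\in S_r$ is exactly what makes the $\sigma$-maps well-defined on the deleted indices, sending $f_\ell$ to a constant. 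Third, Proposition~\ref{prop:morphism} applied to $\Theta$ gives $\lvert\Lambda_\Psi((g_m))\rvert \le \lvert\Lambda_\Phi((f_\ell))\rvert$—wait, this is the wrong direction, so in fact I want the morphism $\Theta\colon \Psi \to \Phi$? No: Proposition~\ref{prop:morphism} takes $\Theta\colon\Phi\to\Psi$ and yields $\lvert\Lambda_\Psi\rvert\le\lvert\Lambda_\Phi\rvert$, whereas I want $\lvert\Lambda_\Phi\rvert\le(\Lambda_\Psi)^{1/4}$, so the right route is to apply two Cauchy--Schwarz steps to $\Phi$ itself (eliminating $\{f_\ell : \ell\in S_1\}$, then $\{f_\ell:\ell\in S_2\}$), obtaining $\lvert\Lambda_\Phi\rvert \le \lvert\Lambda_{\Phi+_{S_1}\Phi+_{\cdots}\Phi}\rvert^{1/4}$, and then map \emph{out} of the resulting self-joined datum into $\Psi$ via a morphism built from the $\mu_r$'s; this is the shape of the $\gc_s$ argument and of Lemma~\ref{lem:gc-bound}, and the two-index analogue is what I will construct.

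The main obstacle I anticipate is getting the direction of the inequality and the placement of the $a$-rescalings right simultaneously: one must choose \emph{which} of $i,j$ carries the $ah$ versus the $h$ in each of the two Cauchy--Schwarz branches so that after the two applications the datum admits a morphism to $\Psi$ and the exponent is exactly $1/4$. The clean way to see this is to introduce the degree-$1$ ``two-variable Cauchy--Schwarz complexity'' datum—call it $\gc_{1,1}$ with a distinguished pair of indices and a partition into $S_1,S_2$—prove a universality lemma in the style of Lemma~\ref{lem:gc-cs} (condition (c) $\iff$ a morphism from this datum to $\Phi$ respecting $i,j$), and prove a bound lemma in the style of Lemma~\ref{lem:gc-bound} ($\gc_{1,1} \entails^2 \Psi$ by the two explicit $\CS$ steps $\CS(S_1)$, $\CS((L;S_2)\cup(R;S_2))$, then simplification to $\Psi$). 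Combining these via Corollary~\ref{cor:log-entails} gives the stated estimate. Everything else—the expansions, checking~\eqref{eq:morph}, tracking conjugates—is routine once the two auxiliary lemmas are in place, so I would state them as the two key steps and relegate the verifications to direct computation.
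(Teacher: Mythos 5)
Your final plan---define a universal datum ($\gc_{1,1}$ in your notation; the paper's $\Psi$ in Lemma~\ref{lem:gc-cs-baby}), show condition (c) is equivalent to a morphism from it to $\Phi$ respecting $i,j$, then prove a separate bound for the universal datum and combine---is exactly the paper's architecture, via Lemma~\ref{lem:gc-cs-baby} and Lemma~\ref{lem:gc-bound-baby}.

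The gap is in the bound step. You propose to show $\gc_{1,1} \entails^2 \Psi$ (where your $\Psi$ is the eight-leaf datum encoding $\EE_{h,h'} b_1(ah,h')\,b_2(h,ah')$) and then conclude via Corollary~\ref{cor:log-entails}. But Corollary~\ref{cor:log-entails} only supplies \emph{some} translates of $f_i$ and $f_j$ at the eight leaves, with potentially different shift parameters at each of the four positions of each $U^2$-block. The resulting right-hand side is then a four-point correlation of four independently translated copies of $f_i$ (and of $f_j$), which is not the quantity $b_1(ah,h')$ in the lemma: to identify it with $b_1$ one would need the four shifts to obey a parallelogram constraint, information the corollary does not record. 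This is precisely the issue the paper flags in the discussion following Lemma~\ref{lem:gc-bound} (``These translates are not really there\ldots'') and deliberately sidesteps in the proof of Lemma~\ref{lem:gc-bound-baby}, which ``drops back to invoking Proposition~\ref{prop:cs} twice'' and changes variables directly, keeping everything translate-free. With the bound lemma proved that way, the only translates enter via Proposition~\ref{prop:morphism} applied to the universality morphism, and they affect $g_1$, $g_2$ as single functions; since $b_1$, $b_2$ are unchanged when $f_i$, $f_j$ are each replaced by a single translate, the stated estimate follows. You should replace the $\entails$-and-Corollary~\ref{cor:log-entails} step for the bound with a direct Proposition~\ref{prop:cs} computation to close this gap.
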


As in Section~\ref{sub:gen-convolution}, we may break the proof of this into two parts.
\begin{lemma}%
  \label{lem:gc-cs-baby}
  Let $a$ be an integer.
  Consider the datum $\Psi$ with $I^{\Psi} =[4]$, $W^\Psi_1=W^\Psi_2=\FF_p$, $W^\Psi_3=W^\Psi_4=\FF_p^3$, $V^\Psi=\FF_p^4$ and $\phi_1^\Psi(x,y,z,w)=x+az+w$, $\phi_2^\Psi(x,y,z,w)=y+z+aw$, $\phi_3^\Psi(x,y,z,w)=(x,y,w)$, $\phi_4^\Psi(x,y,z,w)=(x,y,z)$.

  Then the hypothesis of Lemma~\ref{lem:baby-cs-complexity} holds for $\Phi$, $i,j$ and $a$ if and only if there exists a morphism $\Theta \colon \Psi \to \Phi$ respecting $i,j$ and with $\alpha^\Theta(i)=1$, $\alpha^\Theta(j)=2$.
\end{lemma}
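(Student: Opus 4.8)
The plan is to prove this exactly as Lemma~\ref{lem:gc-cs} was proved: I will show that the data of a morphism $\Theta\colon\Psi\to\Phi$ respecting $i,j$ with $\alpha^\Theta(i)=1$, $\alpha^\Theta(j)=2$ is, after an innocuous normalization, literally the same thing as the data of a partition $I^\Phi\setminus\{i,j\}=S_1\cup S_2$ together with linear maps $\mu_0,\mu_1,\mu_2$ satisfying condition~\uppar{c} above. So the whole content is once again just unwinding Definition~\ref{def:morphism}, with $\Psi$ playing the role that $\gc_s$ played in Section~\ref{sub:gen-convolution}.

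First I would normalize the shape. A morphism $\Theta\colon\Psi\to\Phi$ has shape $\alpha\colon I^\Phi\to I^\Psi\cup\{\zeroi\}=\{1,2,3,4,\zeroi\}$, and if $\Theta$ respects $i$ and $j$ then $\alpha(i)=1$, $\alpha(j)=2$, $\sigma^\Theta_i=\sigma^\Theta_j=\id_{\FF_p}$, and no other index is sent to $1$ or $2$. Exactly as in the proof of Lemma~\ref{lem:gc-cs} we may further assume $\alpha(\ell)\ne\zeroi$ for every $\ell\in I^\Phi$: if $\alpha(\ell)=\zeroi$ then the commuting square~\eqref{eq:morph} for $\ell$ forces $\phi^\Phi_\ell\circ\theta^\Theta=0$, so replacing $\alpha(\ell)$ by $3$ and setting $\sigma^\Theta_\ell=0$ gives another valid morphism respecting the same indices. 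With this normalization, specifying $\alpha$ is the same as specifying a partition $I^\Phi\setminus\{i,j\}=S_1\cup S_2$ via $S_1=\alpha^{-1}(3)$ and $S_2=\alpha^{-1}(4)$.

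Next I would read off the correspondence between $\theta^\Theta$ and the triple $(\mu_0,\mu_1,\mu_2)$, writing $\phi_{i,j}$ for $v\mapsto(\phi^\Phi_i(v),\phi^\Phi_j(v))$. From a morphism, set $\mu_0(x,y)=\theta^\Theta(x,y,0,0)$, $\mu_1(t)=\theta^\Theta(0,0,t,0)$, $\mu_2(t)=\theta^\Theta(0,0,0,t)$. Using $\phi^\Phi_i\circ\theta^\Theta=\phi^\Psi_1$ and $\phi^\Phi_j\circ\theta^\Theta=\phi^\Psi_2$ (valid since $\sigma_i=\sigma_j=\id$), one computes directly $\phi_{i,j}\circ\mu_0=\id$, $\phi_{i,j}(\mu_1(t))=(at,t)$, $\phi_{i,j}(\mu_2(t))=(t,at)$; and using $\phi^\Phi_\ell\circ\theta^\Theta=\sigma_\ell\circ\phi^\Psi_3$ for $\ell\in S_1$ together with $\phi^\Psi_3(0,0,t,0)=0$ gives $\phi^\Phi_\ell\circ\mu_1=0$, and symmetrically $\phi^\Phi_\ell\circ\mu_2=0$ for $\ell\in S_2$; this is precisely~\uppar{c}. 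Conversely, given $(S_1,S_2,\mu_0,\mu_1,\mu_2)$, I define $\alpha$ as above, $\theta^\Theta(x,y,z,w)=\mu_0(x,y)+\mu_1(z)+\mu_2(w)$, $\sigma_i=\sigma_j=\id$, and for $\ell\in S_1$ put $\sigma_\ell(x,y,w)=\phi^\Phi_\ell(\mu_0(x,y))+\phi^\Phi_\ell(\mu_2(w))$ — well-defined because $\phi^\Psi_3$ is surjective and $\phi^\Phi_\ell\circ\mu_1=0$ — and symmetrically $\sigma_\ell(x,y,z)=\phi^\Phi_\ell(\mu_0(x,y))+\phi^\Phi_\ell(\mu_1(z))$ for $\ell\in S_2$. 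Verifying~\eqref{eq:morph} is then one line per index: for $i$, $\phi^\Phi_i\circ\theta^\Theta$ sends $(x,y,z,w)$ to $x+az+w=\phi^\Psi_1(x,y,z,w)$ using $\phi^\Phi_i\mu_0(x,y)=x$, $\phi^\Phi_i\mu_1(z)=az$, $\phi^\Phi_i\mu_2(w)=w$; similarly for $j$; and for $\ell\in S_1\cup S_2$ it holds by construction of $\sigma_\ell$. One also checks that $\Theta$ respects $i$ and $j$ (only $i$ maps to $1$, only $j$ to $2$, $W^\Phi_i=W^\Phi_j=\FF_p$, $\sigma_i=\sigma_j=\id$). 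The two constructions are visibly mutually inverse, which is the claimed equivalence.

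I do not expect a genuine obstacle here: as with Lemma~\ref{lem:gc-cs}, the proof is entirely bookkeeping, and the only points needing a moment's care are the normalization that lets us assume $\alpha$ avoids $\zeroi$, and the use of surjectivity of $\phi^\Psi_3$ and $\phi^\Psi_4$ to manufacture the maps $\sigma_\ell$ for $\ell\in S_1\cup S_2$.
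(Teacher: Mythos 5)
Your proposal is correct and follows the paper's own proof essentially verbatim: normalize the shape to avoid $\zeroi$ by redirecting such indices to $3$ with $\sigma_\ell=0$, identify the partition $S_1,S_2$ with the fibers $\alpha^{-1}(3),\alpha^{-1}(4)$, and set up the mutually inverse dictionary $\mu_0(x,y)=\theta(x,y,0,0)$, $\mu_1(t)=\theta(0,0,t,0)$, $\mu_2(t)=\theta(0,0,0,t)$ versus $\theta=\mu_0+\mu_1+\mu_2$ with the same explicit formulas for $\sigma_\ell$. The only cosmetic difference is that you frame the construction of $\sigma_\ell$ as using surjectivity of $\phi^\Psi_3,\phi^\Psi_4$, whereas the paper (and your own displayed formulas) simply define $\sigma_\ell$ directly on the free coordinates of $\FF_p^3$; the logic underneath is the same and nothing is missing.
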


Again it is clear that $\Psi$ itself obeys the hypothesis of Lemma~\ref{lem:baby-cs-complexity}, by setting $S_1=\{3\}$ and $S_2=\{4\}$, and the lemma states that it is universal for this property.

\begin{lemma}%
  \label{lem:gc-bound-baby}
  Let $a$ be an integer and let $\Psi$ be the datum in Lemma~\ref{lem:gc-cs-baby}.
  Then for any $n \ge 1$ and $1$-bounded functions $f_1,f_2\colon \FF_p^n \to \CC$ and $f_3,f_4 \colon (\FF_p^3)^n \to \CC$, we have
  \[
    \lvert \Lambda_{\Psi}(f_1,f_2,f_3,f_4) \rvert \le \left( \EE_{h,h' \in \FF_p^n} b_1(ah,h') b_2(h,ah') \right)^{1/4}
  \]
  where $b_1,b_2$ are defined as in Lemma~\ref{lem:baby-cs-complexity} applied to $f_1,f_2$.
\end{lemma}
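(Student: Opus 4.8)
The plan is to prove the bound by two applications of the Cauchy--Schwarz inequality followed by one linear change of variables; the two Cauchy--Schwarz steps account for the exponent $1/4 = 1/2^2$. Unpacking Definition~\ref{def:datum} and~\eqref{eq:lambda-def}, and identifying $(\FF_p^3)^n$ with $(\FF_p^n)^3$, we have
\[
  \Lambda_{\Psi}(f_1,f_2,f_3,f_4) = \EE_{x,y,z,w \in \FF_p^n} f_1(x+az+w)\, f_2(y+z+aw)\, f_3(x,y,w)\, f_4(x,y,z).
\]
First I would eliminate $f_3$. Group the average as $\EE_{x,y,w} f_3(x,y,w) \bigl( \EE_z f_4(x,y,z)\, f_1(x+az+w)\, f_2(y+z+aw) \bigr)$, apply Cauchy--Schwarz in the variable $(x,y,w)$, which is exactly the argument of $f_3$, and bound $\EE_{x,y,w} \lvert f_3(x,y,w) \rvert^2 \le 1$ using $1$-boundedness. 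Expanding the resulting square with a second copy $z'$ of $z$ gives
\[
  \lvert \Lambda_{\Psi} \rvert^2 \le \EE_{x,y,w,z,z'} f_4(x,y,z) \overline{f_4(x,y,z')}\, f_1(x+az+w) \overline{f_1(x+az'+w)}\, f_2(y+z+aw) \overline{f_2(y+z'+aw)}.
\]

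Next I would eliminate $f_4$ in the same manner: the factor $f_4(x,y,z) \overline{f_4(x,y,z')}$ is a $1$-bounded function of $(x,y,z,z')$, so Cauchy--Schwarz in that variable, with $w$ playing the role of the inner averaging variable, and expansion with a second copy $w'$ of $w$, yields
\[
  \lvert \Lambda_{\Psi} \rvert^4 \le \EE_{x,y,z,z',w,w'} B_1 \cdot B_2
\]
where $B_1 = f_1(x+az+w) \overline{f_1(x+az'+w)}\, \overline{f_1(x+az+w')} f_1(x+az'+w')$ and $B_2 = f_2(y+z+aw) \overline{f_2(y+z'+aw)}\, \overline{f_2(y+z+aw')} f_2(y+z'+aw')$. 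Finally I would apply the change of variables $(x,y,z,z',w,w') \mapsto (p_1,p_2,h,h',z,w)$ defined by $p_1 = x+az+w$, $p_2 = y+z+aw$, $h = z'-z$, $h' = w'-w$, which is a bijection of $(\FF_p^n)^6$ (one recovers $z' = z+h$, $w' = w+h'$, and then $x,y$ linearly). Under it the four arguments of $f_1$ in $B_1$ become $p_1,\ p_1+ah,\ p_1+h',\ p_1+ah+h'$, and the four arguments of $f_2$ in $B_2$ become $p_2,\ p_2+h,\ p_2+ah',\ p_2+h+ah'$; the integrand no longer depends on $z$ or $w$, so averaging those out, and then averaging over $p_1$ and $p_2$ separately, turns the right-hand side into $\EE_{h,h' \in \FF_p^n} b_1(ah,h')\, b_2(h,ah')$. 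This is precisely the claimed inequality.

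The argument is entirely routine and I do not anticipate any genuine obstacle; the one place to take care is the final change of variables, where one must verify both that the substitution is invertible and that the integer $a$ ends up multiplying $h$ inside $b_1$ but $h'$ inside $b_2$. This asymmetry is forced by the asymmetric appearance of $z$ and $w$ in the forms $\phi^\Psi_1(x,y,z,w) = x+az+w$ and $\phi^\Psi_2(x,y,z,w) = y+z+aw$, and it is exactly what is needed to match the hypothesis of Lemma~\ref{lem:baby-cs-complexity}. Note that no invertibility of $a$ modulo $p$ is used anywhere, as befits the fact that $a$ is merely an integer.
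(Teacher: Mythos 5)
Your proof is correct and takes essentially the same approach as the paper's: the paper phrases the two Cauchy--Schwarz steps as $\CS(\{3\})$ and $\CS(\{(L;4),(R;4)\})$ via Proposition~\ref{prop:cs} and then performs the same final change of variables (with $x',y',z_L,z_R,w_L,w_R$ playing the roles of your $p_1,p_2,z,z',w,w'$). Your unfolded computation, including the sign/conjugation pattern and the observation that no invertibility of $a$ is needed, matches the paper exactly.
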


This lemma is again proved by a number of applications of Cauchy--Schwarz.
\begin{proof}[Proof of Lemma~\ref{lem:gc-bound-baby}]%
  The proof corresponds essentially to the steps $\CS(\{3\})$ and $\CS(\{(L;4), (R;4)\})$ in our language.
  However, to avoid the usual issues about keeping track of translates (see Example~\ref{ex:diag-ex1}, Lemma~\ref{lem:gc-bound}) we drop back to invoking Proposition~\ref{prop:cs} twice to deduce
  \[
    \lvert \Lambda_{\Psi}(f_1,f_2,f_3,f_4) \rvert \le \bigl\lvert \Lambda_{\Xi}\bigl(f_1,\overline{f_1}, \overline{f_1}, f_1, f_2, \overline{f_2}, \overline{f_2}, f_2\bigr) \bigr\rvert^{1/4}
  \]
  where $\Xi = (\Psi +_{\{3\}} \Psi) +_{\{(L;4), (R;4)\}} (\Psi +_{\{3\}} \Psi)$.
  Expanding the definitions,
  \[
    \Xi^V = \biggl\{ \bigl((x_{\omega},y_{\omega},z_{\omega},w_{\omega})_{\omega \in \{L,R\}^2}\bigr) \in \FF_p^{16} \colon
        \begin{aligned}[t]
        (x_{LL},y_{LL},w_{LL}) &= (x_{LR}, y_{LR}, w_{LR}), \\
        (x_{RL},y_{RL},w_{RL}) &= (x_{RR}, y_{RR}, w_{RR}), \\
        (x_{LL},y_{LL},z_{LL}) &= (x_{RL}, y_{RL}, z_{RL}), \\
      (x_{LR},y_{LR},z_{LR}) &= (x_{RR}, y_{RR}, z_{RR}) \ \ \smash{\biggr\}}.
  \end{aligned}
\]
  We may clearly change variables
  \begin{align*}
    x &= x_{LL} = x_{LR} = x_{RL} = x_{RR} &
    y &= y_{LL} = y_{LR} = y_{RL} = y_{RR} \\
    z_L &= z_{LL} = z_{RL} & z_R &= z_{LR} = z_{RR} \\
    w_L &= w_{LL} = w_{LR} & w_R &= w_{RL} = w_{RR}
  \end{align*}
  and then slightly less clearly change variables again to replace $z_R$ with $h=z_R-z_L$, $w_R$ with $h'=w_R-w_L$, $x$ with $x'=x+az_L+w_L$ and $y$ with $y'=y+z_L+a w_L$.
  Now $\Lambda_{\Xi}$ becomes
  \[
    \Lambda_{\Xi}\bigl(f_1,\overline{f_1}, \overline{f_1}, f_1, f_2, \overline{f_2}, \overline{f_2}, f_2\bigr) 
    = \EE_{\substack{x',y' \in \FF_p^n \\h,h' \in \FF_p^n \\z_L, w_L \in \FF_p^n}} \begin{aligned}[t]
    &f_1(x') \overline{f_1(x'\!+\!ah)} \overline{f_1(x'\!+\!h')} f_1(x'\!+\!ah\!+\!h') \\
    &f_2(y') \overline{f_2(y'\!+\!h)} \overline{f_2(y'\!+\!ah')} f_2(y'\!+\!h\!+\!ah')
  \end{aligned}
  \]
  which, after dropping the spurious $z_L$, $w_L$ averages and unwrapping the definitions of $b_1$ and $b_2$, is exactly what we wanted.
\end{proof}

We now prove Lemma~\ref{lem:gc-cs-baby}.
\begin{proof}[Proof of Lemma~\ref{lem:gc-cs-baby}]%
  As in Lemma~\ref{lem:gc-cs}, we show that the partition $S_1,S_2$ and linear maps $(\mu_0,\mu_1,\mu_2)$ encode the same data as a morphism $\Theta \colon \Psi \to \Phi$.

  Writing $\alpha=\alpha^\Theta$, as in Lemma~\ref{lem:gc-cs} we may ignore the possibility $\alpha^\Theta(\ell) = \zeroi$.
  Setting $S_1 = \alpha^{-1}(\{3\})$ and $S_2 = \alpha^{-1}(\{4\})$, specifying $\alpha$ and the partition $S_1,S_2$ are equivalent.

  Given $\Theta$, we may define $\mu_0(x,y) = \theta^\Theta(x,y,0,0)$, $\mu_1(z) = \theta^{\Theta}(0,0,z,0)$ and $\mu_2(w) = \theta^\Theta(0,0,0,w)$.
  Conversely, given $\mu_0,\mu_1,\mu_2$ we define $\theta^\Theta(x,y,z,w)=\mu_0(x,y)+\mu_1(z)+\mu_2(w)$ and 
  \begin{align*}
    \sigma_i=\sigma_j&=\id_{\FF_p}\\
    \forall \ell \in S_1:\ \ \sigma_\ell(x,y,w) &= \phi^\Phi_\ell\bigl(\mu_0(x,y) + \mu_2(w)\bigr) \\
    \forall \ell \in S_2:\ \ \sigma_\ell(x,y,z) &= \phi^\Phi_\ell\bigl(\mu_0(x,y) + \mu_1(z)\bigr).
  \end{align*}
  Under this dictionary the conditions in (c) and the compatibility relations~\eqref{eq:morph} are readily seen to be equivalent, as in Lemma~\ref{lem:gc-cs}.
\end{proof}

This reduces the task of proving Theorem~\ref{thm:baby-thm} to the following statement.
\begin{theorem}%
  \label{thm:baby-gc}
  Let $p>2$ be a prime and $a \in \ZZ$.
  Then $U^3 \entails^M_\gamma \Psi$ where $\Psi$ is the linear datum in Lemma~\ref{lem:gc-cs-baby} \uppar{depending on $a$}, 
  $M = 6 + \lfloor \log_2 \lfloor \log_2 (|a|-1) \rfloor \rfloor $ if $|a| \ge 4$ or $M=5$ if $|a|<4$, and $\gamma(1)=(000,0)$, $\gamma(2)=(000,1)$.
\end{theorem}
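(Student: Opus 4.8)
The plan is to produce, for each integer $a$, an explicit sequence of $\morph$ and $\CS$ steps from $U^3$ to $\Psi$ using exactly $M$ Cauchy--Schwarz steps, and then to read off $\gamma$. It is cleaner to split this into two stages, using the reductions already available. First I would build a Cauchy--Schwarz diagram $D$, starting from $\diagram(U^3)$, using $M$ self-joining steps together with arbitrarily many morphism steps, and designate two of its leaves $i,j$ with $V^D_i=V^D_j=\FF_p$. Second, I would verify that $\datum(D)$ satisfies the hypothesis of Lemma~\ref{lem:baby-cs-complexity} at $i,j$ for this $a$ --- equivalently, by Lemma~\ref{lem:gc-cs-baby}, exhibit a morphism $\Psi\to D$ respecting $i,j$ with $\alpha(i)=1$, $\alpha(j)=2$ --- which gives $D\entails^0_{\gamma''}\Psi$. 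Composing the two stages yields $U^3\entails^M_\gamma\Psi$, and tracking $\gamma$ reduces to recording, for each of $i,j$, which leaf of $U^3$ it descends from and a $\ZZ/2$ sign coming from the left/right branches of the self-joins; the construction will be arranged so that both descend from the $000$-leaf, with opposite signs, giving $\gamma(1)=(000,0)$ and $\gamma(2)=(000,1)$.

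The construction of $D$ has a fixed-cost part and a $\log\log|a|$ part. For the fixed part, one Cauchy--Schwarz step on $\diagram(U^3)$, followed by a cleanup morphism (a relabelling together with the degeneracy- and surjectivity-fixing morphisms of Examples~\ref{ex:weak-eq} and~\ref{ex:surj-fix}), produces the linear datum encoding one instance of the bilinearity identity \eqref{eq:bilinear}, $B(h_1+h_2,h')=B(h_1,h')+B(h_2,h')$; this is exactly the duplication step carried out in prose in Section~\ref{sub:repr}. A bounded further number of Cauchy--Schwarz steps --- which I expect to total at most $6$ including the one above and the overhead below --- assembles a bounded-size ``universal gate'' $G$: a gadget bundling a fixed number of internal slots, each of which can be specialized by a later morphism into any of the elementary relations appearing in \eqref{eq:bilinearity-arg} (the two transposed bilinearity relations, the doubling relation $B(2h,h')=B(h,2h')$, the sign relation $B(-h,h')=-B(h,h')$, or a trivial pass-through). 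The diagram $D_0$ carrying one free copy of $G$ should be wired so that self-joining along designated end wires concatenates copies of $G$ into a line.

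Next comes the doubling phase. Each step $D_{t+1}=D_t+_{S_t}D_t$ fiber-products the current ``line of gates'' with itself and, up to a harmless reflection absorbed by the symmetry of $G$ and a reshaping morphism $D_{t+1}\to D_t+_{S_t}D_t$ that extracts a clean longer line from the (possibly messy, reflection-symmetric) self-join, roughly doubles the number of available gate-slots. Since the double-and-add circuit of Figure~\ref{fig:bilinear} for $a$ uses $O\bigl(\lfloor\log_2(|a|-1)\rfloor\bigr)$ slots --- two rows of about $\lfloor\log_2(|a|-1)\rfloor$ bilinearity gates, plus $O(1)$ sign and pass-through gates --- running $m=\lfloor\log_2\lfloor\log_2(|a|-1)\rfloor\rfloor$ doubling steps when $|a|\ge 4$ leaves $D=D_m$ with enough slots (the bounded multiplicity built into $G$ provides the constant factor needed for $2^m$ times that multiplicity to dominate the slot count, and the constant $6$ absorbs this together with the fixed overhead); for $|a|<4$ a fixed constant-size circuit suffices and $M=5$. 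This accounting is designed to give precisely $M=6+\lfloor\log_2\lfloor\log_2(|a|-1)\rfloor\rfloor$ in the first case.

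Finally I would write down the morphism $\Psi\to D$: reading the binary expansion of $a$ off Figure~\ref{fig:bilinear}, program each slot of each copy of $G$ in $D$ as the corresponding gate there, set the leftover slots to pass-throughs, and let the two free wires at the far ends of the line be the images under $\sigma$ of the indices $1,2$ of $\Psi$, with $\sigma_3,\sigma_4$ bundling the intermediate wires; equivalently, this specifies the partition $S_1\cup S_2$ and the maps $\mu_0,\mu_1,\mu_2$ of Lemma~\ref{lem:baby-cs-complexity}. Verifying the morphism axiom \eqref{eq:diag-morph} here is precisely the statement that the values assigned to wires in Figure~\ref{fig:bilinear} form a compatible tuple, which is the content of \eqref{eq:bilinearity-arg}. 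Reading off $\gamma$ then gives $\gamma(1)=(000,0)$, $\gamma(2)=(000,1)$, since the two end wires are mirror images descending from the single $000$-leaf of $U^3$, exactly one of them through an odd number of right-branches. The hard part will be the bookkeeping of the doubling phase --- choosing the $S_t$ and the reshaping morphisms so that the reflection-symmetric self-joins genuinely thread into a single usable line of honestly programmable gates rather than collapsing, and ensuring $G$ really is universal for the needed small relations --- together with the line-by-line compatibility check for the final morphism; the circuit formalism of Section~\ref{sec:gates} is what will make this tractable.
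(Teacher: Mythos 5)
Your proposal takes essentially the same approach as the paper. The ``universal gate $G$'' you describe is realized there as $\smagate_2$, built from $U^3$ by a bounded number of Cauchy--Schwarz steps via $\gc_2$ and $\aggregate_2$ (Lemma~\ref{lem:u3-cs-complex}, Lemma~\ref{lem:gc-aggre}, Lemma~\ref{lem:build-agate}); the doubling phase producing the chain $\agate_2^k$ is Lemma~\ref{lem:build-large-agate}; and the final morphism $\Psi \to \agate_2^k$ programming the chain with the double-and-add circuit of Figure~\ref{fig:bilinear} is the assignment $\abilin_2^k(a)$ of Lemma~\ref{lem:agate-master}, whose verification is exactly the compatibility check you defer to the circuit formalism.
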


\begin{remark}%
  \label{rem:many-morphs}
  As in Remark~\ref{rem:cs-complex-equiv}, it is possible to reformulate condition (c) into a statement about several morphisms 
  $\cM_0$, $\cM_1$, $\cM_2$, rather than a single morphism as in Lemma~\ref{lem:gc-cs-baby}.

  Indeed, let $\Psi_0 = \trivial(\{1,2\})$, and let $\Psi_1$, $\Psi_2$ be the linear data with $I^{\Psi_r}=[2]$, $V^{\Psi_r} = W^{\Psi_r}_1=W^{\Psi_r}_2=\FF_p$ and
  \begin{align*}
    \phi^{\Psi_1}_1(t) &= a t & \phi^{\Psi_2}_1(t) &= t \\ 
    \phi^{\Psi_1}_2(t) &= t   & \phi^{\Psi_2}_2(t) &= a t.
  \end{align*}
  Then $\mu_0$ corresponds exactly to a morphism $\cM_0 \colon \Psi_0 \to \Phi[\{i,j\}]$ with shape $\alpha^{\cM_0}(i)=1$, $\alpha^{\cM_0}(j)=2$ and respecting $i,j$ (i.e., $\sigma_i=\sigma_j=\id_{\FF_p}$).
  Similarly, $\mu_1$ corresponds exactly to a morphism $\cM_1 \colon \Psi_1 \to \Phi[\{i,j\} \cup S_1]$ with shape $\alpha^{\cM_1}(i)=1$, $\alpha^{\cM_1}(j)=2$ and $\alpha^{\cM_1}(\ell)=\zeroi$ for $\ell \in S_1$, and again respecting $i,j$.
  The same holds symmetrically for $\mu_2$ and $\cM_2 \colon \Psi_2 \to \Phi[\{i,j\} \cup S_2]$.
    
  Indeed, in each case the morphism $\cM_r$ has been completely specified by these requirements except for the linear map $\theta^{\cM_r} \colon V^{\Psi_r} \to V^\Phi$, which we take to be $\mu_r$.
  Then the conditions in (c) are exactly the compatibility conditions~\eqref{eq:morph} for $\cM_0$, $\cM_1$, $\cM_2$ to be morphisms.
\end{remark}

We summarize the key lessons from this discussion.
\begin{itemize}
  \item Doing multilinear algebra calculations using linear data involves statements that generalize the traditional definition of Cauchy--Schwarz complexity: that is, we divide all indices $I^\Phi \setminus J$ into $r$ classes $S_1,\dots,S_r$, where $r$ is the number of tensor modes being considered, and see what happens to the remaining maps $\{ \phi^\Phi_i \colon i \in J\}$ when we set each class $\{\phi^\Phi_\ell \colon \ell \in S_r \}$ to zero.
  \item Positive statements take the form of saying these joint kernels $\bigcap_{\ell \in S_r} \ker(\phi^\Phi_\ell)$ are large.
  \item Such statements can be witnessed by the existence of morphisms $\cM_r \colon \Psi_r \to \Phi[J \cup S_r]$ of a certain shape, or equivalently by a single morphism $\Theta \colon \Psi \to \Phi$ where $\Psi$ is some combined datum of (loosely) generalized-convolution type.
\end{itemize}
Possibly the first point---that the parts $\{1,\dots,r\}$ should be interpreted as tensor modes---is only strongly hinted at this point, considering Lemma~\ref{lem:baby-cs-complexity} in the context of Section~\ref{sub:repr}.
This is not something we can justify precisely, but it should become clear as we construct more examples that this is what is going on.

So far we have phrased everything in the language of linear data.
However, any statement about morphisms has a clear analogue in the setting of diagrams (which is typically equivalent, by Proposition~\ref{prop:adjunction} and Proposition~\ref{prop:sub-sub}).
Combining these observations with the language of diagrams gives the notion of circuits and gates, which we now state precisely.

\subsection{Gates and circuits}%
\label{sub:circuits}

We now introduce the definitions of Cauchy--Schwarz \emph{gates}.
Since gates can be formed by joining together smaller gates, there is no logical need for a separate notion of ``circuit'' but we use the term informally.

\begin{definition}%
  \label{def:gate}
  A \emph{Cauchy--Schwarz gate} $G$ (or simply a \emph{gate}) is a diagram $D$ together with (i) a finite set $\cR$ of \emph{modes}, and (ii) a partition of its leaves $\leafs^D=\pins \cup \toggles$ into two classes called the \emph{pins} $\pins$ and the \emph{toggles} $\toggles$.

  By convention we assume $0 \notin \cR$.
\end{definition}

The modes $\cR$ should be thought of as tensor modes, as in the previous section.
The toggles $\toggles$ are the indices which we plan to partition into $|\cR|$ classes $\{ S_r \colon r \in \cR \}$ and deal with as in Sections~\ref{sub:gen-convolution} and~\ref{sub:baby-gc}.
The remaining indices, the pins, are the ones we are interested in, in the sense of $\{i\}$ in Section~\ref{sub:gen-convolution}, $\{i,j\}$ in Section~\ref{sub:baby-gc} or $J$ in the discussion above.
At the same time, the pins are the indices we can ``fuse'' or ``join by a wire'' to other pins on other gates, by performing Cauchy--Schwarz operations on the underlying diagram.

It is a crucial feature that a single kind of gate can have many possible valid partitions $\toggles = \bigcup_{r \in \cR} S_r$, which achieve different multilinear calculations but all use the same underlying diagram.
This is what was meant in Section~\ref{sub:repr} by saying the gates should be ``programmable''.
The different partitions, and morphisms to keep track of their properties, are therefore defined separately to the underlying gate.

\begin{definition}%
  \label{def:assignment}
  Given a gate $G=(D,\cR, \pins, \toggles)$, an \emph{assignment} $\cA$ of $G$ consists of the following data:---
  \begin{itemize}
    \item a partition $\toggles = \bigcup_{r \in \cR} S_r$ of the toggles, i.e., a function $S \colon \toggles \to \cR$;
    \item a collection of diagrams $(D_r)_{r \in \cR \cup \{0\}}$, with given bijections $\leafs^{D_r} \cong \pins$ for each $r \in \cR \cup \{0\}$, together with morphisms
      \[
        \cM_0 \colon D_0 \to D[\nonleafs \cup \cP]
      \]
      and for each $r \in \cR$,
      \[
        \cM_r \colon D_r \to D[\nonleafs \cup \cP \cup S_r]
      \]
      such that $\cM_r$ respects every vertex $v \in \pins$ for each $r \in \cR \cup \{0\}$, and such that $\alpha^{\cM_r}(\ell) = \zeroi$ for each $r \in \cR$ and $\ell \in S_r$.
  \end{itemize}
\end{definition}

As in Remark~\ref{rem:many-morphs}, we note that the shape $\alpha^{\cM_r}(x)$ of the morphisms $\cM_r$, and the linear maps $\theta_x^{\cM_r}$ for leaves $x \in \leafs^{D_r}$, are determined by definition.
Hence, to specify $\cM_r$ it suffices to describe the shape $\alpha^{\cM_r}(x)$ and maps $\theta_x^{\cM_r}$ for non-leaves $x \in \nonleafs^{D_r}$ (and verify the morphism condition~\eqref{eq:diag-morph}).
Typically the diagram $D_r$ has only one non-leaf (because it has the form $\diagram(\Phi_r)$ for some linear datum $\Phi_r$) so the choice of $\alpha^{\cM_r}$ is in fact completely forced.

We also note that once the diagrams $D_r$ and the partition $S$ have been specified, proving the existence of the morphisms $\cM_r$ is a mechanical linear algebra problem that could in principle be solved by a computer.
Moreover, logically we never care what the morphisms $\cM_r$ are provided we know they exist.
However, in practice, the construction of $\cM_r$ is the part of our arguments where we get to turn an informal ``circuit picture'' into a formal result, and so these proofs carry a lot of the guiding intuition and should not be completely ignored.

\begin{remark}%
  \label{rem:no-big-datum}
  In the spirit of Lemma~\ref{lem:gc-cs} and Lemma~\ref{lem:gc-cs-baby}, we could perhaps define an assignment with reference to a single ``universal'' diagram and morphism built from all of the smaller ones $(D_r)_{r \in \cR \cup \{0\}}$ and $(\cM_r)_{r \in \cR \cup \{0\}}$.
  At least in the case that $D_r = \diagram(\Psi_r)$ are diagrams corresponding to linear data, which they typically are, this can be done.
  (The case of general diagrams $D_r$ is less clear.)

  We do not pursue this because (i) there does not seem to be any advantage to doing so, and (ii) it is just more complicated than working with the component objects $D_r$, $\cM_r$.
  At the very end of the calculation, we can apply Lemma~\ref{lem:gc-cs} or Lemma~\ref{lem:gc-cs-baby} to build a combined object.
\end{remark}

Most of our remaining proofs will involve (i) defining a gate, possibly with reference to smaller gates; (ii) proving an $\entails$ assertion that it is possible to ``build'' that gate starting from some simple diagram, and (iii) defining and verifying various assignments of that gate.
This process is best illustrated by example, so we now put this into practice for Theorem~\ref{thm:baby-thm}.

\section{A proof of Theorem~\ref{thm:baby-thm}}%
\label{sec:baby-thm}

We now begin the task of actually constructing useful gates that prove interesting things.
Our first target is to prove Theorem~\ref{thm:baby-thm}, via Theorem~\ref{thm:baby-gc}.

\subsection{The AggreGate}%
\label{sub:aggre}

The basic building block of all other gates is called the \emph{AggreGate}.
It is denoted $\aggregate_s$, or $\agg_s$ for short, for some parameter $s \ge 1$.
It may be thought of as realizing the fundamental rule of tensor arithmetic (and variants of it): if $u+v=w$ in $V$ and $v_2,\dots,v_m$ are other vectors then
\[
  u \otimes v_2 \otimes \dots \otimes v_s + v \otimes v_2 \otimes \dots \otimes v_s = w \otimes v_2 \otimes \dots \otimes v_s.
\]
There are several variants of the AggreGate.
In all cases the underlying diagram is formed of four copies of the generalized convolution diagram $\gc_s$, as follows.\footnote{The intention here is that each of $00$, $01$, $10$, $11$ is a single top-level label; i.e., in the labelling tree $T$ the root has four children.
  This is distinct from naming these parts $(0;0)$, $(0;1)$ etc.: that would imply a labelling tree where the root has two children labelled $\{0,1\}$ each of which has two children labelled $\{0,1\}$. 
Fortunately, this distinction is both superficial and completely irrelevant.}
\begin{center}
  \begin{tikzpicture}[
      defnode/.style={rectangle,fill=lightgray,draw,inner sep=2pt,outer sep=0pt,minimum size=10pt},
      gatelabel/.style={rectangle, inner sep=2pt, outer sep=0pt, fill=white,scale=0.7},
      subnode/.style={rounded rectangle, draw, inner sep=5pt, outer sep=0pt,minimum size=15pt},
      scale=0.8
    ]

    \node[subnode,scale=0.9] (V00) at (0,0) {$00 \,:\, \gc_s$};
    \node[subnode,scale=0.9] (V01) at (0,3) {$01 \,:\, \gc_s$};
    \node[subnode,scale=0.9] (V10) at (5,0) {$10 \,:\, \gc_s$};
    \node[subnode,scale=0.9] (V11) at (5,3) {$11 \,:\, \gc_s$};

    \node[defnode,scale=0.5] (Ws-0) at ($(V00)!0.5!(V10)$) {};
    \node[defnode,scale=0.5] (Ws-1) at ($(V01)!0.5!(V11)$) {};
    \node[defnode,scale=0.5] (Wt0-) at ($(V00)!0.5!(V01)$) {};
    \node[defnode,scale=0.5] (Wt1-) at ($(V10)!0.5!(V11)$) {};

    \draw (V00) -- node[gatelabel, pos=0.04] {$1$} (Ws-0);
    \draw (V10) -- node[gatelabel, pos=0.04] {$1$} (Ws-0);
    \draw (V01) -- node[gatelabel, pos=0.04] {$1$} (Ws-1);
    \draw (V11) -- node[gatelabel, pos=0.04] {$1$} (Ws-1);

    \draw (V00) -- node[gatelabel, pos=0.02] {$s+1$} (Wt0-);
    \draw (V10) -- node[gatelabel, pos=0.02] {$s+1$} (Wt1-);
    \draw (V01) -- node[gatelabel, pos=0.02] {$s+1$} (Wt0-);
    \draw (V11) -- node[gatelabel, pos=0.02] {$s+1$} (Wt1-);
  \end{tikzpicture}
\end{center}
In particular, the leaves of $\aggregate_s$ are the vertices $\omega;\triangle$ and $\omega;r$ for $\omega \in \{0,1\}^2$ and $2 \le r \le s$, all of which appear only implicitly in this picture.

To allow the reader to cross-check the notation, we show the same diagram with the shorthand from Section~\ref{sub:labelling} expanded as Figure~\ref{fig:big-aggre-diag}.
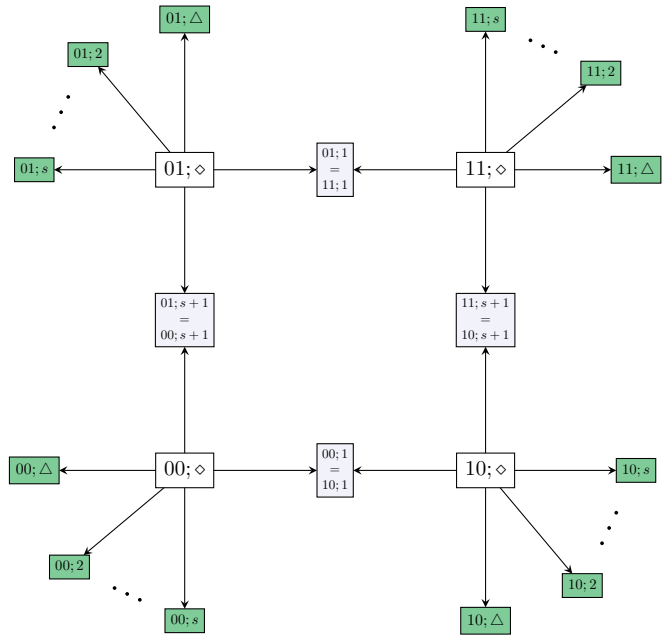
\begin{figure}[htbp]
  \begin{center}
    \begin{tikzpicture}[
        defnode/.style={rectangle,draw,inner sep=4pt,outer sep=0pt,minimum size=15pt},
        mydot/.style={circle,fill,inner sep=0.5pt},
        leaf/.style={defnode,fill=leafgreen},
        toggle/.style={defnode,fill=leafgreen}
      ]

      \node[defnode,scale=0.8] (V00) at (0, 0) {$00;\diamond$};
      \node[defnode,scale=0.8] (V01) at (0, 4) {$01;\diamond$};
      \node[defnode,scale=0.8] (V10) at (4, 0) {$10;\diamond$};
      \node[defnode,scale=0.8] (V11) at (4, 4) {$11;\diamond$};
      \node[defnode,fill=lightgray,scale=0.5,align=center] (Ws-0) at ($(V00)!0.5!(V10)$) {$00;1$   \\ $=$ \\ $10;1$};
      \node[defnode,fill=lightgray,scale=0.5,align=center] (Ws-1) at ($(V01)!0.5!(V11)$) {$01;1$   \\ $=$ \\ $11;1$};
      \node[defnode,fill=lightgray,scale=0.5,align=center] (Wt0-) at ($(V00)!0.5!(V01)$) {$01;s+1$ \\ $=$ \\ $00;s+1$};
      \node[defnode,fill=lightgray,scale=0.5,align=center] (Wt1-) at ($(V10)!0.5!(V11)$) {$11;s+1$ \\ $=$ \\ $10;s+1$};

      \node[leaf,scale=0.6]    (WX00) at ($(V00) + (180:2)$) {$00;\triangle$};
      \node[toggle,scale=0.55] (W100) at ($(V00) + (220:2)$) {$00;2$};
      \node[toggle,scale=0.55] (Wr00) at ($(V00) + (270:2)$) {$00;s$};

      \node[leaf,scale=0.6]    (WX01) at ($(V01) + (090:2)$) {$01;\triangle$};
      \node[toggle,scale=0.55] (W101) at ($(V01) + (130:2)$) {$01;2$};
      \node[toggle,scale=0.55] (Wr01) at ($(V01) + (180:2)$) {$01;s$};

      \node[leaf,scale=0.6]    (WX11) at ($(V11) + (000:2)$) {$11;\triangle$};
      \node[toggle,scale=0.55] (W111) at ($(V11) + (040:2)$) {$11;2$};
      \node[toggle,scale=0.55] (Wr11) at ($(V11) + (090:2)$) {$11;s$};

      \node[leaf,scale=0.6]    (WX10) at ($(V10) + (270:2)$) {$10;\triangle$};
      \node[toggle,scale=0.55] (W110) at ($(V10) + (310:2)$) {$10;2$};
      \node[toggle,scale=0.55] (Wr10) at ($(V10) + (000:2)$) {$10;s$};

      \draw[-stealth] (V00) -- (Ws-0);
      \draw[-stealth] (V10) -- (Ws-0);
      \draw[-stealth] (V01) -- (Ws-1);
      \draw[-stealth] (V11) -- (Ws-1);

      \draw[-stealth] (V00) -- (Wt0-);
      \draw[-stealth] (V10) -- (Wt1-);
      \draw[-stealth] (V01) -- (Wt0-);
      \draw[-stealth] (V11) -- (Wt1-);

      \draw[-stealth] (V00) -- (WX00);
      \draw[-stealth] (V00) -- (W100);
      \draw[-stealth] (V00) -- (Wr00);

      \draw[-stealth] (V01) -- (WX01);
      \draw[-stealth] (V01) -- (W101);
      \draw[-stealth] (V01) -- (Wr01);

      \draw[-stealth] (V10) -- (WX10);
      \draw[-stealth] (V10) -- (W110);
      \draw[-stealth] (V10) -- (Wr10);

      \draw[-stealth] (V11) -- (WX11);
      \draw[-stealth] (V11) -- (W111);
      \draw[-stealth] (V11) -- (Wr11);

      \path (W100) -- node[mydot, pos=0.333] {} node [mydot] {} node[mydot, pos=0.667] {} (Wr00);
      \path (W101) -- node[mydot, pos=0.333] {} node [mydot] {} node[mydot, pos=0.667] {} (Wr01);
      \path (W110) -- node[mydot, pos=0.333] {} node [mydot] {} node[mydot, pos=0.667] {} (Wr10);
      \path (W111) -- node[mydot, pos=0.333] {} node [mydot] {} node[mydot, pos=0.667] {} (Wr11);
    \end{tikzpicture}
  \end{center}
  \caption{The diagram $\agg_{s}$, shown in full.}%
  \label{fig:big-aggre-diag}
\end{figure}

\begin{definition}%
  \label{def:aggre}
  The diagram $\aggregate_s$, with standard top-level labels $00$, $01$, $10$, $11$, is the one shown by the picture above (or by Figure~\ref{fig:big-aggre-diag}).
\end{definition}

We observe that it is indeed possible to build this underlying diagram, starting with a generalized convolution diagram.

\begin{lemma}%
  \label{lem:gc-aggre}
  Let $s \ge 1$ and set $\gamma(00;\triangle)=\gamma(11;\triangle) = (\triangle,0)$ and $\gamma(01;\triangle)=\gamma(10;\triangle)=(\triangle,1)$.
  Then $\gc_s \entails^2_{\gamma} \aggregate_s$.
\end{lemma}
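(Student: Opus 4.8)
The plan is to build $\agg_s$ from $\gc_s$ by exactly two Cauchy--Schwarz steps and then identify the result with the diagram of Figure~\ref{fig:big-aggre-diag} up to relabelling. First I would apply $\CS(\{1\})$ to $\gc_s$, producing $D_1 := \gc_s +_{\{1\}} \gc_s$; by Definition~\ref{def:logic-notation}(b) this gives $\gc_s \entails^1_{\gamma_1} D_1$, where $\gamma_1$ sends $(L;i)\mapsto(i,0)$ and $(R;i)\mapsto(i,1)$ for every leaf $i$ of $\gc_s$ with $i\neq 1$, i.e.\ $i\in\{\triangle\}\cup\{2,\dots,s+1\}$. In $D_1$ the two vertices $(L;1)$ and $(R;1)$ have been fused into a single non-leaf $(\#;1)$, while $(L;s+1)$ and $(R;s+1)$ remain leaves carrying the common space $V^{\gc_s}_{s+1}$.

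Second, I would apply $\CS(\{(L;s+1),(R;s+1)\})$ to $D_1$, producing $D_2 := D_1 +_{\{(L;s+1),(R;s+1)\}} D_1$ with outer labels $\mathfrak{L}$, $\mathfrak{R}$ for the two copies. This is a legal joining since the matched leaves carry the same vector space, and it yields $D_1 \entails^1_{\gamma_2} D_2$, where $\gamma_2$ sends $(\mathfrak{L};t)\mapsto(t,0)$ and $(\mathfrak{R};t)\mapsto(t,1)$ for every leaf $t$ of $D_1$ other than $(L;s+1),(R;s+1)$. Unpacking Definition~\ref{def:diagram-cs}, the diagram $D_2$ has four ``$\diamond$''-vertices $(\mathfrak{L};L;\diamond),(\mathfrak{L};R;\diamond),(\mathfrak{R};L;\diamond),(\mathfrak{R};R;\diamond)$, two fused index-$1$ vertices $(\mathfrak{L};\#;1)$ and $(\mathfrak{R};\#;1)$, two fused index-$(s{+}1)$ vertices $(\mathfrak{L};L;s{+}1)=(\mathfrak{R};L;s{+}1)$ and $(\mathfrak{L};R;s{+}1)=(\mathfrak{R};R;s{+}1)$, and $4s$ leaves indexed by $\{\triangle\}\cup\{2,\dots,s\}$ in each of the four copies; edges, spaces and linear maps are inherited coordinate-wise from the four copies of $\gc_s$. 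Comparing with Figure~\ref{fig:big-aggre-diag}, the bijection $(\mathfrak{L};L;\cdot)\mapsto 00;\cdot$, $(\mathfrak{L};R;\cdot)\mapsto 10;\cdot$, $(\mathfrak{R};L;\cdot)\mapsto 01;\cdot$, $(\mathfrak{R};R;\cdot)\mapsto 11;\cdot$ is a graph isomorphism carrying all spaces and maps correctly and restricting to a bijection on leaves, i.e.\ a strong isomorphism $D_2 \xrightarrow{\cong} \agg_s$; the footnote after Definition~\ref{def:aggre} accounts for the harmless discrepancy between the nested labels produced by Convention~\ref{convention:joining-labelling} and the flat top-level labels $00,01,10,11$. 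In particular $D_2 \entails^0_{\gamma_3} \agg_s$ via the $\morph$ step given by the inverse isomorphism, with $\gamma_3$ the induced leaf bijection carrying trivial conjugation bit.

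Finally I would compose the three steps via Definition~\ref{def:logic-notation}(c) and discard the unwanted leaves via Definition~\ref{def:logic-notation}(d). The only real bookkeeping is the composite rule $\gamma(j)=(i,\,a_1+a_2\bmod 2)$ with $\gamma_2(j)=(\ell,a_2)$, $\gamma_1(\ell)=(i,a_1)$ (and $\gamma_3$ contributing nothing): a leaf $\omega;\triangle$ of $\agg_s$ picks up the bit of the inner $\CS(\{1\})$ step, which is the second coordinate of $\omega$ (as $L\mapsto 0,R\mapsto 1$ under $\gamma_1$), plus the bit of the outer $\CS$ step, which is the first coordinate of $\omega$ (as $\mathfrak{L}\mapsto 0,\mathfrak{R}\mapsto 1$ under $\gamma_2$). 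Hence the composite sends $00;\triangle\mapsto(\triangle,0)$, $10;\triangle\mapsto(\triangle,1)$, $01;\triangle\mapsto(\triangle,1)$, $11;\triangle\mapsto(\triangle,0)$, matching the $\gamma$ in the statement; restricting this partial function to the four leaves $\omega;\triangle$ gives $\gc_s \entails^2_\gamma \agg_s$ as required. I do not expect a genuine obstacle here: the content is the verification that the double self-joining of $\gc_s$ is exactly the diagram of Figure~\ref{fig:big-aggre-diag} together with the conjugation-bit arithmetic above, and the only place a slip could occur is in keeping straight which leaves ($1$ versus $s+1$) are fused at each stage, which the picture in Definition~\ref{def:aggre} pins down.
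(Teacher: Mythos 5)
Your proof is correct, and it is essentially the same argument as the paper's, modulo a cosmetic choice: the paper applies $\CS(\{s+1\})$ first and then $\CS(\{(L;1),(R;1)\})$, while you apply $\CS(\{1\})$ first and then $\CS(\{(L;s+1),(R;s+1)\})$. Since $\aggregate_s$ is symmetric under exchanging the roles of the indices $1$ and $s+1$ (fusions along the two axes), this only changes which relabelling exhibits the strong isomorphism, and your relabelling $(\mathfrak{L};L)\mapsto 00$, $(\mathfrak{L};R)\mapsto 10$, $(\mathfrak{R};L)\mapsto 01$, $(\mathfrak{R};R)\mapsto 11$ does the job: the two inner-step fusions $(\mathfrak{L};L;1)=(\mathfrak{L};R;1)$ and $(\mathfrak{R};L;1)=(\mathfrak{R};R;1)$ become $00;1=10;1$ and $01;1=11;1$, and the two outer-step fusions become $00;s{+}1=01;s{+}1$ and $10;s{+}1=11;s{+}1$, matching Figure~\ref{fig:big-aggre-diag}.

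One small inaccuracy in the prose (harmless to the conclusion): under your relabelling, the inner $L/R$ bit lands in the \emph{first} coordinate of $\omega$ and the outer $\mathfrak{L}/\mathfrak{R}$ bit lands in the \emph{second}, not the other way around as you wrote. Since the conjugation bit is the mod-$2$ sum of the two and this sum is symmetric, the computed values $00\mapsto 0$, $10\mapsto 1$, $01\mapsto 1$, $11\mapsto 0$ are nonetheless correct and agree with the $\gamma$ in the statement.
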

\begin{proof}%
  Starting with $\gc_s$, we apply $\CS(\{s+1\})$ followed by $\CS(\{(L;1),\, (R;1)\})$.
  Up to relabelling $L;L \leadsto 00$, $L;R \leadsto 01$, etc., the resulting datum is exactly $\agg_s$.
\end{proof}

We now describe the gate structure.
There are a few variants.
\begin{definition}%
  \label{def:aggregate}
  Let $P \subseteq \{00,01,10,11\}$ be a subset of size $2$, $3$ or $4$.
  Then the gate $\agg_s^P$ consists of the diagram $\agg_s$, the set of modes $\cR = [s]$, and the set of pins
  \[
    \cP = \bigl\{ (p; \triangle) \colon p \in P \bigr\}.
  \]
  All other leaves of $\agg_s$ are toggles.

  By convention we also write just $\agg_s$ for $\agg_s^{\{00,01,10,11\}}$, and $\agg_s^{\overline{\omega}}$ or $\agg_s^{\overline{\omega_1},\overline{\omega_2}}$ for $\agg_s^P$ where $P = \{00,01,10,11\} \setminus \{\omega\}$ or $\{00,01,10,11\} \setminus \{\omega_1,\omega_2\}$ respectively (i.e., with $\overline{\omega}$ denoting indices $\omega;\triangle$ which are \emph{not} pins).
\end{definition}

We now describe several key assignments of each of these variants.
First we introduce some further standard linear data.

\begin{definition}%
  \label{def:sum}
  The linear datum $\opsum$ is a synonym for $U^2$: that is, $I=\{0,1\}^2$, $W_{ij}=\FF_p$ for $i,j \in \{0,1\}$, $V=\FF_p^3$ and $\phi_{ij}(x,a,b) = x+ia+jb$.

  For $P \subseteq \{0,1\}^2$, the linear datum $\opsum(P)$ is the co-restriction $\opsum\langle \{0,1\}^2 \setminus P \rangle$.
  Explicitly, we have $I =P$, $W_{ij} = \FF_p$ for $ij \in P$,
  \[
    V = \bigl\{ (x,a,b) \in \FF_p^3 \colon x+i a + jb = 0 \ \forall ij \in \{0,1\}^2 \setminus P \bigr\}
  \]
  and $\phi_{ij}(x,a,b) = x+ia+jb$.
\end{definition}

In $\opsum(P)$, we have
\[
  \biggl\{ (\phi_{ij}(v))_{ij \in P} \colon v \in V \biggr\}  = \left\{ (z_{ij})_{ij \in P} \in \FF_p^P \colon \sum_{ij \in P} (-1)^{i+j} z_{ij} = 0 \right\}.
\]
In other words, there is a single constraint that the $\pm 1$ sum of the leaf values is $0$.
Up to strong isomorphism, we could equivalently take $V$ to be this subspace of $\FF_p^P$ and $\phi_\omega$ to be the restrictions of the projection maps $\FF_p^P \to \FF_p$.

\begin{lemma}%
  \label{lem:sum-const-assignment}
  There is an assignment $\sumconst_1$ of the gate $G=\aggregate_s^P$ having diagrams $(D_r)_{r \in [s] \cup \{0\}}$ given by:---
  \begin{itemize}
    \item $D_0 = \opsum(P)$ if $P = \{0,1\}^2$, or $D_0 = \trivial(P)$ if $|P|=3$ or $|P|=2$;
    \item $D_1=\opsum(P)$; and 
    \item $D_2=\cdots=D_s=\const(P)$.
  \end{itemize}
  In all cases, we use the natural identification $\leafs^{D_r} = P \cong \cP^G = \{ \omega;\triangle \colon \omega \in P\}$.
\end{lemma}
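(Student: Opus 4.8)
The plan is to unwind Definition~\ref{def:assignment} for $G = \agg_s^P$ and write down all the required data explicitly. Recall that $\agg_s$ is four copies of $\diagram(\gc_s)$ (top-level labels $00,01,10,11$; the unique non-leaf of copy $\omega$ is $\omega;\diamond$ with $V_{\omega;\diamond} = \FF_p^{s+1}$, the leaf $\omega;\triangle$ has $V = \FF_p$, and the leaves $\omega;1,\dots,\omega;(s+1)$ have $V = \FF_p^{s}$) glued so that $00;1 = 10;1$, $01;1 = 11;1$, $00;(s+1) = 01;(s+1)$ and $10;(s+1) = 11;(s+1)$; after the gluing the surviving leaves are the pins $\omega;\triangle$ ($\omega \in P$) together with the toggles $\omega;\triangle$ ($\omega \notin P$) and $\omega;r$ ($\omega \in \{0,1\}^2$, $2 \le r \le s$). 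I will take the partition $S \colon \toggles \to \cR = [s]$ given by $S_1 = \{\omega;\triangle : \omega \notin P\}$ and $S_r = \{\omega;r : \omega \in \{0,1\}^2\}$ for $2 \le r \le s$; putting a $\triangle$-toggle into any $S_r$ with $r \ge 2$ would be incompatible with the choice $D_r = \const(P)$, so this is essentially forced. With $D_0,\dots,D_s$ as in the statement and the obvious bijections $\leafs^{D_r} = P \cong \cP$, the shape and leaf-part of each $\cM_r$ are forced by Definition~\ref{def:assignment}: every non-leaf of the relevant restriction of $\agg_s$ must go to the unique non-leaf $\diamond$ of $D_r$, each pin $\omega;\triangle$ identically to $\omega$, and each $\ell \in S_r$ to $\zeroi$. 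So the only freedom, and the only thing to verify, is the choice of the linear maps $\theta^{\cM_r}_{\omega;\diamond} \colon V^{D_r}_\diamond \to \FF_p^{s+1}$ (the values at the four gluing vertices are then forced to be $\phi^{\gc_s}_1$ or $\phi^{\gc_s}_{s+1}$ of these), subject to the commuting squares~\eqref{eq:diag-morph}; that each $\cM_r$ then respects every pin is immediate.

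For a mode $r$ with $2 \le r \le s$ (so $D_r = \const(P)$, $V^{D_r}_\diamond = \FF_p$) I would set $\theta^{\cM_r}_{\omega;\diamond}(t) = t\, e_r$ for every $\omega$, where $e_1,\dots,e_{s+1}$ is the standard basis of $\FF_p^{s+1} = V^{\gc_s}$. Then $\phi^{\gc_s}_\triangle(t e_r) = t$ produces the identity at each pin, $\phi^{\gc_s}_r(t e_r) = 0$ produces the required zero map at each toggle $\omega;r \in S_r$, and since $\theta^{\cM_r}_{\omega;\diamond}$ does not depend on $\omega$ the four gluing compatibilities hold trivially. This is a one-line check.

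The content is in $\cM_0$ and $\cM_1$, where I would concentrate all the mass in coordinates $1$ and $s+1$ of $\FF_p^{s+1}$ --- the only coordinates touched by the gluings --- in a ``$2 \times 2$ lattice'' pattern. Identifying $V^{\opsum} = \FF_p^3$ with coordinates $(X,Y,Z)$ so that $\phi_{ij}(X,Y,Z) = X + iY + jZ$, put
\[
  \Theta_{00}(X,Y,Z) = X e_{s+1},\quad \Theta_{01} = (X+Z)\, e_{s+1},\quad \Theta_{10} = Y e_1 + X e_{s+1},\quad \Theta_{11} = Y e_1 + (X+Z)\, e_{s+1}.
\]
The coefficients of $e_1$ agree across $\{00,01\}$ and across $\{10,11\}$, and the coefficients of $e_{s+1}$ agree across $\{00,10\}$ and across $\{01,11\}$, so the four gluing squares commute; and $\phi^{\gc_s}_\triangle \circ \Theta_\omega = \phi_\omega$ as a map on all of $\FF_p^3$. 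For $\cM_1$ I would take $\theta^{\cM_1}_{\omega;\diamond} = \Theta_\omega|_{V^{\opsum(P)}}$, where $V^{\opsum(P)} = \bigcap_{\omega' \notin P} \ker \phi_{\omega'} \subseteq \FF_p^3$: then $\phi^{\gc_s}_\triangle \circ \theta^{\cM_1}_{\omega;\diamond}$ equals $\phi^{\opsum(P)}_\omega$ for $\omega \in P$ and equals $0$ for $\omega \notin P$ (since $V^{\opsum(P)} \subseteq \ker \phi_\omega$), which is exactly the condition forced by $\alpha^{\cM_1}(\omega;\triangle) = \zeroi$, so $D_1 = \opsum(P)$ is precisely the right domain. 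For $\cM_0$ the target $\agg_s[\nonleafs \cup \cP]$ no longer contains the leaves $\omega;\triangle$ with $\omega \notin P$, so their zero-constraints disappear; I would take $\theta^{\cM_0}_{\omega;\diamond} = \Theta_\omega \circ \iota$ for a linear right inverse $\iota$ of the joint evaluation map $(\phi_\omega)_{\omega \in P} \colon \FF_p^3 \to \FF_p^P$, which is an isomorphism onto the $\pm$-constraint subspace when $|P| = 4$ (giving $D_0 = \opsum$) and a genuine section when $|P| < 4$ (where the joint map is onto, allowing the larger $D_0 = \trivial(P)$).

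The main obstacle is not any one computation but the bookkeeping: for each $r$, correctly identifying which vertices of $\agg_s$ survive in $\agg_s[\nonleafs \cup \cP \cup S_r]$, pinning down the forced shape $\alpha^{\cM_r}$, and checking that the single flexible family $\theta^{\cM_r}_{\omega;\diamond}$ makes all of the (up to four pin-edge and four gluing-edge) squares~\eqref{eq:diag-morph} commute simultaneously. Once the right picture --- four $\gc_s$-gates wired into a square, with the ``lattice'' assignment overlaid --- is drawn, as in Section~\ref{sub:repr}, reading off the maps and verifying~\eqref{eq:diag-morph} is routine, and I would present the verification largely pictorially in the style of Example~\ref{ex:diag-ex1}, leaving the elementary linear-algebra checks to the reader.
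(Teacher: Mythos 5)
Your proposal is correct and follows essentially the same route as the paper: for $\cM_1$ (and $\cM_0$) you concentrate the data in coordinates $1$ and $s+1$ of $\FF_p^{s+1}$ in a ``$2\times 2$ lattice'' pattern so that the four gluing squares commute and $\phi^{\gc_s}_\triangle$ reproduces $\phi^{\opsum}_\omega$, while for $\cM_r$ ($r\ge 2$) you put $t\,e_r$ at every $\omega;\diamond$, exactly as the paper does via Figures~\ref{fig:sum} and~\ref{fig:const}, and you obtain the $|P|<4$ cases by restriction/co-restriction just as in the paper. (One small slip: when $|P|=4$ the map $(\phi_\omega)_{\omega\in P}\colon\FF_p^3\to\FF_p^P$ is injective rather than surjective, so ``right inverse'' is a misnomer there; but this case is trivial since $S_1=\emptyset$ forces $\cM_0=\cM_1$, which is what the paper notes.)
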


In other words, we may partition the toggles into $[s]$ parts such that in part $1$ the gate behaves as a ``sum gate'', where the $\pm 1$ sum of the pins is $0$, and in the other $s-1$ modes as the ``const gate'' where all the pins are the same.

\begin{proof}%
  In all cases, the mapping $\toggles \to [s]$ is given by $\omega;r \mapsto r$ for $\omega \in \{0,1\}^2$ and $r \in \{2,\dots,s\}$, and $\omega; \triangle \mapsto 1$ for $\omega \in \{0,1\}^2 \setminus P$.

  It now suffices to describe the morphisms $\cM_0,\cM_1,\dots,\cM_s$ and verify they have the properties claimed.
  As on previous occasions, the proof may be summarized by some pictures in which we write $\FF_p$-values at the vertices of a diagram.

  We first describe $\cM_1$.
  When $P = \{0,1\}^2$, this also describes $\cM_0 = \cM_1$.
  As noted above, the shape $\alpha^{\cM_1}$ is already forced: we take $\alpha^{\cM_1}(\omega;\triangle)=\omega$ for each $\omega \in P$, $\alpha^{\cM_1}(\omega;\triangle) = \zeroi$ for $\omega \in \{0,1\}^2 \setminus  P$, and $\alpha^{\cM_1}(x) = \diamond$ for each non-leaf $x \in \nonleafs^{\agg_s}$.

  First suppose $P=\{0,1\}^2$.
  We can define the non-trivial maps $\theta^{\cM_1}_x$ as follows:
  \begin{align*}
    \theta^{\cM_1}_{00;\diamond} \!=\! \big((x,a,b)\mkern-3mu & \mapsto \mkern-2mu(x,0,\babydots,0,0) \big)&
    \theta^{\cM_1}_{00;1} \!=\!                                                2  
    \theta^{\cM_1}_{10;1}        \!=\! \big((x,a,b)\mkern-3mu & \mapsto \mkern-2mu(0,\babydots,0,0) \big)\\
    \theta^{\cM_1}_{01;\diamond} \!=\! \big((x,a,b)\mkern-3mu & \mapsto \mkern-2mu(x,0,\babydots,0,b) \big)&
    \theta^{\cM_1}_{01;1} \!=\!                                                2  
    \theta^{\cM_1}_{11;1}        \!=\! \big((x,a,b)\mkern-3mu & \mapsto \mkern-2mu(0,\babydots,0,b) \big)\\
    \theta^{\cM_1}_{10;\diamond} \!=\! \big((x,a,b)\mkern-3mu & \mapsto \mkern-2mu(x\!+\!a,0,\babydots,0,0) \big)&
    \theta^{\cM_1}_{00;s+1} \!=\!                                              2  
    \theta^{\cM_1}_{01;s+1}      \!=\! \big((x,a,b)\mkern-3mu & \mapsto \mkern-2mu(x,0,\babydots,0) \big)\\
    \theta^{\cM_1}_{11;\diamond} \!=\! \big((x,a,b)\mkern-3mu & \mapsto \mkern-2mu(x\!+\!a,0,\babydots,0,b) \big)&
    \theta^{\cM_1}_{10;s+1} \!=\!                                              2  
    \theta^{\cM_1}_{11;s+1}      \!=\! \big((x,a,b)\mkern-3mu & \mapsto \mkern-2mu(x\!+\!a,0,\babydots,0) \big).
  \end{align*}
  In pictorial terms this is shown in Figure~\ref{fig:sum}.
  \begin{figure}[htbp]
    \begin{center}
      \begin{tikzpicture}[
          defnode/.style={rectangle,draw,inner sep=4pt,outer sep=0pt,minimum size=15pt},
          mydot/.style={circle,fill,inner sep=0.5pt},
          leaf/.style={defnode,fill=leafgreen},
          toggle/.style={defnode,fill=leafgreen}
        ]

        \node[defnode,scale=0.6] (U) at (-5, 1) {$(x,a,b)$};
        \node[leaf,scale=0.6]    (UX00) at ($(U) + (225:1)$) {$x$};
        \node[leaf,scale=0.6]    (UX01) at ($(U) + (135:1)$) {$x+b$};
        \node[leaf,scale=0.6]    (UX10) at ($(U) + (315:1)$) {$x+a$};
        \node[leaf,scale=0.6]    (UX11) at ($(U) + (045:1)$) {$x+a+b$};

        \draw[-stealth] (U) -- (UX00);
        \draw[-stealth] (U) -- (UX01);
        \draw[-stealth] (U) -- (UX10);
        \draw[-stealth] (U) -- (UX11);

        \node[defnode,scale=0.6] (V00) at (0, 0) {$(x,0,\dots,0,0)$};
        \node[defnode,scale=0.6] (V01) at (0, 2) {$(x,0,\dots,0,b)$};
        \node[defnode,scale=0.6] (V10) at (3.5, 0) {$(x+a,0,\dots,0,0)$};
        \node[defnode,scale=0.6] (V11) at (3.5, 2) {$(x+a,0,\dots,0,b)$};
        \node[defnode,fill=lightgray,scale=0.5] (Ws-0) at ($(V00)!0.5!(V10)$) {$(0,\dots,0,0)$};
        \node[defnode,fill=lightgray,scale=0.5] (Ws-1) at ($(V01)!0.5!(V11)$) {$(0,\dots,0,b)$};
        \node[defnode,fill=lightgray,scale=0.5] (Wt0-) at ($(V00)!0.5!(V01)$) {$(x,0,\dots,0)$};
        \node[defnode,fill=lightgray,scale=0.5] (Wt1-) at ($(V10)!0.5!(V11)$) {$(x+a,0,\dots,0)$};

        \node[leaf,scale=0.6]    (WX00) at ($(V00) + (225:1)$) {$x$};
        \node[leaf,scale=0.6]    (WX01) at ($(V01) + (135:1)$) {$x+b$};
        \node[leaf,scale=0.6]    (WX10) at ($(V10) + (315:1)$) {$x+a$};
        \node[leaf,scale=0.6]    (WX11) at ($(V11) + (045:1)$) {$x+a+b$};

        \draw[-stealth] (V00) -- (Ws-0);
        \draw[-stealth] (V10) -- (Ws-0);
        \draw[-stealth] (V01) -- (Ws-1);
        \draw[-stealth] (V11) -- (Ws-1);

        \draw[-stealth] (V00) -- (Wt0-);
        \draw[-stealth] (V10) -- (Wt1-);
        \draw[-stealth] (V01) -- (Wt0-);
        \draw[-stealth] (V11) -- (Wt1-);

        \draw[-stealth] (V00) -- (WX00);
        \draw[-stealth] (V01) -- (WX01);
        \draw[-stealth] (V10) -- (WX10);
        \draw[-stealth] (V11) -- (WX11);

        \draw[dotted, -angle 60] (UX00) to[bend right=10] (WX00);
        \draw[dotted, -angle 60] (UX01) to[bend left=10] (WX01);
        \draw[dotted, -angle 60] (UX10) to[bend right=25] (WX10);
        \draw[dotted, -angle 60] (UX11) to[bend left=25] (WX11);

        \draw[dashed, -angle 60] (U) to[bend right=10]  (V00);
        \draw[dashed, -angle 60] (U) to[bend left=10]   (V01);
        \draw[dashed, -angle 60] (U) to[bend left=5] (V10);
        \draw[dashed, -angle 60] (U) to[bend right=5]  (V11);

        \draw[dashed, -angle 60] (U) to[bend left=6] (Ws-0);
        \draw[dashed, -angle 60] (U) to[bend right=6] (Ws-1);
        \draw[dashed, -angle 60] (U) to (Wt0-);
        \draw[dashed, -angle 60] (U) .. controls ++(0,2.5) .. ++(8, 2.5) .. controls ++(2, 0) .. ++(2,-1.5) .. controls ++(0,-1) ..  (Wt1-.east);
      \end{tikzpicture}
    \end{center}
    \caption{The morphism $\cM_1$ in the $\sumconst_1$ assignment of $\agg_s$.}%
    \label{fig:sum}
  \end{figure}

  The right-hand diagram is Figure~\ref{fig:big-aggre-diag} with the toggles $\omega;r$ for $r=2,\dots,s$ removed (since these do not lie in $\cP \cup S_1$), and the left-hand diagram is a copy of $\opsum(\{0,1\}^2)$.
  The dotted arrows show the identity maps on pins, and the dashed arrows are the remaining maps $\theta_x^{\cM_1}$.
  Recalling that the maps $\phi_1^{\gc_s}$ (appearing on the horizontal sides of the square) and $\phi_{s+1}^{\gc_s}$  (appearing on the vertical sides of the square) are the maps $\FF_p^{s+1} \to \FF_p^s$ omitting the first and last coordinates respectively, and that $\phi_{\triangle}^{\gc_s} \colon \FF_p^{s+1} \to \FF_p$ (appearing in the corners) takes the sum of the coordinates, it is clear by inspection that the morphism condition~\eqref{eq:diag-morph} holds.

  When $P \subseteq \{0,1\}^2$ has size $2$ or $3$, the only modification necessary to obtain $\cM_1$ from the morphism above is to ``set some of $x$, $x+a$, $x+b$, $x+a+b$ to zero'' in Figure~\ref{fig:sum}.
  Formally, we compose the morphism $\cM_1$ above for $P=\{0,1\}^2$ with the co-restriction map $\opsum(P) \to \opsum(\{0,1\}^2)$; see Definition~\ref{def:cores}, Remark~\ref{rem:cores} and Definition~\ref{def:compose-morph}.

  Similarly, if we apply Remark~\ref{rem:restriction-morphs} to the original morphism $\cM_1$ for $P=\{0,1\}^2$ above, we obtain a morphism $\opsum(\{0,1\}^2)[P] \to \agg_s[\nonleafs \cup \pins]$.
  It is straightforward to construct is a morphism $\trivial(P) \to \opsum(\{0,1\}^2)[P]$ respecting every leaf $i \in P$ (because any three of the linear forms $(x,a,b) \mapsto x$, $(x,a,b) \mapsto x+a$, $(x,a,b) \mapsto x+b$, $(x,a,b) \mapsto x+a+b$ are linearly independent).
  The composite of these two morphisms gives $\cM_0$ when $P$ has size $2$ or $3$.

  Alternatively, one can verify these cases directly using pictures or function definitions as above, and we leave this to the energetic reader.

  Now we consider $\cM_2,\cM_3,\dots,\cM_s$.
  For simplicity we consider $\cM_2$; the other cases are analogous.
  The shape has $\alpha^{\cM_2}(\omega;\triangle)=\omega$ for all $\omega \in P$, $\alpha^{\cM_2}(\omega;2) = \zeroi$ for all $\omega \in \{0,1\}^2$ and $\alpha^{\cM_2}(t) = \diamond$ for all non-leaves $t \in \nonleafs^{\agg_s}$.

  For $P=\{0,1\}^2$ again, the non-trivial maps are as follows.
  \begin{align*}
    \theta^{\cM_1}_{00;\diamond} = \big( x &\mapsto (0,x,0,\dots,0,0) \big)&
    \theta^{\cM_1}_{00;1}      \!=\! 
    \theta^{\cM_1}_{10;1}        = \big( x &\mapsto (x,0,\dots,0,0) \big)\\
    \theta^{\cM_1}_{01;\diamond} = \big( x &\mapsto (0,x,0,\dots,0,0) \big)&
    \theta^{\cM_1}_{01;1}      \!=\! 
    \theta^{\cM_1}_{11;1}        = \big( x &\mapsto (x,0,\dots,0,0) \big)\\
    \theta^{\cM_1}_{10;\diamond} = \big( x &\mapsto (0,x,0,\dots,0,0) \big) &
    \theta^{\cM_1}_{00;s+1}    \!=\! 
    \theta^{\cM_1}_{01;s+1}      = \big( x &\mapsto (0,x,0,\dots,0) \big)\\
    \theta^{\cM_1}_{11;\diamond} = \big( x &\mapsto (0,x,0,\dots,0,0) \big)&
    \theta^{\cM_1}_{10;s+1}    \!=\! 
    \theta^{\cM_1}_{11;s+1}      = \big( x &\mapsto (0,x,0,\dots,0) \big).
  \end{align*}
  Again we can express this as a picture, in Figure~\ref{fig:const}.
  \begin{figure}[htbp]
    \begin{center}
      \begin{tikzpicture}[
          defnode/.style={rectangle,draw,inner sep=4pt,outer sep=0pt,minimum size=15pt},
          mydot/.style={circle,fill,inner sep=0.5pt},
          leaf/.style={defnode,fill=leafgreen},
          toggle/.style={defnode,fill=leafgreen}
        ]

        \node[defnode,scale=0.8] (U) at (-5, 1) {$x$};
        \node[leaf,scale=0.6]    (UX00) at ($(U) + (225:1)$) {$x$};
        \node[leaf,scale=0.6]    (UX01) at ($(U) + (135:1)$) {$x$};
        \node[leaf,scale=0.6]    (UX10) at ($(U) + (315:1)$) {$x$};
        \node[leaf,scale=0.6]    (UX11) at ($(U) + (045:1)$) {$x$};

        \draw[-stealth] (U) -- (UX00);
        \draw[-stealth] (U) -- (UX01);
        \draw[-stealth] (U) -- (UX10);
        \draw[-stealth] (U) -- (UX11);

        \node[defnode,scale=0.6] (V00) at (0, 0) {$(0,x,0,\dots,0,0)$};
        \node[defnode,scale=0.6] (V01) at (0, 2) {$(0,x,0,\dots,0,0)$};
        \node[defnode,scale=0.6] (V10) at (3.5, 0) {$(0,x,0,\dots,0,0)$};
        \node[defnode,scale=0.6] (V11) at (3.5, 2) {$(0,x,0,\dots,0,0)$};
        \node[defnode,fill=lightgray,scale=0.5] (Ws-0) at ($(V00)!0.5!(V10)$) {$(x,0,\dots,0,0)$};
        \node[defnode,fill=lightgray,scale=0.5] (Ws-1) at ($(V01)!0.5!(V11)$) {$(x,0,\dots,0,0)$};
        \node[defnode,fill=lightgray,scale=0.5] (Wt0-) at ($(V00)!0.5!(V01)$) {$(0,x,0,\dots,0)$};
        \node[defnode,fill=lightgray,scale=0.5] (Wt1-) at ($(V10)!0.5!(V11)$) {$(0,x,0,\dots,0)$};

        \node[leaf,scale=0.6]    (WX00) at ($(V00) + (225:1)$) {$x$};
        \node[leaf,scale=0.6]    (WX01) at ($(V01) + (135:1)$) {$x$};
        \node[leaf,scale=0.6]    (WX10) at ($(V10) + (315:1)$) {$x$};
        \node[leaf,scale=0.6]    (WX11) at ($(V11) + (045:1)$) {$x$};

        \node[leaf,scale=0.6]    (W200) at ($(V00) + (315:1)$) {$(0,0,\dots,0,0)$};
        \node[leaf,scale=0.6]    (W201) at ($(V01) + (045:1)$) {$(0,0,\dots,0,0)$};
        \node[leaf,scale=0.6]    (W210) at ($(V10) + (225:1)$) {$(0,0,\dots,0,0)$};
        \node[leaf,scale=0.6]    (W211) at ($(V11) + (135:1)$) {$(0,0,\dots,0,0)$};

        \draw[-stealth] (V00) -- (Ws-0);
        \draw[-stealth] (V10) -- (Ws-0);
        \draw[-stealth] (V01) -- (Ws-1);
        \draw[-stealth] (V11) -- (Ws-1);

        \draw[-stealth] (V00) -- (Wt0-);
        \draw[-stealth] (V10) -- (Wt1-);
        \draw[-stealth] (V01) -- (Wt0-);
        \draw[-stealth] (V11) -- (Wt1-);

        \draw[-stealth] (V00) -- (WX00);
        \draw[-stealth] (V01) -- (WX01);
        \draw[-stealth] (V10) -- (WX10);
        \draw[-stealth] (V11) -- (WX11);
        \draw[-stealth] (V00) -- (W200);
        \draw[-stealth] (V01) -- (W201);
        \draw[-stealth] (V10) -- (W210);
        \draw[-stealth] (V11) -- (W211);

        \draw[dotted, -angle 60] (UX00) to[bend right=10] (WX00);
        \draw[dotted, -angle 60] (UX01) to[bend left=10] (WX01);
        \draw[dotted, -angle 60] (UX10) to[bend right=25] (WX10);
        \draw[dotted, -angle 60] (UX11) to[bend left=25] (WX11);

        \draw[dashed, -angle 60] (U) to[bend right=10]  (V00);
        \draw[dashed, -angle 60] (U) to[bend left=10]   (V01);
        \draw[dashed, -angle 60] (U) to[bend left=5] (V10);
        \draw[dashed, -angle 60] (U) to[bend right=5]  (V11);

        \draw[dashed, -angle 60] (U) to[bend left=6] (Ws-0);
        \draw[dashed, -angle 60] (U) to[bend right=6] (Ws-1);
        \draw[dashed, -angle 60] (U) to (Wt0-);
        \draw[dashed, -angle 60] (U) .. controls ++(0,2.5) .. ++(8, 2.5) .. controls ++(2, 0) .. ++(2,-1.5) .. controls ++(0,-1) ..  (Wt1-.east);
      \end{tikzpicture}
    \end{center}
    \caption{The morphism $\cM_2$ in the $\sumconst_1$ assignment of $\agg_s$.}%
    \label{fig:const}
  \end{figure}

  Now the left-hand diagram is $\const(\{0,1\}^2)$ and the right-hand diagram is $\agg_s$ with all the toggle vertices removed except for those of the form $\omega;2$.
  As before it is clear by inspection that the morphism condition~\eqref{eq:diag-morph} holds.

  When $P \ne \{0,1\}^2$, applying Remark~\ref{rem:restriction-morphs} to the morphism for $P=\{0,1\}^2$ we get a morphism $\const(\{0,1\}^2)[P] \to \agg_s[\nonleafs \cup \pins \cup S_2]$, and it is clear that $\const(\{0,1\}^2)[P] = \const(P)$, as required.
\end{proof}

The assignment $\sumconst_1$ has the ``sum'' taking place in the first tensor mode and the ``const'' in the other modes.
We can make any other mode $r \in \cR$ the distinguished ``sum'' mode by permuting indices.

\begin{corollary}%
  \label{cor:sumconst-2}
  For any $r \in [s]$ there exists an assignment $\sumconst_r$ of the gate $G=\agg_s^P$ such that:---
  \begin{itemize}
    \item $D_0=\opsum(\{0,1\}^2)$ if $P=\{0,1\}^2$ and $\trivial(P)$ otherwise;
    \item $D_r=\opsum(P)$; and
    \item $D_\ell = \const(P)$ for $\ell \in [s] \setminus \{r\}$;
  \end{itemize}
  with the natural identification of leaves $\leafs^{D_\ell} = P \cong \cP^G = \{\omega;\triangle \colon \omega \in P \}$.
\end{corollary}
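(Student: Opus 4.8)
The plan is to obtain $\sumconst_r$ from $\sumconst_1$ by a purely bookkeeping manoeuvre. The set of modes $\cR = [s]$ of the gate $\agg_s^P$ carries no structure, so ``which mode is the sum mode'' is entirely our choice; promoting mode $r$ to the sum mode will just amount to composing the toggle partition of $\sumconst_1$ with the transposition $(1\;r)$ of $[s]$ and recycling, under the induced relabelling, the diagrams and morphisms already built in the proof of Lemma~\ref{lem:sum-const-assignment}. In particular no new diagram or morphism needs to be constructed.

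If $r=1$ there is nothing to prove, so assume $2 \le r \le s$. Recall from the proof of Lemma~\ref{lem:sum-const-assignment} that in $\sumconst_1$ the partition $S \colon \toggles \to [s]$ is $\omega;\triangle \mapsto 1$ for $\omega \in \{0,1\}^2 \setminus P$ and $\omega;\ell \mapsto \ell$ for $\omega \in \{0,1\}^2$, $2 \le \ell \le s$, so that $S_1 = \{\omega;\triangle : \omega \in \{0,1\}^2 \setminus P\}$ and $S_\ell = \{\omega;\ell : \omega \in \{0,1\}^2\}$ for $\ell \ge 2$, with accompanying data $D_0$, $D_1 = \opsum(P)$, $D_2 = \cdots = D_s = \const(P)$ and morphisms $\cM_0, \cM_1, \dots, \cM_s$. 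For $\sumconst_r$ I would take the partition $S' = (1\;r)\circ S$, that is $S'(\omega;\triangle) = r$ for $\omega \notin P$, $S'(\omega;r) = 1$, and $S'(\omega;\ell) = \ell$ for $\ell \in \{2,\dots,s\}\setminus\{r\}$, so that $S'_r = S_1$, $S'_1 = S_r$ and $S'_\ell = S_\ell$ otherwise; the diagram $D_0$ unchanged, $D_r = \opsum(P)$ and $D_\ell = \const(P)$ for $\ell \ne r$, keeping the same leaf identifications $\leafs^{D_\ell} = P \cong \cP^G$; and the morphisms $\cM_0$ unchanged, the new $\cM_r$ equal to the old $\cM_1$, the new $\cM_1$ equal to the old $\cM_r$ (legitimate since $r \ge 2$, so the old $\cM_r$ has domain $\const(P)$), and the new $\cM_\ell$ equal to the old $\cM_\ell$ for $\ell \in \{2,\dots,s\}\setminus\{r\}$.

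It then remains only to check the clauses of Definition~\ref{def:assignment}. The domains match by the choices above, and because $S'_r = S_1$, $S'_1 = S_r$ and $S'_\ell = S_\ell$ the codomains $\agg_s[\nonleafs \cup \cP \cup S'_\ell]$ are precisely those of the reused morphisms; the remaining requirements — that each morphism respects every pin, and that $\alpha^{\cM_\ell}(j) = \zeroi$ for $j \in S'_\ell$ — are inherited verbatim from Lemma~\ref{lem:sum-const-assignment}. The only point warranting any attention, and the closest thing to an obstacle here, is to keep the two flavours of toggle straight: in $\sumconst_1$ the sum block $S_1$ consists of the ``corner'' toggles $\omega;\triangle$ whereas the const blocks $S_\ell$ ($\ell \ge 2$) consist of ``interior-coordinate'' toggles $\omega;\ell$, so one must be comfortable with the $\triangle$-toggle block becoming the sum block of the new mode $r$ and the coordinate-$r$ toggle block becoming a const block of the new mode $1$. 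Once this identification is in place the corollary follows with no further work.
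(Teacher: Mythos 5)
Your proposal is correct and matches the paper's own proof exactly: the paper also obtains $\sumconst_r$ by reusing the partition, diagrams, and morphisms from Lemma~\ref{lem:sum-const-assignment} and composing with the transposition $1 \leftrightarrow r$ on $\cR=[s]$. You have simply spelled out the bookkeeping (new partition $S'=(1\;r)\circ S$, swapped $\cM_1 \leftrightarrow \cM_r$, codomain checks) in more explicit detail than the paper does.
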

\begin{proof}%
  Recall $\cR = [s]$.
  We can use the same partition $\toggles = \bigcup_{r \in \cR} S_r$, diagrams $(D_r)_{r \in \cR \cup \{0\}}$ and morphisms $(\cM_r)_{r \in \cR \cup \{0\}}$ from Lemma~\ref{lem:sum-const-assignment}, but then compose with the permutation $\cR \to \cR$ given by the transposition $1 \leftrightarrow r$.
\end{proof}

There is one further family of interesting assignment of the AggreGate, which we term $\opcross$.
This occurs when we want an AggreGate to ``do nothing'', as far as possible, and just wire its pins to be equal to each other in pairs.
It will correspond to the tensor identities
\[
  u \otimes w \otimes v_3 \otimes \dots \otimes v_s + u' \otimes w' \otimes v_3 \otimes \dots \otimes v_s =
  u \otimes w \otimes v_3 \otimes \dots \otimes v_s + u' \otimes w' \otimes v_3 \otimes \dots \otimes v_s
\]
and
\[
  u \otimes w \otimes v_3 \otimes \dots \otimes v_s + u' \otimes 0 \otimes v_3 \otimes \dots \otimes v_s =
  u \otimes w \otimes v_3 \otimes \dots \otimes v_s
\]
as well as permutations of these.
Clearly these identities are vacuous and sub-optimal, but it is necessary to have them as gate assignments nonetheless.

We first define some more auxiliary data.%
\footnote{Here ``$\crs$'' is short for ``Cross'' but the latter denotes an assignment whereas the former is a linear datum.}
\begin{definition}%
  \label{def:cross}
  Let $\tau, \tau'$ be the values $01, 10$ in some order, i.e., $\{\tau, \tau'\} = \{01,10\}$.
  The linear datum $\crs^{\tau}$ has $I=\{0,1\}^2$, $V=\FF_p^2$ and linear maps $\phi_{00}(x,y) = x$, $\phi_{11}(x,y) = y$, $\phi_{\tau}(x,y)=x$ and $\phi_{\tau'}(x,y)=y$.

  We also consider the $3$-leaf variants $\crs^{\tau}_{\strikeout{\eta}} = \crs^{\tau}\langle \eta \rangle$ and $\crs^{\tau}_{\overline{\eta}} = \crs^{\tau} [\{0,1\}^2 \setminus \{\eta\}]$, for any $\eta \in \{0,1\}^2$.
  The former has $\dim V = 1$, and is obtained form $\opcross^\tau$ by setting $x=0$ or $y=0$ (depending on the values of $\tau$ and $\eta$), and the latter has $\dim V = 2$ and is obtained by simply deleting the index $\eta$.
\end{definition}

The superscript $\tau$ means that the pair $00$ and $\tau$ are set to the same value, as are the parallel pair $\tau'$ and $11$.
Then the subscript $\strikeout{\eta}$ means ``the index $\eta$, and its twin, are forced to zero'', and $\overline{\eta}$ means ``the index $\eta$ is missing'' and its twin can take any value.

\begin{lemma}%
  \label{lem:cross}
  Suppose $s \ge 2$ and $\tau \in \{01,10\}$.
  There is an assignment $\opcross^{\tau}_{1,2}$ of the gate $\agg_s$ with diagrams:---
  \begin{itemize}
    \item $D_0 = \opsum(\{0,1\}^2)$,
    \item $D_1=D_2=\crs^{\tau}$,
    \item $D_3=\cdots=D_s=\const(\{0,1\}^2)$,
  \end{itemize}
  with the usual identification $\{0,1\}^2 \cong \leafs^{\agg_s}$.
  
  Now let $\eta \in \{0,1\}^2$ be any index and write $P = \{0,1\}^2 \setminus \{\eta\}$.
  Then there is an assignment $\opcross^{\tau,\overline{\eta}}_{1,2}$ of $\agg_s^{\overline{\eta}}$ with:---
  \begin{itemize}
    \item $D_0=\trivial(P)$,
    \item $D_1 = \crs^{\tau}_{\overline{\eta}}$,
    \item $D_2 = \crs^{\tau}_{\strikeout{\eta}}$,
    \item $D_3=\cdots=D_s = \const(P)$,
  \end{itemize}
  again with the canonical bijection $P \cong \leafs^{\agg_s^{\overline{\eta}}}$.
\end{lemma}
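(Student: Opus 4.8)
The strategy is to follow the template of Lemma~\ref{lem:sum-const-assignment} and Corollary~\ref{cor:sumconst-2}: fix a partition $S \colon \toggles \to [s]$ of the toggles, note that each $D_r$ has a single non-leaf so that the shape $\alpha^{\cM_r}$ is forced, and then exhibit the linear maps $\theta^{\cM_r}_x$ at the interior vertices of $\agg_s$ and check the commuting square~\eqref{eq:diag-morph}, recording the verification pictorially in the style of Figures~\ref{fig:sum} and~\ref{fig:const}. The relevant bookkeeping inside $\agg_s$ is this: each of the four copies of $\gc_s$ has an interior vertex $\omega;\diamond$ with space $\FF_p^{s+1}$; the maps on the glued edges are $\phi^{\gc_s}_1$ (omit coordinate $1$) and $\phi^{\gc_s}_{s+1}$ (omit coordinate $s+1$); the map at a pin $\omega;\triangle$ is $\phi^{\gc_s}_\triangle$ (sum of all coordinates); and the map at a toggle $\omega;r$ is $\phi^{\gc_s}_r$ (omit coordinate $r$). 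Hence a toggle $\omega;r$ is killed, i.e.\ sent to $\zeroi$, exactly when the vector $\theta^{\cM_r}_{\omega;\diamond}$ is supported on coordinate $r$; the gluing forces $\theta_{00;\diamond}\equiv\theta_{10;\diamond}$ and $\theta_{01;\diamond}\equiv\theta_{11;\diamond}$ off coordinate $1$, and $\theta_{00;\diamond}\equiv\theta_{01;\diamond}$ and $\theta_{10;\diamond}\equiv\theta_{11;\diamond}$ off coordinate $s+1$; and the pin $\omega;\triangle$ carries the coordinate sum of $\theta_{\omega;\diamond}$.

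For the assignment $\opcross^\tau_{1,2}$ of $\agg_s$, put $S_r=\{\omega;r\colon\omega\in\{0,1\}^2\}$ for $3\le r\le s$ and split the row $\{\omega;2\colon\omega\in\{0,1\}^2\}$ between $S_1$ and $S_2$. The morphisms $\cM_0$ (into $\opsum(\{0,1\}^2)$) and $\cM_3,\dots,\cM_s$ (into $\const(\{0,1\}^2)$) are exactly those from Lemma~\ref{lem:sum-const-assignment}, since the diagrams and targets agree. The new content is $\cM_1$ and $\cM_2$. The key point is that the two ``columns'' of $\crs^\tau$ — the pairs of pins forced to share a value, namely $\{00,\tau\}$ and $\{\tau',11\}$ — are two of the gluing pairs of $\agg_s$: for $\tau=01$ they are the pairs glued along coordinate $s+1$, and for $\tau=10$ the pairs glued along coordinate $1$. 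One therefore assigns one column's row-$2$ toggles to $S_1$ and the other's to $S_2$; in $\cM_1$ one sets the two interior vectors of the $S_1$-column equal to a common multiple of $e_2$ (this simultaneously kills their row-$2$ toggles and makes their $\triangle$-pins carry that column's common value), and on the remaining two copies one adds a scalar multiple of the other boundary basis vector ($e_1$ if $\tau=01$, $e_{s+1}$ if $\tau=10$) to pass from one column's value to the other's while staying compatible on the shared vertices. The square~\eqref{eq:diag-morph} is then checked edge by edge by inspection, and $\cM_2$ is the same with the two columns exchanged. (When $s=2$ there are no const modes and this is the whole assignment; $s\ge2$ is needed precisely so that modes $1$ and $2$ both exist.)

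For the $3$-leaf assignment $\opcross^{\tau,\overline{\eta}}_{1,2}$ of $\agg_s^{\overline{\eta}}$, the extra toggle $\eta;\triangle$ is routed into $S_2$, matching the fact that $D_2=\crs^\tau_{\strikeout{\eta}}=\crs^\tau\langle\eta\rangle$ sends the index $\eta$ to $\zeroi$; one splits the row $\{\omega;2\}$ so that $S_1$ takes the row-$2$ toggles of the column \emph{containing} $\eta$ and $S_2$ those of the other column. Then $\cM_1$ is given by the same interior maps as before, with $\eta$ no longer a pin of the domain, so $D_1=\crs^\tau_{\overline{\eta}}$; and $\cM_2$ is built as before except that the surviving member of $\eta$'s column and the interior vertex $\eta;\diamond$ now carry the value $0$ — which is exactly the pin pattern of $\crs^\tau\langle\eta\rangle$ and forces those two interior vectors to be equal with coordinate sum zero. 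The diagrams $D_0=\trivial(P)$ and $D_3=\dots=D_s=\const(P)$ and their morphisms are produced, just as in the $|P|\le3$ cases of Lemma~\ref{lem:sum-const-assignment}, by composing the above with the evident morphism $\trivial(P)\to\opsum(\{0,1\}^2)[P]$ (any three of the four forms of $\opsum$ are linearly independent) and applying Remark~\ref{rem:restriction-morphs}. Verifying~\eqref{eq:diag-morph} throughout is a routine, if slightly tedious, linear-algebra check, best carried out with the pictures.

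The main obstacle is constructing the cross morphisms $\cM_1,\cM_2$: one must at once realize the prescribed cross pin-pattern, kill the designated toggles (and, in the second assignment, the pin $\eta;\triangle$), and respect the gluing constraints on coordinates $1$ and $s+1$ of $\agg_s$. This is possible only because the columns of $\crs^\tau$ are defined to coincide with gluing pairs of $\agg_s$, so that zeroing one boundary coordinate equalizes column-mates while the other boundary coordinate supplies exactly the single free scalar needed to bridge the two column values; the residual difficulty is purely notational — keeping the index bookkeeping straight across the two choices of $\tau$ and the possible positions of $\eta$.
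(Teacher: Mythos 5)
Your proposal is correct and follows essentially the same strategy as the paper: fix a partition, reuse $\cM_0$ and $\cM_3,\dots,\cM_s$ from Lemma~\ref{lem:sum-const-assignment}, and construct $\cM_1,\cM_2$ directly from the observation that the two columns of $\crs^\tau$ coincide with one of the two gluing pair-families of $\agg_s$, with the complementary boundary coordinate providing the free scalar. (The only small deviation is in the $\overline{\eta}$ case: you split the row-$2$ toggles according to which column contains $\eta$, whereas the paper keeps $S_1 = \{(00;2),(\tau;2)\}$ unchanged and simply adds $\eta;\triangle$ to $S_2$; both choices produce the required $D_r$, so this is an inessential variant.)
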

\begin{proof}%
  As in Definition~\ref{def:cross} we choose $\tau'$ to be whichever of $01,10$ is not equal to $\tau$, i.e., so that $\{\tau,\tau'\} = \{01,10\}$.

  We first consider $\opcross^{\tau}_{1,2}$.
  The partition is given by
  $S_1 = \{ (00;2), (\tau; 2) \}$,
  $S_2 = \{ (\tau';2), (11; 2) \}$ and
  \[
    S_r = \bigl\{ \omega;r \colon \omega \in \{0,1\}^2 \bigr\}
  \]
  for $3 \le r \le s$.
  The morphisms
  \[
   \cM_0 \colon \opsum(\{0,1\}^2) \to \agg_s[\nonleafs \cup \pins] 
  \]
  and 
  \[
    \cM_r \colon \const(\{0,1\}^2) \to \agg_s[\nonleafs \cup \pins \cup S_r] 
  \]
  for $3 \le r \le s$ are exactly those given in Lemma~\ref{lem:sum-const-assignment}, and the necessary properties were already shown there.

  We now specify $\cM_1$ and $\cM_2$.
  First consider the case $\tau=01$.
  As usual we are forced to take $\alpha^{\cM_1}(\omega;\triangle) = \omega$, $\alpha^{\cM_1}(00;2) = \alpha^{\cM_1}(01;2) = \zeroi$ and $\alpha^{\cM_1}(x) = \diamond$ for every non-leaf $x \in \nonleafs^{\agg_s}$.

  The non-trivial maps $\theta^{\cM_1}_x$ are given by
  \begin{align*}
    \theta^{\cM_1}_{00;\diamond}    \!=\! \bigl( (x,y) \mkern-3mu &\mapsto \mkern-2mu (0,x,0,\babydots,0,0) \bigr)&
    \theta^{\cM_1}_{00;1}           \!=\!                                          
    \theta^{\cM_1}_{10;1}           \!=\! \bigl( (x,y) \mkern-3mu &\mapsto \mkern-2mu (x,0,\babydots,0,0) \bigr)\\
    \theta^{\cM_1}_{01;\diamond}    \!=\! \bigl( (x,y) \mkern-3mu &\mapsto \mkern-2mu (0,x,0,\babydots,0,0) \bigr) &
    \theta^{\cM_1}_{01;1}           \!=\!                                          
    \theta^{\cM_1}_{11;1}           \!=\! \bigl( (x,y) \mkern-3mu &\mapsto \mkern-2mu (x,0,\babydots,0,0) \bigr)\\
    \theta^{\cM_1}_{10;\diamond}    \!=\! \bigl( (x,y) \mkern-3mu &\mapsto \mkern-2mu (y\!-\!x,x,0,\babydots,0,0) \bigr)&
    \theta^{\cM_1}_{00;s+1}         \!=\!                                          
    \theta^{\cM_1}_{01;s+1}         \!=\! \bigl( (x,y) \mkern-3mu &\mapsto \mkern-2mu (  0,x,0,\babydots,0) \bigr)\\
    \theta^{\cM_1}_{11;\diamond}    \!=\! \bigl( (x,y) \mkern-3mu &\mapsto \mkern-2mu (y\!-\!x,x,0,\babydots,0,0) \bigr)&
    \theta^{\cM_1}_{10;s+1}         \!=\!                                          
    \theta^{\cM_1}_{11;s+1}         \!=\! \bigl( (x,y) \mkern-3mu &\mapsto \mkern-2mu (y\!-\!x,x,0,\babydots,0) \bigr).
  \end{align*}
  We represent this pictorially in Figure~\ref{fig:cross-1}.
  \begin{figure}[htbp]
    \begin{center}
      \begin{tikzpicture}[
          defnode/.style={rectangle,draw,inner sep=4pt,outer sep=0pt,minimum size=15pt},
          mydot/.style={circle,fill,inner sep=0.5pt},
          leaf/.style={defnode,fill=leafgreen},
          toggle/.style={defnode,fill=leafgreen}
        ]

        \node[defnode,scale=0.8] (U) at (-5, 1) {$(x,y)$};
        \node[leaf,scale=0.6]    (UX00) at ($(U) + (225:1)$) {$x$};
        \node[leaf,scale=0.6]    (UX01) at ($(U) + (135:1)$) {$x$};
        \node[leaf,scale=0.6]    (UX10) at ($(U) + (315:1)$) {$y$};
        \node[leaf,scale=0.6]    (UX11) at ($(U) + (045:1)$) {$y$};

        \draw[-stealth] (U) -- (UX00);
        \draw[-stealth] (U) -- (UX01);
        \draw[-stealth] (U) -- (UX10);
        \draw[-stealth] (U) -- (UX11);

        \node[defnode,scale=0.6] (V00) at (0, 0) {$(0,x,0,\dots,0,0)$};
        \node[defnode,scale=0.6] (V01) at (0, 2) {$(0,x,0,\dots,0,0)$};
        \node[defnode,scale=0.6] (V10) at (3.5, 0) {$(y-x,x,0,\dots,0,0)$};
        \node[defnode,scale=0.6] (V11) at (3.5, 2) {$(y-x,x,0,\dots,0,0)$};
        \node[defnode,fill=lightgray,scale=0.5] (Ws-0) at ($(V00)!0.5!(V10)$) {$(x,\dots,0,0)$};
        \node[defnode,fill=lightgray,scale=0.5] (Ws-1) at ($(V01)!0.5!(V11)$) {$(x,\dots,0,0)$};
        \node[defnode,fill=lightgray,scale=0.5] (Wt0-) at ($(V00)!0.5!(V01)$) {$(0,x,\dots,0)$};
        \node[defnode,fill=lightgray,scale=0.5] (Wt1-) at ($(V10)!0.5!(V11)$) {$(y-x,x,\dots,0)$};

        \node[leaf,scale=0.6]    (WX00) at ($(V00) + (225:1)$) {$x$};
        \node[leaf,scale=0.6]    (WX01) at ($(V01) + (135:1)$) {$x$};
        \node[leaf,scale=0.6]    (WX10) at ($(V10) + (315:1)$) {$y$};
        \node[leaf,scale=0.6]    (WX11) at ($(V11) + (045:1)$) {$y$};

        \node[leaf,scale=0.6]    (W200) at ($(V00) + (315:1)$) {$(0,0,\dots,0,0)$};
        \node[leaf,scale=0.6]    (W201) at ($(V01) + (045:1)$) {$(0,0,\dots,0,0)$};

        \draw[-stealth] (V00) -- (Ws-0);
        \draw[-stealth] (V10) -- (Ws-0);
        \draw[-stealth] (V01) -- (Ws-1);
        \draw[-stealth] (V11) -- (Ws-1);

        \draw[-stealth] (V00) -- (Wt0-);
        \draw[-stealth] (V10) -- (Wt1-);
        \draw[-stealth] (V01) -- (Wt0-);
        \draw[-stealth] (V11) -- (Wt1-);

        \draw[-stealth] (V00) -- (WX00);
        \draw[-stealth] (V01) -- (WX01);
        \draw[-stealth] (V10) -- (WX10);
        \draw[-stealth] (V11) -- (WX11);
        \draw[-stealth] (V00) -- (W200);
        \draw[-stealth] (V01) -- (W201);

        \draw[dotted, -angle 60] (UX00) to[bend right=10] (WX00);
        \draw[dotted, -angle 60] (UX01) to[bend left=10] (WX01);
        \draw[dotted, -angle 60] (UX10) to[bend right=25] (WX10);
        \draw[dotted, -angle 60] (UX11) to[bend left=25] (WX11);

        \draw[dashed, -angle 60] (U) to[bend right=10]  (V00);
        \draw[dashed, -angle 60] (U) to[bend left=10]   (V01);
        \draw[dashed, -angle 60] (U) to[bend left=5] (V10);
        \draw[dashed, -angle 60] (U) to[bend right=5]  (V11);

        \draw[dashed, -angle 60] (U) to[bend left=6] (Ws-0);
        \draw[dashed, -angle 60] (U) to[bend right=6] (Ws-1);
        \draw[dashed, -angle 60] (U) to (Wt0-);
        \draw[dashed, -angle 60] (U) .. controls ++(0,2.5) .. ++(8, 2.5) .. controls ++(2, 0) .. ++(2,-1.5) .. controls ++(0,-1) ..  (Wt1-.east);
      \end{tikzpicture}
    \end{center}
    \caption{The morphism $\cM_1$ in the $\opcross^{01}$ assignment of $\agg_s$.}%
    \label{fig:cross-1}
  \end{figure}

  The maps $\theta^{\cM_2}_x$ are defined similarly:
  \begin{align*}
    \theta^{\cM_1}_{00;\diamond} \!=\! \bigl( (x,y) \mkern-3mu &\mapsto \mkern-2mu  (x\!-\!y,y,0,\babydots,0,0) \bigr)&
    \theta^{\cM_1}_{00;1}        \!=\!
    \theta^{\cM_1}_{10;1}        \!=\! \bigl( (x,y) \mkern-3mu &\mapsto \mkern-2mu  (y,0,\babydots,0,0) \bigr)\\
    \theta^{\cM_1}_{01;\diamond} \!=\! \bigl( (x,y) \mkern-3mu &\mapsto \mkern-2mu  (x\!-\!y,y,0,\babydots,0,0) \bigr)&
    \theta^{\cM_1}_{01;1}        \!=\!
    \theta^{\cM_1}_{11;1}        \!=\! \bigl( (x,y) \mkern-3mu &\mapsto \mkern-2mu  (y,0,\babydots,0,0) \bigr)\\
    \theta^{\cM_1}_{10;\diamond} \!=\! \bigl( (x,y) \mkern-3mu &\mapsto \mkern-2mu  (0,y,0,\babydots,0,0) \bigr)&
    \theta^{\cM_1}_{00;s+1}      \!=\!
    \theta^{\cM_1}_{01;s+1}      \!=\! \bigl( (x,y) \mkern-3mu &\mapsto \mkern-2mu  (x\!-\!y,y,0,\babydots,0) \bigr)\\
    \theta^{\cM_1}_{11;\diamond} \!=\! \bigl( (x,y) \mkern-3mu &\mapsto \mkern-2mu  (0,y,0,\babydots,0,0) \bigr)&
    \theta^{\cM_1}_{10;s+1}      \!=\!
    \theta^{\cM_1}_{11;s+1}      \!=\! \bigl( (x,y) \mkern-3mu &\mapsto \mkern-2mu  (0,y,0,\babydots,0)\bigr).
  \end{align*}
  and again we draw this as Figure~\ref{fig:cross-2}.
  \begin{figure}[htbp]
    \begin{center}
      \begin{tikzpicture}[
          defnode/.style={rectangle,draw,inner sep=4pt,outer sep=0pt,minimum size=15pt},
          mydot/.style={circle,fill,inner sep=0.5pt},
          leaf/.style={defnode,fill=leafgreen},
          toggle/.style={defnode,fill=leafgreen}
        ]

        \node[defnode,scale=0.8] (U) at (-5, 1) {$(x,y)$};
        \node[leaf,scale=0.6]    (UX00) at ($(U) + (225:1)$) {$x$};
        \node[leaf,scale=0.6]    (UX01) at ($(U) + (135:1)$) {$x$};
        \node[leaf,scale=0.6]    (UX10) at ($(U) + (315:1)$) {$y$};
        \node[leaf,scale=0.6]    (UX11) at ($(U) + (045:1)$) {$y$};

        \draw[-stealth] (U) -- (UX00);
        \draw[-stealth] (U) -- (UX01);
        \draw[-stealth] (U) -- (UX10);
        \draw[-stealth] (U) -- (UX11);

        \node[defnode,scale=0.6] (V00) at (0, 0) {$(x-y,y,0,\dots,0,0)$};
        \node[defnode,scale=0.6] (V01) at (0, 2) {$(x-y,y,0,\dots,0,0)$};
        \node[defnode,scale=0.6] (V10) at (3.5, 0) {$(0,y,0,\dots,0,0)$};
        \node[defnode,scale=0.6] (V11) at (3.5, 2) {$(0,y,0,\dots,0,0)$};
        \node[defnode,fill=lightgray,scale=0.5] (Ws-0) at ($(V00)!0.5!(V10)$) {$(y,\dots,0,0)$};
        \node[defnode,fill=lightgray,scale=0.5] (Ws-1) at ($(V01)!0.5!(V11)$) {$(y,\dots,0,0)$};
        \node[defnode,fill=lightgray,scale=0.5] (Wt0-) at ($(V00)!0.5!(V01)$) {$(x-y,y,\dots,0)$};
        \node[defnode,fill=lightgray,scale=0.5] (Wt1-) at ($(V10)!0.5!(V11)$) {$(0,y,\dots,0)$};

        \node[leaf,scale=0.6]    (WX00) at ($(V00) + (225:1)$) {$x$};
        \node[leaf,scale=0.6]    (WX01) at ($(V01) + (135:1)$) {$x$};
        \node[leaf,scale=0.6]    (WX10) at ($(V10) + (315:1)$) {$y$};
        \node[leaf,scale=0.6]    (WX11) at ($(V11) + (045:1)$) {$y$};

        \node[leaf,scale=0.6]    (W210) at ($(V10) + (225:1)$) {$(0,0,\dots,0,0)$};
        \node[leaf,scale=0.6]    (W211) at ($(V11) + (135:1)$) {$(0,0,\dots,0,0)$};

        \draw[-stealth] (V00) -- (Ws-0);
        \draw[-stealth] (V10) -- (Ws-0);
        \draw[-stealth] (V01) -- (Ws-1);
        \draw[-stealth] (V11) -- (Ws-1);

        \draw[-stealth] (V00) -- (Wt0-);
        \draw[-stealth] (V10) -- (Wt1-);
        \draw[-stealth] (V01) -- (Wt0-);
        \draw[-stealth] (V11) -- (Wt1-);

        \draw[-stealth] (V00) -- (WX00);
        \draw[-stealth] (V01) -- (WX01);
        \draw[-stealth] (V10) -- (WX10);
        \draw[-stealth] (V11) -- (WX11);
        \draw[-stealth] (V10) -- (W210);
        \draw[-stealth] (V11) -- (W211);

        \draw[dotted, -angle 60] (UX00) to[bend right=10] (WX00);
        \draw[dotted, -angle 60] (UX01) to[bend left=10] (WX01);
        \draw[dotted, -angle 60] (UX10) to[bend right=25] (WX10);
        \draw[dotted, -angle 60] (UX11) to[bend left=25] (WX11);

        \draw[dashed, -angle 60] (U) to[bend right=10]  (V00);
        \draw[dashed, -angle 60] (U) to[bend left=10]   (V01);
        \draw[dashed, -angle 60] (U) to[bend left=5] (V10);
        \draw[dashed, -angle 60] (U) to[bend right=5]  (V11);

        \draw[dashed, -angle 60] (U) to[bend left=6] (Ws-0);
        \draw[dashed, -angle 60] (U) to[bend right=6] (Ws-1);
        \draw[dashed, -angle 60] (U) to (Wt0-);
        \draw[dashed, -angle 60] (U) .. controls ++(0,2.5) .. ++(8, 2.5) .. controls ++(2, 0) .. ++(2,-1.5) .. controls ++(0,-1) ..  (Wt1-.east);
      \end{tikzpicture}
    \end{center}
    \caption{The morphism $\cM_2$ in the $\opcross^{01}$ assignment of $\agg_s$.}%
    \label{fig:cross-2}
  \end{figure}

  To obtain the case $\tau=10$, we reflect these pictures around the SW--NE diagonal, and exchange the roles of the indices $1$ and $s+1$ in $\FF_p^{s+1}$.
  The details are completely analogous and we omit them.

  We briefly describe the modifications needed to obtain $\opcross^{\tau,\overline{\eta}}_{1,2}$.
  We revise the partition by taking $S_2 = \{ (\tau';2),\ (11;2),\ \eta\}$, with the other parts staying the same.
  The morphisms $\cM_0$ and $\cM_3,\dots,\cM_s$ are again the same as in Lemma~\ref{lem:cross}.
  For $\cM_1$, we apply Remark~\ref{rem:restriction-morphs} to the morphism above to obtain one with domain $\crs^\tau[P]$, where $P = \{0,1\}^2 \setminus \{\eta\}$.
  For $\cM_2$, as before we compose the map above with the co-restriction map $\crs^\tau \langle \eta \rangle \to \crs^\tau$.
\end{proof}

Again, it is clear we can permute the indices $[s]$ so that the roles of the special tensor modes $(1,2)$ are played by some other pair of indices $(i,j)$.
\begin{corollary}%
  \label{cor:cross}
  For any pair of distinct indices $i,j \in [s]$ there are assignments $\opcross^\tau_{i,j}$ and $\opcross^{\tau,\overline{\eta}}_{i,j}$ analogous to those in Lemma~\ref{lem:cross} but with $D_i$, $D_j$ swapped with $D_1$, $D_2$ respectively.
\end{corollary}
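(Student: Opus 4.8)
The plan is to deduce Corollary~\ref{cor:cross} from Lemma~\ref{lem:cross} by relabelling the set of modes, exactly as Corollary~\ref{cor:sumconst-2} was deduced from Lemma~\ref{lem:sum-const-assignment}. The key observation to isolate first is that the gate $\agg_s^P$ records only the \emph{set} of modes $\cR = [s]$, with no structure linking a given mode to particular leaves of the underlying diagram (Definition~\ref{def:gate}); that link lives entirely inside the partition $S \colon \toggles \to \cR$ of an assignment (Definition~\ref{def:assignment}). Hence if $\cA$ is an assignment of $\agg_s^P$ with partition $(S_r)_{r \in [s]}$, auxiliary diagrams $(D_r)_{r \in [s] \cup \{0\}}$ and morphisms $(\cM_r)_{r \in [s] \cup \{0\}}$, and $\pi$ is any permutation of $[s]$, then setting $S'_r := S_{\pi^{-1}(r)}$, $D'_r := D_{\pi^{-1}(r)}$ and $\cM'_r := \cM_{\pi^{-1}(r)}$ for $r \in [s]$ (and leaving the mode-$0$ data $D_0$, $\cM_0$ untouched) is again an assignment of $\agg_s^P$: since $\cM_{\pi^{-1}(r)}$ has target $D[\nonleafs \cup \pins \cup S_{\pi^{-1}(r)}] = D[\nonleafs \cup \pins \cup S'_r]$, the requirements that $\cM'_r$ respect every pin and that $\alpha^{\cM'_r}(\ell) = \zeroi$ for $\ell \in S'_r$ transfer verbatim from the corresponding facts for $\cM_{\pi^{-1}(r)}$.

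Next I would pick the permutation. Because $i \neq j$ and $s \geq 2$ (the standing hypothesis of Lemma~\ref{lem:cross}), the partial assignment $1 \mapsto i$, $2 \mapsto j$ extends to a permutation $\pi$ of $[s]$ with $\pi(1) = i$ and $\pi(2) = j$. Applying the relabelling above to the assignment $\opcross^\tau_{1,2}$ of Lemma~\ref{lem:cross} produces the desired $\opcross^\tau_{i,j}$, with $D'_i = D_1 = \crs^\tau$, $D'_j = D_2 = \crs^\tau$, $D'_\ell = \const(\{0,1\}^2)$ for $\ell \in [s] \setminus \{i,j\}$ (as then $\pi^{-1}(\ell) \geq 3$), and $D'_0 = \opsum(\{0,1\}^2)$ — exactly the list claimed. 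The variant $\opcross^{\tau,\overline{\eta}}_{i,j}$ comes out of $\opcross^{\tau,\overline{\eta}}_{1,2}$ in the same way, yielding $D'_i = \crs^\tau_{\overline{\eta}}$, $D'_j = \crs^\tau_{\strikeout{\eta}}$, $D'_\ell = \const(P)$ for $\ell \notin \{i,j\}$, and $D'_0 = \trivial(P)$.

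The hard part is essentially nonexistent here: the entire content is the bookkeeping in the first paragraph confirming that a relabelled assignment is still a legal assignment of the same gate, which is already implicit in the proof of Corollary~\ref{cor:sumconst-2}. As an alternative I could instead rewrite the explicit maps $\theta^{\cM_1}_x$, $\theta^{\cM_2}_x$ of Lemma~\ref{lem:cross} with the coordinates of $\FF_p^{s+1}$ permuted so that modes $i,j$ occupy the roles of $1,2$, but that merely reproduces Figures~\ref{fig:cross-1}--\ref{fig:cross-2} with shifted indices and gains nothing.
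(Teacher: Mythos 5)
Your proof is correct and is essentially the same as the approach the paper uses implicitly (and uses explicitly in the proof of Corollary~\ref{cor:sumconst-2}): relabel the modes $[s]$ by a permutation $\pi$ with $\pi(1)=i$, $\pi(2)=j$, carrying the partition, diagrams and morphisms along. The one small refinement over Corollary~\ref{cor:sumconst-2} — that a single transposition won't always suffice and you need any permutation extending $1\mapsto i$, $2\mapsto j$ — is handled correctly.
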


\subsection{The individual AGate}%
\label{sub:agate}

We now describe the gate which is the building block of the circuit needed to prove Theorem~\ref{thm:baby-thm}.

This subsection and the next are essentially about realizing the sketch in Section~\ref{sub:repr}: that is, building a double-and-add binary multiplier to realize the identity $B(ax,y)=B(x,ay)$ for an integer $a$.
The ``individual AGate'' or ``IndAGate'' discussed here performs one step of the ``double and add'' algorithm: it performs one multiplication by two and optionally adds one register to another.
Figure~\ref{fig:bilinear} essentially contains three of these basic units.
Chaining several IndAGates together forms a ``large AGate'' or ``AGate'', which implements the full circuit sketched in Figure~\ref{fig:bilinear}, and will be discussed in the next section.

We will build the IndAGate gate out of AggreGates.
This allows us to construct assignments of the IndAGate by ``glueing together'' the assignments of the component AggreGates that we constructed in Section~\ref{sub:aggre}.
This ``glueing'' process is fairly intuitive but slightly annoying to phrase formally.

We first define the underlying diagram.
\begin{definition}%
  \label{def:agate}
  Let $s \ge 2$.
  The diagram $\smagate_s$ is described by the following picture.
  \begin{center}
    \begin{tikzpicture}[
        defnode/.style={rectangle,draw,fill=lightgray,inner sep=2pt,outer sep=0pt,minimum size=10pt},
        gatelabel/.style={rectangle, inner sep=2pt, outer sep=0pt, fill=white,scale=0.6},
        subnode/.style={rounded rectangle, draw, inner sep=5pt, outer sep=0pt,minimum size=15pt},
        leaf/.style={rectangle,draw,fill=leafgreen,inner sep=2pt,outer sep=0pt,minimum size=10pt},
        scale=0.75
      ]

      \node[subnode,scale=0.8] (A1) at (0,0) {$A1 : \agg_s$};
      \node[subnode,scale=0.8] (A2) at (4,0) {$A2 : \agg_s$};
      \node[subnode,scale=0.8] (A3) at (8,0) {$A3 : \agg_s$};
      \node[subnode,scale=0.8] (A4) at (12,0) {$A4 : \agg_s$};
      \node[subnode,scale=0.8] (B1) at (0,3.5) {$B1 : \agg_s$};
      \node[subnode,scale=0.8] (B2) at (4,3.5) {$B2 : \agg_s$};
      \node[subnode,scale=0.8] (B3) at (8,3.5) {$B3 : \agg_s$};
      \node[subnode,scale=0.8] (B4) at (12,3.5) {$B4 : \agg_s$};

      \node[leaf,scale=0.5] (X1) at ($(A1)+(-2,0)$) {$X1$};
      \node[leaf,scale=0.5] (X2) at ($(B1)+(-2,0)$) {$X2$};
      \node[leaf,scale=0.5] (Y1) at ($(A4)+(2,0)$) {$Y1$};
      \node[leaf,scale=0.5] (Y2) at ($(B4)+(2,0)$) {$Y2$};

      \node[defnode,scale=0.5] (A12) at ($(A1)!0.5!(A2)$) {};
      \node[defnode,scale=0.5] (A23) at ($(A2)!0.5!(A3)$) {};
      \node[defnode,scale=0.5] (A34) at ($(A3)!0.5!(A4)$) {};

      \node[defnode,scale=0.5] (B12) at ($(B1)!0.5!(B2)$) {};
      \node[defnode,scale=0.5] (B23a) at ($(B2)!0.5!(B3) + (0,0.5)$) {};
      \node[defnode,scale=0.5] (B23b) at ($(B2)!0.5!(B3) + (0,-0.5)$) {};
      \node[defnode,scale=0.5] (B34) at ($(B3)!0.5!(B4)$) {};

      \node[defnode,scale=0.5] (AB1) at ($(A1)!0.5!(B1)$) {};
      \node[defnode,scale=0.5] (AB4) at ($(A4)!0.5!(B4)$) {};

      \draw (A1) -- node[gatelabel, pos=0.28] {$00;\triangle$} (X1);
      \draw (A1) -- node[gatelabel, pos=0.28] {$01;\triangle$} (A12);
      \draw (A1) -- node[gatelabel, pos=0.04] {$10;\triangle$} (AB1);

      \draw (A2) -- node[gatelabel, pos=0.28] {$01;\triangle$} (A12);
      \draw (A2) -- node[gatelabel, pos=0.28] {$10;\triangle$} (A23);

      \draw (A3) -- node[gatelabel, pos=0.28] {$10;\triangle$} (A23);
      \draw (A3) -- node[gatelabel, pos=0.28] {$01;\triangle$} (A34);

      \draw (A4) -- node[gatelabel, pos=0.28] {$00;\triangle$} (Y1);
      \draw (A4) -- node[gatelabel, pos=0.28] {$01;\triangle$} (A34);
      \draw (A4) -- node[gatelabel, pos=0.04] {$10;\triangle$} (AB4);

      \draw (B1) -- node[gatelabel, pos=0.28] {$00;\triangle$} (X2);
      \draw (B1) -- node[gatelabel, pos=0.28] {$01;\triangle$} (B12);
      \draw (B1) -- node[gatelabel, pos=0.04] {$10;\triangle$} (AB1);

      \draw (B2) -- node[gatelabel, pos=0.28] {$01;\triangle$} (B12);
      \draw (B2) -- node[gatelabel, pos=0.28] {$00;\triangle$} (B23a);
      \draw (B2) -- node[gatelabel, pos=0.28] {$11;\triangle$} (B23b);

      \draw (B3) -- node[gatelabel, pos=0.28] {$01;\triangle$} (B34);
      \draw (B3) -- node[gatelabel, pos=0.28] {$00;\triangle$} (B23a);
      \draw (B3) -- node[gatelabel, pos=0.28] {$11;\triangle$} (B23b);

      \draw (B4) -- node[gatelabel, pos=0.28] {$00;\triangle$} (Y2);
      \draw (B4) -- node[gatelabel, pos=0.28] {$01;\triangle$} (B34);
      \draw (B4) -- node[gatelabel, pos=0.04] {$10;\triangle$} (AB4);
    \end{tikzpicture}
  \end{center}
  Note that the four visible leaves have been given their own new top-level labels $X1$, $X2$, $Y1$, $Y2$, which satisfy $X1 = A1;00;\triangle$, $X2 = B1;00;\triangle$, $Y1 = A4;00;\triangle$, $Y2 = B4;00;\triangle$.
\end{definition}
It is not straightforward to enumerate the leaves of $\smagate_s$ except to say that they are all the leaves of the eight copies of $\agg_s$ except those appearing as gray nodes in the diagram.

As before our first goal is to show that we can build this diagram starting from a single copy of $\aggregate_s$, and hence from $\gc_s$.

\begin{lemma}%
  \label{lem:build-agate}
  Let $s \ge 2$, and set $\gamma(X1)=((00;\triangle),0)$ and $\gamma(Y1)=((00;\triangle),1)$.
  Then $\agg_s \entails^3_{\gamma} \smagate_s$.
\end{lemma}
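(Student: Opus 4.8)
The plan is to realise $\smagate_s$ as the output of three Cauchy--Schwarz self‑joining steps applied to $\agg_s$ (followed by one relabelling morphism, a $k=0$ step), by repeatedly exploiting the reflection symmetries of the target diagram. First I would record the symmetries visible in Definition~\ref{def:agate}: swapping the sub‑gates $A1\leftrightarrow A4$, $A2\leftrightarrow A3$, $B1\leftrightarrow B4$, $B2\leftrightarrow B3$ is an automorphism of the underlying diagram, exchanging the wires $A12\leftrightarrow A34$, $B12\leftrightarrow B34$, $AB1\leftrightarrow AB4$ and fixing the three wires $A23$, $B23a$, $B23b$. Hence cutting along those three fixed wires — i.e.\ the fusions $A2;10;\triangle\sim A3;10;\triangle$, $B2;00;\triangle\sim B3;00;\triangle$, $B2;11;\triangle\sim B3;11;\triangle$ — exhibits $\smagate_s$, up to strong isomorphism, as $H+_{\{A2;10;\triangle,\ B2;00;\triangle,\ B2;11;\triangle\}}H$, where $H$ is the sub‑diagram on $\{A1,A2,B1,B2\}$ with its internal wires $A12$, $B12$, $AB1$. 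Applying the same observation to $H$ — which has the reflection $A1\leftrightarrow B1$, $A2\leftrightarrow B2$ fixing $AB1$ — presents $H$ as $G'+_{\{A1;10;\triangle\}}G'$, where $G'$ is the sub‑diagram on $\{A1,A2\}$ with the single internal wire $A12=(A1;01;\triangle\sim A2;01;\triangle)$; and $G'$ is literally $\agg_s+_{\{01;\triangle\}}\agg_s$. Every leaf being fused lives in $\FF_p$ (each is a copy of the $\triangle$‑leaf of the underlying $\gc_s$), so all three self‑joinings are legal.

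Unwinding this, the $\entails$ chain I would write down is: $\CS(\{01;\triangle\})$ to pass from $\agg_s$ to $G'$; then $\CS(\{A1;10;\triangle\})$ to pass from $G'$ to $H$; then $\CS(\{A2;10;\triangle,\ B2;00;\triangle,\ B2;11;\triangle\})$ to pass from $H$ to a diagram $D_3$ which is strongly isomorphic to $\smagate_s$; then a morphism step for the relabelling isomorphism $\smagate_s\to D_3$. This is $k=0+1+1+1=3$ in total. The four exposed leaves $X1=A1;00;\triangle$, $X2=B1;00;\triangle$, $Y1=A4;00;\triangle$, $Y2=B4;00;\triangle$ are precisely the unfused $\triangle$‑pins of the corner sub‑gates, so this matches Definition~\ref{def:agate}; the only thing to verify here is that the remaining fusion pattern of $D_3$ agrees with the picture there, which is a mechanical check.

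Finally I would compute $\gamma$ by tracking which leaves carry translates versus conjugate‑translates (the $\{0,1\}$ tags of Definition~\ref{def:logic-notation}). In $\CS(\{01;\triangle\})$ the left copy of $\agg_s$, which I take to become $A1$, carries tag $0$ and the right copy (becoming $A2$) tag $1$; in $\CS(\{A1;10;\triangle\})$ the left copy of $G'$ (becoming $\{A1,A2\}$) keeps all tags while the right copy (becoming $\{B1,B2\}$) flips them; in the third step the left copy of $H$ (becoming $\{A1,A2,B1,B2\}$) keeps all tags while the right copy (becoming $\{A3,A4,B3,B4\}$ under the reflection) flips them. Composing the three partial functions as in Definition~\ref{def:logic-notation}(c): the leaf $X1=A1;00;\triangle$ lies on the ``left'' side of all three steps, so it pulls back to $00;\triangle$ with tag $0+0+0=0$, whereas $Y1=A4;00;\triangle$ is the image of $A1;00;\triangle$ under the outermost reflection and hence pulls back to $00;\triangle$ with tag $0+0+1=1$. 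Discarding all other values of $\gamma$ (Definition~\ref{def:logic-notation}(d)) yields exactly $\gamma(X1)=((00;\triangle),0)$ and $\gamma(Y1)=((00;\triangle),1)$, as claimed. The one genuinely fiddly point — and the main thing to be careful about — is this orientation bookkeeping: one must fix, in each of the three self‑joinings, which half counts as the ``left'' copy so that $X1$ rather than $Y1$ ends up with tag $0$; once the orientations are chosen consistently, everything else is routine label‑chasing and a comparison of fusion patterns with Definition~\ref{def:agate}.
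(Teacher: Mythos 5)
Your proof is correct and is essentially identical to the paper's: you identify exactly the same three fusion sets and hence the same three $\CS$ steps — $\CS(\{01;\triangle\})$, then $\CS(\{A1;10;\triangle\})$ on the resulting $G'$, then $\CS(\{A2;10;\triangle,\ B2;00;\triangle,\ B2;11;\triangle\})$ on $H$ — only framed as ``cutting'' the target along its reflection axes rather than ``building'' outward from $\agg_s$, with the final relabelling stated as an explicit $k=0$ morphism step where the paper folds it into the words ``after relabelling.'' Your $\gamma$ bookkeeping (all-left for $X1$, one flip at the outermost step for $Y1$) matches the paper's relabelling $A1 = L;L;L$, $A4 = R;L;L$.
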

\begin{proof}%
  Starting with $\agg_s$, we first apply
  $\CS(\{(01;\triangle)\})$
  to obtain
  \begin{center}
    \begin{tikzpicture}[
        defnode/.style={rectangle,draw,fill=lightgray,inner sep=2pt,outer sep=0pt,minimum size=10pt},
        gatelabel/.style={rectangle, inner sep=2pt, outer sep=0pt, fill=white,scale=0.6},
        subnode/.style={rounded rectangle, draw, inner sep=5pt, outer sep=0pt,minimum size=15pt},
        leaf/.style={rectangle,draw,fill=leafgreen,inner sep=2pt,outer sep=0pt,minimum size=10pt},
        scale=0.8
      ]

      \node[subnode,scale=0.8] (A1) at (0,0) {$L : \agg_s$};
      \node[subnode,scale=0.8] (A2) at (4,0) {$R : \agg_s$};
      \node[defnode,scale=0.5] (A12) at ($(A1)!0.5!(A2)$) {};
      \draw (A1) -- node[gatelabel, pos=0.28] {$01;\triangle$} (A12);
      \draw (A2) -- node[gatelabel, pos=0.28] {$01;\triangle$} (A12);
    \end{tikzpicture}
  \end{center}
  Next we apply $\CS(\{(L;10;\triangle)\})$ to obtain
  \begin{center}
    \begin{tikzpicture}[
        defnode/.style={rectangle,draw,fill=lightgray,inner sep=2pt,outer sep=0pt,minimum size=10pt},
        gatelabel/.style={rectangle, inner sep=2pt, outer sep=0pt, fill=white,scale=0.6},
        subnode/.style={rounded rectangle, draw, inner sep=5pt, outer sep=0pt,minimum size=15pt},
        leaf/.style={rectangle,draw,fill=leafgreen,inner sep=2pt,outer sep=0pt,minimum size=10pt},
        scale=0.8
      ]

      \node[subnode,scale=0.8] (A1) at (0,0) {$L;L : \agg_s$};
      \node[subnode,scale=0.8] (A2) at (4,0) {$L;R : \agg_s$};
      \node[subnode,scale=0.8] (B1) at (0,2.5) {$R;L : \agg_s$};
      \node[subnode,scale=0.8] (B2) at (4,2.5) {$R;R : \agg_s$};

      \node[defnode,scale=0.5] (A12) at ($(A1)!0.5!(A2)$) {};
      \node[defnode,scale=0.5] (B12) at ($(B1)!0.5!(B2)$) {};
      \node[defnode,scale=0.5] (AB1) at ($(A1)!0.5!(B1)$) {};

      \draw (A1) -- node[gatelabel, pos=0.28] {$01;\triangle$} (A12);
      \draw (A1) -- node[gatelabel, pos=0.04] {$10;\triangle$} (AB1);
      \draw (A2) -- node[gatelabel, pos=0.28] {$01;\triangle$} (A12);

      \draw (B1) -- node[gatelabel, pos=0.28] {$01;\triangle$} (B12);
      \draw (B1) -- node[gatelabel, pos=0.04] {$10;\triangle$} (AB1);
      \draw (B2) -- node[gatelabel, pos=0.28] {$01;\triangle$} (B12);
    \end{tikzpicture}
  \end{center}
  Finally, we apply $\CS(\{(L;R;10;\triangle), (R;R;00;\triangle), (R;R;11;\triangle)\})$.
  After relabelling
  \begin{align*}
    L;L;L &\leadsto A1 & L;R;L &\leadsto B1 \\
    L;L;R &\leadsto A2 & L;R;R &\leadsto B2 \\
    R;L;R &\leadsto A3 & R;R;R &\leadsto B3 \\
    R;L;L &\leadsto A4 & R;R;L &\leadsto B4 
  \end{align*}
  we obtain exactly the diagram $\smagate_s$ shown above, as required.
\end{proof}

There are again a few different gate structures on the $\smagate_s$ diagram, depending on whether it is going to appear at the beginning, middle or end (or both the beginning and the end) of the circuit depicted in Figure~\ref{fig:bilinear}.

\begin{definition}%
  \label{def:agate-gate}
  We describe four gates $\smagate_s^{U}$, where $U$ is any subset of $\{\Alpha,\Omega\}$.
  (Here $\Alpha$ is pronounced ``initial'' and $\Omega$ is pronounced ``final''.)

  In all cases we take $\cR = [s]$.
  Then the pins of $\smagate_s^{U}$ are the subset of
  $\{X1,X2,Y1,Y2\}$
  where $X2$ is included when $\Alpha \notin U$ and $Y2$ is included when $\Omega \notin U$.
  In particular, $|\pins| = 4 - |U|$.

  We write $\smagate_s$, $\smagate_s^{\Alpha}$ and $\smagate_s^{\Alpha,\Omega}$ in place of $\smagate_s^{\emptyset}$, $\smagate_s^{\{\Alpha\}}$, $\smagate_s^{\{\Alpha,\Omega\}}$, etc..
\end{definition}

Indeed, in Figure~\ref{fig:bilinear},
in the middle ``block'' the top rail has connections to both the left and the right, corresponding to the pins $X2$ and $Y2$ respectively.
In the first block the leftwards connection $X2$ is not needed, and similarly in the last block the rightwards connection $Y2$ is not needed.

In the same way that the diagram $\smagate_s$ is made up of eight sub-diagrams $A1,A2,\dots,B4$ of type $\aggregate_s$, it is useful to think of the gate $\smagate_s^{U}$ as being composed of eight sub-gates.
In other words, for each sub-diagram\footnote{%
  \label{footnote:subgate}%
  Recall that induced sub-diagrams such as $\smagate_s[A1]$ are not strictly speaking copies of $\agg_s$, but rather copies of $\agg_s$ where some leaves have become non-leaves.
  For example, $\smagate_s[A1]$ is a copy of $\agg_s\bigl(\{(01;\triangle),(10;\triangle)\} \leadsto \nonleafs\bigr)$.
  When we think of $A1$ as a sub-gate, the gate structure is relative to an \emph{intact} copy of the original diagram $\agg_s$.
  This is logically the right thing to do---a pin which has been joined to another pin should still be treated as a pin, even though it is no longer a leaf---but is very awkward notationally. 
}
  $A1 \colon \agg_s$, $\dots$, $B4 \colon \agg_s$, we will assign a gate structure $\agg_s^P$ for various choices of $P \subseteq \{0,1\}^2$.

The choice is essentially forced by considering the available pins and toggles.
We want every toggle of $\smagate_s^{U}$ to be a toggle of exactly one sub-gate, so that we can partition the toggles $\toggles^{\smagate_s^{U}}$ by combining partitions of the sub-gates.
Equivalently, an element of $ij;\triangle$ should be a pin in some copy $A1,\dots,B4$ when the corresponding vertex is either one of the $4-|U|$ pins of $\smagate_s^{U}$, or was previously fused to another vertex in a joining operation. 

Specifically, we consider $A1,A2,\dots,B4$ as gates as follows:---
\begin{align*}
  A1 &: \agg_s^{\{00,01,10\}} & B1 &: \agg_s^{\{01,10\} \cup \{00 \text{ if } \Alpha \notin U\}}  \\
  A2 &: \agg_s^{\{01,10\}}    & B2 &: \agg_s^{\{00,01,11\}}  \\
  A3 &: \agg_s^{\{01,10\}}    & B3 &: \agg_s^{\{00,01,11\}}  \\
  A4 &: \agg_s^{\{00,01,10\}} & B4 &: \agg_s^{\{01,10\} \cup \{00 \text{ if } \Omega \notin U\}} 
\end{align*}

We now consider assignments.
For a non-initial gate, $\smagate_s$ or $\smagate_s^{\Omega}$, the assignment will ultimately depend on whether the relevant bit of the integer $a$ is $0$ or $1$.
Hence we will exhibit $2$ assignments, corresponding to the values $0$ and $1$ respectively.
For an initial gate $\smagate_s^{A}$ or $\smagate_s^{A,\Omega}$, we will actually give $8$ assignments, depending on the first $2$ bits of $a$ and also its sign.

We begin with the simplest case of the gate $\smagate_s$ with four pins.
As usual we first define some auxiliary linear data.
\begin{definition}%
  \label{def:ags}
  Define four linear data $\ag(i,A)$ and $\ag(i,B)$ for $i \in \{0,1\}$, all having $I = \{X1,X2,Y1,Y2\}$, $V = \FF_p^2$ and $W_j=\FF_p$ for $j \in I$, as follows.
  Arranging the diagrams as
  \begin{center}
    \begin{tikzpicture}[
        defnode/.style={rectangle,draw,inner sep=4pt,outer sep=0pt,minimum size=15pt},
        mydot/.style={circle,fill,inner sep=0.5pt},
        leaf/.style={defnode,fill=leafgreen},
        toggle/.style={defnode,fill=leafgreen}
      ]
      \node[defnode,scale=0.8] (U) at (0, 0) {$\diamond$};
      \node[leaf,scale=0.6]    (UX00) at ($(U) + (225:1)$) {$X1$};
      \node[leaf,scale=0.6]    (UX01) at ($(U) + (135:1)$) {$X2$};
      \node[leaf,scale=0.6]    (UX10) at ($(U) + (315:1)$) {$Y1$};
      \node[leaf,scale=0.6]    (UX11) at ($(U) + (045:1)$) {$Y2$};
      \draw[-stealth] (U) -- (UX00);
      \draw[-stealth] (U) -- (UX01);
      \draw[-stealth] (U) -- (UX10);
      \draw[-stealth] (U) -- (UX11);
    \end{tikzpicture}
  \end{center}
  the linear maps $V \to W_i$ are given respectively by
  \begin{center}
    \begin{tikzpicture}[
        defnode/.style={rectangle,draw,inner sep=4pt,outer sep=0pt,minimum size=15pt},
        mydot/.style={circle,fill,inner sep=0.5pt},
        leaf/.style={defnode,fill=leafgreen},
        toggle/.style={defnode,fill=leafgreen}
      ]
      \begin{scope}[shift={(0,0)}]
        \node[defnode,scale=0.8] (U) at (0, 0) {$(x,y)$};
        \node[leaf,scale=0.6]    (UX00) at ($(U) + (225:1)$) {$x$};
        \node[leaf,scale=0.6]    (UX01) at ($(U) + (135:1)$) {$2y$};
        \node[leaf,scale=0.6]    (UX10) at ($(U) + (315:1)$) {$x$};
        \node[leaf,scale=0.6]    (UX11) at ($(U) + (045:1)$) {$y$};
        \draw[-stealth] (U) -- (UX00);
        \draw[-stealth] (U) -- (UX01);
        \draw[-stealth] (U) -- (UX10);
        \draw[-stealth] (U) -- (UX11);

        \node["{$\ag(0,A)$}" {scale=0.9}] at (0, -2) {};
      \end{scope}
      \begin{scope}[shift={(3,0)}]
        \node[defnode,scale=0.8] (U) at (0, 0) {$(x,y)$};
        \node[leaf,scale=0.6]    (UX00) at ($(U) + (225:1)$) {$x$};
        \node[leaf,scale=0.6]    (UX01) at ($(U) + (135:1)$) {$y$};
        \node[leaf,scale=0.6]    (UX10) at ($(U) + (315:1)$) {$x$};
        \node[leaf,scale=0.6]    (UX11) at ($(U) + (045:1)$) {$2y$};
        \draw[-stealth] (U) -- (UX00);
        \draw[-stealth] (U) -- (UX01);
        \draw[-stealth] (U) -- (UX10);
        \draw[-stealth] (U) -- (UX11);

        \node["{$\ag(0,B)$}" {scale=0.9}] at (0, -2) {};
      \end{scope}
      \begin{scope}[shift={(6,0)}]
        \node[defnode,scale=0.8] (U) at (0, 0) {$(x,y)$};
        \node[leaf,scale=0.6]    (UX00) at ($(U) + (225:1)$) {$x$};
        \node[leaf,scale=0.6]    (UX01) at ($(U) + (135:1)$) {$2y-2x$};
        \node[leaf,scale=0.6]    (UX10) at ($(U) + (315:1)$) {$x$};
        \node[leaf,scale=0.6]    (UX11) at ($(U) + (045:1)$) {$y$};
        \draw[-stealth] (U) -- (UX00);
        \draw[-stealth] (U) -- (UX01);
        \draw[-stealth] (U) -- (UX10);
        \draw[-stealth] (U) -- (UX11);

        \node["{$\ag(1,A)$}" {scale=0.9}] at (0, -2) {};
      \end{scope}
      \begin{scope}[shift={(9,0)}]
        \node[defnode,scale=0.8] (U) at (0, 0) {$(x,y)$};
        \node[leaf,scale=0.6]    (UX00) at ($(U) + (225:1)$) {$x$};
        \node[leaf,scale=0.6]    (UX01) at ($(U) + (135:1)$) {$y$};
        \node[leaf,scale=0.6]    (UX10) at ($(U) + (315:1)$) {$x+2y$};
        \node[leaf,scale=0.6]    (UX11) at ($(U) + (045:1)$) {$2y$};
        \draw[-stealth] (U) -- (UX00);
        \draw[-stealth] (U) -- (UX01);
        \draw[-stealth] (U) -- (UX10);
        \draw[-stealth] (U) -- (UX11);

        \node["{$\ag(1,B)$}" {scale=0.9}] at (0, -2) {};
      \end{scope}
    \end{tikzpicture}
  \end{center}
\end{definition}

\begin{lemma}%
  \label{lem:agate-assign-1}
  Let $s \ge 2$ and $i \in \{0,1\}$.
  Then there exists an assignment $\Middle(i)$ of $\smagate_s$ with
  \begin{itemize}
    \item $D_0 = \trivial(\{X1,X2,Y1,Y2\})$;
    \item $D_1 = \ag(i,A)$;
    \item $D_2 = \ag(i,B)$; and
    \item $D_r = \const(\{X1,X2,Y1,Y2\})$ for $3 \le r \le s$.
  \end{itemize}
\end{lemma}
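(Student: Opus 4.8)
The plan is to realize $\Middle(i)$ by glueing together assignments of the eight constituent AggreGate sub-gates of $\smagate_s$, using the patching lemma, Lemma~\ref{lem:patching}. Recall (Definition~\ref{def:agate} and the gate-structure table before the statement) that, in the case $U=\emptyset$ treated here, $\smagate_s$ carries sub-gates $A1\colon\agg_s^{\{00,01,10\}}$, $A2,A3\colon\agg_s^{\{01,10\}}$, $A4,B1,B4\colon\agg_s^{\{00,01,10\}}$ and $B2,B3\colon\agg_s^{\{00,01,11\}}$, that the eight vertex sets of these blocks cover $X^{\smagate_s}$, and that this cover also covers $E^{\smagate_s}$ since every edge of $\smagate_s$ lies inside a single $\agg_s$-block. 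By Lemma~\ref{lem:patching}, a morphism with fixed source into $\smagate_s[\nonleafs\cup\cP\cup S_r]$ is then exactly a family of morphisms into the eight blocks agreeing on overlaps; and the only overlaps are the nine fused non-leaves, namely the inter-rail vertices $AB1,AB4$ (i.e.\ $A1;10;\triangle=B1;10;\triangle$ and $A4;10;\triangle=B4;10;\triangle$) together with the seven intra-rail connecting vertices $A12,A23,A34$ and $B12,B23a,B23b,B34$. Inside each block the required piece is precisely what Section~\ref{sub:aggre} provides, treating a pin that has been fused to another pin as still a pin (cf.\ Footnote~\ref{footnote:subgate}).

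First I would fix the partition $S\colon\toggles^{\smagate_s}\to[s]$. For each inert mode $3\le r\le s$, give every sub-gate its $\sumconst_r$-assignment (Corollary~\ref{cor:sumconst-2}), whose mode-$r$ toggle block is $\{\omega;r\colon\omega\in\{0,1\}^2\}$ inside that block; declare $S_r$ to be the disjoint union of these eight blocks, with the glued morphism the evident $\const$-morphism. For the two live modes, the heart of the construction is the parallel pair $B2,B3$ --- joined to one another by the two wires $B23a,B23b$, mirroring the two parallel wires between the $+,=$ and $=,+$ gates in Figure~\ref{fig:bilinear}. I would assign $B2$ its $\sumconst_1$-assignment and $B3$ its $\sumconst_2$-assignment, so that in mode $1$ the pin $B12$ carries the sum of $B23a$ and $B23b$ while $B34$ is held equal to them, in mode $2$ the roles are reversed, and together they realize the bilinear scalar-transfer $B(2u,v)=B(u,2v)$. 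The remaining blocks $B1,B4$ and $A1,\dots,A4$ would get $\const$-, $\opsum$- or $\opcross^\tau$-assignments (Lemma~\ref{lem:sum-const-assignment}, Lemma~\ref{lem:cross}, Corollary~\ref{cor:cross}, using the three-pin variants $\crs^{\tau}_{\strikeout{\eta}},\crs^{\tau}_{\overline{\eta}}$ of Definition~\ref{def:cross} wherever a block has only three pins) chosen to hook the doubler $B2,B3$ up to the external pins $X2,Y2$, to route the A-rail value to $AB1,AB4$, and --- when $i=1$ --- to introduce the extra coupling between the rails realizing the ``add one'' step; the correct choice is dictated by matching against the target data $\ag(i,A)$ and $\ag(i,B)$ of Definition~\ref{def:ags}. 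The blocks $S_1,S_2$ are then the disjoint unions of the mode-$1$ and mode-$2$ toggle blocks of the chosen assignments, and one checks $(S_r)_{r\in[s]}$ partitions $\toggles^{\smagate_s}$: each toggle $\sigma;\omega;\triangle$ (a non-pin, hence non-fused, leaf of block $\sigma$) or $\sigma;\omega;r$ with $r\ge2$ lies in exactly one block, whose assignment places it in exactly one mode.

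Next I would assemble the morphisms. Fix $r\in[s]\cup\{0\}$; each chosen block assignment supplies a diagram $D_r^\sigma$ and a morphism $\cM_r^\sigma$ into the $\sigma$-block respecting the pins of $\sigma$. Exactly as in the proofs of Lemma~\ref{lem:sum-const-assignment} and Lemma~\ref{lem:cross} --- using restrictions, co-restrictions and composites (Remark~\ref{rem:restriction-morphs}, Remark~\ref{rem:cores}) --- each $D_r^\sigma$ receives an evident morphism from a single diagram $D_r$ with leaves $\{X1,X2,Y1,Y2\}$, and precomposing gives morphisms $D_r\to\smagate_s[\dots]$ with common source. To invoke Lemma~\ref{lem:patching} I must make these agree at the nine fused non-leaves; on the leaves there is nothing to check (every block morphism is the identity on its pins), so the real work is pinning down the free parametrizations of the auxiliary data $\opsum(P),\const(P),\crs^{\tau},\crs^{\tau}_{\strikeout{\eta}},\crs^{\tau}_{\overline{\eta}},\trivial(P)$, together with the maps $D_r\to D_r^\sigma$, so that the linear map into $AB1$, into $AB4$, and into each intra-rail connecting vertex, computed from its two incident blocks, coincide. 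Once a consistent choice is exhibited, Lemma~\ref{lem:patching} produces $\cM_r\colon D_r\to\smagate_s[\nonleafs\cup\cP\cup S_r]$; it respects every pin and has $\alpha^{\cM_r}(\ell)=\zeroi$ on $S_r$ because each $\cM_r^\sigma$ does; and computing the compatible tuples of $D_r$ --- equivalently, the constraints the eight blocks jointly impose on $(X1,X2,Y1,Y2)$ --- identifies $D_r$ up to strong isomorphism with $\trivial(\{X1,X2,Y1,Y2\})$ when $r=0$, with $\ag(i,A)$ when $r=1$, with $\ag(i,B)$ when $r=2$, and with $\const(\{X1,X2,Y1,Y2\})$ when $3\le r\le s$, which is the claim.

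The main obstacle will be precisely this consistency around the loop $A1-A2-A3-A4-B4-B3-B2-B1-A1$ of blocks: since the A-rail and the B-rail are joined at both ends (sharing $AB1$ and $AB4$), the maps into $AB1$ and $AB4$ coming from the A-side and from the B-side must agree, and this is not automatic --- it is what forces the A-rail assignments and the doubler assignments on the B-rail to be lined up coherently, and it is where the value of $i$, and the asymmetry between $\ag(i,A)$ and $\ag(i,B)$, genuinely enter. Once a coherent choice is in hand, checking~\eqref{eq:diag-morph} for each block morphism and carrying out the final identification of $D_r$ is routine linear algebra of the kind already done throughout Section~\ref{sub:aggre}, and is in principle machine-checkable.
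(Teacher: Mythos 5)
Your proposal is correct and follows essentially the same strategy as the paper: fix a single assignment (from $\sumconst_r$ or $\opcross^\tau_{i,j}$) on each of the eight $\aggregate_s$-blocks so that the partitions of toggles are compatible, identify $B2 \colon \sumconst_1$ and $B3 \colon \sumconst_2$ as the ``doubler'', and patch the per-block morphisms via Lemma~\ref{lem:patching} after checking consistency at the nine shared non-leaves. One minor imprecision: you speak of giving each sub-gate its $\sumconst_r$-assignment ``for each inert mode $3\le r\le s$'', but an assignment is a single global choice per sub-gate (not one per mode); what you mean --- and what the paper does --- is that every admissible choice ($\sumconst_1$, $\sumconst_2$, $\opcross^\tau_{1,2}$, $\opcross^\tau_{2,1}$) already places $\{\omega;r\}$ into part $S_r$ with $\const$ behavior for all $r\ge 3$, so the modes $3\le r\le s$ take care of themselves.
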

We give some brief explanation, again with reference to Figure~\ref{fig:bilinear} and in particular the middle block.
The value $i \in \{0,1\}$ will represent a certain bit of the integer $a$; in Figure~\ref{fig:bilinear} it is bit $4$ and $i=1$.
The operation is easiest to consider in the mode $r=2$, corresponding to the data $\ag(0,B)$ or $\ag(1,B)$, and to the second $h'$ coordinates in Figure~\ref{fig:bilinear}.
Then, the effect of the circuit is to (i) double the value $y=2h'$ on the top ``rail'' to give $4h'$, and (ii) if $i=1$, add this doubled value to the bottom rail $x=h'$ to give $5h'$.
If $i=0$, we would omit step (ii) and leave the bottom rail $x$ unchanged.
(In the language of Figure~\ref{fig:bilinear}, we would replace the third vertical line with a dotted line and its two endpoints with null gates.)

The mode $r=1$, corresponding to the linear data $\ag(0,A)$ or $\ag(1,A)$ and the first coordinate in Figure~\ref{fig:bilinear}, performs a kind of dual calculation.
This is most easily thought of from \emph{right to left}: (i) subtract the bottom-right rail value $x=h$ from the top right rail $y=-3h$, giving $-3h$, and then (ii) double the value $y-x$ to give $2(y-x)=-6h$ on the top-left rail.

These correspond to the following tensor identities when $s=2$: when $i=0$,
\[
  x_1 \otimes x_2 - (2y_1) \otimes y_2 = x_1 \otimes x_2 - y_1 \otimes (2y_2)
\]
or when $i=1$,
\[
  x_1 \otimes x_2 - (2y_1-2x_1) \otimes y_2 = x_1 \otimes (x_2+2y_2) - y_1 \otimes (2 y_2).
\]
When $s>2$, simply tensor everything with a constant value $x_3 \otimes \dots \otimes x_s$.

\begin{proof}[Proof of Lemma~\ref{lem:agate-assign-1}]%
  We begin by describing the partitions in terms of known assignments of the sub-gates $A1,\dots,B4$.
  Specifically, consider the following assignments.
  \begin{equation}
    \label{eq:agate-assignments}
    \begin{aligned}
      A1 &: \opcross^{01;\overline{11}}_{1,2} & B1 &: \opcross^{01;\overline{11}}_{1,2}  \\
      A2 &: \sumconst_2 &                       B2 &: \sumconst_1 \\
      A3 &: \sumconst_2 &                       B3 &: \sumconst_2 \\
      A4 &:
      \begin{rcases}
        \begin{dcases}
          \opcross^{01;\overline{11}}_{2,1} &\colon i=0 \\
          \sumconst_2 &\colon i=1
        \end{dcases}
      \end{rcases} &
      B4 &: \sumconst_1      .
    \end{aligned}
  \end{equation}
  We refer to the partitions defined by each of these assignments as
  \[
    \toggles^{\ell} = S^{\ell}_1 \cup \dots \cup S^{\ell}_s
  \]
  for $\ell \in \{A1,A2,\dots,B4\}$.
  Similarly we write $D_r^\ell$ and $\cM_r^\ell$ for the other data in the assignment.

  We can then define a partition of $\toggles^{\smagate_s}$ by combining these:
  that is, for $r \in [s]$,
  \[
    S_r = \bigcup_{\ell \in \{A1,A2,\dots,B4\}} S_r^{\ell}.
  \]
  Since
  \[
    \toggles^{\smagate_s} = \bigcup_{\ell \in \{A1,A2,\dots,B4\}} \toggles^\ell
  \]
  by our choice of gate structure on $A1,\dots,B4$, this is a valid partition of the toggles of $\smagate_s$.

  As usual, it is useful to summarize the construction of the morphisms $\cM_1,\cM_2,\dots$ pictorially.
  Starting with the case $i=1$ and considering
  \[
  \cM_2 \colon \ag(1,B) \to \smagate_s[\nonleafs \cup \pins \cup S_2],
  \]
  where $\cP = \cP^{\smagate_s} = \{X1,X2,Y1,Y2\}$,
  this is shown
  in Figure~\ref{fig:agate-1b}.
  We have annotated the subdiagram boxes $A1,\dots,B4$ with a (sometimes slightly abbreviated) description of what their assignment from~\eqref{eq:agate-assignments} looks like when $r=2$; i.e., with a description of the domain $D_2^\ell$ of the morphism $\cM_2^\ell$.

  \begin{figure}[htbp]
    \begin{center}
      \begin{tikzpicture}[
          defnode/.style={rectangle,fill=lightgray,draw,inner sep=2pt,outer sep=0pt,minimum size=10pt},
          gatelabel/.style={ellipse, inner sep=0.5pt, outer sep=0pt, fill=white,scale=0.6},
          subnode/.style={rounded rectangle, draw, inner sep=5pt, outer sep=0pt,minimum size=15pt},
          leaf/.style={rectangle,draw,fill=leafgreen,inner sep=2pt,outer sep=0pt,minimum size=10pt},
          scale=0.8
        ]

        \node[subnode,scale=0.8] (A1) at (0,0)    {$\crs^{01}_{\strikeout{11}}$};
        \node[subnode,scale=0.8] (A2) at (3.5,0)    {$\opsum$};
        \node[subnode,scale=0.8] (A3) at (7,0)    {$\opsum$};
        \node[subnode,scale=0.8] (A4) at (10.5,0)   {$\opsum$};
        \node[subnode,scale=0.8] (B1) at (0,3)  {$\crs^{01}_{\strikeout{11}}$};
        \node[subnode,scale=0.8] (B2) at (3.5,3)  {$\const$};
        \node[subnode,scale=0.8] (B3) at (7,3)  {$\opsum$};
        \node[subnode,scale=0.8] (B4) at (10.5,3) {$\const$};

        \node[leaf,scale=0.7] (X1) at ($(A1)+(-2,0)$) {$x$};
        \node[leaf,scale=0.7] (X2) at ($(B1)+(-2,0)$) {$y$};
        \node[leaf,scale=0.7] (Y1) at ($(A4)+(2,0)$)  {$x+2y$};
        \node[leaf,scale=0.7] (Y2) at ($(B4)+(2,0)$)  {$2y$};

        \node[defnode,scale=0.7] (A12) at ($(A1)!0.5!(A2)$) {$x$};
        \node[defnode,scale=0.7] (A23) at ($(A2)!0.5!(A3)$) {$-x$};
        \node[defnode,scale=0.7] (A34) at ($(A3)!0.5!(A4)$) {$x$};

        \node[defnode,scale=0.7] (B12) at ($(B1)!0.5!(B2)$) {$y$};
        \node[defnode,scale=0.7] (B23a) at ($(B2)!0.5!(B3) + (0,0.5)$) {$y$};
        \node[defnode,scale=0.7] (B23b) at ($(B2)!0.5!(B3) + (0,-0.5)$) {$y$};
        \node[defnode,scale=0.7] (B34) at ($(B3)!0.5!(B4)$) {$2y$};

        \node[defnode,scale=0.7] (AB1) at ($(A1)!0.5!(B1)$) {$0$};
        \node[defnode,scale=0.7] (AB4) at ($(A4)!0.5!(B4)$) {$2y$};

        \draw (A1) -- node[gatelabel, pos=0.28] {$00;\triangle$} (X1);
        \draw (A1) -- node[gatelabel, pos=0.28] {$01;\triangle$} (A12);

        \draw (A2) -- node[gatelabel, pos=0.28] {$01;\triangle$} (A12);
        \draw (A2) -- node[gatelabel, pos=0.28] {$10;\triangle$} (A23);

        \draw (A3) -- node[gatelabel, pos=0.28] {$10;\triangle$} (A23);
        \draw (A3) -- node[gatelabel, pos=0.28] {$01;\triangle$} (A34);

        \draw (A4) -- node[gatelabel, pos=0.28] {$00;\triangle$} (Y1);
        \draw (A4) -- node[gatelabel, pos=0.28] {$01;\triangle$} (A34);

        \draw (B1) -- node[gatelabel, pos=0.28] {$00;\triangle$} (X2);
        \draw (B1) -- node[gatelabel, pos=0.28] {$01;\triangle$} (B12);

        \draw (B2) -- node[gatelabel, pos=0.28] {$01;\triangle$} (B12);
        \draw (B2) -- node[gatelabel, pos=0.28] {$00;\triangle$} (B23a);
        \draw (B2) -- node[gatelabel, pos=0.28] {$11;\triangle$} (B23b);

        \draw (B3) -- node[gatelabel, pos=0.28] {$01;\triangle$} (B34);
        \draw (B3) -- node[gatelabel, pos=0.28] {$00;\triangle$} (B23a);
        \draw (B3) -- node[gatelabel, pos=0.28] {$11;\triangle$} (B23b);

        \draw (B4) -- node[gatelabel, pos=0.28] {$00;\triangle$} (Y2);
        \draw (B4) -- node[gatelabel, pos=0.28] {$01;\triangle$} (B34);

        \draw (A1) -- node[gatelabel, pos=0.08] {$10;\triangle$} (AB1);
        \draw (A4) -- node[gatelabel, pos=0.08] {$10;\triangle$} (AB4);
        \draw (B1) -- node[gatelabel, pos=0.08] {$10;\triangle$} (AB1);
        \draw (B4) -- node[gatelabel, pos=0.08] {$10;\triangle$} (AB4);
      \end{tikzpicture}
    \end{center}
    \caption{A description of the morphism $\cM_2$ for the assignment $\Middle(1)$.}%
    \label{fig:agate-1b}
  \end{figure}

  Informally, checking the validity of this picture amounts to noting by inspection that every gate is ``doing what it is supposed to do''.
  For example, the gate $A4$ in mode $r=2$ is behaving like a $\opsum$ gate, and the values on its pins do indeed satisfy $(x+2y) = (x) + (2y)$, while the gate $B4$ is behaving like a $\const$ gate and indeed all its pins have the same value $2y$.
  Finally, the gate $A1$ is crossing its $00$ value to $01$ (both $x$) and the zeroed $\strikeout{11}$ pin forces its twin $10$ to zero.

  However, we need to turn this informal check into a formal construction of a morphism $\cM_2$ with the required properties.
  We have at our disposal the various morphisms $\cM_2^\ell$ from the assignments in~\eqref{eq:agate-assignments}.
  To construct $\cM_2$, we must (i) \emph{specialize} these morphisms $\cM_2$ to the values shown in Figure~\ref{fig:agate-1b} (e.g., setting the pins of $A4$ to the specific values $x+2y$, $x$ and $2y$ for consistent variables $(x,y)$ rather than an arbitrary triple $a+b$, $a$ and $b$), and then (ii) patch these together to form a single morphism.
  Our hope is to convince the reader that this process is no more than formal nonsense, and that Figure~\ref{fig:agate-1b} captures the entire argument, but we give full details on this occasion.

  Each morphism $\cM_2^\ell$ has co-domain $\aggregate_s[\nonleafs \cup \pins^\ell \cup S_2^\ell]$.
  However, each sub-diagram\footnote{These could also be denoted $\smagate_s[A1]$, $\smagate_s[A2]$, $\dots$, $\smagate_s[B4]$: see the abuses of notation in Definition~\ref{def:labelling} and Convention~\ref{conv:labelling-other}.} $A1,A2,\dots,B4$ of $\smagate_s$ is almost an isomorphic copy of $\aggregate_s$, where ``almost'' reflects the fact that some vertices which are leaves in $\aggregate_s$ will not be leaves in some of $A1,\dots,B4$ (see Remark~\ref{rem:joining-sub}).
  Hence, using Definition~\ref{def:leaf-non-leaf} and Remark~\ref{rem:leaf-non-leaf-morph},
  we obtain morphisms:---
  \begin{align*}
    \cM_2^{A1} &\colon \crs^{01}_{\strikeout{11}}(\{01,10\} \leadsto \nonleafs)  \to A1\bigl[\nonleafs \cup \pins \cup S_2^{A1}\bigr] \\
    \cM_2^{A2} &\colon \opsum(\{01,10\})(\{01,10\} \leadsto \nonleafs)           \to A2\bigl[\nonleafs \cup \pins \cup S_2^{A2}\bigr] \\
    \cM_2^{A3} &\colon \opsum(\{01,10\})(\{01,10\} \leadsto \nonleafs)           \to A3\bigl[\nonleafs \cup \pins \cup S_2^{A3}\bigr] \\
    \cM_2^{A4} &\colon \opsum(\{00,01,10\})(\{01,10\} \leadsto \nonleafs)        \to A4\bigl[\nonleafs \cup \pins \cup S_2^{A4}\bigr] \\
    \cM_2^{B1} &\colon \crs^{01}_{\strikeout{11}}(\{01,10\} \leadsto \nonleafs)  \to B1\bigl[\nonleafs \cup \pins \cup S_2^{B1}\bigr] \\
    \cM_2^{B2} &\colon \const(\{00,01,11\})(\{00,01,11\} \leadsto \nonleafs)     \to B2\bigl[\nonleafs \cup \pins \cup S_2^{B2}\bigr] \\
    \cM_2^{B3} &\colon \opsum(\{00,01,11\})(\{00,01,11\} \leadsto \nonleafs)     \to B3\bigl[\nonleafs \cup \pins \cup S_2^{B3}\bigr] \\
    \cM_2^{B4} &\colon \const(\{00,01,10\})(\{01,10\} \leadsto \nonleafs)        \to B4\bigl[\nonleafs \cup \pins \cup S_2^{B4}\bigr] .
  \end{align*}
  For the ``specialization'' step, we compose $\cM_2^\ell$ with certain morphisms $\Theta_\ell$ with domain $\ag(1,B)$, to obtain morphisms $\Theta_\ell \circ \cM_2^\ell \colon \ag(1,B) \to \ell[\nonleafs \cup \pins^\ell \cup S_2^\ell]$, for $\ell \in \{A1,\dots,B4\}$.
  The morphisms $\Theta_\ell$ can be read off from Figure~\ref{fig:agate-1b}, but we give them explicitly in Figure~\ref{fig:eight-morphisms}.
  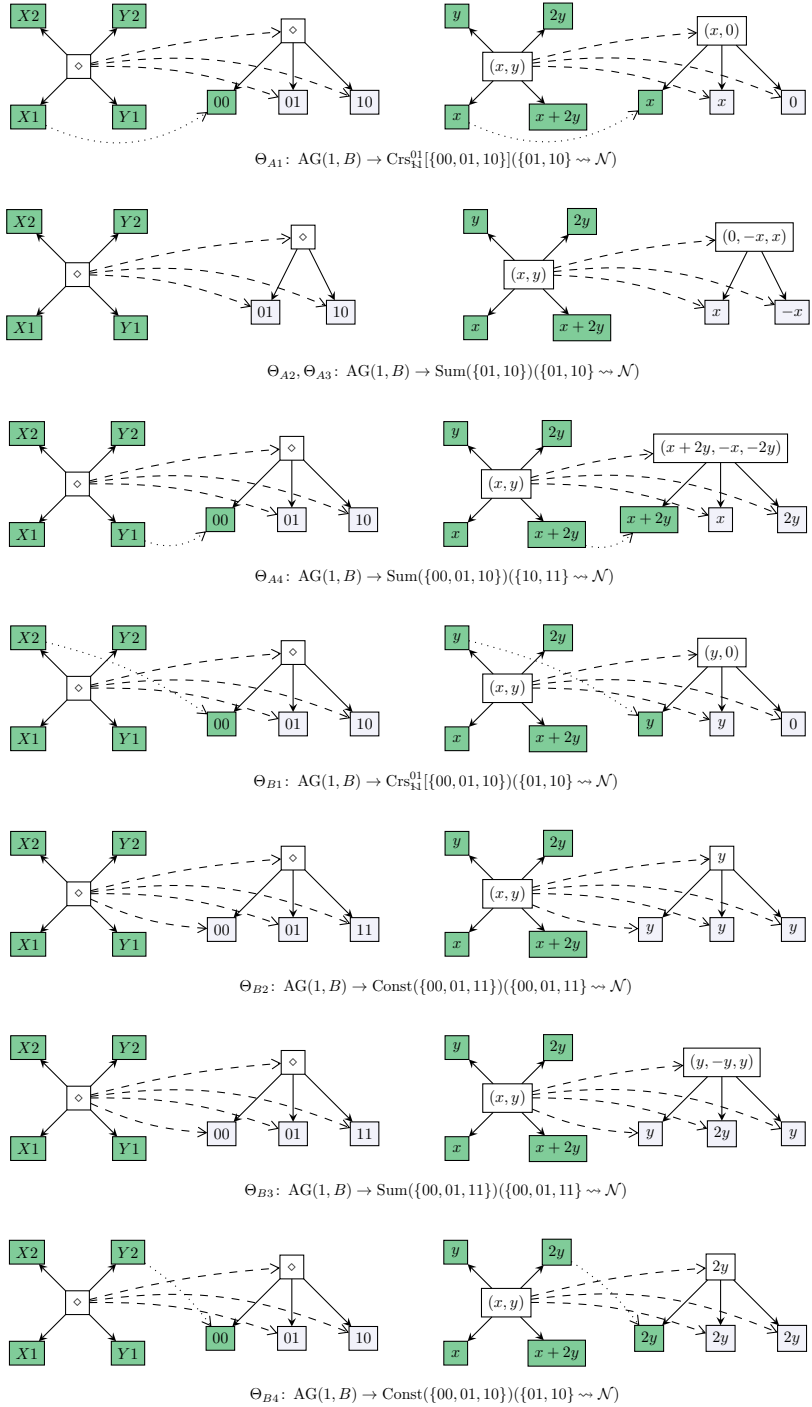
\begin{figure}[htbp]
    \begin{center}
      \begin{tikzpicture}[
          defnode/.style={rectangle,draw,inner sep=4pt,outer sep=0pt,minimum size=15pt},
          mydot/.style={circle,fill,inner sep=0.5pt},
          nonleaf/.style={defnode,fill=lightgray},
          leaf/.style={defnode,fill=leafgreen},
          toggle/.style={defnode,fill=leafgreen},
          scale=0.95
        ]
        \node[scale=0.6] at (5, -1.3) {$\Theta_{A1} \colon \ag(1,B) \to \crs^{01}_{\strikeout{11}}[\{00,01,10\}](\{01,10\} \leadsto \nonleafs)$};
        \begin{scope}[shift={(0,0)}]
          \node[defnode,scale=0.6] (U) at (0, 0) {$\diamond$};
          \node[leaf,scale=0.6]    (UX00) at ($(U) + (225:1)$) {$X1$};
          \node[leaf,scale=0.6]    (UX01) at ($(U) + (135:1)$) {$X2$};
          \node[leaf,scale=0.6]    (UX10) at ($(U) + (315:1)$) {$Y1$};
          \node[leaf,scale=0.6]    (UX11) at ($(U) + (045:1)$) {$Y2$};
          \draw[-stealth] (U) -- (UX00);
          \draw[-stealth] (U) -- (UX01);
          \draw[-stealth] (U) -- (UX10);
          \draw[-stealth] (U) -- (UX11);

          \node[defnode,scale=0.6] (V) at (3, 0.5) {$\diamond$};
          \node[leaf,scale=0.6]    (VX00) at ($(V) + (-1,-1)$) {$00$};
          \node[nonleaf,scale=0.6]    (VX01) at ($(V) + ( 0,-1)$) {$01$};
          \node[nonleaf,scale=0.6]    (VX10) at ($(V) + ( 1,-1)$) {$10$};
          \draw[-stealth] (V) -- (VX00);
          \draw[-stealth] (V) -- (VX01);
          \draw[-stealth] (V) -- (VX10);

          \draw[dotted, -angle 60] (UX00) to[bend right=30] (VX00);
          \draw[dashed, -angle 60] (U) to[bend left=10] (VX01);
          \draw[dashed, -angle 60] (U) to[bend left=15] (VX10);
          \draw[dashed, -angle 60] (U) to[bend left=5] (V);
        \end{scope}
        \begin{scope}[shift={(6,0)}]
          \node[defnode,scale=0.6] (U) at (0, 0) {$(x,y)$};
          \node[leaf,scale=0.6]    (UX00) at ($(U) + (225:1)$) {$x$};
          \node[leaf,scale=0.6]    (UX01) at ($(U) + (135:1)$) {$y$};
          \node[leaf,scale=0.6]    (UX10) at ($(U) + (315:1)$) {$x+2y$};
          \node[leaf,scale=0.6]    (UX11) at ($(U) + (045:1)$) {$2y$};
          \draw[-stealth] (U) -- (UX00);
          \draw[-stealth] (U) -- (UX01);
          \draw[-stealth] (U) -- (UX10);
          \draw[-stealth] (U) -- (UX11);

          \node[defnode,scale=0.6] (V) at (3, 0.5) {$(x,0)$};
          \node[leaf,scale=0.6]    (VX00) at ($(V) + (-1,-1)$) {$x$};
          \node[nonleaf,scale=0.6]    (VX01) at ($(V) + ( 0,-1)$) {$x$};
          \node[nonleaf,scale=0.6]    (VX10) at ($(V) + ( 1,-1)$) {$0$};
          \draw[-stealth] (V) -- (VX00);
          \draw[-stealth] (V) -- (VX01);
          \draw[-stealth] (V) -- (VX10);

          \draw[dotted, -angle 60] (UX00) to[bend right=30] (VX00);
          \draw[dashed, -angle 60] (U) to[bend left=10] (VX01);
          \draw[dashed, -angle 60] (U) to[bend left=15] (VX10);
          \draw[dashed, -angle 60] (U) to[bend left=5] (V);
        \end{scope}
      \end{tikzpicture}
      \\[\baselineskip]
      \begin{tikzpicture}[
          defnode/.style={rectangle,draw,inner sep=4pt,outer sep=0pt,minimum size=15pt},
          mydot/.style={circle,fill,inner sep=0.5pt},
          nonleaf/.style={defnode,fill=lightgray},
          leaf/.style={defnode,fill=leafgreen},
          toggle/.style={defnode,fill=leafgreen}
        ]
        \node[scale=0.6] at (5, -1.3) {$\Theta_{A2},\Theta_{A3} \colon \ag(1,B) \to \opsum(\{01,10\})(\{01,10\} \leadsto \nonleafs)$};
        \begin{scope}[shift={(0,0)}]
          \node[defnode,scale=0.6] (U) at (0, 0) {$\diamond$};
          \node[leaf,scale=0.6]    (UX00) at ($(U) + (225:1)$) {$X1$};
          \node[leaf,scale=0.6]    (UX01) at ($(U) + (135:1)$) {$X2$};
          \node[leaf,scale=0.6]    (UX10) at ($(U) + (315:1)$) {$Y1$};
          \node[leaf,scale=0.6]    (UX11) at ($(U) + (045:1)$) {$Y2$};
          \draw[-stealth] (U) -- (UX00);
          \draw[-stealth] (U) -- (UX01);
          \draw[-stealth] (U) -- (UX10);
          \draw[-stealth] (U) -- (UX11);

          \node[defnode,scale=0.6] (V) at (3, 0.5) {$\diamond$};
          \node[nonleaf,scale=0.6]    (VX01) at ($(V) + ( -0.5,-1)$) {$01$};
          \node[nonleaf,scale=0.6]    (VX10) at ($(V) + ( 0.5,-1)$) {$10$};
          \draw[-stealth] (V) -- (VX01);
          \draw[-stealth] (V) -- (VX10);

          \draw[dashed, -angle 60] (U) to[bend left=10] (VX01);
          \draw[dashed, -angle 60] (U) to[bend left=15] (VX10);
          \draw[dashed, -angle 60] (U) to[bend left=5] (V);
        \end{scope}
        \begin{scope}[shift={(6,0)}]
          \node[defnode,scale=0.6] (U) at (0, 0) {$(x,y)$};
          \node[leaf,scale=0.6]    (UX00) at ($(U) + (225:1)$) {$x$};
          \node[leaf,scale=0.6]    (UX01) at ($(U) + (135:1)$) {$y$};
          \node[leaf,scale=0.6]    (UX10) at ($(U) + (315:1)$) {$x+2y$};
          \node[leaf,scale=0.6]    (UX11) at ($(U) + (045:1)$) {$2y$};
          \draw[-stealth] (U) -- (UX00);
          \draw[-stealth] (U) -- (UX01);
          \draw[-stealth] (U) -- (UX10);
          \draw[-stealth] (U) -- (UX11);

          \node[defnode,scale=0.6] (V) at (3, 0.5) {$(0,-x,x)$};
          \node[nonleaf,scale=0.6]    (VX01) at ($(V) + ( -0.5,-1)$) {$x$};
          \node[nonleaf,scale=0.6]    (VX10) at ($(V) + ( 0.5,-1)$) {$-x$};
          \draw[-stealth] (V) -- (VX01);
          \draw[-stealth] (V) -- (VX10);

          \draw[dashed, -angle 60] (U) to[bend left=10] (VX01);
          \draw[dashed, -angle 60] (U) to[bend left=15] (VX10);
          \draw[dashed, -angle 60] (U) to[bend left=5] (V);
        \end{scope}
      \end{tikzpicture}
      \\[\baselineskip]
      \begin{tikzpicture}[
          scale=0.95,
          defnode/.style={rectangle,draw,inner sep=4pt,outer sep=0pt,minimum size=15pt},
          mydot/.style={circle,fill,inner sep=0.5pt},
          nonleaf/.style={defnode,fill=lightgray},
          leaf/.style={defnode,fill=leafgreen},
          toggle/.style={defnode,fill=leafgreen}
        ]
        \node[scale=0.6] at (5, -1.3) {$\Theta_{A4} \colon \ag(1,B) \to \opsum(\{00,01,10\})(\{10,11\} \leadsto \nonleafs)$};
        \begin{scope}[shift={(0,0)}]
          \node[defnode,scale=0.6] (U) at (0, 0) {$\diamond$};
          \node[leaf,scale=0.6]    (UX00) at ($(U) + (225:1)$) {$X1$};
          \node[leaf,scale=0.6]    (UX01) at ($(U) + (135:1)$) {$X2$};
          \node[leaf,scale=0.6]    (UX10) at ($(U) + (315:1)$) {$Y1$};
          \node[leaf,scale=0.6]    (UX11) at ($(U) + (045:1)$) {$Y2$};
          \draw[-stealth] (U) -- (UX00);
          \draw[-stealth] (U) -- (UX01);
          \draw[-stealth] (U) -- (UX10);
          \draw[-stealth] (U) -- (UX11);

          \node[defnode,scale=0.6] (V) at (3, 0.5) {$\diamond$};
          \node[leaf,scale=0.6]    (VX00) at ($(V) + (-1,-1)$) {$00$};
          \node[nonleaf,scale=0.6]    (VX01) at ($(V) + ( 0,-1)$) {$01$};
          \node[nonleaf,scale=0.6]    (VX10) at ($(V) + ( 1,-1)$) {$10$};
          \draw[-stealth] (V) -- (VX00);
          \draw[-stealth] (V) -- (VX01);
          \draw[-stealth] (V) -- (VX10);

          \draw[dotted, -angle 60] (UX10) to[bend right=30] (VX00);
          \draw[dashed, -angle 60] (U) to[bend left=10] (VX01);
          \draw[dashed, -angle 60] (U) to[bend left=15] (VX10);
          \draw[dashed, -angle 60] (U) to[bend left=5] (V);
        \end{scope}
        \begin{scope}[shift={(6,0)}]
          \node[defnode,scale=0.6] (U) at (0, 0) {$(x,y)$};
          \node[leaf,scale=0.6]    (UX00) at ($(U) + (225:1)$) {$x$};
          \node[leaf,scale=0.6]    (UX01) at ($(U) + (135:1)$) {$y$};
          \node[leaf,scale=0.6]    (UX10) at ($(U) + (315:1)$) {$x+2y$};
          \node[leaf,scale=0.6]    (UX11) at ($(U) + (045:1)$) {$2y$};
          \draw[-stealth] (U) -- (UX00);
          \draw[-stealth] (U) -- (UX01);
          \draw[-stealth] (U) -- (UX10);
          \draw[-stealth] (U) -- (UX11);

          \node[defnode,scale=0.6] (V) at (3, 0.5) {$(x+2y,-x,-2y)$};
          \node[leaf,scale=0.6]    (VX00) at ($(V) + (-1,-1)$) {$x+2y$};
          \node[nonleaf,scale=0.6]    (VX01) at ($(V) + ( 0,-1)$) {$x$};
          \node[nonleaf,scale=0.6]    (VX10) at ($(V) + ( 1,-1)$) {$2y$};
          \draw[-stealth] (V) -- (VX00);
          \draw[-stealth] (V) -- (VX01);
          \draw[-stealth] (V) -- (VX10);

          \draw[dotted, -angle 60] (UX10) to[bend right=30] (VX00);
          \draw[dashed, -angle 60] (U) to[bend left=10] (VX01);
          \draw[dashed, -angle 60] (U) to[bend left=15] (VX10);
          \draw[dashed, -angle 60] (U) to[bend left=5] (V);
        \end{scope}
      \end{tikzpicture}
      \\[\baselineskip]
      \begin{tikzpicture}[
          scale=0.95,
          defnode/.style={rectangle,draw,inner sep=4pt,outer sep=0pt,minimum size=15pt},
          mydot/.style={circle,fill,inner sep=0.5pt},
          nonleaf/.style={defnode,fill=lightgray},
          leaf/.style={defnode,fill=leafgreen},
          toggle/.style={defnode,fill=leafgreen}
        ]
        \node[scale=0.6] at (5, -1.3) {$\Theta_{B1} \colon \ag(1,B) \to \crs^{01}_{\strikeout{11}}[\{00,01,10\})(\{01,10\} \leadsto \nonleafs)$};
        \begin{scope}[shift={(0,0)}]
          \node[defnode,scale=0.6] (U) at (0, 0) {$\diamond$};
          \node[leaf,scale=0.6]    (UX00) at ($(U) + (225:1)$) {$X1$};
          \node[leaf,scale=0.6]    (UX01) at ($(U) + (135:1)$) {$X2$};
          \node[leaf,scale=0.6]    (UX10) at ($(U) + (315:1)$) {$Y1$};
          \node[leaf,scale=0.6]    (UX11) at ($(U) + (045:1)$) {$Y2$};
          \draw[-stealth] (U) -- (UX00);
          \draw[-stealth] (U) -- (UX01);
          \draw[-stealth] (U) -- (UX10);
          \draw[-stealth] (U) -- (UX11);

          \node[defnode,scale=0.6] (V) at (3, 0.5) {$\diamond$};
          \node[leaf,scale=0.6]    (VX00) at ($(V) + (-1,-1)$) {$00$};
          \node[nonleaf,scale=0.6]    (VX01) at ($(V) + ( 0,-1)$) {$01$};
          \node[nonleaf,scale=0.6]    (VX10) at ($(V) + ( 1,-1)$) {$10$};
          \draw[-stealth] (V) -- (VX00);
          \draw[-stealth] (V) -- (VX01);
          \draw[-stealth] (V) -- (VX10);

          \draw[dotted, -angle 60] (UX01) to[bend left=10] (VX00);
          \draw[dashed, -angle 60] (U) to[bend left=10] (VX01);
          \draw[dashed, -angle 60] (U) to[bend left=15] (VX10);
          \draw[dashed, -angle 60] (U) to[bend left=5] (V);
        \end{scope}
        \begin{scope}[shift={(6,0)}]
          \node[defnode,scale=0.6] (U) at (0, 0) {$(x,y)$};
          \node[leaf,scale=0.6]    (UX00) at ($(U) + (225:1)$) {$x$};
          \node[leaf,scale=0.6]    (UX01) at ($(U) + (135:1)$) {$y$};
          \node[leaf,scale=0.6]    (UX10) at ($(U) + (315:1)$) {$x+2y$};
          \node[leaf,scale=0.6]    (UX11) at ($(U) + (045:1)$) {$2y$};
          \draw[-stealth] (U) -- (UX00);
          \draw[-stealth] (U) -- (UX01);
          \draw[-stealth] (U) -- (UX10);
          \draw[-stealth] (U) -- (UX11);

          \node[defnode,scale=0.6] (V) at (3, 0.5) {$(y,0)$};
          \node[leaf,scale=0.6]    (VX00) at ($(V) + (-1,-1)$) {$y$};
          \node[nonleaf,scale=0.6]    (VX01) at ($(V) + ( 0,-1)$) {$y$};
          \node[nonleaf,scale=0.6]    (VX10) at ($(V) + ( 1,-1)$) {$0$};
          \draw[-stealth] (V) -- (VX00);
          \draw[-stealth] (V) -- (VX01);
          \draw[-stealth] (V) -- (VX10);

          \draw[dotted, -angle 60] (UX01) to[bend left=10] (VX00);
          \draw[dashed, -angle 60] (U) to[bend left=10] (VX01);
          \draw[dashed, -angle 60] (U) to[bend left=15] (VX10);
          \draw[dashed, -angle 60] (U) to[bend left=5] (V);
        \end{scope}
      \end{tikzpicture}
      \\[\baselineskip]
      \begin{tikzpicture}[
          scale=0.95,
          defnode/.style={rectangle,draw,inner sep=4pt,outer sep=0pt,minimum size=15pt},
          mydot/.style={circle,fill,inner sep=0.5pt},
          nonleaf/.style={defnode,fill=lightgray},
          leaf/.style={defnode,fill=leafgreen},
          toggle/.style={defnode,fill=leafgreen}
        ]
        \node[scale=0.6] at (5, -1.3) {$\Theta_{B2} \colon \ag(1,B) \to \const(\{00,01,11\})(\{00,01,11\} \leadsto \nonleafs)$};
        \begin{scope}[shift={(0,0)}]
          \node[defnode,scale=0.6] (U) at (0, 0) {$\diamond$};
          \node[leaf,scale=0.6]    (UX00) at ($(U) + (225:1)$) {$X1$};
          \node[leaf,scale=0.6]    (UX01) at ($(U) + (135:1)$) {$X2$};
          \node[leaf,scale=0.6]    (UX10) at ($(U) + (315:1)$) {$Y1$};
          \node[leaf,scale=0.6]    (UX11) at ($(U) + (045:1)$) {$Y2$};
          \draw[-stealth] (U) -- (UX00);
          \draw[-stealth] (U) -- (UX01);
          \draw[-stealth] (U) -- (UX10);
          \draw[-stealth] (U) -- (UX11);

          \node[defnode,scale=0.6] (V) at (3, 0.5) {$\diamond$};
          \node[nonleaf,scale=0.6]    (VX00) at ($(V) + (-1,-1)$) {$00$};
          \node[nonleaf,scale=0.6]    (VX01) at ($(V) + ( 0,-1)$) {$01$};
          \node[nonleaf,scale=0.6]    (VX10) at ($(V) + ( 1,-1)$) {$11$};
          \draw[-stealth] (V) -- (VX00);
          \draw[-stealth] (V) -- (VX01);
          \draw[-stealth] (V) -- (VX10);

          \draw[dashed, -angle 60] (U) to[bend right=10] (VX00);
          \draw[dashed, -angle 60] (U) to[bend left=10] (VX01);
          \draw[dashed, -angle 60] (U) to[bend left=15] (VX10);
          \draw[dashed, -angle 60] (U) to[bend left=5] (V);
        \end{scope}
        \begin{scope}[shift={(6,0)}]
          \node[defnode,scale=0.6] (U) at (0, 0) {$(x,y)$};
          \node[leaf,scale=0.6]    (UX00) at ($(U) + (225:1)$) {$x$};
          \node[leaf,scale=0.6]    (UX01) at ($(U) + (135:1)$) {$y$};
          \node[leaf,scale=0.6]    (UX10) at ($(U) + (315:1)$) {$x+2y$};
          \node[leaf,scale=0.6]    (UX11) at ($(U) + (045:1)$) {$2y$};
          \draw[-stealth] (U) -- (UX00);
          \draw[-stealth] (U) -- (UX01);
          \draw[-stealth] (U) -- (UX10);
          \draw[-stealth] (U) -- (UX11);

          \node[defnode,scale=0.6] (V) at (3, 0.5) {$y$};
          \node[nonleaf,scale=0.6]    (VX00) at ($(V) + (-1,-1)$) {$y$};
          \node[nonleaf,scale=0.6]    (VX01) at ($(V) + ( 0,-1)$) {$y$};
          \node[nonleaf,scale=0.6]    (VX10) at ($(V) + ( 1,-1)$) {$y$};
          \draw[-stealth] (V) -- (VX00);
          \draw[-stealth] (V) -- (VX01);
          \draw[-stealth] (V) -- (VX10);

          \draw[dashed, -angle 60] (U) to[bend right=10] (VX00);
          \draw[dashed, -angle 60] (U) to[bend left=10] (VX01);
          \draw[dashed, -angle 60] (U) to[bend left=15] (VX10);
          \draw[dashed, -angle 60] (U) to[bend left=5] (V);
        \end{scope}
      \end{tikzpicture}
      \\[\baselineskip]
      \begin{tikzpicture}[
          scale=0.95,
          defnode/.style={rectangle,draw,inner sep=4pt,outer sep=0pt,minimum size=15pt},
          mydot/.style={circle,fill,inner sep=0.5pt},
          nonleaf/.style={defnode,fill=lightgray},
          leaf/.style={defnode,fill=leafgreen},
          toggle/.style={defnode,fill=leafgreen}
        ]
        \node[scale=0.6] at (5, -1.3) {$\Theta_{B3} \colon \ag(1,B) \to \opsum(\{00,01,11\})(\{00,01,11\} \leadsto \nonleafs)$};
        \begin{scope}[shift={(0,0)}]
          \node[defnode,scale=0.6] (U) at (0, 0) {$\diamond$};
          \node[leaf,scale=0.6]    (UX00) at ($(U) + (225:1)$) {$X1$};
          \node[leaf,scale=0.6]    (UX01) at ($(U) + (135:1)$) {$X2$};
          \node[leaf,scale=0.6]    (UX10) at ($(U) + (315:1)$) {$Y1$};
          \node[leaf,scale=0.6]    (UX11) at ($(U) + (045:1)$) {$Y2$};
          \draw[-stealth] (U) -- (UX00);
          \draw[-stealth] (U) -- (UX01);
          \draw[-stealth] (U) -- (UX10);
          \draw[-stealth] (U) -- (UX11);

          \node[defnode,scale=0.6] (V) at (3, 0.5) {$\diamond$};
          \node[nonleaf,scale=0.6]    (VX00) at ($(V) + (-1,-1)$) {$00$};
          \node[nonleaf,scale=0.6]    (VX01) at ($(V) + ( 0,-1)$) {$01$};
          \node[nonleaf,scale=0.6]    (VX10) at ($(V) + ( 1,-1)$) {$11$};
          \draw[-stealth] (V) -- (VX00);
          \draw[-stealth] (V) -- (VX01);
          \draw[-stealth] (V) -- (VX10);

          \draw[dashed, -angle 60] (U) to[bend right=10] (VX00);
          \draw[dashed, -angle 60] (U) to[bend left=10] (VX01);
          \draw[dashed, -angle 60] (U) to[bend left=15] (VX10);
          \draw[dashed, -angle 60] (U) to[bend left=5] (V);
        \end{scope}
        \begin{scope}[shift={(6,0)}]
          \node[defnode,scale=0.6] (U) at (0, 0) {$(x,y)$};
          \node[leaf,scale=0.6]    (UX00) at ($(U) + (225:1)$) {$x$};
          \node[leaf,scale=0.6]    (UX01) at ($(U) + (135:1)$) {$y$};
          \node[leaf,scale=0.6]    (UX10) at ($(U) + (315:1)$) {$x+2y$};
          \node[leaf,scale=0.6]    (UX11) at ($(U) + (045:1)$) {$2y$};
          \draw[-stealth] (U) -- (UX00);
          \draw[-stealth] (U) -- (UX01);
          \draw[-stealth] (U) -- (UX10);
          \draw[-stealth] (U) -- (UX11);

          \node[defnode,scale=0.6] (V) at (3, 0.5) {$(y,-y,y)$};
          \node[nonleaf,scale=0.6]    (VX00) at ($(V) + (-1,-1)$) {$y$};
          \node[nonleaf,scale=0.6]    (VX01) at ($(V) + ( 0,-1)$) {$2y$};
          \node[nonleaf,scale=0.6]    (VX10) at ($(V) + ( 1,-1)$) {$y$};
          \draw[-stealth] (V) -- (VX00);
          \draw[-stealth] (V) -- (VX01);
          \draw[-stealth] (V) -- (VX10);

          \draw[dashed, -angle 60] (U) to[bend right=10] (VX00);
          \draw[dashed, -angle 60] (U) to[bend left=10] (VX01);
          \draw[dashed, -angle 60] (U) to[bend left=15] (VX10);
          \draw[dashed, -angle 60] (U) to[bend left=5] (V);
        \end{scope}
      \end{tikzpicture}
      \\[\baselineskip]
      \begin{tikzpicture}[
          scale=0.95,
          defnode/.style={rectangle,draw,inner sep=4pt,outer sep=0pt,minimum size=15pt},
          mydot/.style={circle,fill,inner sep=0.5pt},
          nonleaf/.style={defnode,fill=lightgray},
          leaf/.style={defnode,fill=leafgreen},
          toggle/.style={defnode,fill=leafgreen}
        ]
        \node[scale=0.6] at (5, -1.3) {$\Theta_{B4} \colon \ag(1,B) \to \const(\{00,01,10\})(\{01,10\} \leadsto \nonleafs)$};
        \begin{scope}[shift={(0,0)}]
          \node[defnode,scale=0.6] (U) at (0, 0) {$\diamond$};
          \node[leaf,scale=0.6]    (UX00) at ($(U) + (225:1)$) {$X1$};
          \node[leaf,scale=0.6]    (UX01) at ($(U) + (135:1)$) {$X2$};
          \node[leaf,scale=0.6]    (UX10) at ($(U) + (315:1)$) {$Y1$};
          \node[leaf,scale=0.6]    (UX11) at ($(U) + (045:1)$) {$Y2$};
          \draw[-stealth] (U) -- (UX00);
          \draw[-stealth] (U) -- (UX01);
          \draw[-stealth] (U) -- (UX10);
          \draw[-stealth] (U) -- (UX11);

          \node[defnode,scale=0.6] (V) at (3, 0.5) {$\diamond$};
          \node[leaf,scale=0.6]    (VX00) at ($(V) + (-1,-1)$) {$00$};
          \node[nonleaf,scale=0.6]    (VX01) at ($(V) + ( 0,-1)$) {$01$};
          \node[nonleaf,scale=0.6]    (VX10) at ($(V) + ( 1,-1)$) {$10$};
          \draw[-stealth] (V) -- (VX00);
          \draw[-stealth] (V) -- (VX01);
          \draw[-stealth] (V) -- (VX10);

          \draw[dotted, -angle 60] (UX11) to[bend left=10] (VX00);
          \draw[dashed, -angle 60] (U) to[bend left=10] (VX01);
          \draw[dashed, -angle 60] (U) to[bend left=15] (VX10);
          \draw[dashed, -angle 60] (U) to[bend left=5] (V);
        \end{scope}
        \begin{scope}[shift={(6,0)}]
          \node[defnode,scale=0.6] (U) at (0, 0) {$(x,y)$};
          \node[leaf,scale=0.6]    (UX00) at ($(U) + (225:1)$) {$x$};
          \node[leaf,scale=0.6]    (UX01) at ($(U) + (135:1)$) {$y$};
          \node[leaf,scale=0.6]    (UX10) at ($(U) + (315:1)$) {$x+2y$};
          \node[leaf,scale=0.6]    (UX11) at ($(U) + (045:1)$) {$2y$};
          \draw[-stealth] (U) -- (UX00);
          \draw[-stealth] (U) -- (UX01);
          \draw[-stealth] (U) -- (UX10);
          \draw[-stealth] (U) -- (UX11);

          \node[defnode,scale=0.6] (V) at (3, 0.5) {$2y$};
          \node[leaf,scale=0.6]    (VX00) at ($(V) + (-1,-1)$) {$2y$};
          \node[nonleaf,scale=0.6]    (VX01) at ($(V) + ( 0,-1)$) {$2y$};
          \node[nonleaf,scale=0.6]    (VX10) at ($(V) + ( 1,-1)$) {$2y$};
          \draw[-stealth] (V) -- (VX00);
          \draw[-stealth] (V) -- (VX01);
          \draw[-stealth] (V) -- (VX10);

          \draw[dotted, -angle 60] (UX11) to[bend left=10] (VX00);
          \draw[dashed, -angle 60] (U) to[bend left=10] (VX01);
          \draw[dashed, -angle 60] (U) to[bend left=15] (VX10);
          \draw[dashed, -angle 60] (U) to[bend left=5] (V);
        \end{scope}
      \end{tikzpicture}
    \end{center}
    \caption{The eight morphisms from $\ag(1,B)$ to be composed with the assignment morphisms $\cM_2$ for gates A1,\dots,B4.}%
    \label{fig:eight-morphisms}
  \end{figure}
  
  Finally, we apply Lemma~\ref{lem:patching} to the collection of morphisms $\Theta_\ell \circ \cM_2^\ell$ to construct a single morphism $\cM_2 \colon \ag(1,B) \to \smagate[\nonleafs \cup \pins \cup S_2]$.
  It is clear by inspection and Figure~\ref{fig:agate-1b} that the compatibility hypothesis of Lemma~\ref{lem:patching} is satisfied: for example,
  \[
    \theta^{\Theta_{A4} \circ \cM_2^{A4}}_{10;\triangle} = \theta^{\Theta_{B4} \circ \cM_2^{B4}}_{10;\triangle} = \bigl( (x,y) \mapsto 2y \bigr).
  \]

  To complete the proof for $i=1$, it remains to construct $\cM_0$, $\cM_1$ and $\cM_3,\dots,\cM_s$.
  This time, we will content ourselves with giving the analogous pictures to Figure~\ref{fig:agate-1b} in each case, since it is then mechanical to adapt the argument above (for $\cM_2$ when $i=1$) to these other cases.
  These pictures are given in Figure~\ref{fig:agate-1aplus}.
  The construction of $\cM_4,\dots,\cM_s$ is completely analogous to the case $\cM_3$ shown.

  \begin{figure}[htbp]
    \begin{center}
      \begin{tikzpicture}[
          defnode/.style={rectangle,fill=lightgray,draw,inner sep=2pt,outer sep=0pt,minimum size=10pt},
          gatelabel/.style={rectangle, inner sep=2pt, outer sep=0pt, fill=white,scale=0.6},
          subnode/.style={rounded rectangle, draw, inner sep=5pt, outer sep=0pt,minimum size=15pt},
          leaf/.style={rectangle,draw,fill=leafgreen,inner sep=2pt,outer sep=0pt,minimum size=10pt},
          scale=0.8
        ]
        \node[scale=0.8] at (-2,1.5) {$\cM_0:$};

        \node[subnode,scale=0.8] (A1) at (0,0)    {$\trivial$};
        \node[subnode,scale=0.8] (A2) at (3.5,0)  {$\trivial$};
        \node[subnode,scale=0.8] (A3) at (7,0)    {$\trivial$};
        \node[subnode,scale=0.8] (A4) at (10.5,0) {$\trivial$};
        \node[subnode,scale=0.8] (B1) at (0,3)    {$\trivial$};
        \node[subnode,scale=0.8] (B2) at (3.5,3)  {$\trivial$};
        \node[subnode,scale=0.8] (B3) at (7,3)    {$\trivial$};
        \node[subnode,scale=0.8] (B4) at (10.5,3) {$\trivial$};

        \node[leaf,scale=0.7] (X1) at ($(A1)+(-2,0)$) {$x$};
        \node[leaf,scale=0.7] (X2) at ($(B1)+(-2,0)$) {$y$};
        \node[leaf,scale=0.7] (Y1) at ($(A4)+(2,0)$)  {$z$};
        \node[leaf,scale=0.7] (Y2) at ($(B4)+(2,0)$)  {$w$};

        \node[defnode,scale=0.7] (A12) at ($(A1)!0.5!(A2)$) {$0$};
        \node[defnode,scale=0.7] (A23) at ($(A2)!0.5!(A3)$) {$0$};
        \node[defnode,scale=0.7] (A34) at ($(A3)!0.5!(A4)$) {$0$};

        \node[defnode,scale=0.7] (B12) at ($(B1)!0.5!(B2)$) {$0$};
        \node[defnode,scale=0.7] (B23a) at ($(B2)!0.5!(B3) + (0,0.5)$) {$0$};
        \node[defnode,scale=0.7] (B23b) at ($(B2)!0.5!(B3) + (0,-0.5)$) {$0$};
        \node[defnode,scale=0.7] (B34) at ($(B3)!0.5!(B4)$) {$0$};

        \node[defnode,scale=0.7] (AB1) at ($(A1)!0.5!(B1)$) {$0$};
        \node[defnode,scale=0.7] (AB4) at ($(A4)!0.5!(B4)$) {$0$};

        \draw (A1) -- node[gatelabel, pos=0.10] {$00$} (X1);
        \draw (A1) -- node[gatelabel, pos=0.12] {$01$} (A12);

        \draw (A2) -- node[gatelabel, pos=0.12] {$01$} (A12);
        \draw (A2) -- node[gatelabel, pos=0.12] {$10$} (A23);

        \draw (A3) -- node[gatelabel, pos=0.12] {$10$} (A23);
        \draw (A3) -- node[gatelabel, pos=0.12] {$01$} (A34);

        \draw (A4) -- node[gatelabel, pos=0.12] {$00$} (Y1);
        \draw (A4) -- node[gatelabel, pos=0.12] {$01$} (A34);

        \draw (B1) -- node[gatelabel, pos=0.10] {$00$} (X2);
        \draw (B1) -- node[gatelabel, pos=0.12] {$01$} (B12);

        \draw (B2) -- node[gatelabel, pos=0.12] {$01$} (B12);
        \draw (B2) -- node[gatelabel, pos=0.12] {$00$} (B23a);
        \draw (B2) -- node[gatelabel, pos=0.12] {$11$} (B23b);

        \draw (B3) -- node[gatelabel, pos=0.12] {$01$} (B34);
        \draw (B3) -- node[gatelabel, pos=0.12] {$00$} (B23a);
        \draw (B3) -- node[gatelabel, pos=0.12] {$11$} (B23b);

        \draw (B4) -- node[gatelabel, pos=0.12] {$00$} (Y2);
        \draw (B4) -- node[gatelabel, pos=0.12] {$01$} (B34);

        \draw (A1) -- node[gatelabel, pos=0.08] {$10$} (AB1);
        \draw (A4) -- node[gatelabel, pos=0.08] {$10$} (AB4);
        \draw (B1) -- node[gatelabel, pos=0.08] {$10$} (AB1);
        \draw (B4) -- node[gatelabel, pos=0.08] {$10$} (AB4);
      \end{tikzpicture}
      \\[\baselineskip]
      \vspace{\baselineskip}
      \begin{tikzpicture}[
          defnode/.style={rectangle,fill=lightgray,draw,inner sep=2pt,outer sep=0pt,minimum size=10pt},
          gatelabel/.style={rectangle, inner sep=2pt, outer sep=0pt, fill=white,scale=0.6},
          subnode/.style={rounded rectangle, draw, inner sep=5pt, outer sep=0pt,minimum size=15pt},
          leaf/.style={rectangle,draw,fill=leafgreen,inner sep=2pt,outer sep=0pt,minimum size=10pt},
          scale=0.8
        ]
        \node[scale=0.8] at (-2,1.5) {$\cM_1:$};

        \node[subnode,scale=0.8] (A1) at (0,0)    {$\crs^{01}_{\overline{11}}$};
        \node[subnode,scale=0.8] (A2) at (3.5,0)  {$\const$};
        \node[subnode,scale=0.8] (A3) at (7,0)    {$\const$};
        \node[subnode,scale=0.8] (A4) at (10.5,0) {$\const$};
        \node[subnode,scale=0.8] (B1) at (0,3)    {$\crs^{01}_{\overline{11}}$};
        \node[subnode,scale=0.8] (B2) at (3.5,3)  {$\opsum$};
        \node[subnode,scale=0.8] (B3) at (7,3)    {$\const$};
        \node[subnode,scale=0.8] (B4) at (10.5,3) {$\opsum$};

        \node[leaf,scale=0.7] (X1) at ($(A1)+(-2,0)$) {$x$};
        \node[leaf,scale=0.7] (X2) at ($(B1)+(-2,0)$) {$2y-2x$};
        \node[leaf,scale=0.7] (Y1) at ($(A4)+(2,0)$)  {$x$};
        \node[leaf,scale=0.7] (Y2) at ($(B4)+(2,0)$)  {$y$};

        \node[defnode,scale=0.7] (A12) at ($(A1)!0.5!(A2)$) {$x$};
        \node[defnode,scale=0.7] (A23) at ($(A2)!0.5!(A3)$) {$x$};
        \node[defnode,scale=0.7] (A34) at ($(A3)!0.5!(A4)$) {$x$};

        \node[defnode,scale=0.7] (B12) at ($(B1)!0.5!(B2)$) {$2y-2x$};
        \node[defnode,scale=0.7] (B23a) at ($(B2)!0.5!(B3) + (0,0.5)$) {$y-x$};
        \node[defnode,scale=0.7] (B23b) at ($(B2)!0.5!(B3) + (0,-0.5)$) {$y-x$};
        \node[defnode,scale=0.7] (B34) at ($(B3)!0.5!(B4)$) {$y-x$};

        \node[defnode,scale=0.7] (AB1) at ($(A1)!0.5!(B1)$) {$0$};
        \node[defnode,scale=0.7] (AB4) at ($(A4)!0.5!(B4)$) {$x$};

        \draw (A1) -- node[gatelabel, pos=0.10] {$00$} (X1);
        \draw (A1) -- node[gatelabel, pos=0.12] {$01$} (A12);

        \draw (A2) -- node[gatelabel, pos=0.12] {$01$} (A12);
        \draw (A2) -- node[gatelabel, pos=0.12] {$10$} (A23);

        \draw (A3) -- node[gatelabel, pos=0.12] {$10$} (A23);
        \draw (A3) -- node[gatelabel, pos=0.12] {$01$} (A34);

        \draw (A4) -- node[gatelabel, pos=0.12] {$00$} (Y1);
        \draw (A4) -- node[gatelabel, pos=0.12] {$01$} (A34);

        \draw (B1) -- node[gatelabel, pos=0.10] {$00$} (X2);
        \draw (B1) -- node[gatelabel, pos=0.12] {$01$} (B12);

        \draw (B2) -- node[gatelabel, pos=0.12] {$01$} (B12);
        \draw (B2) -- node[gatelabel, pos=0.12] {$00$} (B23a);
        \draw (B2) -- node[gatelabel, pos=0.12] {$11$} (B23b);

        \draw (B3) -- node[gatelabel, pos=0.12] {$01$} (B34);
        \draw (B3) -- node[gatelabel, pos=0.12] {$00$} (B23a);
        \draw (B3) -- node[gatelabel, pos=0.12] {$11$} (B23b);

        \draw (B4) -- node[gatelabel, pos=0.12] {$00$} (Y2);
        \draw (B4) -- node[gatelabel, pos=0.12] {$01$} (B34);

        \draw (A1) -- node[gatelabel, pos=0.08] {$10$} (AB1);
        \draw (A4) -- node[gatelabel, pos=0.08] {$10$} (AB4);
        \draw (B1) -- node[gatelabel, pos=0.08] {$10$} (AB1);
        \draw (B4) -- node[gatelabel, pos=0.08] {$10$} (AB4);
      \end{tikzpicture}
      \\[\baselineskip]
      \vspace{\baselineskip}
      \begin{tikzpicture}[
          defnode/.style={rectangle,fill=lightgray,draw,inner sep=2pt,outer sep=0pt,minimum size=10pt},
          gatelabel/.style={rectangle, inner sep=2pt, outer sep=0pt, fill=white,scale=0.6},
          subnode/.style={rounded rectangle, draw, inner sep=5pt, outer sep=0pt,minimum size=15pt},
          leaf/.style={rectangle,draw,fill=leafgreen,inner sep=2pt,outer sep=0pt,minimum size=10pt},
          scale=0.8
        ]
        \node[scale=0.8] at (-2,1.5) {$\cM_3$};

        \node[subnode,scale=0.8] (A1) at (0,0)    {$\const$};
        \node[subnode,scale=0.8] (A2) at (3.5,0)  {$\const$};
        \node[subnode,scale=0.8] (A3) at (7,0)    {$\const$};
        \node[subnode,scale=0.8] (A4) at (10.5,0) {$\const$};
        \node[subnode,scale=0.8] (B1) at (0,3)    {$\const$};
        \node[subnode,scale=0.8] (B2) at (3.5,3)  {$\const$};
        \node[subnode,scale=0.8] (B3) at (7,3)    {$\const$};
        \node[subnode,scale=0.8] (B4) at (10.5,3) {$\const$};

        \node[leaf,scale=0.7] (X1) at ($(A1)+(-2,0)$) {$x$};
        \node[leaf,scale=0.7] (X2) at ($(B1)+(-2,0)$) {$x$};
        \node[leaf,scale=0.7] (Y1) at ($(A4)+(2,0)$)  {$x$};
        \node[leaf,scale=0.7] (Y2) at ($(B4)+(2,0)$)  {$x$};

        \node[defnode,scale=0.7] (A12) at ($(A1)!0.5!(A2)$) {$x$};
        \node[defnode,scale=0.7] (A23) at ($(A2)!0.5!(A3)$) {$x$};
        \node[defnode,scale=0.7] (A34) at ($(A3)!0.5!(A4)$) {$x$};

        \node[defnode,scale=0.7] (B12) at ($(B1)!0.5!(B2)$) {$x$};
        \node[defnode,scale=0.7] (B23a) at ($(B2)!0.5!(B3) + (0,0.5)$) {$x$};
        \node[defnode,scale=0.7] (B23b) at ($(B2)!0.5!(B3) + (0,-0.5)$) {$x$};
        \node[defnode,scale=0.7] (B34) at ($(B3)!0.5!(B4)$) {$x$};

        \node[defnode,scale=0.7] (AB1) at ($(A1)!0.5!(B1)$) {$x$};
        \node[defnode,scale=0.7] (AB4) at ($(A4)!0.5!(B4)$) {$x$};

        \draw (A1) -- node[gatelabel, pos=0.10] {$00$} (X1);
        \draw (A1) -- node[gatelabel, pos=0.12] {$01$} (A12);

        \draw (A2) -- node[gatelabel, pos=0.12] {$01$} (A12);
        \draw (A2) -- node[gatelabel, pos=0.12] {$10$} (A23);

        \draw (A3) -- node[gatelabel, pos=0.12] {$10$} (A23);
        \draw (A3) -- node[gatelabel, pos=0.12] {$01$} (A34);

        \draw (A4) -- node[gatelabel, pos=0.12] {$00$} (Y1);
        \draw (A4) -- node[gatelabel, pos=0.12] {$01$} (A34);

        \draw (B1) -- node[gatelabel, pos=0.10] {$00$} (X2);
        \draw (B1) -- node[gatelabel, pos=0.12] {$01$} (B12);

        \draw (B2) -- node[gatelabel, pos=0.12] {$01$} (B12);
        \draw (B2) -- node[gatelabel, pos=0.12] {$00$} (B23a);
        \draw (B2) -- node[gatelabel, pos=0.12] {$11$} (B23b);

        \draw (B3) -- node[gatelabel, pos=0.12] {$01$} (B34);
        \draw (B3) -- node[gatelabel, pos=0.12] {$00$} (B23a);
        \draw (B3) -- node[gatelabel, pos=0.12] {$11$} (B23b);

        \draw (B4) -- node[gatelabel, pos=0.12] {$00$} (Y2);
        \draw (B4) -- node[gatelabel, pos=0.12] {$01$} (B34);

        \draw (A1) -- node[gatelabel, pos=0.08] {$10$} (AB1);
        \draw (A4) -- node[gatelabel, pos=0.08] {$10$} (AB4);
        \draw (B1) -- node[gatelabel, pos=0.08] {$10$} (AB1);
        \draw (B4) -- node[gatelabel, pos=0.08] {$10$} (AB4);
      \end{tikzpicture}
    \end{center}
    \caption{A pictorial description of the morphisms $\cM_0$, $\cM_1$ and $\cM_3$ for the assignment $\Middle(1)$.}%
    \label{fig:agate-1aplus}
  \end{figure}

  When $i=0$, we similarly just show the revised pictures for $\cM_1$ and $\cM_2$, since those for $\cM_0$ and $\cM_3,\dots,\cM_s$ are unchanged.
  These are given in Figure~\ref{fig:agate-0}.
  \begin{figure}[htbp]
    \begin{center}
      \begin{tikzpicture}[
          defnode/.style={rectangle,fill=lightgray,draw,inner sep=2pt,outer sep=0pt,minimum size=10pt},
          gatelabel/.style={rectangle, inner sep=2pt, outer sep=0pt, fill=white,scale=0.6},
          subnode/.style={rounded rectangle, draw, inner sep=5pt, outer sep=0pt,minimum size=15pt},
          leaf/.style={rectangle,draw,fill=leafgreen,inner sep=2pt,outer sep=0pt,minimum size=10pt},
          scale=0.8
        ]
        \node[scale=0.8] at (-2,1.5) {$\cM_1$};

        \node[subnode,scale=0.8] (A1) at (0,0)    {$\crs^{01}_{\overline{11}}$};
        \node[subnode,scale=0.8] (A2) at (3.5,0)  {$\const$};
        \node[subnode,scale=0.8] (A3) at (7,0)    {$\const$};
        \node[subnode,scale=0.8] (A4) at (10.5,0) {$\crs^{01}_{\strikeout{11}}$};
        \node[subnode,scale=0.8] (B1) at (0,3)    {$\crs^{01}_{\overline{11}}$};
        \node[subnode,scale=0.8] (B2) at (3.5,3)  {$\opsum$};
        \node[subnode,scale=0.8] (B3) at (7,3)    {$\const$};
        \node[subnode,scale=0.8] (B4) at (10.5,3) {$\opsum$};

        \node[leaf,scale=0.7] (X1) at ($(A1)+(-2,0)$) {$x$};
        \node[leaf,scale=0.7] (X2) at ($(B1)+(-2,0)$) {$2y$};
        \node[leaf,scale=0.7] (Y1) at ($(A4)+(2,0)$)  {$x$};
        \node[leaf,scale=0.7] (Y2) at ($(B4)+(2,0)$)  {$y$};

        \node[defnode,scale=0.7] (A12) at ($(A1)!0.5!(A2)$) {$x$};
        \node[defnode,scale=0.7] (A23) at ($(A2)!0.5!(A3)$) {$x$};
        \node[defnode,scale=0.7] (A34) at ($(A3)!0.5!(A4)$) {$x$};

        \node[defnode,scale=0.7] (B12) at ($(B1)!0.5!(B2)$) {$2y$};
        \node[defnode,scale=0.7] (B23a) at ($(B2)!0.5!(B3) + (0,0.5)$) {$y$};
        \node[defnode,scale=0.7] (B23b) at ($(B2)!0.5!(B3) + (0,-0.5)$) {$y$};
        \node[defnode,scale=0.7] (B34) at ($(B3)!0.5!(B4)$) {$y$};

        \node[defnode,scale=0.7] (AB1) at ($(A1)!0.5!(B1)$) {$0$};
        \node[defnode,scale=0.7] (AB4) at ($(A4)!0.5!(B4)$) {$0$};

        \draw (A1) -- node[gatelabel, pos=0.10] {$00$} (X1);
        \draw (A1) -- node[gatelabel, pos=0.12] {$01$} (A12);

        \draw (A2) -- node[gatelabel, pos=0.12] {$01$} (A12);
        \draw (A2) -- node[gatelabel, pos=0.12] {$10$} (A23);

        \draw (A3) -- node[gatelabel, pos=0.12] {$10$} (A23);
        \draw (A3) -- node[gatelabel, pos=0.12] {$01$} (A34);

        \draw (A4) -- node[gatelabel, pos=0.12] {$00$} (Y1);
        \draw (A4) -- node[gatelabel, pos=0.12] {$01$} (A34);

        \draw (B1) -- node[gatelabel, pos=0.10] {$00$} (X2);
        \draw (B1) -- node[gatelabel, pos=0.12] {$01$} (B12);

        \draw (B2) -- node[gatelabel, pos=0.12] {$01$} (B12);
        \draw (B2) -- node[gatelabel, pos=0.12] {$00$} (B23a);
        \draw (B2) -- node[gatelabel, pos=0.12] {$11$} (B23b);

        \draw (B3) -- node[gatelabel, pos=0.12] {$01$} (B34);
        \draw (B3) -- node[gatelabel, pos=0.12] {$00$} (B23a);
        \draw (B3) -- node[gatelabel, pos=0.12] {$11$} (B23b);

        \draw (B4) -- node[gatelabel, pos=0.12] {$00$} (Y2);
        \draw (B4) -- node[gatelabel, pos=0.12] {$01$} (B34);

        \draw (A1) -- node[gatelabel, pos=0.08] {$10$} (AB1);
        \draw (A4) -- node[gatelabel, pos=0.08] {$10$} (AB4);
        \draw (B1) -- node[gatelabel, pos=0.08] {$10$} (AB1);
        \draw (B4) -- node[gatelabel, pos=0.08] {$10$} (AB4);
      \end{tikzpicture}
      \\[\baselineskip]
      \vspace{\baselineskip}
      \begin{tikzpicture}[
          defnode/.style={rectangle,fill=lightgray,draw,inner sep=2pt,outer sep=0pt,minimum size=10pt},
          gatelabel/.style={rectangle, inner sep=2pt, outer sep=0pt, fill=white,scale=0.6},
          subnode/.style={rounded rectangle, draw, inner sep=5pt, outer sep=0pt,minimum size=15pt},
          leaf/.style={rectangle,draw,fill=leafgreen,inner sep=2pt,outer sep=0pt,minimum size=10pt},
          scale=0.8
        ]
        \node[scale=0.8] at (-2,1.5) {$\cM_2$};

        \node[subnode,scale=0.8] (A1) at (0,0)    {$\crs^{01}_{\strikeout{11}}$};
        \node[subnode,scale=0.8] (A2) at (3.5,0)  {$\opsum$};
        \node[subnode,scale=0.8] (A3) at (7,0)    {$\opsum$};
        \node[subnode,scale=0.8] (A4) at (10.5,0) {$\crs^{01}_{\overline{11}}$};
        \node[subnode,scale=0.8] (B1) at (0,3)    {$\crs^{01}_{\strikeout{11}}$};
        \node[subnode,scale=0.8] (B2) at (3.5,3)  {$\const$};
        \node[subnode,scale=0.8] (B3) at (7,3)    {$\opsum$};
        \node[subnode,scale=0.8] (B4) at (10.5,3) {$\const$};

        \node[leaf,scale=0.7] (X1) at ($(A1)+(-2,0)$) {$x$};
        \node[leaf,scale=0.7] (X2) at ($(B1)+(-2,0)$) {$y$};
        \node[leaf,scale=0.7] (Y1) at ($(A4)+(2,0)$)  {$x$};
        \node[leaf,scale=0.7] (Y2) at ($(B4)+(2,0)$)  {$2y$};

        \node[defnode,scale=0.7] (A12) at ($(A1)!0.5!(A2)$) {$x$};
        \node[defnode,scale=0.7] (A23) at ($(A2)!0.5!(A3)$) {$-x$};
        \node[defnode,scale=0.7] (A34) at ($(A3)!0.5!(A4)$) {$x$};

        \node[defnode,scale=0.7] (B12) at ($(B1)!0.5!(B2)$) {$y$};
        \node[defnode,scale=0.7] (B23a) at ($(B2)!0.5!(B3) + (0,0.5)$) {$y$};
        \node[defnode,scale=0.7] (B23b) at ($(B2)!0.5!(B3) + (0,-0.5)$) {$y$};
        \node[defnode,scale=0.7] (B34) at ($(B3)!0.5!(B4)$) {$2y$};

        \node[defnode,scale=0.7] (AB1) at ($(A1)!0.5!(B1)$) {$0$};
        \node[defnode,scale=0.7] (AB4) at ($(A4)!0.5!(B4)$) {$2y$};

        \draw (A1) -- node[gatelabel, pos=0.10] {$00$} (X1);
        \draw (A1) -- node[gatelabel, pos=0.12] {$01$} (A12);

        \draw (A2) -- node[gatelabel, pos=0.12] {$01$} (A12);
        \draw (A2) -- node[gatelabel, pos=0.12] {$10$} (A23);

        \draw (A3) -- node[gatelabel, pos=0.12] {$10$} (A23);
        \draw (A3) -- node[gatelabel, pos=0.12] {$01$} (A34);

        \draw (A4) -- node[gatelabel, pos=0.12] {$00$} (Y1);
        \draw (A4) -- node[gatelabel, pos=0.12] {$01$} (A34);

        \draw (B1) -- node[gatelabel, pos=0.10] {$00$} (X2);
        \draw (B1) -- node[gatelabel, pos=0.12] {$01$} (B12);

        \draw (B2) -- node[gatelabel, pos=0.12] {$01$} (B12);
        \draw (B2) -- node[gatelabel, pos=0.12] {$00$} (B23a);
        \draw (B2) -- node[gatelabel, pos=0.12] {$11$} (B23b);

        \draw (B3) -- node[gatelabel, pos=0.12] {$01$} (B34);
        \draw (B3) -- node[gatelabel, pos=0.12] {$00$} (B23a);
        \draw (B3) -- node[gatelabel, pos=0.12] {$11$} (B23b);

        \draw (B4) -- node[gatelabel, pos=0.12] {$00$} (Y2);
        \draw (B4) -- node[gatelabel, pos=0.12] {$01$} (B34);

        \draw (A1) -- node[gatelabel, pos=0.08] {$10$} (AB1);
        \draw (A4) -- node[gatelabel, pos=0.08] {$10$} (AB4);
        \draw (B1) -- node[gatelabel, pos=0.08] {$10$} (AB1);
        \draw (B4) -- node[gatelabel, pos=0.08] {$10$} (AB4);
      \end{tikzpicture}
    \end{center}
    \caption{A description of the morphisms $\cM_1$ and $\cM_2$ for the assignment $\Middle(0)$.}%
    \label{fig:agate-0}
  \end{figure}
\end{proof}

This completes the picture for $\smagate_s$, but unfortunately we still have to design assignments for the gates $\smagate_s^\Alpha$, $\smagate_s^\Omega$ and $\smagate_s^{\Alpha,\Omega}$.
The case $\smagate_s^\Alpha$ is the most involved and we tackle this next.

\begin{definition}%
  \label{def:initial-data}
  Let $i,j \in \{0,1\}$ and $\fs \in \{\pm 1\}$.
  We define linear data $\ag(i,j,\fs,A)$ and $\ag(i,j,\fs,B)$ with $I = \{ X1, Y1, Y2 \}$, $W_\ell = \FF_p$ for $\ell \in I$, $V^{\ag(i,j,\fs,A)} = \FF_p^2$, $V^{\ag(i,j,\fs,B)} = \FF_p$, and with linear maps $V \to W_i$ given by
  \vspace{0.5\baselineskip}
  \begin{center}
    \begin{tikzpicture}[
        defnode/.style={rectangle,draw,inner sep=4pt,outer sep=0pt,minimum size=15pt},
        mydot/.style={circle,fill,inner sep=0.5pt},
        leaf/.style={defnode,fill=leafgreen},
        toggle/.style={defnode,fill=leafgreen}
      ]
      \begin{scope}[shift={(-5,0)}]
        \node[defnode,scale=0.8] (U) at (0, 0) {$\diamond$};
        \node[leaf,scale=0.6]    (UX00) at ($(U) + (225:1)$) {$X1$};
        \node[leaf,scale=0.6]    (UX10) at ($(U) + (315:1)$) {$Y1$};
        \node[leaf,scale=0.6]    (UX11) at ($(U) + (045:1)$) {$Y2$};
        \draw[-stealth] (U) -- (UX00);
        \draw[-stealth] (U) -- (UX10);
        \draw[-stealth] (U) -- (UX11);
      \end{scope}
      \begin{scope}[shift={(0,0)}]
        \node[defnode,scale=0.8] (U) at (0, 0) {$(x,y)$};
        \node[leaf,scale=0.6]    (UX00) at ($(U) + (225:1)$) {$\fs((i+2j)x - 2y)$};
        \node[leaf,scale=0.6]    (UX10) at ($(U) + (315:1)$) {$x$};
        \node[leaf,scale=0.6]    (UX11) at ($(U) + (045:1)$) {$y$};
        \draw[-stealth] (U) -- (UX00);
        \draw[-stealth] (U) -- (UX10);
        \draw[-stealth] (U) -- (UX11);

        \node["{$\ag(i,j,\fs,A)$}" {scale=0.9}] at (0, -2) {};
      \end{scope}
      \begin{scope}[shift={(3.5,0)}]
        \node[defnode,scale=0.8] (U) at (0, 0) {$x$};
        \node[leaf,scale=0.6]    (UX00) at ($(U) + (225:1)$) {$x$};
        \node[leaf,scale=0.6]    (UX10) at ($(U) + (315:1)$) {$\fs (i+2j)x$};
        \node[leaf,scale=0.6]    (UX11) at ($(U) + (045:1)$) {$2\fs x$};
        \draw[-stealth] (U) -- (UX00);
        \draw[-stealth] (U) -- (UX10);
        \draw[-stealth] (U) -- (UX11);

        \node["{$\ag(i,j,\fs,B)$}" {scale=0.9}] at (0, -2) {};
      \end{scope}
    \end{tikzpicture}
  \end{center}
\end{definition}

\begin{lemma}%
  \label{lem:agate-assign-2}
  Let $s \ge 2$.
  For any $i,j \in \{0,1\}$ and $\fs \in \{\pm 1\}$ there exists an assignment $\Initial(i,j,\fs)$ of $\smagate_s^{\Alpha}$ with
  \begin{itemize}
    \item $D_0 =\trivial(\{X1,Y1,Y2\})$;
    \item $D_1 = \ag(i,j,\fs,A)$;
    \item $D_2 = \ag(i,j,\fs,B)$; and
    \item $D_r = \const(\{X1,Y1,Y2\})$ for $3 \le r \le s$.
  \end{itemize}
\end{lemma}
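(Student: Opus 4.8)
The plan is to reproduce the architecture of the proof of Lemma~\ref{lem:agate-assign-1} essentially line for line, changing only the gate structures on the eight sub-diagrams of $\smagate_s^{\Alpha}$ and the arithmetic those sub-gates are asked to perform. First I would record the gate structures forced on $A1,\dots,B4$ by Definition~\ref{def:agate-gate} for $\smagate_s^{\Alpha}$: namely $A1,A4,B4 \colon \agg_s^{\{00,01,10\}}$, $A2,A3 \colon \agg_s^{\{01,10\}}$, $B2,B3 \colon \agg_s^{\{00,01,11\}}$, and — the one genuine change from the non-initial case — $B1 \colon \agg_s^{\{01,10\}}$, so that the vertex $X2 = B1;00;\triangle$ (and also $B1;11;\triangle$) is now a toggle rather than a pin. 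Then, for each of the eight triples $(i,j,\fs) \in \{0,1\}^2 \times \{\pm1\}$, I would pick for every mode $r \in [s]$ and for $r=0$ an assignment of each sub-gate taken from the library of Section~\ref{sub:aggre} (the $\sumconst_r$ assignments, the $\opcross^{\tau}_{i,j}$ and $\opcross^{\tau,\overline{\eta}}_{i,j}$ assignments, and restrictions or co-restrictions of these via Remark~\ref{rem:restriction-morphs} and Definition~\ref{def:cores}); combine the toggle partitions $\toggles^{\ell} = \bigcup_r S_r^{\ell}$ of the eight sub-gates into a single partition $S_r = \bigcup_\ell S_r^{\ell}$ of $\toggles^{\smagate_s^{\Alpha}}$ exactly as in the middle case; and construct the morphism $\cM_r \colon D_r \to \smagate_s^{\Alpha}[\nonleafs \cup \pins \cup S_r]$ by composing each sub-gate morphism $\cM_r^{\ell}$ with a ``specialization'' morphism $\Theta_\ell$ of domain $D_r$ and then patching the results by Lemma~\ref{lem:patching}.

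The arithmetic to be implemented is one step of a double-and-add multiplier that additionally consumes the two lowest bits $(i,j)$ of $|a|$ and the sign $\fs$. In the ``add'' mode $r=2$ (tracking the second $\FF_p$ coordinate) the block should carry an input $x$ on the rail $X1$ to $\fs(i+2j)x$ on the lower output rail $Y1$ and to $2\fs x$ on the upper output rail $Y2$, which is exactly the linear datum $\ag(i,j,\fs,B)$; in mode $r=1$ it should perform the dual right-to-left computation, which is $\ag(i,j,\fs,A)$; and in the modes $r \ge 3$, as well as in $r=0$, every sub-gate is a $\const$ or $\trivial$ gate, giving $D_r = \const(\{X1,Y1,Y2\})$ and $D_0 = \trivial(\{X1,Y1,Y2\})$, as required. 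As in Lemma~\ref{lem:agate-assign-1}, the whole of each $\cM_r$ is summarised faithfully by an ``arithmetic circuit'' picture — a copy of $\smagate_s$ with the irrelevant toggles deleted, each sub-diagram box annotated with the restricted linear datum it receives, and an $\FF_p$-value written on every wire — so that checking the morphism condition~\eqref{eq:diag-morph} together with the compatibility hypothesis of Lemma~\ref{lem:patching} reduces to inspection of those pictures.

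I expect the main obstacle to be bookkeeping rather than ideas, in two places. First, $B1$ has lost the $00;\triangle$ pin it used in Lemma~\ref{lem:agate-assign-1}, so the ``cross'' behaviour it then carried must be re-implemented on the two-pin gate $\agg_s^{\{01,10\}}$; since Section~\ref{sub:aggre} only supplies $\sumconst_r$ (and no $\opcross$ variant) for two-pin AggreGates, I would either verify that a suitable $\sumconst_r$ assignment, with both $X2 = B1;00;\triangle$ and $B1;11;\triangle$ placed in a toggle class mapped to $\zeroi$, already does what is needed, or, failing that, build the small missing two-pin cross assignment by the same argument that produced $\opcross^{\tau,\overline{\eta}}_{1,2}$ in Lemma~\ref{lem:cross} by restricting $\opcross^{01}_{1,2}$. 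Second, carrying the sign $\fs$ and the partial product $(i+2j)$ consistently through all eight sub-gate pictures at once — so that the patched maps genuinely agree on shared vertices such as the cross-wires $AB1$ and $AB4$ — is fiddly, and the reliable route is to draw the eight specialization morphisms $\Theta_\ell$ explicitly, as in Figure~\ref{fig:eight-morphisms}, for a representative mode (say $r=2$ and one choice of $(i,j,\fs)$), and to let the remaining cases follow by the same mechanical adaptation, merely changing the labels on the wires.
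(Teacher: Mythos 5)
Your proposal follows the paper's proof essentially line for line: fix the forced gate structures on the eight $\agg_s$ sub-gates (you correctly identify that only $B1$ changes, becoming $\agg_s^{\{01,10\}}$), choose a library assignment for each sub-gate and each mode, take the union of the partitions, and patch the specialized morphisms via Lemma~\ref{lem:patching}.  Your first option for the $B1$ question is the right one — the paper assigns $B1$ a $\sumconst_1$ or $\sumconst_2$ depending on $\fs$, with no new two-pin cross assignment needed (with only two pins a ``cross'' degenerates anyway); and for $\cM_0,\cM_3,\dots,\cM_s$ the paper simply applies Remark~\ref{rem:restriction-morphs} to the morphisms of Lemma~\ref{lem:agate-assign-1} to delete $X2$, rather than drawing new specialization pictures.
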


This corresponds to a true identity in tensor algebra, namely
\[
  (\fs ((i+2j) x_1 - 2 y_1)) \otimes x_2 = x_1 \otimes (\fs (i+2j) x_2) - y_1 \otimes (2 \fs x).
\]
Again we motivate $\ag(i,j,\fs,A)$ and $\ag(i,j,\fs,B)$ with reference to Figure~\ref{fig:bilinear}, and in particular the first block.
The situation there has $i=1$, $j=0$ and $\fs=1$.

In the second coordinate, corresponding to $r=2$ and $\ag(i,j,\fs,B)$, we take $x=h$ as an input in the bottom left and seed the bottom ``accumulator'' rail with $(i+2j) h$ and the top ``powers of two'' rail with $2h$.
If $j=1$, the dotted line in Figure~\ref{fig:bilinear} would become another $(+,=)$, $(=,+)$ pair and the middle bottom rail would become $(h, 3h')$ accordingly.

In the first coordinate $r=1$, $\ag(i,j,\fs,A)$, we again reason from right to left: we take $y=-6h$ and $x=h$ as inputs in the top right and bottom right respectively, and output $\fs((i+2j)x - 2y) = 13h$ in the bottom left.

\begin{proof}%
  As in Lemma~\ref{lem:agate-assign-2}, we begin by specifying the sub-assignments on $A1,\dots,B4$.
  \begin{equation}
    \label{eq:agate-assignments-2}
    \begin{aligned}
      A1 &: 
      \begin{rcases}
        \begin{dcases}
          \opcross^{10;\overline{11}}_{1,2} &\colon i=0 \\
          \sumconst_1 &\colon i=1
        \end{dcases}
      \end{rcases} &
      B1 &: 
      \begin{rcases}
        \begin{dcases} 
          \sumconst_1 &\colon \fs = 1 \\ 
          \sumconst_2 &\colon \fs= -1
        \end{dcases}
      \end{rcases}   \\
      A2 &: 
      \begin{rcases}
        \begin{dcases} 
          \sumconst_2 &\colon \fs = 1 \\ 
          \sumconst_1 &\colon \fs= -1
        \end{dcases}
      \end{rcases}   &
      B2 &: \sumconst_1 \\
      A3 &: \sumconst_2 &
      B3 &: \sumconst_2 \\
      A4 &:
      \begin{rcases} 
        \begin{dcases} 
          \opcross^{01;\overline{11}}_{2,1} &\colon j=0  \\
          \sumconst_2 &\colon j=1
        \end{dcases} 
      \end{rcases} &                       
      B4 &: \sumconst_1   .
    \end{aligned}
  \end{equation}
  We now draw pictures do describe the morphisms $\cM_1$ and $\cM_2$ as in Lemma~\ref{lem:agate-assign-1}.
  By means of yet more creative notation we avoid drawing eight such pictures, one for each choice of $(i,j,\fs)$.
  These are shown in Figure~\ref{fig:agate-initial}.

  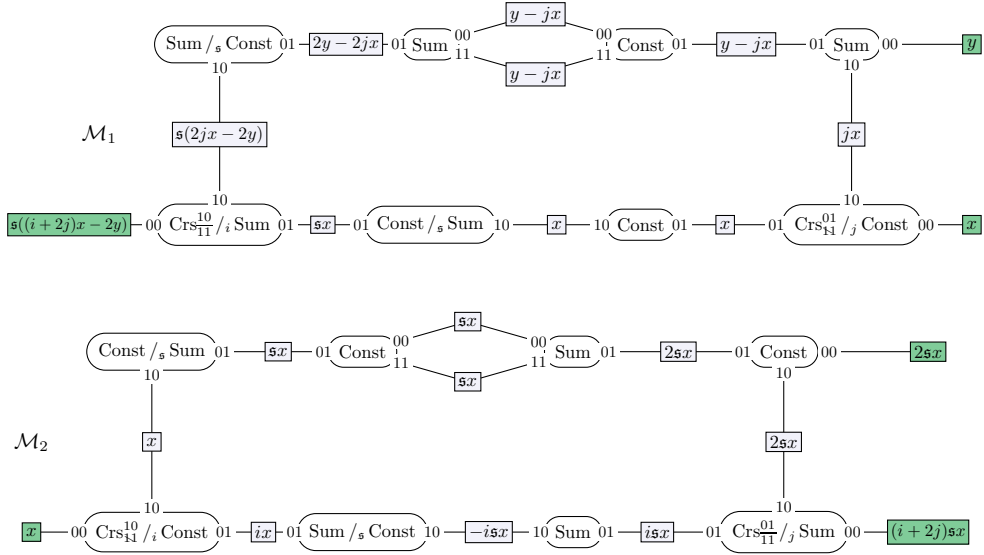
\begin{figure}[htbp]
    \begin{center}
      \begin{tikzpicture}[
          defnode/.style={rectangle,fill=lightgray,draw,inner sep=2pt,outer sep=0pt,minimum size=10pt},
          gatelabel/.style={rectangle, inner sep=2pt, outer sep=0pt, fill=white,scale=0.6},
          subnode/.style={rounded rectangle, draw, inner sep=5pt, outer sep=0pt,minimum size=15pt},
          leaf/.style={rectangle,draw,fill=leafgreen,inner sep=2pt,outer sep=0pt,minimum size=10pt},
          scale=0.8
        ]
        \node[scale=0.8] at (-2,1.5) {$\cM_1$};

        \node[subnode,scale=0.7] (A1) at (0,0)    {$\crs^{10}_{\overline{11}} /_i \opsum$};
        \node[subnode,scale=0.7] (A2) at (3.5,0)  {$\const /_{\fs} \opsum$};
        \node[subnode,scale=0.7] (A3) at (7,0)    {$\const$};
        \node[subnode,scale=0.7] (A4) at (10.5,0) {$\crs^{01}_{\strikeout{11}} /_j \const$};
        \node[subnode,scale=0.7] (B1) at (0,3)    {$\opsum /_\fs \const$};
        \node[subnode,scale=0.7] (B2) at (3.5,3)  {$\opsum$};
        \node[subnode,scale=0.7] (B3) at (7,3)    {$\const$};
        \node[subnode,scale=0.7] (B4) at (10.5,3) {$\opsum$};

        \node[leaf,scale=0.6] (X1) at ($(A1)+(-2.5,0)$) {$\fs((i+2j)x - 2y)$};
        \node[leaf,scale=0.7] (Y1) at ($(A4)+(2,0)$)  {$x$};
        \node[leaf,scale=0.7] (Y2) at ($(B4)+(2,0)$)  {$y$};

        \node[defnode,scale=0.65] (A12) at ($(A1)!0.49!(A2)$) {$\fs x$};
        \node[defnode,scale=0.7] (A23) at ($(A2)!0.6!(A3)$) {$x$};
        \node[defnode,scale=0.7] (A34) at ($(A3)!0.4!(A4)$) {$x$};

        \node[defnode,scale=0.66] (B12) at ($(B1)!0.6!(B2)$) {$2y-2jx$};
        \node[defnode,scale=0.7] (B23a) at ($(B2)!0.5!(B3) + (0,0.5)$) {$y-jx$};
        \node[defnode,scale=0.7] (B23b) at ($(B2)!0.5!(B3) + (0,-0.5)$) {$y-jx$};
        \node[defnode,scale=0.7] (B34) at ($(B3)!0.5!(B4)$) {$y-jx$};

        \node[defnode,scale=0.66] (AB1) at ($(A1)!0.5!(B1)$) {$\fs(2jx-2y)$};
        \node[defnode,scale=0.7] (AB4) at ($(A4)!0.5!(B4)$) {$jx$};

        \draw (A1) -- node[gatelabel, pos=0.10] {$00$} (X1);
        \draw (A1) -- node[gatelabel, pos=0.12] {$01$} (A12);

        \draw (A2) -- node[gatelabel, pos=0.12] {$01$} (A12);
        \draw (A2) -- node[gatelabel, pos=0.12] {$10$} (A23);

        \draw (A3) -- node[gatelabel, pos=0.12] {$10$} (A23);
        \draw (A3) -- node[gatelabel, pos=0.12] {$01$} (A34);

        \draw (A4) -- node[gatelabel, pos=0.12] {$00$} (Y1);
        \draw (A4) -- node[gatelabel, pos=0.12] {$01$} (A34);

        \draw (B1) -- node[gatelabel, pos=0.12] {$01$} (B12);

        \draw (B2) -- node[gatelabel, pos=0.12] {$01$} (B12);
        \draw (B2) -- node[gatelabel, pos=0.12] {$00$} (B23a);
        \draw (B2) -- node[gatelabel, pos=0.12] {$11$} (B23b);

        \draw (B3) -- node[gatelabel, pos=0.12] {$01$} (B34);
        \draw (B3) -- node[gatelabel, pos=0.12] {$00$} (B23a);
        \draw (B3) -- node[gatelabel, pos=0.12] {$11$} (B23b);

        \draw (B4) -- node[gatelabel, pos=0.12] {$00$} (Y2);
        \draw (B4) -- node[gatelabel, pos=0.12] {$01$} (B34);

        \draw (A1) -- node[gatelabel, pos=0.08] {$10$} (AB1);
        \draw (A4) -- node[gatelabel, pos=0.08] {$10$} (AB4);
        \draw (B1) -- node[gatelabel, pos=0.08] {$10$} (AB1);
        \draw (B4) -- node[gatelabel, pos=0.08] {$10$} (AB4);
      \end{tikzpicture}
      \\[\baselineskip]
      \vspace{\baselineskip}
      \begin{tikzpicture}[
          defnode/.style={rectangle,fill=lightgray,draw,inner sep=2pt,outer sep=0pt,minimum size=10pt},
          gatelabel/.style={rectangle, inner sep=2pt, outer sep=0pt, fill=white,scale=0.6},
          subnode/.style={rounded rectangle, draw, inner sep=5pt, outer sep=0pt,minimum size=15pt},
          leaf/.style={rectangle,draw,fill=leafgreen,inner sep=2pt,outer sep=0pt,minimum size=10pt},
          scale=0.8
        ]
        \node[scale=0.8] at (-2,1.5) {$\cM_2$};

        \node[subnode,scale=0.7] (A1) at (0,0)    {$\crs^{10}_{\strikeout{11}} /_i \const$};
        \node[subnode,scale=0.7] (A2) at (3.5,0)  {$\opsum /_{\fs} \const$};
        \node[subnode,scale=0.7] (A3) at (7,0)    {$\opsum$};
        \node[subnode,scale=0.7] (A4) at (10.5,0) {$\crs^{01}_{\overline{11}} /_j \opsum$};
        \node[subnode,scale=0.7] (B1) at (0,3)    {$\const /_\fs \opsum$};
        \node[subnode,scale=0.7] (B2) at (3.5,3)  {$\const$};
        \node[subnode,scale=0.7] (B3) at (7,3)    {$\opsum$};
        \node[subnode,scale=0.7] (B4) at (10.5,3) {$\const$};

        \node[leaf,scale=0.7] (X1) at ($(A1)+(-2,0)$) {$x$};
        \node[leaf,scale=0.65] (Y1) at ($(A4)+(2.4,0)$)  {$(i+2j) \fs x$};
        \node[leaf,scale=0.7] (Y2) at ($(B4)+(2.4,0)$)  {$2\fs x$};

        \node[defnode,scale=0.7] (A12) at ($(A1)!0.53!(A2)$) {$ix$};
        \node[defnode,scale=0.7] (A23) at ($(A2)!0.6!(A3)$) {$-i\fs x$};
        \node[defnode,scale=0.7] (A34) at ($(A3)!0.4!(A4)$) {$i \fs x$};

        \node[defnode,scale=0.7] (B12) at ($(B1)!0.6!(B2)$) {$\fs x$};
        \node[defnode,scale=0.7] (B23a) at ($(B2)!0.5!(B3) + (0,0.5)$) {$\fs x$};
        \node[defnode,scale=0.7] (B23b) at ($(B2)!0.5!(B3) + (0,-0.5)$) {$\fs x$};
        \node[defnode,scale=0.7] (B34) at ($(B3)!0.5!(B4)$) {$2 \fs x$};

        \node[defnode,scale=0.7] (AB1) at ($(A1)!0.5!(B1)$) {$x$};
        \node[defnode,scale=0.7] (AB4) at ($(A4)!0.5!(B4)$) {$2 \fs x$};

        \draw (A1) -- node[gatelabel, pos=0.10] {$00$} (X1);
        \draw (A1) -- node[gatelabel, pos=0.12] {$01$} (A12);

        \draw (A2) -- node[gatelabel, pos=0.12] {$01$} (A12);
        \draw (A2) -- node[gatelabel, pos=0.12] {$10$} (A23);

        \draw (A3) -- node[gatelabel, pos=0.12] {$10$} (A23);
        \draw (A3) -- node[gatelabel, pos=0.12] {$01$} (A34);

        \draw (A4) -- node[gatelabel, pos=0.12] {$00$} (Y1);
        \draw (A4) -- node[gatelabel, pos=0.12] {$01$} (A34);

        \draw (B1) -- node[gatelabel, pos=0.12] {$01$} (B12);

        \draw (B2) -- node[gatelabel, pos=0.12] {$01$} (B12);
        \draw (B2) -- node[gatelabel, pos=0.12] {$00$} (B23a);
        \draw (B2) -- node[gatelabel, pos=0.12] {$11$} (B23b);

        \draw (B3) -- node[gatelabel, pos=0.12] {$01$} (B34);
        \draw (B3) -- node[gatelabel, pos=0.12] {$00$} (B23a);
        \draw (B3) -- node[gatelabel, pos=0.12] {$11$} (B23b);

        \draw (B4) -- node[gatelabel, pos=0.12] {$00$} (Y2);
        \draw (B4) -- node[gatelabel, pos=0.12] {$01$} (B34);

        \draw (A1) -- node[gatelabel, pos=0.08] {$10$} (AB1);
        \draw (A4) -- node[gatelabel, pos=0.08] {$10$} (AB4);
        \draw (B1) -- node[gatelabel, pos=0.08] {$10$} (AB1);
        \draw (B4) -- node[gatelabel, pos=0.08] {$10$} (AB4);
      \end{tikzpicture}
    \end{center}
    \caption{A description of the morphisms $\cM_1$ and $\cM_2$ for the assignments $\Initial(i,j,\fs)$.}%
    \label{fig:agate-initial}
  \end{figure}

  The extra layer of notation $- /_i -$, $- /_j -$, $- /_\fs -$ means that the left option holds if $i=0$, $j=0$ or $\fs =1$ respectively, and the right option holds if $i=1$, $j=1$ or $\fs=-1$.
  We may check for each gate individually that it is doing what it should for each choice of $i$, $j$ and $\fs$.

  For $\cM_0$ and $\cM_3,\dots,\cM_s$ we may apply Remark~\ref{rem:restriction-morphs} to the morphisms in Lemma~\ref{lem:agate-assign-1} to remove the vertex $X2$ from both sides.
  We omit the details.
\end{proof}

Finally, we build some assignments for the gates $\smagate_s^{\Omega}$ and $\smagate_s^{\Alpha,\Omega}$.

\begin{definition}%
  \label{def:final-agate-assignments}
  For $i \in \{0,1\}$ we write $\ag(i,A)^\Omega$ for the linear datum or diagram $\ag(i,A) \langle Y2 \rangle$, and $\ag(i,B)^\Omega$ for $\ag(i,B)[\{X1,X2,Y1\}]$.

  For $i,j \in \{0,1\}$ and $\fs \in \{\pm 1\}$ we similarly write $\ag(i,j,\fs,A)^\Omega$ to denote $\ag(i,j,\fs,A)\langle Y2 \rangle$ and $\ag(i,j,\fs,B)^\Omega$ to denote $\ag(i,j,\fs,B)[\{X1,Y1\}]$.
\end{definition}

\begin{lemma}%
  \label{lem:agate-final-assignments}
  For $i \in \{0,1\}$ there is an assignment $\Middle^\Omega(i)$ of the gate $\smagate_s^{\Omega}$ with
  \begin{itemize}
    \item $D_0 = \trivial(\{X1, X2, Y1\})$;
    \item $D_1 = \ag(i,A)^\Omega$;
    \item $D_2 = \ag(i,B)^\Omega$; and
    \item $D_3 = \cdots = D_s = \const(\{X1, X2, Y1\})$.
  \end{itemize}
  For $i,j \in \{0,1\}$ and $\fs \in \{\pm 1\}$ there is an assignment $\Initial^{\Omega}(i,j,\fs)$ of the gate $\smagate_s^{\Alpha,\Omega}$ with
  \begin{itemize}
    \item $D_0 = \trivial(\{X1, Y1\})$;
    \item $D_1 = \ag(i,j,\fs,A)^\Omega$;
    \item $D_2 = \ag(i,j,\fs,B)^\Omega$; and
    \item $D_3 = \cdots = D_s = \const(\{X1, Y1\})$.
  \end{itemize}
\end{lemma}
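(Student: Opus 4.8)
\textbf{Proof plan for Lemma~\ref{lem:agate-final-assignments}.}
The plan is to obtain both assignments by ``co-restricting'' or ``deleting'' the appropriate pin from the assignments we have already built in Lemma~\ref{lem:agate-assign-1} and Lemma~\ref{lem:agate-assign-2}. The observation is that the gates $\smagate_s^{\Omega}$ and $\smagate_s^{\Alpha,\Omega}$ differ from $\smagate_s$ and $\smagate_s^{\Alpha}$ only in that the leaf $Y2 = B4;00;\triangle$ has been removed from the set of pins and demoted to a toggle (see Definition~\ref{def:agate-gate}). So for the $\Middle^\Omega(i)$ case I start from the assignment $\Middle(i)$ of $\smagate_s$ from Lemma~\ref{lem:agate-assign-1}, whose diagrams are $D_0 = \trivial(\{X1,X2,Y1,Y2\})$, $D_1 = \ag(i,A)$, $D_2 = \ag(i,B)$, and $D_r = \const(\{X1,X2,Y1,Y2\})$ for $3 \le r \le s$. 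The key move is to handle the toggle $Y2$ differently in the two ``shapes'' of mode: in mode $r=1$ we want to \emph{set $Y2$ to zero}, which corresponds to composing $\cM_1$ with a co-restriction map $\ag(i,A)^\Omega = \ag(i,A)\langle Y2 \rangle \to \ag(i,A)$ (Definition~\ref{def:cores}, Remark~\ref{rem:cores}); in every other mode $r \ne 1$ (including $r=2$) we want to simply \emph{delete} the index $Y2$, which corresponds to applying Remark~\ref{rem:restriction-morphs} to $\cM_r$ to obtain a morphism with domain $D_r[\{X1,X2,Y1\}]$, matching $\ag(i,B)^\Omega = \ag(i,B)[\{X1,X2,Y1\}]$ and $\const(\{X1,X2,Y1\})$ as required.

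More precisely: the toggle partition for $\Middle^\Omega(i)$ is the partition for $\Middle(i)$ with the extra toggle $Y2$ added to the class $S_1$. For $\cM_0$ we apply Remark~\ref{rem:restriction-morphs} to delete $Y2$ from both sides; for $\cM_1$ we apply Remark~\ref{rem:leaf-non-leaf-morph} to promote $Y2$ to a non-leaf and then precompose with the co-restriction map $\ag(i,A)\langle Y2\rangle \to \ag(i,A)$ (Definition~\ref{def:compose-morph}), using that co-restriction has the universal property (Remark~\ref{rem:cores}) that a morphism whose shape sends $Y2$ to $\zeroi$ factors through it — and indeed in $\Middle(i)$ the morphism $\cM_1$ has $\alpha^{\cM_1}(Y2) = Y2$ with $\sigma_{Y2} = \id$, but after demoting $Y2$ and co-restricting we get $\alpha(Y2) = \zeroi$ as needed since $Y2$ is now in $S_1$; for $\cM_r$ with $2 \le r \le s$ we apply Remark~\ref{rem:restriction-morphs} to restrict away $Y2$, which is legitimate because $Y2 \notin S_r$ for $r \ne 1$ and so $\alpha^{\cM_r}(Y2) = Y2$ already lies in the surviving index set. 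One then checks that $\const(\{X1,X2,Y1,Y2\})[\{X1,X2,Y1\}] = \const(\{X1,X2,Y1\})$ and $\ag(i,B)[\{X1,X2,Y1\}] = \ag(i,B)^\Omega$, which are immediate from the definitions. The argument for $\Initial^\Omega(i,j,\fs)$ is identical, starting from the assignment $\Initial(i,j,\fs)$ of $\smagate_s^{\Alpha}$ in Lemma~\ref{lem:agate-assign-2}, where now the pin $X2$ is already absent and it is again $Y2$ that we demote; the co-restriction $\ag(i,j,\fs,A)^\Omega = \ag(i,j,\fs,A)\langle Y2\rangle$ and restriction $\ag(i,j,\fs,B)^\Omega = \ag(i,j,\fs,B)[\{X1,Y1\}]$ match Definition~\ref{def:final-agate-assignments} by inspection.

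The only genuine point to verify — and the step I would be most careful about — is that in the assignments $\Middle(i)$ and $\Initial(i,j,\fs)$ the leaf $Y2$ is a toggle of a \emph{single} sub-gate and that this sub-gate ($B4$, which carries the gate structure $\agg_s^{\{01,10\}}$ once $Y2$ is removed, versus $\agg_s^{\{01,10\} \cup \{00\}}$ when it was a pin) is assigned $\sumconst_1$ in $\Middle(i)$ and $\sumconst_1$ in $\Initial(i,j,\fs)$ for the value $r=1$, so that deleting versus zeroing $Y2$ is consistent with the $\sumconst_1$ morphism $\cM_1^{B4}$ ``setting a leaf to zero'' (the $\opsum$ relation $z_{00} = z_{01} + \cdots$ with $z_{00}=0$) in mode $1$ while being $\const$ elsewhere. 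Concretely, in Figure~\ref{fig:sum} the morphism $\cM_1$ for $\opsum(P)$ with $|P|=3$ was already obtained by composing with the co-restriction map $\opsum(P) \to \opsum(\{0,1\}^2)$, so demoting $Y2$ to a toggle and routing it through $S_1$ simply continues this pattern one level up, at the level of the whole $\smagate_s$ diagram. Granting that bookkeeping, Lemma~\ref{lem:patching} reassembles the per-sub-gate morphisms into the required $\cM_0, \cM_1, \dots, \cM_s$ exactly as in the proof of Lemma~\ref{lem:agate-assign-1}, and the compatibility conditions on shared vertices are unchanged because we have modified nothing except the treatment of the single vertex $Y2$ inside $B4$.
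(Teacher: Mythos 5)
Your proposal takes essentially the same route as the paper's proof: keep the partitions from Lemma~\ref{lem:agate-assign-1} (resp.\ Lemma~\ref{lem:agate-assign-2}), add $Y2 = B4;00;\triangle$ to $S_1$, obtain the new $\cM_1$ by composing the old one with the co-restriction map, and obtain the remaining $\cM_r$ by restricting away $Y2$. The sanity check that $B4$ carries the assignment $\sumconst_1$ in both $\Middle(i)$ and $\Initial(i,j,\fs)$ is exactly the observation the paper makes when it notes that the sub-assignments of $A1,\dots,B4$ can be kept verbatim, only the gate structure at $B4$ changing from $\agg_s^{\{00,01,10\}}$ to $\agg_s^{\{01,10\}}$.

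One small inaccuracy worth flagging: the detour through Remark~\ref{rem:leaf-non-leaf-morph} (``promote $Y2$ to a non-leaf and then precompose'') for $\cM_1$ is both unnecessary and doesn't quite parse. After demoting $Y2$ the domain becomes $\ag(i,A)(\{Y2\} \leadsto \nonleafs)$, which is not the same object as $\ag(i,A)\langle Y2 \rangle$ (the former keeps $V=\FF_p^2$ with $Y2$ as a hidden non-leaf; the latter passes to the one-dimensional subspace $\ker\phi_{Y2}$ and discards $Y2$ entirely), so the co-restriction map does not precompose with what you built. Likewise the ``universal property'' of co-restriction from Remark~\ref{rem:cores} applies to morphisms whose shape already sends $Y2$ to $\zeroi$; $\cM_1^{\text{old}}$ has $\alpha(Y2)=Y2$, so it doesn't factor. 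The correct and sufficient step is the one the paper uses: simply compose $\cM_1^{\text{old}}$ with the co-restriction map $\ag(i,A)\langle Y2\rangle \to \ag(i,A)$. The codomain diagrams $\smagate_s\bigl[\nonleafs\cup\{X1,X2,Y1,Y2\}\cup S_1^{\text{old}}\bigr]$ and $\smagate_s^\Omega\bigl[\nonleafs\cup\{X1,X2,Y1\}\cup(S_1^{\text{old}}\cup\{Y2\})\bigr]$ are literally the same induced sub-diagram, so no demotion on the codomain side is required either; the composite acquires $\alpha(Y2)=\zeroi$ automatically via the composition rule.
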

\begin{proof}%
  The partitions $S_1,\dots,S_s$ in each case are identical to those in the corresponding partition constructed in Lemma~\ref{lem:agate-assign-1} or Lemma~\ref{lem:agate-assign-2} without the $\Omega$ decoration, except that $B4;00;\triangle$ is added to $S_1$ in each case.

  Alternatively, we could say that the sub-assignments of $A1,\dots,B4$ are verbatim the same as those in~\eqref{eq:agate-assignments} and~\eqref{eq:agate-assignments-2} respectively. 
  Since these are applied to the gate structure $\agg_s^{\{01,10\}}$ at B4 in place of $\agg_s^{\{00,01,10\}}$, this slight modification to the partitions arises naturally.

  Mercifully we may construct the morphisms $\cM_r$ in each case directly from those in Lemma~\ref{lem:agate-assign-1} or Lemma~\ref{lem:agate-assign-2} without repeating any of those arguments.
  In each case, the new $\cM_1$ is obtained from the old one by composing with the co-restriction map $\ag(i,A)\langle Y2 \rangle \to \ag(i,A)$ or $\ag(i,j,\fs,A)\langle Y2 \rangle \to \ag(i,j,\fs,A)$.
  The other maps $\cM_r$ are obtained from the old ones using Remark~\ref{rem:restriction-morphs} to remove the vertex $Y2$.
\end{proof}

\subsection{The large AGate}%
\label{sub:big-agate}

We are finally in a position to combine lots of individual gates $\smagate_s$ into one large one which is capable of multiplying by arbitrary numbers, implementing Figure~\ref{fig:bilinear} and its generalizations in full.

\begin{definition}%
  \label{def:large-agate}
  Let $k \ge 1$ be an integer.
  The diagram $\agate_s^k$ is defined by the following picture.
  \begin{center}
    \begin{tikzpicture}[
        defnode/.style={rectangle,draw,fill=lightgray,inner sep=2pt,outer sep=0pt,minimum size=10pt},
        gatelabel/.style={rectangle, inner sep=2pt, outer sep=0pt, fill=white,scale=0.6},
        subnode/.style={rounded rectangle, draw, inner sep=5pt, outer sep=0pt,minimum size=15pt},
        leaf/.style={rectangle,draw,fill=leafgreen,inner sep=2pt,outer sep=0pt,minimum size=10pt},
        scale=0.75
      ]

      \node[subnode,scale=0.6] (G0) at (0,0) {$G0 : \smagate_s$};
      \node[subnode,scale=0.6] (G1) at (3.5,0) {$G1 : \smagate_s$};
      \node[scale=0.8] (Gdots) at (5.5,0) {$\dots$};
      \node[subnode,scale=0.6] (Gpen) at (7.5,0) {$G{\scriptstyle(k-2)} : \smagate_s$};
      \node[subnode,scale=0.6] (Glast) at (11,0) {$G{\scriptstyle(k-1)} : \smagate_s$};

      \node[leaf,scale=0.5] (X) at ($(G0)+(-2,0)$) {$X$};
      \node[leaf,scale=0.5] (Y) at ($(Glast)+(2.5,0)$) {$Y$};

      \node[defnode,scale=0.5] (G01a) at ($(G0)!0.5!(G1) + (0,0.5)$) {};
      \node[defnode,scale=0.5] (G01b) at ($(G0)!0.5!(G1) + (0,-0.5)$) {};
      \node[defnode,scale=0.5] (G1-a) at ($(G1)!0.7!(Gdots) + (0,0.5)$) {};
      \node[defnode,scale=0.5] (G1-b) at ($(G1)!0.7!(Gdots) + (0,-0.5)$) {};

      \node[defnode,scale=0.5] (G-2a) at ($(Gdots)!0.3!(Gpen) + (0,0.5)$) {};
      \node[defnode,scale=0.5] (G-2b) at ($(Gdots)!0.3!(Gpen) + (0,-0.5)$) {};
      \node[defnode,scale=0.5] (Gpla) at ($(Gpen)!0.5!(Glast) + (0,0.5)$) {};
      \node[defnode,scale=0.5] (Gplb) at ($(Gpen)!0.5!(Glast) + (0,-0.5)$) {};

      \draw (G0)    -- node[gatelabel, pos=0.15] {$X1$} (X);
      \draw (Glast) -- node[gatelabel, pos=0.15] {$Y1$} (Y);

      \draw (G0)    -- node[gatelabel, pos=0.2] {$Y2$} (G01a);
      \draw (G0)    -- node[gatelabel, pos=0.2] {$Y1$} (G01b);
      \draw (G1)    -- node[gatelabel, pos=0.2] {$X2$} (G01a);
      \draw (G1)    -- node[gatelabel, pos=0.2] {$X1$} (G01b);

      \draw (G1)    -- node[gatelabel, pos=0.2] {$Y2$} (G1-a);
      \draw (G1)    -- node[gatelabel, pos=0.2] {$Y1$} (G1-b);
      \draw (Gpen)  -- node[gatelabel, pos=0.2] {$X2$} (G-2a);
      \draw (Gpen)  -- node[gatelabel, pos=0.2] {$X1$} (G-2b);

      \draw (Gpen)  -- node[gatelabel, pos=0.2] {$Y2$} (Gpla);
      \draw (Gpen)  -- node[gatelabel, pos=0.2] {$Y1$} (Gplb);
      \draw (Glast) -- node[gatelabel, pos=0.2] {$X2$} (Gpla);
      \draw (Glast) -- node[gatelabel, pos=0.2] {$X1$} (Gplb);
    \end{tikzpicture}
  \end{center}
  Again we have assigned redundant labels $X$ to $G0;X1$ and $Y$ to $G(k-1);Y1$. 
\end{definition}

It is fairly straightforward to build this chain of IndAGates, when $k$ is a power of $2$, starting from one IndAGate (and hence from an AggreGate or a $\gc$ gate) by repeatedly applying $\CS(\{Y1,Y2\})$.
The only subtlety is that the resulting right-hand copy is reflected in the north-south axis.
An IndAGate is reflection-symmetric so this does not impact the resulting diagram, but it does mean the labels $A1,\dots,B4$ are inverted.

\begin{lemma}%
  \label{lem:build-large-agate}
  Let $m \ge 0$ and write $k = 2^m$.
  Define $\gamma(X)=(X1,0)$ and $\gamma(Y)=(X1,1)$, unless $m=0$ in which case $\gamma(Y)=(Y1,0)$.
  Then $\smagate_s \entails^{m}_\gamma \agate_s^k$.
\end{lemma}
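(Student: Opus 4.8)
The plan is to induct on $m$, where $k=2^m$. The base case $m=0$ is trivial: then $k=1$ and $\agate_s^1$ is just a single copy of $\smagate_s$ with its leaves $X1$, $Y1$ kept as the redundant labels $X$, $Y$ (and $X2$, $Y2$ retained as further leaves). So $\smagate_s \entails^0_\gamma \agate_s^1$ holds by the trivial ``morphism'' step (a') witnessed by a strong isomorphism, with $\gamma(X)=(X1,0)$, $\gamma(Y)=(Y1,0)$.

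For the inductive step, suppose $\smagate_s \entails^{m}_{\gamma'} \agate_s^{2^m}$ with $\gamma'(X)=(X1,0)$ and $\gamma'(Y)=(X1,1)$ (this normalized form of $\gamma'$ holds once $m\ge 1$; for $m=1$ we must first apply one $\CS$ step to a single $\smagate_s$ and check that the result is $\agate_s^2$, reading off $\gamma$ directly). First I would apply one Cauchy--Schwarz step $\CS(\{Y1,Y2\})$ to $\agate_s^{2^m}$, joining two copies along the leaves $Y1$ and $Y2$. By Definition~\ref{def:diagram-cs} the resulting diagram is two copies of $\agate_s^{2^m}$ with the $Y1$ and $Y2$ vertices of the left copy identified with the $Y1$, $Y2$ vertices of the right copy. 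I then claim this is strongly isomorphic to $\agate_s^{2^{m+1}}$, via the relabelling that sends $G\ell$ in the left copy to $G\ell$ in the chain (for $0\le \ell \le 2^m - 1$) and $G\ell$ in the right copy to $G(2^{m+1}-1-\ell)$, i.e.\ the right-hand chain is reversed. The key point is that an $\smagate_s$ diagram is invariant under the ``north--south'' reflection that swaps $A1\leftrightarrow B1$, ..., $A4\leftrightarrow B4$ and also swaps $X1\leftrightarrow X2$, $Y1\leftrightarrow Y2$; this is visible from the picture in Definition~\ref{def:agate} (the diagram has a mirror symmetry exchanging the $A$-row and $B$-row, since $B2$ and $B3$ play symmetric roles and the only asymmetry, the doubled middle wire between $B2$ and $B3$, is itself symmetric). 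Under this reflection the pin $Y1$ of the reversed gate becomes an $X1$-type pin, $Y2$ becomes $X2$, etc., so the shared vertices on the two chains glue exactly as in the $Y2$--$X2$, $Y1$--$X1$ pattern of Definition~\ref{def:large-agate}. The surviving leaves $X1$ of $G0$ in the left copy and $X1$ of $G(2^{m+1}-1)$ (the reversed image of the left copy's original $Y$) become the new $X$ and $Y$. Hence $\agate_s^{2^m} \entails^1_\delta \agate_s^{2^{m+1}}$ for the appropriate $\delta$, where $\delta$ tracks that the new $Y$ descends from the old $Y$ of the right copy (which by the Cauchy--Schwarz step is a conjugate-translate), giving $\gamma(Y)=(X1,1)$; composing with the inductive hypothesis via rule (c') of Definition~\ref{def:logic-notation-revenge} yields $\smagate_s\entails^{m+1}_\gamma \agate_s^{2^{m+1}}$ with the stated $\gamma$.

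The main obstacle I expect is not the bookkeeping of Cauchy--Schwarz steps but pinning down the reflection isomorphism of $\smagate_s$ precisely and checking it respects all leaves, vertices, vector spaces and linear maps: one must verify that the reflection $A\ell\leftrightarrow B\ell$ together with the internal relabelling of each $\agg_s$ sub-block (swapping the top-level labels $00\leftrightarrow 10$, $01\leftrightarrow 11$ within each $\agg_s$, which corresponds to exchanging the two $\gc_s$ copies joined along the $\triangle=1$ leaf) actually produces an automorphism of the underlying diagram, and then that gluing the reflected copy along $\{Y1,Y2\}$ reproduces the wiring of $\agate_s^{2^{m+1}}$ vertex-for-vertex (including the gray non-leaf ``connector'' vertices). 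Since $\smagate_s$ and $\agg_s$ are both manifestly symmetric under these swaps, this is routine but tedious; I would present it as a single explicit relabelling and invoke Lemma~\ref{lem:patching} (covering each chain by its $\smagate_s$ blocks) to assemble the isomorphism, leaving the per-block verifications to the reader as is done elsewhere in the paper. Finally I would note that the claimed value $\gamma(Y)=(X1,1)$ for $m\ge 1$ is consistent with one conjugation having occurred at the first doubling step and none thereafter (each later doubling contributes a conjugation on the discarded half, not on the surviving $Y$), so the parity stabilizes at $1$; the $m=0$ case records no conjugation, hence $\gamma(Y)=(Y1,0)$ there.
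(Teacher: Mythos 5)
Your overall strategy is exactly the paper's --- base case at $m=0$, then one $\CS(\{Y1,Y2\})$ step to double the chain, followed by a relabelling that reverses the right-hand copy. However, the identification step contains a genuine error: you describe the reflection of the reversed $\smagate_s$ block as a ``north--south'' swap $A\ell \leftrightarrow B\ell$, $X1 \leftrightarrow X2$, $Y1 \leftrightarrow Y2$. This is not the right reflection, and in fact it is not an automorphism of the $\smagate_s$ diagram at all. Look at the wiring in Definition~\ref{def:agate}: the $A$-row has a \emph{single} connector vertex $A23$ (labelled $10;\triangle$) between $A2$ and $A3$, whereas the $B$-row has a \emph{double} connector, $B23a$ ($00;\triangle$) and $B23b$ ($11;\triangle$), between $B2$ and $B3$. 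Swapping $A \leftrightarrow B$ therefore does not respect the edge structure. You note the doubled middle wire yourself and call it ``itself symmetric,'' but its internal symmetry is irrelevant; what matters is that the $A$-row and $B$-row are wired differently, so they cannot be exchanged.

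Moreover, even granting the north--south swap, it would send $Y1 = A4;00;\triangle$ to $B4;00;\triangle = Y2$ and $Y2$ to $Y1$: the $Y$-pins of the right-hand copy would remain $Y$-pins, and the glue point would not assume the $Y1\!-\!X1$, $Y2\!-\!X2$ form required by Definition~\ref{def:large-agate}. In fact you contradict yourself two sentences later when you write ``the pin $Y1$ of the reversed gate becomes an $X1$-type pin, $Y2$ becomes $X2$'' --- that is what an \emph{east--west} reflection does, not a north--south one. The correct symmetry, which the paper uses, reverses the column index within each row: $A1 \leftrightarrow A4$, $A2 \leftrightarrow A3$, $B1 \leftrightarrow B4$, $B2 \leftrightarrow B3$ (encoded as $R;Gi;Aj \leadsto G(2k-1-i);A(5-j)$ and similarly for $B$). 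This is an automorphism because both rows are palindromic in the $j$-index (the $A$-row reads $(01),(10),(01)$ and the $B$-row reads $(01),(00\,\&\,11),(01)$ in both directions), it preserves the vertical joins $AB1 \leftrightarrow AB4$, and it sends $Y1 \mapsto X1$, $Y2 \mapsto X2$ as needed. You should replace your claimed north--south symmetry with this east--west one; the rest of the argument (chain reversal, tracking $\gamma$, the special case at the first doubling) is fine.
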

\begin{proof}%
  When $m=0$, $\agate_s^1$ is the same diagram as $\smagate_s$, up to relabelling, so the statement is trivial.

  We then note that $\agate_s^{2k} = \agate_s^k +_{\{(G(k-1);Y1), (G(k-1);Y2)\}} \agate_s^k$, except that the labellings do not agree: specifically, we have to map
  \begin{align*}
    L;Gi &\leadsto Gi \\
    R;Gi;Aj &\leadsto G(2k-1-i);A(5-j) \\
    R;Gi;Bj &\leadsto G(2k-1-i);B(5-j) \\
    L;X &\leadsto X \\
    R;X &\leadsto Y.
  \end{align*}
  We deduce $\agate_s^k \entails^1_{\gamma'} \agate_s^{2k}$ where $\gamma'$ is the map $(L;\ell) \mapsto (\ell,0)$, $(R;\ell) \mapsto (\ell,1)$ composed with the relabelling above.
  By induction on $m$, and keeping track of the $\gamma$ functions, the result follows.
\end{proof}

The gate structure is, this time, easy to define.
\begin{definition}%
  \label{def:large-agate-gate}
  The gate $\agate_s^k$ consists of the diagram $\agate_s^k$, the modes $\cR = [s]$, and the pins $\cP = \{ X, Y \}$, with all other leaves of $\agate_s^k$ being toggles.
\end{definition}

Again it is useful to identify the sub-diagrams $G0,\dots,G(k-1)$ as gates in their own right (noting Footnote~\ref{footnote:subgate}).
The rule is that, for $0 \le i \le k-1$,
\[
  Gi \colon
  \begin{cases}
    \smagate &\colon i \ne 0,\, i \ne k-1 \\
    \smagate^{\Alpha} &\colon i = 0,\, i \ne k-1 \\
    \smagate^{\Omega} &\colon i \ne 0,\, i = k-1 \\
    \smagate^{A,\Omega} &\colon i = 0,\, i = k-1.
  \end{cases}
\]
Clearly the last option is only available if $k=1$.
Again, this choice is determined by the requirement that assigning the toggles of $\agate_s^k$ should be the same thing as assigning the toggles of all the component gates; that is,
\[
  \toggles^{\agate_s^k} = \bigcup_{i=0}^{k-1} \toggles^{Gi}.
\]

We may now state an assignment result that summarizes all our work on AGates.

\begin{definition}%
  \label{def:lag}
  Let $a$ be any integer.
  Define two linear data $\lag(a,A)$ and $\lag(a,B)$ with $I=\{X,Y\}$, $V=W_1=W_2=\FF_p$ and linear maps
  \vspace{0.5\baselineskip}
  \begin{center}
    \begin{tikzpicture}[
        defnode/.style={rectangle,draw,inner sep=4pt,outer sep=0pt,minimum size=15pt},
        mydot/.style={circle,fill,inner sep=0.5pt},
        leaf/.style={defnode,fill=leafgreen},
        toggle/.style={defnode,fill=leafgreen}
      ]
      \begin{scope}[shift={(-4,0)}]
        \node[defnode,scale=0.6] (U) at (0, 0) {$\diamond$};
        \node[leaf,scale=0.6]    (UX) at ($(U) + (225:1)$) {$X$};
        \node[leaf,scale=0.6]    (UY) at ($(U) + (315:1)$) {$Y$};
        \draw[-stealth] (U) -- (UX);
        \draw[-stealth] (U) -- (UY);
      \end{scope}
      \begin{scope}[shift={(0,0)}]
        \node[defnode,scale=0.6] (U) at (0, 0) {$x$};
        \node[leaf,scale=0.6]    (UX) at ($(U) + (225:1)$) {$ax$};
        \node[leaf,scale=0.6]    (UY) at ($(U) + (315:1)$) {$x$};
        \draw[-stealth] (U) -- (UX);
        \draw[-stealth] (U) -- (UY);

        \node["{$\lag(a,A)$}" {scale=0.9}] at (0, -1.7) {};
      \end{scope}
      \begin{scope}[shift={(3,0)}]
        \node[defnode,scale=0.6] (U) at (0, 0) {$x$};
        \node[leaf,scale=0.6]    (UX) at ($(U) + (225:1)$) {$x$};
        \node[leaf,scale=0.6]    (UY) at ($(U) + (315:1)$) {$ax$};
        \draw[-stealth] (U) -- (UX);
        \draw[-stealth] (U) -- (UY);

        \node["{$\lag(a,B)$}" {scale=0.9}] at (0, -1.7) {};
      \end{scope}
    \end{tikzpicture}
  \end{center}
\end{definition}

\begin{lemma}%
  \label{lem:agate-master}
  Suppose $s \ge 2$, and $k \ge 1$ and $a$ are integers with $|a| < 2^{k+1}$.  
  Then there is an assignment $\abilin_s^k(a)$ of $\agate_s^k$ with $D_0 = \trivial(\{X,Y\})$, $D_1 = \lag(a, A)$, $D_2 = \lag(a, B)$ and $D_r = \const(\{X,Y\})$ for $3 \le r \le s$.
\end{lemma}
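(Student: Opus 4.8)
The plan is to construct $\abilin_s^k(a)$ by the same device used in the proof of Lemma~\ref{lem:agate-assign-1}, one level up: that proof assembled an assignment of $\smagate_s$ out of eight AggreGate sub-assignments, and here I would assemble an assignment of $\agate_s^k$ out of $k$ IndAGate sub-assignments, one for each component gate $G0,\dots,G(k-1)$, chosen so as to implement the double-and-add binary multiplier sketched in Section~\ref{sub:repr} and Figure~\ref{fig:bilinear}. First I would dispose of the easy cases. When $a=0$ all bits below are zero and the sign is taken to be $+1$. When $k=1$ the diagram $\agate_s^1$ is, up to relabelling, a single copy of $\smagate_s^{\Alpha,\Omega}$ with pins $\{X,Y\}$, so the claim is precisely the assignment $\Initial^\Omega(b_0,b_1,\fs)$ of Lemma~\ref{lem:agate-final-assignments}, where $|a|=b_0+2b_1$ and $a=\fs|a|$; note $|a|<2^2$ guarantees two bits suffice. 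So assume $k\ge 2$, write $a=\fs|a|$ with $\fs\in\{\pm1\}$, and expand $|a|=\sum_{t\ge 0} b_t 2^t$ with $b_t\in\{0,1\}$; since $|a|<2^{k+1}$ every bit $b_t$ with $t>k$ vanishes.

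Next I would fix the sub-gate assignments. The initial gate $G0\colon\smagate_s^{\Alpha}$ gets $\Initial(b_0,b_1,\fs)$ (Lemma~\ref{lem:agate-assign-2}), which seeds the ``accumulator'' rail with $b_0+2b_1$ (times $\fs x$) and the ``powers of two'' rail with $2x$, absorbing the two low bits and the sign; each interior gate $Gi\colon\smagate_s$ for $1\le i\le k-2$ gets $\Middle(b_{i+1})$ (Lemma~\ref{lem:agate-assign-1}), which doubles once and conditionally adds bit $b_{i+1}$; and the final gate $G(k-1)\colon\smagate_s^{\Omega}$ gets $\Middle^\Omega(b_k)$ (Lemma~\ref{lem:agate-final-assignments}). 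Since the gate structures on $G0,\dots,G(k-1)$ were chosen precisely so that $\toggles^{\agate_s^k}=\bigcup_i \toggles^{Gi}$, the toggle-partitions of the component assignments combine into a single partition $S_r=\bigcup_i S_r^{Gi}$ of $\toggles^{\agate_s^k}$ for each $r\in[s]$, and the component diagrams have the shapes $D_0^{Gi}=\trivial$, $D_1^{Gi}$ a copy of $\ag(\cdot,A)$ (or $\ag(\cdot,\cdot,\fs,A)$, or their $\Omega$-variants), $D_2^{Gi}$ a copy of the corresponding $\ag(\cdot,B)$-type datum, and $D_r^{Gi}=\const$ for $r\ge 3$.

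Then, mirroring the ``specialize and patch'' step of the proof of Lemma~\ref{lem:agate-assign-1}, I would, for each mode $r$ and each $i$, compose $\cM_r^{Gi}$ with a specialization morphism $\Theta_{r,i}$ whose domain is the prescribed $D_r$ --- namely $\lag(a,A)$, $\lag(a,B)$, $\trivial(\{X,Y\})$ or $\const(\{X,Y\})$ --- and which records, on each wire of $Gi$, the relevant partial product of the double-and-add run. Concretely, in mode $r=2$ the wires of $Gi$ carry the accumulator value $x\sum_{t\le i+1} b_t 2^t$ (times $\fs$ inside $G0$) together with the current power $2^{i+2}x$, so that the rightmost output of $G(k-1)$ is $ax$ and the $X$-leaf still carries $x$ --- this is exactly $\lag(a,B)$; in mode $r=1$ the same gate is read right-to-left and carries the complementary partial products realising $\lag(a,A)$; modes $r\ge 3$ are the constant assignment. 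Each $\Theta_{r,i}$ exists as an honest diagram morphism by a routine check of~\eqref{eq:diag-morph}, entirely analogous to Figure~\ref{fig:eight-morphisms}. One then applies Lemma~\ref{lem:patching} to glue the composites into morphisms $\cM_r\colon D_r\to\agate_s^k[\nonleafs\cup\pins\cup S_r]$, checks that each $\cM_r$ respects $X,Y$ and kills $S_r$, and the assignment is complete.

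I expect the genuine work --- though it is bookkeeping rather than a conceptual obstacle --- to be the verification of the compatibility hypothesis of Lemma~\ref{lem:patching} at the shared vertices $Gi;Y1 = G(i{+}1);X1$ and $Gi;Y2 = G(i{+}1);X2$: one must confirm, for every $r$ and every $0\le i\le k-2$, that the two incident composites assign the same $\theta$ there, which amounts to the identities ``$\mathrm{acc}_{i+1}=2\,\mathrm{acc}_i + b_{i+1}\,\mathrm{pow}_i$'' and ``$\mathrm{pow}_{i+1}=2\,\mathrm{pow}_i$'' on the $B$-side (and their mirror images on the $A$-side), together with the seeding and termination identities at the two ends supplied by the $\Initial$ and $\Middle^\Omega$ assignments. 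The arithmetic is just the correctness of double-and-add multiplication run in both directions; the care needed is in stating the $\Theta_{r,i}$ uniformly in the bit pattern of $a$ and simultaneously in all $s$ modes, and in tracking the alternation of the internal labels $A1,\dots,B4$ along the chain. A convenient fallback I would keep in reserve is an induction on $k$ that peels the final $\smagate_s$ off the right of $\agate_s^k$, localizing the compatibility check to a single gate boundary per step.
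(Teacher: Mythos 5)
Your proposal matches the paper's proof essentially step for step: the same sign/bit decomposition of $a$, the same choice of sub-assignments ($\Initial(a_0,a_1,\fs)$ on $G0$, $\Middle(a_{\ell+1})$ on the interior gates, $\Middle^\Omega(a_k)$ on $G(k-1)$), the same ``specialize and patch via Lemma~\ref{lem:patching}'' strategy, and the same compatibility identities at the shared $Y$/$X$ pins. The only small slip is in the narrative for mode $r=2$: in the paper the power-of-two rail carries $\fs 2^{\ell} x$ (not $2^{\ell+1}x$ without the sign), so the sign $\fs$ travels along \emph{both} rails throughout the chain rather than only ``inside $G0$''; this is a bookkeeping detail that your uniform $\Theta_{r,i}$ would need to absorb, and does not affect the correctness of the approach.
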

\begin{proof}%
  As in the proofs of Lemma~\ref{lem:agate-assign-1} and Lemma~\ref{lem:agate-assign-2}, we obtain assignments of $G0,\dots,G(k-1)$ by the results in Section~\ref{sub:agate} and then argue, with the aid of suitable pictures, that these can be combined to yield the morphisms $\cM_0,\cM_1,\dots,\cM_s$.  

  We write $\fs \in \{\pm 1\}$ for the sign of $a$ (if $a=0$ then either value works), and $a_0,a_1,\dots,a_{k} \in \{0,1\}$ for the binary digits of $|a|$ (extended with zeros if necessary), so that
  \[
    a = \fs \left( \sum_{\ell=0}^{k} 2^\ell a_\ell \right).
  \]
  We then consider the following assignments from Section~\ref{sub:agate}:
  \[
    G0 : \begin{rcases}
      \begin{dcases}
        \Initial(a_0, a_1, \fs) &\colon k > 1 \\
        \Initial^{\Omega}(a_0, a_1, \fs) &\colon k = 1.
      \end{dcases}
    \end{rcases}
  \]
  and for $1 \le \ell \le k-1$,
  \[
    Gi : \begin{rcases}
      \begin{dcases}
        \Middle(a_{\ell+1}) &\colon \ell \ne k-1 \\
        \Middle^{\Omega}(a_{\ell+1}) &\colon \ell = k - 1.
      \end{dcases}
    \end{rcases}
  \]
  As usual, these provide us with partitions $S_1^\ell,\dots,S_s^\ell$ and morphisms $\cM_0^\ell,\dots,\cM_s^\ell$. 
  The domains $D_0^\ell,\dots,D_s^\ell$ may be read off from the statements of Lemma~\ref{lem:agate-assign-1}, Lemma~\ref{lem:agate-assign-2} and Lemma~\ref{lem:agate-final-assignments}.
  We again define the partition $S_1,\dots,S_s$ by taking unions $S_r = \bigcup_{\ell=0}^{k-1} S_r^\ell$ over the  
  component gates.

  We first analyse $\cM_2$.
  Define the integers $t_1,\dots,t_k$ by
  \[
    t_\ell = \fs \left( \sum_{j=0}^\ell 2^j a_j \right)
  \]
  so that $t_k=a$.
  Once again the heart of the proof is a picture, Figure~\ref{fig:big-agate-b}.
  \begin{figure}[htbp]
  \begin{center}
    \begin{tikzpicture}[
        defnode/.style={rectangle,draw,fill=lightgray,inner sep=3pt,outer sep=0pt,minimum size=10pt},
        gatelabel/.style={rectangle, inner sep=2pt, outer sep=0pt, fill=white,scale=0.6},
        subnode/.style={rounded rectangle, draw, inner sep=5pt, outer sep=0pt,minimum size=15pt},
        leaf/.style={rectangle,draw,fill=leafgreen,inner sep=3pt,outer sep=0pt,minimum size=10pt},
        scale=0.87
      ]

      \node[subnode,scale=0.6] (G0) at (0,0) {$\ag(a_0,a_1,\fs,B)$};
      \node[subnode,scale=0.6] (G1) at (3,0) {$\ag(a_2,B)$};
      \node[scale=0.8]         (Gdots) at (5,0) {$\dots$};
      \node[subnode,scale=0.6] (Gpen) at (7.3,0) {$\ag(a_{k-1},B)$};
      \node[subnode,scale=0.6] (Glast) at (10.3,0) {$\ag^\Omega(a_{k},B)$};

      \node[leaf,scale=0.5] (X) at ($(G0)+(-1.8,0)$) {$x$};
      \node[leaf,scale=0.5] (Y) at ($(Glast)+(1.7,0)$) {$ax$};

      \node[defnode,scale=0.5] (G01a) at ($(G0)!0.5!(G1)      + (0, 0.7)$) {$2\fs x$};
      \node[defnode,scale=0.5] (G01b) at ($(G0)!0.5!(G1)      + (0,-0.7)$) {$t_1 x$};
      \node[defnode,scale=0.5] (G1-a) at ($(G1)!0.7!(Gdots)   + (0, 0.7)$) {$4 \fs x$};
      \node[defnode,scale=0.5] (G1-b) at ($(G1)!0.7!(Gdots)   + (0,-0.7)$) {$t_2 x$};

      \node[defnode,scale=0.5] (G-2a) at ($(Gdots)!0.3!(Gpen) + (0, 0.7)$) {$2^{k-2} \fs x$};
      \node[defnode,scale=0.5] (G-2b) at ($(Gdots)!0.3!(Gpen) + (0,-0.7)$) {$t_{k-2} x$};
      \node[defnode,scale=0.5] (Gpla) at ($(Gpen)!0.5!(Glast) + (0, 0.7)$) {$2^{k-1} \fs x$};
      \node[defnode,scale=0.5] (Gplb) at ($(Gpen)!0.5!(Glast) + (0,-0.7)$) {$t_{k-1} x$};

      \draw (G0)    -- node[gatelabel, pos=0.15] {$X1$} (X);
      \draw (Glast) -- node[gatelabel, pos=0.15] {$Y1$} (Y);

      \draw (G0)    -- node[gatelabel, pos=0.2] {$Y2$} (G01a);
      \draw (G0)    -- node[gatelabel, pos=0.2] {$Y1$} (G01b);
      \draw (G1)    -- node[gatelabel, pos=0.2] {$X2$} (G01a);
      \draw (G1)    -- node[gatelabel, pos=0.2] {$X1$} (G01b);

      \draw (G1)    -- node[gatelabel, pos=0.2] {$Y2$} (G1-a);
      \draw (G1)    -- node[gatelabel, pos=0.2] {$Y1$} (G1-b);
      \draw (Gpen)  -- node[gatelabel, pos=0.2] {$X2$} (G-2a);
      \draw (Gpen)  -- node[gatelabel, pos=0.2] {$X1$} (G-2b);

      \draw (Gpen)  -- node[gatelabel, pos=0.2] {$Y2$} (Gpla);
      \draw (Gpen)  -- node[gatelabel, pos=0.2] {$Y1$} (Gplb);
      \draw (Glast) -- node[gatelabel, pos=0.2] {$X2$} (Gpla);
      \draw (Glast) -- node[gatelabel, pos=0.2] {$X1$} (Gplb);
    \end{tikzpicture}
  \end{center}
  \caption{The pictorial summary of the morphism $\cM_2$ from $\abilin_s^k(a)$.}%
  \label{fig:big-agate-b}
  \end{figure}
  
  To be precise, the four pins of $G\ell$ for $1 \le \ell \le k-1$ carry values
  \[
    \begin{aligned}
      X1 &\colon t_\ell x & Y1 &\colon t_{\ell+1} x \\
      X2 &\colon \fs 2^\ell x & Y2 &\colon \fs 2^{\ell+1} x
    \end{aligned}
  \]
  only omitting $Y2$ when $\ell=k-1$.
  In particular the $Y$ pins from $G\ell$ coincide with the $X$ pins of $G(\ell+1)$ where applicable, as they must.

  We should verify that each sub-gate $G\ell$ is ``doing what it supposed to do'', insofar as its pins are a specialization $\Theta_\ell \circ \cM_2^\ell$ of the morphism $\cM_2^\ell$.
  In the case $\ell=0$, $k>1$, the values on the pins are exactly those in $\ag(i,j,\fs,B)$ (see Definition~\ref{def:initial-data}) and $\Theta_0$ is simply
  \begin{center}
    \begin{tikzpicture}[
        defnode/.style={rectangle,draw,inner sep=4pt,outer sep=0pt,minimum size=15pt},
        mydot/.style={circle,fill,inner sep=0.5pt},
        leaf/.style={defnode,fill=leafgreen},
        nonleaf/.style={defnode,fill=lightgray},
        toggle/.style={defnode,fill=leafgreen}
      ]
      \node[scale=0.6] at (5, -1.3) {$\Theta_{0} \colon \lag(a,B) \to \ag(a_0,a_1,\fs,B)(\{Y1,Y2\} \leadsto \nonleafs)$};
        \begin{scope}[shift={(0,0)}]
          \node[defnode,scale=0.6] (U) at (0, 0) {$\diamond$};
          \node[leaf,scale=0.6]    (UXX) at ($(U) + (225:1)$) {$X$};
          \node[leaf,scale=0.6]    (UXY) at ($(U) + (315:1)$) {$Y$};
          \draw[-stealth] (U) -- (UXX);
          \draw[-stealth] (U) -- (UXY);

          \node[defnode,scale=0.6] (V) at (3, 0) {$\diamond$};
          \node[leaf,scale=0.6]    (VXX1) at ($(V) + (225:1)$) {$X1$};
          \node[nonleaf,scale=0.6]    (VXY1) at ($(V) + (315:1)$) {$Y1$};
          \node[nonleaf,scale=0.6]    (VXY2) at ($(V) + (045:1)$) {$Y2$};
          \draw[-stealth] (V) -- (VXX1);
          \draw[-stealth] (V) -- (VXY1);
          \draw[-stealth] (V) -- (VXY2);

          \draw[dashed, -angle 60] (U) to[bend left=5] (V);
          \draw[dotted, -angle 60] (UXX) to[bend right=15] (VXX1);
          \draw[dashed, -angle 60] (U) to[bend left=10] (VXY1);
          \draw[dashed, -angle 60] (U) to[bend left=10] (VXY2);
        \end{scope}
        \begin{scope}[shift={(6,0)}]
          \node[defnode,scale=0.6] (U) at (0, 0) {$x$};
          \node[leaf,scale=0.6]    (UXX) at ($(U) + (225:1)$) {$x$};
          \node[leaf,scale=0.6]    (UXY) at ($(U) + (315:1)$) {$ax$};
          \draw[-stealth] (U) -- (UXX);
          \draw[-stealth] (U) -- (UXY);

          \node[defnode,scale=0.6] (V) at (3, 0) {$x$};
          \node[leaf,scale=0.6]    (VXX1) at ($(V) + (225:1)$) {$x$};
          \node[nonleaf,scale=0.6]    (VXY1) at ($(V) + (315:1)$) {$\fs(a_0+2a_1)x$};
          \node[nonleaf,scale=0.6]    (VXY2) at ($(V) + (045:1)$) {$2 \fs x$};
          \draw[-stealth] (V) -- (VXX1);
          \draw[-stealth] (V) -- (VXY1);
          \draw[-stealth] (V) -- (VXY2);

          \draw[dashed, -angle 60] (U) to[bend left=5] (V);
          \draw[dotted, -angle 60] (UXX) to[bend right=15] (VXX1);
          \draw[dashed, -angle 60] (U) to[bend left=10] (VXY1);
          \draw[dashed, -angle 60] (U) to[bend left=10] (VXY2);
        \end{scope}
    \end{tikzpicture}
  \end{center}
  where we note that $\fs(a_0+2a_1) = t_1$.
  For $1 \le \ell < k-1$, the morphism $\Theta_\ell$ is given by
  \begin{center}
    \begin{tikzpicture}[
        defnode/.style={rectangle,draw,inner sep=4pt,outer sep=0pt,minimum size=15pt},
        mydot/.style={circle,fill,inner sep=0.5pt},
        leaf/.style={defnode,fill=leafgreen},
        nonleaf/.style={defnode,fill=lightgray},
        toggle/.style={defnode,fill=leafgreen}
      ]
      \node[scale=0.6] at (5, -1.3) {$\Theta_{0} \colon \lag(a,B) \to \ag(a_{\ell+1},\fs,B)(\{X1,X2,Y1,Y2\} \leadsto \nonleafs)$};
        \begin{scope}[shift={(0,0)}]
          \node[defnode,scale=0.6] (U) at (0, 0) {$\diamond$};
          \node[leaf,scale=0.6]    (UXX) at ($(U) + (225:1)$) {$X$};
          \node[leaf,scale=0.6]    (UXY) at ($(U) + (315:1)$) {$Y$};
          \draw[-stealth] (U) -- (UXX);
          \draw[-stealth] (U) -- (UXY);

          \node[defnode,scale=0.6] (V) at (3, 0) {$\diamond$};
          \node[nonleaf,scale=0.6]    (VXX1) at ($(V) + (225:1)$) {$X1$};
          \node[nonleaf,scale=0.6]    (VXX2) at ($(V) + (135:1)$) {$X2$};
          \node[nonleaf,scale=0.6]    (VXY1) at ($(V) + (315:1)$) {$Y1$};
          \node[nonleaf,scale=0.6]    (VXY2) at ($(V) + (045:1)$) {$Y2$};
          \draw[-stealth] (V) -- (VXX1);
          \draw[-stealth] (V) -- (VXX2);
          \draw[-stealth] (V) -- (VXY1);
          \draw[-stealth] (V) -- (VXY2);

          \draw[dashed, -angle 60] (U) to[bend left=5] (V);
          \draw[dashed, -angle 60] (U) to[bend left=10] (VXX1);
          \draw[dashed, -angle 60] (U) to[bend right=5] (VXX2);
          \draw[dashed, -angle 60] (U) to[bend left=10] (VXY1);
          \draw[dashed, -angle 60] (U) to[bend right=5] (VXY2);
        \end{scope}
        \begin{scope}[shift={(6,0)}]
          \node[defnode,scale=0.6] (U) at (0, 0) {$x$};
          \node[leaf,scale=0.6]    (UXX) at ($(U) + (225:1)$) {$x$};
          \node[leaf,scale=0.6]    (UXY) at ($(U) + (315:1)$) {$ax$};
          \draw[-stealth] (U) -- (UXX);
          \draw[-stealth] (U) -- (UXY);

          \node[defnode,scale=0.6] (V) at (3, 0) {$(t_\ell x, \fs 2^\ell x)$};
          \node[nonleaf,scale=0.6]    (VXX1) at ($(V) + (225:1)$) {$t_\ell x$};
          \node[nonleaf,scale=0.6]    (VXX2) at ($(V) + (135:1)$) {$\fs 2^\ell x$};
          \node[nonleaf,scale=0.6]    (VXY1) at ($(V) + (315:1)$) {$t_{\ell+1} x$};
          \node[nonleaf,scale=0.6]    (VXY2) at ($(V) + (045:1)$) {$\fs 2^{\ell+1} x$};
          \draw[-stealth] (V) -- (VXX1);
          \draw[-stealth] (V) -- (VXX2);
          \draw[-stealth] (V) -- (VXY1);
          \draw[-stealth] (V) -- (VXY2);

          \draw[dashed, -angle 60] (U) to[bend left=5] (V);
          \draw[dashed, -angle 60] (U) to[bend left=10] (VXX1);
          \draw[dashed, -angle 60] (U) to[bend right=5] (VXX2);
          \draw[dashed, -angle 60] (U) to[bend left=10] (VXY1);
          \draw[dashed, -angle 60] (U) to[bend right=5] (VXY2);
        \end{scope}
    \end{tikzpicture}
  \end{center}
  and the key consistency checks are that $2 (\fs 2^\ell x) = \fs 2^{\ell+1} x$ and that $t_\ell x + 2 a_{\ell+1}(\fs 2^\ell x) = t_{\ell+1} x$.
  Finally for $\ell=k-1$ we make a small modification:
  \begin{center}
    \begin{tikzpicture}[
        defnode/.style={rectangle,draw,inner sep=4pt,outer sep=0pt,minimum size=15pt},
        mydot/.style={circle,fill,inner sep=0.5pt},
        leaf/.style={defnode,fill=leafgreen},
        nonleaf/.style={defnode,fill=lightgray},
        toggle/.style={defnode,fill=leafgreen}
      ]
      \node[scale=0.6] at (5, -1.3) {$\Theta_{0} \colon \lag(a,B) \to \ag(a_{\ell+1},\fs,B)[\{X1,X2,Y1\}](\{X1,X2\} \leadsto \nonleafs)$};
        \begin{scope}[shift={(0,0)}]
          \node[defnode,scale=0.6] (U) at (0, 0) {$\diamond$};
          \node[leaf,scale=0.6]    (UXX) at ($(U) + (225:1)$) {$X$};
          \node[leaf,scale=0.6]    (UXY) at ($(U) + (315:1)$) {$Y$};
          \draw[-stealth] (U) -- (UXX);
          \draw[-stealth] (U) -- (UXY);

          \node[defnode,scale=0.6] (V) at (3, 0) {$\diamond$};
          \node[nonleaf,scale=0.6]    (VXX1) at ($(V) + (225:1)$) {$X1$};
          \node[nonleaf,scale=0.6]    (VXX2) at ($(V) + (135:1)$) {$X2$};
          \node[leaf,scale=0.6]    (VXY1) at ($(V) + (315:1)$) {$Y1$};
          \draw[-stealth] (V) -- (VXX1);
          \draw[-stealth] (V) -- (VXX2);
          \draw[-stealth] (V) -- (VXY1);

          \draw[dashed, -angle 60] (U) to[bend left=5] (V);
          \draw[dashed, -angle 60] (U) to[bend left=10] (VXX1);
          \draw[dashed, -angle 60] (U) to[bend right=5] (VXX2);
          \draw[dotted, -angle 60] (UXY) to[bend right=15] (VXY1);
        \end{scope}
        \begin{scope}[shift={(6,0)}]
          \node[defnode,scale=0.6] (U) at (0, 0) {$x$};
          \node[leaf,scale=0.6]    (UXX) at ($(U) + (225:1)$) {$x$};
          \node[leaf,scale=0.6]    (UXY) at ($(U) + (315:1)$) {$ax$};
          \draw[-stealth] (U) -- (UXX);
          \draw[-stealth] (U) -- (UXY);

          \node[defnode,scale=0.6] (V) at (3, 0) {$(t_{k-1} x, \fs 2^{k-1} x)$};
          \node[nonleaf,scale=0.6]    (VXX1) at ($(V) + (225:1)$) {$t_{k-1} x$};
          \node[nonleaf,scale=0.6]    (VXX2) at ($(V) + (135:1)$) {$\fs 2^{k-1} x$};
          \node[leaf,scale=0.6]    (VXY1) at ($(V) + (315:1)$) {$t_{k} x$};
          \draw[-stealth] (V) -- (VXX1);
          \draw[-stealth] (V) -- (VXX2);
          \draw[-stealth] (V) -- (VXY1);

          \draw[dashed, -angle 60] (U) to[bend left=5] (V);
          \draw[dashed, -angle 60] (U) to[bend left=10] (VXX1);
          \draw[dashed, -angle 60] (U) to[bend right=5] (VXX2);
          \draw[dotted, -angle 60] (UXY) to[bend right=15] (VXY1);
        \end{scope}
    \end{tikzpicture}
  \end{center}
  using the observation that $a=t_k$.
  Note that in each case these morphisms were entirely forced by Figure~\ref{fig:big-agate-b}.
  Applying Lemma~\ref{lem:patching} to $\Theta_\ell \circ \cM_2^\ell$, as in the proof of Lemma~\ref{lem:agate-assign-1}, we get the required morphism $\cM_2$.

  When $k=1$ these pictures are not quite correct but in this case the construction is very direct from Lemma~\ref{lem:agate-final-assignments}.

  Again we cut corners for $\cM_0$, $\cM_1$ and $\cM_3,\dots,\cM_s$ by showing the analogue of Figure~\ref{fig:big-agate-b} and leaving it to the reader to imagine or construct the maps $\Theta_\ell$.
  
  We first consider $\cM_1$.
  We define integers $u_0,u_1,\dots,u_k$ by
  \[
    u_\ell = \sum_{i=\ell}^k 2^{i-\ell} a_i
  \]
  which obey the recurrence $u_\ell = a_\ell + 2 u_{\ell+1}$ for $0 \le \ell < k$.
  Note also that $\fs u_0 = a$.
  Then Figure~\ref{fig:big-agate-a} describes the morphism $\Theta_1$.
  Again to be precise, the pins on $G\ell$ for $1 \le \ell \le k-1$ carry values
  \[
    \begin{aligned}
      X1 &\colon x & Y1 &\colon x \\
      X2 &\colon {-2} u_{\ell+1} x & Y2 &\colon {-2} u_{\ell+2} x
    \end{aligned}
  \]
  but omitting $Y2$ for $\ell=k-1$.
  
  \begin{figure}[htbp]
    \begin{center}
      \begin{tikzpicture}[
          defnode/.style={rectangle,draw,fill=lightgray,inner sep=3pt,outer sep=0pt,minimum size=10pt},
          gatelabel/.style={rectangle, inner sep=2pt, outer sep=0pt, fill=white,scale=0.6},
          subnode/.style={rounded rectangle, draw, inner sep=5pt, outer sep=0pt,minimum size=15pt},
          leaf/.style={rectangle,draw,fill=leafgreen,inner sep=3pt,outer sep=0pt,minimum size=10pt},
          scale=0.87
        ]
        \node[subnode,scale=0.6] (G0) at (0,0) {$\ag(a_0,a_1,\fs,A)$};
        \node[subnode,scale=0.6] (G1) at (3,0) {$\ag(a_2,A)$};
        \node[scale=0.8] (Gdots) at (5.0,0) {$\dots$};
        \node[subnode,scale=0.6] (Gpen) at (7.3,0) {$\ag(a_{k-1},A)$};
        \node[subnode,scale=0.6] (Glast) at (10.3,0) {$\ag(a_k, A)$};

        \node[leaf,scale=0.5] (X) at ($(G0)+(-1.8,0)$) {$ax$};
        \node[leaf,scale=0.5] (Y) at ($(Glast)+(1.7,0)$) {$x$};

        \node[defnode,scale=0.5] (G01a) at ($(G0)!0.5!(G1)      + (0, 0.7)$) {$-2 u_2 x$};
        \node[defnode,scale=0.5] (G01b) at ($(G0)!0.5!(G1)      + (0,-0.7)$) {$x$};
        \node[defnode,scale=0.5] (G1-a) at ($(G1)!0.7!(Gdots)   + (0, 0.7)$) {$-2 u_3 x$};
        \node[defnode,scale=0.5] (G1-b) at ($(G1)!0.7!(Gdots)   + (0,-0.7)$) {$x$};

        \node[defnode,scale=0.5] (G-2a) at ($(Gdots)!0.3!(Gpen) + (0, 0.7)$) {$-2 u_{k-1} x$};
        \node[defnode,scale=0.5] (G-2b) at ($(Gdots)!0.3!(Gpen) + (0,-0.7)$) {$x$};
        \node[defnode,scale=0.5] (Gpla) at ($(Gpen)!0.5!(Glast) + (0, 0.7)$) {$-2 u_k x$};
        \node[defnode,scale=0.5] (Gplb) at ($(Gpen)!0.5!(Glast) + (0,-0.7)$) {$x$};

        \draw (G0)    -- node[gatelabel, pos=0.15] {$X1$} (X);
        \draw (Glast) -- node[gatelabel, pos=0.15] {$Y1$} (Y);

        \draw (G0)    -- node[gatelabel, pos=0.2] {$Y2$} (G01a);
        \draw (G0)    -- node[gatelabel, pos=0.2] {$Y1$} (G01b);
        \draw (G1)    -- node[gatelabel, pos=0.2] {$X2$} (G01a);
        \draw (G1)    -- node[gatelabel, pos=0.2] {$X1$} (G01b);

        \draw (G1)    -- node[gatelabel, pos=0.2] {$Y2$} (G1-a);
        \draw (G1)    -- node[gatelabel, pos=0.2] {$Y1$} (G1-b);
        \draw (Gpen)  -- node[gatelabel, pos=0.2] {$X2$} (G-2a);
        \draw (Gpen)  -- node[gatelabel, pos=0.2] {$X1$} (G-2b);

        \draw (Gpen)  -- node[gatelabel, pos=0.2] {$Y2$} (Gpla);
        \draw (Gpen)  -- node[gatelabel, pos=0.2] {$Y1$} (Gplb);
        \draw (Glast) -- node[gatelabel, pos=0.2] {$X2$} (Gpla);
        \draw (Glast) -- node[gatelabel, pos=0.2] {$X1$} (Gplb);
      \end{tikzpicture}
    \end{center}
  \caption{The pictorial summary of the morphism $\cM_1$ from $\abilin_s^k(a)$.}%
  \label{fig:big-agate-a}
  \end{figure}

  The pictures for $\cM_0$ and $\cM_3$ (with $\cM_4,\dots,\cM_s$ being analogous to the latter) are straightforward by comparison and are shown in Figure~\ref{fig:big-agate-other}.

  \begin{figure}[htbp]
  \begin{center}
    \begin{tikzpicture}[
        defnode/.style={rectangle,draw,fill=lightgray,inner sep=3pt,outer sep=0pt,minimum size=10pt},
        gatelabel/.style={rectangle, inner sep=2pt, outer sep=0pt, fill=white,scale=0.6},
        subnode/.style={rounded rectangle, draw, inner sep=5pt, outer sep=0pt,minimum size=15pt},
        leaf/.style={rectangle,draw,fill=leafgreen,inner sep=3pt,outer sep=0pt,minimum size=10pt},
        scale=0.87
      ]
      \node[scale=0.8] at (-3,0) {$\cM_0:$};

      \node[subnode,scale=0.6] (G0) at (0,0) {$\trivial$};
      \node[subnode,scale=0.6] (G1) at (2.5,0) {$\trivial$};
      \node[scale=0.8] (Gdots) at (4.5,0) {$\dots$};
      \node[subnode,scale=0.6] (Gpen) at (6.5,0) {$\trivial$};
      \node[subnode,scale=0.6] (Glast) at (9.0,0) {$\trivial$};

      \node[leaf,scale=0.5] (X) at ($(G0)+(-1.7,0)$) {$x$};
      \node[leaf,scale=0.5] (Y) at ($(Glast)+(1.7,0)$) {$y$};

      \node[defnode,scale=0.5] (G01a) at ($(G0)!0.5!(G1)      + (0, 0.7)$) {$0$};
      \node[defnode,scale=0.5] (G01b) at ($(G0)!0.5!(G1)      + (0,-0.7)$) {$0$};
      \node[defnode,scale=0.5] (G1-a) at ($(G1)!0.7!(Gdots)   + (0, 0.7)$) {$0$};
      \node[defnode,scale=0.5] (G1-b) at ($(G1)!0.7!(Gdots)   + (0,-0.7)$) {$0$};

      \node[defnode,scale=0.5] (G-2a) at ($(Gdots)!0.3!(Gpen) + (0, 0.7)$) {$0$};
      \node[defnode,scale=0.5] (G-2b) at ($(Gdots)!0.3!(Gpen) + (0,-0.7)$) {$0$};
      \node[defnode,scale=0.5] (Gpla) at ($(Gpen)!0.5!(Glast) + (0, 0.7)$) {$0$};
      \node[defnode,scale=0.5] (Gplb) at ($(Gpen)!0.5!(Glast) + (0,-0.7)$) {$0$};

      \draw (G0)    -- node[gatelabel, pos=0.15] {$X1$} (X);
      \draw (Glast) -- node[gatelabel, pos=0.15] {$Y1$} (Y);

      \draw (G0)    -- node[gatelabel, pos=0.2] {$Y2$} (G01a);
      \draw (G0)    -- node[gatelabel, pos=0.2] {$Y1$} (G01b);
      \draw (G1)    -- node[gatelabel, pos=0.2] {$X2$} (G01a);
      \draw (G1)    -- node[gatelabel, pos=0.2] {$X1$} (G01b);

      \draw (G1)    -- node[gatelabel, pos=0.2] {$Y2$} (G1-a);
      \draw (G1)    -- node[gatelabel, pos=0.2] {$Y1$} (G1-b);
      \draw (Gpen)  -- node[gatelabel, pos=0.2] {$X2$} (G-2a);
      \draw (Gpen)  -- node[gatelabel, pos=0.2] {$X1$} (G-2b);

      \draw (Gpen)  -- node[gatelabel, pos=0.2] {$Y2$} (Gpla);
      \draw (Gpen)  -- node[gatelabel, pos=0.2] {$Y1$} (Gplb);
      \draw (Glast) -- node[gatelabel, pos=0.2] {$X2$} (Gpla);
      \draw (Glast) -- node[gatelabel, pos=0.2] {$X1$} (Gplb);
    \end{tikzpicture}
    \\[1\baselineskip]
    \begin{tikzpicture}[
        defnode/.style={rectangle,draw,fill=lightgray,inner sep=3pt,outer sep=0pt,minimum size=10pt},
        gatelabel/.style={rectangle, inner sep=2pt, outer sep=0pt, fill=white,scale=0.6},
        subnode/.style={rounded rectangle, draw, inner sep=5pt, outer sep=0pt,minimum size=15pt},
        leaf/.style={rectangle,draw,fill=leafgreen,inner sep=3pt,outer sep=0pt,minimum size=10pt},
        scale=0.87
      ]
      \node[scale=0.8] at (-3,0) {$\cM_3:$};

      \node[subnode,scale=0.6] (G0) at (0,0) {$\const$};
      \node[subnode,scale=0.6] (G1) at (2.5,0) {$\const$};
      \node[scale=0.8] (Gdots) at (4.5,0) {$\dots$};
      \node[subnode,scale=0.6] (Gpen) at (6.5,0) {$\const$};
      \node[subnode,scale=0.6] (Glast) at (9.0,0) {$\const$};

      \node[leaf,scale=0.5] (X) at ($(G0)+(-1.7,0)$) {$x$};
      \node[leaf,scale=0.5] (Y) at ($(Glast)+(1.7,0)$) {$x$};

      \node[defnode,scale=0.5] (G01a) at ($(G0)!0.5!(G1)      + (0, 0.7)$) {$x$};
      \node[defnode,scale=0.5] (G01b) at ($(G0)!0.5!(G1)      + (0,-0.7)$) {$x$};
      \node[defnode,scale=0.5] (G1-a) at ($(G1)!0.7!(Gdots)   + (0, 0.7)$) {$x$};
      \node[defnode,scale=0.5] (G1-b) at ($(G1)!0.7!(Gdots)   + (0,-0.7)$) {$x$};

      \node[defnode,scale=0.5] (G-2a) at ($(Gdots)!0.3!(Gpen) + (0, 0.7)$) {$x$};
      \node[defnode,scale=0.5] (G-2b) at ($(Gdots)!0.3!(Gpen) + (0,-0.7)$) {$x$};
      \node[defnode,scale=0.5] (Gpla) at ($(Gpen)!0.5!(Glast) + (0, 0.7)$) {$x$};
      \node[defnode,scale=0.5] (Gplb) at ($(Gpen)!0.5!(Glast) + (0,-0.7)$) {$x$};

      \draw (G0)    -- node[gatelabel, pos=0.15] {$X1$} (X);
      \draw (Glast) -- node[gatelabel, pos=0.15] {$Y1$} (Y);

      \draw (G0)    -- node[gatelabel, pos=0.2] {$Y2$} (G01a);
      \draw (G0)    -- node[gatelabel, pos=0.2] {$Y1$} (G01b);
      \draw (G1)    -- node[gatelabel, pos=0.2] {$X2$} (G01a);
      \draw (G1)    -- node[gatelabel, pos=0.2] {$X1$} (G01b);

      \draw (G1)    -- node[gatelabel, pos=0.2] {$Y2$} (G1-a);
      \draw (G1)    -- node[gatelabel, pos=0.2] {$Y1$} (G1-b);
      \draw (Gpen)  -- node[gatelabel, pos=0.2] {$X2$} (G-2a);
      \draw (Gpen)  -- node[gatelabel, pos=0.2] {$X1$} (G-2b);

      \draw (Gpen)  -- node[gatelabel, pos=0.2] {$Y2$} (Gpla);
      \draw (Gpen)  -- node[gatelabel, pos=0.2] {$Y1$} (Gplb);
      \draw (Glast) -- node[gatelabel, pos=0.2] {$X2$} (Gpla);
      \draw (Glast) -- node[gatelabel, pos=0.2] {$X1$} (Gplb);
    \end{tikzpicture}
  \end{center}
  \caption{The pictorial summary of the morphisms $\cM_0$ and $\cM_3$ from $\abilin_s^k(a)$.}%
  \label{fig:big-agate-other}
  \end{figure}
\end{proof}

Again we have treated the indices $1,2 \in \cR$ specially in Lemma~\ref{lem:agate-master}, but we can always permute $\cR$ to make other indices special.

\begin{corollary}%
  \label{cor:agate-master}
  For integers $s \ge 1$, $k \ge 1$ and $a$ with $|a|<2^{k+1}$, and distinct indices $i,j \in [s]$, there is an assignment $\abilin_{s;i,j}^k(a)$ of $\agate_s^k$ with $D_0 = \trivial(\{X,Y\})$, $D_i = \lag(a,A)$, $D_j = \lag(a,B)$ and $D_r = \const(\{X,Y\})$ for all $r \in [s] \setminus \{i,j\}$.
\end{corollary}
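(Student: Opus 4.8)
The plan is to deduce Corollary~\ref{cor:agate-master} from Lemma~\ref{lem:agate-master} by a pure relabelling of the modes $\cR = [s]$, in exactly the way that Corollary~\ref{cor:sumconst-2} was deduced from Lemma~\ref{lem:sum-const-assignment} and Corollary~\ref{cor:cross} from Lemma~\ref{lem:cross}. There is nothing to prove when $s \le 1$ since then $[s]$ has no two distinct elements, so I would assume $s \ge 2$, which is the hypothesis of Lemma~\ref{lem:agate-master}; the constraint $|a| < 2^{k+1}$ is also exactly that of Lemma~\ref{lem:agate-master}. Since $i,j \in [s]$ are distinct, I would first fix any bijection $\pi \colon [s] \to [s]$ with $\pi(1) = i$ and $\pi(2) = j$ (for instance, combine $1 \mapsto i$, $2 \mapsto j$ with any bijection $[s] \setminus \{1,2\} \to [s] \setminus \{i,j\}$, these sets having the same cardinality $s-2$). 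Lemma~\ref{lem:agate-master} then supplies an assignment $\abilin_s^k(a)$ of $\agate_s^k$ with a partition $S \colon \toggles^{\agate_s^k} \to [s]$ of the toggles, diagrams $D_0 = \trivial(\{X,Y\})$, $D_1 = \lag(a,A)$, $D_2 = \lag(a,B)$ and $D_r = \const(\{X,Y\})$ for $3 \le r \le s$ (each with a bijection $\leafs^{D_r} \cong \cP$), and morphisms $(\cM_r)_{r \in [s] \cup \{0\}}$ satisfying the conditions of Definition~\ref{def:assignment}.

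Next I would define the assignment $\abilin_{s;i,j}^k(a)$ by transporting all of this data along $\pi$. Concretely, take the new partition $S' = \pi \circ S \colon \toggles^{\agate_s^k} \to [s]$, so that $S'_r = S_{\pi^{-1}(r)}$; take the new diagrams $D'_0 = D_0$ and $D'_r = D_{\pi^{-1}(r)}$ for $r \in [s]$, keeping the bijections $\leafs^{D'_r} \cong \cP$ inherited from $D_{\pi^{-1}(r)}$; and take the new morphisms $\cM'_0 = \cM_0$ and $\cM'_r = \cM_{\pi^{-1}(r)}$ for $r \in [s]$. By construction $D'_i = D_1 = \lag(a,A)$, $D'_j = D_2 = \lag(a,B)$, $D'_0 = \trivial(\{X,Y\})$, and $D'_r = \const(\{X,Y\})$ for $r \in [s] \setminus \{i,j\}$, which is exactly the diagram data demanded in the statement.

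It then remains to check that $(S', (D'_r), (\cM'_r))$ is a genuine assignment in the sense of Definition~\ref{def:assignment}, and this is immediate. The map $S'$ is a partition of $\toggles^{\agate_s^k}$ into classes $(S'_r)_{r \in [s]}$ because $S$ is and $\pi$ is a bijection. Each $\cM'_r = \cM_{\pi^{-1}(r)}$ respects every pin $v \in \cP$ because $\cM_{\pi^{-1}(r)}$ does. Finally, the codomain of $\cM'_r = \cM_{\pi^{-1}(r)}$ is $\agate_s^k[\nonleafs \cup \cP \cup S_{\pi^{-1}(r)}] = \agate_s^k[\nonleafs \cup \cP \cup S'_r]$, as required, and $\alpha^{\cM'_r}(\ell) = \zeroi$ for every $\ell \in S'_r = S_{\pi^{-1}(r)}$ since the same held for $\cM_{\pi^{-1}(r)}$ on $S_{\pi^{-1}(r)}$. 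The only point needing any care — and it is a trivial one — is that the codomains $\agate_s^k[\nonleafs \cup \cP \cup S_r]$ are part of the assignment data, so the partition and the morphism codomains must be permuted in tandem, which the definitions above do; accordingly I do not expect any real obstacle, as all the substance is contained in Lemma~\ref{lem:agate-master}.
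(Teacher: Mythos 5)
Your proposal is correct and takes exactly the approach the paper intends: the paper gives no explicit proof of Corollary~\ref{cor:agate-master}, stating it immediately after Lemma~\ref{lem:agate-master} on the understanding that one permutes the modes exactly as in the (explicitly proved) Corollary~\ref{cor:sumconst-2}. Your write-up correctly handles the vacuous case $s=1$, uses a general bijection $\pi$ rather than just a transposition (necessary here since two modes must move), and verifies that the partition, diagrams, and morphism codomains transport coherently.
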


\subsection{Finishing the proof}%
\label{sub:finish-repr}

We now complete the proof of Theorem~\ref{thm:baby-thm}.
By far the most complicated ingredient is Lemma~\ref{lem:agate-master} in the special case $s=2$.

We recall the proof of Theorem~\ref{thm:baby-thm} proceeds via Theorem~\ref{thm:baby-gc}.
To get started, we make a standard observation that the $U^3$-norm datum has Cauchy--Schwarz complexity $2$.

\begin{lemma}%
  \label{lem:u3-cs-complex}
  The linear datum $U^3$ has Cauchy--Schwarz complexity $2$ at index $000$, in the sense of Definition~\ref{def:cs-complexity}.
  Hence we have $\diagram(U^3) \entails^0_\gamma \diagram(\gc_2)$ where $\gamma(\triangle) = (000, 0)$.
\end{lemma}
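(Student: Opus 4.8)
The plan is to handle the two assertions separately: first the combinatorial statement $s_{\cs}(U^3,000)=2$ in the sense of Definition~\ref{def:cs-complexity}, and then the ``$\entails$'' consequence, which is an immediate translation through Proposition~\ref{prop:diag-cs-complexity}.

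For the upper bound $s_{\cs}(U^3,000)\le 2$ I will exhibit an explicit partition and explicit right-inverses, as Definition~\ref{def:cs-complexity} and Remark~\ref{rem:def-cs-monotone} invite. Using coordinates $(x,h_1,h_2,h_3)$ on $V^{U^3}=\FF_p^4$ and the notation of Definition~\ref{def:uk}, so that $\phi_\omega(x,h_1,h_2,h_3)=x+\omega_1 h_1+\omega_2 h_2+\omega_3 h_3$, take
\[
  S_1=\{\omega:\omega_1=1\},\qquad S_2=\{\omega:\omega_1=0,\ \omega_2=1\},\qquad S_3=\{001\},
\]
which partition $\{0,1\}^3\setminus\{000\}$, and set $\mu_r\colon\FF_p\to\FF_p^4$ by $\mu_1(t)=(t,-t,0,0)$, $\mu_2(t)=(t,0,-t,0)$, $\mu_3(t)=(t,0,0,-t)$. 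Then $\phi_{000}\circ\mu_r=\id_{\FF_p}$ trivially, while $\phi_\omega(\mu_r(t))=t(1-\omega_r)$, which vanishes precisely when $\omega_r=1$; and every $\omega\in S_r$ has $\omega_r=1$ by construction. This verifies Definition~\ref{def:cs-complexity} with $s=2$. For the matching lower bound $s_{\cs}(U^3,000)\ge 2$ I would invoke the standard fact that the $U^3$ norm is not controlled by the $U^2$ norm: if $s_{\cs}(U^3,000)\le 1$, then \eqref{eq:cs-bound} applied with all functions equal to $f$ or $\overline f$ (so that $\Lambda_{U^3}$ becomes $\|f\|_{U^3}^8$) would give $\|f\|_{U^3}^8\le\|f\|_{U^2}$ for every $1$-bounded $f$, which already fails for a quadratic phase $f(x)=\exp(2\pi i x^2/p)$ on $\FF_p$, where $\|f\|_{U^3}=1$ but $\|f\|_{U^2}=p^{-1/4}<1$ (using $p>2$). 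Since only the upper bound is needed below, I will keep this remark brief.

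For the ``Hence'' clause, note $\datum(\diagram(U^3))=U^3$ by Remark~\ref{rem:datum-diag} and $V^{\diagram(U^3)}_{000}=\FF_p$, so $\gc_2(V^{\diagram(U^3)}_{000})=\gc_2$. By the implication (i)$\Rightarrow$(ii) of Proposition~\ref{prop:diag-cs-complexity}, the bound $s_{\cs}(U^3,000)\le 2$ produces a diagram morphism $\Theta\colon\diagram(\gc_2)\to\diagram(U^3)$ with $\alpha^\Theta(000)=\triangle$ respecting $000$. This is precisely a step of type (a') in Definition~\ref{def:logic-notation-revenge}, whence $\diagram(U^3)\entails^0_{\gamma_0}\diagram(\gc_2)$, where $\gamma_0$ sends $\alpha^\Theta(i)\mapsto(i,0)$ for each leaf $i$ of $\diagram(U^3)$ respected by $\Theta$; in particular $\gamma_0(\triangle)=(000,0)$. (The morphism produced via the construction behind Lemma~\ref{lem:gc-cs} respects no leaf other than $000$, since distinct $\omega\in S_r$ share the image $r$ under the shape so condition (b) of ``respects'' fails for them; but in any case clause (d') of Definition~\ref{def:logic-notation-revenge} lets us restrict $\gamma_0$ to the sub-function $\gamma=\{\triangle\mapsto(000,0)\}$.) This yields the claim. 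I do not anticipate any real obstacle: the mathematical content is the three-line verification of the right-inverses, and everything else is unwrapping the $\entails$-formalism, the only mild nuisance being keeping the direction of $\Theta$ and the bookkeeping of $\gamma$ straight.
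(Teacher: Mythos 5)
Your proposal is correct and uses exactly the same partition (grouping $\omega \in \{0,1\}^3 \setminus \{000\}$ by the first position of a $1$) and the same linear maps $\mu_r$ as the paper's proof, with the second clause likewise routed through Proposition~\ref{prop:diag-cs-complexity}. The only additions are the explicit verification that $\phi_\omega \circ \mu_r = 0$ on $S_r$ and the aside on the lower bound $s_{\cs}(U^3,000) \ge 2$; the paper leaves both implicit, and (as you note) only the upper bound is used downstream.
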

\begin{proof}%
  To see the first statement, we partition $\{0,1\}^3 \setminus \{000\}$ into three parts $S_1, S_2, S_3$ based on the first position that is a $1$; that is,
  \[
    S_i = \bigl\{ \omega \in \{0,1\}^3 \colon \omega_i=1,\ \omega_j = 0\ \forall j<i \bigr\}.
  \]
  Using linear maps
  \begin{align*}
    \mu_1(x) &= (x,-x,0,0) \\
    \mu_2(x) &= (x,0,-x,0) \\
    \mu_3(x) &= (x,0,0,-x)
  \end{align*}
  Definition~\ref{def:cs-complexity} holds.
  The second statement follows by Proposition~\ref{prop:diag-cs-complexity}.
\end{proof}

\begin{proof}[Proof of Theorem~\ref{thm:baby-gc}]%
  Fix $m \ge 0$ and $k=2^m$ such that $|a| < 2^{k+1}$.
  Combining Lemma~\ref{lem:gc-aggre}, Lemma~\ref{lem:build-agate} and Lemma~\ref{lem:build-large-agate} for $s=2$, we have $\gc_2 \entails^{m+5}_{\gamma_1} \agate_2^{k}$ where $\gamma_1(X) = (\triangle, 0)$, $\gamma_1(Y) = (\triangle, 1)$.
  Hence, $U^3 \entails^{m+5}_{\gamma_2} \agate_2^k$ where $\gamma_2(X) = (000,0)$ and $\gamma_2(Y) = (000,1)$.

  We make the following claim.
  \begin{claim}
    Let $\Psi$ be the linear datum from Lemma~\ref{lem:gc-cs-baby} \uppar{which depends on $a$}.
    Then there is a diagram morphism $\Theta \colon \diagram(\Psi) \to \agate_2^k$ with $\alpha^\Theta(X) = 1$ and $\alpha^\Theta(Y) = 2$, respecting $X$ and $Y$.
  \end{claim}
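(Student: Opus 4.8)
The plan is to exhibit the morphism $\Theta \colon \diagram(\Psi) \to \agate_2^k$ by patching together morphisms coming from the assignment $\abilin_2^k(a)$ of $\agate_2^k$ provided by Lemma~\ref{lem:agate-master} (valid since $s=2\ge 2$, $k\ge 1$, and $|a|<2^{k+1}$). Recall that $\Psi$ has $I^\Psi=[4]$, $V^\Psi=\FF_p^4$, and linear forms $\phi^\Psi_1(x,y,z,w)=x+az+w$, $\phi^\Psi_2(x,y,z,w)=y+z+aw$, $\phi^\Psi_3(x,y,z,w)=(x,y,w)$, $\phi^\Psi_4(x,y,z,w)=(x,y,z)$. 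Comparing with the shape of $\Psi$ as a generalized-convolution-type datum in the spirit of Lemma~\ref{lem:gc-cs-baby} and Remark~\ref{rem:many-morphs}, the indices $3$ and $4$ are the two ``toggle'' modes (to be sent to $\zeroi$) and $1$, $2$ are the pins. The assignment $\abilin_2^k(a)$ has exactly $|\cR|=s=2$ modes, with $D_1=\lag(a,A)$, $D_2=\lag(a,B)$, $D_0=\trivial(\{X,Y\})$, so $\cR\cup\{0\}=\{0,1,2\}$ matches up (after identifying $X\mapsto 1$, $Y\mapsto 2$) with the three ``fragment'' morphisms $\cM_0,\cM_1,\cM_2$ whose existence we must translate into a single $\Theta$.

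Concretely, I would invoke the diagram analogue of Lemma~\ref{lem:gc-cs-baby}. By Proposition~\ref{prop:adjunction} and Proposition~\ref{prop:sub-sub}, specifying a diagram morphism $\diagram(\Psi)\to\agate_2^k$ with $\alpha^\Theta(X)=1$, $\alpha^\Theta(Y)=2$ and respecting $X,Y$ is equivalent to specifying a linear-datum morphism $\Psi\to\datum(\agate_2^k)$ with those properties. By Lemma~\ref{lem:gc-cs-baby} applied to $\Phi=\datum(\agate_2^k)$, $i=X$, $j=Y$, such a morphism exists if and only if the hypothesis of Lemma~\ref{lem:baby-cs-complexity} holds for $\datum(\agate_2^k)$, its pins $X,Y$, and the integer $a$: i.e., a partition $\leafs^{\agate_2^k}\setminus\{X,Y\}=S_1\cup S_2$ and linear maps $\mu_0\colon\FF_p^2\to V$, $\mu_1,\mu_2\colon\FF_p\to V$ with $\phi_{X,Y}(\mu_0(u))=u$, $\phi_{X,Y}(\mu_1(t))=(at,t)$, $\phi_{X,Y}(\mu_2(t))=(t,at)$, and $\phi_\ell\circ\mu_r=0$ for $r=1,2$, $\ell\in S_r$. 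But this is precisely the content of an assignment of the gate $\agate_2^k$ whose data are $D_0=\trivial(\{X,Y\})$, $D_1=\lag(a,A)$, $D_2=\lag(a,B)$: the maps $\theta^{\cM_r}$ restricted to the (one-dimensional) $V^{D_r}$ are the required $\mu_r$, the shape conditions $\alpha^{\cM_r}(\ell)=\zeroi$ for $\ell\in S_r$ give $\phi_\ell\circ\mu_r=0$, and the fact that $\cM_r$ respects $X$ and $Y$ together with the explicit form of $\lag(a,A)$, $\lag(a,B)$ in Definition~\ref{def:lag} gives exactly $\phi_{X,Y}(\mu_1(t))=(at,t)$ and $\phi_{X,Y}(\mu_2(t))=(t,at)$. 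Thus Lemma~\ref{lem:agate-master} furnishes everything needed.

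The remaining bookkeeping is to check that the ``$\mu_r$'' extracted this way land in $V^{\datum(\agate_2^k)}$ (the space of compatible tuples of the diagram), which is automatic: each $\cM_r$ is a genuine morphism of diagrams into $\agate_2^k[\nonleafs\cup\cP\cup S_r]$, so composing with the inclusions $\agate_2^k[\cdots]\hookrightarrow\agate_2^k$ and passing through $\datum(-)$ via Proposition~\ref{prop:diagram-morphism} yields linear-datum morphisms with the correct codomain; then the equivalence in Lemma~\ref{lem:gc-cs-baby} repackages the three of them into the single $\Theta$. I expect the main (though entirely formal) obstacle to be verifying that the three separate morphisms $\cM_0,\cM_1,\cM_2$ of the assignment $\abilin_2^k(a)$ can be combined into one morphism out of $\Psi$ exactly as in the ``conversely'' direction of the proof of Lemma~\ref{lem:gc-cs-baby} --- that is, checking that $\theta^\Theta(x,y,z,w)=\mu_0(x,y)+\mu_1(z)+\mu_2(w)$ and the induced $\sigma_\ell$ satisfy the compatibility diagram~\eqref{eq:morph}; but this is literally the calculation already carried out there, so no new ideas are required. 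If one prefers to avoid quoting Lemma~\ref{lem:gc-cs-baby} as a black box, an alternative is to apply Lemma~\ref{lem:patching} directly to $\diagram(\Psi)$ using a cover of $\agate_2^k$ by the sub-diagrams indexed by $\nonleafs\cup\cP$, $\nonleafs\cup\cP\cup S_1$, $\nonleafs\cup\cP\cup S_2$ and the three morphisms $\cM_0,\cM_1,\cM_2$ (precomposed with the evident inclusions $\diagram(D_r)\to\diagram(\Psi)$ dual to the coordinate inclusions $V^{D_r}\hookrightarrow V^\Psi$), which gives the same conclusion.
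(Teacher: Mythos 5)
Your proposal is correct and follows essentially the same route as the paper: reduce via Proposition~\ref{prop:adjunction} and Proposition~\ref{prop:sub-sub} to a linear-datum morphism, invoke Lemma~\ref{lem:gc-cs-baby} (equivalently Remark~\ref{rem:many-morphs}) to replace it by the data of a partition $S_1 \cup S_2$ together with the maps/morphisms $\cM_0,\cM_1,\cM_2$, and observe these are furnished exactly by the assignment $\abilin_2^k(a)$ of Lemma~\ref{lem:agate-master}. The only cosmetic difference is that you extract the maps $\mu_r$ and feed them back through the ``conversely'' direction of Lemma~\ref{lem:gc-cs-baby}, whereas the paper works directly with the morphism triple $(\cM_0,\cM_1,\cM_2)$ via Remark~\ref{rem:many-morphs}; these are interchangeable by the paper's own discussion.
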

  \begin{proof}[Proof of claim]%
    By Proposition~\ref{prop:adjunction} it is equivalent to find a morphism of linear data $\Psi \to \datum(\agate_2^k)$ with the same properties.
    By Lemma~\ref{lem:gc-cs-baby} and Remark~\ref{rem:many-morphs}, it suffices to find a partition $\leafs^{\agate_2^k} = \{X,Y\} \cup S_1 \cup S_2$ and morphisms of linear data
    \begin{align*}
      \cM_0 \colon \trivial(\{X,Y\}) &\to \datum(\agate_2^k)[\{X,Y\}] \\
      \cM_1 \colon \lag(a, A) &\to \datum(\agate_2^k)[\{X,Y\} \cup S_1] \\
      \cM_2 \colon \lag(a, B) &\to \datum(\agate_2^k)[\{X,Y\} \cup S_2]
    \end{align*}
    with $\alpha^{\cM_r}(X) = X$, $\alpha^{\cM_r}(Y) = Y$ for $r=0,1,2$, and $\alpha^{\cM_r}(x) = \zeroi$ for $r=1,2$ and $x \in S_r$, all respecting $X,Y$.
    But by Lemma~\ref{lem:agate-master}, we have such a partition and morphisms of diagrams
    \begin{align*}
      \cM_0 \colon \trivial(\{X,Y\}) &\to \agate_2^k[\{X,Y\}] \\
      \cM_1 \colon \lag(a, A) &\to \agate_2^k[\{X,Y\} \cup S_1] \\
      \cM_2 \colon \lag(a, B) &\to \agate_2^k[\{X,Y\} \cup S_2]
    \end{align*}
    with the same properties, and by Proposition~\ref{prop:adjunction} again and Proposition~\ref{prop:sub-sub} this is equivalent.
  \end{proof}
  
  It follows that $\agate_2^k \entails^0_{\gamma_3} \Psi$ where $\gamma_3(X) = (1,0)$ and $\gamma_3(Y) = (2,0)$.
  The conclusion of Theorem~\ref{thm:baby-gc} follows, with $L = m+5$.
\end{proof}

As noted in Section~\ref{sub:baby-gc}, Theorem~\ref{thm:baby-thm} follows from Theorem~\ref{thm:baby-gc}, by Lemma~\ref{lem:gc-bound-baby} (and Corollary~\ref{cor:entails-entails}, and Corollary~\ref{cor:log-entails}).

\section{Some stashing theory}%
\label{sec:stashing}

Before proving Theorem~\ref{thm:main}, we must establish some useful results related to ``stashing'', as discussed in Section~\ref{sub:true-summary} (see also Remark~\ref{rem:stashing} and Example~\ref{ex:cache-example}).

\subsection{Discarding stashed structure}%
\label{sub:stashing-theory-1}

As described in Example~\ref{ex:cache-example}, the notion of ``stashing'' corresponds in the language of diagrams to joinings $C +_{i \leftrightarrow j} D$ at a single vertex, or equivalently to diagrams with a cut-vertex of in-degree $2$ and out-degree $0$.

Recall that a multilinear average $\Lambda(\cdots)$ of shape $C +_{i \leftrightarrow j} D$ can be thought of as a multilinear average $\Lambda(\cdots)$ of shape $C$, where we assume the function $f_i$ is a dual function of shape $(D,j)$.
We have stated that we can always simply discard this extra information about $f_i$, and revert to treating it as an arbitrary $1$-bounded function.
In the language of diagrams, this looks like ``collapsing'' the diagram $C +_{i \leftrightarrow j} D$ back onto $C$.
We now state this formally.

\begin{proposition}%
  \label{prop:discard-dual}
  Let $C$, $D$ be diagrams, let $i \in \leafs^C$ and $j \in \leafs^D$, and suppose $V^C_i = V^D_j$.
  Further assume $D$ is surjective at $j$ \uppar{see Definition~\ref{def:diag-surj}}.

  Then there is a morphism $\Theta \colon C \to \bigl( C +_{i \leftrightarrow j} D \bigr)$ with the property that \uppar{i} for all $x \in \leafs^C \setminus \{i\}$, $\alpha^{\Theta}(L;x) = x$ and $\Theta$ respects $(L;x)$, and \uppar{ii} for all $y \in \leafs^D$, $\alpha^\Theta(R;y) = i$. 

  Alternatively, $ \bigl( C +_{i \leftrightarrow j} D \bigr) \entails^0_\gamma C$ where $\gamma(x) = ((L;x),0)$ for every $x \in \leafs^C \setminus \{i\}$.
\end{proposition}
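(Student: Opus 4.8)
The plan is to construct the morphism $\Theta \colon C \to C +_{i \leftrightarrow j} D$ explicitly and then read off the $\entails^0_\gamma$ statement from Definition~\ref{def:logic-notation-revenge}(a'). First I would set up notation: write $E = C +_{i \leftrightarrow j} D$, recall that $X^E$ consists of the $(L,x)$ for $x \in X^C$ and the $(R,y)$ for $y \in X^D$ with $(L,i)=(R,j)$ identified (call this common vertex $(\#,i)$), and recall $\leafs^E = (\leafs^C \setminus \{i\}) \cup (\leafs^D \setminus \{j\})$, while $(\#,i)$ is a non-leaf of $E$. The shape $\alpha = \alpha^\Theta \colon X^E \to X^C \cup \{\zeroi\}$ should be $\alpha(L,x) = x$ for all $x \in X^C$ (so in particular $\alpha(\#,i) = i$), and $\alpha(R,y) = i$ for all $y \in X^D$; one checks this is legal in the sense of Definition~\ref{def:diagram-morphism}, i.e.\ it sends leaves of $E$ to leaves of $C$ (or $\zeroi$) and non-leaves to non-leaves --- the only point to watch is that $(\#,i)$ is a non-leaf of $E$ mapping to the leaf $i$ of $C$, which is fine because the condition only constrains leaves of the \emph{domain}, and here the roles are such that $\alpha$ has domain $X^E$; note the leaves of $E$ lying over $i \in \leafs^C$ are exactly the vertices $(R,y)$ for $y \in \leafs^D \setminus \{j\}$, and for these $\alpha(R,y)=i$ is a leaf of $C$ as required, while edges $(R,y)(R,y') \in E^E$ map to $ii$, which is the self-loop edge of $C$ by Convention~\ref{convention:ast-revenge}.

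Next I would define the linear maps $\theta_z \colon V^C_{\alpha(z)} \to V^E_z$ for $z \in X^E$. For $z = (L,x)$ with $x \in X^C$, take $\theta_{(L,x)} = \id_{V^C_x}$ (noting $V^E_{(L,x)} = V^C_x$). For $z = (R,y)$ with $y \in X^D$, we need a map $V^C_i \to V^D_y$; the natural choice is to compose a section of the surjection $\pi_j \colon V^{\datum(D)} \to V^D_j = V^C_i$ (which exists because $D$ is surjective at $j$, by Definition~\ref{def:diag-surj}) with the projection $\pi_y \colon V^{\datum(D)} \to V^D_y$. Concretely, fix a linear section $\fs \colon V^C_i \to V^{\datum(D)}$ of $\pi_j$, and set $\theta_{(R,y)} = \pi_y \circ \fs$. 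I should check consistency at the shared vertex: $(\#,i)$ is both $(L,i)$ and $(R,j)$, and the definition via the $(L,\cdot)$ branch gives $\theta_{(\#,i)} = \id_{V^C_i}$, while the $(R,\cdot)$ branch gives $\pi_j \circ \fs = \id_{V^C_i}$ --- these agree, so $\theta$ is well-defined on $X^E$. Then I verify the commuting square~\eqref{eq:diag-morph} for every edge $zz' \in E^E$. For edges $(L,x)(L,x')$ this is the identity-composed-with-$\phi^C_{xx'}$ equals $\phi^C_{xx'}$-composed-with-identity, trivial. For edges $(R,y)(R,y')$ with $yy' \in E^D$ we need $\phi^D_{yy'} \circ \theta_{(R,y)} = \theta_{(R,y')} \circ \phi^C_{ii}$, i.e.\ $\phi^D_{yy'} \circ \pi_y \circ \fs = \pi_{y'} \circ \fs$; this follows because $\phi^D_{yy'} \circ \pi_y = \pi_{y'}$ on $V^{\datum(D)}$, which is exactly the compatibility relation~\eqref{eq:pi-compat} defining the limit space. (There are no edges in $E^E$ between an $(L,\cdot)$ vertex and an $(R,\cdot)$ vertex other than through $(\#,i)$, which is absorbed into one side or the other.)

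Then I would check the ``respects'' conditions. For $x \in \leafs^C \setminus \{i\}$, the vertex $(L,x)$ is a leaf of $E$ with $\alpha(L,x) = x \ne \zeroi$; no other leaf $z$ of $E$ has $\alpha(z) = x$ (the only other preimages of $x$ under $\alpha$ would be among the $(R,\cdot)$ vertices, which all map to $i \ne x$, or $(L,x')$ for $x' \ne x$); and $V^E_{(L,x)} = V^C_x$ with $\theta_{(L,x)} = \id$. So $\Theta$ respects $(L,x)$. Part (ii) of the proposition statement, $\alpha^\Theta(R,y) = i$ for all $y \in \leafs^D$, holds by construction. Finally, to extract the $\entails$ statement: Definition~\ref{def:logic-notation-revenge}(a') says that from a diagram morphism $\Theta \colon E \to C$ --- here our $\Theta \colon C \to E$ plays the role of ``$\Theta \colon D \to C$'' in the pattern $C \entails^0_\gamma D$, with $C$ in the role of $\Phi$ and $E$ in the role of $\Psi$, wait --- more carefully, case (a') requires a morphism $\Theta \colon \Psi \to \Phi$ to conclude $\Phi \entails^0 \Psi$; we want $E \entails^0_\gamma C$, so $\Phi = E$, $\Psi = C$, and indeed our $\Theta \colon C \to E$ is a morphism $\Psi \to \Phi$. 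Then $\gamma = \{\alpha^\Theta(x) \mapsto (x,0) \colon x \in \leafs^{\Phi} = \leafs^E,\ \Theta \text{ respects } x\}$ --- hmm, this needs re-reading: the formula in (a) is $\gamma = \{\alpha^\Theta(i) \mapsto (i,0) \colon i \in I^\Phi,\ \Theta \text{ respects } i\}$ with $\Theta \colon \Psi \to \Phi$, so $\gamma \colon \leafs^{\Psi} \to \leafs^{\Phi} \times \{0,1\}$ has $\gamma(\alpha^\Theta(i)) = (i,0)$ for each leaf $i$ of $\Phi = E$ respected by $\Theta$; the leaves of $E$ respected by $\Theta$ include all $(L,x)$ for $x \in \leafs^C \setminus \{i\}$ and $\alpha^\Theta(L,x) = x$, so $\gamma(x) = ((L,x),0)$ for $x \in \leafs^C \setminus \{i\}$, matching the claimed $\gamma$. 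I expect the only real subtlety --- the ``main obstacle'' --- to be bookkeeping around the identified vertex $(\#,i)$ and making sure the section $\fs$ is used consistently (so that the $D$-side of the diagram genuinely collapses to a single fibre-compatible tuple), but this is a finite check with no genuine difficulty once the surjectivity hypothesis is invoked; everything else is unwinding Definition~\ref{def:diagram-morphism}, Definition~\ref{def:diagram-cs} and Definition~\ref{def:logic-notation-revenge}.
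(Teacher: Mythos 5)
Your construction breaks at the shape function $\alpha$. You set $\alpha(\#,i)=i$ and $\alpha(R,y)=i$ for every $y\in X^D$, including $y\in\nonleafs^D$. But $(\#,i)$ and the vertices $(R,y)$ with $y\in\nonleafs^D$ are all \emph{non-leaves} of $E = C +_{i\leftrightarrow j} D$, while $i$ is a \emph{leaf} of $C$. Definition~\ref{def:diagram-morphism}(i) requires $\alpha(\nonleafs^E)\subseteq\nonleafs^C$ — the condition constrains \emph{both} the leaves and the non-leaves of the domain $X^E$, not only the leaves as you claim. You actually quote the condition correctly in the same sentence (``it sends leaves of $E$ to leaves of $C$ ... and non-leaves to non-leaves'') and then contradict it; the parenthetical dismissal is simply a misreading.

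The fix, which is what the paper does, is to map all of these offending non-leaves to $z$, the unique parent of $i$ in $\cG^C$ (a non-leaf), and to precompose the corresponding linear maps with $\phi^C_{zi}$ to make the types work out: set $\alpha(R;x)=z$ for every non-leaf $(R;x)$ of $E$ (including $(\#,i)=(R;j)$) and $\theta_{(R;x)}=\pi_x\circ\psi\circ\phi^C_{zi}$, where $\psi$ is your section $\fs$. Then $\theta_{(R;x)}\colon V^C_z\to V^E_{(R;x)}$, and one verifies~\eqref{eq:diag-morph} on the edges $(L;z)(\#,i)$ and $(R;x)(R;y)$ using $\pi_j\circ\psi=\id$ and $\phi^D_{xy}\circ\pi_x=\pi_y$, as you already did in spirit; your edge-compatibility and consistency checks go through unchanged once $\alpha$ is repaired. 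Your handling of the leaves $(R;y)$, $y\in\leafs^D\setminus\{j\}$, and your extraction of the $\entails^0_\gamma$ statement from Definition~\ref{def:logic-notation-revenge}(a') are correct.

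(Incidentally, conclusion (ii) in the statement should be read as $y\in\leafs^D\setminus\{j\}$: the vertex $(R;j)$ is identified with $(\#,i)$ and is not a leaf of $E$, and the paper's own proof sends it to $z$, not $i$.)
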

The surjectivity hypothesis is necessary, as otherwise the dual function interpretation does not even make sense.
\begin{proof}
  For brevity we write $\wt C = C +_{i \leftrightarrow j} D$.
  Let $z \in X^C$ be the unique parent of $i$ in $\cG^C$, meaning $z \in \nonleafs^C$ and $zi \in E^C$.

  Consider the space $V^{\datum(D)}$ from~\eqref{eq:limit-space} and write $\pi_x \colon V^{\datum(D)} \to V^D_x$ for the projection maps as in~\eqref{eq:projections}.
  By hypothesis, $\pi_j$ is surjective.
  Hence it has a right inverse $\psi \colon V^D_j \to V^{\datum(D)}$, i.e., $\pi_j \circ \psi = \id_{V^D_j}$.
  Recall $V^D_j = V^C_i$.

  We define the ``collapsing'' morphism $\Theta \colon C \to \wt C$ as follows:---
  \begin{itemize}
    \item for every $x \in X^C$, $x \ne i$, we take $\alpha(L;x) = x$ and $\theta_{(L;x)} = \id_{V^C_x}$;
    \item for all remaining leaves $(R;\ell)$ of $\wt C$ we take $\alpha(R;\ell) = i$ and $\theta_{R;\ell} = \pi_\ell \circ \psi$;
    \item for all remaining non-leaves $(R;x)$ of $\wt C$ (including the special case $(L;i)=(R;j)$) we take
      $\alpha(R;x) = z$ and
        $\theta_{(R;x)} = \pi_x \circ \psi \circ \phi^C_{zi}$.
  \end{itemize}

  To check this is a morphism we must consider~\eqref{eq:diag-morph} for each kind of edge in $\wt C$ in turn.
  The edges $(L;x)(L;y)$ where $y \ne j$ are straightforward because $\theta_{(L;x)}$ and $\theta_{(L;y)}$ are both the identity.
  For the specific edge $(L;z)(L;i)$ we have
  \[
    \theta_{(L;i)} = \theta_{(R;j)} = \pi_j \circ \psi \circ \phi^C_{zi} = \phi^C_{zi}
  \]
  by our choice of $\psi$, and so~\eqref{eq:diag-morph} becomes
  \[
    \theta_{(L;i)} \circ \phi^C_{zz} = \phi^C_{zi} = \phi^{\wt C}_{(L;z)(L;i)} \circ \theta_{(L;z)}
  \]
  as required, recalling Convention~\ref{convention:ast-revenge}.

  For the remaining cases we recall the property $\phi^D_{xy} \circ \pi_x = \pi_y$ for $xy \in E^D$, by the definition~\eqref{eq:limit-space} of $V^{\datum(D)}$ (see~\eqref{eq:pi-compat}).
  For two non-leaves $(R;x),(R;y) \in \nonleafs^{\wt C}$ with $xy \in E^D$, we have
  \[
    \theta_{(R;y)} \circ \phi^C_{zz} = \pi_y \circ \bigl( \psi \circ \phi^C_{zi} \bigr)  
    = \phi^D_{xy} \circ \pi_x \circ \bigl( \psi \circ \phi^C_{zi} \bigr)
    =\phi^{\wt C}_{(R;x)(R;y)} \circ \theta_{(R;x)} 
  \]
  as required.
  If $(R;x) \in \nonleafs^{\wt C}$, $(R;y) \in \leafs^{\wt C}$ and $xy \in E^D$, we similarly have
  \[
    \theta_{(R;y)} \circ \phi^C_{zi} = \pi_y \circ \bigl( \psi \circ \phi^C_{zi} \bigr)
    =\phi^D_{xy} \circ \pi_x \circ \bigl( \psi \circ \phi^C_{zi} \bigr)
    =\phi^{\wt C}_{(R;x)(R;y)} \circ \theta_{(R;x)} 
  \]
  for the same reason.

  Hence $\Theta$ is a morphism. 
  By definition it respects every vertex $(L;x)$ for $x \in \leafs^C \setminus \{i\}$, and the $\entails$ statement follows.
\end{proof}

We briefly record a consequence of Proposition~\ref{prop:discard-dual}.

\begin{corollary}%
  \label{cor:join-cs-complex}
  Suppose $C$, $D$ are diagrams, $i \in \leafs^C$, $j \in \leafs^D$ and $V_i^C=V_j^D$.
  Suppose further that $D$ is surjective at $j$.

  If $x \in \leafs^C \setminus \{i\}$ and $C$ has Cauchy--Schwarz complexity at most $s$ at $x$, then $C +_{i \leftrightarrow j} D$ has Cauchy--Schwarz complexity at most $s$ at $(L;x)$.
\end{corollary}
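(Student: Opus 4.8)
The plan is to combine Proposition~\ref{prop:discard-dual} with the monotonicity/transfer properties of Cauchy--Schwarz complexity already established in the excerpt. Recall from Proposition~\ref{prop:diag-cs-complexity} that ``$C$ has Cauchy--Schwarz complexity at most $s$ at $x$'' is equivalent to the existence of a partition $\leafs^C \setminus \{x\} = S_1 \cup \dots \cup S_{s+1}$ together with morphisms $\cM_r \colon \trivial(\{x\}, V^C_x) \to C[\nonleafs \cup \{x\} \cup S_r]$ that respect $x$ and send $S_r$ to $\zeroi$. The goal is to produce the analogous data witnessing Cauchy--Schwarz complexity at most $s$ at $(L;x)$ in $\wt C := C +_{i \leftrightarrow j} D$.

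First I would invoke Proposition~\ref{prop:discard-dual} to obtain the ``collapsing'' morphism $\Theta \colon C \to \wt C$, which respects $(L;y)$ for every $y \in \leafs^C \setminus \{i\}$ (in particular it respects $(L;x)$), has $\alpha^\Theta(L;y) = y$ for those $y$, and has $\alpha^\Theta(R;z) = i$ for every leaf $z$ of $D$. Next I would use this $\Theta$ to transport the witnessing data for $C$ at $x$ across to $\wt C$. The natural construction of the partition of $\leafs^{\wt C} \setminus \{(L;x)\}$ is to set
\[
  S_r' = \bigl\{ (L;y) \colon y \in S_r \bigr\} \cup \bigl(\text{the $R$-leaves and $(L;i)$, dumped into } S_1'\bigr);
\]
more precisely put $(L;i)$ and every leaf $(R;z)$ into $S_1'$, together with the $(L;y)$ for $y \in S_1$, and set $S_r' = \{(L;y) \colon y \in S_r\}$ for $2 \le r \le s+1$. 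This is a valid partition of $\leafs^{\wt C} \setminus \{(L;x)\}$ because $\leafs^{\wt C} = \{(L;y) : y \in \leafs^C \setminus \{i\}\} \cup \{(R;z) : z \in \leafs^D\}$ and $i \notin \leafs^{\wt C}$ unless $(i,i)\in\beta$, which it is not here since $j$ is matched with $i$. Then I would compose the given morphism $\cM_r$ with $\Theta$ (after restricting $\Theta$ to the appropriate sub-diagrams, using Remark~\ref{rem:restriction-morphs}, or applying Proposition~\ref{prop:diagram-morphism}-style functoriality of restriction) to obtain morphisms $\cM_r' \colon \trivial(\{(L;x)\}, V^{\wt C}_{(L;x)}) \to \wt C[\nonleafs \cup \{(L;x)\} \cup S_r']$. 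The point is that $\Theta$ sends every $(L;y)$ with $y \in S_r$ to $y \in S_r \subseteq$ the domain of $\cM_r$'s codomain, and it sends every $(R;z)$ and $(L;i)$ to $i = \zeroi$-image under the composite with $\cM_r$ — except that we need $\alpha$ of these to be $\zeroi$ in the composite, which follows because $\cM_r$ kills $i$ when $i \in S_r$ (true for $r=1$ by construction) and for $r \ge 2$ the vertices $(R;z)$, $(L;i)$ are simply not in the sub-diagram $\wt C[\nonleafs \cup \{(L;x)\} \cup S_r']$ at all, so they don't need to be assigned. Checking that $\cM_r'$ respects $(L;x)$ is immediate since both $\cM_r$ respects $x$ and $\Theta$ respects $(L;x)$, and composition of respecting morphisms respects (by the definition of composition in Definition~\ref{def:compose-morph}, identity $\circ$ identity = identity on the relevant $\sigma$).

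The main obstacle I anticipate is purely bookkeeping: making sure the sub-diagram targets line up correctly under restriction, i.e.\ that $\Theta$ actually restricts to a morphism $C[\nonleafs^C \cup \{x\} \cup S_r] \to \wt C[\nonleafs^{\wt C} \cup \{(L;x)\} \cup S_r']$ with the right shape, and that the ``dumping everything extra into $S_1'$'' step is legal — one must confirm that $\cM_1$, which already kills $S_1 \cup \{i\}$ (since $i \in S_1$ in the Cauchy--Schwarz complexity witness we may assume by relabelling, or we absorb $i$-handling using that $\cM_1$'s codomain includes $i$ and $\cM_1$ respects it... actually we want $i$ in $S_1$, so I would simply choose the original partition of $C$ at $x$ to have $i \in S_1$ WLOG, since $i \ne x$). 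Then the composite $\Theta \circ \cM_1$ genuinely sends $(R;z)$-leaves and $(L;i)$ to $\zeroi$: $\cM_1$ sends $i$ to $\zeroi$, and $\Theta$ sends each such vertex to $i$, so the composite sends them to $\zeroi$ as required. Once this alignment is verified, Proposition~\ref{prop:diag-cs-complexity}(iii) gives the conclusion. Alternatively — and this is perhaps cleaner — one can avoid the partition bookkeeping entirely by using characterization (ii) of Proposition~\ref{prop:diag-cs-complexity}: $C$ has Cauchy--Schwarz complexity $\le s$ at $x$ iff there is a morphism $\gc_s(V^C_x) \to C$ with $\alpha(x)=\triangle$ respecting $x$; composing with $\Theta \colon C \to \wt C$ (which respects $(L;x)$, has $\alpha^\Theta(L;x) = x$, and maps all other leaves of $\wt C$ either to leaves of $C$ or — for the $R$-side — to $i$, which we've sent to $\triangle$... wait, $\Theta$ sends them to $i$ not to a leaf in the image of the $\gc_s$ morphism) — so I would instead compose to get $\gc_s(V^{\wt C}_{(L;x)}) \to \wt C$ with $\alpha((L;x))=\triangle$ respecting $(L;x)$, using that composition of diagram morphisms is a diagram morphism (the reader should check the extra leaves of $\wt C$ are harmlessly assigned). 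This second route has the same essential content but packages the combinatorics inside Lemma~\ref{lem:gc-cs} and Proposition~\ref{prop:adjunction}, so I expect it to be the shortest to write down.
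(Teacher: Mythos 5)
Your preferred ``second route'' is exactly the paper's proof: compose the universal morphism $\Xi \colon \gc_s(V^C_x) \to C$ supplied by Proposition~\ref{prop:diag-cs-complexity}(ii) with the collapsing morphism $\Theta \colon C \to C +_{i \leftrightarrow j} D$ from Proposition~\ref{prop:discard-dual}, and apply Proposition~\ref{prop:diag-cs-complexity}(ii) once more to $\Theta \circ \Xi$. The hesitation you flag mid-sentence is unwarranted: characterization (ii) requires only that \emph{some} morphism $\gc_s(W) \to C +_{i \leftrightarrow j} D$ exists with $\alpha((L;x)) = \triangle$ respecting $(L;x)$; since $\alpha^{\Theta \circ \Xi} = \alpha^\Xi \circ \alpha^\Theta$ by Definition~\ref{def:compose-morph}, the $R$-side leaves simply land at $\alpha^\Xi(i)$ (whatever that is), and no further condition needs checking. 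Your first route via characterization (iii) would also go through once the bookkeeping with $(L;i)=(R;j)$ (which is a non-leaf of the joined diagram, hence present in every restriction) is sorted out, but it is the same content unpacked less efficiently.
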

\begin{proof}%
  Write $\wt C = C +_{i \leftrightarrow j} D$ and $W = V_x^C = V_{L;x}^{\wt C}$.
  Let $\Theta \colon C \to \wt C$ be the morphism in Proposition~\ref{prop:discard-dual}.
  By Proposition~\ref{prop:diag-cs-complexity} we have a morphism $\Xi \colon \gc_s(W) \to C$ with $\alpha^\Theta(x) = \triangle$ respecting $x$.
  Then the result follows by applying Proposition~\ref{prop:diag-cs-complexity} to $\Theta \circ \Xi \colon \gc_s(W) \to \wt C$.
\end{proof}

\subsection{Revealing stashed structure}%
\label{sub:reveal}

The other natural stashing statement is that we can ignore the extra hypothesis that $f_i$ is a dual function of shape $(D,j)$ for as long as we like, performing Cauchy--Schwarz or morphism steps as if $f_i$ were an arbitrary function $1$-bounded function, and then later dramatically reveal that every copy of $f_i$ now present was a dual function all along.
Again we can state this within the diagram formalism.

We first fix some notation for discussing iterated joinings. 
\begin{convention}%
  Suppose $C$ and $D_1,\dots,D_m$ are diagrams, and $\beta_r$ is a partial matching between $J_r \subseteq \leafs^C$ and $J'_r \subseteq \leafs^{D_r}$ for each $r \in [m]$ such that $V^C_i = V^{D_r}_j$ for all $(i,j) \in \beta_r$.
  Hence $C +_{\beta_r} D_r$ is defined for each $r \in [m]$.

  Now suppose that $(J_r)_{r \in [m]}$ are disjoint subsets of $\leafs^C$.
  We can define the \emph{iterated joining}
  \[
    \bigg( \dots \Big( \big(C +_{\beta_1}^{,R_1} D_1 \big) +_{\beta_2}^{,R_2} D_2 \Big) \dots +_{\beta_m}^{,R_m} D_m \bigg).
  \]
  Note this makes sense, because the labelling convention $+_{\beta_1}^{,R_1}$ means that vertex labels of $C$ are still vertex labels of $C +_{\beta_1}^{,R_1} D_1$, etc..

  We simplify the notation in two ways. 
  First, we use the convention that $+_{\beta_r}^{,R_r}$ is \emph{left-associative}: that is, if we write
  \[
    C +_{\beta_1}^{,R_1} D_1 +_{\beta_2}^{,R_2} D_2 \dots +_{\beta_m}^{,R_m} D_m
  \]
  then the arrangement of parentheses above is implied.
  Second, we avoid the ellipsis by writing
  \[
    C \bigplus_{r=1}^m {}^{,R_r}_{\beta_r} D_r
  \]
  to mean the same expression.
\end{convention}
If for simplicity $\beta_r = \{(i_r,j_r)\}$ (which is the case we care about for stashing) this is just a roundabout way of describing the diagram:
\vspace{0.5\baselineskip}
\begin{center}
  \begin{tikzpicture}[
    defnode/.style={rectangle,fill=lightgray,draw,inner sep=2pt,outer sep=0pt,minimum size=10pt},
    gatelabel/.style={rectangle, inner sep=1pt, outer sep=0pt, fill=white,scale=0.7},
    subnode/.style={rounded rectangle, draw, inner sep=5pt, outer sep=0pt,minimum size=15pt}]
    \node[subnode,scale=0.8] (V) at (0,0) {\phantom{--}$C$\phantom{--}};

    \node[subnode,scale=0.8] (W1) at (-2.1, -1.4) {$R_1 : D_1$};
    \node[subnode,scale=0.8] (W2) at (-0.7, -1.4) {$R_2 : D_2$};
    \node (dots) at (0.7, -1) {$\dots$};
    \node[subnode,scale=0.8] (Wm) at (2.1, -1.4) {$R_m : D_m$};

    \node[defnode,scale=0.4] (U1) at ($(V)!0.5!(W1)$) {};
    \node[defnode,scale=0.4] (U2) at ($(V)!0.5!(W2)$) {};
    \node[defnode,scale=0.4] (Um) at ($(V)!0.5!(Wm)$) {};

    \draw (V) -- node[pos=0.06, gatelabel] {$i_1$} (U1);
    \draw (V) -- node[pos=0.06, gatelabel] {$i_2$} (U2);
    \draw (V) -- node[pos=0.06, gatelabel] {$i_m$} (Um);

    \draw (W1) -- node[pos=0.06, gatelabel] {$j_1$} (U1);
    \draw (W2) -- node[pos=0.06, gatelabel] {$j_2$} (U2);
    \draw (Wm) -- node[pos=0.06, gatelabel] {$j_m$} (Um);
  \end{tikzpicture}
\end{center}
In particular it is clear from this picture that reordering the $+_{\beta_r}^{,R_r}$ terms does not alter the diagram or the labelling.
Hence we can use the notation $\bigplus_{i \in S}$ without specifying an ordering on $S$.

We now state the stashing / revealing result discussed above.
\begin{proposition}%
  \label{prop:awesome-stashing}
  Suppose $C_1$, $C_2$ are two diagrams and $C_1 \entails^M_{\gamma} C_2$.
  Suppose $D$ is another diagram, $i \in \leafs^{C_1}$, $j \in \leafs^D$ are leaves, $V^{C_1}_i = V^D_j$ and
  $D$ is surjective at $j$.
  Finally let $Y \subseteq \leafs^{C_2}$ be a subset such that $\gamma(r)$ is defined and equal to $(i,0)$ or $(i,1)$ for all $r \in Y$.  Then
  \[
    \left( C_1 +_{i \leftrightarrow j}^{,R} D  \right) \entails^M_{\gamma'} \left( C_2 \bigplus_{r \in Y} {}_{r \leftrightarrow j}^{,R_r} D \right).
  \]
  Here $\gamma'$ is defined as follows: for any $r \in Y$ and $x \in \leafs^D \setminus \{j\}$ we have $\gamma'(R_r;x) = ((R;x), t)$ where $\gamma(r) = (i,t)$.
  For any $y \in \leafs^{C_2}$ such that $\gamma(y) = (z,t)$ for some $z \in \leafs^{C_1} \setminus \{i\}$ we have $\gamma'(y) = (z,t)$.
\end{proposition}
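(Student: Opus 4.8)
The statement asserts that we can run the given Cauchy--Schwarz argument $C_1 \entails^M_\gamma C_2$ while carrying a passenger copy of $D$ stashed at the leaf $i$, and that after the argument terminates the stash has been duplicated onto all the leaves $r \in Y$ which $\gamma$ traces back to $i$. The natural strategy is induction on the structure of the $\entails$ derivation, following the recursive cases (a')--(d') in Definition~\ref{def:logic-notation-revenge}, exactly mirroring how $\entails$ was built up. The surjectivity of $D$ at $j$ is needed throughout because, in the morphism case, we will occasionally need to \emph{forget} a copy of the stash (via Proposition~\ref{prop:discard-dual}), and that proposition requires surjectivity; it is also what makes the joining operation well-behaved with respect to morphisms.

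\textbf{Key steps.} First I would handle the base cases. For a single Cauchy--Schwarz step (case (b')), $C_2 = C_1 +_S C_1$ with $\gamma((L,x)) = (x,0)$, $\gamma((R,x)) = (x,1)$. If $i \notin S$ then $Y \subseteq \{(L,i),(R,i)\}$, and one checks directly from Definition~\ref{def:diagram-cs} that $(C_1 +_{i \leftrightarrow j}^{,R} D) +_S (C_1 +_{i \leftrightarrow j}^{,R} D)$ is, after relabelling, exactly $(C_1 +_S C_1) \bigplus_{r \in \{(L,i),(R,i)\}}{}_{r \leftrightarrow j}^{,R_r} D$, since the two matchings commute (the joins are along disjoint leaf sets); the conjugation bits work out because joining at $(L,i)$ inherits the orientation of $(L,i)$ and at $(R,i)$ the orientation of $(R,i)$. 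If $i \in S$ then $(L,i) = (R,i)$ is no longer a leaf of $C_2$, so $Y = \emptyset$ and we just need $(C_1 +_{i \leftrightarrow j}^{,R} D) +_S (C_1 +_{i \leftrightarrow j}^{,R} D) \entails^1 C_1 +_S C_1$ with the appropriate $\gamma'$; here we use Proposition~\ref{prop:discard-dual} to collapse the two stashed copies of $D$ (one on each side of the Cauchy--Schwarz join), after first doing the Cauchy--Schwarz step on the enlarged diagrams. Actually it is cleaner to do the Cauchy--Schwarz step first and then discard: $(C_1 +_{i\leftrightarrow j}^{,R}D) +_S (C_1+_{i\leftrightarrow j}^{,R}D)$ contains two cut-vertex copies of $D$, and applying Proposition~\ref{prop:discard-dual} twice (once per copy) lands us at $C_1 +_S C_1$. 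For the morphism step (case (a')), given $\Theta \colon C_2 \to C_1$, I would construct a morphism $\widetilde\Theta \colon C_2 \bigplus_{r \in Y}{}_{r\leftrightarrow j}^{,R_r} D \to C_1 +_{i \leftrightarrow j}^{,R} D$ by patching (Lemma~\ref{lem:patching}): on the copy of $C_2$ it acts as $\Theta$; on each passenger copy $R_r$ of $D$ it acts as the identity on $D$ composed into the copy $R$ of $D$ on the target, when $\gamma(r) = (i,t)$ — the point being that $\alpha^\Theta(r) = i$ for such $r$ (since $\Theta$ respects $r$ and $\gamma(r)$ traces to $i$), so the vertex $R_r;x$ maps naturally to $R;x$. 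Here one must be careful: $\Theta$ respecting $r$ means $\alpha^\Theta(r) = i$ and $\theta^\Theta_r = \mathrm{id}$, which is exactly what lets the identification of $R_r$'s copy of $D$ with $R$'s copy of $D$ be a morphism; the orientation tag $t$ is just carried along. For leaves $y$ of $C_2$ with $\gamma(y) = (z,t)$, $z \ne i$, nothing special happens and we copy $\Theta$ verbatim. Finally, the composition case (c') is immediate from the inductive hypothesis applied twice — passing through the intermediate $\Xi$, noting that the set of leaves of $\Xi$ tracing back to $i$ under $\gamma_1$ is precisely where we stash for the first half, then re-applying the proposition with the $D$-joined version of $\Xi$ — and the discarding case (d') follows because shrinking the domain of $\gamma$ only shrinks $Y$, and one can always forget extra stashed copies via Proposition~\ref{prop:discard-dual} / Corollary~\ref{cor:join-cs-complex}.

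\textbf{Main obstacle.} The genuinely delicate step is the morphism case, specifically verifying that the patched map $\widetilde\Theta$ satisfies the compatibility condition~\eqref{eq:diag-morph} across the cut-edges — the edges $(R_r;x)(R_r;y)$ of each passenger $D$, and in particular the edge incident to the (former) leaf $r$ which in $C_2 \bigplus \cdots$ has become a non-leaf cut-vertex. One needs $\theta^\Theta_r = \mathrm{id}$ (from ``$\Theta$ respects $r$'') to line up the $D$-side of the square at the cut-vertex with the $C_1$-side, and one needs the defining compatibility of the limit space (equation~\eqref{eq:pi-compat}, or rather the fact that $D$ appearing as a subdiagram contributes its own edge maps $\phi^D_{xy}$ unchanged on both source and target) for the interior edges of $D$. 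Getting the labelling bookkeeping right — which $R_r$ maps into which part of the target, and tracking the conjugation bits $t$ through composition in case (c') (where the tags add mod $2$) — is the other place where care is required, though it is bookkeeping rather than mathematics. I expect the whole argument to go through cleanly once the morphism-case patching is set up, since every ingredient (Lemma~\ref{lem:patching}, Proposition~\ref{prop:discard-dual}, the disjointness of the leaf sets $Y$ used for the iterated joining) is already available.
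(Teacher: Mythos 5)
Your overall strategy — induction on the derivation of $C_1 \entails^M_\gamma C_2$, treating the morphism, Cauchy--Schwarz, composition, and discard cases separately — is exactly the one the paper uses. The morphism-case patching via Lemma~\ref{lem:patching}, and the observation that ``$\Theta$ respects $r$'' supplies $\alpha^\Theta(r)=i$, $\theta^\Theta_r=\id$, which is what lets the passenger copies of $D$ slot into place, is the right content of that case and matches the paper's Claim in the proof of Lemma~\ref{lem:awesome-stashing}.

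There is, however, a gap in the composition case (c') that the paper addresses explicitly and you do not. After a Cauchy--Schwarz step with $i \notin S$, the single stash at $i$ gets duplicated to \emph{two} leaves of the intermediate diagram $\Xi$, and more generally after several steps the set of leaves of $\Xi$ tracing back to $i$ can be arbitrarily large. You notice this (``the set of leaves of $\Xi$ tracing back to $i$ \dots is precisely where we stash'') but then say you ``re-apply the proposition'' to the $D$-joined $\Xi$ — yet the proposition as stated only covers a single stashed leaf, so the inductive hypothesis literally does not apply. The fix, which the paper makes at the outset, is to \emph{strengthen the inductive hypothesis}: prove the statement with the single leaf $i$ replaced by a set $Z \subseteq \leafs^{C_1}$, each carrying its own stashed copy of $D$. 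That is exactly the content of the paper's Lemma~\ref{lem:awesome-stashing}, of which Proposition~\ref{prop:awesome-stashing} is the case $Z=\{i\}$. Without this strengthening, (c') does not close.

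A smaller issue: in the Cauchy--Schwarz sub-case $i\in S$, the order ``CS first, then discard'' does not quite go through. When $i\in S$, the vertex $i$ is not a leaf of $\widetilde C_1 = C_1 +_{i\leftrightarrow j}D$ (it is a cut vertex), so $\CS(S)$ cannot be applied; applying $\CS(S\setminus\{i\})$ instead produces a diagram in which the two copies of $i$ (each with its own stash) are \emph{not} identified, whereas in $C_2 = C_1 +_S C_1$ they are, and no morphism step can merge two distinct vertices into one. The correct order is discard first (Proposition~\ref{prop:discard-dual} gives $\widetilde C_1 \entails^0 C_1$) and then apply $\CS(S)$. The paper sidesteps this entirely: before the case analysis it normalizes $Z$ and $Y$ (its Claim~0) so that every $i\in Z$ has a preimage under $\gamma$, which in the CS case forces $Z\cap S=\emptyset$, and the awkward sub-case never arises. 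Your proposal would benefit from adopting both devices: strengthen to a set $Z$, and normalize $Z$, $Y$ up front.
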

A typical application will be phrased as follows.
We start by making some argument $C_0 \entails \left( C_1 +_{i \leftrightarrow j} D \right)$.
We will then say we ``stash'' $D$ and continue by reasoning about $C_1$, until we have shown $C_1 \entails C_2$.
Finally by Proposition~\ref{prop:awesome-stashing} we ``reveal'' copies of $D$ to deduce $C_0 \entails \left( C_2 \bigplus_{r \in Y} {}_{r \leftrightarrow j}^{,R_r} D \right)$.

The statement in Proposition~\ref{prop:awesome-stashing} is the one we will use in applications, but for the purposes of induction we will prove a slightly stronger version where the single vertex $i$ is replaced by a set of vertices.

\begin{lemma}%
  \label{lem:awesome-stashing}
  Suppose $C_1$, $C_2$ are two diagrams with $C_1 \entails^M_{\gamma} C_2$, that $D$ is another diagram, $j \in \leafs^D$ and $D$ is surjective at $j$.
  Also let $Z \subseteq \leafs^{C_1}$ and $Y \subseteq \leafs^{C_2}$ be subsets such that $V^{C_1}_i = V^D_j$ for all $i \in Z$, and $Y \subseteq \gamma^{-1}(Z \times \{0,1\})$.
  Then
  \[
    \left( C_1 \bigplus_{i \in Z} {}_{i \leftrightarrow j}^{,R_i} D  \right) \entails^M_{\gamma'} \left( C_2 \bigplus_{r \in Y} {}_{r \leftrightarrow j}^{,R_r} D \right).
  \]
  Here $\gamma'$ is as follows.
  Whenever $r \in Y$ and $x \in \leafs^D \setminus \{j\}$ we have $\gamma'(R_r;x) = ((R_i;x), t)$, where $\gamma(r) = (i,t)$.
  Whenever $y \in \leafs^{C_2}$ and $\gamma(y) = (z,t)$ for some $z \in \leafs^{C_1} \setminus Z$, we have $\gamma'(y) = (z,t)$.
\end{lemma}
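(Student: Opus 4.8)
\textbf{Proof proposal for Lemma~\ref{lem:awesome-stashing}.}

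The plan is to induct on the length $M$ and structure of the derivation $C_1 \entails^M_\gamma C_2$, peeling off one step at a time according to the four clauses (a')--(d') of Definition~\ref{def:logic-notation-revenge}, and in each case showing the corresponding operation can be mirrored on the ``stashed'' side. The base of the induction, and the two single-step cases (a') and (b'), are where the real content lies; composition (c') and discarding (d') will follow almost formally.

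First I would set up the single \textbf{morphism step} (case (a')): here $M=0$ and we have a diagram morphism $\Theta \colon C_2 \to C_1$ inducing $\gamma$. The goal is to build a morphism $\bigl( C_2 \bigplus_{r \in Y} {}^{,R_r}_{r \leftrightarrow j} D \bigr) \to \bigl( C_1 \bigplus_{i \in Z} {}^{,R_i}_{i \leftrightarrow j} D \bigr)$. On the ``$C$-part'' we just use $\Theta$ (with shape $x \mapsto \alpha^\Theta(x)$ on vertices coming from $C_2$). For each $r \in Y$, $\Theta$ respects $r$ — since $\gamma(r)=(i,t)$ is defined with $t=0$, which by Remark~\ref{rem:conj-tracking} we may always arrange to interpret as ``respects'' up to the conjugation flag, but actually we only know $\gamma(r)$ is \emph{defined}; the key point is that when $\gamma(r)=(i,0)$ this forces $\Theta$ to respect $r$, and when $\gamma(r)=(i,1)$ we are in the situation tracked by the $\{0,1\}$-flag, which is handled by the conjugation bookkeeping in Definition~\ref{def:logic-notation}. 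In either sub-case, $\alpha^\Theta$ maps the leaf $r \in \leafs^{C_2}$ to $i \in \leafs^{C_1}$ with $\theta^\Theta_r = \id$, so the identification $(R_r;x) \leftrightarrow (R_i;x)$ for $x \in X^D$ extends $\Theta$ by the identity on the copy $R_r$ of $D$; one checks~\eqref{eq:diag-morph} holds for the edges inside each copy of $D$ trivially, and for the single ``bridging'' identification at $j$ because $\theta^\Theta_r = \id$ on $V^{C_2}_r = V^D_j$. Leaves $r \in Y$ are not leaves of either iterated joining (they have been fused to $j$), which is consistent. This is essentially Proposition~\ref{prop:diagram-morphism}-style bookkeeping applied vertex-by-vertex, and the ``patching'' Lemma~\ref{lem:patching} could be invoked to assemble it cleanly if one covers the target by the $C_1$-part together with the $k$ copies of $D$.

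Next the \textbf{Cauchy--Schwarz step} (case (b')): here $C_2 = C_1 +_S C_1$ for some $S \subseteq \leafs^{C_1}$, with $\gamma((L;x))=(x,0)$, $\gamma((R;x))=(x,1)$ for $x \notin S$. We must show $\bigl( C_1 \bigplus_{i \in Z} {}^{,R_i}_{i \leftrightarrow j} D\bigr) \entails^1$ of the appropriate joining. I would argue that applying $\CS(S)$ to the left-hand side directly produces $\bigl( C_1 \bigplus_{i \in Z} {}_{i\leftrightarrow j} D\bigr) +_S \bigl(C_1 \bigplus_{i\in Z} {}_{i \leftrightarrow j} D\bigr)$, and that this latter diagram is strongly isomorphic to $C_2 \bigplus_{r \in Y} {}_{r \leftrightarrow j} D$ where $Y = \{(L;i),(R;i) : i \in Z \setminus S\}$ — because elements of $Z$ that lie in $S$ get \emph{fused together}, and so do their attached copies of $D$, which is exactly what $\gamma^{-1}(Z \times \{0,1\})$ restricted to the $(L;-),(R;-)$ labels of leaves outside $S$ records. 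The verification is that fiber products commute with fiber products: $(C +_S C) +_{(L;i)\leftrightarrow j} D$ joined on both sides equals $C +_S C$ with $D$ hung off each surviving copy of each $i \in Z$. Matching up the $\gamma'$ functions is then a matter of unwinding the labelling Convention~\ref{convention:joining-labelling}.

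The \textbf{composition step} (case (c')) is where the induction hypothesis earns its keep: given $C_1 \entails^{k_1}_{\gamma_1} \Xi$ and $\Xi \entails^{k_2}_{\gamma_2} C_2$ with $\gamma = \gamma_2 \bullet \gamma_1$, I apply the induction hypothesis first to the pair $(C_1,\Xi)$ with the original $Z$ and with $Y_1 := \gamma_1^{-1}(Z \times \{0,1\})$ — so that $Y_1$ is the \emph{full} preimage, which makes the next step applicable — obtaining $\bigl(C_1 \bigplus_{Z}\bigr) \entails^{k_1} \bigl(\Xi \bigplus_{Y_1}\bigr)$; then apply it again to $(\Xi, C_2)$ with the set $Y_1$ in place of $Z$ and the given $Y$, noting $Y \subseteq \gamma_2^{-1}(Y_1 \times \{0,1\})$ since $Y \subseteq \gamma^{-1}(Z \times \{0,1\})$ and $\gamma$ factors through $\gamma_1,\gamma_2$. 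Composing the two $\entails$ statements via clause (c') of Definition~\ref{def:logic-notation-revenge} gives the result; the conjugation flags compose correctly by the ``$a_1 + a_2 \bmod 2$'' rule. Finally \textbf{discarding} (case (d')) is immediate: if $C_1 \entails^{k'}_{\gamma'} C_2$ with $\gamma$ a subfunction of $\gamma'$, apply the induction hypothesis to $\gamma'$ and then discard the extra data on the stashed side to recover $\gamma$ restricted appropriately — one has to check that restricting $\gamma$ only shrinks which copies $R_r$ of $D$ appear on the right and leaves a legal sub-function, which it does.

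\textbf{Main obstacle.} The genuinely delicate point is the Cauchy--Schwarz case when $Z \cap S \ne \emptyset$: here two stashed copies of $D$ get identified along their shared vertex $j$, and one must confirm that $\bigl(C_1 \bigplus_Z {}_{i\leftrightarrow j} D\bigr) +_S \bigl(C_1 \bigplus_Z {}_{i \leftrightarrow j} D\bigr)$ really is the same diagram (not just the same up to some subtle degeneracy) as $\bigl(C_1 +_S C_1\bigr) \bigplus_{Y} {}_{r \leftrightarrow j} D$, including on the nose at the level of the limit space $V^{\datum(-)}$ and the labellings. This is a fiber-product-of-fiber-products identity — morally Proposition~\ref{prop:diagram-joining} applied twice — but keeping the $L/R$ and $R_i/R_r$ labels straight, and confirming the surjectivity-at-$j$ hypothesis is preserved so the $\gamma'$ conjugation data is well-posed, requires care. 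Everything else is routine diagram-chasing.
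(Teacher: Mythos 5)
Your high-level plan — induct on the structure of the derivation $C_1 \entails^M_\gamma C_2$ and mirror each step of Definition~\ref{def:logic-notation-revenge} on the stashed side — is the same as the paper's, and your treatment of the composition and discarding clauses is essentially right. However, there is a crucial missing step at the outset, and your attempt to compensate for it in the Cauchy--Schwarz case does not actually go through.

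The paper begins with a normalization: using Proposition~\ref{prop:discard-dual} repeatedly (this is where the surjectivity-at-$j$ hypothesis is actually consumed), one may discard any stashed copy of $D$ on either side of the desired $\entails$, and hence assume without loss of generality that (a) every $i \in Z$ has some $r \in Y$ with $\gamma(r) \in \{(i,0),(i,1)\}$, and (b) $Y = \gamma^{-1}(Z\times\{0,1\})$ exactly. Under (a) the two single-step cases become clean. For a morphism step, (a) forces $\Theta$ to respect every $i \in Z$; your remark that ``$\gamma(r)=(i,0)$ forces $\Theta$ to respect $r$'' has the roles of $r$ and $i$ backwards (``respect'' is a property of leaves of $C_1$, the target of $\alpha^\Theta$), and more to the point is not automatic for arbitrary $Z$: a leaf $i \in Z$ not respected by $\Theta$ has no preimage under $\gamma$, so its stashed $D$ must be discarded before the morphism can be extended. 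Likewise for a $\CS(S)$ step, (a) forces $Z \cap S = \emptyset$.

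The Cauchy--Schwarz case is where the gap bites hardest. You flag $Z \cap S \ne \emptyset$ as ``the genuinely delicate point'' and propose to verify a fiber-product identity; but the proposed operation is not even well-posed. If $i \in Z \cap S$ then $i$ is no longer a leaf of $\wt C_1 = C_1 \bigplus_{i\in Z}{}^{,R_i}_{i\leftrightarrow j}D$ (it has become the non-leaf identification point with $j$), so $\CS(S)$ cannot be applied to $\wt C_1$ for that $S$, since it requires $S \subseteq \leafs^{\wt C_1}$. Moreover, the target $C_2 \bigplus_{r\in Y}{}^{,R_r}_{r\leftrightarrow j} D$ has no stashed copy of $D$ at the image of $i$ — after the $\CS$ step $(L;i)$ and $(R;i)$ coincide, are not leaves of $C_2$, and no $r \in Y$ maps there — whereas the left-hand side still carries a stashed $D$ at $i$. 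So the ``fiber-product-of-fiber-products'' identity you want to confirm is simply false in that case. The correct move is the normalization: first discard the stashed $D$ at every $i \in Z \cap S$ (reducing $Z$), then apply $\CS(S)$. Once you have $Z \cap S = \emptyset$, your argument for the Cauchy--Schwarz case — $\CS(S)$ commutes with the iterated single-vertex joinings, up to relabelling — is correct and matches the paper's.
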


Clearly Proposition~\ref{prop:awesome-stashing} is the case $Z=\{i\}$, up to minor relabelling.

\begin{proof}[Proof of Lemma~\ref{lem:awesome-stashing}]%
  We first make the following observation.
  \begin{claim}%
    \label{claim:awesome-claim0}
    Whenever $C$ is a diagram and $S' \subseteq S \subseteq \leafs^C$ are leaves such that $V^C_i = V^D_j$ for all $i \in S$, we have
    \[
      \left( C \bigplus_{i \in S} {}_{i \leftrightarrow j}^{,R_i} D \right) \entails^0_{\gamma} \left( C \bigplus_{i \in S'} {}_{i \leftrightarrow j}^{,R_i} D \right) 
    \]
    where $\gamma(i;x) = ((i;x),0)$ for all $i \in S'$ and $x \in \leafs^D \setminus \{j\}$, and $\gamma(y) = (y,0)$ for all $y \in \leafs^C \setminus S$.
  \end{claim}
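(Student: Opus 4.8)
The plan is to prove Claim~\ref{claim:awesome-claim0} by exhibiting a single morphism of diagrams
\[
  \Theta \colon \Bigl( C \bigplus_{i \in S'} {}_{i \leftrightarrow j}^{,R_i} D \Bigr) \longrightarrow \Bigl( C \bigplus_{i \in S} {}_{i \leftrightarrow j}^{,R_i} D \Bigr)
\]
which is the identity on the $C$-part and on each copy of $D$ indexed by $i \in S'$, and which ``collapses'' each of the extra copies of $D$ indexed by $i \in S \setminus S'$ back onto the corresponding leaf $i$ of $C$. This collapsing is exactly the construction in Proposition~\ref{prop:discard-dual}; the only new content is that we do it simultaneously for all $i \in S \setminus S'$, which is harmless since the subdiagrams attached at distinct leaves of $C$ are vertex-disjoint (by the picture following the iterated-joining convention, reordering and disjointness of the $R_i$-parts is built in). So the first step is to write $\widetilde C = C \bigplus_{i \in S} {}_{i \leftrightarrow j}^{,R_i} D$ and $\widetilde C' = C \bigplus_{i \in S'} {}_{i \leftrightarrow j}^{,R_i} D$, identify $\widetilde C'$ with the sub-diagram of $\widetilde C$ obtained by deleting, for each $i \in S \setminus S'$, all of the vertices of the $R_i$-copy of $D$ except the cut-vertex $i$ (which reverts to a leaf), and recognize $\widetilde C'$ as having the same underlying $C$-structure but with the $R_i$-copies for $i \notin S'$ removed.

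Next I would define $\Theta$ by patching, using Lemma~\ref{lem:patching}. On the $C$-part and on the $R_i$-part for $i \in S'$, take $\alpha$ and all the $\theta_x$ to be the identity. For each $i \in S \setminus S'$, apply the construction from the proof of Proposition~\ref{prop:discard-dual} locally: letting $z_i \in \nonleafs^C$ be the parent of $i$, and $\psi_i \colon V^D_j \to V^{\datum(D)}$ a right inverse of the projection $\pi_j$ (which exists by the surjectivity-at-$j$ hypothesis), set $\alpha(R_i;y) = i$ with $\theta_{(R_i;y)} = \pi_y \circ \psi_i$ for leaves $y$ of $D$, and $\alpha(R_i;x) = z_i$ with $\theta_{(R_i;x)} = \pi_x \circ \psi_i \circ \phi^C_{z_i i}$ for non-leaves $x$ of $D$ (including the cut-vertex, as in the proof of Proposition~\ref{prop:discard-dual}). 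Verifying~\eqref{eq:diag-morph} on each kind of edge is then exactly the computation already carried out in the proof of Proposition~\ref{prop:discard-dual}: edges internal to $C$ or to an $R_i$-copy with $i \in S'$ are trivial, the edge $z_i i$ uses $\pi_j \circ \psi_i = \id$, and edges internal to the collapsed $R_i$-copies use the compatibility relation~\eqref{eq:pi-compat}. Since the different cover pieces ($C$, and each $R_i$-subdiagram) overlap only at the cut-vertices $i$, and our definitions agree there ($\alpha(i) = i$, $\theta_i = \phi^C_{z_i i}$ viewed correctly via $\pi_j \circ \psi_i = \id$), Lemma~\ref{lem:patching} assembles these into a single morphism. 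The morphism respects every leaf of the $C$-part other than $S$, and every leaf $(i;x)$, $x \ne j$, of the $R_i$-copies with $i \in S'$; reading off the induced $\gamma$ (case (a') / morphism step, $k=0$) gives exactly the $\gamma$ stated in the claim.

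With Claim~\ref{claim:awesome-claim0} in hand, the plan for the full Lemma~\ref{lem:awesome-stashing} is to interpolate. Given $C_1 \entails^M_\gamma C_2$, I would apply the ``revealing'' step that upgrades the relation after attaching one copy of $D$ at each vertex of $Z$ to $C_1$: this is the heart of the argument and is presumably proved by induction on the structure of the derivation of $C_1 \entails^M_\gamma C_2$ (cases (a')--(d') of Definition~\ref{def:logic-notation-revenge}), pushing copies of $D$ through morphism steps and Cauchy--Schwarz steps — a Cauchy--Schwarz step $\CS(S)$ that eliminates a leaf mapped to $Z$ creates two copies of the attached $D$, which is why the bookkeeping passes from a single $i$ to the full preimage set $Y \subseteq \gamma^{-1}(Z \times \{0,1\})$. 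Claim~\ref{claim:awesome-claim0} is then used twice at the ends: once to reduce from ``a copy of $D$ at every vertex of $Z$'' down to whatever subset the induction actually needs, and once to trim the right-hand side down from $\gamma^{-1}(Z \times \{0,1\})$ to the chosen $Y$. I expect the main obstacle to be precisely this inductive step through the $\CS$ case: one must check that after a Cauchy--Schwarz self-joining, the two mirror copies of each attached $D$ are correctly re-indexed by the left/right labels and that the disjointness hypothesis (the $R_r$ for $r \in Y$ being distinct fresh labels) is preserved, so that the iterated-joining notation on the right-hand side remains well-formed; the surjectivity-at-$j$ hypothesis is what keeps the dual-function interpretation — and hence the collapsing morphisms $\psi_i$ — available at every stage.
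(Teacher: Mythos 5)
Your overall plan is sound, and the verification boils down to the same computations as in Proposition~\ref{prop:discard-dual}, but the patching step as you have written it contains a genuine error at the overlap vertices. You stipulate that on the $C$-part (and the $R_i$-part for $i \in S'$) the maps $\alpha, \theta_x$ are the identity, and then assert ``our definitions agree there ($\alpha(i)=i$, $\theta_i=\phi^C_{z_i i}$\dots)''. For $i \in S \setminus S'$ this cannot be right: the cut-vertex $i=(R_i;j)$ is a \emph{non-leaf} of the source $\bigl( C \bigplus_{i \in S} {}_{i \leftrightarrow j}^{,R_i} D \bigr)$ (it has in-degree $2$ after joining), whereas $i$ is a \emph{leaf} of the target $\bigl( C \bigplus_{i \in S'} {}_{i \leftrightarrow j}^{,R_i} D \bigr)$ (since $i \notin S'$), so $\alpha(i)=i$ would send a non-leaf to a leaf, violating condition (i) of Definition~\ref{def:diagram-morphism}. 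Moreover your own Proposition~\ref{prop:discard-dual} recipe on the $R_i$-part forces $\alpha(R_i;j)=z_i$, which contradicts $\alpha(i)=i$; the two pieces do not agree, so Lemma~\ref{lem:patching} does not apply. The fix is to set $\alpha(i)=z_i$ and $\theta_i=\phi^C_{z_i i}$ for every $i \in S\setminus S'$, so that the ``$C$-part'' morphism is not literally the identity but is the identity away from those cut-vertices; with that change the pieces do agree and your single-morphism argument goes through. Also note that for $i\in S'$ the overlap data is $\theta_i=\id$, not $\phi^C_{z_i i}$, so the parenthetical blanket claim needs to split into the two cases.

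The paper itself sidesteps all of this with a one-line argument: apply Proposition~\ref{prop:discard-dual} once for each $i \in S \setminus S'$ in turn, writing $\widetilde C$ at each stage as $\bigl(\cdot\bigr) +^{,R_i}_{i\leftrightarrow j} D$ and composing the resulting $\entails^0$ statements via Definition~\ref{def:logic-notation-revenge}(c'). That iteration produces exactly the claimed $\gamma$ and never requires you to resolve the overlap bookkeeping. Your simultaneous-collapse construction works once the slip is corrected, but the compositional version is shorter, avoids the leaf/non-leaf pitfall, and is the one the paper uses.
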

  \begin{proof}[Proof of claim]%
    This follows by applying Proposition~\ref{prop:discard-dual} repeatedly for each $i \in S \setminus S'$ in turn.
  \end{proof}
  By applying the claim to either $C=C_1$ or $C=C_2$, we are free to make $Z$ smaller or $Y$ larger subject to the constraint $Y \subseteq \gamma^{-1}(Z \times \{0,1\})$.
  That is, without loss of generality we may assume that (a) for every $i \in Z$ there is at least one $r \in Y$ such that $\gamma(r)=(i,0)$ or $(i,1)$ (or else by Claim~\ref{claim:awesome-claim0} we reduce to the same statement for $Z' = Z \setminus \{i\}$), and (b) $Y = \gamma^{-1}(Z \times \{0,1\})$ (or else by Claim~\ref{claim:awesome-claim0} we reduce to the same statement with this larger choice of $Y$).

  Throughout we write
  \begin{align*}
    \wt C_1 &= C_1 \bigplus_{i \in Z} {}_{i \leftrightarrow j}^{,R_i} D &
    \wt C_2 &= C_2 \bigplus_{r \in Y} {}_{r \leftrightarrow j}^{,R_r} D  .
  \end{align*}
  We first consider the case that $C_1 \entails C_2$ by a single $\morph$ step.
  \begin{claim}%
    \label{claim:awesome-claim1}
    Suppose that $D$ and $j$ are as in the statement, that $\Theta \colon C_2 \to C_1$ is a morphism, and that $Z \subseteq \leafs^{C_1}$ and $Y \subseteq \leafs^{C_2}$ are subsets such that \uppar{i} $V_i^{C_1} = V_j^D$ for all $i \in Z$, \uppar{ii} $\Theta$ respects every $i \in Z$, and \uppar{iii} $Y = \alpha^{\Theta}(Z)$.

    Then there exists a morphism
    $
      \wt \Theta \colon \wt C_2 \to \wt C_1 
    $
    such that:---
    \begin{itemize}
      \item for all $y \in \leafs^{C_1} \setminus Z$ we have $\alpha^{\wt \Theta}(y) = \alpha^{\Theta}(y)$, and if $\Theta$ respects $y$ then so does $\wt \Theta$;
      \item for all $i \in Z$ and $x \in \leafs^D \setminus \{j\}$ we have $\alpha^{\wt \Theta}(R_i;x) = \bigl(R_{\alpha^{\Theta}(i)};x\bigr)$, and $\wt \Theta$ respects $R_i;x$.
    \end{itemize}
  \end{claim}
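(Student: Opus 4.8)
\textbf{Proof proposal for Claim~\ref{claim:awesome-claim1}.}

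The plan is to build $\wt\Theta\colon \wt C_2 \to \wt C_1$ by the patching lemma, Lemma~\ref{lem:patching}, using the natural cover of $\wt C_1$ by the copy of $C_1$ and the copies $R_i;D$ of $D$ for $i\in Z$. On the $C_1$-part the morphism should just be $\Theta$ (composed with the relabelling that embeds $C_1$-labels into $\wt C_1$-labels, which is transparent since joinings of the form $+^{,R}$ leave $C_1$'s labels untouched). On each copy $R_r;D$ inside $\wt C_2$ (for $r\in Y$, with $i = $ the unique element of $Z$ such that $\alpha^\Theta(r) = i$, using hypothesis (iii) that $Y = \alpha^\Theta(Z)$ together with the reductions (a), (b) already made) the morphism should map it to the copy $R_i;D$ in $\wt C_1$ by the identity on $D$. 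So the candidate data is: $\alpha^{\wt\Theta}$ agrees with $\alpha^\Theta$ on $\leafs^{C_1}\setminus Z$ and on all non-leaves of the $C_1$-part; $\alpha^{\wt\Theta}(R_r;x) = (R_i;x)$ with $\theta^{\wt\Theta}_{R_r;x} = \id$ for $x \ne j$ and also for the cut-vertex; and on the cut-vertices we take $\alpha^{\wt\Theta}(R_r;j) = \alpha^{\wt\Theta}(\text{image of }r\text{ in }C_2) = i$ (recall $(R_r;j)$ is identified with the leaf $r$ of $C_2$ in $\wt C_2$, and similarly $(R_i;j)$ is identified with $i$ in $\wt C_1$), with $\theta^{\wt\Theta}$ there being $\theta^\Theta_r$. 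Note that because $\Theta$ respects every $i\in Z$ (hypothesis (ii)), we have $V^{C_2}_r = V^{C_1}_i$ and $\theta^\Theta_r = \id$, which is exactly what makes the two descriptions of the cut-vertex data consistent.

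Next I would verify the compatibility condition in Lemma~\ref{lem:patching}: the overlaps between the $C_1$-block and the block $R_i;D$ consist precisely of the single cut-vertex $(R_i;j)$ (identified with $i\in\leafs^{C_1}$), and the overlaps between two different $D$-blocks are empty (distinct copies share no vertices, by the iterated-joining picture). On the shared cut-vertex, the $C_1$-block assigns $\alpha = \alpha^\Theta(i)$, which since $\Theta$ respects $i$ equals $i$ itself, hmm --- more carefully, $\Theta\colon C_2\to C_1$ respects $i\in\leafs^{C_1}$ means $\alpha^\Theta$ sends the (unique) preimage, call it $r$, to $i$ and $\theta^\Theta_r = \id$; in $\wt C_1$ the relevant cut-vertex is $(R_i;j)$, and both blocks need to agree there. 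The $C_1 \to C_1$-block of $\wt\Theta$ restricted to this vertex is forced by $\Theta$ to map the $\wt C_2$-cut-vertex (the image of $r$) to the $\wt C_1$-cut-vertex via $\theta^\Theta_r = \id$; the $D$-block maps it via $\theta^{\wt\Theta}_{R_r;j}$, which we also set to $\id$. So they agree. The remaining checks --- that each block-restricted map really is a morphism onto the induced sub-diagram --- are immediate: for the $C_1$-block this is just $\Theta$ (restricted via Remark~\ref{rem:restriction-morphs}), and for each $D$-block it is the identity, using the already-established fact (Remark~\ref{rem:joining-sub}) that the induced sub-diagram on a $D$-copy inside $\wt C_1$ is $D$ with the matched leaf turned into a non-leaf, so $\id$ is a valid morphism from $D$ (with the corresponding leaf-to-non-leaf conversion) to it. Then I would read off that $\wt\Theta$ respects $R_i;x$ (its component there is $\id$ onto a space that has not changed, and $\alpha^{\wt\Theta}$ is injective on these leaves precisely because $\alpha^\Theta$ is injective on $Z$ after reduction (a) and because distinct copies have distinct labels) and that it respects any $y\in\leafs^{C_1}\setminus Z$ that $\Theta$ respected (the data at $y$ is untouched by the joining).

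The main obstacle I anticipate is bookkeeping around the three-way identification of the cut-vertices: the vertex $(R_i;j)$ in $\wt C_1$ is simultaneously the leaf $i$ of $C_1$, the ``$j$'' of the $i$-th copy of $D$, and (after applying $\wt\Theta$) the target of the cut-vertex of $\wt C_2$, and one has to be scrupulous that the morphism condition~\eqref{eq:diag-morph} holds for the two edges incident to it --- the edge coming from $C_1$ (handled by $\Theta$ being a morphism, using that $\Theta$ respects $i$ so the square degenerates appropriately) and the edge coming from $D$ (handled by $\id_D$ being a morphism). Getting the labelling conventions of Convention~\ref{convention:joining-labelling} to line up, so that reordering the copies and invoking $\bigplus_{i\in Z}$ without an order is legitimate, is the other fiddly point, but it is genuinely routine given the remark that reordering $+^{,R_r}_{\beta_r}$ terms does not change the diagram. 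I expect no conceptual difficulty beyond this; the claim is a ``transport a morphism across a joining'' statement and the patching lemma was designed precisely for it.
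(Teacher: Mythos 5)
Your proof is correct and takes essentially the same route as the paper: cover $\wt C_1$ by the $C_1$-block and the $R_i;D$-blocks, invoke Lemma~\ref{lem:patching}, restrict $\Theta$ (via Remark~\ref{rem:leaf-non-leaf-morph}) on the $C_1$-block, use the identity on each $D$-block, and use the fact that $\Theta$ respects each $i \in Z$ to get compatibility at the cut-vertices.

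One consistent slip worth flagging, though it does not affect the substance: you repeatedly invert the direction of the shape map. For a morphism $\Theta \colon C_2 \to C_1$, Definition~\ref{def:diagram-morphism} gives $\alpha^\Theta \colon X^{C_1} \to X^{C_2} \cup \{\zeroi\}$ (target indexes source), and $\theta^\Theta_x$ is indexed by $x \in X^{C_1}$. So ``respects $i \in \leafs^{C_1}$'' means $\theta^\Theta_i = \id$ (not $\theta^\Theta_r$), the preimage relation is $\alpha^\Theta(i) = r$ (not $\alpha^\Theta(r) = i$), and the conclusion concerns $\alpha^{\wt\Theta}(R_i;x)$ for $(R_i;x) \in X^{\wt C_1}$, not $\alpha^{\wt\Theta}(R_r;x)$. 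You catch part of this mid-proof but continue with the reversed indexing afterwards. Also, the injectivity of $\alpha^\Theta$ on $Z$ which you attribute to ``reduction (a)'' is in fact automatic from hypothesis (ii): Definition~\ref{def:diagram-morphism}(b) of ``respects'' already forbids $\alpha^\Theta(i) = \alpha^\Theta(i')$ for distinct respected leaves. None of these change the argument, but they should be tidied.
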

  Note that (ii), (iii) follow from our assumptions (a), (b) above in the context of a single morphism step.
  \begin{proof}[Proof of claim]%
    The original copies of $C_1$, $C_2$ in $\wt C_1$, $\wt C_2$ appear as
    \begin{align*}
      \wt C_1\bigl[X^{C_1}\bigr] &= C_1(Z \leadsto \nonleafs) &
      \wt C_2\bigl[X^{C_2}\bigr] &= C_2(Y \leadsto \nonleafs)
    \end{align*}
    (see Remark~\ref{rem:joining-sub}), so by Remark~\ref{rem:leaf-non-leaf-morph} applied to $\Theta \colon C_2 \to C_1$ we get a morphism $\Theta \colon \wt C_2\bigl[X^{C_2}\bigr] \to \wt C_1\bigl[X^{C_1}\bigr]$.
    We then compose this with the restriction $\wt C_2 \to \wt C_2[X^{C_2}]$ to give a morphism $\wt C_2 \to \wt C_1\bigl[X^{C_1}\bigr]$.

    For each pair $i \in Z$, $r \in Y$ with $\alpha^{\Theta}(i) = r$,
    we have
    \[
      \wt C_1[R_i] = D(j \leadsto \nonleafs) = \wt C_2[R_r].
    \]
    Composing with the restriction $\wt C_2 \to \wt C_2[R_r]$ gives a morphism $\wt C_2 \to \wt C_1[R_i]$.

    Finally we note that all these morphisms $\wt C_2 \to \wt C_1[-]$ coincide on the overlap vertices $i=(R_i;j)$ for $i \in Z$ (because $\Theta$ respects $i$), so Lemma~\ref{lem:patching} applies and gives a morphism $\wt \Theta \colon \wt C_2 \to \wt C_1$ with the properties claimed.
  \end{proof}
  
  Next we consider the possibility that $C_1 \entails C_2$ by a single $\CS(S)$ step.
  \begin{claim}%
    \label{claim:awesome-claim2}
    Suppose $S, Z \subseteq \leafs^{C_1}$ are disjoint subsets, $C_2 = C_1 +_{S}^{A,B} C_1$, and $Y = \{ (A;i), (B;i) \colon i \in Z \}$.
    Then $\wt C_1 \entails^1_{\gamma'} \wt C_2$.

    Here
    $\gamma'(A;y) = (y,0)$ and $\gamma'(B;y) = (y,1)$ for any $y \in \leafs^{C_1} \setminus (S \cup Z)$, and
    $\gamma'(R_{A;i};x) = ((R_i;x), 0)$ and $\gamma'(R_{B;i};x) = ((R_i;x),1)$ for any $i \in Z$ and $x \in \leafs^D \setminus \{j\}$.
  \end{claim}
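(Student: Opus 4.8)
The plan is to prove Claim~\ref{claim:awesome-claim2} by carrying out the single Cauchy--Schwarz step on $\wt C_1$ and identifying the resulting diagram with $\wt C_2$. The key observation is that the copies $R_i$ of $D$ hanging off $\wt C_1$ attach only at leaves $i \in Z$, and $Z$ is disjoint from $S$; so when we apply $\CS(S')$ to $\wt C_1$ for the appropriate set $S'$ of leaves, the $D$-copies are simply duplicated along with the two halves of $C_1$ and do not interact with the matching along $S$.

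More precisely, first I would observe that the leaves of $\wt C_1$ are exactly $\bigl(\leafs^{C_1} \setminus Z\bigr) \cup \bigl\{ (R_i;x) \colon i \in Z,\ x \in \leafs^D \setminus \{j\} \bigr\}$, since each vertex $i \in Z$ has become a non-leaf cut-vertex of $\wt C_1$. In particular $S \subseteq \leafs^{C_1} \setminus Z \subseteq \leafs^{\wt C_1}$, so $\CS(S)$ is a legal step on $\wt C_1$: we form $\wt C_1 +_S^{A,B} \wt C_1$. Unwinding Definition~\ref{def:diagram-cs}, the underlying diagram of $\wt C_1 +_S^{A,B} \wt C_1$ consists of two copies $A$, $B$ of $C_1$ glued along $S$, and hanging off the copy $A$ (resp.\ $B$), for each $i \in Z$, a copy $A;R_i$ (resp.\ $B;R_i$) of $D$ attached at the vertex $A;i$ (resp.\ $B;i$) --- because $i \notin S$, the two vertices $A;i$ and $B;i$ remain distinct, so these $D$-copies are not merged. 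Relabelling $A;R_i \leadsto R_{A;i}$ and $B;R_i \leadsto R_{B;i}$, and recalling $Y = \{(A;i),(B;i) \colon i \in Z\}$ and $C_2 = C_1 +_S^{A,B} C_1$, this is exactly the diagram $C_2 \bigplus_{r \in Y} {}_{r \leftrightarrow j}^{,R_r} D = \wt C_2$, with the matching labels agreeing on the nose. This verification is the bulk of the work but is entirely a bookkeeping exercise in the labelling conventions of Section~\ref{sub:labelling} and Definition~\ref{def:diagram-cs}.

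Finally I would read off $\gamma'$ directly from clause~(b') of Definition~\ref{def:logic-notation-revenge} (the Cauchy--Schwarz case): the step $\CS(S)$ applied to $\wt C_1$ gives $\wt C_1 \entails^1_{\gamma'} \wt C_2$ where $\gamma'$ sends $(A;\ell) \mapsto (\ell,0)$ and $(B;\ell) \mapsto (\ell,1)$ for every leaf $\ell$ of $\wt C_1$ not in $S$. Splitting the leaves of $\wt C_1$ into those of the form $y \in \leafs^{C_1} \setminus (S \cup Z)$ and those of the form $(R_i;x)$ with $i \in Z$, $x \in \leafs^D \setminus \{j\}$, and translating through the relabelling $A;R_i \leadsto R_{A;i}$, $B;R_i \leadsto R_{B;i}$, gives precisely $\gamma'(A;y) = (y,0)$, $\gamma'(B;y) = (y,1)$, $\gamma'(R_{A;i};x) = ((R_i;x),0)$ and $\gamma'(R_{B;i};x) = ((R_i;x),1)$, as claimed.

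The main obstacle I anticipate is not mathematical depth but notational precision: one has to be careful that the ``iterated joining'' relabelling convention (with its left-associativity and the $+_\beta^{,R}$ notation) genuinely produces the same labelled diagram whether one first joins the $D$-copies to $C_1$ and then Cauchy--Schwarzes, or first Cauchy--Schwarzes $C_1$ and then joins. This is intuitively obvious from the picture of a central diagram with pendant $D$-copies, and the remark following the iterated-joining convention (``reordering the $+_{\beta_r}^{,R_r}$ terms does not alter the diagram or the labelling'') is exactly what licenses it; but writing it out requires patience with the formal definitions rather than any new idea. Once Claims~\ref{claim:awesome-claim1} and~\ref{claim:awesome-claim2} are in hand, the general statement of Lemma~\ref{lem:awesome-stashing} follows by induction on the length $M$ of the derivation $C_1 \entails^M_\gamma C_2$, using Claim~\ref{claim:awesome-claim0} to handle the ``discarding information'' clause~(d') and composing the intermediate $\gamma'$ functions for clause~(c'), and then Proposition~\ref{prop:awesome-stashing} is the special case $Z = \{i\}$.
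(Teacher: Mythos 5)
Your proposal is correct and follows essentially the same route as the paper's (very terse) proof, which simply observes that up to relabelling $\wt C_2 = \wt C_1 +_S \wt C_1$ and invokes $\CS(S)$. You have unpacked the relabelling bookkeeping that the paper leaves implicit, and your appeal to the reordering remark after the iterated-joining convention is exactly the right tool to license it.
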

  Again, the hypothesis that $Z \cap S = \emptyset$ is forced by assumption (a) and the identity of $Y$ is forced by assumption (b).
  \begin{proof}[Proof of claim]%
    Up to relabelling, we have
    $
      \wt C_2 = \wt C_1 +_{S} \wt C_2
    $.
    Hence the claimed $\entails$ follows exactly from $\CS(S)$ and relabelling.
  \end{proof}

  To complete the proof we argue by induction on the length of the sequence of $\CS$ and $\morph$ steps in the ``proof'' of $C_1 \entails C_2$, as implied by Definition~\ref{def:logic-notation-revenge}(c') (see Definition~\ref{def:logic-notation}(c)).
  It is clear that discarding information as in Definition~\ref{def:logic-notation-revenge}(d') has no effect.

  The case of zero steps vacuous, and Claim~\ref{claim:awesome-claim1} and Claim~\ref{claim:awesome-claim2} handle the case of one step.
  If $C_1 \entails_{\gamma_0}^{M_1} C_3$ and $C_3 \entails_{\gamma_1}^{M_2} C_2$ where each $\entails$ has fewer steps than the original $C_1 \entails C_2$, then we define
  \[
    \wt Y = \gamma_0^{-1}(Z \times \{0,1\}) \subseteq \leafs^{C_3}
  \]
  and apply the statement recursively to $(C_1, C_3, Z, \wt Y)$ and $(C_3, C_2, \wt Y, Y)$, giving
  \[
    \left( C_1 \bigplus_{i \in Z} {}_{i \leftrightarrow j}^{,R_i} D \right) 
    \entails_{\gamma_0'}^{M_1}
    \left( C_3 \bigplus_{\ell \in \wt Y} {}_{\ell \leftrightarrow j}^{,R_\ell} D \right) 
    \entails_{\gamma_1'}^{M_2}
    \left( C_2 \bigplus_{r \in Y} {}_{r \leftrightarrow j}^{,R_r} D \right)
  \]
  for functions $\gamma_0'$, $\gamma_1'$ whose composite is $\gamma'$.
  This completes the proof.
\end{proof}

\subsection{Surjectivity}%
\label{sub:surj}

To apply Proposition~\ref{prop:discard-dual} or Proposition~\ref{prop:awesome-stashing} we have to verify that $D$ is surjective at $j$.
In practice, the diagrams we build tend to be surjective everywhere, but this is usually not obvious.
We now give some tools to justify such statements.

The following fact is fundamental.
\begin{lemma}%
  \label{lem:surj-fact-1}
  If $C$, $D$ are two diagrams, $\Theta \colon C \to D$ is a morphism, $i \in X^D$, $\theta^\Theta_i \colon V^C_{\alpha^{\Theta}_i} \to V^D_i$ is surjective and $C$ is surjective at $\alpha^{\Theta}(i)$, then $D$ is surjective at $i$.
\end{lemma}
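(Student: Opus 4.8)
The plan is a short diagram chase through the map on limit spaces induced by $\Theta$. Recall from Definition~\ref{def:diag-surj} that ``$D$ is surjective at $i$'' means precisely that the coordinate projection $\pi_i \colon V^{\datum(D)} \to V^D_i$ from~\eqref{eq:limit-space}--\eqref{eq:projections} is surjective, and likewise that ``$C$ is surjective at $\alpha^\Theta(i)$'' means $\pi_{\alpha^\Theta(i)} \colon V^{\datum(C)} \to V^C_{\alpha^\Theta(i)}$ is surjective. (If $\alpha^\Theta(i) = \zeroi$ then the surjectivity of $\theta^\Theta_i$ forces $V^D_i = \{0\}$ and there is nothing to prove, so we may assume $\alpha^\Theta(i) \in X^C$.) So I would fix an arbitrary $w \in V^D_i$ and exhibit a compatible tuple of $D$ whose $i$-th coordinate is $w$.

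First I would recall, from the proof of Proposition~\ref{prop:diagram-morphism}, that $\Theta$ induces a linear map $\wt\theta \colon V^{\datum(C)} \to V^{\datum(D)}$ defined by~\eqref{eq:theta-tilde}, i.e.\ $\wt\theta\bigl((v_z)_{z \in X^C}\bigr) = \bigl(\theta^\Theta_x(v_{\alpha^\Theta(x)})\bigr)_{x \in X^D}$; the content of that proof is exactly that this is well-defined, i.e.\ that it carries compatible tuples of $C$ to compatible tuples of $D$. Reading off the $i$-th coordinate of the output yields the identity of maps $V^{\datum(C)} \to V^D_i$
\[
  \pi_i \circ \wt\theta = \theta^\Theta_i \circ \pi_{\alpha^\Theta(i)}.
\]

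The proof then finishes with three successive lifts. Since $\theta^\Theta_i$ is surjective, pick $u \in V^C_{\alpha^\Theta(i)}$ with $\theta^\Theta_i(u) = w$. Since $C$ is surjective at $\alpha^\Theta(i)$, pick a compatible tuple $(v_z)_{z \in X^C} \in V^{\datum(C)}$ with $\pi_{\alpha^\Theta(i)}\bigl((v_z)_{z \in X^C}\bigr) = u$. Then $\wt\theta\bigl((v_z)_{z \in X^C}\bigr)$ is a compatible tuple in $D$ whose $i$-th coordinate is $\pi_i\bigl(\wt\theta((v_z)_{z \in X^C})\bigr) = \theta^\Theta_i(u) = w$ by the displayed identity. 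As $w \in V^D_i$ was arbitrary, $\pi_i$ is surjective, which is the claim.

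I do not expect a genuine obstacle: beyond unwinding Definition~\ref{def:diag-surj}, the only thing to verify is the displayed compatibility identity $\pi_i \circ \wt\theta = \theta^\Theta_i \circ \pi_{\alpha^\Theta(i)}$, and this is immediate from the formula~\eqref{eq:theta-tilde} for $\wt\theta$ together with the well-definedness already established inside the proof of Proposition~\ref{prop:diagram-morphism}. So the write-up will be a few lines of routine bookkeeping rather than anything substantive.
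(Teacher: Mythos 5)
Your argument is the same as the paper's: pull back to the induced map $\wt\theta$ on limit spaces, use the identity $\pi_i \circ \wt\theta = \theta^\Theta_i \circ \pi_{\alpha^\Theta(i)}$ from the proof of Proposition~\ref{prop:diagram-morphism}, and conclude that $\pi_i$ is surjective since the right-hand side is a composite of two surjections. Your treatment of the degenerate case $\alpha^\Theta(i) = \zeroi$ is a small, harmless addition that the paper leaves implicit.
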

The hypothesis that $\theta^\Theta_i$ is surjective is certainly satisfied when $\Theta$ respects $i$.
\begin{proof}%
  Consider $V^{\datum(C)}$, $V^{\datum(D)}$ as in~\eqref{eq:limit-space} and write $\pi_{\alpha^\Theta(i)}^C \colon V^{\datum(C)} \to V^C_{\alpha^\Theta(i)}$ and $\pi^D_i \colon V^{\datum(D)} \to V^D_i$ for the projection maps as in~\eqref{eq:projections}.
  By Proposition~\ref{prop:diagram-morphism}, or more correctly by~\eqref{eq:theta-tilde} in its proof, we get a morphism $\wt \theta \colon V^{\datum(C)} \to V^{\datum(D)}$ such that
  \[
    \pi_i \circ \wt \theta = \theta^{\Theta}_i \circ \pi_{\alpha(i)} .
  \]
  By hypothesis the right-hand side is surjective, and therefore so is $\pi_i$.
\end{proof}

It follows that joining diagrams at a single vertex preserves surjectivity provided the joining itself is surjective.
\begin{corollary}%
  \label{cor:join-surj}
  Suppose $C$, $D$ are diagrams, $i \in \leafs^C$ and $j \in \leafs^D$ are leaves with $V^C_i=V^D_j$, $x \in X^C \setminus \{i\}$ is another vertex, $C$ is surjective at $x$ and $D$ is surjective at $j$.
  Then $C +_{i \leftrightarrow j} D$ is surjective at $(L;x)$.
\end{corollary}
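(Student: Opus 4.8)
The plan is to reduce Corollary~\ref{cor:join-surj} to Proposition~\ref{prop:discard-dual} together with Lemma~\ref{lem:surj-fact-1}. Write $\wt C = C +_{i \leftrightarrow j} D$. Proposition~\ref{prop:discard-dual} gives a morphism $\Theta \colon C \to \wt C$ (using precisely the hypothesis that $D$ is surjective at $j$) with the property that $\alpha^\Theta(L;x) = x$ and $\Theta$ respects $(L;x)$ for every $x \in \leafs^C \setminus \{i\}$. I would first dispose of the case $x \in \leafs^C \setminus \{i\}$: since $\Theta$ respects $(L;x)$, the component map $\theta^\Theta_{(L;x)}$ is the identity and hence surjective, and $C$ is surjective at $\alpha^\Theta(L;x) = x$ by hypothesis, so Lemma~\ref{lem:surj-fact-1} applied to $\Theta$ and the vertex $(L;x) \in X^{\wt C}$ gives that $\wt C$ is surjective at $(L;x)$.

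It remains to handle a general non-leaf $x \in \nonleafs^C$. Here I expect the main subtlety: the morphism $\Theta$ of Proposition~\ref{prop:discard-dual} is stated to respect the \emph{leaves} $(L;x)$ with $x \ne i$, but for a non-leaf $x$ we should check that its component map $\theta^\Theta_{(L;x)}$ is still the identity (or at least surjective) and that $\alpha^\Theta(L;x) = x$. Inspecting the construction in the proof of Proposition~\ref{prop:discard-dual}: the ``collapsing'' morphism is defined by $\alpha(L;x) = x$ and $\theta_{(L;x)} = \id_{V^C_x}$ for \emph{every} $x \in X^C$ with $x \ne i$, not just the leaves. So in fact $\Theta$ respects, in the obvious extended sense, all of $X^C \setminus \{i\}$. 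Thus for any $x \in X^C \setminus \{i\}$ at which $C$ is surjective, Lemma~\ref{lem:surj-fact-1} applied to $\Theta$ at the vertex $(L;x)$ — whose component map is the identity, hence surjective, and whose image under $\alpha^\Theta$ is $x$, at which $C$ is surjective by hypothesis — yields that $\wt C$ is surjective at $(L;x)$. This covers both the leaf and non-leaf cases uniformly.

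So the proof is essentially two lines: invoke Proposition~\ref{prop:discard-dual} to produce $\Theta \colon C \to \wt C$ with $\theta^\Theta_{(L;x)} = \id_{V^C_x}$ and $\alpha^\Theta(L;x) = x$ for all $x \ne i$, then apply Lemma~\ref{lem:surj-fact-1} with this $\Theta$ and the vertex $i' = (L;x)$. The only thing to be careful about in the write-up is to confirm that the explicit formula in Proposition~\ref{prop:discard-dual}'s proof indeed sets $\theta_{(L;x)}$ to be the identity for non-leaf $x$ as well as leaf $x$ — which it does, since the bullet ``for every $x \in X^C$, $x \ne i$, we take $\alpha(L;x) = x$ and $\theta_{(L;x)} = \id_{V^C_x}$'' makes no leaf/non-leaf distinction. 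I do not anticipate a genuine obstacle; the content is entirely in the earlier lemmas, and this corollary is just their composition.
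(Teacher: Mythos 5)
Your proof is correct and takes exactly the same route as the paper, which simply declares the corollary ``immediate from Lemma~\ref{lem:surj-fact-1} and Proposition~\ref{prop:discard-dual}''. You are right to flag the one subtlety: the \emph{statement} of Proposition~\ref{prop:discard-dual} only asserts that the collapsing morphism respects leaves $(L;x)$ for $x \in \leafs^C \setminus \{i\}$, whereas the corollary allows $x$ to be any vertex of $C$; but, as you observe, the construction in the proof of that proposition sets $\theta^\Theta_{(L;x)} = \id_{V^C_x}$ and $\alpha^\Theta(L;x)=x$ for \emph{every} $x \ne i$, so Lemma~\ref{lem:surj-fact-1} applies uniformly — and indeed Lemma~\ref{lem:surj-fact-1} itself only needs $\theta^\Theta_{(L;x)}$ to be surjective, not the full ``respects'' condition (which is in any case only defined for leaves).
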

\begin{proof}%
  This is immediate from Lemma~\ref{lem:surj-fact-1} and Proposition~\ref{prop:discard-dual}.
\end{proof}

Moreover, if we already have a hypothesis about the Cauchy--Schwarz complexity of a diagram $D$ at $i$, this implies it is surjective at $i$.
\begin{lemma}%
  \label{lem:cs-implies-surj}
  If $D$ is a diagram, $i \in \leafs^D$ is a leaf and $s_{\cs}(D, i) < \infty$, then $D$ is surjective at $i$.
\end{lemma}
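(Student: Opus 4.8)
The plan is to reduce the statement immediately to the corresponding fact about the associated linear datum. By Proposition~\ref{prop:diag-cs-complexity} we have $s_{\cs}(D,i) = s_{\cs}(\datum(D),i)$, and by Definition~\ref{def:diag-surj}, since $i \in \leafs^D$, the assertion ``$D$ is surjective at $i$'' is exactly the assertion ``$\datum(D)$ is surjective at $i$'', i.e.\ that the projection map $V^{\datum(D)} \to V^D_i$ is onto. So it suffices to prove: if $\Phi$ is a linear datum with $s_{\cs}(\Phi,i) < \infty$, then $\phi^\Phi_i$ is surjective.

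The key point is that Definition~\ref{def:cs-complexity} already furnishes a one-sided inverse of $\phi^\Phi_i$. Writing $s = s_{\cs}(\Phi,i)$ (a non-negative integer, since the complexity is finite), the definition provides a partition $I^\Phi \setminus \{i\} = S_1 \cup \dots \cup S_{s+1}$ together with linear maps $\mu_r \colon W^\Phi_i \to V^\Phi$, and in particular $\phi^\Phi_i \circ \mu_1 = \id_{W^\Phi_i}$. Hence $\mu_1$ is a right inverse of $\phi^\Phi_i$, which forces $\phi^\Phi_i$ to be surjective. Applying this with $\Phi = \datum(D)$, and recalling from Definition~\ref{def:diagram-datum} that $\phi^{\datum(D)}_i$ is precisely the restricted projection $\pi_i \colon V^{\datum(D)} \to V^D_i$ occurring in Definition~\ref{def:diag-surj}, we conclude that $D$ is surjective at $i$.

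As an alternative route, one could instead invoke Lemma~\ref{lem:gc-cs} (equivalently Proposition~\ref{prop:diag-cs-complexity}(ii)): a finite Cauchy--Schwarz complexity at $i$ yields a diagram morphism $\Theta \colon \gc_s(V^D_i) \to D$ respecting $i$ with $\alpha^\Theta(i) = \triangle$, and the diagram $\gc_s(V^D_i)$ is visibly surjective at $\triangle$ since $\phi^{\gc_s}_\triangle \colon (x_1,\dots,x_{s+1}) \mapsto x_1 + \dots + x_{s+1}$ is onto; then Lemma~\ref{lem:surj-fact-1} (whose surjectivity hypothesis on $\theta^\Theta_i$ holds because $\Theta$ respects $i$) gives the result. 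I would nonetheless prefer the direct argument, because it sidesteps the small nuisance that $\gc_s$ is only defined for $s \ge 1$ in Definition~\ref{def:gen-conv} whereas $s_{\cs}$ may equal $0$ (which would otherwise require first passing to a larger $s$ by Remark~\ref{rem:def-cs-monotone}). Either way there is essentially no obstacle here; the only thing that needs a moment's care is the identification of the projection map in Definition~\ref{def:diag-surj} with $\phi^{\datum(D)}_i$, which is immediate.
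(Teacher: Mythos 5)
Your proof is correct, but your primary argument is genuinely different from the paper's. The paper's own proof is precisely what you describe as the ``alternative route'': it invokes Proposition~\ref{prop:diag-cs-complexity} to obtain a diagram morphism $\Theta \colon \gc_s(V^D_i) \to D$ respecting $i$ with $\alpha^\Theta(i)=\triangle$ for some finite $s$, observes by inspection that $\gc_s(V^D_i)$ is surjective at $\triangle$, and concludes via Lemma~\ref{lem:surj-fact-1}. Your main argument instead unpacks Definition~\ref{def:cs-complexity} directly: finite complexity furnishes $\mu_1$ with $\phi^{\datum(D)}_i \circ \mu_1 = \id_{W^{\datum(D)}_i}$, so $\phi^{\datum(D)}_i$ is surjective, and since $\phi^{\datum(D)}_i$ is exactly the restricted projection $\pi_i$ appearing in Definition~\ref{def:diag-surj}, this is what surjectivity of $D$ at $i$ means. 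Your version is more self-contained (it bypasses Lemma~\ref{lem:surj-fact-1} and the $\gc_s$ machinery), and you correctly note the minor wrinkle it avoids: $\gc_s$ is only defined for $s\ge 1$ (Definition~\ref{def:gen-conv}), so the paper's route silently relies on Remark~\ref{rem:def-cs-monotone} to replace $s_{\cs}(D,i)=0$ by $s=1$. The paper's route is, however, stylistically uniform with the rest of Section~\ref{sub:surj} (e.g.\ Corollary~\ref{cor:join-surj} also funnels through Lemma~\ref{lem:surj-fact-1}); either is perfectly acceptable.
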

\begin{proof}%
  By Proposition~\ref{prop:diag-cs-complexity} we get a morphism $\Theta \colon \gc_s(V^D_i) \to D$ with $\alpha^\Theta(i) = \triangle$ respecting $i$, for some finite $s$.
  It is clear by inspection that $\gc_s(V^D_i)$ is surjective at $\triangle$.
  The result now follows by Lemma~\ref{lem:surj-fact-1}.
\end{proof}

Many other general statements follow easily from Lemma~\ref{lem:surj-fact-1} (for instance, applying a $\CS(S)$ step preserves surjectivity) but these are the only ones we will need.

\section{A proof of Theorem~\ref{thm:main}}%
\label{sec:thm-main}

We will now prove Theorem~\ref{thm:main-ext}---and hence Theorem~\ref{thm:main}---via Theorem~\ref{thm:main-gc}, by constructing suitable circuits.
The circuit $\agate$ constructed in Section~\ref{sub:big-agate} will be a key component of these.

\subsection{The structure of the proof}%
\label{sub:main-outline}

We begin by stating some key lemmas that encode the overall layout of the proof.
From now on we fix a system of linear forms $\Phi = (\phi_i)_{i \in I}$ where $\phi_i \colon \FF_p^d \to \FF_p$.
(In Theorem~\ref{thm:main-gc} we used $I=[k]$ but we drop this requirement here.)
We may also think of $\Phi$ as a linear datum.

We will assume $(\phi_i)_{i \in I}$ are \emph{pairwise} linearly independent as elements of $(\FF_p^d)^\ast$.
Indeed if $\phi_i,\phi_j$ are linearly dependent then so are $\phi_i^{\otimes t},\phi_j^{\otimes t}$ for any $t \ge 1$, so $s(\Phi) = \infty$, contrary to the hypotheses of Theorem~\ref{thm:main-ext} or Theorem~\ref{thm:main}.

It follows that $s_{\cs}(\Phi,i) \le |I|-2$ for each $i \in I$, as discussed in Section~\ref{sub:background}: we can partition $(\phi_j)_{j \ne i}$ into $|I|-1$ singletons, and $\phi_i \notin \spn(\phi_j)$ for $j \ne i$.

Note that we can assume $\spn_{i \in I} \phi_i = (\FF_p^d)^\ast$, or in other words that $\Phi$ is non-degenerate (see Remark~\ref{rem:surj-degen}), or else we can reduce the value of $d$ without changing the value $\Lambda_\Phi(\cdots)$.
In particular we may freely assume that $d \le |I|$.

Suppose $i_0 \in I$ is the form of interest.
If $s(\Phi, i_0) = 0$ then $s_{\cs}(\Phi, i_0) = 0$ and everything is trivial, so we may assume $s(\Phi, i_0) \ge 1$.
For brevity we write $I' = I \setminus \{i_0\}$ for the remaining indices.

The initial phase of the argument is to use Cauchy--Schwarz complexity to replace each $(\phi_j)_{j \in I'}$ by a $\gc_{t_j}$-dual function, where $t_j = s_{\cs}(\Phi, j) \le |I|-2$.
We will use the following lemma.

\begin{lemma}%
  \label{lem:cs-dual}
  Suppose $D$ is a diagram, $i \in \leafs^D$ is a leaf and $s_{\cs}(D, i) \le t$.
  Then $D \entails^1_{\gamma} \left( D +_{i \leftrightarrow \triangle}^{,R_i} \gc_t(V^D_i) \right)$, where $\gamma$ is the function $\gamma(j) = (j,1)$ for all $j \in \leafs^D \setminus \{i\}$.
\end{lemma}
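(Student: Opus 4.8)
The plan is to combine the Cauchy--Schwarz step $\CS(\{i\})$ applied to $D$ with the universal property of $\gc_t$ established in Lemma~\ref{lem:gc-cs} and Proposition~\ref{prop:diag-cs-complexity}. Concretely, the hypothesis $s_{\cs}(D,i) \le t$ gives, via Proposition~\ref{prop:diag-cs-complexity}(ii), a diagram morphism $\Theta \colon \gc_t(V^D_i) \to D$ with $\alpha^{\Theta}(\triangle) = i$, respecting $i$. First I would apply the single Cauchy--Schwarz step $\CS(\{i\})$ to $D$, producing the self-joining $D +_{\{i\}} D$; by Definition~\ref{def:logic-notation-revenge}(b') this gives $D \entails^1_{\gamma_0} (D +_{\{i\}} D)$ with $\gamma_0((L;x)) = (x,0)$, $\gamma_0((R;x)) = (x,1)$ for $x \in \leafs^D \setminus \{i\}$.

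Next I would observe that the morphism $\Theta$ can be used to replace the right-hand copy of $D$ in the joining by $\gc_t(V^D_i)$. Here is the key point: a joining $D +_{i \leftrightarrow j} E$ is built by gluing along the single leaf $i$ of $D$ and $j$ of $E$; if we have a morphism $E \to D'$ respecting $j$ (with $\alpha$ sending $j$ to the matched leaf), then composing with this morphism inside the joining yields a morphism of diagrams $D +_{i \leftrightarrow i} D \to D +_{i \leftrightarrow \triangle} \gc_t(V^D_i)$, realizing the latter as the target of a $\morph$ step. Formally I would invoke Lemma~\ref{lem:patching}: cover the target diagram $D +_{i \leftrightarrow \triangle}^{,R_i} \gc_t(V^D_i)$ by the vertex set of the left copy of $D$ (on which the morphism is the identity/restriction) and the vertex set of the $\gc_t$ part (on which we use $\Theta$), which agree on the shared vertex $i = (R_i;\triangle)$ because $\Theta$ respects $\triangle$. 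This assembles a single morphism
\[
  \Xi \colon D +_{\{i\}} D \longrightarrow D +_{i \leftrightarrow \triangle}^{,R_i} \gc_t(V^D_i),
\]
which is what a $\morph$ step of type (a') requires. By Definition~\ref{def:logic-notation-revenge}(a') this gives a zero-cost entailment, and its $\gamma$ respects every leaf of $D$ on the left copy and every leaf of $\gc_t$ except $\triangle$.

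Then I would compose the two entailments using Definition~\ref{def:logic-notation-revenge}(c'): $D \entails^1 (D +_{\{i\}} D) \entails^0 (D +_{i \leftrightarrow \triangle}^{,R_i} \gc_t(V^D_i))$, giving $D \entails^1_\gamma (D +_{i \leftrightarrow \triangle}^{,R_i} \gc_t(V^D_i))$. Computing the composite $\gamma$: the Cauchy--Schwarz step sends each $x \in \leafs^D \setminus \{i\}$ to $(x,1)$ via its right copy $(R;x)$, and the morphism step then maps the right copy isomorphically onto the leaves of $D$ inside the joined diagram (these are the leaves of $D$, not of $\gc_t$), so the composite sends $x \mapsto (x, 1)$ for $x \in \leafs^D \setminus \{i\}$, exactly as claimed. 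I should also note that, by Remark~\ref{rem:conj-tracking}, the $\{0,1\}$-bookkeeping is the only subtlety, and the bit value $1$ is precisely what one gets from the conjugated copy in a Cauchy--Schwarz step; I would need to double-check the details of which copy of $D$ survives as leaves in the joining (it is the left copy $D +_{\{i\}} D[X^D]$, and the right copy's non-$i$ leaves become the leaves of the image) to confirm the bit assignment, but this is routine. The only mild obstacle is keeping the labelling conventions of Convention~\ref{convention:joining-labelling} straight so that the output is literally $D +_{i \leftrightarrow \triangle}^{,R_i} \gc_t(V^D_i)$ rather than a relabelled variant; since relabellings are strong isomorphisms and do not affect $\entails$, this causes no real difficulty.
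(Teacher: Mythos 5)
Your overall strategy---apply $\CS(\{i\})$ and then use the morphism $\Theta \colon \gc_t(V^D_i) \to D$ supplied by Proposition~\ref{prop:diag-cs-complexity}(ii) to trade one copy of $D$ in $D +_{\{i\}} D$ for a copy of $\gc_t$---is exactly the paper's strategy, and your instinct to assemble the replacement morphism by patching over the two halves of the joining is also how the paper ultimately does it (the paper routes this through Proposition~\ref{prop:awesome-stashing}, whose proof is itself a patching argument). So there is no real difference in approach.

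However, there is a genuine direction error in the morphism step. Recall from Definition~\ref{def:logic-notation-revenge}(a') that a $\morph$ step yielding $\Phi \entails \Psi$ requires a morphism $\Psi \to \Phi$, i.e., from the \emph{target} of the entailment to its \emph{source}. You want $(D +_{\{i\}} D) \entails^0 (D +_{i \leftrightarrow \triangle}^{,R_i} \gc_t)$, so you need a morphism
\[
  \bigl( D +_{i \leftrightarrow \triangle}^{,R_i} \gc_t(V^D_i) \bigr) \;\longrightarrow\; \bigl( D +_{\{i\}} D \bigr),
\]
not the reverse arrow you wrote. Concretely: when you invoke Lemma~\ref{lem:patching}, you must cover the \emph{codomain} of the morphism, which is $D +_{\{i\}} D$, by the vertex sets of its left and right copies of $D$; on the left copy you use the restriction morphism from $D +_{i\leftrightarrow\triangle}^{,R_i}\gc_t$ onto $D(\{i\}\leadsto\nonleafs)$, and on the right copy you use the composite of the restriction $\bigl(D+\gc_t\bigr) \to \gc_t(\triangle\leadsto\nonleafs)$ with $\Theta$ (via Remark~\ref{rem:leaf-non-leaf-morph}). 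Your proposal covers $D + \gc_t$ instead and tries to ``use $\Theta$'' to produce a morphism \emph{into} the $\gc_t$ part; but $\Theta$ has $\gc_t$ as its \emph{source}, so this produces no such morphism---to go the way you describe you would need a morphism $D \to \gc_t$, which you do not have and which generally does not exist. As a sanity check, notice also that your minor slip $\alpha^\Theta(\triangle) = i$ (it should be $\alpha^\Theta(i) = \triangle$, since $\alpha^\Theta \colon X^D \to X^{\gc_t} \cup \{\zeroi\}$) is consistent with the same arrow confusion.

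Once the direction is corrected, the argument goes through. One small fallout you should be aware of: with your construction (left copy of $D$ survives, right copy becomes $\gc_t$) the composite $\gamma$ actually lands on $(j,0)$ rather than $(j,1)$, since the surviving leaves come from the unconjugated $L$-copy, through which the $\CS$ step contributes bit $0$. The paper gets bit $1$ by instead replacing the \emph{left} copy with $\gc_t$ and then relabelling so the surviving $R$-copy of $D$ reads as the main part. The paper immediately remarks after the proof that either sign is available by swapping the two halves, so this is harmless---but your stated reason (``the morphism step maps the right copy isomorphically onto the leaves of $D$'') is inconsistent with the construction you actually describe, in which the right copy is precisely the one that disappears.
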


Applying this $|I|-1$ times to each $j \in I \setminus \{i_0\}$ will give a diagram
\begin{equation}
  \label{eq:boost-phi}
  D\bigl((t_j)_{j \in I'}\bigr) := \Phi \bigplus_{j \in I'} {}_{j \leftrightarrow \triangle}^{,R_j} \gc_{t_j}.
\end{equation}
By Corollary~\ref{cor:join-cs-complex}, replacing $\Phi$ with $\Phi +_{j \leftrightarrow \triangle} \gc_{t_j}$ does not affect the Cauchy--Schwarz complexity of the remaining vertices $j' \in I'$, $j' \ne j$, and so on, so the hypothesis of Lemma~\ref{lem:cs-dual} holds at each step.

The key lemma for the whole proof is the following.
\begin{lemma}[Key lemma]%
  \label{lem:key-lem}
  Suppose $\Phi=(\phi_i)_{i \in I}$ is a system of linear forms as above, $i_0 \in I^\Phi$, $s$ is an integer, $2 \le s \le |I|-2$, and $(t_j)_{j \in I'}$ are integers with $1 \le t_j \le s$.
  Suppose moreover that $s(\Phi,i_0) \le s-1$; that is, $\phi_{i_0}^{\otimes s}$ does not lie in the span of $\bigl((\phi_j)^{\otimes s}\bigr)_{j \in I'}$.
  
  Let $D = D((t_j)_{j \in I'})$ be the diagram~\eqref{eq:boost-phi}.
  Then $D \entails^M_{\triangle \mapsto i_0} \gc_{s-1}$, where $M = O\bigl(|I| (\log \lvert I \rvert + \log \log (10 L))\bigr)$ and $L$ is the value defined in Theorem~\ref{thm:main}.
\end{lemma}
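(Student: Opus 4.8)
The plan is to build an explicit ``StarGate'' circuit $E$ from $D$ using $M$ Cauchy--Schwarz steps, arrange that $s_{\cs}(E,i_0)\le s-1$, and conclude: by Proposition~\ref{prop:diag-cs-complexity} there is then a morphism $\gc_{s-1}\to E$ with $\alpha(i_0)=\triangle$ respecting $i_0$, so $E\entails^0_{\triangle\mapsto i_0}\gc_{s-1}$, and composing with $D\entails^M E$ gives the claim. The first ingredient is the underlying multilinear algebra. Since $s(\Phi,i_0)\le s-1$ there is $T\in(\FF_p^d)^{\otimes s}$ with $\phi_{i_0}^{\otimes s}(T)=1$ and $\phi_j^{\otimes s}(T)=0$ for all $j\in I'$; this is a system of $|I|$ linear equations in $d^s\le|I|^s$ unknowns, so by Cramer's rule $T$ may be chosen supported on at most $|I|$ pure tensors, with integer coordinates (after clearing denominators; valid for $p$ large, resp.\ directly over $\FF_p$) of height $(C|I|L)^{O(|I|)}$ --- a determinant of an $|I|\times|I|$ matrix whose entries are products of $\le s$ coefficients of $\Phi$, each of absolute value $\le L$. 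Consequently every integer $a$ by which the circuit will have to ``multiply'' satisfies $\log\log|a|=O(\log|I|+\log\log(10L))$; fix a power of two $k$ with $2^{k+1}$ exceeding all such $|a|$, so $\log k=O(\log|I|+\log\log(10L))$.

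Next, the circuit. Differentiating $\sum_iF_i(\phi_i(\cdot))=A$ a total of $s$ times shows, in the $100\%$ picture, that $B_{i_0}:=\partial^sF_{i_0}$ is base-point--independent and that $B_{i_0}(\phi_{i_0}(h_1),\dots,\phi_{i_0}(h_s))=-\sum_{j\in I'}B_j(\phi_j(h_1),\dots,\phi_j(h_s))$, where each $B_j=\partial^sF_j$ is multilinear; this multilinearity is precisely the structure stashed in $D$ by the $\gc_{t_j}$--joinings, after promoting each $\gc_{t_j}$ to $\gc_s$ via the morphism witnessing $s_{\cs}(\gc_{t_j},\triangle)\le t_j\le s$ (Lemma~\ref{lem:gc-cs}). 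From multilinearity of the $B_j$ and surjectivity of $\phi_{i_0}$ one deduces that $B_{i_0}$ is itself multilinear, and then summing the displayed identity against $T$ kills the $B_j$--contributions (by $\phi_j^{\otimes s}(T)=0$) and isolates $B_{i_0}$. The circuit $E$ realizes this computation: a central ``hub'' obtained by differentiating $\Phi$ $s$ times (built by $O(s)=O(|I|)$ Cauchy--Schwarz steps of the kind used for Cauchy--Schwarz complexity in Section~\ref{sub:gen-convolution}), together with, for each term of $T$ and each relevant $j\in I'$, an arm which is a copy of the AGate $\agate_s^k$ attached to the corresponding $\gc_s$--boost, plus an $i_0$--arm routing $\phi_{i_0}$; the AGate arms perform the argument rescalings $B_\ell(az,z')=B_\ell(z,az')$ dictated by the coordinates of $T$, via their $\abilin_s^k(a)$ assignments.

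For the build phase, starting from $D$ one grows each of the $O(|I|)$ arms into $\agate_s^k$ using Lemma~\ref{lem:gc-aggre}, Lemma~\ref{lem:build-agate} and Lemma~\ref{lem:build-large-agate} --- each arm costing $O(\log k)=O(\log|I|+\log\log(10L))$ Cauchy--Schwarz steps --- and assembles the hub as above, with Corollary~\ref{cor:join-cs-complex} guaranteeing the Cauchy--Schwarz--complexity hypotheses survive each joining. A routine accounting of the diagram sizes then yields $D\entails^M E$ with $M=O(|I|(\log|I|+\log\log(10L)))$. To finish one exhibits the morphism $\gc_{s-1}\to E$ --- equivalently the partition $\leafs^E\setminus\{i_0\}=S_1\cup\dots\cup S_s$ and the maps $\mu_r$ of Definition~\ref{def:cs-complexity} --- by gluing together, via Lemma~\ref{lem:patching}, the assignment morphisms of the constituent gates: the $\abilin_s^k(a)$ assignments of the arms (Lemma~\ref{lem:agate-master}, Corollary~\ref{cor:agate-master}) and the $\sumconst$ and $\opcross$ assignments of the AggreGates (Lemma~\ref{lem:sum-const-assignment}, Lemma~\ref{lem:cross} and their corollaries), chosen so that mode $r$ encodes the $r$-th tensor slot of the identity, the $B_j$--terms cancelling by the choice of $T$ and $B_{i_0}$ --- a single multilinear quantity --- occupying exactly the $s$ available modes.

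The main obstacle is this last step, together with the design of the hub: one must set up the hub so that the $s$-fold differentiation and the readout of $\partial^sF_{i_0}$ are faithfully encoded, and then verify that the many small assignment morphisms patch into a single morphism $\gc_{s-1}\to E$, i.e.\ that the commuting square~\eqref{eq:diag-morph} holds across all of $E$. By Lemma~\ref{lem:patching} this reduces to the per-gate verifications already carried out in Sections~\ref{sub:aggre}--\ref{sub:big-agate}, plus the purely linear-algebraic fact that the chosen $T$ does what it should; but coordinating these checks --- and keeping the labelling straight throughout a circuit with thousands of forms --- is where the real work lies. A secondary nuisance is making the height bound on $T$, and hence the count $M$ of Cauchy--Schwarz steps, fully explicit and consistent with the stated estimate.
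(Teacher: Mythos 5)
Your high-level plan is the paper's plan: build a StarGate $E$ from $D$, show $s_{\cs}(E,F0;i_0)\le s-1$ via the gate assignments, then invoke Proposition~\ref{prop:diag-cs-complexity}.  But there is a concrete conceptual gap in how you propose to use the tensor $T$, and the bulk of the construction is, as you acknowledge, left undone.

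The gap is in the treatment of the coefficients.  You propose to choose $T$ with small \emph{integer} coordinates by Cramer's rule and then to feed "the coordinates of $T$" into the $\abilin_s^k(a)$ assignments of the AGate arms.  This is not what the circuit needs, and it does not work as stated.  The AGates must multiply only by the \emph{slot coefficients} $a_r^{(j,\ell)}$ --- integer representatives of the values $\phi_j(e_{\tau^\ell_r})$ --- and these are genuinely small ($\le 2L^2$ after the change of basis in Claim~\ref{claim:basis}).  The tensor coefficients $\beta_\tau$ of $T$ are arbitrary $\FF_p$-elements and appear only in the \emph{morphism} $\Theta_1^{\bag j}$ of Claim~\ref{claim:ass-claim} (specifically in the linear map $\theta_\diamond$), where no size bound is required or possible.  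Your Cramer's-rule height bound on $T$ is therefore unnecessary; worse, as you half-notice, the hypothesis $s(\Phi,i_0)\le s-1$ lives over $\FF_p$, so for general $p$ there need not exist any integer $T$ lifting the $\FF_p$-solution, and "the height of $T$" is not a well-defined quantity.  If you actually tried to run the $\beta_\tau$ through AGates you would end up with $k$ controlled by the size of integer representatives of arbitrary $\FF_p$-elements, i.e.\ by $p$, which contradicts the $L$-only dependence in Theorem~\ref{thm:main}.  The paper avoids this by (i) only ever multiplying by the slot coefficients inside AGates, and (ii) using morphisms, which are free, for everything else.

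A secondary point: the good basis in Claim~\ref{claim:basis} is not merely a device for keeping coefficients small.  It is needed to ensure $\phi_{i_0}(e_\ell)=0$ for $\ell\ge 2$, which is exactly what lets the morphisms $\Theta_r^{(F\ell)}$ in Claim~\ref{claim:ass-claim} send $i_0\mapsto\zeroi$ when $F\ell;i_0\in S_r$.  Your outline, which works with pure tensors in the standard basis, does not address this.  Finally, you correctly identify that the heavy lifting --- the actual construction of the StarGate (Definition~\ref{def:stargate}), the build in Lemma~\ref{lem:build-stargate}, and most of all the patching verification of Lemma~\ref{lem:assign-stargate} via Claim~\ref{claim:ass-claim} --- is where the real proof lives, and none of that is carried out here.
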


In terms of functions, this would imply that
\[
  \lvert \Lambda_{\Phi}((f_i)_{i \in I}) \rvert \le \|f_{i_0}\|_{U^{s}}^{2^{-M-s}}
\]
whenever $(f_j)_{j \ne i_0}$ are $\gc_{t_j}$-dual functions.

If the hypothesis holds for $t_j = s_{\cs}(\Phi, j)$ and $s-1=s(\Phi,i_0)$, we can stop at this point.
This is the case in Example~\ref{ex:gw-ex}.
If not, we have to use Lemma~\ref{lem:key-lem} iteratively on \emph{other indices} $j$ than the one we care about to achieve \emph{degree reduction}: i.e., to improve some ``stashed $\gc_t$'' hypothesis to ``stashed $\gc_{t-1}$'', and so on.

\begin{corollary}%
  \label{cor:key-cor}
  Let $\Phi$ and $i_0$ be as in Lemma~\ref{lem:key-lem}.
  Let $j_0 \in I^\Phi$ be another index.
  Let $s$ be an integer, $2 \le s \le |I|-2$, and $(t_j)_{j \in I'}$ a tuple of integers, with $t_j \ge 1$ for all $j$ and $t_j \le s$ for all $j \ne j_0$.

  Suppose that $s(\Phi, j_0) \le s-1$.
  Define $(t'_j)_{j \in I'}$ by $t'_{j} = s-1$ if $j=j_0$ and $t'_j=t_j$ otherwise.
  Write $D = D\bigl((t_j)_{j \in I'}\bigr)$ and $D' = D\bigl((t'_j)_{j \in I'}\bigr)$ for the diagrams in~\eqref{eq:boost-phi}.
  Then
  $D \entails_{i_0 \mapsto i_0}^{M+2} D'$
  where $M$ has the same form as Lemma~\ref{lem:key-lem}.
\end{corollary}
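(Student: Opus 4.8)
The plan is to deduce this from Lemma~\ref{lem:key-lem} applied with $j_0$ playing the role of the distinguished index, wrapped in the stashing machinery of Section~\ref{sec:stashing} so that the $\gc_{t_{j_0}}$-dual structure already sitting at $j_0$ is carried through the argument and the resulting bound is re-packaged as the desired $\gc_{s-1}$-dual structure. First I would fix notation: write $C_0$ for the diagram $\Phi$ with the stashes at the indices $I' \setminus \{j_0\} = I \setminus \{i_0,j_0\}$ already attached (with the degrees $t_j$), so that $D = C_0 +_{j_0 \leftrightarrow \triangle}^{,R_{j_0}} \gc_{t_{j_0}}$ and $D' = C_0 +_{j_0 \leftrightarrow \triangle}^{,R_{j_0}} \gc_{s-1}$, and the goal is exactly $C_0 +_{j_0 \leftrightarrow \triangle}^{,R_{j_0}} \gc_{t_{j_0}} \entails^{M+2}_{i_0 \mapsto i_0} C_0 +_{j_0 \leftrightarrow \triangle}^{,R_{j_0}} \gc_{s-1}$. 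Both $i_0$ and $j_0$ are leaves of $C_0$.

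Next I would manufacture the data Lemma~\ref{lem:key-lem} needs in order to be applied to $\Phi$ with $j_0$ distinguished: that lemma wants a $\gc_{u_\ell}$-dual stash, $1 \le u_\ell \le s$, at every index $\ell \ne j_0$, and we already have such stashes for $\ell \in I \setminus \{i_0,j_0\}$, so the only missing ingredient is a stash at $i_0$. I would adjoin it by one Cauchy--Schwarz step via Lemma~\ref{lem:cs-dual}, obtaining $C_1 = C_0 +_{i_0 \leftrightarrow \triangle} \gc_{u}$ with $u$ an appropriate degree not exceeding $s$; here $s_{\cs}(\Phi,i_0)$ governs which $u$ are admissible and is unaffected by the other stashes by Corollary~\ref{cor:join-cs-complex}. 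Since $\Phi$ is pairwise independent, $s(\Phi,j_0) \le s-1$, and $2 \le s \le |I|-2$, Lemma~\ref{lem:key-lem} applies and yields $C_1 \entails^{M} \gc_{s-1}$ with $\gamma(\triangle) = (j_0,0)$ or $(j_0,1)$; discarding the temporary $i_0$-stash afterwards is free by Proposition~\ref{prop:discard-dual}, so $C_0 \entails^{M+1}_{\triangle \mapsto j_0} \gc_{s-1}$.

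The heart of the matter is to convert this ``$j_0$ is controlled by its $\gc_{s-1}$-norm'' statement into the statement that one may swap $\gc_{t_{j_0}}$ for $\gc_{s-1}$ at $j_0$. For this I would mimic the dual-function argument closing Section~\ref{sub:true-summary}: a single further Cauchy--Schwarz step produces two copies of $C_0$, on one of which we run the control $C_0 \entails^{M+1}_{\triangle\mapsto j_0} \gc_{s-1}$, while the other copy is re-read as a $C_0$-dual function occupying the slot $j_0$; the net effect is an $\entails$-statement exhibiting $C_0$ with an arbitrary $1$-bounded function at $j_0$ as dominated, at the cost of one more power of two, by $C_0$ with a $\gc_{s-1}$-dual function at $j_0$. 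Feeding this through Proposition~\ref{prop:awesome-stashing} with $D = \gc_{t_{j_0}}$ and $j = \triangle$ carries the already-present $\gc_{t_{j_0}}$-dual stash at $j_0$ along the argument, and Proposition~\ref{prop:discard-dual} lets us drop the leftover copies at the end, giving $D \entails^{M+2}_{i_0 \mapsto i_0} D'$. The surjectivity hypotheses of these propositions I would discharge with Section~\ref{sub:surj}: $\gc_{t_{j_0}}$ is surjective at $\triangle$ since it has finite Cauchy--Schwarz complexity there (Lemma~\ref{lem:cs-implies-surj}), and surjectivity propagates through the joinings by Corollary~\ref{cor:join-surj}.

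The main obstacle I expect is not any new idea but the bookkeeping in this final re-packaging: turning the informal ``stash a $C_0$-dual function, then stash a $\gc_{s-1}$-dual function'' maneuver into an honest composition of $\morph$ and $\CS$ steps; tracking the $\gamma$-functions through the adjunction, the key lemma, Proposition~\ref{prop:awesome-stashing} and Proposition~\ref{prop:discard-dual} so that the conclusion really is $\gamma' = (i_0 \mapsto i_0)$ rather than something weaker; checking at each joining that the leaf spaces agree and that the relevant diagram is surjective exactly where the stashing propositions require; and ensuring the exponent telescopes to precisely $M+2$ — one step for the temporary $i_0$-stash, one step for the dual-function maneuver, with all remaining moves being zero-cost morphisms and invocations of the already-proved stashing lemmas. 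Handling the temporary stash at $i_0$ when $s_{\cs}(\Phi,i_0)$ is larger than $s$ is the one place where some genuine care (or an extra hypothesis inherited from the context in which the corollary is invoked) may be needed.
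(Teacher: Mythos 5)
Your overall strategy---discard the $\gc_{t_{j_0}}$ stash, Cauchy--Schwarz once at $j_0$ to stash away a copy of $C_0$, adjoin a temporary $\gc$-stash at $i_0$, apply Lemma~\ref{lem:key-lem} with the roles of $i_0$ and $j_0$ swapped, and then reveal via Proposition~\ref{prop:awesome-stashing}---matches the paper's proof, and your exponent count $M+2$ is right. The secondary muddle about ``carrying the $\gc_{t_{j_0}}$-stash along'' rather than discarding it up front is harmless: when you apply Proposition~\ref{prop:awesome-stashing} with $D=\gc_{t_{j_0}}$, the joined vertex $j_0$ is a non-leaf of $D'$ so the only admissible $Y$ is $\emptyset$, and the maneuver degenerates to an early discard.

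The genuine gap is precisely the one you flag and do not close: obtaining the temporary $\gc_u$-stash at $i_0$ with $u\le s$. You appeal to $s_{\cs}(\Phi,i_0)$ and Corollary~\ref{cor:join-cs-complex}, which only tells you the Cauchy--Schwarz complexity does not \emph{increase} when you attach stashes, so it gives a bound like $s_{\cs}(\Phi,i_0)\le |I|-2$, which can strictly exceed $s$. In that case Lemma~\ref{lem:key-lem} would not apply, since it requires all the stash degrees to be at most $s$. No extra hypothesis is available in the contexts where the corollary gets invoked (in the proof of Theorem~\ref{thm:main-gc}, $s$ shrinks iteratively while $s_{\cs}(\Phi,i_0)$ stays fixed). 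The paper's resolution is Claim~\ref{claim:cs-complex}: it computes the Cauchy--Schwarz complexity of the \emph{stashed} diagram $D_1 = \Phi\bigplus_{j\ne i_0,j_0}\gc_{t_j}$ at $i_0$ directly, by explicitly building the morphism $\gc_t\to D_1$ that routes each ``handle'' through one of the stashed $\gc_{t_j}$'s rather than just through a single form $\phi_j$. This shows $s_{\cs}(D_1,i_0)\le \max_j t_j \le s$, a bound that the attached dual structures \emph{improve}, not merely preserve, and that is invisible to Corollary~\ref{cor:join-cs-complex}. This claim is the one genuinely new piece of content in the corollary's proof; without it the argument does not go through in general.
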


Corollary~\ref{cor:key-cor} follows from Lemma~\ref{lem:key-lem} and judicious applications of stashing and Lemma~\ref{lem:cs-dual}.

Given this, we can deduce Theorem~\ref{thm:main-gc}.
\begin{proof}[Proof of Theorem~\ref{thm:main-gc} given these results]%
  By applying Lemma~\ref{lem:cs-dual} $|I|-1$ times as suggested above we deduce
  \[
    \Phi \entails^{|I|-1}_{i_0 \mapsto i_0} D\bigl((t_j)_{j \in I'}\bigr)
  \]
  where $t_j = s_{\cs}(\Phi, j) \le |I|-2$.

  Let $(t'_j)_{j \in I'}$ be the tuple $t'_j = \min\bigl(s(\Phi, i_0) + 1, t_j\bigr)$.
  We claim that
  \[
    \Phi \entails^{M'}_{i_0 \mapsto i_0} D\bigl((t'_j)_{j \in I'}\bigr)
  \]
  where $M' = O\bigl(|I|^3 (\log |I| + \log \log (10 L))\bigr)$.

  Indeed, so long as $s = \max_{j \in I'} t_j > s(\Phi, i_0)+1$, we can iteratively select $j_0$ with $t_{j_0}=s$ maximal and apply Corollary~\ref{cor:key-cor} to replace it with $t_{j_0}-1$.
  By hypothesis $s(\Phi, j_0) \le s(\Phi) \le s(\Phi, i_0) + 1$, so the hypothesis of Corollary~\ref{cor:key-cor} holds.
  Overall we apply Corollary~\ref{cor:key-cor} at most
  \[
    \sum_{j \in I'} (t_j - t'_j) \le (|I|-1)(|I|-2)
  \]
  times, which gives the bound on $M'$.

  Finally, Lemma~\ref{lem:key-lem} then implies
  $
    \Phi \entails^{M+M'}_{\triangle \mapsto i_0} \gc_{s(\Phi,i_0)}
  $
  where $M$ is the value from Lemma~\ref{lem:key-lem}.
\end{proof}

Now, Theorem~\ref{thm:main-ext} follows, by Corollary~\ref{cor:entails-entails}, Corollary~\ref{cor:log-entails} and Lemma~\ref{lem:gc-bound}.

We first prove Lemma~\ref{lem:cs-dual} and deduce Corollary~\ref{cor:key-cor} from Lemma~\ref{lem:key-lem}, since these are good illustrations of the stashing method.
After that we turn to the much harder problem of building a circuit to prove Lemma~\ref{lem:key-lem}.

\subsection{Some stashing-type proofs}%
\label{sub:stashing-applications}

We now prove Lemma~\ref{lem:cs-dual}.

\begin{proof}[Proof of Lemma~\ref{lem:cs-dual}]%
  Write $W = V^D_i$.
  By Proposition~\ref{prop:diag-cs-complexity} we have a morphism $\Theta \colon \gc_t(W) \to D$ with $\alpha^\Theta(i) = \triangle$ respecting $i$.
  In particular, $D \entails^0_{i \mapsto (\triangle,0)} \gc_s(W)$.

  By $\CS(\{i\})$ we have $D \entails^1_{\gamma_1} \bigl( D +_{\{i\}} D \bigr)$, where $\gamma_1(L;x) = \gamma_1(R;x) = x$ for all $x \in \leafs^D \setminus \{i\}$.
  We can think of the right-hand copy of $D$ as ``stashed'' and apply the fact from the first paragraph to the left copy.
  Stated formally, we invoke Proposition~\ref{prop:awesome-stashing} on $D \entails^0_{i \mapsto (\triangle,0)} \gc_s(W)$ to obtain
  \[
    \biggl(D +_{i \leftrightarrow i}^{,R} D\biggr) \entails^0_{\gamma_2} \left( \gc_s(W) +_{\triangle \leftrightarrow i}^{,R} D \right).
  \]
  Here $\gamma_2$ sends $(R;x) \mapsto ((R;x),1)$ for all $x \in \leafs^D \setminus \{i\}$.
  Note that by Lemma~\ref{lem:cs-implies-surj} we know that $D$ is surjective at $i$.

  Combining with the statement $D \entails^1_{\gamma_1} \left( D +_{\{i\}} D \right)$ and
  relabelling,\footnote{``Relabelling'' here includes switching our perspective so that the stashed copy of $D$ with label $R$ becomes the ``main part'', while what used to be the main part $\gc_s$ becomes the ``stashed part''.} this is what we wanted to prove.
\end{proof}

If it mattered, we could also prove the same statement with $\gamma(j) = (j,0)$, simply by swapping the roles of the left and right copies in $D +_{\{i\}} D$.

We now do the deduction of Corollary~\ref{cor:key-cor}.
First we note a useful lemma.

\begin{lemma}%
  \label{lem:gc-morph}
  For any finite-dimensional space $W$ and $1 \le t \le s$, there is a diagram morphism $\gc_s(W) \to \gc_t(W)$ with $\alpha(x)=x$ for all $x \in X^{\gc_t}$, respecting $\triangle$.
  In particular, $\gc_t \entails^0_{\triangle \mapsto \triangle} \gc_s$.
\end{lemma}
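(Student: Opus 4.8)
The plan is to construct the morphism $\gc_s(W) \to \gc_t(W)$ explicitly by unpacking Definition~\ref{def:gen-conv}, just as in the proof of Lemma~\ref{lem:gc-cs}. Recall $\gc_t(W)$ has index set $\{\triangle\} \cup [t+1]$, with $V^{\gc_t(W)} = W^{t+1}$, $W^{\gc_t(W)}_\triangle = W$ and $W^{\gc_t(W)}_i = W^t$; similarly $\gc_s(W)$ has $V^{\gc_s(W)} = W^{s+1}$. Since $\diagram(\gc_t) = \diagram(\gc_s)$ have a single non-leaf $\diamond$ each, by Proposition~\ref{prop:adjunction} (and Remark~\ref{rem:other-functor}) it suffices to produce a morphism of linear data $\Theta \colon \gc_s(W) \to \gc_t(W)$ with $\alpha^\Theta(x) = x$ for $x \in \{\triangle\} \cup [t+1]$ and $\alpha^\Theta(i) = \zeroi$ for $t+2 \le i \le s+1$ (the ``extra'' indices of $\gc_s$ are killed). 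As in Remark~\ref{rem:def-cs-monotone} this is natural: we are collapsing the last $s-t$ summands into constants.

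Concretely I would set $\theta^\Theta \colon W^{t+1} \to W^{s+1}$ to be the ``pad with zeros'' map
\[
  \theta^\Theta(u_1,\dots,u_{t+1}) = (u_1,\dots,u_t,u_{t+1},0,\dots,0),
\]
with the $u_{t+1}$ placed so that the sum $\sum_{\ell} u_\ell$ is preserved, $\sigma_\triangle = \id_W$, $\sigma_i = \id_{W^t}$ for $i \in [t+1]$, and $\sigma_i = 0$ for $i \ge t+2$. Then I would verify the commuting square~\eqref{eq:morph} for each index: for $\triangle$, both $\phi^{\gc_t}_\triangle$ and $\phi^{\gc_s}_\triangle \circ \theta^\Theta$ send a tuple to the sum of its coordinates, and padding by zeros does not change this sum; for $i \in [t+1]$, one checks that deleting the $i$-th coordinate commutes with the zero-padding (here one must take care that for $i = t+1$ the conventions line up, which is why the placement of $u_{t+1}$ matters); for $i \ge t+2$ the relevant $\sigma_i = 0$ makes the square trivially commute. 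Since $\sigma_\triangle = \id$ and $\alpha^\Theta(\triangle) = \triangle$ with no other index mapping to $\triangle$, the morphism respects $\triangle$. The corresponding $\morph(\Theta)$ step then gives $\gc_t \entails^0_{\triangle \mapsto \triangle} \gc_s$ via Definition~\ref{def:logic-notation-revenge}(a') (using the abuse of Remark~\ref{rem:conj-tracking} so that $\gamma$ records only that the relevant leaf is $\triangle$, with the $\{0,1\}$-tag being $0$).

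This proof is entirely routine bookkeeping and there is no real obstacle; the only point requiring mild care is the indexing/convention at the boundary coordinate $t+1$ when checking~\eqref{eq:morph} for $\phi^{\gc}_{t+1}$, and keeping straight that we are killing indices $t+2,\dots,s+1$ of $\gc_s$ rather than of $\gc_t$. Everything else follows by unpacking definitions exactly as in Lemma~\ref{lem:gc-cs}.
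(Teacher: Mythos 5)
Your construction has the direction of the morphism data reversed, which is a genuine error. In Definition~\ref{def:morphism}, a morphism $\Theta \colon \Phi \to \Psi$ carries a linear map $\theta^\Theta \colon V^\Phi \to V^\Psi$ and a shape $\alpha^\Theta \colon I^\Psi \to I^\Phi \cup \{\zeroi\}$. For $\Theta \colon \gc_s(W) \to \gc_t(W)$ this means $\theta^\Theta \colon W^{s+1} \to W^{t+1}$ (a \emph{collapsing} map, not a padding map) and $\alpha^\Theta$ has domain $I^{\gc_t} = \{\triangle\}\cup[t+1]$, i.e.\ the \emph{smaller} index set. The statement of the lemma already tells you this: it requires $\alpha(x)=x$ for all $x \in X^{\gc_t}$, so there is nowhere to write $\alpha(i)=\zeroi$ for $i \ge t+2$ — those indices of $\gc_s$ are simply not in the domain of $\alpha$. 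Your proposed data (a map $W^{t+1}\to W^{s+1}$, a shape defined on $\{\triangle\}\cup[s+1]$, maps $\sigma_i$ for $i$ up to $s+1$) is at best an attempt at a morphism in the \emph{opposite} direction $\gc_t(W) \to \gc_s(W)$. Worse, even that reversed attempt fails: with $\theta$ the pad-with-zeros map, the commuting square~\eqref{eq:morph} for an index $i \ge t+2$ with $\alpha(i)=\zeroi$ would require $\phi^{\gc_s}_i \circ \theta = 0$, but $\phi^{\gc_s}_i(u_1,\dots,u_{t+1},0,\dots,0)$ still retains $(u_1,\dots,u_{t+1})$ and is not identically zero. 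Also, your $\sigma_i$ for $i \in [t+1]$ cannot be $\id_{W^t}$: the definition requires $\sigma_i \colon W^{\gc_s}_{\alpha(i)} = W^s \to W^{\gc_t}_i = W^t$, and these spaces differ when $s \ne t$.

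The correct explicit $\theta^\Theta$ is the compression map
\[
  (a_1,\dots,a_{s+1}) \mapsto \bigl(a_1,\dots,a_t,\ a_{t+1}+a_{t+2}+\cdots+a_{s+1}\bigr),
\]
which preserves the total sum (so the $\triangle$-square commutes with $\sigma_\triangle = \id_W$) and is compatible with the coordinate-deletion maps for $i \in [t+1]$ with the appropriate (non-identity) $\sigma_i \colon W^s \to W^t$. Alternatively — and this is the route the paper actually takes — you can avoid writing any maps: observe that $s_{\cs}(\gc_t(W),\triangle) \le t \le s$ (the identity morphism $\gc_t \to \gc_t$ witnesses $\le t$ via Lemma~\ref{lem:gc-cs}, and monotonicity in $s$ is Remark~\ref{rem:def-cs-monotone}), and then Proposition~\ref{prop:diag-cs-complexity}(ii) hands you the morphism $\gc_s(W) \to \gc_t(W)$, with the shape $\alpha(x)=x$ coming from the partition $S_j = \{j\}$ for $j \le t+1$ and $S_j = \emptyset$ otherwise.
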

\begin{proof}%
  We have $s_{\cs}(\gc_t(W), \triangle) \le t$, e.g.\ by Lemma~\ref{lem:gc-cs} applied to the identity morphism $\gc_t(W) \to \gc_t(W)$.
  Hence $s_{\cs}(\gc_t(W), \triangle) \le s$.
  The morphism $\gc_s(W) \to \gc_t(W)$ is then supplied by Proposition~\ref{prop:diag-cs-complexity}(ii).

  If we prefer to be explicit, we can use a linear map $\theta \colon \FF_p^{s+1} \to \FF_p^{t+1}$ given by
  \[
    (a_1,a_2,\dots,a_{s+1}) \mapsto \bigl(a_1,a_2,\dots,a_t,a_{t+1}+a_{t+2}+\cdots+a_{s+1}\bigr).
  \]
  This determines the other linear maps uniquely; we omit the details.
\end{proof}

\begin{proof}[Proof of Corollary~\ref{cor:key-cor} given Lemma~\ref{lem:key-lem}]%
  Let $D_0 = D\bigl((t_j)_{j \in I'}\bigr)$.
  First, we discard the stashed $\gc_{t_{j_0}}$ at $j_0$: by Proposition~\ref{prop:discard-dual} $D_0 \entails^0_{i_0 \mapsto i_0} D_1$ where
  \[
    D_1 = \Phi \bigplus_{j \in I \setminus \{i_0,j_0\}} {}_{j \leftrightarrow \triangle}^{,R_j} \gc_{t_j}.
  \]
  It is clear by inspection that $\gc_{t_j}$ is surjective at $\triangle$.

  Next, we can apply $\CS(\{j_0\})$ to obtain $D_2 = D_1 +_{\{j_0\}}^{,A} D_1$.
  Certainly $D_1 \entails^1_{(A;i_0) \mapsto (i_0,1)} D_2$.
  We again stash the right-hand copy of $D_1$ and continue to reason about $D_1$.

  We would like to apply Lemma~\ref{lem:key-lem} to $D_1$ using $j_0$ in place of $i_0$, to show a statement of the form $D_1 \entails_{\triangle \mapsto j_0} \gc_{s-1}$.
  The problem is that we are missing a copy of $\gc_s$ at $i_0$.\footnote{We cannot adjust the definition of $D((t_j)_{j \in I'})$ to include a copy of $\gc_s$ at $i_0$, because then we would lose all memory of the function $f_{i_0}$ that we actually care about.
    Some copy of the original vertex $i_0$ must be respected at all times.
  Currently, that golden copy is $(A,i_0)$, hidden in the stashed part, so it is safe to discard the copy at $i_0$.}
  However we can reclaim this with Lemma~\ref{lem:cs-dual} and the following claim.

  \begin{claim}%
    \label{claim:cs-complex}
    Let $t = \max \bigl\{ t_j \colon j \in I \setminus \{i_0,j_0\} \bigr\}$.
    Then $D_1$ has Cauchy--Schwarz complexity at most $t$ at $i_0$.
  \end{claim}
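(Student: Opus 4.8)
The plan is to show that $D_1$ has Cauchy--Schwarz complexity at most $t$ at $i_0$ by tracking the Cauchy--Schwarz complexity through the construction of $D_1$ from $\Phi$ via the iterated joinings $\bigplus_{j \in I \setminus \{i_0,j_0\}} {}_{j \leftrightarrow \triangle}^{,R_j} \gc_{t_j}$. The key input is Corollary~\ref{cor:join-cs-complex}, which states that joining a diagram $C$ with a surjective-at-$j$ diagram $D$ at $i \leftrightarrow j$ preserves the Cauchy--Schwarz complexity of any leaf $x \ne i$.

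First I would recall the starting point: $\Phi$, viewed as a linear datum, has $(\phi_i)_{i \in I}$ pairwise linearly independent as elements of $(\FF_p^d)^\ast$, so as discussed in Section~\ref{sub:main-outline} we have $s_{\cs}(\Phi, i_0) \le |I| - 2$. Moreover, for each $j$ the singleton partition witnessing this can be refined: since $\phi_{i_0}$ is not a scalar multiple of any $\phi_j$ with $j \ne i_0$, we may group $(\phi_j)_{j \in I \setminus \{i_0,j_0\}}$ into $t$ classes each of which omits $\phi_{i_0}$ from its span — indeed, each class being a singleton already works, so in fact $s_{\cs}(\Phi, i_0) \le |I| - 2$, but we only need the crude bound $s_{\cs}(\Phi, i_0) \le |I \setminus \{i_0,j_0\}| = |I| - 2$. (One should double-check which bound is actually needed downstream; the claim as stated asserts complexity at most $t = \max\{t_j : j \in I \setminus\{i_0,j_0\}\}$, which requires a genuinely sharper argument — see below.)

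The substantive point is that the sharpened bound $s_{\cs}(D_1, i_0) \le t$, rather than $\le |I|-2$, must come from the $\gc_{t_j}$-dual structure. Here is the mechanism I would use. Each copy of $\gc_{t_j}$ that we glue at $j \leftrightarrow \triangle$ has $s_{\cs}(\gc_{t_j}, \triangle) \le t_j \le t$. When we form $\Phi[\{i_0\} \cup S] +_{j \leftrightarrow \triangle} \gc_{t_j}$, the leaf $j$ (formerly in $\Phi$) gets replaced, at the level of partitions witnessing the Cauchy--Schwarz complexity at $i_0$, by the leaves of $\gc_{t_j}$ — and those leaves can be distributed among at most $t_j + 1$ classes in a way compatible with $\phi_{i_0}$ being excluded, since $\gc_{t_j}$ has complexity $t$ at $\triangle$ and the co-restriction maps $\mu_r$ of Definition~\ref{def:cs-complexity} for $\gc_{t_j}$ kill all but one coordinate of $\gc_{t_j}$. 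Concretely, I would write out the partition of $\leafs^{D_1} \setminus \{i_0\}$ into $t$ classes explicitly: start from a witnessing partition $S_1 \cup \dots \cup S_t$ for $s_{\cs}(\Phi, i_0) \le t$ of $I \setminus \{i_0\}$ (available because any refinement of singletons works and $|I \setminus \{i_0,j_0\}| \ge t$, together with the trivial fact that the empty class may be added — using Remark~\ref{rem:def-cs-monotone}), and then for the class containing $j$, replace $j$ by the images under $\phi^{\gc_{t_j}}$ of the leaves of $\gc_{t_j}$; Corollary~\ref{cor:join-cs-complex} applied repeatedly (once per $j \in I \setminus \{i_0, j_0\}$, using that each $\gc_{t_j}$ is surjective at $\triangle$ by inspection) then certifies that the Cauchy--Schwarz complexity at $i_0$ is not increased by the gluings, so $s_{\cs}(D_1, i_0) \le t$.

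The main obstacle I anticipate is getting the bookkeeping of the partition exactly right — in particular, confirming that $t = \max\{t_j\}$ (and not, say, $t + 1$ or $\sum (t_j - 1)$ or $|I|-2$) is genuinely achievable. The cleanest route, which I would pursue, is to invoke Corollary~\ref{cor:join-cs-complex} as a black box and reduce everything to the single assertion $s_{\cs}(\Phi[\{i_0\} \cup (I \setminus \{i_0, j_0\})], i_0) \le t$; but this last inequality requires $t \ge |I \setminus \{i_0,j_0\}| - $ something, which is false in general for small $t$, so Corollary~\ref{cor:join-cs-complex} alone cannot be the whole story. Rather, one genuinely must use that gluing $\gc_{t_j}$ \emph{can reduce} the contribution of the leaf $j$ to a single class of ``width'' $t_j$: the leaves of $\gc_{t_j}$ other than $\triangle$ all lie in $\ker(\phi^{\datum(\gc_{t_j})}_{r})$ for appropriate $r$, so one new class per original singleton $\{j\}$ suffices and these classes can be fused (since $\phi_{i_0}$-freeness is preserved under unions when each piece omits $\phi_{i_0}$ — this is false in general!). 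So the correct and careful argument must exploit the \emph{specific} structure of $\gc_{t_j}$, probably via the morphism $\gc_{t_j} \to \Phi$ of Lemma~\ref{lem:gc-cs} composed with the inclusion into $D_1$, and I would prove the claim by exhibiting, for each of $t$ classes, an explicit linear map $\mu_r \colon W^{D_1}_{i_0} \to V^{\datum(D_1)}$ with $\phi^{D_1}_{i_0} \circ \mu_r = \id$ and $\phi^{D_1}_\ell \circ \mu_r = 0$ for all $\ell$ in the $r$-th class — built by combining the maps $\mu_r$ for $\Phi$ (Definition~\ref{def:cs-complexity}) with the right-inverses $\psi$ of the projections $\pi_\triangle$ coming from surjectivity of $\gc_{t_j}$, exactly as in the proof of Proposition~\ref{prop:discard-dual}. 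This is routine but fiddly, which is why I flag it as the main work of the claim.
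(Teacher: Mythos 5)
Your proposal correctly identifies the two traps (a naive application of Corollary~\ref{cor:join-cs-complex} is too weak, and fusing classes does not preserve the non-spanning condition), and you even flag the second one yourself. But the route you settle on at the end — ``built by combining the maps $\mu_r$ for $\Phi$ (Definition~\ref{def:cs-complexity})'' — still depends on an ingredient that doesn't exist: those $\mu_r$ would witness a partition of $I \setminus \{i_0\}$ into $t+1$ parts each with $\phi_{i_0}$ outside the span, i.e.\ $s_{\cs}(\Phi,i_0) \le t$. That inequality is false in general. For a concrete example take $\Phi$ the $4$-term AP system $(x,\,x+h,\,x+2h,\,x+3h)$, $i_0 = 1$, $j_0 = 2$, and $t_3 = t_4 = 1$, so $t = 1$; then any two of $\phi_2,\phi_3,\phi_4$ already span $\phi_1$, so $s_{\cs}(\Phi,1) = 2 > t$, and no partition of $\{2,3,4\}$ into two admissible parts exists, yet the claim still asserts $s_{\cs}(D_1,1) \le 1$.

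The observation you are missing is that after the joinings the vertices $j \in I \setminus \{i_0,j_0\}$ are \emph{non-leaves} of $D_1$, so they need not appear in the partition at all, and correspondingly you never need to arrange $\phi_j(v) = 0$ for those $j$. The paper only requires a \emph{single} vector $u \in V^\Phi$ with $\phi_{j_0}(u) = 0$ and $\phi_{i_0}(u) = 1$, which exists by pairwise linear independence of $\phi_{i_0}$ and $\phi_{j_0}$ alone, and then sets $c_j = \phi_j(u)$, which may be nonzero. The reason this suffices is a feature of $\gc_{t_j}$ that your ``width $t_j$'' remark gestures at but does not exploit: for any $r \in [t_j+1]$, the condition $\phi^{\gc_{t_j}}_r(w_j) = 0$ forces $w_j$ to be supported on the single coordinate $r$, but places no constraint on $\phi^{\gc_{t_j}}_\triangle(w_j) = (w_j)_r$; so the compatibility $\phi^{\gc_{t_j}}_\triangle(w_j) = \phi_j(v) = c_j$ can be met for \emph{any} value of $c_j$ by taking $w_j = c_j e_r$. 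Thus the partition of $\leafs^{D_1}\setminus\{i_0\}$ is $S_1 = \{j_0\} \cup \{(R_j;1) : j\}$ and $S_r = \{(R_j;r) : t_j \ge r-1\}$ for $2 \le r \le t+1$, and the witnessing maps are $\mu_r(a) = \bigl(au,\ (c_j a\, e_r)_j\bigr)$ (with the convention $e_r = e_1$ say when $r > t_j+1$), not a combination of pre-existing $\mu_r$'s for $\Phi$ with right-inverses as in Proposition~\ref{prop:discard-dual}. Also note the off-by-one: ``complexity at most $t$'' means $t+1$ classes.
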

  \begin{proof}[Proof of claim]%
    There are many ways to argue this but all implicitly use a partition $\leafs^{D_1} \setminus \{i_0\} = S_1 \cup \dots \cup S_{t+1}$ such as
    \begin{align*}
      S_1 &= \{j_0\} \cup \bigl\{ (R_j;1) \colon j \in I \setminus \{i_0,j_0\}\bigr\}; \\
      S_r &= \bigl\{ (R_j;r) \colon j \in I \setminus \{i_0,j_0\},\ t_j \ge r-1\bigr\} \ \ (2 \le r \le t+1).
    \end{align*}
    To justify this choice, we construct a morphism $\Theta \colon \gc_t \to D_1$ with $\alpha^\Theta(i_0) = \triangle$ and $\alpha^\Theta(x) = r$ for each $x \in S_r$, respecting $i_0$.
    This suffices by Proposition~\ref{prop:diag-cs-complexity}.

    Note that since $\phi_{i_0} \notin \spn(\phi_{j_0})$, there exists $u \in V^\Phi$ such that $\phi_{j_0}(u) = 0$ but $\phi_{i_0}(u) = 1$.
    Write $c_j = \phi_j(u)$ for $j \in I$.

    We build $\Theta$ by patching on each part of $D_1$.
    We have $D_1[I] \cong \Phi\bigl(I \setminus \{i_0,j_0\} \!\leadsto\! \nonleafs\bigr)$.
    Define a morphism $\Theta_0 \colon \gc_t \to D_1[I]$ by $\alpha^{\Theta_0}(i_0) = \triangle$, $\alpha^{\Theta_0}(j_0) = 1$ and $\alpha^{\Theta_0}(j) = \diamond$ for all other $j \in I$, with
    \begin{align*}
      \theta^{\Theta_0}_\diamond \colon \FF_p^{t+1} &\to V^\Phi \\
      (a_1,\dots,a_{t+1}) &\mapsto (a_1+\cdots+a_{t+1}) u.
    \end{align*}
    The other maps are (necessarily) $\theta^{\Theta_0}_{i_0} = \id_{\FF_p}$, $\theta^{\Theta_0}_{j_0} \colon \FF_p^t \to \FF_p$ the zero map, and $\theta^{\Theta_0}_j \colon \FF_p^{t+1} \to \FF_p$ given by $\phi_j \circ \theta^{\Theta_0}_\diamond$ for all $j \in I \setminus \{i_0,j_0\}$.
    Then~\eqref{eq:diag-morph} holds for the edges $\diamond i_0$, $\diamond j_0$ by our choice of $u$, and for the other edges by definition of $\theta_j^{\Theta}$.

    Now for each $j \in I \setminus \{i_0,j_0\}$ we have $D_1[R_j] \cong \gc_{t_j}(\triangle \leadsto \nonleafs)$.
    To define a morphism $\Theta_j \colon \gc_t \to D_1[R_j]$,
    we first define $\Xi_j \colon \gc_t \to \gc_t(\triangle \leadsto \nonleafs)$ by:---
    \begin{align*}
      \alpha^{\Xi_j}(r) &= r \ : \  r \in [t+1] & \alpha^{\Xi_j}(\triangle) = \alpha^{\Xi_j}(\diamond) &= \diamond
    \end{align*}
    and
    \begin{align*}
      \theta^{\Xi_j}_\triangle &= \phi_j \circ \theta^{\Theta_0}_\diamond = \bigl( (a_1,\dots,a_{t+1}) \mapsto c_j (a_1+\cdots+a_{t+1}) \bigr) \\
      \theta^{\Xi_j}_x &= \bigl( v \mapsto c_j v \bigr) \ \  \forall x \ne \triangle.
    \end{align*}
    It is routine to verify~\eqref{eq:diag-morph}, and we note $\theta^{\Xi_j}_\triangle = \theta^{\Theta_0}_j$.
    By Lemma~\ref{lem:gc-morph} we have a morphism $\gc_t \to \gc_{t_j}$ respecting $\triangle$, and by Remark~\ref{rem:leaf-non-leaf-morph} we obtain a morphism $\gc_t(\triangle \leadsto \nonleafs) \to D_1[R_j]$.
    We let $\Theta_j \colon \gc_t \to D[R_j]$ be the composite of $\Xi_j$ with this morphism.
    It follows that $\theta^{\Theta_j}_\triangle = \theta^{\Theta_0}_j$.
    
    Patching together $\Theta_0$ and $(\Theta_j)_{j \in I \setminus \{i_0,j_0\}}$ by Lemma~\ref{lem:patching} gives a morphism $\Theta \colon \gc_t \to D_1$ with the required properties.
  \end{proof}
  By the claim and Lemma~\ref{lem:cs-dual} we deduce that
  \[
    D_1 \entails^1_{j_0 \mapsto j_0} \left( D_1 +_{i_0 \leftrightarrow \triangle}^{,R_{i_0}} \gc_s \right) =: D_3.
  \]
  Given the hypothesis $s(\Phi, j_0) \le s-1$,
  we can now apply Lemma~\ref{lem:key-lem} with the roles of the indices $i_0$, $j_0$ reversed to conclude
  $D_3 \entails^M_{\triangle \mapsto j_0} \gc_{s-1}$.
  Combining these last two statements, $D_1 \entails^{M+1}_{\triangle \mapsto j_0} \gc_{s-1}$.

  Finally we reveal the stashed copy of $D_1$.
  By Proposition~\ref{prop:awesome-stashing}, $D_1 \entails^{M+1}_{\triangle \mapsto j_0} \gc_{s-1}$ implies
  \[
    D_2 = \left( D_1 +_{j_0 \leftrightarrow j_0}^{,A} D_1 \right) \entails^{M+1}_{(A;i_0) \mapsto (A;i_0)} \left( \gc_{t-1} +_{\triangle \leftrightarrow j_0}^{,A} D_1 \right).
  \]
  We need to verify that $D_1$ is surjective at $j_0$: indeed $\Phi$ is surjective at $j_0$ by inspection and hence $D_1$ is by repeated application of Corollary~\ref{cor:join-surj}.

  The right-hand side is the diagram we want, up to relabelling.
  Combining with the statements $D_0 \entails^0_{i_0 \mapsto i_0} D_1$ and $D_1 \entails^1_{A;i_0 \mapsto i_0} D_2$ at the start, we obtain the result.
\end{proof}

\subsection{The Bridge gate}%
\label{sub:bridge}

The next few subsections introduce some further gates we will need in Section~\ref{sub:stargates} to prove Lemma~\ref{lem:key-lem}.
We start with a very boring one.

\begin{definition}%
  \label{def:bridge-gate}
  For $s \ge 1$, the diagram $\bridge_s$ is $\gc_s +_{\{s+1\}}^{L,R} \gc_s$.
  We also add labels $X$ for $L;\triangle$ and $Y$ for $R;\triangle$ as shown below.
  \begin{center}
    \begin{tikzpicture}[
        defnode/.style={rectangle,fill=lightgray,draw,inner sep=2pt,outer sep=0pt,minimum size=10pt},
        gatelabel/.style={rectangle, inner sep=2pt, outer sep=0pt, fill=white,scale=0.6},
        subnode/.style={rounded rectangle, draw, inner sep=5pt, outer sep=0pt,minimum size=15pt},
        leaf/.style={defnode,fill=leafgreen},
        scale=0.8
      ]

      \node[subnode,scale=0.9] (VL) at (0,0) {$L \,:\, \gc_s$};
      \node[subnode,scale=0.9] (VR) at (5,0) {$R \,:\, \gc_s$};

      \node[defnode,scale=0.5] (W) at ($(VL)!0.5!(VR)$) {};

      \node[leaf,scale=0.7] (X) at ($(VL)+(-2,0)$) {$X$};
      \node[leaf,scale=0.7] (Y) at ($(VR)+( 2,0)$) {$Y$};

      \draw (VL) -- node[gatelabel, pos=0.17] {$s+1$} (W);
      \draw (VR) -- node[gatelabel, pos=0.17] {$s+1$} (W);
      \draw (VL) -- node[gatelabel, pos=0.12] {$\triangle$} (X);
      \draw (VR) -- node[gatelabel, pos=0.12] {$\triangle$} (Y);
    \end{tikzpicture}
  \end{center}
\end{definition}

It is clear we can build this from $\gc_s$ with a single $\CS(\{s+1\})$.
\begin{definition}%
  \label{def:bridge-gate-gate}
  The gate $\bridge_s$ consists of the diagram $\bridge_s$ with $\cR = [s]$ and $\cP = \{X,Y\}$.
\end{definition}

There is only one interesting assignment of $\bridge_s$, which does nothing: that is, it sets the values at $X$ and $Y$ to be equal in every mode.

\begin{lemma}%
  \label{lem:bridge-ass}
  There is an assignment $\boring$ of $\bridge_s$ with $D_0 = \trivial(\{X,Y\})$ and $D_r = \const(\{X,Y\})$ for $1 \le r \le s$.
\end{lemma}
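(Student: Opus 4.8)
The plan is to follow the template used for the $\sumconst$ and $\crs$ assignments of the AggreGate: specify the partition of the toggles into $s$ classes, write down the morphisms $\cM_0, \cM_1, \dots, \cM_s$ explicitly, and check the commutativity condition~\eqref{eq:diag-morph} for each by inspection. Since $\bridge_s = \gc_s +_{\{s+1\}}^{L,R} \gc_s$, unwinding the joining shows that its leaves are $X = L;\triangle$ and $Y = R;\triangle$ together with the toggles $L;1, \dots, L;s, R;1, \dots, R;s$, while its non-leaves are $L;\diamond$, $R;\diamond$ and the fused vertex $(\#;s+1) = L;s+1 = R;s+1$. I would take the obvious partition, $S_r = \{L;r,\, R;r\}$ for $r \in [s]$; that is, the map $\toggles \to [s]$ sending $L;r \mapsto r$ and $R;r \mapsto r$.

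For $\cM_0 \colon \trivial(\{X,Y\}) \to \bridge_s[\nonleafs \cup \cP]$, the shape and the maps on leaves are forced ($X \mapsto X$, $Y \mapsto Y$, every non-leaf $\mapsto \diamond$, and $\theta_X = \theta_Y = \id_{\FF_p}$), so it remains only to give maps on the three non-leaves. Recalling that $\phi^{\gc_s}_\triangle \colon \FF_p^{s+1} \to \FF_p$ is the coordinate sum and $\phi^{\gc_s}_{s+1} \colon \FF_p^{s+1} \to \FF_p^s$ is the projection onto the first $s$ coordinates, I would take
\[
  \theta_{(\#;s+1)}(x,y) = 0, \qquad
  \theta_{L;\diamond}(x,y) = (0,\dots,0,x), \qquad
  \theta_{R;\diamond}(x,y) = (0,\dots,0,y),
\]
with the nonzero entry in position $s+1$. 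Then~\eqref{eq:diag-morph} for the four edges reduces to $\phi^{\gc_s}_\triangle(0,\dots,0,x) = x$, $\phi^{\gc_s}_{s+1}(0,\dots,0,x) = 0$, and likewise with $y$.

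For $\cM_r \colon \const(\{X,Y\}) \to \bridge_s[\nonleafs \cup \cP \cup S_r]$ with $r \in [s]$, one must have $\alpha^{\cM_r}(L;r) = \alpha^{\cM_r}(R;r) = \zeroi$, and otherwise the shape is again forced. Writing $\delta_r(x)$ for the tuple (in $\FF_p^{s+1}$ or $\FF_p^s$ as appropriate) with $x$ in coordinate $r$ and $0$ elsewhere, I would take $\theta_{L;\diamond} = \theta_{R;\diamond} = \delta_r$ and $\theta_{(\#;s+1)} = \delta_r$; since $r \le s$, coordinate $r$ survives the projection $\phi^{\gc_s}_{s+1}$, so the descriptions of $\theta_{(\#;s+1)}$ coming from the $L$- and $R$-sides agree. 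Condition~\eqref{eq:diag-morph} then amounts to $\phi^{\gc_s}_\triangle(\delta_r(x)) = x$, $\phi^{\gc_s}_{s+1}(\delta_r(x)) = \delta_r(x)$ and $\phi^{\gc_s}_r(\delta_r(x)) = 0$ (the last because $\phi^{\gc_s}_r$ omits coordinate $r$), all immediate; and for $3 \le r \le s$ these are literally the same formulas, so no separate argument is needed.

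There is essentially no obstacle here — this is deliberately the most boring gate. The only points needing a moment's care are unwinding the labelling of $\gc_s +_{\{s+1\}} \gc_s$ to pin down the vertices of $\bridge_s$, and the observation that the toggle coordinates $1, \dots, s$ sit among the first $s$ coordinates of $\FF_p^{s+1}$ and so are not destroyed in passing through the shared vertex $(\#;s+1)$. As with the earlier gate constructions the argument is clearest drawn as a picture in the style of Figure~\ref{fig:const}, but I would likely omit it.
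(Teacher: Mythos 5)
Your proof is correct and takes essentially the same approach as the paper: the same partition $S_r = \{L;r, R;r\}$, the same coordinate-vector maps (the paper calls your $\delta_r$ by the name $\tau_r$), and the same verification using $\phi^{\gc_s}_\triangle \circ \tau_r = \id$, $\phi^{\gc_s}_r \circ \tau_r = 0$ for the toggle modes and $\tau_{s+1}$ for $\cM_0$. The paper presents the morphisms as pictures in the style of Figure~\ref{fig:const} rather than writing out the formulas, but the content is identical.
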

\begin{proof}%
  The partition $\toggles = S_1 \cup \dots \cup S_s$ is given by $S_r = \{ (L;r), (R;r) \}$ for $r \in [s]$.

  For $r \in [s+1]$ we define $\tau_r \colon \FF_p \to \FF_p^{s+1}$ as in the proof of Lemma~\ref{lem:gc-cs} by
  \[
    \tau_r(z) = (0, \dots, 0, z, 0, \dots, 0)
  \]
  where the $z$ is in position $r$ on the right.
  For $1 \le r \le s$ we can now define $\cM_r \colon \const(\{X,Y\}) \to \bridge_s[\nonleafs \cup \pins \cup S_r]$ by the picture
  \begin{center}
    \begin{tikzpicture}[
        defnode/.style={rectangle,draw,inner sep=4pt,outer sep=0pt,minimum size=15pt},
        mydot/.style={circle,fill,inner sep=0.5pt},
        leaf/.style={defnode,fill=leafgreen},
        nonleaf/.style={defnode,fill=lightgray},
        toggle/.style={defnode,fill=leafgreen}
      ]
      \begin{scope}[shift={(0,0)}]
        \node[defnode,scale=0.6] (U) at (0, 0) {$\diamond$};
        \node[leaf,scale=0.6]    (UXX) at ($(U) + (235:1)$) {$X$};
        \node[leaf,scale=0.6]    (UXY) at ($(U) + (305:1)$) {$Y$};
        \draw[-stealth] (U) -- (UXX);
        \draw[-stealth] (U) -- (UXY);

        \node[defnode,scale=0.6] (VL) at (2.5, 0) {$L;\diamond$};
        \node[defnode,scale=0.6] (VR) at (4, 0) {$R;\diamond$};
        \node[defnode,scale=0.6] (V-) at ($(VL)!0.5!(VR)+(0,-0.5)$) {$\#;s+1$};

        \node[leaf,scale=0.6]    (AL) at ($(VL) + (120:1)$) {$L;r$};
        \node[leaf,scale=0.6]    (AR) at ($(VR) + (60:1)$) {$R;r$};
        \node[leaf,scale=0.6]    (X) at ($(VL) + (235:1)$) {$X$};
        \node[leaf,scale=0.6]    (Y) at ($(VR) + (305:1)$) {$Y$};

        \draw[-stealth] (VL) -- (V-);
        \draw[-stealth] (VL) -- (AL);
        \draw[-stealth] (VL) -- (X);
        \draw[-stealth] (VR) -- (V-);
        \draw[-stealth] (VR) -- (AR);
        \draw[-stealth] (VR) -- (Y);

        \draw[dashed, -angle 60] (U) to[bend left=0] (VL);
        \draw[dashed, -angle 60] (U) to[bend left=14] (VR);
        \draw[dashed, -angle 60] (U) to[bend right=0] (V-);
        \draw[dotted, -angle 60] (UXX) to[bend right=15] (X);
        \draw[dotted, -angle 60] (UXY) to[bend right=15] (Y);
      \end{scope}
      \begin{scope}[shift={(6.5,0)}]
        \node[defnode,scale=0.6] (U) at (0, 0) {$z$};
        \node[leaf,scale=0.6]    (UXX) at ($(U) + (235:1)$) {$z$};
        \node[leaf,scale=0.6]    (UXY) at ($(U) + (305:1)$) {$z$};
        \draw[-stealth] (U) -- (UXX);
        \draw[-stealth] (U) -- (UXY);

        \node[defnode,scale=0.6] (VL) at (2.5, 0) {$\tau_r(z)$};
        \node[defnode,scale=0.6] (VR) at (4, 0) {$\tau_r(z)$};
        \node[defnode,scale=0.6] (V-) at ($(VL)!0.5!(VR)+(0,-0.5)$) {$\phi^{\gc_s}_{s+1}(\tau_r(z))$};

        \node[leaf,scale=0.6]    (AL) at ($(VL) + (120:1)$) {$0$};
        \node[leaf,scale=0.6]    (AR) at ($(VR) + (60:1)$) {$0$};
        \node[leaf,scale=0.6]    (X) at ($(VL) + (235:1)$) {$z$};
        \node[leaf,scale=0.6]    (Y) at ($(VR) + (305:1)$) {$z$};

        \draw[-stealth] (VL) -- (V-);
        \draw[-stealth] (VL) -- (AL);
        \draw[-stealth] (VL) -- (X);
        \draw[-stealth] (VR) -- (V-);
        \draw[-stealth] (VR) -- (AR);
        \draw[-stealth] (VR) -- (Y);

        \draw[dashed, -angle 60] (U) to[bend left=0] (VL);
        \draw[dashed, -angle 60] (U) to[bend left=14] (VR);
        \draw[dashed, -angle 60] (U) to[bend right=0] (V-);
        \draw[dotted, -angle 60] (UXX) to[bend right=15] (X);
        \draw[dotted, -angle 60] (UXY) to[bend right=15] (Y);
      \end{scope}
    \end{tikzpicture}
  \end{center}
  We have use the properties $\phi^{\gc_s}_\triangle \circ \tau_r = \id_{\FF_p}$ and $\phi^{\gc_s}_r \circ \tau_r =0$.

  Similarly, we define $\cM_0 \colon \trivial(\{X,Y\}) \to \bridge_s[\nonleafs \cup \{X,Y\}]$ by
  \begin{center}
    \begin{tikzpicture}[
        defnode/.style={rectangle,draw,inner sep=4pt,outer sep=0pt,minimum size=15pt},
        mydot/.style={circle,fill,inner sep=0.5pt},
        leaf/.style={defnode,fill=leafgreen},
        nonleaf/.style={defnode,fill=lightgray},
        toggle/.style={defnode,fill=leafgreen}
      ]
      \begin{scope}[shift={(0,0)}]
        \node[defnode,scale=0.6] (U) at (0, 0) {$\diamond$};
        \node[leaf,scale=0.6]    (UXX) at ($(U) + (235:1)$) {$X$};
        \node[leaf,scale=0.6]    (UXY) at ($(U) + (305:1)$) {$Y$};
        \draw[-stealth] (U) -- (UXX);
        \draw[-stealth] (U) -- (UXY);

        \node[defnode,scale=0.6] (VL) at (2.5, 0) {$L;\diamond$};
        \node[defnode,scale=0.6] (VR) at (4, 0) {$R;\diamond$};
        \node[defnode,scale=0.6] (V-) at ($(VL)!0.5!(VR)+(0,-0.5)$) {$\#;s+1$};

        \node[leaf,scale=0.6]    (X) at ($(VL) + (235:1)$) {$X$};
        \node[leaf,scale=0.6]    (Y) at ($(VR) + (305:1)$) {$Y$};

        \draw[-stealth] (VL) -- (V-);
        \draw[-stealth] (VL) -- (X);
        \draw[-stealth] (VR) -- (V-);
        \draw[-stealth] (VR) -- (Y);

        \draw[dashed, -angle 60] (U) to[bend left=0] (VL);
        \draw[dashed, -angle 60] (U) to[bend left=14] (VR);
        \draw[dashed, -angle 60] (U) to[bend right=0] (V-);
        \draw[dotted, -angle 60] (UXX) to[bend right=15] (X);
        \draw[dotted, -angle 60] (UXY) to[bend right=15] (Y);
      \end{scope}
      \begin{scope}[shift={(6.5,0)}]
        \node[defnode,scale=0.6] (U) at (0, 0) {$(x,y)$};
        \node[leaf,scale=0.6]    (UXX) at ($(U) + (235:1)$) {$x$};
        \node[leaf,scale=0.6]    (UXY) at ($(U) + (305:1)$) {$y$};
        \draw[-stealth] (U) -- (UXX);
        \draw[-stealth] (U) -- (UXY);

        \node[defnode,scale=0.6] (VL) at (2.5, 0) {$\tau_{s+1}(x)$};
        \node[defnode,scale=0.6] (VR) at (4, 0) {$\tau_{s+1}(y)$};
        \node[defnode,scale=0.6] (V-) at ($(VL)!0.5!(VR)+(0,-0.5)$) {$0$};

        \node[leaf,scale=0.6]    (X) at ($(VL) + (235:1)$) {$x$};
        \node[leaf,scale=0.6]    (Y) at ($(VR) + (305:1)$) {$y$};

        \draw[-stealth] (VL) -- (V-);
        \draw[-stealth] (VL) -- (X);
        \draw[-stealth] (VR) -- (V-);
        \draw[-stealth] (VR) -- (Y);

        \draw[dashed, -angle 60] (U) to[bend left=0] (VL);
        \draw[dashed, -angle 60] (U) to[bend left=14] (VR);
        \draw[dashed, -angle 60] (U) to[bend right=0] (V-);
        \draw[dotted, -angle 60] (UXX) to[bend right=15] (X);
        \draw[dotted, -angle 60] (UXY) to[bend right=15] (Y);
      \end{scope}
    \end{tikzpicture}
  \end{center}
  using the properties $\phi_\triangle^{\gc_s} \circ \tau_{s+1} = \id_{\FF_p}$ and $\phi_{s+1}^{\gc_s} \circ \tau_{s+1} = 0$.
\end{proof}

The reader may wonder what need we have for a gate that does nothing.
The answer is that these objects are useful for the process of \emph{building} complicated gates and diagrams: they allow us to keep spare copies of $\gc_s$ floating around to be used later.
The purpose of this gate definition is to show that copies that remain unused do not do any harm.

\subsection{The super $\agate$}%
\label{sub:super-agate}

In Section~\ref{sub:big-agate} we defined a gate $\agate_s^{k}$ that encoded a tensor identity such as
\[
  (a v_1) \otimes v_2 \otimes \dots \otimes v_s = v_1 \otimes (a v_2) \otimes \dots \otimes v_s.
\]
Specifically this captures multilinearity in a single pair of indices.
We now strap a few of these together to encode multilinearity in all the indices simultaneously: that is, to encode a tensor identity
\[
  (a_1 v_1) \otimes (a_2 v_2) \otimes \dots \otimes (a_s v_s) = (a_1 a_2 \dots a_s) v_1 \otimes v_2 \otimes \dots \otimes v_s.
\]

\begin{definition}%
  \label{def:super-agate}
  The diagram $\supera_s^{k,m}$ consists of $m$ copies of $\agate_s^k$ glued together, as follows.
  \begin{center}
    \begin{tikzpicture}[
        defnode/.style={rectangle,draw,fill=lightgray,inner sep=2pt,outer sep=0pt,minimum size=10pt},
        gatelabel/.style={rectangle, inner sep=2pt, outer sep=0pt, fill=white,scale=0.6},
        subnode/.style={rounded rectangle, draw, inner sep=5pt, outer sep=0pt,minimum size=15pt},
        leaf/.style={rectangle,draw,fill=leafgreen,inner sep=2pt,outer sep=0pt,minimum size=10pt},
        scale=0.75
      ]

      \node[subnode,scale=0.6] (G0) at (0,0) {$H0 : \agate^k_s$};
      \node[subnode,scale=0.6] (G1) at (3.2,0) {$H1 : \agate^k_s$};
      \node[subnode,scale=0.6] (Gpen) at (7.8,0) {$H{\scriptstyle(m-2)} : \agate^k_s$};
      \node[scale=0.8] (Gdots) at ($(G1.east)!0.5!(Gpen.west)$) {$\dots$};
      \node[subnode,scale=0.6] (Glast) at (12,0) {$H{\scriptstyle(m-1)} : \agate^k_s$};

      \node[leaf,scale=0.5] (X) at ($(G0)+(-2,0)$) {$X$};
      \node[leaf,scale=0.5] (Y) at ($(Glast)+(2.2,0)$) {$Y$};

      \node[defnode,scale=0.5] (G01) at ($(G0)!0.5!(G1)$) {};
      \node[defnode,scale=0.5] (G1-) at ($(G1.east)!0.6!(Gdots)$) {};

      \node[defnode,scale=0.5] (G-2) at ($(Gdots)!0.4!(Gpen.west)$) {};
      \node[defnode,scale=0.5] (Gpl) at ($(Gpen)!0.5!(Glast)$) {};

      \draw (G0)    -- node[gatelabel, pos=0.15] {$X$} (X);
      \draw (Glast) -- node[gatelabel, pos=0.15] {$Y$} (Y);

      \draw (G0)    -- node[gatelabel, pos=0.2] {$Y$} (G01);
      \draw (G1)    -- node[gatelabel, pos=0.2] {$X$} (G01);

      \draw (G1)    -- node[gatelabel, pos=0.2] {$Y$} (G1-);
      \draw (Gpen)  -- node[gatelabel, pos=0.2] {$X$} (G-2);

      \draw (Gpen)  -- node[gatelabel, pos=0.2] {$Y$} (Gpl);
      \draw (Glast) -- node[gatelabel, pos=0.2] {$X$} (Gpl);
    \end{tikzpicture}
  \end{center}
  Again note we have introduced redundant labels $X=H0;X$ and $Y=H(m-1);Y$.
\end{definition}

Again this is a gate with two pins.
\begin{definition}%
  \label{def:super-agate-gate}
  The gate $\supera_s^{k,m}$ consists of the diagram $\supera_s^{k,m}$ together with $\cR = [s]$ and $\pins=\{X,Y\}$.
\end{definition}
We also consider the sub-diagrams $H0,\dots,H{\scriptstyle (m-1)}$ to be gates with gate structure $\agate_s^k$ (again noting Footnote~\ref{footnote:subgate}).

It is straightforward to build such a gate starting from a single $\agate_s^k$, at least when $m$ is a power of $2$.
\begin{lemma}%
  \label{lem:build-super-agate}
  For $s \ge 2$, $k \ge 1$ and $m=2^M$ for $M \ge 0$, we have $\agate_s^k \entails^M_\gamma \supera_s^{k,m}$, where $\gamma(X)=(X,0)$ and $\gamma(Y)=(X,1)$, unless $M=0$ in which case $\gamma(Y)=(Y,0)$.
\end{lemma}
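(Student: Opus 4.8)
The plan is to argue by induction on $M$, doubling the length of the chain at each step, exactly as in the proof of Lemma~\ref{lem:build-large-agate} (and Lemma~\ref{lem:build-agate}). For the base case $M=0$ we have $m=1$, and by Definition~\ref{def:super-agate} the diagram $\supera_s^{k,1}$ is literally a single copy $H0 \colon \agate_s^k$ together with the redundant relabelling $X = H0;X$, $Y = H0;Y$. Hence there is a strong isomorphism $\agate_s^k \xrightarrow{\cong} \supera_s^{k,1}$; taking its inverse as a (length-zero) $\morph$ step, we get $\agate_s^k \entails^0_\gamma \supera_s^{k,1}$, and since the isomorphism respects every leaf, $\gamma(X) = (X,0)$ and $\gamma(Y) = (Y,0)$, as claimed.

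For the inductive step, suppose $\agate_s^k \entails^M_\gamma \supera_s^{k,m}$ with $m = 2^M$ and $\gamma$ as stated. First I would apply the single Cauchy--Schwarz step $\CS(\{Y\})$ to $\supera_s^{k,m}$, obtaining $\supera_s^{k,m} \entails^1_{\gamma_0} \bigl(\supera_s^{k,m} +_{\{Y\}} \supera_s^{k,m}\bigr)$ where, by Definition~\ref{def:logic-notation-revenge}(b') (see Definition~\ref{def:logic-notation}(b)), $\gamma_0(L;z) = (z,0)$ and $\gamma_0(R;z) = (z,1)$ for every $z \in \leafs^{\supera_s^{k,m}}\setminus\{Y\}$, in particular for $z = X$. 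The crucial observation is then that $\supera_s^{k,m} +_{\{Y\}} \supera_s^{k,m}$ is strongly isomorphic to $\supera_s^{k,2m}$. This rests on the fact that the diagram $\agate_s^k$ — and hence, being a chain of copies of it, the diagram $\supera_s^{k,m}$ — carries a reflection automorphism interchanging its two pins $X \leftrightarrow Y$. That automorphism descends from the reflection symmetry of the IndAGate diagram $\smagate_s$ noted in the proof of Lemma~\ref{lem:build-large-agate}: reversing the order of the $Gi$ inside $\agate_s^k$ and reflecting each $\smagate_s$ internally interchanges $X1 \leftrightarrow Y1$ and $X2 \leftrightarrow Y2$, hence $X \leftrightarrow Y$, and is compatible with all the chain gluings. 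Using it, I relabel the right copy by $H\ell \leadsto H(2m-1-\ell)$ (with the internal reflection of each $\agate_s^k$, and so of each $\smagate_s$), leaving the left copy's labels $H0,\dots,H(m-1)$ unchanged; the merged vertex becomes $H(m-1);Y = Hm;X$ in $\supera_s^{k,2m}$, the pin $X$ of $\supera_s^{k,2m}$ is $(L;X)$, and the pin $Y$ of $\supera_s^{k,2m}$ is $(R;X)$. Propagating $\gamma_0$ through this identification yields $\supera_s^{k,m} \entails^1_{\gamma_1} \supera_s^{k,2m}$ with $\gamma_1(X) = (X,0)$ and $\gamma_1(Y) = (X,1)$, the targets being leaves of $\supera_s^{k,m}$.

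Finally I would compose with the inductive hypothesis via Definition~\ref{def:logic-notation-revenge}(c') (see Definition~\ref{def:logic-notation}(c)): from $\agate_s^k \entails^M_\gamma \supera_s^{k,m}$ and $\supera_s^{k,m} \entails^1_{\gamma_1} \supera_s^{k,2m}$ we obtain $\agate_s^k \entails^{M+1}_{\gamma'} \supera_s^{k,2m}$ with $\gamma'$ the composite. Since $\gamma(X) = (X,0)$ always, we get $\gamma'(X) = (X,0)$; and whether $M=0$ (so $\gamma(Y) = (Y,0)$) or $M\ge 1$ (so $\gamma(Y) = (X,1)$, which still routes through $\gamma(X) = (X,0)$), the conjugate bits add to give $\gamma'(Y) = (X,1)$ — matching the claim for $2m = 2^{M+1} \ge 2$. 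The main obstacle here is purely bookkeeping: setting up the reflection/relabelling so that $\supera_s^{k,m} +_{\{Y\}} \supera_s^{k,m}$ and $\supera_s^{k,2m}$ are genuinely identified as labelled diagrams, and then tracking the $\gamma$ functions through the composition. All of it is mechanical and directly parallel to Lemma~\ref{lem:build-large-agate}, so I would keep the write-up short.
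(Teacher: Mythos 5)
Your proposal is correct and takes essentially the same approach as the paper: the base case identifies $\supera_s^{k,1}$ with $\agate_s^k$ up to relabelling, the inductive step is $\CS(\{Y\})$ followed by the observation that $\supera_s^{k,m} +_{\{Y\}} \supera_s^{k,m}$ is $\supera_s^{k,2m}$ after reflecting the right-hand copy, and the $\gamma$-tracking is as in Lemma~\ref{lem:build-large-agate}. You have simply spelled out the reflection and the $\gamma$-bookkeeping in more detail than the paper, which invokes the parallel with Lemma~\ref{lem:build-large-agate} and leaves the verification implicit.
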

\begin{proof}%
  When $M=0$ the gates $\agate_s^k$ and $\supera_s^{k,1}$ are identical up to relabelling, and just as in Lemma~\ref{lem:build-large-agate} we argue that
  \[
    \supera_s^{k,m} \entails^1 \left(\supera_s^{k,m} +_{Y \leftrightarrow Y} \supera_s^{k,m}\right)
  \]
  where the right-hand side is the same as $\supera_s^{k,2m}$ up to relabelling.
  (As before, this relabelling is non-trivial because we have to reflect the right-hand copy.)
  The result follows by induction on $M$.
\end{proof}

The assignment of $\supera_s^{k,m}$ we will use is the following.

\begin{lemma}%
  \label{lem:super-assignment}
  Suppose $s \ge 2$, $k,m \ge 1$ and $s-1 \ge m$.
  Let $a_2,\dots,a_s$ be integers with $|a_i| < 2^{k+1}$, and write $\alpha = \prod_{i=2}^n a_i$.
  Then there is an assignment $\amulti_s^{k,m}(a_2,\dots,a_n)$ of $\supera_s^{k,m}$ with:---
  \begin{itemize}
    \item $D_0 = \trivial(\{X,Y\})$,
    \item $D_r = \lag(a_r, A)$ for $2 \le r \le s$,
    \item $D_1 = \lag(\alpha, B)$.
  \end{itemize}
\end{lemma}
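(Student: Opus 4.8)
The plan is to obtain $\amulti_s^{k,m}(a_2,\dots,a_s)$ by glueing together, on each of the $m$ sub-gates $H0,\dots,H(m-1)$ of $\supera_s^{k,m}$, a single copy of the bilinear assignment of $\agate_s^k$, in exactly the style of the proofs of Lemma~\ref{lem:agate-assign-1} and Lemma~\ref{lem:agate-master}. Concretely: there are $s-1$ modes $\{2,\dots,s\}$ and $m\le s-1$ sub-gates, so I fix distinct modes $r_0,\dots,r_{m-1}\in\{2,\dots,s\}$ (a bijection onto $\{2,\dots,s\}$ when $m=s-1$, which is the main case) and give $H\ell$ the permuted assignment $\abilin_{s;r_\ell,1}^k(a_{r_\ell})$ of $\agate_s^k$ from Corollary~\ref{cor:agate-master}. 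On $H\ell$ this means $D_0^\ell=\trivial(\{X,Y\})$, $D_{r_\ell}^\ell=\lag(a_{r_\ell},A)$, $D_1^\ell=\lag(a_{r_\ell},B)$ and $D_r^\ell=\const(\{X,Y\})$ for every other $r$. Writing $S_r^\ell$ and $\cM_r^\ell$ for the associated partitions and morphisms, I set $S_r=\bigcup_{\ell=0}^{m-1}S_r^\ell$; this is a legitimate partition of $\toggles^{\supera_s^{k,m}}$ because, by the chosen gate structure $\agate_s^k$ on each $H\ell$, one has $\toggles^{\supera_s^{k,m}}=\bigcup_\ell\toggles^{H\ell}$, exactly as in Lemma~\ref{lem:agate-master}. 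The case $m<s-1$ is handled identically, with the modes $r\in\{2,\dots,s\}$ not of the form $r_\ell$ simply inheriting the $\const$ assignment along the whole chain.

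It then remains to construct the morphisms $\cM_r\colon D_r\to\supera_s^{k,m}[\nonleafs\cup\cP\cup S_r]$ for $r\in\cR\cup\{0\}$ by the same ``specialize and patch'' procedure used for $\agate_s^k$. The arithmetic along the chain $H0\to H1\to\cdots\to H(m-1)$ (recall $Y$ of $H\ell$ is identified with $X$ of $H(\ell+1)$, while $X$ of $\supera_s^{k,m}$ is $H0;X$ and $Y$ is $H(m-1);Y$) is transparent. In a mode $r=r_{\ell^\ast}$, every $H\ell$ with $\ell\ne\ell^\ast$ acts as a $\const$-gate and passes its pin value through unchanged, while $H\ell^\ast$ multiplies by $a_r$ in the $A$-direction; so a value $x$ fed in at $Y=H(m-1);Y$ emerges as $a_rx$ at $X=H0;X$, giving $\lag(a_r,A)$. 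In mode $1$ every $H\ell$ acts as $\lag(a_{r_\ell},B)$, so a value $x$ fed in at $X=H0;X$ produces the partial products $t_{\ell+1}x:=(a_{r_0}a_{r_1}\cdots a_{r_\ell})x$ on the successive connecting pins and finally $\alpha x$ at $Y=H(m-1);Y$, using $\prod_\ell a_{r_\ell}=\prod_{r=2}^s a_r=\alpha$; this is $\lag(\alpha,B)$. For $r=0$, and for any $r\in\{2,\dots,s\}$ not of the form $r_\ell$, the chain is entirely $\trivial$ (resp.\ $\const$). For each $r$ I would draw the picture over the chain $H0,\dots,H(m-1)$ labelled with these partial-product values — the analogue of Figure~\ref{fig:big-agate-b} and Figure~\ref{fig:big-agate-a} — read off for each $\ell$ a specialization morphism $\Theta_\ell$ from $D_r$ into the appropriate copy of $D_r^\ell$ (with the glued pins turned into non-leaves, cf.\ Remark~\ref{rem:joining-sub}) substituting the displayed values into the domain of $\cM_r^\ell$, form the composites $\Theta_\ell\circ\cM_r^\ell$, and finally apply Lemma~\ref{lem:patching}. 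Its compatibility hypothesis is exactly the agreement of the values on the shared connecting pins, which is manifest from the picture; and the patched $\cM_r$ respects $X$ and $Y$ because the component morphisms do. The edge cases $m=1$ (which is $\agate_s^k$ itself) and $k=1$ are handled as in Lemma~\ref{lem:agate-master}.

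The main obstacle is book-keeping rather than anything conceptual: keeping track of the partial products $t_\ell$ along the chain, writing down the morphisms $\Theta_\ell$ precisely, and verifying the compatibility conditions of Lemma~\ref{lem:patching} at each internal connecting pin. No new idea is required because distinct $A$-modes $r_\ell$ interact only through the common accumulator mode $1$: each sub-gate contributes its factor $a_{r_\ell}$ independently to its own $A$-mode and, simultaneously, to the running product in mode $1$. Thus the multilinear identity ``$\lag(a_r,A)$ in mode $r$ for all $2\le r\le s$, together with $\lag(\alpha,B)$ in mode $1$'' is obtained simply by chaining $m$ copies of the single-factor identity already established for $\agate_s^k$ in Lemma~\ref{lem:agate-master}.
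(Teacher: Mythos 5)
Your overall plan --- chaining $m$ copies of $\abilin^k_{s;\cdot,1}$, one per ``$A$-mode'', with mode~$1$ as the common accumulator, taking $S_r=\bigcup_\ell S_r^\ell$, and then specializing and patching via Lemma~\ref{lem:patching} --- is exactly the paper's approach, and your accounting of the partial products along the chain in mode~$1$ (and of the pass-through in each single $A$-mode) is correct.

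However, your treatment of the relationship between $m$ and $s-1$ has a genuine gap. First note that the hypothesis ``$s-1 \ge m$'' in the statement is evidently a typo for ``$s-1 \le m$'': the paper's pictorial description of $\cM_1$ sets $t_s = \cdots = t_m = \alpha$, which only parses when $m \ge s-1$, and the downstream application in Lemma~\ref{lem:assign-stargate} explicitly imposes $s-1 \le m$. Reading the typo'd hypothesis $m\le s-1$ literally, your fallback for $m<s-1$ is wrong: leaving the unhandled modes $r$ as $\const(\{X,Y\})$ produces $D_r = \const(\{X,Y\})$ rather than the required $D_r=\lag(a_r,A)$, so it does not establish the lemma (indeed the conclusion is simply unachievable with $m<s-1$ gates unless the extra $a_r$ all equal~$1$, which is further evidence the inequality is reversed). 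Reading the intended hypothesis $m \ge s-1$, your construction covers only the boundary case $m=s-1$: for $m>s-1$ you need to say what to do with the surplus sub-gates $Hi$, $s-1 \le i \le m-1$. The paper assigns them the trivial bilinear assignment $\abilin^k_{s;1,2}(1)$, whose domains $\lag(1,A)$ and $\lag(1,B)$ both coincide with $\const(\{X,Y\})$, so these gates pass every mode through unchanged; the corresponding $t_i$ in your mode-$1$ picture stay constant at $\alpha$ past index $s-1$. This is precisely the missing piece that lets the chain close for $m>s-1$, and once you include it the rest of your argument goes through unchanged.
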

\begin{proof}%
  We will build the assignments out of the following sub-assignments.
  For $0 \le i \le s-2$, set
  \[
    Hi \colon \abilin_{s;i+2,1}^k(a_{i+2});
  \]
  see Corollary~\ref{cor:agate-master}.
  For $s-2 < i \le m-1$, set $Hi \colon \abilin_{s;1,2}^k(1)$.
  The partition $S_1,\dots,S_s$ is the union of the partitions from these sub-assignments, as usual.

  In other words, the sub-gate $Hi$ is responsible for moving the factor $a_{i+2}$ from tensor mode $i+2$ to tensor mode $1$. 
  If we have left-over gates $Hi$, $i>s-2$ they do nothing.

  To make this rigorous we must construct morphism $\cM_0,\dots,\cM_s$ from the morphisms $\cM_0^{(i)},\dots,\cM_s^{(i)}$ for each $0 \le i \le m-1$ obtained from Corollary~\ref{cor:agate-master} with these assignments.
  Each morphism $\cM_1,\dots,\cM_s$ has a pictorial summary of the form
  \begin{center}
    \begin{tikzpicture}[
        defnode/.style={rectangle,draw,fill=lightgray,inner sep=2pt,outer sep=0pt,minimum size=10pt},
        gatelabel/.style={rectangle, inner sep=2pt, outer sep=0pt, fill=white,scale=0.6},
        subnode/.style={rounded rectangle, draw, inner sep=5pt, outer sep=0pt,minimum size=15pt},
        leaf/.style={rectangle,draw,fill=leafgreen,inner sep=2pt,outer sep=0pt,minimum size=10pt},
        scale=0.75
      ]

      \node[subnode,scale=0.6] (G0) at (0,0) {$H0 : \agate^k_s$};
      \node[subnode,scale=0.6] (G1) at (3.2,0) {$H1 : \agate^k_s$};
      \node[subnode,scale=0.6] (Gpen) at (7.8,0) {$H(m-2) : \agate^k_s$};
      \node[scale=0.8] (Gdots) at ($(G1.east)!0.5!(Gpen.west)$) {$\dots$};
      \node[subnode,scale=0.6] (Glast) at (12,0) {$H(m-1) : \agate^k_s$};

      \node[leaf,scale=0.5] (X) at ($(G0)+(-2,0)$) {$t_0 x$};
      \node[leaf,scale=0.5] (Y) at ($(Glast)+(2.2,0)$) {$t_m x$};

      \node[defnode,scale=0.5] (G01) at ($(G0)!0.5!(G1)$) {$t_1 x$};
      \node[defnode,scale=0.5] (G1-) at ($(G1.east)!0.6!(Gdots)$) {$t_2 x$};

      \node[defnode,scale=0.5] (G-2) at ($(Gdots)!0.4!(Gpen.west)$) {$t_{m-2} x$};
      \node[defnode,scale=0.5] (Gpl) at ($(Gpen)!0.5!(Glast)$) {$t_{m-1} x$};

      \draw (G0)    -- node[gatelabel, pos=0.15] {$X$} (X);
      \draw (Glast) -- node[gatelabel, pos=0.15] {$Y$} (Y);

      \draw (G0)    -- node[gatelabel, pos=0.2] {$Y$} (G01);
      \draw (G1)    -- node[gatelabel, pos=0.2] {$X$} (G01);

      \draw (G1)    -- node[gatelabel, pos=0.2] {$Y$} (G1-);
      \draw (Gpen)  -- node[gatelabel, pos=0.2] {$X$} (G-2);

      \draw (Gpen)  -- node[gatelabel, pos=0.2] {$Y$} (Gpl);
      \draw (Glast) -- node[gatelabel, pos=0.2] {$X$} (Gpl);
    \end{tikzpicture}
  \end{center}
  where $(t_0,\dots,t_m)$ is a different tuple of integers depending on which morphism $\cM_r$ is being considered.
  Specifically:---
  \begin{itemize}
    \item for $r=1$ we take $t_0=1$, $t_1=a_2$, $t_2=a_2 a_3$, and so on, up to $t_s=\cdots=t_m=a_2 a_3 \dots a_s = \alpha$; that is,
      \[
        t_i = \prod_{j=2}^{\min(i+1,s)} a_j;
      \]
    \item for $2 \le r \le s$ we take $t_0=t_1=\cdots=t_{r-2}=a_r$ and $t_{r-1}=t_r=\cdots=t_m=1$.
  \end{itemize}
  For each value of $r$, each gate $Hi$ is then ``doing what it is supposed to do'', namely acting as a gate $\lag(a_{i+2}, B)$ (if $r=1$), $\lag(a_{i+2}, A)$ (if $r=i+2$) or $\const$ otherwise.
  It is therefore routine, in the spirit of previous proofs, to compose the morphisms $\cM_r^{(i)}$ with a suitable rescaling morphism $D_r \to D_r^{(i)}$ (with all linear maps given by $v \mapsto c v$, for some $c \in \FF_p$) and then combine them using Lemma~\ref{lem:patching} to get $\cM_r$.

  Finally, $\cM_0$ is summarized similarly by the following picture.
  \begin{center}
    \begin{tikzpicture}[
        defnode/.style={rectangle,draw,fill=lightgray,inner sep=2pt,outer sep=0pt,minimum size=10pt},
        gatelabel/.style={rectangle, inner sep=2pt, outer sep=0pt, fill=white,scale=0.6},
        subnode/.style={rounded rectangle, draw, inner sep=5pt, outer sep=0pt,minimum size=15pt},
        leaf/.style={rectangle,draw,fill=leafgreen,inner sep=2pt,outer sep=0pt,minimum size=10pt},
        scale=0.75
      ]

      \node[subnode,scale=0.6] (G0) at (0,0) {$H0 : \agate^k_s$};
      \node[subnode,scale=0.6] (G1) at (3.2,0) {$H1 : \agate^k_s$};
      \node[subnode,scale=0.6] (Gpen) at (7.8,0) {$H(m-2) : \agate^k_s$};
      \node[scale=0.8] (Gdots) at ($(G1.east)!0.5!(Gpen.west)$) {$\dots$};
      \node[subnode,scale=0.6] (Glast) at (12,0) {$H(m-1) : \agate^k_s$};

      \node[leaf,scale=0.5] (X) at ($(G0)+(-2,0)$) {$x$};
      \node[leaf,scale=0.5] (Y) at ($(Glast)+(2.2,0)$) {$y$};

      \node[defnode,scale=0.5] (G01) at ($(G0)!0.5!(G1)$) {$0$};
      \node[defnode,scale=0.5] (G1-) at ($(G1.east)!0.6!(Gdots)$) {$0$};

      \node[defnode,scale=0.5] (G-2) at ($(Gdots)!0.4!(Gpen.west)$) {$0$};
      \node[defnode,scale=0.5] (Gpl) at ($(Gpen)!0.5!(Glast)$) {$0$};

      \draw (G0)    -- node[gatelabel, pos=0.15] {$X$} (X);
      \draw (Glast) -- node[gatelabel, pos=0.15] {$Y$} (Y);

      \draw (G0)    -- node[gatelabel, pos=0.2] {$Y$} (G01);
      \draw (G1)    -- node[gatelabel, pos=0.2] {$X$} (G01);

      \draw (G1)    -- node[gatelabel, pos=0.2] {$Y$} (G1-);
      \draw (Gpen)  -- node[gatelabel, pos=0.2] {$X$} (G-2);

      \draw (Gpen)  -- node[gatelabel, pos=0.2] {$Y$} (Gpl);
      \draw (Glast) -- node[gatelabel, pos=0.2] {$X$} (Gpl);
    \end{tikzpicture}
  \end{center}
\end{proof}

\subsection{The big AggreGate}%
\label{sub:big-agg}

In a similar spirit to the previous section, we already have a gate $\aggregate$ that can encode multilinear tensor identities such as
\[
  x \otimes v_2 \otimes \dots \otimes v_s -
  y \otimes v_2 \otimes \dots \otimes v_s - 
  z \otimes v_2 \otimes \dots \otimes v_s +
  w \otimes v_2 \otimes \dots \otimes v_s =0
\]
whenever $x-y-z+w=0$, but want to go further and define a gate that encodes multilinear identities with more summands: i.e., if $\sum_{i=0}^{m-1} (-1)^i x_i = 0$ then
\[
  \sum_{i=0}^{m-1} (-1)^i x_i \otimes v_2 \otimes \dots \otimes v_s = 0.
\]
We do this using a ``big $\aggregate$''.

\begin{definition}%
  \label{def:bigagg}
  Let $m \ge 4$ be an even integer and let $s \ge 1$.
  The diagram $\bigagg_s^m$ is the one shown in Figure~\ref{fig:bigagg}.
  \begin{figure}[htbp]
    \begin{center}
    \begin{tikzpicture}[
        defnode/.style={rectangle,draw,fill=lightgray,inner sep=2pt,outer sep=0pt,minimum size=10pt},
        gatelabel/.style={rectangle, inner sep=2pt, outer sep=0pt, fill=white,scale=0.4},
        subnode/.style={rounded rectangle, draw, inner sep=5pt, outer sep=0pt,minimum size=15pt},
        leaf/.style={rectangle,draw,fill=leafgreen,inner sep=2pt,outer sep=0pt,minimum size=10pt},
        scale=0.75
      ]
      \node[subnode,scale=0.6] (G0) at (0  :3.5) {$H0 \colon \agg_s$};
      \node[subnode,scale=0.6] (G1) at (50 :2.5) {$H1 \colon \agg_s$};
      \node[subnode,scale=0.6] (G2) at (130:2.5) {$H2 \colon \agg_s$};
      \node[subnode,scale=0.6] (G3) at (180:3.5) {$H3 \colon \agg_s$};
      \node[scale=0.8]      (Gdots) at (230:2.5) {$\dots$};
      \node[subnode,scale=0.6] (Gp) at (310:2.5) {$H{\scriptstyle(m-1)} \colon \agg_s$};

      \node[leaf,scale=0.5] (X0) at ($1.5*(G0)$) {$X0$};
      \node[leaf,scale=0.5] (X1) at ($1.5*(G1)$) {$X1$};
      \node[leaf,scale=0.5] (X2) at ($1.5*(G2)$) {$X2$};
      \node[leaf,scale=0.5] (X3) at ($1.5*(G3)$) {$X3$};
      \node[leaf,scale=0.5] (Xp) at ($1.5*(Gp)$) {$X{\scriptstyle(m-1)}$};

      \node[defnode,scale=0.5] (G01) at ($0.4*(G0) + 0.4*(G1)$) {};
      \node[defnode,scale=0.5] (G12) at ($0.4*(G1) + 0.4*(G2)$) {};
      \node[defnode,scale=0.5] (G23) at ($0.4*(G2) + 0.4*(G3)$) {};
      \node[defnode,scale=0.5] (G34) at ($0.4*(G3) + 0.4*(Gdots)$) {};
      \node[defnode,scale=0.5] (G45) at ($0.4*(Gdots) + 0.4*(Gp)$) {};
      \node[defnode,scale=0.5] (G50) at ($0.4*(Gp) + 0.4*(G0)$) {};

      \draw (G0)    -- node[gatelabel, pos=0.15] {$00;\triangle$} (X0);
      \draw (G1)    -- node[gatelabel, pos=0.15] {$00;\triangle$} (X1);
      \draw (G2)    -- node[gatelabel, pos=0.15] {$00;\triangle$} (X2);
      \draw (G3)    -- node[gatelabel, pos=0.15] {$00;\triangle$} (X3);
      \draw (Gp)    -- node[gatelabel, pos=0.15] {$00;\triangle$} (Xp);

      \draw (G0)    -- node[gatelabel, pos=0.15] {$11;\triangle$} (G01);
      \draw (G1)    -- node[gatelabel, pos=0.11] {$11;\triangle$} (G01);
      \draw (G1)    -- node[gatelabel, pos=0.15] {$10;\triangle$} (G12);
      \draw (G2)    -- node[gatelabel, pos=0.15] {$10;\triangle$} (G12);
      \draw (G2)    -- node[gatelabel, pos=0.11] {$11;\triangle$} (G23);
      \draw (G3)    -- node[gatelabel, pos=0.15] {$11;\triangle$} (G23);
      \draw (G3)    -- node[gatelabel, pos=0.15] {$10;\triangle$} (G34);
      \draw (Gp)    -- node[gatelabel, pos=0.15] {$11;\triangle$} (G45);
      \draw (Gp)    -- node[gatelabel, pos=0.11] {$10;\triangle$} (G50);
      \draw (G0)    -- node[gatelabel, pos=0.15] {$10;\triangle$} (G50);
    \end{tikzpicture}
  \end{center}
  \caption{The diagram $\bigagg_s^m$.}%
  \label{fig:bigagg}
  \end{figure}
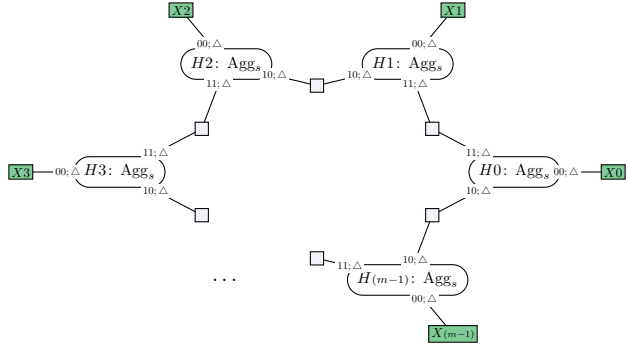
  
  In other words, we have diagrams $H\ell \colon \agg_s$ for $0 \le \ell < m$, with overlapping vertices $H\ell;11;\triangle = H(\ell+1);11;\triangle$ and $H(\ell+1);10;\triangle = H(\ell+2);10;\triangle$ for $0 \le \ell < m$ even (with $\ell+1$, $\ell+2$ taken modulo $m$).
  We also label $X\ell = H\ell;00;\triangle$.
\end{definition}

\begin{definition}%
  \label{def:bigagg-gate}
   The gate $\bigagg_s^{m}$ consists of the diagram $\bigagg_s^m$ together with $\cR = [s]$ and $\cP = \{Xi \colon 0 \le i \le m-1\}$.
\end{definition}

We identify the sub-gates $Hi$ for $0 \le i \le m-1$ with $\agg_s^{\overline{01}}$.

\begin{remark}%
  \label{rem:build-bigagg}
  We do not actually need a lemma telling us we can build a $\bigagg_s^m$ starting from an $\aggregate_s$ when $m=2^M \ge 4$ is a power of $2$, but for future reference it will be useful to describe the process.
  We would start with one $\agg_s$ and apply $\CS(\{11;\triangle\})$ to obtain
  \begin{center}
  \begin{tikzpicture}[
      defnode/.style={rectangle,draw,fill=lightgray,inner sep=2pt,outer sep=0pt,minimum size=10pt},
      gatelabel/.style={rectangle, inner sep=2pt, outer sep=0pt, fill=white,scale=0.4},
      subnode/.style={rounded rectangle, draw, inner sep=5pt, outer sep=0pt,minimum size=15pt},
      leaf/.style={rectangle,draw,fill=leafgreen,inner sep=2pt,outer sep=0pt,minimum size=10pt},
      scale=0.75
    ]
    \node[subnode,scale=0.6] (G0) at (0,0) {$H0 \colon \agg_s$};
    \node[subnode,scale=0.6] (G1) at (3,0) {$H1 \colon \agg_s$};
  
    \node[defnode,scale=0.5] (G01) at (1.5,-0.5) {};
  
    \draw (G0)    -- node[gatelabel, pos=0.15] {$11;\triangle$} (G01);
    \draw (G1)    -- node[gatelabel, pos=0.11] {$11;\triangle$} (G01);
  \end{tikzpicture}
  \end{center}
  We for $j=1,2,\dots,M-2$ we then repeatedly apply $\CS(\{H{\scriptstyle(2^j-1)};10;\triangle\})$ and relabel, to obtain a chain:
    \begin{center}
    \begin{tikzpicture}[
        defnode/.style={rectangle,draw,fill=lightgray,inner sep=2pt,outer sep=0pt,minimum size=10pt},
        gatelabel/.style={rectangle, inner sep=2pt, outer sep=0pt, fill=white,scale=0.4},
        subnode/.style={rounded rectangle, draw, inner sep=5pt, outer sep=0pt,minimum size=15pt},
        leaf/.style={rectangle,draw,fill=leafgreen,inner sep=2pt,outer sep=0pt,minimum size=10pt},
        scale=0.75
      ]
      \node[subnode,scale=0.6] (G0) at (0,0) {$H0 \colon \agg_s$};
      \node[subnode,scale=0.6] (G1) at (3,0) {$H1 \colon \agg_s$};
      \node[subnode,scale=0.6] (G2) at (6,0) {$H2 \colon \agg_s$};
      \node[subnode,scale=0.6] (G3) at (9,0) {$H3 \colon \agg_s$};
      \node[scale=0.8]      (Gdots) at (11,0) {$\dots$};
      \node[subnode,scale=0.6] (Gp) at (13,0) {$H{\scriptstyle\bigl(2^{M-1}-1\bigr)} \colon \agg_s$};

      \node[defnode,scale=0.5] (G01) at ($0.5*(G0) + 0.5*(G1)    + (0,-0.5)$) {};
      \node[defnode,scale=0.5] (G12) at ($0.5*(G1) + 0.5*(G2)    + (0,-0.5)$) {};
      \node[defnode,scale=0.5] (G23) at ($0.5*(G2) + 0.5*(G3)    + (0,-0.5)$) {};
      \node[defnode,scale=0.5] (G34) at (10.5,-0.5) {};
      \node[defnode,scale=0.5] (G45) at (11.5,-0.5) {};

      \draw (G0)    -- node[gatelabel, pos=0.15] {$11;\triangle$} (G01);
      \draw (G1)    -- node[gatelabel, pos=0.11] {$11;\triangle$} (G01);
      \draw (G1)    -- node[gatelabel, pos=0.15] {$10;\triangle$} (G12);
      \draw (G2)    -- node[gatelabel, pos=0.15] {$10;\triangle$} (G12);
      \draw (G2)    -- node[gatelabel, pos=0.11] {$11;\triangle$} (G23);
      \draw (G3)    -- node[gatelabel, pos=0.15] {$11;\triangle$} (G23);
      \draw (G3)    -- node[gatelabel, pos=0.15] {$10;\triangle$} (G34);
      \draw (Gp)    -- node[gatelabel, pos=0.15] {$11;\triangle$} (G45);
    \end{tikzpicture}
  \end{center}
  Finally we apply $\CS\bigl(\bigl\{(H0;10;\triangle), (H({\scriptstyle2^{M-1}-1});10;\triangle)\bigr\}\bigr)$ to close the loop, and relabel a final time.
\end{remark}

There is again one interesting assignment, analogous to the $\sumconst$ assignment for the gate $\aggregate$.

\begin{definition}%
  \label{def:bigsum}
  For an even integer $m \ge 2$, the linear datum $\bigsum(m)$ has index set $I=\{0,1,\dots,m-1\}$, $W_i =\FF_p$ for each $i \in I$, and
  \[
    V = \Bigl\{ (x_i)_{i \in I} \colon {\textstyle \sum_{i \in I} } (-1)^i x_i = 0  \Bigr\} \subseteq \FF_p^I
  \]
  and linear maps $\phi_i \colon V \to W_i$ the natural projections $(x_j)_{j \in I} \mapsto x_i$.
\end{definition}

\begin{lemma}%
  \label{lem:bigagg-assign}
  There is an assignment $\sumconst_1$ of $\bigagg_s^{m}$ with diagrams $D_0 = \trivial(\pins)$, $D_1 = \bigsum(m)$ and $D_r=\const(\pins)$ for $2 \le r \le s$.
  As necessary we use the obvious bijection $\{0,1,\dots,m-1\} \leftrightarrow \pins^{\bigagg_s^m}$.
\end{lemma}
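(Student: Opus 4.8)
The plan is to build the assignment $\sumconst_1$ of $\bigagg_s^m$ by the now-familiar recipe: give each component gate $Hi \cong \agg_s^{\overline{01}}$ an assignment, take the partition of $\toggles^{\bigagg_s^m}$ to be the union of the component partitions (which is legal since $\toggles^{\bigagg_s^m} = \bigcup_i \toggles^{Hi}$), and then patch together the component morphisms with Lemma~\ref{lem:patching}. On each $Hi$ I would use the $\sumconst_1$ assignment of $\agg_s^{\overline{01}}$ from Lemma~\ref{lem:sum-const-assignment} (with $P = \{00,10,11\}$), so that $D_1^{Hi} = \opsum(\{00,10,11\})$ and $D_r^{Hi} = \const(\{00,10,11\})$ for $2 \le r \le s$. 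The modes $r \ge 2$ and the mode-$0$ datum then glue almost for free; the content is entirely in gluing the $\opsum$'s into a single $\bigsum(m)$.

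For the morphism $\cM_1 \colon \bigsum(m) \to \bigagg_s^m[\nonleafs \cup \cP \cup S_1]$, given a compatible tuple $(x_i)_i$ of $\bigsum(m)$ I would put the values of the $m$ pins $Xi = Hi;00;\triangle$ equal to $x_i$, and on the ``ring'' of shared interior vertices set the $11$-wire $H\ell;11;\triangle = H(\ell+1);11;\triangle$ ($\ell$ even) to $a_\ell := \sum_{j=1}^{\ell/2}(x_{2j-1}-x_{2j})$ and the $10$-wire $H(\ell+1);10;\triangle = H(\ell+2);10;\triangle$ ($\ell$ even) to $b_\ell := x_{\ell+1} + a_\ell$. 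A direct check shows that with these values the three $\triangle$-pins of each $Hi$ satisfy the single $\opsum(\{00,10,11\})$ relation $z_{00} + z_{11} = z_{10}$, so the $\sumconst_1$ morphism of $\agg_s$ supplied by Lemma~\ref{lem:sum-const-assignment} specializes — after turning the interior $\triangle$-pins into non-leaves, exactly as in the proof of Lemma~\ref{lem:agate-assign-1} — to a morphism $\bigsum(m) \to Hi[\cdots]$ for each $i$; these agree on the shared $\triangle$-vertices by construction, so Lemma~\ref{lem:patching} assembles them into $\cM_1$. The one non-automatic consistency requirement — that the values forced on $H0;10;\triangle$ from the $H(m-1)$-side and from the $H0$-side agree — unwinds to exactly $\sum_i (-1)^i x_i = 0$, which is the defining relation of $V^{\bigsum(m)}$, so it holds for every compatible tuple.

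The remaining morphisms are routine. For $2 \le r \le s$ each $Hi$ acts as a $\const$ gate, so its three $\triangle$-pins share a value; propagating this around the ring forces one common global value, giving a morphism with domain $\const(\pins)$, built by specializing and patching the $\const$ morphisms of Figure~\ref{fig:const}. For $\cM_0$ one sets every interior $\triangle$-wire of the ring to $0$ and leaves the $m$ pins free: this is legal because an $\agg_s$ with $D_0 = \trivial(\{00,10,11\})$ can realise the pin values $(x_i, 0, 0)$, and these patch to a morphism from $\trivial(\pins)$. The main obstacle is the first of these steps: getting the parity-dependent bookkeeping of Definition~\ref{def:bigagg} straight — which shared vertex lies on which ring-wire — and checking that the partial-alternating-sum ansatz for the ring makes every component $\opsum$ relation hold \emph{and} closes the cycle precisely against the $\bigsum(m)$ relation. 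Once that computation is pinned down, everything else proceeds as in the earlier AggreGate and AGate assignments.
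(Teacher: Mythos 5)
Your proposal is correct and takes essentially the same approach as the paper: assign each $Hi$ via $\sumconst_1$ for $\agg_s^{\overline{01}}$, take the union of the component partitions, define the ring values by partial alternating sums so that every local $\opsum(\{00,10,11\})$ constraint holds, check that the loop closes precisely because of the $\bigsum(m)$ relation, and patch with Lemma~\ref{lem:patching}; the modes $r\ge 2$ and $r=0$ glue for free. Your explicit ring values differ from the paper's only by the additive constant $-x_0$ (you anchor the $H0$–$H1$ $11$-wire at $0$, the paper anchors the $H(m-1)$–$H0$ $10$-wire at $0$), which is an immaterial gauge choice since the $\opsum$ constraint is invariant under a common shift of the $10$ and $11$ values.
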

\begin{proof}%
  We assign each sub-gate $Hi$ with $\sumconst_1$ (see Lemma~\ref{lem:sum-const-assignment}).
  We set $S_r = \bigcup_{i=0}^{m-1} S_r^{(i)}$ to be the union of the sub-partitions in the usual way.
  Also let
  \[
    \cM_1^{(i)} \colon \opsum(\pins^{Hi}) \to \agg_s
  \]
  and
  \[
    \cM_r^{(i)} \colon \const(\pins^{Hi}) \to \agg_s
  \]
  for $2 \le r \le s$ be the morphisms obtained from Lemma~\ref{lem:sum-const-assignment}.

  For $2 \le r \le s$, the morphisms $\cM_r^{(i)}$ already have
  \[
    \theta^{\cM_r^{(i)}}_{11;\triangle} = \theta^{\cM_r^{(i)}}_{10;\triangle}  = \id_{\FF_p}
  \]
  for every $i \in \{0,\dots,m-1\}$, and so we can directly apply Lemma~\ref{lem:patching} (and Remark~\ref{rem:leaf-non-leaf-morph}) to obtain 
  the required morphism $\cM_r \colon \const(\cP) \to \bigagg_s^{m}$.

  For $\cM_1$, we now specify the overlap maps $\theta^{\cM_1}_{Hi;10;\triangle} \colon V^{\bigsum(m)} \to \FF_p$ and $\theta^{\cM_1}_{Hi;11;\triangle} \colon V^{\bigsum(m)} \to \FF_p$.
  We claim there is a unique choice such that:---
  \begin{enumerate}[label=(\alph*)]
    \item $\theta^{\cM_1}_{H0;10;\triangle} = 0$;
    \item for $0 \le i \le m-1$, $\theta^{\cM_1}_{Hi;10;\triangle} = \theta^{\cM_1}_{Hi;11;\triangle} + \phi^{\bigsum(m)}_i$;
    \item we have $\theta^{\cM_1}_{Hi;11;\triangle} = \theta^{\cM_1}_{H(i+1);11;\triangle}$ and
      $\theta^{\cM_1}_{H(i+1);10;\triangle} = \theta^{\cM_1}_{H(i+2);10;\triangle}$ for all
      even integers $0 \le i < m$, with additions $i+1$, $i+2$ taken modulo $m$.
  \end{enumerate}
  Indeed, (c) is necessary as $H{\scriptstyle (i+1)};10;\triangle = H{\scriptstyle (i+2)};10;\triangle$ and $Hi;11;\triangle = H{\scriptstyle (i+1)};11;\triangle$ are the same vertices in $\bigagg_s^m$.
  We can give an explicit choice of maps by
  \begin{align*}
    \theta^{\cM_1}_{Hi;10;\triangle}\bigl((x_j)_{j \in I}\bigr) &= -\sum_{0 \le j < i} (-1)^j x_j &
    \theta^{\cM_1}_{Hi;11;\triangle}\bigl((x_j)_{j \in I}\bigr) &= -\sum_{0 \le j \le i} (-1)^j x_j
  \end{align*}
  for $i$ even and
  \begin{align*}
    \theta^{\cM_1}_{Hi;11;\triangle}\bigl((x_j)_{j \in I}\bigr) &= -\sum_{0 \le j < i } (-1)^j x_j &
    \theta^{\cM_1}_{Hi;10;\triangle}\bigl((x_j)_{j \in I}\bigr) &= -\sum_{0 \le j \le i} (-1)^j x_j
  \end{align*}
  for $i$ odd.
  One can verify that (a)--(c) holds for this choice.
  Crucially, the required fact $\theta^{\cM_1}_{H(m-1);10;\triangle} = \theta^{\cM_1}_{H0;10;\triangle} = 0$
  holds because $\sum_{0 \le i \le m-1} (-1)^i x_i = 0$ on $V^{\bigsum(m)}$.

  In particular, (b) dictates that each gate $Hi$ is ``doing what it is supposed to'', in this case acting as a $\opsum$ gate.
  Formally, this means we can find specialization morphisms $\Theta_i \colon \bigsum(m) \to \opsum(\{00,10,11\})(\{10,11\} \leadsto \nonleafs)$ such that the composites $\Theta_i \circ \cM_1$ agree at vertices $Hi;10;\triangle$ and $Hi;11;\triangle$ with the maps given above, and at vertices $Xi$ with the maps $\phi_i^{\bigsum(m)}$.
  As usual Lemma~\ref{lem:patching} then gives the required $\cM_1$.

  Finally, for $\cM_0$ we can simply specialize the morphisms $\cM_0^{(i)}$ to be zero at vertices $Hi;10;\triangle$ and $Hi;11;\triangle$ (indeed, starting with a morphism $\trivial(\pins) \to \agg_s$ we can make every pin be whatever we want).
  Then these specialized morphisms are compatible and Lemma~\ref{lem:patching} again gives $\cM_0$.
\end{proof}

\subsection{StarGates and Lemma~\ref{lem:key-lem}}%
\label{sub:stargates}

We finally describe the gate that will actually prove Lemma~\ref{lem:key-lem}, and thereby Theorem~\ref{thm:main}.
We refer to the set-up from Section~\ref{sub:main-outline}, in particular the linear datum $\Phi$.

\begin{definition}%
  \label{def:stargate}
  We fix parameters $s \ge 2$ and\footnote{%
    The values $k,m$ pertain to the parameters of gates $\supera_s^{k,m}$; see Definition~\ref{def:super-agate}.
  The value $n$ denotes the number of ``spokes'' of the StarGate, which correspond to rows of the matrix in~\eqref{eq:matrix-eq}.}
   $k,m,n \ge 1$.
  Let $\Phi=(\phi_i)_{i \in I}$ be a system of linear forms, $i_0 \in I$ and $I' = I \setminus \{i_0\}$.

  The diagram $\stargate(s,k,m,n,\Phi,i_0)$ in the case $n=8$, $I'=\{i_1,\dots,i_5\}$ is shown in Figure~\ref{fig:stargate}.
  In general it has the following component sub-gates / sub-diagrams.
  \begin{itemize}
    \item For each \emph{spoke} $\ell \in \{0,1,\dots,n-1\}$ there is a \emph{$\Phi$-gate} $F \ell \colon \Phi$.
    \item For each \emph{form} $j \in I'$, there is a gate $\bag j \colon \bigagg_s^{n}$ (at the center).
    \item For each form $j \in I'$ and each spoke $\ell \in \Sigma$ there is a super AGate $A(j,\ell) \colon \supera_s^{k,m}$, and a bridge $\opbr(j,\ell) \colon \bridge_s$.
  \end{itemize}
  The connections between these gates may be summarized as follows.
  For each spoke $\ell$ and each form $j \in I'$, the vertices $F\ell;j = \opbr(j,\ell);X$, $\opbr(j,\ell);Y = A(j,\ell);X$, and $A(j,\ell);Y = \bag j;\ell$ are identified (and become non-leaves).

  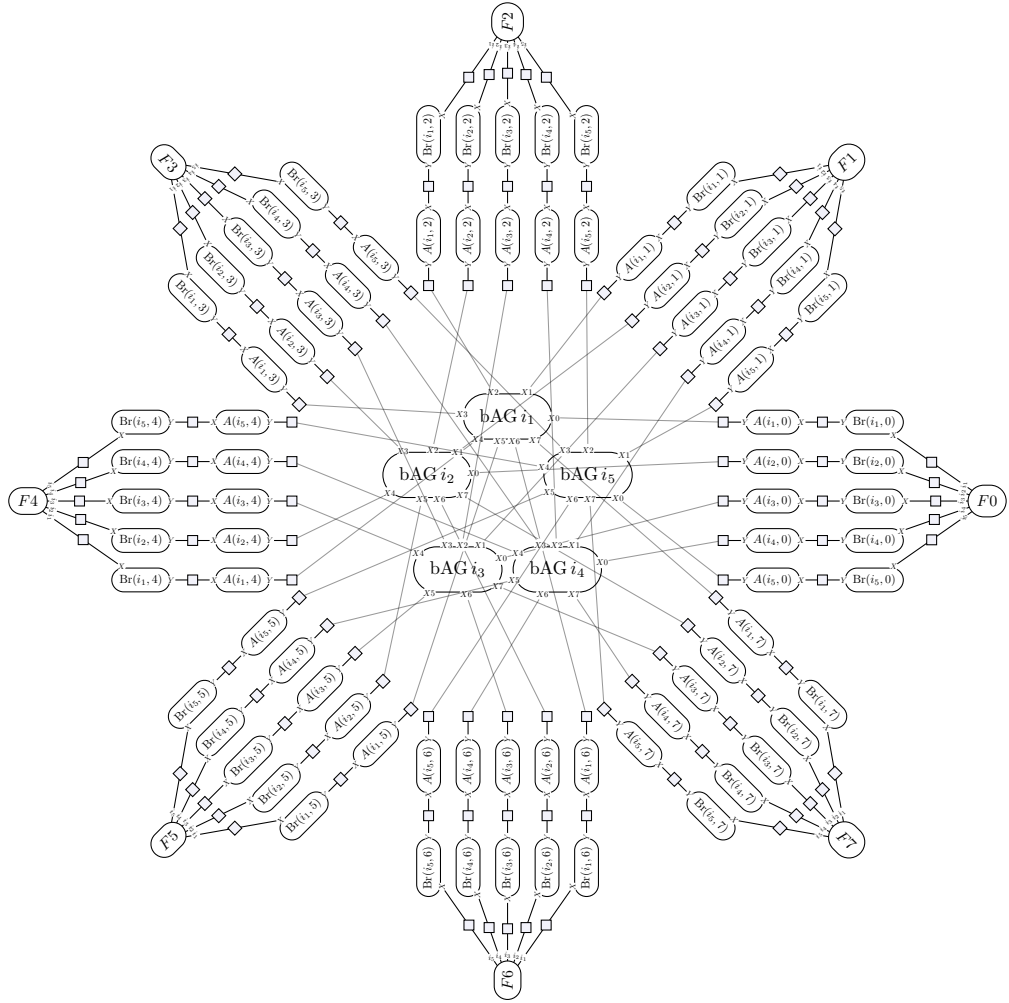
\begin{figure}[htbp]
    \begin{center}
    \begin{tikzpicture}[
        defnode/.style={rectangle,draw,fill=lightgray,inner sep=2pt,outer sep=0pt,minimum size=10pt},
        gatelabel/.style={circle, inner sep=0pt, outer sep=0pt, fill=white,scale=0.4},
        subnode/.style={rounded rectangle, draw, inner sep=5pt, outer sep=0pt,minimum size=15pt},
        leaf/.style={rectangle,draw,fill=leafgreen,inner sep=2pt,outer sep=0pt,minimum size=10pt},
        scale=0.75
      ]
      \foreach \i in {7,0,1,2} {
        \coordinate (origin) at (45*\i:6.5);
        \begin{scope}[rotate={45*\i}, shift={(origin)}]
          \node[transform shape, subnode,scale=0.8, minimum height=20pt] (F\i) at (2,0) {$F\i$};
        
          \node[transform shape, subnode,scale=0.6] (B\i1) at (0, 1.4) {$\opbr(i_1,\i)$};
          \node[transform shape, subnode,scale=0.6] (B\i2) at (0, 0.7) {$\opbr(i_2,\i)$};
          \node[transform shape, subnode,scale=0.6] (B\i3) at (0, 0.0) {$\opbr(i_3,\i)$};
          \node[transform shape, subnode,scale=0.6] (B\i4) at (0,-0.7) {$\opbr(i_4,\i)$};
          \node[transform shape, subnode,scale=0.6] (B\i5) at (0,-1.4) {$\opbr(i_5,\i)$};

          \node[transform shape, subnode,scale=0.6] (A\i1) at (-1.8, 1.4) {$A(i_1,\i)$};
          \node[transform shape, subnode,scale=0.6] (A\i2) at (-1.8, 0.7) {$A(i_2,\i)$};
          \node[transform shape, subnode,scale=0.6] (A\i3) at (-1.8, 0.0) {$A(i_3,\i)$};
          \node[transform shape, subnode,scale=0.6] (A\i4) at (-1.8,-0.7) {$A(i_4,\i)$};
          \node[transform shape, subnode,scale=0.6] (A\i5) at (-1.8,-1.4) {$A(i_5,\i)$};

          \foreach \j in {1,...,5} {
            \draw (F\i)    -- node[gatelabel, pos=0.10, rotate=90, transform shape] {$i_{\j}$} node[midway, defnode, transform shape,scale=0.5] {} node[gatelabel,pos=0.97, transform shape] {$X$} (B\i\j);
            \draw (B\i\j)    -- node[gatelabel, pos=0.03, transform shape] {$Y$} node[midway, defnode, transform shape,scale=0.5] {} node[gatelabel,pos=0.97, transform shape] {$X$} (A\i\j);
            \node[transform shape, defnode, scale=0.5] (Ap\i\j)  at ($(A\i\j.west) + (-0.4, 0) $) {};
            \draw (A\i\j) -- node[gatelabel, pos=0.03, transform shape] {$Y$} (Ap\i\j);
          }
        \end{scope}
      }
      \foreach \i in {3,4,5,6} {
        \coordinate (origin) at (45*\i:6.5);
        \begin{scope}[rotate={45*\i}, shift={(origin)}]
          \node[transform shape, rotate=180, subnode,scale=0.8] (F\i) at (2,0) {$F\i$};
        
          \node[transform shape, rotate=180, subnode,scale=0.6] (B\i1) at (0, 1.4) {$\opbr(i_1,\i)$};
          \node[transform shape, rotate=180, subnode,scale=0.6] (B\i2) at (0, 0.7) {$\opbr(i_2,\i)$};
          \node[transform shape, rotate=180, subnode,scale=0.6] (B\i3) at (0, 0.0) {$\opbr(i_3,\i)$};
          \node[transform shape, rotate=180, subnode,scale=0.6] (B\i4) at (0,-0.7) {$\opbr(i_4,\i)$};
          \node[transform shape, rotate=180, subnode,scale=0.6] (B\i5) at (0,-1.4) {$\opbr(i_5,\i)$};

          \node[transform shape, rotate=180, subnode,scale=0.6] (A\i1) at (-1.8, 1.4) {$A(i_1,\i)$};
          \node[transform shape, rotate=180, subnode,scale=0.6] (A\i2) at (-1.8, 0.7) {$A(i_2,\i)$};
          \node[transform shape, rotate=180, subnode,scale=0.6] (A\i3) at (-1.8, 0.0) {$A(i_3,\i)$};
          \node[transform shape, rotate=180, subnode,scale=0.6] (A\i4) at (-1.8,-0.7) {$A(i_4,\i)$};
          \node[transform shape, rotate=180, subnode,scale=0.6] (A\i5) at (-1.8,-1.4) {$A(i_5,\i)$};

          \foreach \j in {1,...,5} {
            \draw (F\i)    -- node[gatelabel, pos=0.10, rotate=90, transform shape] {$i_{\j}$} node[midway, defnode, transform shape,scale=0.5] {} node[gatelabel,pos=0.97, rotate=180, transform shape] {$X$} (B\i\j);
            \draw (B\i\j)    -- node[gatelabel, pos=0.03, rotate=180, transform shape] {$Y$} node[midway, defnode, transform shape,scale=0.5] {} node[gatelabel,pos=0.97, rotate=180, transform shape] {$X$} (A\i\j);
            \node[transform shape, defnode, scale=0.5] (Ap\i\j)  at ($(A\i\j.east) + (-0.4, 0)$) {};
            \draw (A\i\j) -- node[gatelabel, pos=0.03, rotate=180, transform shape] {$Y$} (Ap\i\j);
          }
        \end{scope}
      }
      \foreach \i in {1,...,5} {
        \node[subnode, scale=0.7, inner sep = 8pt, outer sep = 1pt] (bag\i) at (18+72*\i:1.5) {$\bag i_\i$};
        \foreach  \j in {0,...,7} {
          \draw[opacity=0.4] (bag\i) -- node[gatelabel, pos=0.00, opacity=1, scale=0.8] {$X{\j}$} (Ap\j\i);
          }
        }
    \end{tikzpicture}
  \end{center}
  \caption{The diagram $\stargate$, for $n=8$ and $I=\{i_0,\dots,i_5\}$.}%
  \label{fig:stargate}
  \end{figure}
\end{definition}
The term ``$\stargate$'' is motivated by Figure~\ref{fig:stargate}.

\begin{remark}%
  \label{rem:phi-gate}
  We have already assigned each sub-diagram a gate structure in Definition~\ref{def:stargate}, with the exception of the sub-diagrams $F\ell$ for $0 \le \ell \le n-1$, where there as yet is no formal notion a ``$\Phi$-gate''.
  For consistency of notation we give each of the copies of $\Phi$ a gate structure, with $\cR^{F\ell} = [s]$
  and
  \[
    \pins^{F\ell} = \begin{dcases}
      \begin{rcases}
        I &\colon \ell=0 \\
        I' &\colon \ell \ne 0.
      \end{rcases}
    \end{dcases}
  \]
\end{remark}

It is true, but not obvious, that we can build this gate starting from the configuration in Lemma~\ref{lem:key-lem}.

\begin{lemma}%
  \label{lem:build-stargate}
  Let $\Phi$, $I$, $i_0$, $I'$, $(t_j)_{j \in I'}$ and be as in the statement of Lemma~\ref{lem:key-lem}, and let $D = D((t_j)_{j \in I'})$ be the diagram from~\eqref{eq:boost-phi}.
  Suppose $s$ is an integer with $s \ge \max_{j \in I'} t_j$.

  Suppose also that $k,m \ge 1$ and $n \ge 4$ are powers of $2$.
  Then $D \entails^M_{F0;i_0 \mapsto i_0} \stargate(t, k,m,n,\Phi,i_0)$, where
  \[
    M = \lvert I' \rvert \bigl(9 + \log_2 k + \log_2 m\bigr) + \log_2 n.
  \]
\end{lemma}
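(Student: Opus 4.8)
The strategy is to build the $\stargate$ diagram by assembling its sub-diagrams from the inside out, combining the various ``build'' lemmas already established (Lemma~\ref{lem:gc-aggre}, Lemma~\ref{lem:build-agate}, Lemma~\ref{lem:build-large-agate}, Lemma~\ref{lem:build-super-agate}, the construction in Remark~\ref{rem:build-bigagg}) with the stashing machinery of Section~\ref{sec:stashing}, particularly Proposition~\ref{prop:awesome-stashing}. First I would observe that $D = D((t_j)_{j \in I'})$ already carries, stashed at each $j \in I'$, a copy of $\gc_{t_j}$, and that by Lemma~\ref{lem:gc-morph} we have $\gc_{t_j} \entails^0_{\triangle \mapsto \triangle} \gc_s$ (using $t_j \le s$); so after a preliminary round of morphism steps we may as well assume that each stashed datum is a full $\gc_s$. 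Then the plan is: for each form $j \in I'$, starting from the stashed $\gc_s$ at $j$, build a bridge $\bridge_s$ (one $\CS$ step, Definition~\ref{def:bridge-gate}), then a large AGate $\agate_s^k$ (via Lemma~\ref{lem:build-agate} and Lemma~\ref{lem:build-large-agate}, costing $\log_2 k + 4$ steps), then a super AGate $\supera_s^{k,m}$ (via Lemma~\ref{lem:build-super-agate}, costing $\log_2 m$ steps), and finally glue all the spoke-ends together into a $\bigagg_s^n$ gate (via the process in Remark~\ref{rem:build-bigagg}, costing $\log_2 n$ steps). Summing the costs for each of the $|I'|$ forms, plus one extra per form for the bridge, plus a constant slack, gives exactly $M = |I'|(9 + \log_2 k + \log_2 m) + \log_2 n$.

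The key technical point is how to use stashing to keep the already-constructed pieces intact while operating on a single vertex at a time. When we are working to build the $\supera_s^{k,m}$ piece attached to form $j$ and spoke $\ell$, every Cauchy--Schwarz step we apply duplicates the \emph{entire} current diagram, including all the sub-pieces attached to the other forms and spokes. To control this, I would proceed one form $j$ at a time: having built the $\supera$-and-bridge apparatus for $j$ on one copy, we ``stash'' the rest and continue building for the next form. More precisely, after building the bridge, AGate and super-AGate attached to form $j$ on one spoke, the resulting diagram has the required $\gc_s$-dual structure stashed; by Proposition~\ref{prop:awesome-stashing} (applied with $D$ equal to the bridge/AGate/super-AGate composite and with $Y$ the set of all copies of the relevant vertex created so far), the structure is replicated across all copies automatically, so that the $\log_2 n$ spokes get their apparatus ``for free'' once we assemble them via the $\bigagg$ construction. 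The crucial bookkeeping is that the $\CS$ steps used to \emph{create} the $n$ spokes are precisely the $\log_2 n$ steps in the $\bigagg$ construction from Remark~\ref{rem:build-bigagg}, and these duplicate the whole spoke apparatus, so each of those steps must come \emph{after} the per-form apparatus is in place.

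The main obstacle, and where I expect to spend the most care, is the correct ordering of operations so that the step count is additive rather than multiplicative, together with the careful tracking of labellings and of the function $\gamma$. A naive approach — building the $\bigagg_n$ skeleton first and then trying to grow AGates onto each of its $n$ spokes — would cost roughly $n(\log_2 k + \log_2 m)$ steps because each spoke-growth step would be duplicated by the $\bigagg$ construction. The resolution is the stashing trick above: grow the AGate/super-AGate/bridge \emph{per form} on a single representative vertex, with everything else stashed, and only \emph{then} perform the $\log_2 n$ spoke-duplicating steps, which distribute the apparatus over all spokes without re-counting. Keeping straight which copies of $F0;i_0$ remain ``respected'' throughout — so that the final $\gamma$ sends $F0;i_0 \mapsto i_0$ as claimed — requires invoking the surjectivity hypotheses checked in Section~\ref{sub:surj} (Lemma~\ref{lem:cs-implies-surj}, Corollary~\ref{cor:join-surj}) to ensure Proposition~\ref{prop:awesome-stashing} applies at each stashing step, since each stashed $\gc_s$ is surjective at $\triangle$ and each intermediate diagram remains surjective at the vertices being joined. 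The remaining verifications — that the iterated joinings produce precisely the diagram in Figure~\ref{fig:stargate}, with the stated vertex identifications and gate structures — are routine but tedious relabellings, which I would present as a sequence of $\entails$ assertions with the labelling dictionaries made explicit only where ambiguity could arise.
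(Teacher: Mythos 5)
Your overall strategy is the same as the paper's: build the per-form apparatus on a single spoke first (using stashing so that work on different forms $j$ does not compound), and only then perform the $\log_2 n$ Cauchy--Schwarz steps that create the $n$ spokes while simultaneously assembling the $\bigagg_s^n$ gates. You also correctly identify the crucial reason this ordering is needed (to keep the step count additive in $|I'|$ rather than multiplicative), and correctly invoke Proposition~\ref{prop:awesome-stashing}, Lemma~\ref{lem:gc-morph}, and the surjectivity checks from Section~\ref{sub:surj}.

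However, there is a genuine gap: your per-form apparatus is incomplete. You build ``bridge, then $\agate_s^k$, then $\supera_s^{k,m}$,'' but the paper's per-form diagram is
\[
  D_2 = \supera_s^{k,m} +_{X \leftrightarrow Y}^{,A} \bridge_s +_{Y \leftrightarrow 00;\triangle}^{,B} \aggregate_s,
\]
i.e.\ it also carries a copy of $\aggregate_s$ at the end. This is not a minor omission: the $\log_2 n$ spoke-duplicating steps in Claim~\ref{claim:claim4} perform exactly the construction of Remark~\ref{rem:build-bigagg} \emph{simultaneously} on one $\aggregate_s$ per form $j$, merging them into $\bigagg_s^n$ gates as the spokes are created. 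If the $\aggregate_s$ gates are not in place before those $\CS$ steps, there is nothing to merge, and the construction from Remark~\ref{rem:build-bigagg} cannot be applied. Your phrase ``glue all the spoke-ends together into a $\bigagg_s^n$ gate'' presupposes those gates exist. Relatedly, building the bridge in the paper actually costs $2$ steps (Claim~\ref{claim:claim1} produces $\bridge_s +_{Y \leftrightarrow \triangle} \gc_s$, keeping a spare $\gc_s$ stashed precisely so it can later become the $\aggregate_s$), and from $\gc_s$ to $\agate_s^k$ costs $5 + \log_2 k$ (two steps for Lemma~\ref{lem:gc-aggre}, not accounted for in your count), so your per-form total of $5 + \log_2 k + \log_2 m$ falls $4$ short of the required $9 + \log_2 k + \log_2 m$ --- exactly the $2+2$ steps needed for the spare $\gc_s$ and for turning it into $\aggregate_s$. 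Once the missing $\aggregate_s$ piece is added, the step count reconciles and the rest of your plan goes through.
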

The proof is covered in Section~\ref{sub:stargate-build}.

We can put a gate structure on $\stargate$ itself, albeit a rather artificial one, compatible with Remark~\ref{rem:phi-gate} in the sense that $\toggles$ is the union of the toggles of all sub-gates.
\begin{definition}
  The gate $\stargate(s, k, m, n, \Phi, i_0)$ consists of the corresponding diagram $\stargate$ with $\cR = [s]$ and $\pins = \{ (F0; i_0)\}$.
\end{definition}

The desired ``assignment'' of $\stargate$ is the following.

\begin{lemma}%
  \label{lem:assign-stargate}
  Suppose $\Phi$ is a system of linear forms with index set $I$, $i_0 \in I$, $I' = I \setminus \{i_0\}$, $s \ge 2$ is an integer and $s(\Phi,i_0) \le s-1$.

  Moreover suppose that $V^\Phi = \FF_p^d$ and $\phi_i(x_1,\dots,x_d) = a_{i,1} x_1 + \cdots + a_{i,d} x_d$ for integers $a_{i,j}$ with $\lvert a_{i,j} \rvert \le L$.
  Also suppose that $k,m \ge 1$ and $n \ge 4$ are integers such that $2L^2 < 2^{k+1}$, $s-1 \le m$, $n$ is even and
  \[
    n \ge \binom{d+s-1}{s}.
  \]
  Write $G = \stargate(s,k,m,n,\Phi,i_0)$.

  Then there exists an assignment of $G$ with $\cM_r = \trivial(\{i_0\})$ for each $r$, $0 \le r \le s$.
  Equivalently, as a diagram, $G$ has Cauchy--Schwarz complexity at most $s-1$ at $F0;i_0$.
\end{lemma}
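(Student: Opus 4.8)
# Proof proposal for Lemma~\ref{lem:assign-stargate}

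The plan is to construct the assignment by assembling sub-assignments of all the component gates of $\stargate$, following the pattern established in the proofs of Lemma~\ref{lem:agate-master} and Lemma~\ref{lem:super-assignment}. The central point is that the hypothesis $s(\Phi,i_0) \le s-1$ is equivalent, by Definition~\ref{def:true-complexity} and the discussion around~\eqref{eq:true-criterion}, to the existence of a tensor $T \in (\FF_p^d)^{\otimes s}$ with $\phi_{i_0}^{\otimes s}(T) = 1$ and $\phi_j^{\otimes s}(T) = 0$ for all $j \in I'$; equivalently, by the linear-algebra discussion in Section~\ref{sub:true-summary}, a solution $(\alpha_\ell)_{0 \le \ell < n}$ and rescaling vectors to the matrix problem~\eqref{eq:matrix-eq} generalized to degree $s$. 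Concretely, since $n \ge \binom{d+s-1}{s}$ is at least the number of degree-$s$ monomials in $d$ variables, we may choose spoke data: for each spoke $\ell$ a vector $v_\ell = (v_{\ell,1},\dots,v_{\ell,d}) \in \FF_p^d$ (which we will feed as the ``input'' to the copy $F\ell$ of $\Phi$ via the morphism $\cM_r$) and a scalar $\alpha_\ell \in \FF_p$, such that $\sum_\ell \alpha_\ell \, \phi_{i_0}(v_\ell)^{\otimes s}$ realizes the standard basis vector dual to $\phi_{i_0}^{\otimes s}$ while $\sum_\ell \alpha_\ell \, \phi_j(v_\ell)^{\otimes s} = 0$ for each $j \in I'$. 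The role of each super-AGate $A(j,\ell)$ is to turn the single tensor $\phi_j(v_\ell)^{\otimes s}$ into the multilinear expression $(\phi_j(v_\ell) e_1) \otimes \cdots$, i.e., to multiply by the various coordinates; the bridges $\opbr(j,\ell)$ do nothing (via the $\boring$ assignment of Lemma~\ref{lem:bridge-ass}); and each big AggreGate $\bag j$ collects the $n$ spoke contributions and enforces that their signed sum vanishes (via the $\sumconst_1$ assignment of Lemma~\ref{lem:bigagg-assign}, using $\bigsum(n)$).

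First I would set up the partition $\toggles^G = \bigcup_{r \in [s]} S_r$ as the union of the sub-partitions coming from: the $\sumconst_1$ assignment on each $\bag j$; the $\amulti_s^{k,m}(\dots)$ assignment on each $A(j,\ell)$ (Lemma~\ref{lem:super-assignment}), where the multiplier tuple is chosen so that the product over modes $2,\dots,s$ equals the appropriate coordinate product and mode $1$ receives the aggregated value; the $\boring$ assignment on each $\opbr(j,\ell)$; and a suitable assignment of each $\Phi$-gate $F\ell$. The hypothesis $2L^2 < 2^{k+1}$ ensures $|a| < 2^{k+1}$ for all the multipliers that arise (products of pairs of coefficients), so Lemma~\ref{lem:super-assignment} and Corollary~\ref{cor:agate-master} apply; the hypothesis $s-1 \le m$ ensures there are enough AGates in each super-AGate to move factors out of all $s-1$ non-distinguished modes; and the hypothesis $n \ge \binom{d+s-1}{s}$ (with $n$ even) ensures the degree-$s$ multilinear-algebra system has a solution over $\FF_p$, which is exactly where $s(\Phi,i_0) \le s-1$ enters.

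Next I would construct the morphisms $\cM_0, \cM_1, \dots, \cM_s$, each from $\trivial(\{i_0\}) = \const_0$-type data (a single space $\FF_p$) into the appropriate induced sub-diagram of $G$. For each $r$, the morphism is obtained by patching (Lemma~\ref{lem:patching}) the morphisms $\cM_r^{(\bullet)}$ from the sub-assignments, after composing each with a ``specialization/rescaling'' morphism $D_r \to D_r^{(\bullet)}$ that plugs in the chosen spoke vectors $v_\ell$, the scalars $\alpha_\ell$, and the coordinate values, exactly as in the proofs of Lemma~\ref{lem:agate-assign-1}, Lemma~\ref{lem:agate-master} and Lemma~\ref{lem:super-assignment}. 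The key consistency checks are: (i) at each interface $F\ell;j = \opbr(j,\ell);X$ the value on both sides agrees (it is $\phi_j(v_\ell)$ times the relevant scalar); (ii) at $\opbr(j,\ell);Y = A(j,\ell);X$ agreement is automatic since the bridge is boring; (iii) at $A(j,\ell);Y = \bag j;\ell$ the super-AGate outputs $\alpha_\ell$ times $\phi_j(v_\ell)^{\otimes s}$ flattened appropriately into mode $1$, matching what $\bag j$'s $\sumconst_1$ assignment expects; and (iv) the $\bigsum(n)$ constraint $\sum_\ell (-1)^\ell (\text{value}_\ell) = 0$ at $\bag j$ holds precisely because of the chosen solution to the degree-$s$ tensor equation for $j \in I'$, while at the pin $F0;i_0$ the corresponding signed sum equals $1$, so that the vertex $i_0$ is respected with the identity map. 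One must also track the shape $\alpha^{\cM_r}$: all pins of all sub-gates except $(F0;i_0)$ must get $\alpha = \zeroi$ in the non-zero modes, which is arranged because they are toggles of exactly one sub-gate and the sub-assignments already send them to $\zeroi$.

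The main obstacle I anticipate is bookkeeping rather than conceptual: verifying that the chosen multiplier tuples for the super-AGates $A(j,\ell)$ are mutually consistent across all modes $r = 1,\dots,s$ simultaneously, and that the specialization morphisms $\Theta$ glue correctly at the many triple-overlap vertices $F\ell;j$, $\opbr(j,\ell);Y$, $A(j,\ell);Y$. In particular, in mode $r$ with $2 \le r \le s$ the super-AGate $A(j,\ell)$ must present $\phi_j(v_\ell)$ times a fixed coordinate $e_{c(r)}$ (for some indexing of monomials), while in mode $1$ it must present the full aggregated scalar; ensuring these are compatible specializations of the single assignment $\amulti_s^{k,m}$ requires carefully matching the $\lag(a_r,A)$ / $\lag(\alpha,B)$ structure of Lemma~\ref{lem:super-assignment} to the coordinate-extraction picture. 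This is the analogue of the ``eight morphisms'' bookkeeping in Figure~\ref{fig:eight-morphisms}, scaled up; as there, I would present the argument via a sequence of pictures (one schematic per morphism $\cM_r$) annotating each sub-gate with the $\FF_p^{\otimes}$-values on its pins, and leave the routine patching to the reader. The final sentence of the lemma — that $G$ has Cauchy--Schwarz complexity at most $s-1$ at $F0;i_0$ — then follows immediately from the constructed assignment together with Proposition~\ref{prop:diag-cs-complexity}(iii), taking the partition $\leafs^G \setminus \{F0;i_0\} = S_1 \cup \cdots \cup S_s$ and morphisms $\cM_1,\dots,\cM_s$ just built.
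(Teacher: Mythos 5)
Your high-level architecture is correct and matches the paper's: assign $\sumconst_1$ to each $\bag j$, $\boring_s$ to each bridge, $\amulti_s^{k,m}(\cdots)$ to each super-AGate, build each $\cM_r$ by specializing and patching via Lemma~\ref{lem:patching}, and close with Proposition~\ref{prop:diag-cs-complexity}. However, there are two real gaps. First, you never construct the special basis $e_1,\dots,e_d$ with $\phi_{i_0}(e_\ell)=0$ for $\ell\ge 2$ (the paper's Claim~\ref{claim:basis}). This is not a cosmetic normalization: the toggles $F\ell;i_0$ for $\ell\ne 0$ belong to the $\Phi$-gates $F\ell$ themselves, not to the bridges/AGates/bigAGGs, so there is no pre-existing sub-assignment that sends them to $\zeroi$. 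Your claim that ``the sub-assignments already send them to $\zeroi$'' is false as stated; the morphism $\cM_r$ can only send $F\ell;i_0$ to $\zeroi$ if $\phi_{i_0}$ vanishes on the vector fed into $F\ell$ in mode $r$, which is exactly what the good basis (together with putting $F\ell;i_0$ in $S_r$ with $\tau^\ell_r\ne 1$) is engineered to guarantee.

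Second, your spoke data is a single vector $v_\ell$ fed to $F\ell$ ``via $\cM_r$,'' apparently the same in every mode, producing contributions of the form $\alpha_\ell\,\phi_j(v_\ell)^s$. Those diagonal terms do not span enough of $(\FF_p^d)^{\otimes s}$ over small primes (pure $s$-th powers fail to span the symmetric part once $p\le s$), so the required linear combination need not exist. The paper instead indexes the spokes by monomials $\tau^\ell=(\tau^\ell_1,\dots,\tau^\ell_s)$ and feeds a \emph{different} basis vector $e_{\tau^\ell_r}$ in each tensor mode $r$, producing $\prod_{r}\phi_j(e_{\tau^\ell_r})$ — which is exactly what Claim~\ref{claim:complexity} delivers. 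Your parenthetical remark about ``a fixed coordinate $e_{c(r)}$'' suggests you suspected this, but as written the proposal commits to a single per-spoke vector, which is not correct. Once both points are fixed — a good basis and per-mode coordinate vectors $e_{\tau^\ell_r}$, together with the integer representatives $a_r^{(j,\ell)}$ bounded by $2L^2<2^{k+1}$ — the patching argument you sketch goes through as in the paper.
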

The last two sentences are indeed equivalent by Proposition~\ref{prop:diag-cs-complexity}.
Note $\cM_0$ is rather irrelevant in this case: given $\cM_1 \colon \trivial(\{i_0\}) \to \stargate[\nonleafs \cup \pins \cup S_1]$ we can define $\cM_0 \colon \trivial(\{i_0\}) \to \stargate[\nonleafs \cup \pins]$ immediately by restriction.

We cover the proof in Section~\ref{sub:stargate-assign}.

\begin{proof}[Proof of Lemma~\ref{lem:key-lem} given Lemma~\ref{lem:build-stargate} and Lemma~\ref{lem:assign-stargate}]
  Let $k,m,n$ be powers of two which are as small as possible while obeying the conditions in Lemma~\ref{lem:assign-stargate}.
  Specifically we can take $k = O(\log (10 L))$, $m = O(s)$ and $n = O(d^s)$.

  By Lemma~\ref{lem:build-stargate} we have
  \[
    D\bigl((t_j)_{j \in I'}\bigr) \entails^M_{F0;i_0 \mapsto i_0} \stargate(s, k, m, n, \Phi, i_0)
  \]
  where
  \[
    M = \lvert I' \rvert \bigl( 9 + \log_2 k + \log_2 m \bigr) + \log_2 n = O \bigl( \lvert I \rvert (\log \log (10 L) + \log \lvert I \rvert ) \bigr)
  \]
  since $s,d \le \lvert I \rvert$ by our standing hypotheses.
  By Lemma~\ref{lem:assign-stargate} and Proposition~\ref{prop:diag-cs-complexity} we deduce
  \[
    \stargate(s, k, m, n, \Phi, i_0) \entails^0_{\triangle \mapsto F0;i_0} \gc_{s-1}.
  \]
  Combining these gives the result.
\end{proof}

\subsection{Building a StarGate}%
\label{sub:stargate-build}

In this section we prove Lemma~\ref{lem:build-stargate}.
We start with some useful observations.

\begin{claim}%
  \label{claim:claim0}
  We have:---
  \begin{enumerate}[label=(\roman*)]
    \item $\bridge_s \entails^0_{\triangle \mapsto Y} \gc_s$;
    \item $\bridge_s$ is surjective at $X$ and $Y$;
    \item $\supera_s^{k,m}$ is surjective at $X$ and $Y$;
    \item $\aggregate_s$ is surjective at $\gamma;\triangle$ for any $\gamma \in \{0,1\}^2$.
  \end{enumerate}
\end{claim}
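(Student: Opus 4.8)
The plan is to verify each of the four surjectivity claims in Claim~\ref{claim:claim0} by exhibiting, where needed, an explicit morphism and invoking the surjectivity lemmas of Section~\ref{sub:surj}, particularly Lemma~\ref{lem:surj-fact-1}, Corollary~\ref{cor:join-surj} and Lemma~\ref{lem:cs-implies-surj}. None of these should present a genuine obstacle; the whole statement is a collection of routine ``warm-up'' facts needed to later apply the stashing machinery (Proposition~\ref{prop:discard-dual} and Proposition~\ref{prop:awesome-stashing}) to the sub-diagrams appearing in $\stargate$.

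For (i), I would observe that $\bridge_s = \gc_s +_{\{s+1\}} \gc_s$, and that the right-hand copy of $\gc_s$ has Cauchy--Schwarz complexity at most $s$ at $\triangle$ (by Lemma~\ref{lem:gc-cs} applied to the identity morphism, as in the proof of Lemma~\ref{lem:gc-morph}). The vertex $Y$ is by definition $R;\triangle$, which lies in this right-hand copy and is not the matched vertex $s+1$; applying Corollary~\ref{cor:join-cs-complex} (with the roles of the two copies arranged appropriately, using that $\gc_s$ is surjective at $s+1$ by inspection) gives $s_{\cs}(\bridge_s, Y) \le s$, and hence Proposition~\ref{prop:diag-cs-complexity}(ii) supplies a diagram morphism $\gc_s(V_Y) \to \bridge_s$ with $\alpha(\triangle)=Y$ respecting $Y$, which is exactly the statement $\bridge_s \entails^0_{\triangle \mapsto Y} \gc_s$.

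For (ii), (iii), (iv) I would argue surjectivity directly. For (iv), each of the four copies of $\gc_s$ inside $\aggregate_s$ is surjective at its $\triangle$-vertex by inspection of Definition~\ref{def:gen-conv} (the map $\phi_\triangle^{\gc_s}$ is the sum map $U^{s+1} \to U$, which is surjective, and one checks the induced projection $V^{\datum(\gc_s)} \to V^D_\triangle$ is onto, e.g.\ via the right inverse $u \mapsto (u,0,\dots,0)$); the inclusion $\aggregate_s[\text{that copy}] \hookrightarrow \aggregate_s$ is a morphism whose component at $\gamma;\triangle$ is the identity, so Lemma~\ref{lem:surj-fact-1} applies. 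Alternatively, since each $\gamma;\triangle$ is a pin of $\agg_s^{\{00,01,10,11\}}$ one may quote the relevant assignment, but the direct argument is cleanest. For (ii), $\gc_s$ is surjective at $\triangle$ as just noted, and $X = L;\triangle$, $Y = R;\triangle$ each sit in one copy of $\gc_s$ inside $\bridge_s$, away from the matched vertex $s+1$, so Corollary~\ref{cor:join-surj} gives surjectivity at $X$ and $Y$. For (iii), I would induct along the chain structure: $\supera_s^{k,m}$ is built from copies of $\agate_s^k$ glued at single vertices $X \leftrightarrow Y$, and $\agate_s^k$ is in turn built by $\CS$ steps and single-vertex joinings from $\smagate_s$, which is built from $\agg_s$; at each stage Corollary~\ref{cor:join-surj} (for joinings) and Lemma~\ref{lem:surj-fact-1} applied to the morphism underlying a $\CS$ step preserve surjectivity at the relevant end vertices, with the base case being surjectivity of $\agg_s$ at $\gamma;\triangle$ from (iv). Tracking which specific vertices remain surjective through the relabellings is the only mildly fiddly part, but it is bookkeeping rather than a real difficulty.
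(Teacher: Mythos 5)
Your proof is correct in spirit, and all four parts can be made to work, but you take a noticeably different and more laborious route than the paper for parts (i), (iii), (iv). The paper's proof is very short: part (i) is a direct instance of Proposition~\ref{prop:discard-dual} applied to $\bridge_s = \gc_s +_{\{s+1\}} \gc_s$ (using that $\gc_s$ is surjective at $s+1$); part (ii) is Corollary~\ref{cor:join-surj}; and for parts (iii) and (iv) the paper applies Lemma~\ref{lem:surj-fact-1} to the $\cM_0$ morphisms \emph{already constructed} in Lemma~\ref{lem:super-assignment} and Lemma~\ref{lem:sum-const-assignment}, which have domains $\trivial(\{X,Y\})$ and $\opsum(\{0,1\}^2)$ and manifestly satisfy the hypotheses. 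Your route for (i) goes via Corollary~\ref{cor:join-cs-complex} and then back through Proposition~\ref{prop:diag-cs-complexity}(ii); this works, but Corollary~\ref{cor:join-cs-complex} is itself proved from Proposition~\ref{prop:discard-dual}, so it is a detour. Your inductive approach for (iii), tracking surjectivity through every $\CS$ step from $\gc_s$ up to $\supera_s^{k,m}$, is valid but re-derives from scratch what the $\cM_0$ morphisms already encode.

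One point of imprecision to flag in your (iv). You invoke ``the inclusion $\aggregate_s[\text{that copy}] \hookrightarrow \aggregate_s$'' as a morphism. Recall that a diagram morphism $\Theta \colon C \to D$ carries a shape $\alpha^\Theta \colon X^D \to X^C \cup \{\zeroi\}$ with the constraint $\alpha(\nonleafs^D) \subseteq \nonleafs^C$, so the non-leaves of $\aggregate_s$ lying \emph{outside} your chosen copy must be mapped into $\nonleafs^C$ (they cannot go to $\zeroi$). A morphism that does the job exists --- take $\alpha(\omega;\diamond) = \gamma;\diamond$, $\alpha(\omega;\triangle) = \gamma;\triangle$, $\alpha(\omega;r) = \gamma;r$ for \emph{all} $\omega$, with every $\theta_x = \id$, which ``collapses'' all four copies onto one --- and it has the needed properties for Lemma~\ref{lem:surj-fact-1}. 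But this is a many-to-one collapsing map, not anything naturally called an inclusion, and specifying it carefully (including checking all the commutativity conditions) is no shorter than quoting the $\cM_0$ assignment morphism as the paper does and as you mention as your ``alternative''. The direction conventions for morphisms in this framework make the word ``inclusion'' genuinely misleading here, so you should either write out the collapsing morphism explicitly or fall back on the assignment.
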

\begin{proof}[Proof of claim]%
  Part (i) follows from Proposition~\ref{prop:discard-dual} and the definition $\bridge_s = \gc_s +_{\{s+1\}} \gc_s$.
  It is clear by inspection that $\gc_s$ is surjective at $s+1$.

  Part (ii) is true by Corollary~\ref{cor:join-surj}.
  For (iii), we apply Lemma~\ref{lem:surj-fact-1} to the morphism $\cM_0 \colon \trivial(\{X,Y\}) \to \supera_s^{k,m}$ from Lemma~\ref{lem:super-assignment}.
  Similarly for (iv) we apply Lemma~\ref{lem:surj-fact-1} to the morphism $\cM_0$ from Lemma~\ref{lem:sum-const-assignment} for $P=\{0,1\}^2$ (say), noting by inspection that $\opsum(\{0,1\}^2)$ is surjective.
\end{proof}

The following claims break down the process of building a StarGate.
\begin{claim}%
  \label{claim:claim1}
  For $s \ge 1$, let $D_1 = \bridge_s +_{Y \leftrightarrow \triangle} \gc_s$.
  Then $\gc_s \entails^2_{(L;X) \mapsto \triangle} D_1$.
\end{claim}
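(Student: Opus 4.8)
The plan is to reach the four-copy chain $\bridge_s +_{\{Y\}} \bridge_s$ by two Cauchy--Schwarz steps and then collapse it back onto $D_1$ by a single morphism supplied by Proposition~\ref{prop:discard-dual}. First I would apply $\CS(\{s+1\})$ to $\gc_s$: since $\bridge_s = \gc_s +_{\{s+1\}}^{L,R} \gc_s$ by Definition~\ref{def:bridge-gate}, this gives $\gc_s \entails^1_{\gamma_1} \bridge_s$ with $\gamma_1(L;x) = (x,0)$ and $\gamma_1(R;x) = (x,1)$ for $x \in \leafs^{\gc_s} \setminus \{s+1\}$; in particular $\gamma_1(X) = (\triangle,0)$ since $X = L;\triangle$. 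Then I would apply $\CS(\{Y\})$ to $\bridge_s$, obtaining $\bridge_s \entails^1_{\gamma_2} (\bridge_s +_{\{Y\}}^{L,R} \bridge_s)$ with $\gamma_2(L;y) = (y,0)$ for $y \in \leafs^{\bridge_s} \setminus \{Y\}$, so $\gamma_2(L;X) = (X,0)$. Composing, $\gc_s \entails^2_{\gamma_{12}} (\bridge_s +_{\{Y\}} \bridge_s)$ with $\gamma_{12}(L;X) = (\triangle,0)$; note also that $X$ is a genuine leaf of $\bridge_s +_{\{Y\}} \bridge_s$ because $\bridge_s$'s first $\gc_s$-copy is joined only at $s+1$.

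The key observation is purely combinatorial: unfolding the joinings, $\bridge_s +_{\{Y\}} \bridge_s$ is just four copies of $\gc_s$ strung in a chain whose successive joined leaves are, in order, $s+1$, then $\triangle$, then $s+1$; and the diagram $D_1 +_{(R;s+1) \leftrightarrow (s+1)} \gc_s$ — obtained from $D_1 = \bridge_s +_{Y \leftrightarrow \triangle} \gc_s$ by grafting an extra $\gc_s$ onto the free leaf $(R;s+1)$ at the far end of $D_1$ — is the very same four-copy chain with the very same joins. Hence there is a strong isomorphism $\bridge_s +_{\{Y\}} \bridge_s \cong D_1 +_{(R;s+1) \leftrightarrow (s+1)} \gc_s$ under which the leaf $(L;X)$ corresponds to the leaf $(L;X)$ of the $D_1$-summand. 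I would then invoke Proposition~\ref{prop:discard-dual} with $C = D_1$, $D = \gc_s$, $i = (R;s+1)$, $j = s+1$; its surjectivity hypothesis is immediate since $\phi^{\gc_s}_{s+1}$ (indeed any $\phi^{\gc_s}_r$) is surjective — or, if one prefers, since $s_{\cs}(\gc_s, s+1) \le s$ one may quote Lemma~\ref{lem:cs-implies-surj}. This yields $(D_1 +_{(R;s+1) \leftrightarrow (s+1)} \gc_s) \entails^0_{\gamma_3} D_1$ with $\gamma_3(x) = ((L;x),0)$ for $x \in \leafs^{D_1} \setminus \{(R;s+1)\}$, so $\gamma_3$ carries $(L;X)$ of $D_1$ precisely back to the leaf of $\bridge_s +_{\{Y\}} \bridge_s$ tracked above.

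Composing the three $\entails$-statements (together with the relabelling isomorphism of the middle step, which respects every leaf) produces $\gc_s \entails^2_{\gamma} D_1$ with $\gamma(L;X) = (\triangle,0)$, i.e.\ $\gamma = \{(L;X) \mapsto \triangle\}$ in the sense of Remark~\ref{rem:conj-tracking}, as required. The only genuine work is the middle step: checking that the two joined diagrams really do have the same underlying singly connected graph of vector spaces, which is a routine but fiddly unpacking of the joining and labelling conventions of Definition~\ref{def:diagram-cs} and Convention~\ref{convention:joining-labelling}, and checking that the relabelling carries the tracked leaf $(L;X)$ to the intended place. I expect this bookkeeping — rather than anything conceptual — to be the main obstacle.
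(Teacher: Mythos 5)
Your proof is correct, and it takes a genuinely (if mildly) different route from the paper's. Both proofs open identically with $\CS(\{s+1\})$ then $\CS(\{Y\})$ to produce the four-copy chain $\bridge_s +_{\{Y\}} \bridge_s$. The paper then closes by applying Proposition~\ref{prop:awesome-stashing} to the fact $\bridge_s \entails^0_{\triangle\mapsto Y}\gc_s$ from Claim~\ref{claim:claim0}(i): the right half of the chain is treated as a stash, the left half is collapsed, and the stash is ``revealed'' as $D_1$. You instead observe directly that the four-copy chain is strongly isomorphic to $D_1 +_{(R;s+1)\leftrightarrow(s+1)} \gc_s$ — i.e.\ it is exactly $D_1$ with one extra $\gc_s$ grafted onto the free $(R;s+1)$ leaf — and then invoke Proposition~\ref{prop:discard-dual} once to strip that extra copy. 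This avoids the heavier stashing machinery entirely and reduces the whole finishing step to a single application of the more elementary discard-dual lemma plus a relabelling. The bookkeeping you flag at the end — matching the two chains label-for-label and checking that $(L;X)$ tracks correctly through the isomorphism and the discard step — is indeed the only delicate point, and your account of it is right: the four copies line up as $L;L$, $L;R$, $R;R$, $R;L$ on one side and $L;L;L$, $L;L;R$, $L;R$, $R$ on the other, with $(L;L;\triangle)$ going to $(L;L;L;\triangle)$, so the tracked leaf ends up where it should. One reason the paper may have preferred the stashing route here is that the following Claim~\ref{claim:claim2} genuinely needs Proposition~\ref{prop:awesome-stashing} (the inner entailment there has $M>0$), so using it already in Claim~\ref{claim:claim1} keeps the pattern uniform; but as a standalone argument your version is cleaner.
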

\begin{proof}[Proof of claim]%
  By $\CS(\{s+1\})$ we have $\gc_s \entails^1_{X \mapsto \triangle} \bridge_s$, as observed in Section~\ref{sub:bridge}.
  Similarly $\bridge_s \entails^1_{L;X \mapsto X} \bigl( \bridge_s +_{Y \leftrightarrow Y} \bridge_s \bigr)$ by $\CS(\{Y\})$.

  Applying Proposition~\ref{prop:awesome-stashing} to Claim~\ref{claim:claim0}(i) we deduce that
  \[
    \left( \bridge_s +_{Y \leftrightarrow Y}^{,R} \bridge_s \right) \entails^0_{R;X \mapsto R;X}
    \left( \gc_s +_{\triangle \leftrightarrow Y}^{,R} \bridge_s \right).
  \]
  We need that $\bridge_s$ is surjective at $Y$, which is Claim~\ref{claim:claim0}(ii).

  The right-hand side is the same as $D_1$ up to relabelling, so combining the $\entails$ statements gives the claim.
  A schematic is shown in Figure~\ref{fig:claim1}.
\end{proof}

\begin{figure}[htbp]
\begin{NoHyper}
\begin{center}
  \begin{tikzpicture}[
      defnode/.style={rectangle,fill=lightgray,draw,inner sep=2pt,outer sep=0pt,minimum size=10pt},
      gatelabel/.style={rectangle, inner sep=2pt, outer sep=0pt, fill=white,scale=0.6},
      subnode/.style={rounded rectangle, draw, inner sep=5pt, outer sep=0pt,minimum size=15pt},
      leaf/.style={defnode,fill=leafgreen},
      scale=0.8
    ]

    \begin{scope}[shift={(0,0)}]
      \node[subnode, scale=0.8] (V) at (0,0) {$\gc_s$};
      \node[leaf,scale=0.5] (Vp) at (-1,0) {};
      \draw (V.west) -- node[pos=0.0,gatelabel] {$\triangle$} (Vp);
    \end{scope}
    \node at (1,0) {$\leadsto$};
    \node[scale=0.5] at (1,0.5) {$\CS(\{s+1\})$};
    \begin{scope}[shift={(2.7,0)}]
      \node[subnode, scale=0.8] (V1) at (0,0) {$\gc_s$};
      \node[leaf,scale=0.5] (Vp) at (-1,0) {};
      \node[subnode, scale=0.8] (V2) at (3,0) {$\gc_s$};
      \draw (V1.west) -- node[pos=0.0,gatelabel] {$\triangle$} (Vp);
      \draw (V1.east) -- node[pos=0.13,gatelabel] {$s+1$} node[midway,defnode,scale=0.6] {} node[pos=0.87,gatelabel] {$s+1$} (V2.west);
    \end{scope}
    \node at (1,-2) {$\leadsto$};
    \node[scale=0.5] at (1,-1.5) {$\CS(\{Y\})$};
    \begin{scope}[shift={(2.7,-2)}]
      \node[subnode, scale=0.8] (V1) at (0,0) {$\gc_s$};
      \node[leaf,scale=0.5] (Vp) at (-1,0) {};
      \node[subnode, scale=0.8] (V2) at (3,0) {$\gc_s$};
      \node[subnode, scale=0.8] (V3) at (6,0) {$\gc_s$};
      \node[subnode, scale=0.8] (V4) at (9,0) {$\gc_s$};
      \draw (V1.west) -- node[pos=0.0,gatelabel] {$\triangle$} (Vp);
      \draw (V1.east) -- node[pos=0.13,gatelabel] {$s+1$} node[midway,defnode,scale=0.6] {} node[pos=0.87,gatelabel] {$s+1$} (V2.west);
      \draw (V2.east) -- node[pos=0.03,gatelabel] {$\triangle$} node[midway,defnode,scale=0.6] {} node[pos=0.97,gatelabel] {$\triangle$} (V3.west);
      \draw (V3.east) -- node[pos=0.13,gatelabel] {$s+1$} node[midway,defnode,scale=0.6] {} node[pos=0.87,gatelabel] {$s+1$} (V4.west);
    \end{scope}
    \node at (1,-4) {$\leadsto$};
    \node[scale=0.5,align=center] at (1,-3.4) {Prop~\ref{prop:discard-dual}+Prop~\ref{prop:awesome-stashing}\\+Claim~\ref{claim:claim0}(i)};
    \begin{scope}[shift={(2.7,-4)}]
      \node[subnode, scale=0.8] (V1) at (0,0) {$\gc_s$};
      \node[leaf,scale=0.5] (Vp) at (-1,0) {};
      \node[subnode, scale=0.8] (V2) at (3,0) {$\gc_s$};
      \node[subnode, scale=0.8] (V3) at (6,0) {$\gc_s$};
      \draw (V1.west) -- node[pos=0.0,gatelabel] {$\triangle$} (Vp);
      \draw (V1.east) -- node[pos=0.13,gatelabel] {$s+1$} node[midway,defnode,scale=0.6] {} node[pos=0.87,gatelabel] {$s+1$} (V2.west);
      \draw (V2.east) -- node[pos=0.03,gatelabel] {$\triangle$} node[midway,defnode,scale=0.6] {} node[pos=0.97,gatelabel] {$\triangle$} (V3.west);
    \end{scope}
  \end{tikzpicture}
\end{center}
\end{NoHyper}
\caption{A schematic of the proof of Claim~\ref{claim:claim1}.}%
\label{fig:claim1}
\end{figure}

\begin{claim}%
  \label{claim:claim2}
  Define
  \[
    D_2 = \supera_s^{k,m} +_{X \leftrightarrow Y}^{,A} \bridge_s +_{Y \leftrightarrow 00;\triangle}^{,B} \aggregate_s .
  \]
  Then we have $\gc_s \entails_{A;X \mapsto \triangle}^{9+\log_2 k + \log_2 m} D_2$.
  Also, $D_2$ is surjective at $A;X$.
\end{claim}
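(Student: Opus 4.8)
The plan is to build $D_2$ in two stages: first construct a fully-assembled copy of $\supera_s^{k,m}$ out of $\gc_s$, and then attach the bridge $\bridge_s$ at one of its pins and the AggreGate $\aggregate_s$ at the other, using the stashing machinery of Proposition~\ref{prop:awesome-stashing} exactly in the spirit of Claim~\ref{claim:claim1}. Concretely, chaining Lemma~\ref{lem:gc-aggre}, Lemma~\ref{lem:build-agate}, Lemma~\ref{lem:build-large-agate} and Lemma~\ref{lem:build-super-agate} (with $k$ and $m$ the powers of two implicit in the statement) gives $\gc_s \entails^{5+\log_2 k+\log_2 m}_{\gamma_0} \supera_s^{k,m}$, where, tracking the $\gamma$'s through each doubling, both pins $X$ and $Y$ of $\supera_s^{k,m}$ are recorded as translates of the single leaf $\triangle$ of $\gc_s$. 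Separately $\gc_s\entails^1\bridge_s$ by $\CS(\{s+1\})$ and $\gc_s\entails^2\aggregate_s$ by Lemma~\ref{lem:gc-aggre}, so the remaining budget of four Cauchy--Schwarz steps is enough to produce the two gadgets and perform the two attachments.

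For each attachment I would follow the Claim~\ref{claim:claim1} template: apply a $\CS$ step to the relevant pin of a finished $\supera_s^{k,m}$ to form the self-joining, regard one copy as ``stashed'', and then invoke Proposition~\ref{prop:awesome-stashing} on the already-established facts ($\gc_s\entails^1\bridge_s$, respectively $\gc_s\entails^2\aggregate_s$, or Claim~\ref{claim:claim0}(i)) to collapse the other copy onto the gadget. The surjectivity hypotheses required to apply Proposition~\ref{prop:awesome-stashing} are supplied by Claim~\ref{claim:claim0}: $\supera_s^{k,m}$ is surjective at $X$ and $Y$, $\bridge_s$ is surjective at $X$ and $Y$, and $\aggregate_s$ is surjective at $00;\triangle$. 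Composing all of the $\morph$ and $\CS$ steps, and propagating the $\gamma$'s (including the conjugation bits), the free leaf of the attached bridge --- which carries the top-level label $A$ and is the vertex $A;X$ in the final labelling --- stays tracked back to $\triangle$, yielding
\[
  \gc_s \entails^{9+\log_2 k+\log_2 m}_{A;X\mapsto\triangle} D_2
\]
(weakening a slightly smaller true step count via the ``discarding information'' clause of Definition~\ref{def:logic-notation-revenge} if necessary).

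The surjectivity of $D_2$ at $A;X$ then follows purely formally from Corollary~\ref{cor:join-surj}. Writing $D_2$ as $\bridge_s$ joined at its $Y$ leaf to $\supera_s^{k,m}$, which is in turn joined at its $Y$ leaf to $\aggregate_s$ at the leaf $00;\triangle$, the vertex $A;X$ is the $X$ leaf of the bridge, and neither joining touches it. Applying Corollary~\ref{cor:join-surj} to the first joining (using Claim~\ref{claim:claim0}(ii) and (iii)) gives surjectivity at $A;X$ of the partial diagram, and applying it again to the second joining (using Claim~\ref{claim:claim0}(iv), with $A;X$ now playing the role of the vertex $x$) propagates this to all of $D_2$.

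The step I expect to be the genuine nuisance is not any piece of mathematics but the label/tracking bookkeeping around the two attachments. Because the doubling constructions of Lemmas~\ref{lem:build-large-agate} and~\ref{lem:build-super-agate} fold the two pins of each intermediate gate onto a common vertex of the preceding stage, the $\gamma_0$ above records \emph{both} pins $X$, $Y$ of $\supera_s^{k,m}$ as translates of the same leaf $\triangle$ of $\gc_s$; consequently one cannot simply stash a single gadget on the initial $\gc_s$ and let the construction carry it along (it would be revealed at \emph{both} pins), and the attachments must be performed on an already-completed $\supera_s^{k,m}$, where $X$ and $Y$ are distinct vertices. Getting the nested joining labels, the prefixes $R_r$ introduced by Proposition~\ref{prop:awesome-stashing}, and the conjugation bits to line up through these operations is the fiddly part, and --- as with the proof of Claim~\ref{claim:claim1} --- the cleanest presentation is a schematic in the style of Figure~\ref{fig:claim1}.
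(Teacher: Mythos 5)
Your concluding paragraph identifies the key fact (the doubling constructions send both pins of $\supera_s^{k,m}$ back to the single vertex $\triangle$ of $\gc_s$) but then draws the wrong conclusion from it. You say one cannot stash a gadget on the initial $\gc_s$ because the stash ``would be revealed at both pins''; in fact, revealing at both pins is exactly the mechanism the paper's proof uses, and it is a feature, not an obstruction. The route is: first build $D_1 = \gc_s +^{,R}_{\triangle \leftrightarrow Y} \bridge_s$ in two CS steps (this is exactly Claim~\ref{claim:claim1}), then apply Proposition~\ref{prop:awesome-stashing} to $\gc_s \entails^{M}_{X, Y \mapsto \triangle} \supera_s^{k,m}$ (where $M = 5 + \log_2 k + \log_2 m$) with the $\bridge_s$ stashed, which produces
\[
  C_0 = \supera_s^{k,m} +^{,A}_{X \leftrightarrow Y} \bridge_s +^{,B}_{Y \leftrightarrow Y} \bridge_s,
\]
i.e.\ a $\bridge_s$ attached at \emph{both} pins, as you expected. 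The point you miss is that a surplus $\bridge_s$ at the $Y$ pin is essentially free to upgrade to an $\aggregate_s$: since $\bridge_s \entails^0_{\triangle \mapsto Y} \gc_s$ (Claim~\ref{claim:claim0}(i)) and $\gc_s \entails^2 \aggregate_s$ (Lemma~\ref{lem:gc-aggre}), one more application of Proposition~\ref{prop:awesome-stashing} with $C_1 = \supera_s^{k,m} +^{,A}_{X \leftrightarrow Y}\bridge_s$ now regarded as the stashed part converts $C_0$ into $D_2$ in $2$ further CS steps. The total is $2 + M + 2 = 9 + \log_2 k + \log_2 m$, and the leaf $A;X$ is tracked all the way back to $\triangle$.

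By contrast, the alternative you sketch --- finish building $\supera_s^{k,m}$ first, then attach the gadgets at the distinct vertices $X$ and $Y$ by fresh CS steps --- does not obviously stay inside the stated budget. Producing a stashed $\gc_s$ at each of $X$ and $Y$ (via Lemma~\ref{lem:cs-dual}) already costs two CS steps and requires knowing $s_{\cs}(\supera_s^{k,m}, X), s_{\cs}(\supera_s^{k,m}, Y) \le s$, which is not established in the paper; then turning one stashed $\gc_s$ into a $\bridge_s$ costs one more step and the other into an $\aggregate_s$ two more, for $M + 5$ in total. More fundamentally, the step ``collapse the other copy onto the gadget'' has a type mismatch as you describe it: after $\CS(\{X\})$ the two copies are copies of $\supera_s^{k,m}$, not of $\gc_s$, so the facts $\gc_s \entails^1 \bridge_s$ and $\gc_s \entails^2 \aggregate_s$ do not give anything to which Proposition~\ref{prop:awesome-stashing} directly applies in that position. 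The surjectivity half of your argument is correct and essentially matches the paper's.
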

\begin{proof}[Proof of claim]%
  Set $M = 5 + \log_2 k + \log_2 m$.
  Combining Lemma~\ref{lem:gc-aggre}, Lemma~\ref{lem:build-agate}, Lemma~\ref{lem:build-large-agate} and Lemma~\ref{lem:build-super-agate}, we have $\gc_s \entails^{M}_{X,Y \mapsto \triangle} \supera_s^{k,m}$.
  Now by Proposition~\ref{prop:awesome-stashing}, we deduce
  \[
    \left( \gc_s +^{,R}_{\triangle \leftrightarrow Y} \bridge_s \right) \entails^{M}_{A;X \mapsto R;X} \left( \supera_s^{k,m} +^{,A}_{X \leftrightarrow Y} \bridge_s +^{,B}_{Y \leftrightarrow Y} \bridge_s \right).
  \]
  Again we use Claim~\ref{claim:claim0}(ii).
  We temporarily write $C_0$ for the diagram on the right-hand side.

  Also write $C_1 = \supera_s^{k,m} +_{X \leftrightarrow Y}^{,A} \bridge_s$.
  By Claim~\ref{claim:claim0}(i) and Lemma~\ref{lem:gc-aggre} we have
  \[
    \bridge_s \entails^0_{\triangle \mapsto Y} \gc_s \entails^{2}_{00;\triangle \mapsto \triangle} \aggregate_s
  \]
  and so by Proposition~\ref{prop:awesome-stashing},
  \[
    \left( \bridge_s +_{Y \leftrightarrow Y}^{,R} \, C_1 \right) \entails^{2}_{R;A;X \mapsto R;A;X} \left( \aggregate_s +_{00;\triangle \leftrightarrow Y}^{,R} \, C_1 \right).
  \]
  We need to check that $C_1$ is surjective at $Y$, which holds by Claim~\ref{claim:claim0}(iii), Claim~\ref{claim:claim0}(ii) and Corollary~\ref{cor:join-surj}.

  Up to relabelling, the right-hand side is $D_2$ and the left-hand side is $C_0$.
  Furthermore, $\gc_s +_{\triangle \leftrightarrow Y}^{,R} \bridge_s$ is the same up to relabelling as the diagram $D_1$ in Claim~\ref{claim:claim1}.
  Recalling Claim~\ref{claim:claim1}, we have shown $\gc_s \entails D_1 \entails C_0 \entails D_2$,
  which proves the first statement.

  Again we give a schematic, in Figure~\ref{fig:claim2}.
  \begin{figure}[htbp]
  \begin{NoHyper}
  \begin{center}
    \begin{tikzpicture}[
        defnode/.style={rectangle,fill=lightgray,draw,inner sep=2pt,outer sep=0pt,minimum size=10pt},
        gatelabel/.style={rectangle, inner sep=2pt, outer sep=0pt, fill=white,scale=0.6},
        subnode/.style={rounded rectangle, draw, inner sep=5pt, outer sep=0pt,minimum size=15pt},
        leaf/.style={defnode,fill=leafgreen},
        scale=0.8
      ]

      \begin{scope}[shift={(0,0)}]
        \node[subnode, scale=0.8] (V) at (0,0) {$\gc_s$};
        \node[leaf,scale=0.5] (Vp) at (-1,0) {};
        \draw (V.west) -- node[pos=0.0,gatelabel] {$\triangle$} (Vp);
      \end{scope}
      \node at (1,0) {$\leadsto$};
      \node[scale=0.5] at (1,0.5) {Claim~\ref{claim:claim1}};
      \begin{scope}[shift={(2.7,0)}]
        \node[subnode, scale=0.8] (V1) at (0,0) {$\gc_s$};
        \node[leaf,scale=0.5] (Vp) at (-1,0) {};
        \node[subnode, scale=0.8] (V2) at (3,0) {$\gc_s$};
        \node[subnode, scale=0.8] (V3) at (6,0) {$\gc_s$};
        \draw (V1.west) -- node[pos=0.0,gatelabel] {$\triangle$} (Vp);
        \draw (V1.east) -- node[pos=0.13,gatelabel] {$s+1$} node[midway,defnode,scale=0.6] {} node[pos=0.87,gatelabel] {$s+1$} (V2.west);
        \draw (V2.east) -- node[pos=0.03,gatelabel] {$\triangle$} node[midway,defnode,scale=0.6] {} node[pos=0.97,gatelabel] {$\triangle$} (V3.west);
      \end{scope}
      \node at (0,-2) {$\leadsto$};
      \node[scale=0.5,align=center] at (0,-1.5) {Lem~\ref{lem:build-agate}+Lem~\ref{lem:build-large-agate}\\+Lem~\ref{lem:build-super-agate}+Prop~\ref{prop:awesome-stashing}};
      \begin{scope}[shift={(1.7,-2)}]
        \node[subnode, scale=0.8] (V1) at (0,0) {$\gc_s$};
        \node[leaf,scale=0.5] (Vp) at (-1,0) {};
        \node[subnode, scale=0.8] (V2) at (3,0) {$\gc_s$};
        \node[subnode, scale=0.8] (V3) at (6,0) {$\supera_s^{k,m}$};
        \node[subnode, scale=0.8] (V4) at (9,0) {$\gc_s$};
        \node[subnode, scale=0.8] (V5) at (12,0) {$\gc_s$};
        \draw (V1.west) -- node[pos=0.0,gatelabel] {$\triangle$} (Vp);
        \draw (V1.east) -- node[pos=0.13,gatelabel] {$s+1$} node[midway,defnode,scale=0.6] {} node[pos=0.87,gatelabel] {$s+1$} (V2.west);
        \draw (V2.east) -- node[pos=0.03,gatelabel] {$\triangle$} node[midway,defnode,scale=0.6] {} node[pos=0.97,gatelabel] {$X$} (V3.west);
        \draw (V3.east) -- node[pos=0.03,gatelabel] {$Y$} node[midway,defnode,scale=0.6] {} node[pos=0.97,gatelabel] {$\triangle$} (V4.west);
        \draw (V4.east) -- node[pos=0.13,gatelabel] {$s+1$} node[midway,defnode,scale=0.6] {} node[pos=0.87,gatelabel] {$s+1$} (V5.west);
      \end{scope}
      \node at (0,-4) {$\leadsto$};
      \node[scale=0.5,align=center] at (0,-3.5) {Prop~\ref{prop:awesome-stashing}+\\Claim~\ref{claim:claim0}(i)+Lem~\ref{lem:gc-aggre}};
      \begin{scope}[shift={(1.7,-4)}]
        \node[subnode, scale=0.8] (V1) at (0,0) {$\gc_s$};
        \node[leaf,scale=0.5] (Vp) at (-1,0) {};
        \node[subnode, scale=0.8] (V2) at (3,0) {$\gc_s$};
        \node[subnode, scale=0.8] (V3) at (6,0) {$\supera_s^{k,m}$};
        \node[subnode, scale=0.8] (V4) at (9.8,0) {$\agg_s$};
        \draw (V1.west) -- node[pos=0.0,gatelabel] {$\triangle$} (Vp);
        \draw (V1.east) -- node[pos=0.13,gatelabel] {$s+1$} node[midway,defnode,scale=0.6] {} node[pos=0.87,gatelabel] {$s+1$} (V2.west);
        \draw (V2.east) -- node[pos=0.03,gatelabel] {$\triangle$} node[midway,defnode,scale=0.6] {} node[pos=0.97,gatelabel] {$X$} (V3.west);
        \draw (V3.east) -- node[pos=0.03,gatelabel] {$Y$} node[midway,defnode,scale=0.6] {} node[pos=0.87,gatelabel] {$00;\triangle$} (V4.west);
      \end{scope}
    \end{tikzpicture}
  \end{center}
  \end{NoHyper}
  \caption{A schematic of the proof of Claim~\ref{claim:claim2}.}%
  \label{fig:claim2}
  \end{figure}
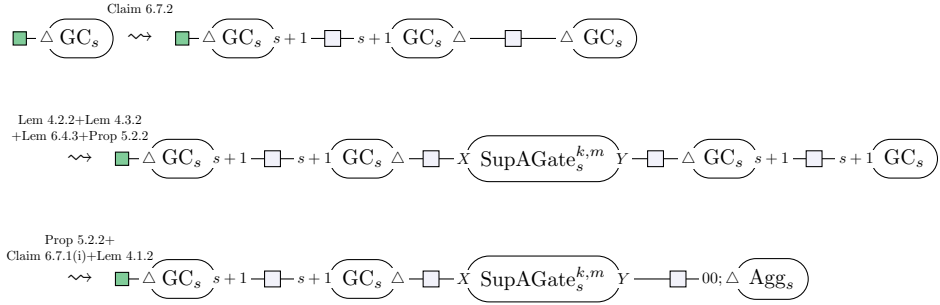

  For the surjectivity statement, we combine Claim~\ref{claim:claim0}(i,iii,iv) with repeated applications of Corollary~\ref{cor:join-surj}.
\end{proof}

Recall the diagram $D=D((t_j)_{j \in I'})$ from~\eqref{eq:boost-phi}.
Our next step is to apply Claim~\ref{claim:claim2} a total of $|I'|$ times, to replace each copy $\gc_{t_j}$ in $D$, one at a time, with a copy of the diagram $D_2$ from Claim~\ref{claim:claim2}.

\begin{claim}%
  \label{claim:claim3}
  Let
  \[
    D_3 = \Phi \bigplus_{j \in I'} {}_{j \leftrightarrow A;X}^{R_j} D_2.
  \]
  Then for $M = 9+\log_2 k + \log_2 m$, we have
  $
    D \entails_{i_0 \mapsto i_0}^{|I'|M} D_3
  $.
\end{claim}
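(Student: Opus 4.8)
The plan is to replace the sub-diagrams $\gc_{t_j}$ appearing in $D$ by copies of the diagram $D_2$ from Claim~\ref{claim:claim2}, processing the forms $j \in I'$ one at a time via the stashing principle Proposition~\ref{prop:awesome-stashing}.

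First I would upgrade Claim~\ref{claim:claim2} from $\gc_s$ to each of the smaller $\gc_{t_j}$. Since we are in the setting of Lemma~\ref{lem:build-stargate}, where $s \ge \max_{j \in I'} t_j$, Lemma~\ref{lem:gc-morph} gives $\gc_{t_j} \entails^0_{\triangle \mapsto \triangle} \gc_s$, and composing with $\gc_s \entails^{M}_{A;X \mapsto \triangle} D_2$ from Claim~\ref{claim:claim2} (with $M = 9 + \log_2 k + \log_2 m$) yields
\[
  \gc_{t_j} \entails^{M}_{A;X \mapsto \triangle} D_2 \qquad \text{for every } j \in I'.
\]

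Next I would set up an induction along $I' = \{j_1,\dots,j_{\lvert I' \rvert}\}$, defining a chain $D = E_0, E_1, \dots, E_{\lvert I' \rvert} = D_3$ in which $E_\ell$ is obtained from $E_{\ell-1}$ by deleting the sub-diagram $R_{j_\ell} \colon \gc_{t_{j_\ell}}$ (attached at the leaf $j_\ell$ of $\Phi$) and instead attaching $R_{j_\ell} \colon D_2$ along its leaf $A;X$ to the leaf $j_\ell$ of $\Phi$. It then suffices to show $E_{\ell-1} \entails^{M}_{i_0 \mapsto i_0} E_\ell$ for each $\ell$ and to compose the $\lvert I' \rvert$ resulting statements, which accounts for the claimed $\lvert I' \rvert M$ Cauchy--Schwarz steps. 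For the inductive step, write $\wt D_\ell$ for ``the rest of $E_{\ell-1}$'', i.e.\ $E_{\ell-1}$ with the copy $R_{j_\ell} \colon \gc_{t_{j_\ell}}$ removed; this is a copy of $\Phi$ with copies of $D_2$ (for the already-processed forms) and $\gc_{t_{j_m}}$ (for the not-yet-processed forms) attached, and it still carries $j_\ell$ and $i_0$ among its leaves. Up to relabelling, $E_{\ell-1} = \gc_{t_{j_\ell}} +_{\triangle \leftrightarrow j_\ell} \wt D_\ell$, with the $\gc$-copy as the ``main part'' and $\wt D_\ell$ ``stashed'' (as in the footnote to the proof of Lemma~\ref{lem:cs-dual}). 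I would then apply Proposition~\ref{prop:awesome-stashing} with $C_1 = \gc_{t_{j_\ell}}$, $C_2 = D_2$, stashed diagram $\wt D_\ell$, $i = \triangle$, $j = j_\ell$ and $Y = \{A;X\}$ to the displayed statement above; this produces $E_{\ell-1} \entails^{M}_{\gamma_\ell} E_\ell$, and since $i_0 \in \leafs^{\wt D_\ell} \setminus \{j_\ell\}$ the description of $\gamma_\ell$ in Proposition~\ref{prop:awesome-stashing} tracks the leaf $i_0$ of $E_\ell$ back to the leaf $i_0$ of $E_{\ell-1}$, as needed.

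The one genuine hypothesis to discharge is the surjectivity of $\wt D_\ell$ at $j_\ell$ required by Proposition~\ref{prop:awesome-stashing}: this holds because $\phi_{j_\ell}$ is a non-zero linear form (the forms of $\Phi$ being pairwise independent), so $\Phi$ is surjective at $j_\ell$, while each $\gc_t$ is surjective at $\triangle$ by inspection and $D_2$ is surjective at $A;X$ by the last sentence of Claim~\ref{claim:claim2}; repeated application of Corollary~\ref{cor:join-surj} then gives the result. Beyond this the work is entirely bookkeeping --- correctly decomposing $E_{\ell-1}$ as a single joining $\gc_{t_{j_\ell}} +_{\triangle \leftrightarrow j_\ell} \wt D_\ell$ with the right choice of main and stashed halves, and checking that the index $i_0$ (disjoint from every $j_\ell$) survives the relabellings --- so I expect the decomposition/relabelling step rather than any mathematical point to be the main obstacle. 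No new diagram or morphism needs to be built by hand.
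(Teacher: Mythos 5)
Your proposal is correct and follows essentially the same route as the paper: upgrade Claim~\ref{claim:claim2} via Lemma~\ref{lem:gc-morph} to $\gc_{t_j} \entails^M D_2$, then replace the $\gc_{t_j}$ copies one at a time by Proposition~\ref{prop:awesome-stashing}, treating the rest of the diagram as stashed. One small point in your favour: you explicitly discharge the surjectivity hypothesis of Proposition~\ref{prop:awesome-stashing} via Claim~\ref{claim:claim0} and Corollary~\ref{cor:join-surj}, whereas the paper's proof of this particular claim silently omits that check (though it carries out the analogous verification elsewhere, e.g.\ in the proof of Corollary~\ref{cor:key-cor}).
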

We show this pictorially, in Figure~\ref{fig:claim3}, in the special case $I=\{i_0,i_1,\dots,i_5\}$.
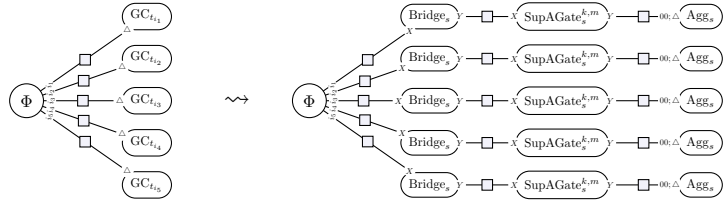
\begin{figure}[htbp]
\begin{NoHyper}
\begin{center}
  \begin{tikzpicture}[
      defnode/.style={rectangle,fill=lightgray,draw,inner sep=2pt,outer sep=0pt,minimum size=10pt},
      gatelabel/.style={circle, inner sep=1pt, outer sep=0pt, fill=white,scale=0.4},
      subnode/.style={rounded rectangle, draw, inner sep=5pt, outer sep=0pt,minimum size=15pt},
      leaf/.style={defnode,fill=leafgreen},
      scale=0.8
    ]
    \begin{scope}[shift={(0,0)}]
      \node[transform shape, subnode,scale=0.8, minimum height=20pt] (F) at (-2,0) {$\Phi$};

      \node[transform shape, subnode,scale=0.6] (B1) at (0, 1.4) {$\gc_{t_{i_1}}$};
      \node[transform shape, subnode,scale=0.6] (B2) at (0, 0.7) {$\gc_{t_{i_2}}$};
      \node[transform shape, subnode,scale=0.6] (B3) at (0, 0.0) {$\gc_{t_{i_3}}$};
      \node[transform shape, subnode,scale=0.6] (B4) at (0,-0.7) {$\gc_{t_{i_4}}$};
      \node[transform shape, subnode,scale=0.6] (B5) at (0,-1.4) {$\gc_{t_{i_5}}$};
      \foreach \j in {1,...,5} {
        \draw (F)    -- node[gatelabel, pos=0.10, inner sep=0pt, rotate=90, transform shape] {$i_{\j}$} node[midway, defnode, transform shape,scale=0.5] {} node[gatelabel,pos=0.97, transform shape] {$\triangle$} (B\j);
      }
    \end{scope}
    \node at (1.5,0) {$\leadsto$};
    \begin{scope}[shift={(4.7,0)}]
      \node[transform shape, subnode,scale=0.8, minimum height=20pt] (F) at (-2,0) {$\Phi$};

      \node[transform shape, subnode,scale=0.6] (B1) at (0, 1.4) {$\bridge_s$};
      \node[transform shape, subnode,scale=0.6] (B2) at (0, 0.7) {$\bridge_s$};
      \node[transform shape, subnode,scale=0.6] (B3) at (0, 0.0) {$\bridge_s$};
      \node[transform shape, subnode,scale=0.6] (B4) at (0,-0.7) {$\bridge_s$};
      \node[transform shape, subnode,scale=0.6] (B5) at (0,-1.4) {$\bridge_s$};

      \node[transform shape, subnode,scale=0.6] (A1) at (2.2, 1.4) {$\supera_s^{k,m}$};
      \node[transform shape, subnode,scale=0.6] (A2) at (2.2, 0.7) {$\supera_s^{k,m}$};
      \node[transform shape, subnode,scale=0.6] (A3) at (2.2, 0.0) {$\supera_s^{k,m}$};
      \node[transform shape, subnode,scale=0.6] (A4) at (2.2,-0.7) {$\supera_s^{k,m}$};
      \node[transform shape, subnode,scale=0.6] (A5) at (2.2,-1.4) {$\supera_s^{k,m}$};

      \node[transform shape, subnode,scale=0.6] (C1) at (4.5, 1.4) {$\agg_s$};
      \node[transform shape, subnode,scale=0.6] (C2) at (4.5, 0.7) {$\agg_s$};
      \node[transform shape, subnode,scale=0.6] (C3) at (4.5, 0.0) {$\agg_s$};
      \node[transform shape, subnode,scale=0.6] (C4) at (4.5,-0.7) {$\agg_s$};
      \node[transform shape, subnode,scale=0.6] (C5) at (4.5,-1.4) {$\agg_s$};

      \foreach \j in {1,...,5} {
        \draw (F)    -- node[gatelabel, inner sep=0pt, pos=0.10, rotate=90, transform shape] {$i_{\j}$} node[midway, defnode, transform shape,scale=0.5] {} node[gatelabel,pos=0.97, transform shape] {$X$} (B\j);
        \draw (B\j)    -- node[gatelabel, pos=0.03, transform shape] {$Y$} node[midway, defnode, transform shape,scale=0.5] {} node[gatelabel,pos=0.97, transform shape] {$X$} (A\j);
        \draw (A\j) -- node[gatelabel, pos=0.03, transform shape] {$Y$} node[midway, defnode, scale=0.5, transform shape] {} node[gatelabel,pos=0.87,transform shape] {$00;\triangle$} (C\j);
      }
    \end{scope}
  \end{tikzpicture}
\end{center}
\end{NoHyper}
\caption{A pictorial depiction of Claim~\ref{claim:claim3}.}%
\label{fig:claim3}
\end{figure}

\begin{proof}[Proof of claim]%
  For any subset $S \subseteq I'$ define
  \[
    D_3^{(S)} = \Phi \bigplus_{j \in S} {}_{j \leftrightarrow A;X}^{R j} D_2
    \bigplus_{j \in I' \setminus S} {}_{j \leftrightarrow \triangle}^{R j} \gc_{t_j}.
  \]
  The we claim $D \entails_{i_0 \mapsto i_0}^{|S|M} D_3^{(S)}$ for any $S$, which suffices by taking $S=I'$.
  Indeed, if $S=\emptyset$ this is trivial, and if $S=S'\cup\{j_1\}$ then
  by Claim~\ref{claim:claim2} and Lemma~\ref{lem:gc-morph} we have $\gc_{t_j} \entails^M_{A;X \mapsto \triangle} D_2$, so by Proposition~\ref{prop:awesome-stashing} we have
  \[
    \bigl( \gc_{t_j} +_{\triangle \leftrightarrow j_1}^R C \bigr)
    \entails^M_{R;i_0 \mapsto R;i_0}
    \bigl( D_2 +_{A;X; \leftrightarrow j_1}^R C \bigr)
  \]
  where
  \[
    C = \Phi \bigplus_{j \in S'} {}_{j \leftrightarrow A;X}^{R j} D_2 
    \bigplus_{j \in I' \setminus S} {}_{j \leftrightarrow \triangle}^{R j} \gc_{t_j} .
  \]
  Up to relabelling the left-hand side is $D_3^{(S')}$ and the right-hand side is $D_3^{(S)}$.
  Hence the claim follows, by induction on $|S|$.
\end{proof}
  
We have now built one of the outer pieces in Figure~\ref{fig:stargate}, except with some extra $\aggregate_s$ gates replacing the central part.
In the final stage of the construction, we clone this piece $n$ ways, while building the gates $\bigagg_s^n$ in the center.

\begin{claim}%
  \label{claim:claim4}
  For $D_3$ the diagram in Claim~\ref{claim:claim3}, we have
  \[
    D_3 \entails^{\log_2 n}_{F0;i_0 \mapsto i_0} \stargate(s,k,m,n,\Phi,i_0).
  \]
\end{claim}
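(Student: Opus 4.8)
The plan is to exhibit $\stargate(s,k,m,n,\Phi,i_0)$ as an iterated self-joining of $D_3$. Recall from Claim~\ref{claim:claim3} that $D_3$ consists of a single copy of $\Phi$, with, for each form $j \in I'$, a chain $\Phi \to \bridge_s \to \supera_s^{k,m} \to \aggregate_s$ hanging off the leaf $j$; write $C^{(j)}$ for the terminal $\aggregate_s$ of this chain (the sub-diagram $R_j;B$). Inside $C^{(j)}$, the only leaf joined to the rest of $D_3$ is $C^{(j)};00;\triangle$ (it is fused to the $Y$-vertex of the adjacent $\supera_s^{k,m}$, which will become some $A(j,\ell)$), while $C^{(j)};01;\triangle$, $C^{(j)};10;\triangle$, $C^{(j)};11;\triangle$ are free. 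These are exactly the roles played by the vertices of a single $\aggregate_s$ in the ring-assembly of $\bigagg_s^n$ described in Remark~\ref{rem:build-bigagg}, and the strategy is to run that assembly simultaneously in all $\lvert I' \rvert$ of the $C^{(j)}$.

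Write $n = 2^M$, so $M \ge 2$. First I would apply $\CS\bigl(\{\, C^{(j)};11;\triangle \colon j \in I'\,\}\bigr)$: this clones $D_3$ and, for each $j$, fuses the two copies of $C^{(j)}$ at $11;\triangle$, just as in the first move of Remark~\ref{rem:build-bigagg}; relabel the two copies of each $C^{(j)}$ as $H0,H1$ inside a common sub-diagram $\bag j$. Then for $t = 1,\dots,M-2$ I would apply $\CS$ to the union over $j \in I'$ of the free $10;\triangle$ vertex of the copy $H(2^t-1)$ produced at the previous stage, relabelling to extend each chain $H0 \to H1 \to \cdots$; after these $M-1$ steps each $\bag j$ is a length-$2^{M-1}$ chain of $\aggregate_s$'s with the alternating junction pattern of Definition~\ref{def:bigagg}, and there are $2^{M-1}$ copies of the outer part. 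Finally I would apply $\CS\bigl(\{\, (\bag j;H0;10;\triangle),\ (\bag j;H(2^{M-1}-1);10;\triangle) \colon j \in I'\,\}\bigr)$, closing all $\lvert I' \rvert$ rings at once; by Definition~\ref{def:bigagg} each $\bag j$ is now a copy of $\bigagg_s^n$. This uses $1 + (M-2) + 1 = M = \log_2 n$ Cauchy--Schwarz steps.

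Each of these steps also duplicates the outer structure (the copy of $\Phi$ together with its $\lvert I' \rvert$ chains $\bridge_s \to \supera_s^{k,m}$), so after all $M$ steps there are $2^M = n$ copies of $\Phi$; relabelling these as $F0,\dots,F(n-1)$, the copies of the chains as $\opbr(j,\ell)$ and $A(j,\ell)$ for $\ell \in \{0,\dots,n-1\}$, and the merged central diagrams as $\bag j$, the identifications produced ($F\ell;j = \opbr(j,\ell);X$, $\opbr(j,\ell);Y = A(j,\ell);X$, $A(j,\ell);Y = \bag j;\ell$) match Definition~\ref{def:stargate} exactly, so the result is $\stargate(s,k,m,n,\Phi,i_0)$ up to relabelling. (Note that $F\ell;i_0$ remains a free leaf in every copy, consistent with Remark~\ref{rem:phi-gate}, where only $F0;i_0$ is later designated a pin.) Tracking the $\gamma$ functions through Definition~\ref{def:logic-notation-revenge}: each $\CS$ step records the left copy's leaves with a $0$ in the second coordinate, so choosing $F0$ to be the copy of $\Phi$ obtained by always taking the left branch gives $\gamma(F0;i_0) = (i_0,0)$, and all other translate-information may be discarded by Definition~\ref{def:logic-notation-revenge}(d').

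The one step that needs genuine care — the main obstacle — is the labelling and orientation bookkeeping: the $\CS$ operation reflects the right-hand copy, so one must verify at each stage that the relabellings assemble the fused $\aggregate_s$ gates into precisely the pattern $H\ell;11;\triangle = H(\ell{+}1);11;\triangle$, $H(\ell{+}1);10;\triangle = H(\ell{+}2);10;\triangle$ (indices mod $n$, $\ell$ even) demanded by Definition~\ref{def:bigagg}. This is exactly the bookkeeping already carried out in Remark~\ref{rem:build-bigagg} for a single $\aggregate_s$; since the $\lvert I' \rvert$ sub-diagrams $C^{(j)}$ lie in pairwise disjoint parts of $D_3$, performing it simultaneously introduces nothing new, and the outer $\Phi$-with-chains part is merely duplicated at each step and imposes no constraint.
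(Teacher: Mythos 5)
Your proposal is correct and takes essentially the same route as the paper: the paper's proof of Claim~\ref{claim:claim4} likewise applies the $\bigagg_s^n$ ring-assembly from Remark~\ref{rem:build-bigagg} simultaneously across all $j \in I'$, using the same $\log_2 n$ Cauchy--Schwarz steps (one to fuse the $11;\triangle$ vertices, $\log_2 n - 2$ to extend the chains via $10;\triangle$ vertices, and one to close the loop), followed by the same relabelling of $\Phi$-copies as $F\ell$, chain pieces as $\opbr(j,\ell)$ and $A(j,\ell)$, and merged centers as $\bag j$.
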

\begin{proof}[Proof of claim]%
  The key idea is to apply the steps in Remark~\ref{rem:build-bigagg} to \emph{all the gates $R_j;B \colon \agg_s$ simultaneously}.
  That is, we perform the following steps:---
  \begin{itemize}
    \item apply $\CS(\{R_j;B;11;\triangle \colon j \in I'\})$;
    \item rename $L \leadsto H0$ and $R \leadsto H1$ as in Remark~\ref{rem:build-bigagg};
    \item for $r=1,2,\dots,\log_2 n - 2$, apply
      \[
        \CS\bigl(\bigl\{ H{\scriptstyle (2^r-1)};R_j;B;10;\triangle \colon j \in I' \bigr\}\bigr)
      \]
      and relabel $L;H\ell \mapsto H\ell$, $R;H\ell \leadsto H{\scriptstyle(2^{r+1}-\ell-1)}$;
    \item finally apply
      \[
        \CS\bigl(\bigl\{ H\ell;R_j;B;10;\triangle \colon j \in I' , \ell \in \{0, n/2-1\}  \bigr\}\bigr)
      \]
      to close the loop, and relabel $L;H\ell \leadsto H\ell$, $R;H\ell \leadsto H{\scriptstyle(n-\ell-1)}$.
  \end{itemize}
  Finally we relabel again to match the labels in Definition~\ref{def:stargate}.
  To be explicit, this goes as follows: for each spoke $\ell \in \{0,1,\dots,n-1\}$ and form $j \in I'$, we would set
  \begin{itemize}
    \item $H\ell;R_j;A \leadsto \opbr(j,\ell)$,
    \item $H\ell;R_j;B \leadsto (\bag j;H\ell)$,
    \item $H\ell;R_j;\ast \leadsto A(j,\ell)$,
    \item $H\ell;\ast \leadsto F\ell$
  \end{itemize}
  where ``$\ast$'' does not include indices previously reassigned.

  Verifying that the resulting diagram is indeed $\stargate(s,k,m,n,\Phi,i_0)$ is now an exercise in unpacking the definitions.
\end{proof}

Lemma~\ref{lem:build-stargate} follows immediately from Claim~\ref{claim:claim3} and Claim~\ref{claim:claim4}. 

\subsection{Assigning a StarGate}%
\label{sub:stargate-assign}

We finish by proving Lemma~\ref{lem:assign-stargate}.

The first step is to do some multilinear algebra, following the outline in Section~\ref{sub:true-summary}.
We begin by choosing a good basis $e_1,\dots,e_d$ for $\FF_p^d = V^\Phi$.

\begin{claim}%
  \label{claim:basis}
  There exists a basis $e_1,\dots,e_d$ for $\FF_p^d$ such that
  $\phi_{i_0}(e_\ell) = 0$ whenever $2 \le \ell \le d$, $\phi_{i_0}(e_1) \ne 0$,
  and moreover the values $\phi_{j}(e_\ell)$ for $j \in I$, $\ell \in [d]$ have representatives in $\ZZ$ with absolute value at most $2 L^2$.
\end{claim}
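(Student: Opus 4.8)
The plan is to write down an explicit basis built from the standard coordinate basis of $\FF_p^d$, being careful to avoid any division in $\FF_p$ (which could produce an integer representative as large as $p$) by \emph{scaling} instead of inverting.

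Since $\Phi$ is non-degenerate and the forms are pairwise linearly independent, $\phi_{i_0}$ is a nonzero element of $(\FF_p^d)^\ast$, so there is an index $j_0 \in [d]$ with $a_{i_0,j_0} \not\equiv 0 \pmod p$; in particular the integer $a_{i_0,j_0}$ is nonzero, with $1 \le |a_{i_0,j_0}| \le L$. Let $f_1,\dots,f_d$ be the standard basis of $\FF_p^d$. I would set $e_1 = f_{j_0}$ and, for each $\ell \in [d] \setminus \{j_0\}$, introduce
\[
  v_\ell = a_{i_0,j_0}\, f_\ell - a_{i_0,\ell}\, f_{j_0} \in \FF_p^d,
\]
taking $\{e_2,\dots,e_d\}$ to be an enumeration of $\{ v_\ell : \ell \in [d] \setminus \{j_0\} \}$.

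The verification is then routine. First $\phi_{i_0}(e_1) = a_{i_0,j_0} \ne 0$, while $\phi_{i_0}(v_\ell) = a_{i_0,j_0} a_{i_0,\ell} - a_{i_0,\ell} a_{i_0,j_0} = 0$ for each $\ell \ne j_0$. To see $e_1,\dots,e_d$ is a basis: the $d-1$ vectors $v_\ell$ all lie in the hyperplane $\ker \phi_{i_0}$ (which has dimension $d-1$), and they are linearly independent, since reading off the coordinates indexed by $[d] \setminus \{j_0\}$ their matrix is $a_{i_0,j_0}$ times the identity, invertible as $a_{i_0,j_0} \ne 0$ in $\FF_p$; hence $\spn\{e_2,\dots,e_d\} = \ker \phi_{i_0}$, and since $e_1 = f_{j_0} \notin \ker\phi_{i_0}$ the full collection spans $\FF_p^d$. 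Finally, for the size bound: $\phi_j(e_1) = a_{j,j_0}$ admits the integer representative $a_{j,j_0}$ with $|a_{j,j_0}| \le L \le 2L^2$, and $\phi_j(v_\ell) \equiv a_{i_0,j_0} a_{j,\ell} - a_{i_0,\ell} a_{j,j_0} \pmod p$, whose right-hand side is an integer of absolute value at most $|a_{i_0,j_0}||a_{j,\ell}| + |a_{i_0,\ell}||a_{j,j_0}| \le L^2 + L^2 = 2L^2$.

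There is essentially no serious obstacle here; the one point that is not completely automatic is precisely the choice to replace the naive basis of $\ker\phi_{i_0}$ (obtained by solving $\phi_{i_0} = 0$ for the coordinate $x_{j_0}$, which divides by $a_{i_0,j_0}$) with the scaled vectors $v_\ell$, which keeps every coefficient integral and bounded by $2L^2$.
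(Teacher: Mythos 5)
Your proof is correct and takes essentially the same approach as the paper: it also avoids division by scaling, setting $e_\ell = \lambda\eps_\ell - \phi_{i_0}(\eps_\ell)\eps_1$ with $\lambda = \phi_{i_0}(\eps_1)$, the only cosmetic difference being that the paper permutes the standard basis so that your $j_0$ becomes $1$ at the outset. The size bounds and the linear independence check are the same in substance (you spell out the basis verification slightly more explicitly).
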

\begin{proof}[Proof of claim]%
  This is a standard exercise in linear algebra but we take some care to control the coefficients.
  Since $\phi_{i_0} \ne 0$, by permuting the standard basis $\eps_1,\dots,\eps_d$ of $\FF_p^d$ (which does not affect $L$) we may assume WLOG that
  $\phi_{i_0}(\eps_1) \ne 0$ in $\FF_p$.
  Then set $e_1 = \eps_1$, $\lambda = \phi_{i_0}(e_1) \ne 0$ and
  \[
    e_\ell = \lambda \eps_\ell - \phi_{i_0}(e_\ell) \eps_1
  \]
  for $2 \le \ell \le d$.

  We have $\phi_{i_0}(e_\ell)=0$.
  Since all values $\phi_{j}(\eps_\ell)$ including $\lambda$ have integer representatives at most $L$ in absolute value,
  it follows that 
  \[
    \phi_j(e_\ell) = \lambda \phi_j(\eps_\ell) - \phi_{i_0}(e_\ell) \phi_j(\eps_1)
  \]
  has an integer representative which is at most $2 L^2$ in absolute value, for all $j \in I$ and $2 \le \ell \le d$.
  In the case $\ell=1$ it is clear $\phi_j(e_1) = \phi_j(\eps_1)$ has an integer representative at most $L$ in absolute value.
\end{proof}

Next we translate the hypothesis $s(\Phi, i_0) \le s-1$ into a concrete statement in this basis.
\begin{claim}%
  \label{claim:complexity}
  Assume $s(\Phi, i_0) \le s-1$.
  Writing
  \[
    \Omega = \bigl\{ \tau = (\tau_1,\tau_2,\dots, \tau_s) \colon 1 \le \tau_1 \le \cdots \le \tau_s \le d \bigr\}
  \]
  there are coefficients $(\beta_{\tau})_{\tau \in \Omega} \in \FF_p$ such that
  \begin{equation}%
    \label{eq:beta-i0}
    \beta_{(1,1,\dots,1)} \phi_{i_0}(e_1) = 1
  \end{equation}
  and for all $j \in I'$,
  \begin{equation}%
    \label{eq:beta-prop}
    \sum_{\tau \in \Omega} \beta_{\tau} \prod_{r=1}^s \phi_j(e_{\tau_r}) = 0.
  \end{equation}
\end{claim}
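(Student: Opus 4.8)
The plan is to convert the hypothesis $s(\Phi,i_0)\le s-1$ into the existence of a single linear functional on a tensor space and then read off the $\beta_\tau$ by expanding everything in the basis $e_1,\dots,e_d$ furnished by Claim~\ref{claim:basis}. Recall from Definition~\ref{def:true-complexity} that $s(\Phi,i_0)\le s-1$ says precisely that $\phi_{i_0}^{\otimes s}$ does \emph{not} lie in $U:=\spn\bigl(\phi_j^{\otimes s}\colon j\in I'\bigr)\le \bigl((\FF_p^d)^\ast\bigr)^{\otimes s}$. Passing to the quotient $\bigl((\FF_p^d)^\ast\bigr)^{\otimes s}/U$, the image of $\phi_{i_0}^{\otimes s}$ is non-zero, so a linear functional on the quotient non-vanishing there pulls back to a linear functional
\[
  T\colon \bigl((\FF_p^d)^\ast\bigr)^{\otimes s}\to\FF_p
\]
with $T(\phi_j^{\otimes s})=0$ for all $j\in I'$ and $T(\phi_{i_0}^{\otimes s})\ne 0$.

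Next I would make $T$ explicit in coordinates. Writing $e_1^\ast,\dots,e_d^\ast$ for the dual basis of $e_1,\dots,e_d$, the tensors $e_{\ell_1}^\ast\otimes\cdots\otimes e_{\ell_s}^\ast$ over $(\ell_1,\dots,\ell_s)\in[d]^s$ form a basis, so $T$ is determined by the scalars $c_{\ell_1\dots\ell_s}:=T\bigl(e_{\ell_1}^\ast\otimes\cdots\otimes e_{\ell_s}^\ast\bigr)$. Since $\phi_j=\sum_{\ell}\phi_j(e_\ell)e_\ell^\ast$, expanding $\phi_j^{\otimes s}$ and pairing with $T$ gives
\[
  T(\phi_j^{\otimes s})=\sum_{(\ell_1,\dots,\ell_s)\in[d]^s} c_{\ell_1\dots\ell_s}\prod_{r=1}^s\phi_j(e_{\ell_r}).
\]
The product $\prod_r\phi_j(e_{\ell_r})$ depends only on the multiset $\{\ell_1,\dots,\ell_s\}$, equivalently on its increasing rearrangement $\tau\in\Omega$; so setting $\beta'_\tau:=\sum c_{\ell_1\dots\ell_s}$, the sum running over all tuples whose increasing rearrangement is $\tau$, yields $T(\phi_j^{\otimes s})=\sum_{\tau\in\Omega}\beta'_\tau\prod_{r=1}^s\phi_j(e_{\tau_r})$ for every $j\in I$.

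It then remains to extract the two asserted properties. For $j\in I'$ we get $\sum_\tau\beta'_\tau\prod_r\phi_j(e_{\tau_r})=T(\phi_j^{\otimes s})=0$, which is \eqref{eq:beta-prop}. For $j=i_0$, Claim~\ref{claim:basis} gives $\phi_{i_0}(e_\ell)=0$ for $\ell\ge 2$, so $\prod_r\phi_{i_0}(e_{\tau_r})$ vanishes unless $\tau=(1,\dots,1)$, whence $\beta'_{(1,\dots,1)}\,\phi_{i_0}(e_1)^s=T(\phi_{i_0}^{\otimes s})\ne 0$ and in particular $\beta'_{(1,\dots,1)}\ne 0$ (recall $\phi_{i_0}(e_1)\ne 0$, also from Claim~\ref{claim:basis}). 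Finally, rescaling $\beta_\tau:=\beta'_\tau/\bigl(\beta'_{(1,\dots,1)}\,\phi_{i_0}(e_1)\bigr)$ preserves the homogeneous identities \eqref{eq:beta-prop} and arranges $\beta_{(1,\dots,1)}\,\phi_{i_0}(e_1)=1$, giving \eqref{eq:beta-i0}. I expect no real obstacle here: the argument is routine linear algebra over $\FF_p$ (no characteristic-$p$ subtlety arises, since the expansion of $\phi_j^{\otimes s}$ involves no divisions), and the only point needing a little care is the bookkeeping in the passage from the tensor functional $T$ to the sum over the increasing tuples $\Omega$, together with the final rescaling to fix the normalisation.
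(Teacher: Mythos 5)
Your proposal is correct and is essentially the paper's own proof, up to two cosmetic changes: you phrase the dual object as a linear functional $T$ on $\bigl((\FF_p^d)^\ast\bigr)^{\otimes s}$ rather than as a tensor $v\in(\FF_p^d)^{\otimes s}$ (canonically the same thing), and you normalise at the end by dividing through by $\beta'_{(1,\dots,1)}\phi_{i_0}(e_1)$ where the paper instead normalises $v$ so that $\phi_{i_0}^{\otimes s}(v)=1$ at the start and then multiplies by $\phi_{i_0}(e_1)^{s-1}$. The core steps---duality to obtain the vanishing/non-vanishing functional, expansion in the basis from Claim~\ref{claim:basis}, symmetrisation over $\Omega$ using that $\prod_r\phi_j(e_{\tau_r})$ depends only on the multiset, and the observation that only the $(1,\dots,1)$ term survives when $j=i_0$---are identical.
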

\begin{proof}%
  By hypothesis we have
  \[
    \phi_{i_0}^{\otimes s} \notin \spn \left( \phi_j^{\otimes s} \colon j \in I' \right)
  \]
  as elements of $\bigl((\FF_p^d)^\ast\bigr)^{\otimes s} \cong \bigl((\FF_p^d)^{\otimes s}\bigr)^\ast$.
  By duality, there exists an element $v \in (\FF_p^d)^{\otimes s}$ such that
  $
    \phi_{i_0}^{\otimes s}(v) = 1
  $
  but
  $
    \phi_{j}^{\otimes s}(v) = 0
  $
  for all $j \in I'$.
  Expanding $v$ in the basis $e_1,\dots,e_d$, we have
  \[
    v = \sum_{\ell_1,\dots,\ell_s \in [d]} b_{\ell_1,\dots,\ell_s} (e_{\ell_1} \otimes \dots \otimes e_{\ell_s})
  \]
  for some coefficients $b_{\ell_1,\dots,\ell_s} \in \FF_p$.
  For $j \in I$, we have
  \[
    \phi_j^{\otimes s}(v) = \sum_{\ell_1,\dots,\ell_s \in [d]} b_{\ell_1,\dots,\ell_s} \prod_{r \in [s]} \phi_j(e_{\ell_r}).
  \]
  When $j=i_0$, since $\phi_{i_0}(e_\ell)=0$ for $\ell \ne 1$, every term vanishes except $\ell_1=\cdots=\ell_s=1$, so $b_{1,1,\dots,1} \phi_{i_0}(e_1)^s = 1$.
  When $j \ne i$ we deduce
  \[
    \sum_{\ell_1,\dots,\ell_s \in [d]} b_{\ell_1,\dots,\ell_s} \prod_{r \in [s]} \phi_j(e_{\ell_r}) = 0.
  \]

  Finally set
  \[
    \beta_{\tau} = \phi_{i_0}(e_1)^{s-1} \sum_{(\ell_1,\dots,\ell_s) \sim \tau} b_{\ell_1,\dots,\ell_s}
  \]
  where $(\ell_1,\dots,\ell_s) \sim (\tau_1,\dots,\tau_s)$ means that the two sequences are permutations of each other.
  Then~\eqref{eq:beta-i0} and~\eqref{eq:beta-prop} follow from the corresponding properties of $b_{\ell_1,\dots,\ell_s}$.
\end{proof}

We fix an enumeration $\Omega = \{\tau^{0},\dots,\tau^{N-1}\}$ where $N = \lvert \Omega \rvert = \binom{d+s-1}{s}$, meaning $N \le n$ by hypothesis, and where $\tau^0 = (1,1,\dots,1)$.

We now describe the claimed assignment of $\stargate(s,k,m,n,\Phi,i_0)$, henceforth abbreviated to $\stargate$.
The following values are key: for $j \in I$, $0 \le \ell \le N-1$ and $r \in [s]$, we pick an integer $a_{r}^{(j,\ell)}$ such that
\[
  a_{r}^{(j,\ell)} \bmod{p} = \phi_j(e_{\tau^\ell_r})
\]
and $|a_r^{(j,\ell)}| \le 2 L^2 < 2^{k+1}$, which is possible by Claim~\ref{claim:basis}.
When $N \le \ell \le n-1$ we set $a_r^{(j,\ell)} = 0$.

We consider assignments of the sub-gates of $\stargate$ as follows.
\begin{itemize}
  \item For $j \in I'$, take $\bag j \colon \sumconst_1$.
  \item For $j \in I'$ and $0 \le \ell \le n-1$, take $\opbr(j, \ell) \colon \boring_s$.
  \item For $j \in I'$ and $0 \le \ell \le n-1$,
    take $A(j, \ell) \colon \amulti_s^{k,m} \bigl(a^{(j,\ell)}_2,\dots,a^{(j,\ell)}_s\bigr)$.
\end{itemize}
Using Lemma~\ref{lem:bigagg-assign}, Lemma~\ref{lem:bridge-ass} and Lemma~\ref{lem:super-assignment}, we obtain the data $(S_r^{(G)})_{r \in [s]}$ and $(\cM_r^{(G)})_{r \in [s]}$ for each gate $G$.
(We ignore $\cM_0^{(G)}$.)
We recall that the domains $D_r^{(G)}$ are as follows.
\begin{itemize}
  \item $D_1^{(\bag j)} = \bigsum_n$ and $D_r^{(\bag j)} = \const(\{0,\dots,n-1\})$ for $2 \le r \le s$;
  \item $D_r^{(\opbr(j,\ell))} = \const(\{X,Y\})$ for $r \in [s]$;
  \item $D_r^{(A(j,\ell))} = \lag(a^{(j,\ell)}_r, A)$ for $2 \le r \le s$, and $D_1^{(A(j,\ell))} = \lag\bigl(\prod_{r=2}^s a^{(j,\ell)}_r, B\bigr)$.
\end{itemize}

In the remaining sub-diagrams $F\ell$ for $0 \le \ell \le n-1$, the only toggles are the leaves $F\ell;i_0$ for $\ell \ne 0$.
We partition these as follows.
Given $\ell$, $1 \le \ell \le N-1$, we choose the least $r \in [s]$ such that $\tau^\ell_r \ne 1$ (which exists, as $\tau^\ell \ne \tau^0 = (1,\dots,1)$).
Then set $S_r^{(F\ell)} = \{ F\ell;i_0 \}$ and $S_{r'}^{(F\ell)} = \emptyset$ for $r' \ne r$.
For $N \le \ell \le n-1$, set $S_1^{(F\ell)} = \{ F\ell; i_0\}$ and $S_r^{(F\ell)} = \emptyset$ for $r \ne 1$.

For notational compatibility we set $D_r^{(F\ell)} = \Phi$ and $\cM_r^{(F\ell)}$ the identity morphism $\Phi \to \Phi$, for $0 \le \ell \le n-1$ and $r \in [s]$.

As usual we set $S_r = \bigcup_{G \in \Gamma} S_r^{(G)}$, where
\begin{equation}%
  \label{eq:gates}
  \Gamma =
  \begin{aligned}[t]
    &\bigl\{ F\ell \colon 0 \le \ell \le n-1\bigr\} \cup \bigl\{ \opbr(j,\ell) \colon j \in I', 0 \le \ell \le n-1\bigr\} \\
    &\cup \bigl\{ A(j,\ell) \colon j \in I', 0 \le \ell \le n-1 \bigr\} \cup \bigl\{ \bag j \colon j \in I'\bigr\}
  \end{aligned}
\end{equation}
is the collection of sub-gates.
Then $(S_r)_{r \in [s]}$ is a valid partition of $\leafs^{\stargate} \setminus \{F0;i_0\}$.

As is now familiar, we construct the required morphisms
\[
  \cM_r \colon \trivial(\{i_0\}) \to \stargate\bigl[\nonleafs \cup \{F0;i_0\} \cup S_r\bigr]
\]
for $r \in [s]$ by piecing together the existing morphisms $\cM_r^{(G)}$.

\begin{claim}%
  \label{claim:ass-claim}
  There exist morphisms $\Theta_r^{(G)}$ for $r \in [s]$ and $G \in \Gamma$, where
  \[
    \Theta_r^{(G)} \colon \trivial(\{i_0\}) \to D_r^{(G)}\bigl(Z^{(G)} \leadsto \nonleafs\bigr)
  \]
  and where $Z^{(G)} \subseteq \leafs^{G}$ is $I'$ if $G=F\ell$ and $Z^{(G)} = \leafs^G$ for all other $G$, such that the following holds.

  Write $\fM_r^{(G)} = \Theta_r^{(G)} \circ \cM_r^{(G)}$ \uppar{using Remark~\ref{rem:leaf-non-leaf-morph}}, which are morphisms
  \[
    \trivial(\{i_0\}) \to G\bigl[\nonleafs \cup \pins^G \cup S_r^{(G)}\bigr].
  \]
  Then for each $j \in I'$ and $0 \le \ell \le n-1$:---
  \begin{equation}%
    \begin{aligned}
      \theta^{\fM_1^{(F\ell)}}_{j} &= \theta^{\fM_1^{(\opbr(j,\ell))}}_X = x \mapsto (-1)^\ell \beta_{\tau^\ell} a_1^{(j,\ell)} x \\
      \theta^{\fM_1^{(\opbr(j,\ell))}}_Y &= \theta^{\fM_1^{(A(j,\ell))}}_X = x \mapsto (-1)^\ell \beta_{\tau^\ell} a_1^{(j,\ell)} x \\
      \theta^{\fM_1^{(A(j,\ell))}}_Y &= \theta^{\fM_1^{(\bag j)}}_{X\ell} = x \mapsto  (-1)^\ell \beta_{\tau^\ell} \left( \prod_{r=1}^s a_{r}^{(j,\ell)} \right) x
    \end{aligned}
    \label{eq:compat-maps}
  \end{equation}
  and for $2 \le r \le s$:---
  \begin{equation}%
    \begin{aligned}
      \theta^{\fM_r^{(F\ell)}}_{j} &= \theta^{\fM_r^{(\opbr(j,\ell))}}_X = x \mapsto \lambda a_{r}^{(j,\ell)} x \\
      \theta^{\fM_r^{(\opbr(j,\ell))}}_Y &= \theta^{\fM_r^{(A(j,\ell))}}_X = x \mapsto \lambda a_{r}^{(j,\ell)} x \\
      \theta^{\fM_r^{(A(j,\ell))}}_Y &= \theta^{\fM_r^{(\bag j)}}_{X\ell} = x \mapsto \lambda x
    \end{aligned}
    \label{eq:compat-maps-2}
  \end{equation}
  where $\lambda = \phi_{i_0}(e_1)^{-1} \in \FF_p$ \uppar{which exists by Claim~\ref{claim:basis}}.

  Finally, $\theta^{\fM_r^{(F0)}}_{i_0} = \id_{\FF_p}$ for all $r \in [s]$, and
  $\alpha^{\Theta_r^{(F\ell)}}(i_0) = \zeroi$ whenever $F\ell;i_0 \in S_r$.
\end{claim}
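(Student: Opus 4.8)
The plan is to verify Claim~\ref{claim:ass-claim} by unwinding the structure of each sub-gate assignment and checking that the specified maps $\theta$ are mutually compatible — since all the hard work has already been done in constructing the sub-assignments $\sumconst_1$ (Lemma~\ref{lem:bigagg-assign}), $\boring_s$ (Lemma~\ref{lem:bridge-ass}) and $\amulti_s^{k,m}$ (Lemma~\ref{lem:super-assignment}), the task here is purely one of ``specialization and patching'', exactly as in the proofs of Lemma~\ref{lem:agate-assign-1} and Lemma~\ref{lem:super-assignment}. Concretely, for each sub-gate $G \in \Gamma$ and each mode $r$, the domain $D_r^{(G)}$ is an explicit small linear datum ($\bigsum_n$, $\const$, $\lag(a,A)$, $\lag(a,B)$, or $\Phi$ itself), and we must exhibit a morphism $\Theta_r^{(G)}$ out of $\trivial(\{i_0\})$ into $D_r^{(G)}(Z^{(G)} \leadsto \nonleafs)$ realizing the prescribed values on the exposed vertices. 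Since $\trivial(\{i_0\})$ has a one-dimensional vertex space $\FF_p$ at its root, such a morphism is just a choice of linear functional $\FF_p \to V^{D_r^{(G)}}$, i.e.\ a choice of where the ``input'' $x$ lands; the values in~\eqref{eq:compat-maps} and~\eqref{eq:compat-maps-2} then pin this down.

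First I would handle the sub-gates $F\ell$. For $r=1$ and $\ell = 0$ (or all $r$ with $\ell=0$), we must have $\theta^{\fM_r^{(F0)}}_{i_0} = \id$, so $\Theta_r^{(F0)}$ is the identity $\Phi \to \Phi$ — here $Z^{(F0)} = I'$, so $i_0$ remains a leaf, which is correct since $F0;i_0 \notin S_r$. For $\ell \ge 1$, the vertex $F\ell;i_0$ lies in exactly one part $S_r$ (the one indexed by the least $r$ with $\tau^\ell_r \neq 1$, or $r=1$ when $\ell \ge N$), and when $F\ell;i_0 \in S_r$ we need $\alpha^{\Theta_r^{(F\ell)}}(i_0) = \zeroi$, i.e.\ the input must map to the subspace $\bigcap_{j}\ker\phi_j^\Phi$ after being forced to zero — but since $\Phi$ is non-degenerate this kernel is trivial, so in fact we send the input to $0 \in V^\Phi$, forcing all $\phi_j(e_{\tau^\ell_r})$-scaled outputs. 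We then need to check that the values $\phi_j\bigl(\theta^{\Theta}(v)\bigr)$ on the remaining $j \in I'$ match $\lambda a_r^{(j,\ell)} x = \lambda \phi_j(e_{\tau^\ell_r}) x$: this is where the choice $\theta^{\Theta_r^{(F\ell)}}(x) = \lambda x \cdot e_{\tau^\ell_r}$ (appropriately modified for mode $1$ using the $\beta_\tau$ coefficients) comes in, and the identity~\eqref{eq:beta-prop} is precisely what makes the ``sum over $\ell$'' consistency hold at the central gate $\bag j$ in mode $1$. For the bridges and super-AGates, $\Theta_r^{(\opbr(j,\ell))}$ and $\Theta_r^{(A(j,\ell))}$ are rescaling morphisms $v \mapsto c v$ with the scalar $c$ read off from~\eqref{eq:compat-maps}--\eqref{eq:compat-maps-2}, and the consistency across the shared vertices $\opbr(j,\ell);X = F\ell;j$, $\opbr(j,\ell);Y = A(j,\ell);X$, $A(j,\ell);Y = \bag j;\ell$ is built into the statement.

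The substantive checks are two: (a) that $\amulti_s^{k,m}(a_2^{(j,\ell)},\dots,a_s^{(j,\ell)})$ does indeed produce, when specialized, the claimed output $\bigl(\prod_{r=1}^s a_r^{(j,\ell)}\bigr)x$ at $\bag j; \ell$ in mode $1$ and $\lambda x$ in modes $2 \le r \le s$ — this is exactly the content of Lemma~\ref{lem:super-assignment}, noting $\lag(\alpha, B)$ with $\alpha = \prod_{r=2}^s a_r^{(j,\ell)}$ takes $x \mapsto \alpha x$, so the $X$-to-$Y$ direction multiplies by $a_1^{(j,\ell)} \cdot \alpha = \prod_{r=1}^s a_r^{(j,\ell)}$ once we compose with the bridge carrying $a_1^{(j,\ell)}$; (b) that the gate $\bag j$ in its $\sumconst_1$ assignment, with $D_1^{(\bag j)} = \bigsum_n$, can receive the values $(-1)^\ell \beta_{\tau^\ell}\bigl(\prod_{r=1}^s a_r^{(j,\ell)}\bigr)x$ on its pins $X\ell$ — this requires $\sum_{\ell}(-1)^\ell \cdot (-1)^\ell \beta_{\tau^\ell}\bigl(\prod_r a_r^{(j,\ell)}\bigr)x = 0$, which simplifies to $\sum_\ell \beta_{\tau^\ell}\prod_r \phi_j(e_{\tau^\ell_r}) x = 0$, precisely~\eqref{eq:beta-prop} (recalling $a_r^{(j,\ell)} = 0$ for $\ell \ge N$, so only $\ell < N$ contribute). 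Once all the $\Theta_r^{(G)}$ are in place and shown compatible on overlaps, Lemma~\ref{lem:patching} assembles them into morphisms $\cM_r \colon \trivial(\{i_0\}) \to \stargate[\nonleafs \cup \{F0;i_0\} \cup S_r]$, and since each $\cM_r$ respects $F0;i_0$ and sends every element of $S_r$ to $\zeroi$, Definition~\ref{def:assignment} is satisfied; the final sentence of Lemma~\ref{lem:assign-stargate} then follows by Proposition~\ref{prop:diag-cs-complexity}.

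The main obstacle I anticipate is purely bookkeeping: there are $s \cdot |\Gamma| = O(s \cdot n |I|)$ individual morphisms $\Theta_r^{(G)}$ to specify, and one must be careful about the mode-$1$ case (where the $\beta_\tau$ and $a_1^{(j,\ell)}$ factors appear and the ``sum'' structure at $\bag j$ is active) versus modes $2 \le r \le s$ (where everything is a $\const$-style rescaling by $\lambda a_r^{(j,\ell)}$ or $\lambda$), and about the degenerate spokes $\ell \ge N$ where all $a_r^{(j,\ell)} = 0$ and the gates simply carry zeros. A secondary subtlety is confirming that $Z^{(G)}$ is chosen so that the ``leaf $\leadsto$ nonleaf'' bookkeeping in Remark~\ref{rem:joining-sub} matches how $\stargate$ was actually built in Section~\ref{sub:stargate-build} — i.e.\ that the vertices which became non-leaves in the joining are exactly those exposed in Figure~\ref{fig:stargate}. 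I expect no genuinely new idea is needed beyond what appears in the proofs of Lemma~\ref{lem:agate-assign-1} and Lemma~\ref{lem:super-assignment}; the role of Claim~\ref{claim:basis} (coefficient control) is only to guarantee $|a_r^{(j,\ell)}| \le 2L^2 < 2^{k+1}$ so that Lemma~\ref{lem:super-assignment} is applicable, and the role of Claim~\ref{claim:complexity} is to supply the $\beta_\tau$ making~\eqref{eq:beta-i0} and~\eqref{eq:beta-prop} hold.
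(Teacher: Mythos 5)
Your overall strategy---specialize the sub-assignment morphisms $\cM_r^{(G)}$ via maps $\Theta_r^{(G)}$ out of $\trivial(\{i_0\})$, verify compatibility at the shared pins, and patch with Lemma~\ref{lem:patching}---is exactly the paper's, and your identification of~\eqref{eq:beta-prop} as the identity which lets $\theta_\diamond^{\Theta_1^{(\bag j)}}$ land in $V^{\bigsum(n)}$ is correct. However, your explanation of the $\alpha^{\Theta_r^{(F\ell)}}(i_0) = \zeroi$ condition contains a real error. Setting $\alpha(i_0)=\zeroi$ forces only that $\phi_{i_0}^\Phi \circ \theta_\diamond = 0$, i.e.\ that $\theta_\diamond$ lands in $\ker\phi_{i_0}^\Phi$; it does \emph{not} require the image to lie in the joint kernel $\bigcap_j \ker\phi_j^\Phi$. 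Since the joint kernel is trivial for non-degenerate $\Phi$, your reading would force $\theta_\diamond = 0$ and hence every output $\phi_j\circ\theta_\diamond$ to vanish, which directly contradicts the values $\lambda a_r^{(j,\ell)}x$ you (correctly) prescribe two sentences later. The actual justification is simply that $\theta_\diamond(x) = \lambda x\, e_{\tau_r^\ell}$ with $\tau_r^\ell \ne 1$, so $\phi_{i_0}\bigl(\theta_\diamond(x)\bigr) = \lambda x\,\phi_{i_0}(e_{\tau_r^\ell}) = 0$ by Claim~\ref{claim:basis}, while $\phi_j\bigl(\theta_\diamond(x)\bigr) = \lambda a_r^{(j,\ell)}x$ is generally non-zero.

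A secondary imprecision: you say the $X$-to-$Y$ direction picks up $a_1^{(j,\ell)}$ ``once we compose with the bridge carrying $a_1^{(j,\ell)}$.'' The bridge $\opbr(j,\ell)$ is assigned $\boring$, so it acts as a $\const$ gate in every mode and carries no scaling whatsoever. The factor $a_1^{(j,\ell)}$ originates at the $\Phi$-gate $F\ell$, since $\theta_j^{\fM_1^{(F\ell)}}(x) = \phi_j\bigl((-1)^\ell\beta_{\tau^\ell}x\,e_{\tau_1^\ell}\bigr) = (-1)^\ell\beta_{\tau^\ell}a_1^{(j,\ell)}x$; the bridge propagates this value unchanged, and the super-AGate $A(j,\ell)$ multiplies by $\prod_{r=2}^s a_r^{(j,\ell)}$ in mode $1$. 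Neither slip derails the proof since your end specification of the maps matches~\eqref{eq:compat-maps} and~\eqref{eq:compat-maps-2}, but both need fixing in a careful writeup, and the first in particular would be a flat logical error if taken literally.
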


In particular,~\eqref{eq:compat-maps} and~\eqref{eq:compat-maps-2} imply that for each $r$ the morphisms $\fM_r^{(G)}$ agree wherever they overlap.
Hence by Lemma~\ref{lem:patching}, we can patch $\fM_r^{(G)}$ together to obtain morphisms
\[
  \cM_r \colon \trivial(\{i_0\}) \to \stargate\bigl[\nonleafs \cup \{F0;i_0\} \cup S_r\bigr]
\]
respecting $F0;i_0$ and with $\alpha^{\cM_r}(x) = \zeroi$ for all $x \in S_r$.
As noted in Section~\ref{sub:stargates}, we can define $\cM_0$ by composing $\cM_1$ with the restriction map
\[
  \stargate[\nonleafs \cup \{F0;i_0\} \cup S_1] \to \stargate[\nonleafs \cup \{F0;i_0\}],
\]
or similarly for any other $\cM_r$, $r \ge 1$.
Hence we get the assignment claimed, and Claim~\ref{claim:ass-claim} completes the proof of Lemma~\ref{lem:assign-stargate}.

We note informally why each gate is doing what it is supposed to do relative the maps specified in Claim~\ref{claim:ass-claim} at the shared pins.\footnote{
  It would be visually clearer to place these values at the vertices of Figure~\ref{fig:stargate}, as in previous cases such as Figure~\ref{fig:agate-initial}, but this is typographically impractical in this case.
  The reader can of course draw their own such picture.}
\begin{itemize}
  \item Certainly the $\bridge$ gates always behave as $\const(\{X,Y\})$.
  \item The vertices $F\ell;j$ have maps
    \[
      x \mapsto C_\ell a_r^{(j,\ell)} x = \phi_j(C_\ell x e_{\tau_r^\ell})
    \]
    for some $C_\ell \in \FF_p$, which are indeed $\phi_j$ composed with the map $\FF_p \to V^\Phi$, $x \mapsto C_\ell x e_{\tau_r^\ell}$, as required by what we informally call a ``$\Phi$-gate''.
  \item The gates $A(j,\ell)$ do indeed have $\theta_X = a_r^{(j,\ell)} \theta_Y$ when $r=2,\dots,s$ and
    $\theta_Y = \left( \prod_{r=2}^s a_r^{(j,\ell)} \right) \theta_X$ when $r=1$, as required by the linear data $\lag(a_r^{j,\ell},A)$ and $\lag\left(\prod_{r=1}^s a_r^{j,\ell},B\right)$.
  \item The gates $\bag j$ clearly behave as $\const(I')$ when $r \ne 1$, and
    \[
      \sum_{\ell=0}^{n-1} \beta_{\tau^\ell} \left( \prod_{r=1}^s a_r^{(j,\ell)} \right) = 0
    \]
    by~\eqref{eq:beta-prop} so when $r=1$, $\bag j$ behaves like a $\bigsum$ gate.
\end{itemize}
Finally, the fact that when $F\ell;i_0 \in S_r$ we have $\tau_r^\ell \ne 1$, so $\phi_{i_0}(e_{\tau_r^\ell} ) = 0$ and hence
  $\phi_{i_0}(C_\ell x e_{\tau_r^\ell}) = 0$,
allows us to set $\alpha^{\Theta_r^{(F\ell)}}(i_0) = \zeroi$.

We now make this slightly more formal.
\begin{proof}[Proof of claim]%
  We again specify the morphisms $\Theta_r^{(G)}$ pictorially.
  When $r=1$ these are shown in Figure~\ref{fig:ass-claim-1}, while those for $2 \le r \le s$ are shown in Figure~\ref{fig:ass-claim-2}.
  \begin{figure}
    \begin{center}
      \begin{tikzpicture}[
          scale=0.95,
          defnode/.style={rectangle,draw,inner sep=4pt,outer sep=0pt,minimum size=15pt},
          mydot/.style={circle,fill,inner sep=0.5pt},
          nonleaf/.style={defnode,fill=lightgray},
          leaf/.style={defnode,fill=leafgreen},
          toggle/.style={defnode,fill=leafgreen}
        ]
        \node[scale=0.6] at (5, -1.5) {$\Theta_1^{\opbr(j,\ell)} \colon \trivial(\{i_0\}) \to \const(\{X,Y\})(\{X,Y\} \leadsto \nonleafs)$};
        \begin{scope}[shift={(0,0)}]
          \node[defnode,scale=0.6] (U) at (0, 0) {$\diamond$};
          \node[leaf,scale=0.6]    (Ui) at ($(U) + (0,-1)$) {$i_0$};
          \draw[-stealth] (U) -- (Ui);

          \node[defnode,scale=0.6] (V) at (3, 0) {$\diamond$};
          \node[nonleaf,scale=0.6]    (VX) at ($(V) + (-0.6,-1)$) {$X$};
          \node[nonleaf,scale=0.6]    (VY) at ($(V) + ( 0.6,-1)$) {$Y$};
          \draw[-stealth] (V) -- (VX);
          \draw[-stealth] (V) -- (VY);

          \draw[dashed, -angle 60] (U) to[bend right=10] (VX);
          \draw[dashed, -angle 60] (U) to[bend left=10] (VY);
          \draw[dashed, -angle 60] (U) to (V);
        \end{scope}
        \begin{scope}[shift={(6,0)}]
          \node[defnode,scale=0.6] (U) at (0, 0) {$x$};
          \node[leaf,scale=0.6]    (Ui) at ($(U) + (0,-1)$) {$x$};
          \draw[-stealth] (U) -- (Ui);

          \node[defnode,scale=0.6] (V) at (3, 0) {$(-1)^\ell \beta_{\tau^\ell} a_1^{(j,\ell)} x$};
          \node[nonleaf,scale=0.6]    (VX) at ($(V) + (-1.0,-1)$) {$(-1)^\ell \beta_{\tau^\ell} a_1^{(j,\ell)} x$};
          \node[nonleaf,scale=0.6]    (VY) at ($(V) + ( 1.0,-1)$) {$(-1)^\ell \beta_{\tau^\ell} a_1^{(j,\ell)} x$};
          \draw[-stealth] (V) -- (VX);
          \draw[-stealth] (V) -- (VY);

          \draw[dashed, -angle 60] (U) to[bend right=10] (VX);
          \draw[dashed, -angle 60] (U) to[bend left=10] (VY);
          \draw[dashed, -angle 60] (U) to (V);
        \end{scope}
      \end{tikzpicture}
      \\[0.5\baselineskip]
      \begin{tikzpicture}[
          scale=0.95,
          defnode/.style={rectangle,draw,inner sep=4pt,outer sep=0pt,minimum size=15pt},
          mydot/.style={circle,fill,inner sep=0.5pt},
          nonleaf/.style={defnode,fill=lightgray},
          leaf/.style={defnode,fill=leafgreen},
          toggle/.style={defnode,fill=leafgreen}
        ]
        \node[scale=0.6] at (5, -1.5) {$\Theta_1^{A(j,\ell)} \colon \trivial(\{i_0\}) \to \lag \left( \prod_{r=2}^s a_r^{(j,\ell)}, B \right)(\{X,Y\} \leadsto \nonleafs)$};
        \begin{scope}[shift={(0,0)}]
          \node[defnode,scale=0.6] (U) at (0, 0) {$\diamond$};
          \node[leaf,scale=0.6]    (Ui) at ($(U) + (0,-1)$) {$i_0$};
          \draw[-stealth] (U) -- (Ui);

          \node[defnode,scale=0.6] (V) at (3, 0) {$\diamond$};
          \node[nonleaf,scale=0.6]    (VX) at ($(V) + (-0.6,-1)$) {$X$};
          \node[nonleaf,scale=0.6]    (VY) at ($(V) + ( 0.6,-1)$) {$Y$};
          \draw[-stealth] (V) -- (VX);
          \draw[-stealth] (V) -- (VY);

          \draw[dashed, -angle 60] (U) to[bend right=10] (VX);
          \draw[dashed, -angle 60] (U) to[bend left=10] (VY);
          \draw[dashed, -angle 60] (U) to (V);
        \end{scope}
        \begin{scope}[shift={(6,0)}]
          \node[defnode,scale=0.6] (U) at (0, 0) {$x$};
          \node[leaf,scale=0.6]    (Ui) at ($(U) + (0,-1)$) {$x$};
          \draw[-stealth] (U) -- (Ui);

          \node[defnode,scale=0.6] (V) at (3.5, 0) {$(-1)^\ell \beta_{\tau^\ell} a_1^{(j,\ell)} x$};
          \node[nonleaf,scale=0.6]    (VX) at ($(V) + (-1.3,-1)$) {$(-1)^\ell \beta_{\tau^\ell} a_1^{(j,\ell)} x$};
          \node[nonleaf,scale=0.6]    (VY) at ($(V) + ( 1.6,-1)$) {$(-1)^\ell \beta_{\tau^\ell} \left( \prod_{r=1}^s a_r^{(j,\ell)} \right) x$};
          \draw[-stealth] (V) -- (VX);
          \draw[-stealth] (V) -- (VY);

          \draw[dashed, -angle 60] (U) to[bend right=10] (VX);
          \draw[dashed, -angle 60] (U) to[bend left=5] (VY);
          \draw[dashed, -angle 60] (U) to (V);
        \end{scope}
      \end{tikzpicture}
      \\[0.5\baselineskip]
      \begin{tikzpicture}[
          scale=0.95,
          defnode/.style={rectangle,draw,inner sep=4pt,outer sep=0pt,minimum size=15pt},
          mydot/.style={circle,fill,inner sep=0.5pt},
          nonleaf/.style={defnode,fill=lightgray},
          leaf/.style={defnode,fill=leafgreen},
          toggle/.style={defnode,fill=leafgreen}
        ]
        \node[scale=0.6] at (5, -1.5) {$\Theta_1^{F0} \colon \trivial(\{i_0\}) \to \Phi(I' \leadsto \nonleafs)$};
        \begin{scope}[shift={(0,0)}]
          \node[defnode,scale=0.6] (U) at (0, 0) {$\diamond$};
          \node[leaf,scale=0.6]    (Ui) at ($(U) + (0,-1)$) {$i_0$};
          \draw[-stealth] (U) -- (Ui);

          \node[defnode,scale=0.6]    (V) at (3, 0) {$\diamond$};
          \node[leaf,scale=0.6]       (VX) at ($(V) + (-0.8,-1)$) {$i_0$};
          \node[scale=0.6]            (Vd1) at ($(V) + (0.5,-1)$) {$\dots$};
          \node[nonleaf,scale=0.6]    (VY) at ($(V) + ( 1,-1)$) {$j$};
          \node[scale=0.6]            (Vd2) at ($(V) + ( 1.5,-1)$) {$\dots$};
          \draw[-stealth] (V) -- (VX);
          \draw[-stealth] (V) -- (VY);

          \draw[dotted, -angle 60] (Ui) to[bend right=10] (VX);
          \draw[dashed, -angle 60] (U) to[bend left=10] (VY);
          \draw[dashed, -angle 60] (U) to (V);
        \end{scope}
        \begin{scope}[shift={(6,0)}]
          \node[defnode,scale=0.6] (U) at (0, 0) {$x$};
          \node[leaf,scale=0.6]    (Ui) at ($(U) + (0,-1)$) {$x$};
          \draw[-stealth] (U) -- (Ui);

        \node[defnode,scale=0.6]    (V) at (3, 0) {$\beta_{\tau^0} x e_{1}$};
        \node[leaf,scale=0.6]       (VX) at ($(V) + (-0.8,-1)$) {$x$};
          \node[scale=0.6]            (Vd1) at ($(V) + (0.5,-1)$) {$\dots$};
          \node[nonleaf,scale=0.6]    (VY) at ($(V) + ( 1.5,-1)$) {$\beta_{\tau^0} a_1^{(j,0)} x$};
          \node[scale=0.6]            (Vd2) at ($(V) + ( 2.5,-1)$) {$\dots$};
          \draw[-stealth] (V) -- (VX);
          \draw[-stealth] (V) -- (VY);

          \draw[dotted, -angle 60] (Ui) to[bend right=10] (VX);
          \draw[dashed, -angle 60] (U) to[bend left=10] (VY);
          \draw[dashed, -angle 60] (U) to (V);
        \end{scope}
      \end{tikzpicture}
      \\[0.5\baselineskip]
      \begin{tikzpicture}[
          scale=0.95,
          defnode/.style={rectangle,draw,inner sep=4pt,outer sep=0pt,minimum size=15pt},
          mydot/.style={circle,fill,inner sep=0.5pt},
          nonleaf/.style={defnode,fill=lightgray},
          leaf/.style={defnode,fill=leafgreen},
          toggle/.style={defnode,fill=leafgreen}
        ]
        \node[scale=0.6] at (5, -1.5) {$\Theta_1^{F\ell} \colon \trivial(\{i_0\}) \to \Phi(I' \leadsto \nonleafs)$, $\ell \ne 0$, $F\ell;i_0 \in S_1$};
        \begin{scope}[shift={(0,0)}]
          \node[defnode,scale=0.6] (U) at (0, 0) {$\diamond$};
          \node[leaf,scale=0.6]    (Ui) at ($(U) + (0,-1)$) {$i_0$};
          \draw[-stealth] (U) -- (Ui);

          \node[defnode,scale=0.6]    (V) at (3, 0) {$\diamond$};
          \node[leaf,scale=0.6]       (VX) at ($(V) + (-0.8,-1)$) {$i_0$};
          \node[scale=0.6]            (Vd1) at ($(V) + (0.5,-1)$) {$\dots$};
          \node[nonleaf,scale=0.6]    (VY) at ($(V) + ( 1,-1)$) {$j$};
          \node[scale=0.6]            (Vd2) at ($(V) + ( 1.5,-1)$) {$\dots$};
          \draw[-stealth] (V) -- (VX);
          \draw[-stealth] (V) -- (VY);

          \draw[dashed, -angle 60] (U) to[bend left=10] (VY);
          \draw[dashed, -angle 60] (U) to (V);
        \end{scope}
        \begin{scope}[shift={(6,0)}]
          \node[defnode,scale=0.6] (U) at (0, 0) {$x$};
          \node[leaf,scale=0.6]    (Ui) at ($(U) + (0,-1)$) {$x$};
          \draw[-stealth] (U) -- (Ui);

        \node[defnode,scale=0.6]    (V) at (3, 0) {$(-1)^\ell \beta_{\tau^\ell} x e_{\tau_1^\ell}$};
        \node[leaf,scale=0.6]       (VX) at ($(V) + (-0.8,-1)$) {$0$};
          \node[scale=0.6]            (Vd1) at ($(V) + (0.3,-1)$) {$\dots$};
          \node[nonleaf,scale=0.6]    (VY) at ($(V) + ( 1.5,-1)$) {$(-1)^\ell \beta_{\tau^\ell} a_1^{(j,\ell)} x$};
          \node[scale=0.6]            (Vd2) at ($(V) + ( 2.7,-1)$) {$\dots$};
          \draw[-stealth] (V) -- (VX);
          \draw[-stealth] (V) -- (VY);

          \draw[dashed, -angle 60] (U) to[bend left=5] (VY);
          \draw[dashed, -angle 60] (U) to (V);
        \end{scope}
      \end{tikzpicture}
      \\[0.5\baselineskip]
      \begin{tikzpicture}[
          scale=0.95,
          defnode/.style={rectangle,draw,inner sep=4pt,outer sep=0pt,minimum size=15pt},
          mydot/.style={circle,fill,inner sep=0.5pt},
          nonleaf/.style={defnode,fill=lightgray},
          leaf/.style={defnode,fill=leafgreen},
          toggle/.style={defnode,fill=leafgreen}
        ]
        \node[scale=0.6] at (5, -1.5) {$\Theta_1^{F\ell} \colon \trivial(\{i_0\}) \to \Phi(I' \leadsto \nonleafs)[I']$, $\ell \ne 0$, $F\ell;i_0 \notin S_1$};
        \begin{scope}[shift={(0,0)}]
          \node[defnode,scale=0.6] (U) at (0, 0) {$\diamond$};
          \node[leaf,scale=0.6]    (Ui) at ($(U) + (0,-1)$) {$i_0$};
          \draw[-stealth] (U) -- (Ui);

          \node[defnode,scale=0.6]    (V) at (3, 0) {$\diamond$};
          \node[scale=0.6]            (Vd1) at ($(V) + (0.5,-1)$) {$\dots$};
          \node[nonleaf,scale=0.6]    (VY) at ($(V) + ( 1,-1)$) {$j$};
          \node[scale=0.6]            (Vd2) at ($(V) + ( 1.5,-1)$) {$\dots$};
          \draw[-stealth] (V) -- (VY);

          \draw[dashed, -angle 60] (U) to[bend left=10] (VY);
          \draw[dashed, -angle 60] (U) to (V);
        \end{scope}
        \begin{scope}[shift={(6,0)}]
          \node[defnode,scale=0.6] (U) at (0, 0) {$x$};
          \node[leaf,scale=0.6]    (Ui) at ($(U) + (0,-1)$) {$x$};
          \draw[-stealth] (U) -- (Ui);

        \node[defnode,scale=0.6]    (V) at (3, 0) {$(-1)^\ell \beta_{\tau^\ell} x e_{\tau_1^\ell}$};
          \node[scale=0.6]            (Vd1) at ($(V) + (0.3,-1)$) {$\dots$};
          \node[nonleaf,scale=0.6]    (VY) at ($(V) + ( 1.5,-1)$) {$(-1)^\ell \beta_{\tau^\ell} a_1^{(j,\ell)} x$};
          \node[scale=0.6]            (Vd2) at ($(V) + ( 2.7,-1)$) {$\dots$};
          \draw[-stealth] (V) -- (VY);

          \draw[dashed, -angle 60] (U) to[bend left=5] (VY);
          \draw[dashed, -angle 60] (U) to (V);
        \end{scope}
      \end{tikzpicture}
      \\[0.5\baselineskip]
      \begin{tikzpicture}[
          scale=0.95,
          defnode/.style={rectangle,draw,inner sep=4pt,outer sep=0pt,minimum size=15pt},
          mydot/.style={circle,fill,inner sep=0.5pt},
          nonleaf/.style={defnode,fill=lightgray},
          leaf/.style={defnode,fill=leafgreen},
          toggle/.style={defnode,fill=leafgreen}
        ]
        \node[scale=0.6] at (5, -1.5) {$\Theta_1^{\bag j} \colon \trivial(\{i_0\}) \to \bigsum_n(\{X0,\dots,X(n-1)\} \leadsto \nonleafs)$};
        \begin{scope}[shift={(0,0)}]
          \node[defnode,scale=0.6] (U) at (0, 0) {$\diamond$};
          \node[leaf,scale=0.6]    (Ui) at ($(U) + (0,-1)$) {$i_0$};
          \draw[-stealth] (U) -- (Ui);

          \node[defnode,scale=0.6]    (V) at (3, 0) {$\diamond$};
          \node[scale=0.6]    (Vd1) at ($(V) + ( -0.7,-1)$) {$\dots$};
          \node[nonleaf,scale=0.6]    (Vi) at ($(V) + ( 0,-1)$) {$X\ell$};
          \node[scale=0.6]            (Vd2) at ($(V) + ( 0.7,-1)$) {$\dots$};
          \draw[-stealth] (V) -- (Vi);

          \draw[dashed, -angle 60] (U) to[bend left=10] (Vi);
          \draw[dashed, -angle 60] (U) to (V);
        \end{scope}
        \begin{scope}[shift={(6,0)}]
          \node[defnode,scale=0.6] (U) at (0, 0) {$x$};
          \node[leaf,scale=0.6]    (Ui) at ($(U) + (0,-1)$) {$x$};
          \draw[-stealth] (U) -- (Ui);

          \node[defnode,scale=0.6]    (V) at (3, 0) {$\left( (-1)^\ell \beta_{\tau^\ell} \left( \prod_{r=1}^s a_r^{(j,\ell)} \right) x\right)_{0 \le \ell \le n-1}$};
          \node[scale=0.6]    (Vd1) at ($(V) + ( -1.5,-1)$) {$\dots$};
          \node[nonleaf,scale=0.6]    (Vi) at ($(V) + ( -0,-1)$) {$(-1)^\ell \beta_{\tau^\ell} \left( \prod_{r=1}^s a_r^{(j,\ell)} \right) x$};
          \node[scale=0.6]    (Vd2) at ($(V) + ( 1.5,-1)$) {$\dots$};
          \draw[-stealth] (V) -- (Vi);

          \draw[dashed, -angle 60] (U) to[bend right=10] (Vi);
          \draw[dashed, -angle 60] (U) to (V);
        \end{scope}
      \end{tikzpicture}
    \end{center}
    \caption{The morphisms $\Theta_1^{(G)}$ from Claim~\ref{claim:ass-claim}.}%
    \label{fig:ass-claim-1}
  \end{figure}
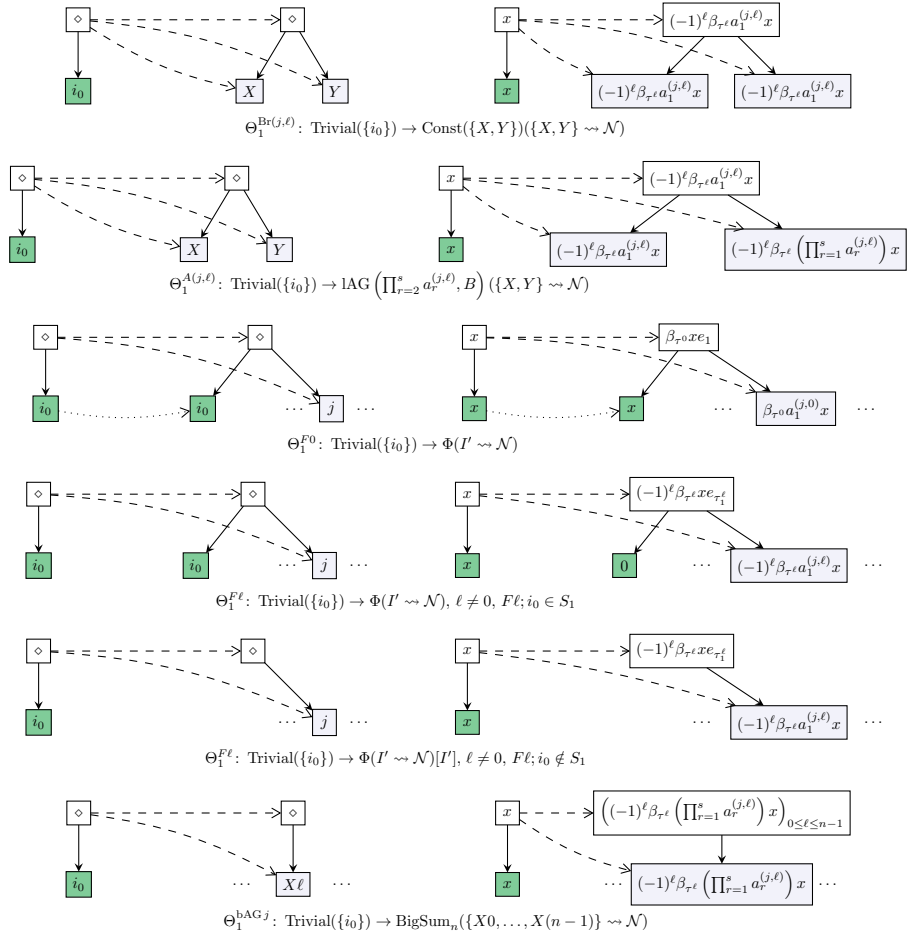

  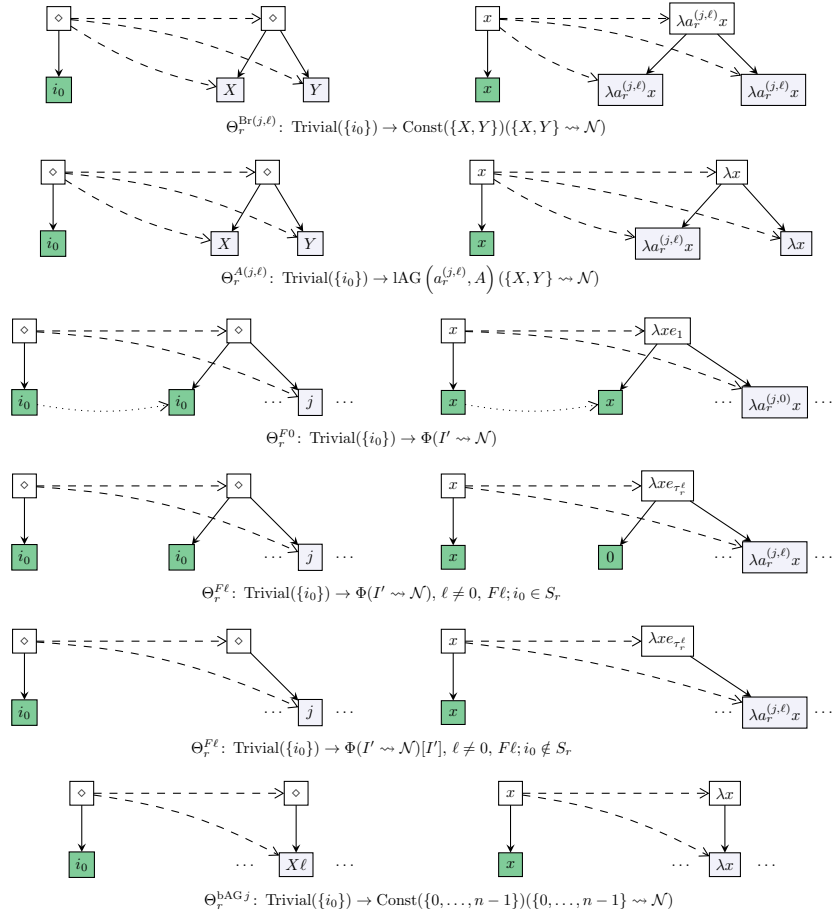
\begin{figure}
    \begin{center}
      \begin{tikzpicture}[
          scale=0.95,
          defnode/.style={rectangle,draw,inner sep=4pt,outer sep=0pt,minimum size=15pt},
          mydot/.style={circle,fill,inner sep=0.5pt},
          nonleaf/.style={defnode,fill=lightgray},
          leaf/.style={defnode,fill=leafgreen},
          toggle/.style={defnode,fill=leafgreen}
        ]
        \node[scale=0.6] at (5, -1.5) {$\Theta_r^{\opbr(j,\ell)} \colon \trivial(\{i_0\}) \to \const(\{X,Y\})(\{X,Y\} \leadsto \nonleafs)$};
        \begin{scope}[shift={(0,0)}]
          \node[defnode,scale=0.6] (U) at (0, 0) {$\diamond$};
          \node[leaf,scale=0.6]    (Ui) at ($(U) + (0,-1)$) {$i_0$};
          \draw[-stealth] (U) -- (Ui);

          \node[defnode,scale=0.6] (V) at (3, 0) {$\diamond$};
          \node[nonleaf,scale=0.6]    (VX) at ($(V) + (-0.6,-1)$) {$X$};
          \node[nonleaf,scale=0.6]    (VY) at ($(V) + ( 0.6,-1)$) {$Y$};
          \draw[-stealth] (V) -- (VX);
          \draw[-stealth] (V) -- (VY);

          \draw[dashed, -angle 60] (U) to[bend right=10] (VX);
          \draw[dashed, -angle 60] (U) to[bend left=10] (VY);
          \draw[dashed, -angle 60] (U) to (V);
        \end{scope}
        \begin{scope}[shift={(6,0)}]
          \node[defnode,scale=0.6] (U) at (0, 0) {$x$};
          \node[leaf,scale=0.6]    (Ui) at ($(U) + (0,-1)$) {$x$};
          \draw[-stealth] (U) -- (Ui);

          \node[defnode,scale=0.6] (V) at (3, 0) {$\lambda a_r^{(j,\ell)} x$};
          \node[nonleaf,scale=0.6]    (VX) at ($(V) + (-1.0,-1)$) {$\lambda a_r^{(j,\ell)} x$};
          \node[nonleaf,scale=0.6]    (VY) at ($(V) + ( 1.0,-1)$) {$\lambda a_r^{(j,\ell)} x$};
          \draw[-stealth] (V) -- (VX);
          \draw[-stealth] (V) -- (VY);

          \draw[dashed, -angle 60] (U) to[bend right=10] (VX);
          \draw[dashed, -angle 60] (U) to[bend left=10] (VY);
          \draw[dashed, -angle 60] (U) to (V);
        \end{scope}
      \end{tikzpicture}
      \\[0.5\baselineskip]
      \begin{tikzpicture}[
          scale=0.95,
          defnode/.style={rectangle,draw,inner sep=4pt,outer sep=0pt,minimum size=15pt},
          mydot/.style={circle,fill,inner sep=0.5pt},
          nonleaf/.style={defnode,fill=lightgray},
          leaf/.style={defnode,fill=leafgreen},
          toggle/.style={defnode,fill=leafgreen}
        ]
        \node[scale=0.6] at (5, -1.5) {$\Theta_r^{A(j,\ell)} \colon \trivial(\{i_0\}) \to \lag \left(a_r^{(j,\ell)}, A \right)(\{X,Y\} \leadsto \nonleafs)$};
        \begin{scope}[shift={(0,0)}]
          \node[defnode,scale=0.6] (U) at (0, 0) {$\diamond$};
          \node[leaf,scale=0.6]    (Ui) at ($(U) + (0,-1)$) {$i_0$};
          \draw[-stealth] (U) -- (Ui);

          \node[defnode,scale=0.6] (V) at (3, 0) {$\diamond$};
          \node[nonleaf,scale=0.6]    (VX) at ($(V) + (-0.6,-1)$) {$X$};
          \node[nonleaf,scale=0.6]    (VY) at ($(V) + ( 0.6,-1)$) {$Y$};
          \draw[-stealth] (V) -- (VX);
          \draw[-stealth] (V) -- (VY);

          \draw[dashed, -angle 60] (U) to[bend right=10] (VX);
          \draw[dashed, -angle 60] (U) to[bend left=10] (VY);
          \draw[dashed, -angle 60] (U) to (V);
        \end{scope}
        \begin{scope}[shift={(6,0)}]
          \node[defnode,scale=0.6] (U) at (0, 0) {$x$};
          \node[leaf,scale=0.6]    (Ui) at ($(U) + (0,-1)$) {$x$};
          \draw[-stealth] (U) -- (Ui);

          \node[defnode,scale=0.6] (V) at (3.5, 0) {$\lambda x$};
          \node[nonleaf,scale=0.6]    (VX) at ($(V) + (-0.9,-1)$) {$\lambda a_r^{(j,\ell)} x$};
          \node[nonleaf,scale=0.6]    (VY) at ($(V) + ( 0.9,-1)$) {$\lambda x$};
          \draw[-stealth] (V) -- (VX);
          \draw[-stealth] (V) -- (VY);

          \draw[dashed, -angle 60] (U) to[bend right=10] (VX);
          \draw[dashed, -angle 60] (U) to[bend left=5] (VY);
          \draw[dashed, -angle 60] (U) to (V);
        \end{scope}
      \end{tikzpicture}
      \\[0.5\baselineskip]
      \begin{tikzpicture}[
          scale=0.95,
          defnode/.style={rectangle,draw,inner sep=4pt,outer sep=0pt,minimum size=15pt},
          mydot/.style={circle,fill,inner sep=0.5pt},
          nonleaf/.style={defnode,fill=lightgray},
          leaf/.style={defnode,fill=leafgreen},
          toggle/.style={defnode,fill=leafgreen}
        ]
        \node[scale=0.6] at (5, -1.5) {$\Theta_r^{F0} \colon \trivial(\{i_0\}) \to \Phi(I' \leadsto \nonleafs)$};
        \begin{scope}[shift={(0,0)}]
          \node[defnode,scale=0.6] (U) at (0, 0) {$\diamond$};
          \node[leaf,scale=0.6]    (Ui) at ($(U) + (0,-1)$) {$i_0$};
          \draw[-stealth] (U) -- (Ui);

          \node[defnode,scale=0.6]    (V) at (3, 0) {$\diamond$};
          \node[leaf,scale=0.6]       (VX) at ($(V) + (-0.8,-1)$) {$i_0$};
          \node[scale=0.6]            (Vd1) at ($(V) + (0.5,-1)$) {$\dots$};
          \node[nonleaf,scale=0.6]    (VY) at ($(V) + ( 1,-1)$) {$j$};
          \node[scale=0.6]            (Vd2) at ($(V) + ( 1.5,-1)$) {$\dots$};
          \draw[-stealth] (V) -- (VX);
          \draw[-stealth] (V) -- (VY);

          \draw[dotted, -angle 60] (Ui) to[bend right=10] (VX);
          \draw[dashed, -angle 60] (U) to[bend left=10] (VY);
          \draw[dashed, -angle 60] (U) to (V);
        \end{scope}
        \begin{scope}[shift={(6,0)}]
          \node[defnode,scale=0.6] (U) at (0, 0) {$x$};
          \node[leaf,scale=0.6]    (Ui) at ($(U) + (0,-1)$) {$x$};
          \draw[-stealth] (U) -- (Ui);

        \node[defnode,scale=0.6]    (V) at (3, 0) {$\lambda x e_{1}$};
        \node[leaf,scale=0.6]       (VX) at ($(V) + (-0.8,-1)$) {$x$};
          \node[scale=0.6]            (Vd1) at ($(V) + (0.8,-1)$) {$\dots$};
          \node[nonleaf,scale=0.6]    (VY) at ($(V) + ( 1.5,-1)$) {$\lambda a_r^{(j,0)} x$};
          \node[scale=0.6]            (Vd2) at ($(V) + ( 2.2,-1)$) {$\dots$};
          \draw[-stealth] (V) -- (VX);
          \draw[-stealth] (V) -- (VY);

          \draw[dotted, -angle 60] (Ui) to[bend right=10] (VX);
          \draw[dashed, -angle 60] (U) to[bend left=10] (VY);
          \draw[dashed, -angle 60] (U) to (V);
        \end{scope}
      \end{tikzpicture}
      \\[0.5\baselineskip]
      \begin{tikzpicture}[
          scale=0.95,
          defnode/.style={rectangle,draw,inner sep=4pt,outer sep=0pt,minimum size=15pt},
          mydot/.style={circle,fill,inner sep=0.5pt},
          nonleaf/.style={defnode,fill=lightgray},
          leaf/.style={defnode,fill=leafgreen},
          toggle/.style={defnode,fill=leafgreen}
        ]
        \node[scale=0.6] at (5, -1.5) {$\Theta_r^{F\ell} \colon \trivial(\{i_0\}) \to \Phi(I' \leadsto \nonleafs)$, $\ell \ne 0$, $F\ell;i_0 \in S_r$};
        \begin{scope}[shift={(0,0)}]
          \node[defnode,scale=0.6] (U) at (0, 0) {$\diamond$};
          \node[leaf,scale=0.6]    (Ui) at ($(U) + (0,-1)$) {$i_0$};
          \draw[-stealth] (U) -- (Ui);

          \node[defnode,scale=0.6]    (V) at (3, 0) {$\diamond$};
          \node[leaf,scale=0.6]       (VX) at ($(V) + (-0.8,-1)$) {$i_0$};
          \node[scale=0.6]            (Vd1) at ($(V) + (0.5,-1)$) {$\dots$};
          \node[nonleaf,scale=0.6]    (VY) at ($(V) + ( 1,-1)$) {$j$};
          \node[scale=0.6]            (Vd2) at ($(V) + ( 1.5,-1)$) {$\dots$};
          \draw[-stealth] (V) -- (VX);
          \draw[-stealth] (V) -- (VY);

          \draw[dashed, -angle 60] (U) to[bend left=10] (VY);
          \draw[dashed, -angle 60] (U) to (V);
        \end{scope}
        \begin{scope}[shift={(6,0)}]
          \node[defnode,scale=0.6] (U) at (0, 0) {$x$};
          \node[leaf,scale=0.6]    (Ui) at ($(U) + (0,-1)$) {$x$};
          \draw[-stealth] (U) -- (Ui);

        \node[defnode,scale=0.6]    (V) at (3, 0) {$\lambda x e_{\tau_r^\ell}$};
        \node[leaf,scale=0.6]       (VX) at ($(V) + (-0.8,-1)$) {$0$};
          \node[scale=0.6]            (Vd1) at ($(V) + (0.8,-1)$) {$\dots$};
          \node[nonleaf,scale=0.6]    (VY) at ($(V) + ( 1.5,-1)$) {$\lambda a_r^{(j,\ell)} x$};
          \node[scale=0.6]            (Vd2) at ($(V) + ( 2.2,-1)$) {$\dots$};
          \draw[-stealth] (V) -- (VX);
          \draw[-stealth] (V) -- (VY);

          \draw[dashed, -angle 60] (U) to[bend left=5] (VY);
          \draw[dashed, -angle 60] (U) to (V);
        \end{scope}
      \end{tikzpicture}
      \\[0.5\baselineskip]
      \begin{tikzpicture}[
          scale=0.95,
          defnode/.style={rectangle,draw,inner sep=4pt,outer sep=0pt,minimum size=15pt},
          mydot/.style={circle,fill,inner sep=0.5pt},
          nonleaf/.style={defnode,fill=lightgray},
          leaf/.style={defnode,fill=leafgreen},
          toggle/.style={defnode,fill=leafgreen}
        ]
        \node[scale=0.6] at (5, -1.5) {$\Theta_r^{F\ell} \colon \trivial(\{i_0\}) \to \Phi(I' \leadsto \nonleafs)[I']$, $\ell \ne 0$, $F\ell;i_0 \notin S_r$};
        \begin{scope}[shift={(0,0)}]
          \node[defnode,scale=0.6] (U) at (0, 0) {$\diamond$};
          \node[leaf,scale=0.6]    (Ui) at ($(U) + (0,-1)$) {$i_0$};
          \draw[-stealth] (U) -- (Ui);

          \node[defnode,scale=0.6]    (V) at (3, 0) {$\diamond$};
          \node[scale=0.6]            (Vd1) at ($(V) + (0.5,-1)$) {$\dots$};
          \node[nonleaf,scale=0.6]    (VY) at ($(V) + ( 1,-1)$) {$j$};
          \node[scale=0.6]            (Vd2) at ($(V) + ( 1.5,-1)$) {$\dots$};
          \draw[-stealth] (V) -- (VY);

          \draw[dashed, -angle 60] (U) to[bend left=10] (VY);
          \draw[dashed, -angle 60] (U) to (V);
        \end{scope}
        \begin{scope}[shift={(6,0)}]
          \node[defnode,scale=0.6] (U) at (0, 0) {$x$};
          \node[leaf,scale=0.6]    (Ui) at ($(U) + (0,-1)$) {$x$};
          \draw[-stealth] (U) -- (Ui);

        \node[defnode,scale=0.6]    (V) at (3, 0) {$\lambda x e_{\tau_r^\ell}$};
          \node[scale=0.6]            (Vd1) at ($(V) + (0.8,-1)$) {$\dots$};
          \node[nonleaf,scale=0.6]    (VY) at ($(V) + ( 1.5,-1)$) {$\lambda a_r^{(j,\ell)} x$};
          \node[scale=0.6]            (Vd2) at ($(V) + ( 2.2,-1)$) {$\dots$};
          \draw[-stealth] (V) -- (VY);

          \draw[dashed, -angle 60] (U) to[bend left=5] (VY);
          \draw[dashed, -angle 60] (U) to (V);
        \end{scope}
      \end{tikzpicture}
      \\[0.5\baselineskip]
      \begin{tikzpicture}[
          scale=0.95,
          defnode/.style={rectangle,draw,inner sep=4pt,outer sep=0pt,minimum size=15pt},
          mydot/.style={circle,fill,inner sep=0.5pt},
          nonleaf/.style={defnode,fill=lightgray},
          leaf/.style={defnode,fill=leafgreen},
          toggle/.style={defnode,fill=leafgreen}
        ]
        \node[scale=0.6] at (5, -1.5) {$\Theta_r^{\bag j} \colon \trivial(\{i_0\}) \to \const(\{0,\dots,n-1\})(\{0,\dots,n-1\} \leadsto \nonleafs)$};
        \begin{scope}[shift={(0,0)}]
          \node[defnode,scale=0.6] (U) at (0, 0) {$\diamond$};
          \node[leaf,scale=0.6]    (Ui) at ($(U) + (0,-1)$) {$i_0$};
          \draw[-stealth] (U) -- (Ui);

          \node[defnode,scale=0.6]    (V) at (3, 0) {$\diamond$};
          \node[scale=0.6]    (Vd1) at ($(V) + ( -0.7,-1)$) {$\dots$};
          \node[nonleaf,scale=0.6]    (Vi) at ($(V) + ( 0,-1)$) {$X\ell$};
          \node[scale=0.6]            (Vd2) at ($(V) + ( 0.7,-1)$) {$\dots$};
          \draw[-stealth] (V) -- (Vi);

          \draw[dashed, -angle 60] (U) to[bend left=10] (Vi);
          \draw[dashed, -angle 60] (U) to (V);
        \end{scope}
        \begin{scope}[shift={(6,0)}]
          \node[defnode,scale=0.6] (U) at (0, 0) {$x$};
          \node[leaf,scale=0.6]    (Ui) at ($(U) + (0,-1)$) {$x$};
          \draw[-stealth] (U) -- (Ui);

          \node[defnode,scale=0.6]    (V) at (3, 0) {$\lambda x$};
          \node[scale=0.6]    (Vd1) at ($(V) + ( -0.6,-1)$) {$\dots$};
          \node[nonleaf,scale=0.6]    (Vi) at ($(V) + ( -0,-1)$) {$\lambda x$};
          \node[scale=0.6]    (Vd2) at ($(V) + ( 0.6,-1)$) {$\dots$};
          \draw[-stealth] (V) -- (Vi);

          \draw[dashed, -angle 60] (U) to[bend left=10] (Vi);
          \draw[dashed, -angle 60] (U) to (V);
        \end{scope}
      \end{tikzpicture}
    \end{center}
    \caption{The morphisms $\Theta_r^{(G)}$, $2 \le r \le s$, from Claim~\ref{claim:ass-claim}.}%
    \label{fig:ass-claim-2}
  \end{figure}
  
  For every morphism $\Theta_r^{(G)}$ we take $\alpha(x) = \diamond$ unless $G = F\ell$ and $x=i_0$, in which case $\alpha(x) = \zeroi$ for $\ell \ne 0$ and $\alpha(x) = i_0$ for $\ell=0$.
  
  In some cases a brief justification is necessary that these are valid morphisms, similar to the informal discussion above.
  In the case of $\Theta_1^{\bag j}$, for the linear map
  \begin{align*}
    \theta_\diamond^{\Theta_1^{\bag j}} \colon \FF_p &\to V^{\bigsum_n} \\
      x &\mapsto \left( (-1)^\ell \beta_{\tau^\ell} a_1^{(j,\ell)} x \right)_{0 \le \ell \le n-1}.
  \end{align*}
  to make sense we need the $\pm 1$ sum of the tuple on the right to be zero, i.e.,
  \[
    \sum_{\ell=0}^{n-1} \beta_{\tau^\ell} \left( \prod_{r=1}^s a_r^{(j,\ell)} \right) = 0.
  \]
  This holds by~\eqref{eq:beta-prop} and $a_r^{(j,\ell)} \bmod p = \phi_j(e_{\tau_r^\ell})$.

  For the maps $\Theta_r^{(F0)}$ we need to check that
  \[
    \phi_{i_0}(\beta_{\tau^0} e_1) = 1
  \]
  when $r=1$, which holds by~\eqref{eq:beta-i0}, and for $2 \le r \le s$ that
  \[
    \phi_{i_0}(\lambda e_1) = 1
  \]
  which holds by definition of $\lambda$.

  Similarly, for $\Theta_r^{(F\ell)}$ when $\ell \ne 0$ and $F\ell;i_0 \in S_r$, we need to verify that
  \[
    \phi_{i_0}(e_{\tau_r^\ell}) = 0
  \]
  which holds because $\tau_r^\ell \ne 1$ (by construction of $S_r^{(F\ell)}$) and $\phi_{i_0}(e_i) = 0$ for $i \ne 1$ (by Claim~\ref{claim:basis}).
  
Recall that the morphisms $\cM_r^{(G)}$ respect all of the vertices $x$ appearing in~\eqref{eq:compat-maps} and~\eqref{eq:compat-maps-2}, so $\theta^{\cM_r^{(G)}}_x$ are the identity map for these $x$.
  Hence the maps $\theta^{\fM_r^{(G)}}_x$ coincide with the corresponding maps in Figure~\ref{fig:ass-claim-1} and Figure~\ref{fig:ass-claim-2}.
  For example, since $\alpha^{\cM_1^{(A(j,\ell))}}(Y) = Y$, the second row of Figure~\ref{fig:ass-claim-1} gives
  \[
    \theta^{\fM_1^{(A(j,\ell))}}_Y = x \mapsto (-1)^\ell \beta_{\tau^\ell} \left( \prod_{r=1}^s a_r^{(j,\ell)} \right) x
  \]
  as required by~\eqref{eq:compat-maps}.
The other parts of~\eqref{eq:compat-maps},\eqref{eq:compat-maps-2}, and the condition $\theta_{i_0}^{\fM_r^{(F0)}} = \id_{\FF_p}$, follow similarly.
\end{proof}

This completes the proof of Theorem~\ref{thm:main-ext} and Theorem~\ref{thm:main}.

\section{Conjectures}%
\label{sec:conjectures}

In Section~\ref{sub:background} we began by stating a very general question: if a putative inequality has a corresponding 100\% functional equation statement which is (a) true, or (b) has an elementary proof, when can we turn this into a Cauchy--Schwarz proof of the inequality?

We now revisit this question and attempt to make it precise.

\subsection{Cases to avoid}%
\label{sub:bad-conj}

Some caution needs to be taken when phrasing this precisely.
For example, consider the following statement: for some $C>0$ and any $1$-bounded function $f \colon \FF_p^n \to \CC$,
\[
  \left\lvert \EE_{x,y \in \FF_p^n} f(x) f(y) \overline{f(x+y)^2} \right\rvert \ge \left\lvert \EE_{x \in \FF_p^n} f(x) \right\rvert^{C}.
\]
The 100\% analogue states that if $F \colon \FF_p^n \to \RR/\ZZ$ is a constant function then
\[
  \forall x,y \in \FF_p^n:\  F(x)+F(y) = 2 F(x+y)
\]
which is clearly true.
However, the statement itself is clearly false: for example, take $f=1_S$ for $S \subseteq \FF_p^n$ a non-empty sum-free set.

We can see what has gone wrong by writing down an elementary and Cauchy--Schwarz friendly proof of the 100\% functional equation version.
Given the hypothesis $\forall a\in A \colon F(a) = A$, by a duplication step we have $\forall a,b \in \FF_p^n \colon F(a)=F(b)$ or equivalently 
$
  \forall a,b \in \FF_p^n \colon F(a) - F(b) = 0
$.
By another duplication step,
\[
  \forall a,b,a',b' \in \FF_p^n \colon F(a) - F(b) = F(a') - F(b').
\]
By specialization we can set $a=x$, $b'=y$, $a'=b=x+y$.
Phrased in the language of Section~\ref{sub:linear-datum}, by $\CS(\emptyset)$, $\CS(\emptyset)$ followed by a $\morph$ step we have
\[
  \trivial(\{A\}) \entails^1 \trivial(\{A,B\}) \entails^1 \trivial(\{A,B,A',B'\}) \entails^0 \Phi
\]
where $\Phi$ is the system of linear forms $(x,y,x+y,x+y)$.

However, the statement that comes out of this and Corollary~\ref{cor:log-entails} is that
there exist functions $f_1,\dots,f_4$ which are all translates of $f$, such that
\[
  \left\lvert \EE_{x,y \in \FF_p^n} f_1(x) f_2(y) \overline{f_3(x+y)} \overline{f_4(x+y)} \right\rvert \ge \left\lvert \EE_{x \in \FF_p^n} f(x) \right\rvert^{4}.
\]
This weaker statement is true and does not conflict with the example above.

In other words, we \emph{can} turn the 100\% proof into a Cauchy--Schwarz proof, but it is not a proof of the (naive, false) inequality we first wrote down.
Instead it is necessary and natural to introduce additional translates, as in the proof of Proposition~\ref{prop:morphism}.

In the same way, these extra translates save us from trying to prove a polynomial bound in Roth's theorem over $\FF_p$ for $p$ large, which would also be false by Behrend's construction~\cite{behrend}.

\subsection{A precise phrasing}%
\label{sub:precise}

It therefore turns out that the framework of Section~\ref{sub:linear-datum} is already well-suited for phrasing a conjecture that is not obviously false.
The most ambitious version is the following.

\begin{conjecture}[Bold conjecture]%
  \label{conj:bold-conjecture}
  Let $\Phi$, $\Psi$ be linear data and let $\gamma \colon I^\Psi \to I^\Phi \times \{0,1\}$ be a partial function.
  Suppose the following holds: for any tuple of functions $(F_i)_{i \in I^\Phi}$, $F_i \colon W_i^\Phi \to \RR/\ZZ$, and $A \in \RR/\ZZ$ such that
  \[
    \forall v \in V^\Phi \colon \sum_{i \in I^\Phi} F_i\bigl(\phi^\Phi_i(v)\bigr) = A
  \]
  there exists a tuple of functions $(F'_j)_{j \in I^\Psi}$ and $B \in \RR/\ZZ$ such that
  \[
    \forall u \in V^\Psi \colon \sum_{j \in I^\Psi} F'_j\bigl(\phi^\Psi_j(u)\bigr) = B
  \]
  and such that whenever $\gamma(j) = (i,\ell)$ is defined, $F'_j = (-1)^{\ell} F_i$.

  Then for some $M \ge 0$, $\Phi \entails_{\gamma}^M \Psi$.
\end{conjecture}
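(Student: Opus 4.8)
The plan is to split the argument into two halves: a ``100\% calculus'' reducing the hypothesis to linear algebra over $\FF_p$, and a ``circuit synthesis'' step turning that linear algebra into a Cauchy--Schwarz proof $\Phi \entails_\gamma^M \Psi$. For the first half I would analyse solutions $F_i \colon W_i^\Phi \to \RR/\ZZ$ of the functional equation $\sum_i F_i(\phi_i^\Phi(v)) = A$ as in Section~\ref{sub:background}: after tensoring the spaces $W_i^\Phi$ with $\FF_p^n$ for large $n$ and invoking the tensor-power trick, such an $F_i$ decomposes into homogeneous pieces, the degree-$t$ piece being (up to the non-classical corrections of~\cite{tao-blog} when $t\ge p$) a symmetric tensor in $\mathrm{Sym}^t\bigl((W_i^\Phi)^\ast\bigr)$, and the functional equation becomes, degree by degree, a single $\FF_p$-linear relation among the pullbacks of these tensors along $\phi_i^\Phi$. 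The hypothesis of Conjecture~\ref{conj:bold-conjecture} then unfolds into a family of purely linear-algebraic implications, one for each $t\ge 1$, each of exactly the shape studied throughout the paper: partition the indices, bound joint kernels, and keep track of the matching $\gamma$.

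For the second half I would try to realise each degree-$t$ implication by a programmable circuit, generalising the $\stargate$ of Section~\ref{sub:stargates}. Three sub-tasks present themselves: (i) extend the universality statements Lemma~\ref{lem:gc-cs} and Lemma~\ref{lem:gc-cs-baby}, which encode one tensor identity as a single morphism from a generalized-convolution-type datum, so that \emph{any} morphism certifying a tensor deduction can be built; (ii) assemble from $\aggregate$, $\supera$ and $\bigagg$ a library of gates implementing the elementary multilinear moves (rescaling, symmetrisation, contraction, change of basis) needed to pass from one linear relation to another, not merely ``set a class of indices to zero''; and (iii) chain the degrees together by the degree-reduction scheme of Section~\ref{sub:true-summary} together with the stashing machinery of Section~\ref{sec:stashing} (Proposition~\ref{prop:awesome-stashing}, Corollary~\ref{cor:key-cor}), banking a deduction established at degree $t$ and feeding it into the degree-$(t-1)$ argument exactly as in the proof of Theorem~\ref{thm:main-gc}. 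The resulting $M$ would be read off from the total circuit size; I would expect it to be finite but in general far from polynomial in the data of $\Phi$ and $\Psi$.

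I expect the main obstacle to be step (ii), and this is precisely why the statement is a conjecture rather than a theorem. The circuit formalism only lets us perform a move by joining pins of gates already present, so the realisable tensor deductions are constrained by the reflection symmetry discussed after Figure~\ref{fig:bilinear} and by the ``serial / non-transitive / no-memory'' features (A)--(D) of Section~\ref{sub:background}; proving the conjecture amounts to showing this symmetry is never fatal, i.e.\ that programmability always buys back enough freedom. A sensible intermediate milestone, and the natural proving ground for step (ii), is the asymmetric Gowers--Wolf statement of Conjecture~\ref{conj:asymmetric}, where the sole new difficulty over Theorem~\ref{thm:main-ext} is that the intermediate multilinear forms $B_j$ (for $j\ne i_0$) need not be globally defined --- already this appears to require a genuinely new circuit idea, and carrying it out would both test the method and supply a template for the general case.

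Finally, two consistency checks must be threaded through the reduction. The $\entails$ relation only ever produces translates and conjugate-translates, whereas the hypothesis of Conjecture~\ref{conj:bold-conjecture} demands $F'_j = (-1)^\ell F_i$ on the nose; as the examples of Section~\ref{sub:bad-conj} show, this strengthening of the hypothesis is exactly what keeps the statement from being false (it is what the extra translates in Proposition~\ref{prop:morphism} buy us), so the first half must faithfully match the two $\gamma$'s. And the degree-by-degree decomposition must be verified to interact correctly with the non-classical regime $t\ge p$, which is the other place the argument could break and where I would most expect hidden technical difficulty.
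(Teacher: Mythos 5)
This statement is Conjecture~\ref{conj:bold-conjecture}, and the paper does not prove it: it is stated as an open conjecture in Section~\ref{sec:conjectures}, where the author explicitly notes only that they are ``not aware of any counterexamples'' and that even the special case Conjecture~\ref{conj:asymmetric} (the asymmetric Gowers--Wolf statement) remains open, with Example~\ref{ex:bad-asymmetric} giving a concrete unresolved instance. So there is no ``paper's own proof'' to compare against, and no proof proposal could be complete here.

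To your credit, you recognise this explicitly --- you write that the difficulty in your step (ii) ``is precisely why the statement is a conjecture rather than a theorem.'' What you have written is not a proof but an honest research sketch, and it lines up well with the paper's own framing. Your two-phase decomposition (functional-equation analysis into degree-$t$ tensor identities, then circuit synthesis) is the natural one; your identification of Conjecture~\ref{conj:asymmetric} as the proving ground matches Example~\ref{ex:bad-asymmetric}; your observation that the exact matching $F'_j = (-1)^\ell F_i$ rather than ``translate of'' is what keeps the conjecture from being falsified by the examples of Section~\ref{sub:bad-conj} is precisely the paper's point there; and your flagging of the non-classical regime $t \ge p$ is a legitimate worry. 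The one thing you might have added is the paper's own observation (Remark~\ref{rem:logic}) that the conjecture is really a completeness statement --- that $\Phi \models \Psi$ should imply $\Phi \entails \Psi$ --- which sharpens why step (ii) is hard: you are not trying to implement one known argument, you are trying to show the $\entails$ calculus is complete against semantic entailment, and no amount of gate engineering can settle that without a structural idea about \emph{which} tensor deductions exist. Your proposal gestures at this but does not confront it, which is the genuine gap --- the same gap the paper itself leaves open.
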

In particular, the conclusion implies an inequality statement by Corollary~\ref{cor:log-entails}.

Variants are possible: we could restrict to systems of linear forms $W_i^\Phi = \FF_p = W_j^\Psi$, or we could weaken the hypothesis $F'_j = (-1)^{\ell} F_i$ to stating that $F'_j$ is a translate of $(-1)^\ell F_i$.
It could also perhaps make a difference to replace the domain $W_i^\Phi$ of $F_i$ with $(W_i^\Phi)^n$ for an arbitrary $n \ge 1$.

The author is not aware of any counterexamples even to this very strong conjecture.
Conjecture~\ref{conj:asymmetric} corresponds to the cases where $\Psi = \gc_s$ or equivalently $\Psi=U^{s+1}$.
Hence Example~\ref{ex:bad-asymmetric} gives an open case of Conjecture~\ref{conj:bold-conjecture}.

\begin{remark}%
  \label{rem:logic}
  It is notable that this conjecture has more in common with logic than analysis.
  Essentially we are asking for a completeness theorem for the not-quite-first-order-language defined implicitly by $\entails$. 
  Indeed, the hypothesis states roughly that $\Phi \models \Psi$: i.e., every ``model'' $(F_i)_{i \in I^\Phi}$ of $\Phi$ gives a corresponding ``model'' $(F'_j)_{j \in I^\Psi}$ of $\Psi$.
  The desired conclusion is that this implies $\Phi \entails \Psi$, i.e., that there is a syntactic ``proof'' of $\Psi$ assuming $\Phi$.
\end{remark}

\subsection{Requiring an elementary argument}%
\label{sub:elem-arg}

We originally asked for less than Conjecture~\ref{conj:bold-conjecture} in that we assumed we already had \emph{some} elementary proof of the functional equation identity, albeit not one of Cauchy--Schwarz type.

It is natural to ask whether any true functional equation identity of this sort has an elementary proof.
Certainly Conjecture~\ref{conj:bold-conjecture} implies the answer is yes.
As such issues are beyond the author's expertise, we formulate a weaker conjecture where this question is factored out, by conditioning on the existence of an elementary proof.

There are many possibly ways to encode formally what an elementary proof means in this setting, but one guess is the following.

\begin{definition}%
  \label{def:elem}
  We define a relation $\Phi {\entails'}^k_\gamma \Psi$ by adding another inference rule (B) to Definition~\ref{def:logic-notation}, which states the following.

    Suppose we already have $\Phi {\entails'}^{k_0}_{\gamma_0} \Psi_0$ and $\Phi {\entails'}^{k_1}_{\gamma_1} \Psi_1$.
    Suppose also that $J_0 \subseteq I^{\Psi_0}$ and $J_1 \subseteq I^{\Psi_1}$ are subsets and $\beta$ is a partial matching between $J_0$ and $J_1$ such that $W^{\Psi_0}_{i_0} = W^{\Psi_1}_{i_1}$ for all $(i_0,i_1) \in \beta$.

    Finally, suppose that for all $(i_0,i_1) \in \beta$, $\gamma_0(i_0)$ and $\gamma_1(i_1)$ are both defined and $\gamma_0(i_0) = \gamma_1(i_1)$.

    Then $\Phi {\entails'}^{k_0+k_1}_{\gamma} \bigl( \Psi_0 +_\beta \Psi_1 \bigr)$, where $\gamma$ is defined by $\gamma(L;i_0) = \gamma_0(i_0)$ and $\gamma(R;i_1) = (j,\ell+1\bmod2)$ if $\gamma_1(i_1) = (j,\ell)$.
\end{definition}
We call this an ``$\opr{Eliminate}$'' step.
It allows us to use the previously missing logic that if
\[
  \forall u \in V^{\Psi_0}:\ [E_1](u) = \sum_{i \in J_0} F'_j(\phi^{\Psi_0}_i(u))
\]
and
\[
  \forall v \in V^{\Psi_1}:\ [E_2](v) = \sum_{j \in J_1} F'_j(\phi^{\Psi_1}_j(v))
\]
for some expressions $[E_1]$ and $[E_2]$, then for all pairs $(u,v)$ such that $\phi^{\Psi_0}_i(u)=\phi^{\Psi_1}_j(v)$ for all $(i,j) \in \beta$ (i.e., the right-hand sides are demonstrably equal) we have $[E_1](u) = [E_2](v)$.

Then a weaker version of Conjecture~\ref{conj:bold-conjecture} is the following.

\begin{conjecture}[Slightly less bold conjecture]%
  \label{conj:less-bold}
  If $\Phi {\entails'}^k_\gamma \Psi$ then $\Phi {\entails}^{k'}_\gamma \Psi$ for some $k' \ge 0$.
\end{conjecture}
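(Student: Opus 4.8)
The plan is to show that the \opr{Eliminate} rule of Definition~\ref{def:elem} is \emph{admissible} for the calculus $\entails$, by induction on the number of \opr{Eliminate} steps in a given derivation witnessing $\Phi {\entails'}^k_\gamma \Psi$. If there are none, $\entails'$ and $\entails$ already coincide and there is nothing to do. Otherwise, choose an \opr{Eliminate} step of maximal depth in the derivation tree: its two input derivations contain no further \opr{Eliminate} steps, so they are genuine derivations $\Phi \entails^{k_0}_{\gamma_0} \Psi_0$ and $\Phi \entails^{k_1}_{\gamma_1} \Psi_1$, equipped with a partial matching $\beta$ between $J_0 \subseteq I^{\Psi_0}$ and $J_1 \subseteq I^{\Psi_1}$ with $\gamma_0(i_0) = \gamma_1(i_1)$ for every $(i_0,i_1)\in\beta$. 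Everything reduces to the following \emph{Key Lemma}: under these hypotheses there is a genuine derivation $\Phi \entails^{k'}_{\gamma'} \bigl(\Psi_0 +_\beta \Psi_1\bigr)$ where $\gamma'$ extends the partial function prescribed in Definition~\ref{def:elem}(B). Granting this, splice the new derivation in place of the chosen \opr{Eliminate} step (its conclusion and its $\gamma$ being what was needed, up to a final application of rule (d) of Definition~\ref{def:logic-notation}), which lowers the \opr{Eliminate} count by one, and iterate.

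For the Key Lemma the idea is to \emph{replay} one of the derivations inside a self-joined copy of $\Phi$ after the other has already been run, which is precisely what the stashing machinery of Section~\ref{sec:stashing} is built for. First, by Example~\ref{ex:surj-fix} we may assume $\Phi$ is surjective, since the morphisms in both directions transport $\entails'$-statements. Let $K \subseteq \leafs^\Phi$ be the set of $\Phi$-leaves occurring as first coordinates of $\gamma_0(i_0)$ for $(i_0,i_1)\in\beta$ (equivalently of $\gamma_1(i_1)$); each such leaf is respected throughout the relevant derivation, so by Lemma~\ref{lem:surj-fact-1} and the remark following it that a $\CS$ step preserves surjectivity, $\Psi_1$ is surjective at every leaf of $J_1$, which is the hypothesis needed to stash it. Now: (i) using $\CS(K)$ together with a many-leaf version of Proposition~\ref{prop:awesome-stashing} applied to $\Phi \entails \Psi_1$, obtain $\Phi \entails \bigl(\Phi +_{\beta''} \Psi_1\bigr)$, where $\beta''$ matches $K$ with the corresponding leaves of $\Psi_1$; (ii) stash this copy of $\Psi_1$ and run $\Phi \entails \Psi_0$ on the surviving copy of $\Phi$, again by the stashing lemma, producing a derivation of the diagram consisting of $\Psi_0$ with a copy of $\Psi_1$ glued on at each leaf of $\gamma_0^{-1}(K)$; (iii) compose steps (i) and (ii); (iv) collapse every unwanted copy of $\Psi_1$ by Proposition~\ref{prop:discard-dual} (its surjectivity at the attaching leaf having been checked) and discard the corresponding $\gamma$-data, leaving precisely $\Psi_0 +_\beta \Psi_1$. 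The conjugation flip built into Definition~\ref{def:elem}(B) should match the sign carried by $\gamma'$ in Proposition~\ref{prop:awesome-stashing} — the diagram geometry is insensitive to the $\{0,1\}$ bit, so this is pure bookkeeping — and Proposition~\ref{prop:diagram-joining} should confirm the resulting diagram is strongly isomorphic to $\Psi_0 +_\beta \Psi_1$ with the advertised labelling.

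Two pieces of infrastructure lie outside what the paper proves. The first is a strengthening of Lemma~\ref{lem:awesome-stashing} in which the stashed diagram is attached to the host along a \emph{matching} of several of its leaves, rather than with all of $Z$ glued to a single leaf $j$; I expect its proof to mirror Lemma~\ref{lem:awesome-stashing} almost verbatim (induction on derivation length, with the morphism-step and $\CS$-step cases handled exactly as in Claim~\ref{claim:awesome-claim1} and Claim~\ref{claim:awesome-claim2}, plus Proposition~\ref{prop:discard-dual} for simplification). The second, and this is where I expect the real difficulty, is the following phenomenon: a $\CS$ step inside the derivation $\Phi \entails \Psi_0$ can duplicate a leaf, so that several matched leaves of $\Psi_0$ (and, symmetrically, of $\Psi_1$) trace back to the \emph{same} leaf of $\Phi$ bearing the \emph{same} $\gamma$-label. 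In that case the naive ``overshoot-then-collapse'' above yields several disjoint copies of $\Psi_1$, one per such leaf, whereas $\Psi_0 +_\beta \Psi_1$ wants a \emph{single} copy of $\Psi_1$ glued simultaneously at several of its own leaves; collapsing just leaves one copy, attached at one point. Bridging this gap appears to need either a genuinely cleverer stashing construction — one that re-uses a single stashed copy across positions that the host derivation later bifurcates — or a new primitive identifying two leaves of a single diagram into a non-leaf, which is not obviously expressible in the present calculus. I suspect it is exactly this ``parallel use of a shared intermediate that subsequently splits'' that keeps the statement a conjecture; handling the multi-matching case, or reducing it to the single-matching case, is the crux.
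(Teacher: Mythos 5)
This statement is an open \emph{conjecture} in the paper, not a theorem, so there is no paper proof to compare against. You are therefore right to be suspicious of your own argument, and in fact your self-diagnosis of where it fails is, up to framing, exactly the discussion the paper gives in the paragraphs immediately following Conjecture~\ref{conj:less-bold}: the paper notes that when $\lvert J_0 \rvert = \lvert J_1 \rvert = 1$ the $\opr{Eliminate}$ step is precisely what stashing achieves, that these techniques might extend to the case where the restrictions $\Psi_0[J_0]$, $\Psi_1[J_1]$ are ``trivial'' (i.e.\ $V \to \bigoplus_{j \in J} W_j$ is surjective), but that ``there appear to be genuine difficulties'' beyond that. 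Your outer induction on the number of $\opr{Eliminate}$ steps, the reduction to a ``Key Lemma'' about a single $\opr{Eliminate}$, and the stashing strategy for that lemma all match what the paper has in mind.

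The gap you name is real, and it is worth stating slightly more sharply than you do. The issue is not specifically that a $\CS$ step can duplicate a $\Phi$-leaf (though that is one manifestation); it is that the stashing machinery of Section~\ref{sec:stashing} can only ever produce a diagram in which each stashed copy of $\Psi_1$ is attached at a \emph{single cut vertex}, whereas the target $\Psi_0 +_\beta \Psi_1$ glues one copy of $\Psi_1$ at $\lvert \beta \rvert$ places simultaneously. Running stashing and then ``collapsing the extras'' via Proposition~\ref{prop:discard-dual} lands you on a diagram $D'$ with $\Psi_1$ glued to $\Psi_0$ at just one pair $(i_0,i_1) \in \beta$, with the remaining matched leaves of both $\Psi_0$ and $\Psi_1$ left free — and there is no morphism $(\Psi_0 +_\beta \Psi_1) \to D'$ (the map $\alpha$ would have to send a leaf of $D'$ to an attachment vertex of $\Psi_0 +_\beta \Psi_1$, which is a non-leaf, violating Definition~\ref{def:diagram-morphism}). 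So $D' \entails^0 \Psi_0 +_\beta \Psi_1$ fails, and the proposed collapse does not in fact give what is needed. The missing primitive is something that re-glues multiple free leaves of a single diagram to each other (or, equivalently, that re-uses a single stashed copy across several attachment sites that a $\CS$ step has already separated). No such step is available in the calculus of Definition~\ref{def:logic-notation-revenge}, and inventing one compatible with the $\Lambda$-inequality interpretation of Corollary~\ref{cor:log-entails} is precisely the open problem.

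One secondary point: you also posit a ``many-leaf version of Proposition~\ref{prop:awesome-stashing}'' and expect it to follow by the same induction as Lemma~\ref{lem:awesome-stashing}. Be cautious here — the single-leaf argument in Proposition~\ref{prop:discard-dual} relies on the stashed diagram hanging off a cut vertex, and the ``collapsing'' morphism it constructs uses a right inverse of the single projection $\pi_j$. If $\Psi_1$ is attached to the host at several leaves, there is no longer a cut vertex, and there is no reason to expect a right inverse of the joint map $(\pi_{j})_{j \in J_1}$ to exist unless $\Psi_1[J_1]$ is trivial in the paper's sense. This is exactly the hypothesis the paper flags as necessary for the ``potentially extend'' regime, and it is a genuine additional obstruction your proposal doesn't account for.
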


When $|J_0|=|J_1|=1$, the process of mimicking the step (B) using only Cauchy--Schwarz is essentially what ``stashing'' is all about.
These techniques could potentially extend to cases where the restrictions $\Psi_0[J_0]$ and $\Psi_1[J_1]$ are ``trivial'', i.e., when the map $V \mapsto \bigoplus_{j \in J} W_j$ is surjective.
However, there appear to be genuine difficulties when trying to extend this to eliminating non-trivial sub-expressions using only Cauchy--Schwarz.

Of course, there are other inequalities than Cauchy--Schwarz and it would also be fruitful to interpret an $\opr{Eliminate}$ step as an inequality in some other way, ideally still with polynomial bounds.
This possibility was discussed implicitly in Footnote~\ref{footnote:ultra}.

It would also be interesting if Conjecture~\ref{conj:less-bold} were false in an essential way.

\subsection{A special case}%
\label{sub:special}

There is an attractive special case of either Conjecture~\ref{conj:bold-conjecture} or Conjecture~\ref{conj:less-bold} which is in a way dual to the Gowers--Wolf problem:
instead of asking when $\Phi \entails \gc_s$, we ask when $\gc_s \entails \Phi$.

In terms of functional equations, we are assuming that a function $F \colon \FF_p^n \to \RR/\ZZ$ is a (generalized) polynomial of degree at most $s$, and wish to deduce an identity
\[
  \forall v \in (V^{\Phi})^n:\ \sum_{i \in I^\Phi} (-1)^{\fs(i)} F\bigl(\phi_i^\Phi(v)\bigr) = 0
\]
where $\fs(i) \in \{0,1\}$ determine the signs.
It is a relatively straightforward exercise in multilinear algebra to compute whether or not such an identity should hold.

\begin{conjecture}%
  \label{conj:maker}
  Let $\Phi$ be is a linear datum with $W_i^\Phi = \FF_p$ for all $i \in I^\Phi$ \uppar{i.e., a system of linear forms}.
  Suppose $s \ge 1$ is given, and there is a function $\fs \colon I \to \{0,1\}$ such that
  \[
    \sum_{i \in I} (-1)^{\fs(i)} \left( \phi_i^\Phi \right)^{\otimes t} = 0
  \]
  as elements of $\bigl((V^\Phi)^\ast\bigr)^{\otimes t}$, for all $t$, $0 \le t \le s$.
  The case $t=0$ just says that $\sum_{i \in I} (-1)^{\fs(i)} = 0$.
  
  Then for some $k \ge 0$, $\gc_s \entails^k_{\gamma} \Phi$, where $\gamma(i) = (\triangle, \fs(i))$ for all $i \in I$.
\end{conjecture}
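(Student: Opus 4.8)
\textbf{Proof proposal for Conjecture~\ref{conj:maker}.}
The plan is to follow exactly the same three-phase strategy used for Theorem~\ref{thm:main-ext}, but ``run backwards''. The hypothesis on $\Phi$ is precisely the dual of the true-complexity criterion~\eqref{eq:true-criterion} (cf.\ Definition~\ref{def:true-complexity}), so the multilinear-algebra data we need is of the same shape as in Lemma~\ref{lem:assign-stargate}: vanishing of a signed sum of tensor powers up to degree $s$. First I would reduce to the non-degenerate and pairwise-independent case exactly as in Section~\ref{sub:main-outline} (if some $\phi_i$ is a scalar multiple of $\phi_j$, or if $\Phi$ is degenerate, we can quotient or merge without changing the statement). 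Unlike the Gowers--Wolf direction there is no ``degree-reduction'' needed here: we are given a single clean polynomial hypothesis $\gc_s$ and want to \emph{produce} a signed identity, so the iterative Corollary~\ref{cor:key-cor} machinery is not required.

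The core of the argument is a ``dual StarGate''. Where $\stargate$ in Section~\ref{sub:stargates} had a central $\bigagg$ whose $\sumconst_1$ assignment witnessed a linear relation among the $\beta_\tau$-weighted tensor products, here I would build an analogous circuit whose \emph{one pin} is the $\triangle$ of the initial $\gc_s$, and whose outer leaves are the indices of $\Phi$. Concretely: start from $\gc_s$; use Lemma~\ref{lem:gc-aggre}, Lemma~\ref{lem:build-agate}, Lemma~\ref{lem:build-large-agate}, Lemma~\ref{lem:build-super-agate} and Lemma~\ref{lem:build-stargate}-style self-joinings to build a circuit with $|I^\Phi|$ ``leg'' leaves, one $\supera_s^{k,m}$ per leg (to realize multiplication by the integer coefficients $a_{i,j}$ of the linear forms $\phi_i$ in a good basis, as in Claim~\ref{claim:basis}), and a single $\bigagg_s^{n}$ at the hub with $n \geq \binom{d+s-1}{s}$ spokes. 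This gives $\gc_s \entails^M_{\triangle \mapsto \triangle} \text{(dual StarGate)}$ for $M = O\bigl(|I^\Phi|(\log|I^\Phi| + \log\log(10L))\bigr)$, reusing the schematics in Claims~\ref{claim:claim1}--\ref{claim:claim4} essentially verbatim (the only change is which leaf of $\agg_s$ plays the role of the designated pin). Then I would exhibit a single morphism $\Theta \colon \diagram(\Phi) \to \text{(dual StarGate)}$ with $\alpha^\Theta(i) = X\text{-of-leg-}i$ and respecting every $i \in I^\Phi$, so that (by Proposition~\ref{prop:adjunction}) $\text{(dual StarGate)} \entails^0_{i \mapsto (\triangle, \fs(i))} \Phi$; composing the two $\entails$ statements finishes the proof. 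The coefficients $\beta_\tau$ playing the role of~\eqref{eq:beta-i0}--\eqref{eq:beta-prop} are supplied directly by the hypothesis: for each $0 \le t \le s$, $\sum_i (-1)^{\fs(i)} \phi_i^{\otimes t} = 0$ is exactly the relation making the $\bigsum$ assignment of the hub gate $\bag$ valid, and the $\agg_s$ pieces carry the signs $\fs(i)$ into $\gamma$ via the $\sumconst$ assignments of Lemma~\ref{lem:sum-const-assignment}.

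The main obstacle I expect is bookkeeping the signs $\fs$ correctly through all the self-joinings and through the $\bigagg$ hub, and — more substantively — handling the requirement that the relation hold \emph{simultaneously} for all $t \le s$ rather than just $t = s$. In the Theorem~\ref{thm:main-ext} proof only $t = s$ mattered because the $\gc_{s-1}$ target was on the output side; here, since $\gc_s$ is the \emph{input}, the morphism $\Theta$ must be compatible with \emph{every} tensor mode $r \in [s]$ of the circuit, and each mode corresponds to a different ``slice'' of the hypothesis. I believe the degree-$t$ hypotheses for $t < s$ are automatically implied by a suitable choice of the auxiliary coefficients — one can take a tensor $v \in (\FF_p^d)^{\otimes s}$ with $\phi_i^{\otimes s}(v) = (-1)^{\fs(i)}$ forced and then symmetrize as in Claim~\ref{claim:complexity} — but verifying that the resulting $\beta_\tau$ also kill the lower-degree modes (where some factors $e_{\tau_r}$ repeat the designated basis vector) will require a short argument using the $t=0,\dots,s-1$ hypotheses, and getting the $\const$-gate versus $\bigsum$-gate dichotomy to line up across all $s$ modes at once is where the proof is most delicate. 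A secondary concern is that $\gc_s$ has $\cR = [s]$ whereas $\Phi$ has no modes of its own; as in Remark~\ref{rem:phi-gate} I would artificially equip $\Phi$ with $\cR = [s]$ and $\pins = I^\Phi$, and the morphism $\Theta$ must then respect all of $I^\Phi$ in the strong sense, which forces the $\theta$-maps to be the specialization maps $x \mapsto c_{i} x$ for explicit constants $c_i \in \FF_p$ read off from the good basis — routine, but it needs to be checked.
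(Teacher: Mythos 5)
The statement you have attempted to prove is Conjecture~\ref{conj:maker}, which the paper states as an \emph{open conjecture}: it is presented in Section~\ref{sec:conjectures} as a special case of Conjecture~\ref{conj:bold-conjecture}, which the author explicitly leaves open and says ``would be of very significant interest'' to resolve. There is therefore no proof in the paper against which to compare your attempt, and you should be clear-eyed that you are proposing an attack on an unsolved problem, not recovering a known argument.

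As a speculative sketch, your ``dual StarGate'' idea is a reasonable first guess and correctly identifies the key structural similarity: the hypothesis $\sum_i(-1)^{\fs(i)}\phi_i^{\otimes t}=0$ is the kind of multilinear-algebraic input that the $\bigagg$, $\supera$ and $\abilin$ machinery was designed to metabolise, and a circuit built of those pieces is a natural thing to attempt. However, there is a genuine gap that you partially acknowledge but understate. In the forward (Theorem~\ref{thm:main-ext}) direction, the final step ``$\stargate\entails^0\gc_{s-1}$'' comes from Proposition~\ref{prop:diag-cs-complexity}: a successful assignment establishes Cauchy--Schwarz complexity at the single pin $F0;i_0$, and this \emph{is} the existence of a morphism $\gc_{s-1}\to\stargate$. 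In your direction you need a morphism $\diagram(\Phi)\to D$ for some diagram $D$ built from $\gc_s$, \emph{respecting all $|I^\Phi|$ leaves at once}, so that $\gamma$ is total and sends every $i$ to $(\triangle,\fs(i))$. The assignment/complexity machinery of Section~\ref{sub:circuits} does not produce morphisms of this shape --- it produces morphisms whose \emph{domain} is $\gc_t$ or $\trivial(\cdot)$, never a general system of forms $\Phi$ with several constrained, mutually interacting leaves. To make your step work you would need a new lemma playing the role of Proposition~\ref{prop:diag-cs-complexity} but with $\Phi$ as the source, i.e.\ characterising when a morphism $\Phi\to D$ respecting all of $I^\Phi$ exists in terms of the internal structure of $D$. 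You have not formulated such a lemma, and it is not clear that the $\stargate$ topology (which is built around the asymmetric ``one special pin, many toggles'' pattern) is even the right shape; the needed symmetry over all $|I^\Phi|$ legs is closer to the hypercube structure of $U^{s+1}$ than to the star-shaped hub-and-spoke of $\stargate$.

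A second, related gap is that you gesture at the $\bigagg$ hub encoding ``the'' relation supplied by the hypothesis, but the hypothesis is not a single linear relation among $n$ scalars: it is a family of relations, one per tensor mode $t$ and one per basis monomial $(\ell_1,\dots,\ell_t)$, and the forward $\stargate$'s $\sumconst_1$ assignment was carefully engineered to encode one specific signed combination of $\binom{d+s-1}{s}$ quantities, not a family indexed by both $\tau$ and a separate degree index. You flag the simultaneity over all $t\le s$ as the ``most delicate'' point, and I agree; I would go further and say it is unresolved. Finally, the sign bookkeeping is not just a nuisance: the $\gamma$ function $\gamma(i)=(\triangle,\fs(i))$ is an integral part of the conclusion, so any construction needs to control, for each $i\in I^\Phi$, the parity of the number of $L$-vs-$R$ choices along the CS-chain leading back to $\triangle$, and that parity has to equal $\fs(i)$. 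The existing build lemmas (Lemma~\ref{lem:build-agate}, Lemma~\ref{lem:build-large-agate}, etc.) track $\gamma$ for one or two distinguished leaves, not for $|I^\Phi|$ of them simultaneously, so you would need a strengthened version of those as well.
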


As well as being a natural case of Conjecture~\ref{conj:bold-conjecture}, this is also motivated by the fact that an affirmative answer could be helpful in proving Conjecture~\ref{conj:asymmetric}.

\bibliography{refs}
\bibliographystyle{alpha}

\end{document}